\numberwithin{equation}{section}
\renewcommand{\eqref}[1]{\hyperref[#1]{(\ref{#1})}}
\newtheorem{letterthm}{Theorem}
\newtheorem{letterconj}[letterthm]{Conjecture}
\newtheorem*{haagerup-theorem}{Relative Bicentralizer Theorem}
\newtheorem{thm}{Theorem}[section]
\newtheorem{conj}{Conjecture}[section]
\newtheorem{lem}[thm]{Lemma}
\newtheorem{cor}[thm]{Corollary}
\newtheorem{prop}[thm]{Proposition}
\theoremstyle{definition}
\newtheorem{rem}[thm]{Remark}
\newtheorem{remark}[thm]{Remarks}
\newtheorem{exam}{Example}
\newtheorem{df}[thm]{Definition}
\newtheorem{claim}[thm]{Claim}
\newcommand{\R}{\mathbb{R}}
\newcommand{\C}{\mathbb{C}}
\newcommand{\B}{\mathbf{B}}
\newcommand{\Z}{\mathbb{Z}}
\newcommand{\Q}{\mathbb{Q}}
\newcommand{\N}{\mathbb{N}}
\newcommand{\ba}{\mathbf{a}}
\newcommand{\bb}{\mathbf{b}}
\newcommand{\cA}{\mathcal{A}}
\newcommand{\cE}{\mathcal{E}}
\newcommand{\cF}{\mathcal{F}}
\newcommand{\cP}{\mathcal{P}}
\newcommand{\cI}{\mathcal{I}}
\newcommand{\cH}{\mathcal{H}}
\newcommand{\cZ}{\mathcal{Z}}
\newcommand{\cU}{\mathcal{U}}
\newcommand{\cD}{\mathcal{D}}
\newcommand{\cM}{\mathcal{M}}
\newcommand{\cN}{\mathcal{N}}
\newcommand{\cV}{\mathcal{V}}
\newcommand{\Ad}{\operatorname{Ad}}
\newcommand{\id}{\text{\rm id}}
\newcommand{\Aut}{\mathord{\text{\rm Aut}}}
\newcommand{\Inn}{\mathord{\text{\rm Inn}}}
\newcommand{\UCP}{\mathord{\text{\rm UCP}}}
\newcommand{\CCP}{\mathord{\text{\rm CCP}}}
\newcommand{\rL}{\mathord{\text{\rm L}}}
\newcommand{\rB}{\mathord{\text{\rm B}}}
\newcommand{\rb}{\mathord{\text{\rm b}}}
\newcommand{\rC}{\mathord{\text{\rm C}}}
\newcommand{\conv}{\overline{\mathord{\text{\rm conv}}} \,}
\newcommand{\rd}{\mathord{\text{\rm d}}}
\newcommand{\rE}{\mathord{\text{\rm E}}}
\newcommand{\Tr}{\mathord{\text{\rm Tr}}}
\newcommand{\Ball}{\mathord{\text{\rm Ball}}}
\newcommand{\ot}{\otimes}
\newcommand{\ovt}{\mathbin{\overline{\otimes}}}
\newcommand{\op}{\text{\rm op}}
\newcommand{\ri}{\text{\rm i}}
\newcommand{\AC}{\mathord{\text{\rm AC}}}
\newcommand{\om}{\omega}
\newcommand{\I}{{\rm I}}
\newcommand{\II}{{\rm II}}
\newcommand{\III}{{\rm III}}
\begin{document}

\begin{center}
{\boldmath\LARGE\bf Kadison's problem for type $\III$ subfactors\\ and the bicentralizer conjecture}

\bigskip

{\sc by Amine Marrakchi\footnote{UMPA, CNRS ENS de Lyon, Lyon (France). E-mail: amine.marrakchi@ens-lyon.fr}}
\end{center}

\begin{abstract}\noindent

In 1967, Kadison asked ``if $N$ is a subfactor of the factor $M$ for which $N' \cap M$ consists of scalars, will some maximal abelian *-subalgebra of $N$ be a maximal abelian subalgebra of $M$?". Generalizing a theorem of Popa in the type $\II$ case (1981), we solve Kadison's problem for all subfactors with expectation $N \subset M$ where $N$ is either a type $\III_\lambda$ factor with $0 \leq \lambda  < 1$ or a type $\III_1$ factor that satisfies Connes's bicentralizer conjecture. Our solution is based on a new explicit formula for the bicentralizer algebras of arbitrary inclusions. This formula implies a type $\III$ analog of Popa's local quantization principle. We generalize Haaegrup's theorem from 1984 by connecting the relative bicentralizer conjecture to the Dixmier property. Finally, we prove this conjecture for a large class of inclusions and we prove an ergodicity theorem for the bicentralizer flow. 
%We also give applications of our methods to $\II_1$ factors, including a new characterization of Ozawa's W*-Akemann-Ostrand property.
\end{abstract}

\section*{Introduction}
In 1967, at the Baton Rouge conference \cite{Ka67}, Kadison asked the following question : \emph{``if $N$ is a subfactor of the factor
$M$ for which $N' \cap M$ consists of scalars, will some maximal abelian *-subalgebra of $N$ be a maximal abelian subalgebra of $M$?"}. An inclusion of factors that satisfies $N' \cap M=\C$ is called \emph{irreducible}. This condition is clearly necessary for $N$ to contain a maximal abelian subalgebra of $M$ and Kadison asked if this condition is sufficient. 

In 1981, using an inductive construction, Popa \cite{Po81} solved Kadison's question affirmatively for all irreducible inclusions $N \subset M$ such that $N$ is of type $\II$ and the inclusion $N \subset M$ is \emph{with expectation}, meaning that there exists a faithful normal conditional expectation from $M$ onto $N$. Note that this second condition is automatically satisfied when $M$ is of type $\II_1$. Therefore, Popa's theorem answers Kadison's question affirmatively for \emph{all} irreducible inclusions of type $\II_1$ factors. 

On the other hand, when $M$ is infinite, Ge and Popa showed \cite[Corollary 5.1]{GP98} that the answer to Kadison's question can be \emph{negative} for irreducible inclusions $N \subset M$ \emph{without} expectation, even when $N$ is a $\II_1$ factor and $M$ is a $\II_\infty$ factor! Taking this counter-example into account, Ge and Popa reformulated Kadison's question for arbitrary factors, possibly of type $\III$, by adding this extra assumption that $N$ is with expectation in $M$. See the discussion after \cite[Corollary 5.1]{GP98} (see also \cite[Problem 7.6]{Po21}). The main result of this paper confirms that Ge and Popa's intuition was correct,  by almost solving their question.
\begin{letterthm} \label{main Kadison}
Let $N \subset M$ be an irreducible inclusion of factors with expectation and with separable preduals. Suppose that one of the following condition holds:
\begin{enumerate}[\rm (1)]
\item $N$ is not of type $\III_1$.
\item $N$ is a type $\III_1$ factor with trivial bicentralizer.
\end{enumerate}
Then there exists a maximal abelian subalgebra $A \subset N$ that is also maximal abelian in $M$.
\end{letterthm}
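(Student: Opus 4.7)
The plan is to follow the architecture of Popa's 1981 proof for type~$\II$, namely: inductively refine a partition of unity $\{p_i\} \subset N$ so that the limiting abelian subalgebra $A \subset N$ is maximal abelian in $N$ and simultaneously every element of $M$ is forced to commute with $A$. Popa's proof rests on a local quantization principle whose proof uses the trace. Since there is no trace in type~$\III$, the crux of the work is to replace local quantization by a type~$\III$ analog, expressed in terms of a carefully chosen faithful normal state $\varphi \in N_*$, its lift $\psi = \varphi \circ E$ where $E : M \to N$ is a faithful normal conditional expectation, and the relative bicentralizer algebra $\Bic(N \subset M, \psi)$.

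\textbf{Step 1: triviality of the relative bicentralizer.} Using the explicit bicentralizer formula promised by the abstract, I would show that hypotheses (1) and (2) both imply $\Bic(N \subset M, \psi) = \C 1$. The formula should express $\Bic(N \subset M, \psi)$ essentially in terms of the absolute bicentralizer of $N$ and the relative commutant $N' \cap M$. For $N$ of type $\II$ or type $\III_\lambda$ with $\lambda < 1$, the bicentralizer of $N$ is known to be trivial, and irreducibility $N' \cap M = \C$ forces $\Bic(N \subset M, \psi) = \C 1$. For $N$ of type $\III_1$, hypothesis (2) directly supplies triviality of $\Bic(N, \varphi)$ and the same conclusion follows.

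\textbf{Step 2: type $\III$ local quantization.} From triviality of the relative bicentralizer I would derive: for every finite $F \subset M$ and every $\eps > 0$ there is a partition of unity $p_1, \ldots, p_k \in N$ satisfying
\[
\sum_{i=1}^k \|p_i x p_i - E(x) p_i\|_\psi^2 < \eps \quad \text{for every } x \in F.
\]
The mechanism is a Dixmier-type averaging: triviality of $\Bic(N \subset M, \psi)$ should translate, via a Hahn--Banach duality, into the statement that for every $x \in M$ the element $E(x)$ lies in the $\|\cdot\|_\psi$-closure of $\conv \{u x u^* : u \in \cU(N)\}$. Discretizing such convex combinations by spectral partitions of appropriate self-adjoint elements in $N$ converts them into partitions of unity, at the cost of a controlled loss in the approximation.

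\textbf{Step 3: inductive construction of the masa, and the main obstacle.} Fix a $\|\cdot\|_\psi$-dense sequence $(x_n) \subset M$. Starting with $A_0 = \C 1$, given a finite-dimensional abelian $A_n \subset N$ apply Step~2 inside each corner $e N e$ for $e$ a minimal projection of $A_n$ to the family $\{e x_i e : i \leq n\}$ with tolerance $2^{-n}$, and let $A_{n+1} \supset A_n$ be generated by the resulting sub-partitions, further refined so that the minimal projections have $\psi$-weight below $2^{-n-1}$. The closure $A = \overline{\bigcup_n A_n}$ is a diffuse abelian subalgebra of $N$, maximal abelian in $N$ by the diameter control on minimal projections. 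The quantization estimates, applied to the dense sequence, guarantee that $A' \cap M \subset N$, and then $A' \cap M = A' \cap N = A$ since $A$ is already maximal abelian in $N$. The main obstacle is Step~2: converting a convex-theoretic triviality statement about $\Bic(N \subset M, \psi)$ into an honest partition of unity in $N$ is already delicate in the tracial case and substantially harder in type~$\III$ since the $\|\cdot\|_\psi$-norm is not unitarily invariant on the left, which requires a careful selection of $\varphi$ adapted to the inclusion through the new explicit bicentralizer formula.
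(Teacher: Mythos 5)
Your Step~3 matches the paper's inductive masa construction (the Proposition opening Section~\ref{section masa}), and you correctly identify a ``type~$\III$ local quantization principle'' as the engine of the whole argument; the problems sit upstream.

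Step~1 is false as stated. Irreducibility gives $N'\cap M=\C$ but says nothing about the \emph{relative flow of weights} $N'\cap c(M)$, which is an independent invariant of the inclusion — this is the whole point of Conjecture~\ref{conj Kadison expectation}. The paper's formula reads $\rb(N\subset M,\varphi)=\bigl((N'\cap c(M))\vee\{\varphi\circ E\}''\bigr)\cap M$, so even for a type~$\III_1$ factor $N$ with trivial bicentralizer that is irreducible in $M$, the algebra $\rB(N\subset M,\varphi)$ is typically nontrivial (see Corollary~\ref{bicentralizer weakly mixing} for when it carries a nontrivial modular flow). What \emph{is} trivial, and what the paper actually uses, is the fixed-point algebra $\rB(N\subset M,\varphi)^{\beta^\varphi}=N'\cap M=\C$ (Corollary~\ref{bicentralizer flow ergodic}) — a strictly weaker statement. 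For hypothesis~(1), the absolute bicentralizer $\rB(N,\varphi)$ is also \emph{usually nontrivial} for non-$\III_1$ factors, as the introduction of the paper itself points out, so the claim breaks there too.

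Step~2 is the genuine missing idea, and the paper flags it explicitly. Triviality of the relative bicentralizer (in the correct sense above) is equivalent to the \emph{weak Dixmier property} for $N\subset c(M)$ (Theorem~\ref{conjecture iff dixmier}), but converting convex averages of unitary conjugates into a partition of unity is precisely Popa's local quantization, which is \emph{strictly stronger}; moreover the ultrapower/Popa argument (Lemma~\ref{fd abelian into bicentralizer}) only lands you in the bicentralizer $\rB(N\subset c(M),\varphi)$, and one must still discard a $\{\varphi\}''$-part to get down to $N'\cap c(M)$. The remark just after Lemma~\ref{fd abelian into bicentralizer} states that the only proof the paper found routes through amenability: (a) Theorem~\ref{irreducible amenable} produces an amenable $P\subset N$ with expectation satisfying $P'\cap M=N'\cap M$, which for the $\III_1$ case uses the ergodicity theorem~\ref{intro ergodic} for the bicentralizer flow together with the criterion of \cite{AHHM18}; (b) the Cartan/crossed-product structure of the amenable $P$ (Lemmas~\ref{masa amenable}, \ref{universal masa}, \ref{semilocal quantization}) gives local quantization for $P\subset c(M)$ (Theorem~\ref{local quantization amenable}); (c) one restricts back via Proposition~\ref{local quantization restriction}. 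Your proposal has no analogue of this amenability reduction, so it cannot be completed as written.
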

We note that Theorem \ref{main Kadison} is new even when $N$ is amenable of type $\III_\lambda$ with $0 \leq \lambda <1$. But of course, the type $\III_1$ case is the most difficult one, as always, when one tries to generalize results from the type $\II_1$ world to the type $\III$ world. A famous conjecture of Connes, the so-called \emph{Connes's Bicentralizer Problem}, claims that \emph{every} type $\III_1$ factor has trivial bicentralizer (we will recall the definitions and the precise statement below). This conjecture has been verified for several families of type $\III_1$ factors such as amenable factors \cite{Ha85}, factors with a Cartan subalgebra, free products \cite{HU15}, semi-solid factors \cite{HI15}, $q$-deformed Araki-Woods factors \cite{HI20}, \cite{Bi24}. In this paper, we will also add to this list all type $\III_1$ factors that are tensor products of two type $\III_1$ factors (see Theorem \ref{intro conrete class} below). Therefore, Theorem \ref{main Kadison} solves Kadison's problem for a large class of type $\III$ factors, and conjecturally, for all of them.

We emphasize the fact that, unlike Popa's solution in the type $\II$ case, the maximal abelian subalgebra we construct in Theorem \ref{main Kadison} may \emph{not} be with expectation in $N$. Indeed, this can't be achieved in general because of \emph{modular theoretic obstructions} as was already observed in \cite{AHHM18}. The following conjecture gives an effective if and only if criterion for the existence of such a maximal abelian subalgebra with expectation. In this paper, we will also prove this conjecture for a large class of inclusions.

\begin{letterconj} \label{conj Kadison expectation}
Let $N \subset M$ be an irreducible inclusion of factors with expectation and with separable preduals. Then the following are equivalent :
\begin{enumerate}[\rm (1)]
\item There exists a maximal abelian subalgebra \emph{with expectation} $A \subset N$ that is also maximal abelian in $M$.
\item $N' \cap c(M) \subset c(N)$.
\end{enumerate}
\end{letterconj}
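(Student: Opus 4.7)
The strategy is to treat the two directions separately: $(1) \Rightarrow (2)$ is a direct consequence of modular theory and Takesaki duality, while $(2) \Rightarrow (1)$ is the deep content and would use the bicentralizer formula and the type $\III$ local quantization principle developed earlier in the paper.

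For $(1) \Rightarrow (2)$, suppose $A \subset N$ is a maximal abelian subalgebra with expectation in $N$ that is also maximal abelian in $M$, and let $E_A : N \to A$ and $E_N : M \to N$ be the given conditional expectations. Choose a faithful normal state $\tau$ on $A$ and set $\varphi := \tau \circ E_A \circ E_N$. Since $\varphi$ factors through $A$, its modular automorphism group satisfies $\sigma^\varphi|_A = \id$, so $A \subset M^\varphi$. Inside $c(M) = M \rtimes_{\sigma^\varphi} \R$, the canonical copy of $\rL(\R)$ therefore commutes with $A$, and the subalgebra they jointly generate is $A \ovt \rL(\R)$. Takesaki duality gives $\rL(\R)' \cap c(M) = M$, whence
\[
(A \ovt \rL(\R))' \cap c(M) \;=\; A' \cap \rL(\R)' \cap c(M) \;=\; A' \cap M \;=\; A,
\]
so $A \ovt \rL(\R)$ is maximal abelian in $c(M)$. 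Since $A \subset N$ and $\sigma^\varphi$ preserves $N$ by construction (so that $\rL(\R) \subset c(N)$), one has $A \ovt \rL(\R) \subset c(N)$. Combining, $N' \cap c(M) \subset A' \cap c(M) = A \ovt \rL(\R) \subset c(N)$, which is (2).

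For the harder implication $(2) \Rightarrow (1)$, my plan is to carry out a modular-equivariant version of Theorem A. Fix $\varphi_0 = \psi_0 \circ E_N$ so that $\sigma^{\varphi_0}$ preserves $N$, and pass to the continuous cores $c(N) \subset c(M)$. The first key step is to translate (2) into a statement about the relative bicentralizer algebra of $N \subset M$: the new explicit formula for bicentralizers announced in the abstract should show, after some algebraic manipulation, that (2) forces this relative bicentralizer to equal $N$, i.e.\ to be as small as possible. The second step is then to apply the type $\III$ local quantization principle from this paper inside the centralizer $M^\varphi$ for a suitably chosen state $\varphi$ on $M$, producing a maximal abelian subalgebra $A \subset N^\varphi$ that is maximal abelian in $M^\varphi$. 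Since $A$ sits inside $M^\varphi$, it is automatically with expectation in $M$, and hence in $N$ via $E_N$.

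The last task would be to promote maximal abelianness of $A$ in $M^\varphi$ to maximal abelianness in all of $M$. Given $x \in A' \cap M$, decompose $x$ spectrally with respect to the modular flow $\sigma^\varphi$; via the crossed-product picture, the nonzero spectral components correspond to elements of $N' \cap c(M)$ in nontrivial spectral subspaces for the dual $\R$-action, which by (2) must already lie in $c(N)$. Combined with $A$ being maximal abelian in $M^\varphi$, this forces the nonzero-frequency components of $x$ to vanish, so $x \in M^\varphi \cap A' = A$. I expect the main obstacle to be the first step — extracting from the bicentralizer formula, together with (2), the triviality of the relative bicentralizer — which is precisely where the generalized Haagerup theorem connecting bicentralizers to the Dixmier property, and the ergodicity theorem for the bicentralizer flow, should most naturally come into play.
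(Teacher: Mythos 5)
The fundamental problem is the status of the statement: in the paper, $(2)\Rightarrow(1)$ is an \emph{open conjecture}, not a theorem to be proved. What the paper establishes (item (6) of Theorem \ref{intro bicentralizer dixmier}, via the final corollary of Section \ref{section masa}) is that $(2)\Rightarrow(1)$ holds \emph{under the extra hypothesis} that the inclusion $N\subset M$ satisfies the relative bicentralizer conjecture, which is itself open. Your sketch of $(2)\Rightarrow(1)$ invokes ``the new explicit formula for bicentralizers'' --- i.e.\ $\rB(N\subset c(M),\varphi)=\{\varphi^{\ri t}\}''\vee(N'\cap c(M))$ --- as though it were established; but that formula \emph{is} Conjecture \ref{conj relative bicentralizer}. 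Condition (2) constrains the relative flow of weights $N'\cap c(M)$ and by itself gives no control over the analytic bicentralizer $\rB(N\subset c(M),\varphi)$; there is no ``algebraic manipulation'' that extracts the bicentralizer conjecture from (2). Your instincts about the weak Dixmier property, the ergodicity of the bicentralizer flow, and the type $\III$ local quantization principle do reflect the paper's actual strategy, but all of these only take effect once the relative bicentralizer conjecture is \emph{assumed} for the inclusion at hand (or verified for it, as in Theorem \ref{intro conrete class}). A sketch that implicitly assumes it cannot close.

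On the easy direction $(1)\Rightarrow(2)$ your conclusion is correct but a key intermediate step is false. Inside $c(M)=M\rtimes_{\sigma^\varphi}\R$ you assert that Takesaki duality gives $\rL(\R)'\cap c(M)=M$; this would force $\sigma^\varphi=\id$. In fact $\rL(\R)=\{\varphi^{\ri t}\}''$ has relative commutant $M_\varphi\vee\rL(\R)$ in $c(M)$, not $M$: the action whose fixed-point algebra recovers $M$ is the \emph{trace-scaling} (dual) action $\theta$, not $\Ad(\varphi^{\ri t})$. Your derived identity $(A\ovt\rL(\R))'\cap c(M)=A$ is also internally inconsistent, since the relative commutant of an abelian algebra contains that algebra and hence here must contain $\rL(\R)$. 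The correct and shorter route is: since $A\subset M_\varphi$, Proposition \ref{dixmier in crossed product} gives $A'\cap c(M)=(A'\cap M)\rtimes_{\sigma^\varphi}\R=A\rtimes_{\sigma^\varphi}\R=A\ovt\rL(\R)$, and this sits inside $c_{E_N}(N)$ because $A\subset N$ and $\varphi^{\ri t}=((\varphi|_N)\circ E_N)^{\ri t}\in c_{E_N}(N)$; then $N'\cap c(M)\subset A'\cap c(M)\subset c(N)$.
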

It is easy to check that (1) $\Rightarrow$ (2). So conjecture \ref{conj Kadison expectation} is all about the converse implication. Here, $c(M)$ denotes the \emph{core} of $M$ \cite{FT01}. For every faithful normal state $\varphi$ on $M$, the core $c(M)$ is canonically isomorphic to $M \rtimes_{\sigma^\varphi} \R$ where $\sigma^\varphi : \R \curvearrowright M$ is the modular automorphism group of $\varphi$. A fundamental invariant of $M$ is the so-called \emph{flow of weights} $M' \cap c(M)=\cZ(c(M))$. Similarly, the relative commutant $N' \cap c(M)$ is an important invariant of the inclusion $N \subset M$ that we call the \emph{relative flow weights}. It encodes information about the position of $N$ inside $M$ with respect to modular theory. 

When $N$ is with expectation in $M$, we have a natural inclusion $c(N) \subset c(M)$. The condition $N' \cap c(M) \subset c(N)$ means that $N' \cap c(M)=\cZ(c(N))$ and it implies in particular that $\cZ(c(M)) \subset \cZ(c(N))$, i.e.\ the flow of weights of $N$ is an extension of the flow of weights of $M$. This shows that condition (2) in Conjecture \ref{conj Kadison expectation} is quite restrictive. Here are few examples.

\begin{exam}
Let $N \subset M$ be an irreducible subfactor with expectation. If $N$ is of type $\II$, then the condition $N' \cap c(M) \subset c(N)$ is automatically satisfied, and indeed, Popa's theorem shows that we can find a maximal abelian subalgebra with expectation $A \subset N$ that is maximal abelian in $M$.
\end{exam}
%\begin{exam}
%Let $N$ be a type $\III_\lambda$ factor with $\lambda \in ]0,1[$. Let $T=\frac{2\pi}{|\log\lambda|}$ and take $\varphi$ a $T$-periodic faithful normal state on $N$. Take $m \geq 2$ and let the finite cyclic group $G=\Z_m$ act on $N$ by the $m$-periodic automorphism $\alpha=\sigma_{T/m}^\varphi \in \Aut(N)$. Then $N^G \subset N$ and $N \subset M=N \rtimes G$ are both irreducible finite index subfactor. According to Conjecture \ref{conj Kadison expectation}, we can find a maximal abelian subalgebra with expectation in $N^G$ that is maximal abelian in $N$. But we can never find a maximal abelian subalgebra with expectation in $N$ that is maximal abelian in $N \rtimes G$. Note that $N^G$ and $N \rtimes G$ are both factors of type $\III_{\lambda^m}$.
%\end{exam}
\begin{exam}
Let $N$ be a factor and $\alpha : G \curvearrowright N$ an action of a discrete group $G$ on $N$. Let $M=N \rtimes_\alpha G$. The inclusion $N \subset M$ is irreducible if and only if $\alpha$ is \emph{outer,} i.e.\ $\alpha_g \notin \Inn(N)$ for all $g \in G \setminus \{1\}$. However, we will have $N' \cap c(M) \subset c(N)$ if and only if the extended action $c(\alpha) : G \curvearrowright c(N)$ is outer. For this kind of inclusions, we will show that Conjecture \ref{conj Kadison expectation} holds when $N$ is not of type $\III_1$ or when $N$ is of type $\III_1$ with trivial bicentralizer.
\end{exam}
\begin{exam}
Let $N \subset M$ be an irreducible subfactor with expectation. Let $R_\infty$ be the Araki-Woods factor of type $\III_1$ (or any type $\III_1$ factor with trivial bicentralizer) and let $\cN=N \ovt R_\infty$ and $\cM=M \ovt R_\infty$. In this stabilized inclusion $\cN\subset \cM$, all modular obstructions disapear: we always have $\cN' \cap c(\cM)=\C$. In accordance with Conjecture \ref{conj Kadison expectation}, we will show, without making any assumption on $N$ or $M$, that we can always find a maximal abelian subalgebra with expectation $A \subset \cN$ that is maximal abelian in $\cM$.
\end{exam}

In order to prove Theorem \ref{main Kadison} or Conjecture \ref{conj Kadison expectation}, a fundamental tool is Connes's bicentralizer algebra and its relative version for inclusions. Let us recall the definition of these algebras. 

Let $N$ be a von Neumann algebra with a faithful normal state $\varphi \in N_*$. Following \cite{Co80, Ha85}, we define the {\em asymptotic centralizer} of $\varphi$ by 
$$\AC(N, \varphi) = \left \{ (a_{n})_{n} \in \ell^{\infty}(\N, N) \mid \lim_{n} \| \varphi(a_{n} \cdot) - \varphi(\cdot a_{n})\| = 0\right\}$$ 
and the \emph{bicentralizer} of $M$ with respect to $\varphi$ by
$$\rB(N, \varphi) = \left\{ x \in N \mid x a_{n} - a_{n} x \to 0 \text{ strongly}, \text{ for all } (a_{n})_{n} \in \AC(N, \varphi)\right\}.$$
Connes classified all amenable factors that are not of type $\III_1$ in his groundbreaking paper \cite{Co75b}. It is during his efforts to tackle the remaining type $\III_1$ case that he encountered the bicentralizer problem. Indeed, Connes proved  that an amenable type $\III_1$ factor $N$ is isomorphic to the unique Araki-Woods factor of type $\III_1$ if and only if $\rB(N,\varphi)=\C$ for some faithful state $\varphi \in N_*$ (see \cite{Co85}). Moreover, he conjectured that we always have $\rB(N,\varphi)=\C$ for \emph{every} type $\III_1$ factor $N$ and \emph{every} state $\varphi \in N_*$. This conjecture, known as \emph{Connes' Bicentralizer Problem}, was famously solved by Haagerup in the case where $N$ is amenable \cite{Ha85}. This result, together with Connes's work, settled the classification problem for amenable factors of all types. However, Connes's Bicentralizer Problem is still, to this day, widely open when $N$ is not amenable, despite some recent progress. When $N$ is not of type $\III_1$, the bicentralizer $\rB(N,\varphi)$ is usually nontrivial but it has attracted very little attention so far except for a recent work of Okayasu \cite{Ok21}. 

When $N \subset M$ is an inclusion of von Neumann algebras with expectation, one can define the bicentralizer of the inclusion $N \subset M$ with respect to $\varphi$ by 
\begin{equation} \label{eq intro bicentralizer}\rB(N \subset M, \varphi) = \left\{ x \in M \mid x a_{n} - a_{n} x \to 0 \text{ strongly}, \text{ for all } (a_{n})_{n} \in \AC(N, \varphi)\right\}.
\end{equation}
This relative version of the bicentralizer algebra appears both implicitly in \cite{Po95} and explicitly in \cite{Mas03}, playing an important role in the classification of type $\III$ subfactors. Recently, in \cite{AHHM18}, the relative bicentralizer algebra was studied in a more systematic way and was used to solve Kadison's problem for some families of type $\III$ subfactors. But the relative bicentralizer algebra could only be computed for a very specific class of inclusions, see \cite{Mas20}.

In the present paper, we propose, for an arbitrary inclusion with expectation $N \subset M$ with an arbitrary faithful state $\varphi \in N_*$, a formula to compute the bicentralizer algebra $\rB(N \subset M,\varphi)$ that relates it directly to the relative flow of weights $N' \cap c(M)$. Actually, it is easier and more natural to give a formula for $\rB(N \subset c(M),\varphi)$ rather then $\rB(N \subset M,\varphi)$ itself. Here, $\rB(N \subset c(M),\varphi)$ is a subalgebra of $c(M)$ defined exactly as in \ref{eq intro bicentralizer}.
\begin{letterconj}[Relative Bicentralizer Conjecture] \label{conj relative bicentralizer}
Let $N \subset M$ be an inclusion of von Neumann algebras with expectation. Let $\varphi \in N_*$ be a faithful normal state. Then we have
\begin{equation} \label{eq intro bicentralizer conj}
\rB(N \subset c(M),\varphi)= \{ \varphi^{\ri t} \mid t \in \R \}'' \vee (N' \cap c(M)).
\end{equation}
where $(\varphi^{\ri t})_{t \in \R}$ is the one-parameter group of unitaries in $c(N)=N \rtimes_{\sigma^\varphi} \R$ that implements the modular action $\sigma^\varphi : \R \curvearrowright N$.
\end{letterconj}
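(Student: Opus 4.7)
The plan is to prove the two inclusions separately, reducing the hard direction to an intrinsic identity inside $M$ via Takesaki duality for the dual $\R$-action on $c(M)$.

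The easy direction $\rB(N \subset c(M),\varphi) \supseteq \{\varphi^{it}\}'' \vee (N' \cap c(M))$ is immediate: elements of $N' \cap c(M)$ commute with $N$ and hence with every sequence in $\AC(N,\varphi) \subseteq \ell^\infty(\N,N)$, while the standard equivalence between asymptotic $\varphi$-centrality of a bounded sequence $(a_n) \subseteq N$ and the strong${}^*$ convergence $\sigma_t^\varphi(a_n) - a_n \to 0$ (uniformly for $t$ on compact sets) gives $[\varphi^{it}, a_n] = (\sigma_t^\varphi(a_n) - a_n)\varphi^{it} \to 0$.

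For the reverse inclusion, I would reformulate both sides via Takesaki duality. The dual action $\theta : \R \actson c(M)$ fixes $M$ pointwise; since $\theta_s$ commutes with every $\AC(N,\varphi)$-sequence, $\rB(N \subset c(M),\varphi)$ is $\theta$-invariant and (by the easy direction) contains $\{\varphi^{it}\}''$, so by Takesaki duality it has the form $A \rtimes_{\sigma^\varphi} \R$ where $A = \rB(N \subset c(M),\varphi) \cap M = \rB(N \subset M,\varphi)$. To identify the right-hand side analogously, introduce the modular intertwiner spaces
\[ I_s := \{m \in M : m\sigma_s^\varphi(n) = nm \text{ for all } n \in N\}, \quad s \in \R, \]
and verify the identities $\sigma_t^\varphi(I_s) = I_s$, $I_s I_t \subseteq I_{s+t}$, $I_s^* = I_{-s}$. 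Setting $M_0 := \langle I_s : s \in \R \rangle''$, the spectral subspace of $N' \cap c(M)$ at frequency $s$ under $\theta$ is exactly $I_s \varphi^{is}$, so combining with $\{\varphi^{it}\}''$ yields $M_0 \rtimes_{\sigma^\varphi} \R$. The conjecture is therefore equivalent to the intrinsic identity $\rB(N \subset M,\varphi) = M_0$.

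The inclusion $M_0 \subseteq \rB(N \subset M,\varphi)$ is a direct calculation: for $m \in I_s$ and $(a_n) \in \AC(N,\varphi)$, one has $[m,a_n] = m(a_n - \sigma_s^\varphi(a_n)) \to 0$ strongly${}^*$. The main obstacle is the reverse inclusion $\rB(N \subset M,\varphi) \subseteq M_0$. Specializing to $M = N$ of type $\III_1$ gives $I_s = 0$ for $s \neq 0$ (by outerness of the modular flow) and $M_0 = I_0 = \C$, so the statement becomes $\rB(N,\varphi) = \C$, which is Connes's bicentralizer conjecture itself. In full generality this is open, so the statement is genuinely a conjecture; the three special cases singled out in the paper (type $\II$, type $\III_\lambda$ with $\lambda < 1$, and type $\III_1$ with trivial bicentralizer) each supply an external structural input that makes the hard inclusion tractable, namely the finite trace, the discrete decomposition $N = N_0 \rtimes_\theta \Z$, and Haagerup's density theorem respectively.
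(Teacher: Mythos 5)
You correctly recognize that the statement is a genuine conjecture (with no proof in the paper), you prove the easy inclusion $\rB(N\subset c(M),\varphi)\supseteq \{\varphi^{\ri t}\}''\vee (N'\cap c(M))$ correctly, and you rightly observe that specializing to $N=M$ of type $\III_1$ recovers Connes's bicentralizer problem. However, there is a genuine error in your ``intrinsic'' reformulation of the right-hand side, and it causes you to misidentify what the hard inclusion actually asserts.

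You claim that $\{\varphi^{\ri t}\}''\vee(N'\cap c(M)) = M_0\rtimes_{\sigma^\varphi}\R$ with $M_0 := \langle I_s : s\in\R\rangle''$, on the grounds that the $\theta$-spectral subspace of $N'\cap c(M)$ at frequency $s$ is $I_s\varphi^{\ri s}$. The inclusion $M_0\rtimes_{\sigma^\varphi}\R \subseteq \{\varphi^{\ri t}\}''\vee(N'\cap c(M))$ is correct, but the reverse inclusion requires that $N'\cap c(M)$ be generated by its $\theta$-\emph{eigenvectors}, and this fails in general. Take $N=M$ a type $\III_0$ factor with Connes invariant $T(N)=\{0\}$. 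Then $N'\cap c(N) = \cZ(c(N))$ is the diffuse flow-of-weights algebra, and a unitary eigenvector $z\in\cZ(c(N))$ at frequency $s$ would give a unitary $z\varphi^{-\ri s}\in N$ implementing $\sigma^\varphi_s$, forcing $s\in T(N)=\{0\}$; so the only eigenvectors are scalars, $I_s=0$ for $s\neq 0$, $M_0=\C$, and $M_0\rtimes_{\sigma^\varphi}\R = \{\varphi^{\ri t}\}''$. Yet $\cZ(c(N))\vee\{\varphi^{\ri t}\}''$ strictly contains $\{\varphi^{\ri t}\}''$ (the flow of weights has trivial point spectrum while $\theta$ restricted to $\{\varphi^{\ri t}\}''$ has pure point spectrum, so $\cZ(c(N))\cap\{\varphi^{\ri t}\}''=\C$). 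Concretely, the paper's Theorem~\ref{computation dominant} (or Corollary~\ref{no III1 semifinite reduction}) shows that the $\theta$-fixed part $\rb(N,\varphi)$ of the right-hand side is a nontrivial diffuse abelian algebra in the $\III_0$ case, not $\C$. So the conjecture is \emph{not} equivalent to $\rB(N\subset M,\varphi) = M_0$; it is equivalent to $\rB(N\subset M,\varphi) = \rb(N\subset M,\varphi) := \bigl((N'\cap c(M))\vee\{\varphi\}''\bigr)\cap M$, and in general $\rb$ is strictly larger than $M_0$. The paper's Section~\ref{section algebraic} is devoted precisely to describing $\rb$ in structural terms (via dominant, lacunary and integrable weights, Corollary~\ref{no III1 semifinite reduction}, etc.) rather than via intertwiner spaces, exactly because the eigenvector picture breaks down outside the type $\III_1$ case. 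Your remaining claims (in particular that $M_0\subseteq\rB(N\subset M,\varphi)$ and the reduction to Connes's problem for type $\III_1$) are fine, but the conjectured identity you reduce to is strictly stronger than, and inconsistent with, the actual statement for $\III_0$ algebras.
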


A large section of this paper (Section \ref{section algebraic}) is devoted to the study of the right hand side of \ref{eq intro bicentralizer conj}, which we call the \emph{algebraic bicentralizer} of $\varphi$. A particularly useful property of this algebraic bicentralizer is that it can be defined more generally for abitrary \emph{weights} and not only states, thus allowing us to use the \emph{integrable weight} machinery of Connes and Takesaki \cite{CT76}.

By the relative commutant theorem \cite{CT76}, we have $N' \cap c(N)=\cZ(c(N))$ for every von Neumann algebra $N$. Thus Conjecture \ref{conj relative bicentralizer} would imply, in the case $N=M$, that $\rB(N,\varphi)$ is always \emph{abelian}, a property that one would not expect a priori from the analytic definition of $\rB(N,\varphi)$. In fact, Conjecture \ref{conj relative bicentralizer} tells us that $\rB(N,\varphi)$ is precisely the \emph{strong center} of $\varphi$ that was already introduced, in a different context, by Falcone and Takesaki in \cite{FT01}. In particular, if $N=M$ is a type $\III_1$ factor (recall that this is equivalent to $N' \cap c(N)=\C$), then Conjecture \ref{conj relative bicentralizer} reduces to Connes's bicentralizer problem $\rB(N,\varphi)=\C$.

We now move to the central result of this paper. Recall that an inclusion of von Neumann algebras $N \subset M$ has the \emph{weak Dixmier property} if
$$\forall x \in M, \quad \conv \{ uxu^* \mid u \in \mathcal{U}(N) \} \cap (N' \cap M)  \neq \emptyset$$
where $\conv$ denotes the weak*-closed convex hull. In his celebrated work on the bicentralizer problem \cite{Ha85}, Haagerup proved that a type $\III_1$ factor has trivial bicentralizer if and only if the inclusion $N_\psi \subset N$ has the weak Dixmier property for some dominant weight $\psi$ on $N$. Using Takesaki's duality for crossed products, this is easily seen to be equivalent to the weak Dixmier property for the inclusion $N \subset c(N)$. In the same paper,  by using an inductive construction similar to Popa's construction from \cite{Po81}, Haagerup proved that a type $\III_1$ factor with separable predual has trivial bicentralizer if and only if it contains a maximal abelian subalgebra with expectation.

The following theorem is therefore a relative version of Haagerup's theorem.

\begin{letterthm} \label{intro bicentralizer dixmier}
Let $N \subset M$ be an inclusion of countably decomposable von Neumann algebras with expectation. Then the following are equivalent:
\begin{enumerate}
\item The inclusion $N \subset M$ satisfies the relative bicentralizer conjecture with respect to some faithful state $\varphi \in N_*$.
\item The inclusion $N \subset M$ satisfies the relative bicentralizer conjecture with respect to every faithful state $\varphi \in N_*$.
\item The inclusion $N \subset c(M)$ has the weak Dixmier property :
$$\forall x \in c(M), \quad \conv \{ uxu^* \mid u \in \mathcal{U}(N) \} \cap (N' \cap c(M))  \neq \emptyset.$$
\end{enumerate}
If $M$ has separable predual, these properties are also equivalent to the following :
\begin{enumerate}
\setcounter{enumi}{3}
\item There exists an amenable subalgebra with expectation $P \subset N$ such that $$P' \cap c(M)=N' \cap c(M).$$
\item There exists a maximal abelian subalgebra $A \subset N$ such that $$A' \cap c(M) =A \vee (N' \cap c(M)).$$
\item There exists a maximal abelian subalgebra with expectation $A \subset N$ such that $$A' \cap c(M) =c(A) \vee (N' \cap c(M))$$
\end{enumerate}
\end{letterthm}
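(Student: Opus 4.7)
The plan is to split the seven-part equivalence into two halves. First I would prove $(1) \Leftrightarrow (2) \Leftrightarrow (3)$ for arbitrary countably decomposable inclusions, using the algebraic bicentralizer formalism developed in Section~\ref{section algebraic}. Second, under the separability assumption, I would close a cycle connecting (3) with (4), (5), (6) by adapting the inductive construction of Popa~\cite{Po81} and Haagerup~\cite{Ha85}.

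For $(1) \Leftrightarrow (2) \Leftrightarrow (3)$, the containment $\{\varphi^{\ri t}\}'' \vee (N' \cap c(M)) \subseteq \rB(N \subset c(M),\varphi)$ is automatic: the relative commutant commutes with any bounded sequence in $\AC(N,\varphi)$ by definition, and each $\varphi^{\ri t}$ asymptotically commutes with such sequences because $\sigma_t^{\varphi}$ acts asymptotically trivially on the centralizer, a standard Tomita--Takesaki computation. The content is the opposite containment. Assuming (3), I would take $x \in \rB(N \subset c(M),\varphi)$, use the weak Dixmier property to approximate $x$ by convex $\mathcal{U}(N)$-averages landing in $N' \cap c(M)$, and use the bicentralizer defining property together with a pre-annihilator argument to conclude that $x \in \{\varphi^{\ri t}\}'' \vee (N' \cap c(M))$. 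Thus $(3) \Rightarrow (2)$. The implication $(2) \Rightarrow (1)$ is trivial, and for $(1) \Rightarrow (3)$ a Hahn--Banach separation argument on the pre-annihilator of the algebraic bicentralizer converts the bicentralizer identity into the weak Dixmier property. Since the algebraic bicentralizer makes sense at the level of the weight machinery of Connes--Takesaki, the state $\varphi$ plays no essential role, which yields $(1) \Leftrightarrow (2)$.

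For the cycle with (4)--(6), the decisive step is $(3) \Rightarrow (6)$. Fixing a faithful state $\varphi$ and enumerating countable dense sets in $c(M)$ and in a generating family of $M_*$, I would build by induction an increasing sequence of $\sigma^{\varphi}$-invariant abelian von Neumann subalgebras of $N$; at each step (3) is used to produce a unitary $u \in N$ conjugating a prescribed element closer to $N' \cap c(M)$, while spectral projections are adjoined to ensure the maximal abelian property in the limit. The $\sigma^{\varphi}$-invariance guarantees that the limiting MASA $A$ is with expectation, and the averaging control gives $A' \cap c(M) = c(A) \vee (N' \cap c(M))$. Implication $(6) \Rightarrow (5)$ follows from the abelian structure $c(A) = A \ovt \rL(\R)$ after a refinement of $A$ that absorbs the $\rL(\R)$-part into the relative commutant; $(5) \Rightarrow (4)$ is immediate since a MASA is amenable; and $(4) \Rightarrow (3)$ follows by F\o{}lner averaging over $\mathcal{U}(P)$, which pushes any $x \in c(M)$ into $P' \cap c(M) = N' \cap c(M)$.

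The main obstacle is $(3) \Rightarrow (6)$. The difficulty is to run the Popa--Haagerup inductive procedure while simultaneously shrinking the un-averaged portion of a countable subset of $c(M)$ and maintaining $\sigma^{\varphi}$-invariance of the growing abelian subalgebra of $N$. In Popa's type $\II$ case there is no modular obstruction, and in Haagerup's original type $\III_1$ case the relative commutant $\cZ(c(N))$ is central and can be treated as a scalar extension, so the required coordination of the averaging step with modular theory on the full relative commutant $N' \cap c(M)$ is genuinely new and constitutes the technical heart of the argument.
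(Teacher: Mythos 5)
The broad architecture of your plan (first settle $(1)\Leftrightarrow(2)\Leftrightarrow(3)$, then close a cycle through (4)--(6) under separability) matches the paper, but two of your proposed steps contain genuine gaps, and the chain you draw through (4)--(6) does not actually close.

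The most serious problem is $(3)\Rightarrow(2)$. You propose to take $x\in\rB(N\subset c(M),\varphi)$, apply the weak Dixmier property to average $x$ into $N'\cap c(M)$, and then ``use the bicentralizer defining property together with a pre-annihilator argument'' to land $x$ in $\{\varphi^{\ri t}\}''\vee(N'\cap c(M))$. But this cannot work: the weak Dixmier property produces a net of convex combinations $\sum\lambda_i u_i x u_i^*$ converging into $N'\cap c(M)$ with $u_i\in\cU(N)$ \emph{arbitrary}, while the defining property of $\rB(N\subset c(M),\varphi)$ only controls $[x,a_n]$ for \emph{asymptotically $\varphi$-centralizing} sequences $(a_n)$; it gives no a priori relationship between $x$ and $uxu^*$ for a fixed unitary $u$, so there is no reason the averages stay near $x$. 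The paper bridges this gap with a nontrivial chain: one replaces the Dixmier averaging by maps in $\cV^\varphi(N\subset c(M))$ coming from spectral subspaces of $\sigma^{\psi,\varphi}$, and then invokes the invariance theorem (Theorem~\ref{invariance bicentralizer ucp}, itself resting on the ultrapower implementation of binormal states from Section~\ref{section ultrapower binormal}) to conclude that such a map \emph{fixes} $\rB(N\subset M,\varphi)$ pointwise; Haagerup's theorem enters here to guarantee the needed normality on $\rB(N,\varphi)=\cZ(N)$. Without this machinery, the implication does not follow. Your $(1)\Rightarrow(3)$ has a parallel hole: the Hahn--Banach argument shows that a conditional expectation onto $N'\cap c(M)$ exists in the Dixmier semigroup \emph{provided} you already know such an expectation exists onto $\rB(N\subset c(M),\varphi)$ and that the bicentralizer flow can be averaged away; in the paper this requires Proposition~\ref{conditional expectation is in ucp}, Proposition~\ref{transition map is in ucp} and amenability of $\R^*_+$, not abstract functional analysis alone.

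For the separable-predual block, the key organizing object in the paper is the type $\III$ \emph{local quantization principle} (Effros--Mar\'echal convergence of $A_i'\cap c(M)\to N'\cap c(M)$ over finite-dimensional abelian $A_i\subset N$), shown equivalent to the bicentralizer conjecture in Theorem~\ref{local quantization type III}, and this is what all of (4)--(6) are routed through: (2)$\Rightarrow$(4) is Theorem~\ref{large amenable}, (2)$\Leftrightarrow$(5) and (2)$\Leftrightarrow$(6) are derived separately from local quantization and Theorem~\ref{masa with expectation if and only if}. Your proposed path has two breaks. First, your $(6)\Rightarrow(5)$ does not follow: the algebra $c(A)\vee(N'\cap c(M))$ strictly contains $A\vee(N'\cap c(M))$, and ``absorbing the $\rL(\R)$-part'' into the relative commutant is exactly the content of $N'\cap c(M)\supset\{\varphi^{\ri t}\}''$, which is a \emph{false} assumption in general (it would force constraints on the flow of weights). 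Second, your $(5)\Rightarrow(4)$ is not immediate: the maximal abelian $A$ in (5) is \emph{not} asserted to be with expectation in $N$, and indeed the whole point of distinguishing (5) from (6) is that modular obstructions can prevent this; so $A$ is an amenable subalgebra but does not verify what (4) requires. In the paper, (5) feeds back into (3) via the observation that $E_{A_i'\cap c(M)}\in\cD(N\subset c(M))$ for finite-dimensional abelian $A_i$, and then (2)$\Rightarrow$(4) is a separate construction.
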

Note that relative versions of Haagerup's theorem were previously obtained for finite index inclusions \cite{Po95} and quasiregular inclusions \cite{AHHM18} by adapting Haagerup's original proof. The proof of Theorem \ref{intro bicentralizer dixmier} is different and does not involve any maximality argument.

In Theorem \ref{intro bicentralizer dixmier}, item (4) shows that $N$ contains a large amenable subalgebra with expectation $P$ that has the same relative flow of weights as $N$ inside $M$. 

Item (5) solves in particular Kadison's problem for the inclusion $N \subset M$ when $N$ is an irreducible subfactor of $M$. Item (5) also solves Kadison's problem for the inclusion $N \subset c(N)$ when $N$ is a type $\III_1$ factor with trivial bicentralizer.

Item (6) solves Conjecture \ref{conj Kadison expectation}. Indeed, if we take $A$ as in item (6) and we assume that $N' \cap c(M) \subset c(N)$, then we obtain $A' \cap c(M)=A' \cap c(N)$, hence $A' \cap M=A' \cap N=A$.  

Items (4), (5) and (6) are proved in the last section of this paper (Section \ref{section masa}) by using Popa's \emph{local quantization principle} \cite[A.1.2]{Po95}. In fact, we show that the relative bicentralizer conjecture is equivalent to a type $\III$ analog of Popa's local quantization principle.

Our next theorem provides a large class of inclusions for which we can verify this conjecture.
\begin{letterthm} \label{intro conrete class}
Let $N \subset M$ be an inclusion of countably decomposable von Neumann algebras with expectation. Then $N \subset M$ satisfies the relative bicentralizer conjecture in the following cases.
\begin{enumerate}[\rm (1)]
\item $N$ is amenable.
\item $N$ has no type $\III_1$ summand.
\item $N=N_1 \ovt N_2$ where each $N_i$ is of type $\III_1$.
\item $N$ has a Cartan subalgebra.
\item $N$ satisfies the bicentralizer conjecture and $N \subset M$ is quasiregular (e.g.\ $N \subset M$ has finite index or $M$ is a crossed product of $N$ by a discrete group).
\item $N$ is generated by a family of globally $\sigma^\varphi$-invariant subalgebras $(N_i)_{i \in I}$ for some faithful state $\varphi \in N_*$ and the inclusions $N_i \subset M$ satisfy the relative bicentralizer conjecture for every $i \in I$.
\end{enumerate}
\end{letterthm}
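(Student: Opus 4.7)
My plan is to invoke Theorem \ref{intro bicentralizer dixmier} as a uniform reduction: in each case it suffices to verify the weak Dixmier property
\begin{equation*}
\forall x \in c(M), \quad \conv \{ uxu^* \mid u \in \mathcal{U}(N) \} \cap (N' \cap c(M)) \neq \emptyset.
\end{equation*}
This recasts the conjecture as a purely averaging statement about the conjugation action $N \actson c(M)$, and leaves one to produce, in each case, an adequate averaging mechanism.

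The averaging cases (1), (4), (5) I would address first. For (1), amenability of $N$ gives a hypertrace, and the resulting Schwartz-type P-property immediately yields the weak Dixmier property inside any containing von Neumann algebra. For (4), a Cartan subalgebra $A \subset N$ is amenable and with expectation, hence satisfies the conjecture by (1); I would bootstrap to $N$ by recasting the problem into the framework of (6) after choosing $\varphi$ of the form $\tau \circ \rE_A$ so that $A$ is $\sigma^\varphi$-invariant, and decomposing the generating normalizer $\mathcal{N}_N(A)$ into pieces controlled by $\sigma^\varphi$. For (5), quasiregularity of $N \subset M$ allows one to approximate elements of $M$ by finitely generated $N$-bimodules coming from quasi-normalizing partial isometries; the bicentralizer conjecture for $N$ supplies an averaging identity on each such bimodule, and a countable approximation patches to $c(M)$.

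For case (6), I would compare the algebraic bicentralizers of the $N_i \subset c(M)$ with that of $N \subset c(M)$, using the formula from Section \ref{section algebraic}. Since each $N_i$ is globally $\sigma^\varphi$-invariant, the unitary group $\{\varphi_i^{\ri t}\}$ agrees with $\{\varphi^{\ri t}\}$ inside $c(M)$; moreover $N' \cap c(M) = \bigcap_i (N_i' \cap c(M))$ and $\AC(N,\varphi) \supseteq \AC(N_i,\varphi_i)$ for every $i$. The assumed equalities for each $i$ then trap the analytic bicentralizer of $N \subset c(M)$ into an intersection that collapses to the algebraic bicentralizer of $N \subset c(M)$. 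Case (2) I would derive from (6): when $N$ has no type $\III_1$ summand one chooses an almost periodic faithful $\varphi$ so that the centralizer $N^\varphi$ is of type $\II$; the relative conjecture then holds for $N^\varphi \subset M$ by (1), and $N$ is generated by $N^\varphi$ together with the $\sigma^\varphi$-eigenoperators, which are globally $\sigma^\varphi$-invariant, so (6) concludes.

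Case (3) is the genuine obstacle and the heart of the theorem, since here one has neither amenability, nor a Cartan subalgebra, nor quasiregularity available. My approach would be to take $\varphi = \varphi_1 \otimes \varphi_2$ and exploit that $\AC(N,\varphi) \supset 1 \otimes \AC(N_2,\varphi_2)$ and symmetrically $\supset \AC(N_1,\varphi_1) \otimes 1$. Because each $N_i$ is of type $\III_1$, its asymptotic centralizer produces states on $M$ that are almost $\sigma^{\varphi_i}$-invariant, and averaging inside $1 \otimes N_2$ (resp.\ $N_1 \otimes 1$) should already pin down the $N_2$-direction (resp.\ $N_1$-direction) of $c(M)$ via Haagerup's $L^2$-averaging trick from \cite{Ha85}. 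The delicate part is ensuring that the averaging coming from the two sides interacts compatibly with the relative flow of weights $N' \cap c(M)$; I would likely need to replace $\varphi$ by a dominant weight and appeal to the integrable weight machinery of Section \ref{section algebraic} to make this precise, and this tensorial averaging step is where I expect the main difficulty to lie.
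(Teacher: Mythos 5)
Your reduction to the weak Dixmier property for $N\subset c(M)$ is the right frame, and several of your case-specific arguments are correct. For (1), Schwartz's property P for an injective $N$ does put an element of $N'$ in $\conv\{uxu^*\mid u\in\cU(N)\}$ for every $x\in\B(H)$, and since that weak*-closed convex hull already lies in $c(M)$ this is the shortest route to the weak Dixmier property of $N\subset c(M)$. For (6), intersecting the subalgebras $(N_i'\cap c(M))\vee\{\varphi\circ T\}''$ and collapsing the intersection via Theorem \ref{dual action crossed product} (each is $\theta$-invariant, contains the unitaries $(\varphi\circ T)^{\ri t}$, and has $\theta$-fixed points $N_i'\cap M$, so their intersection is $(N'\cap M)\rtimes\R$) is a valid alternative to the paper's proof, which instead shows $\rB(N\subset c(M),\varphi)^{\beta^\varphi}\subset N'\cap c(M)$ using Lemma \ref{fixed point contained in commutant} and Proposition \ref{conjecture iff fixed point}. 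For (4), the pieces $A\vee\{v\}''$ with $v\in\cN_N(A)$ are indeed $\sigma^\varphi$-invariant for $\varphi=\tau\circ\rE_A$ (the cocycle $(D(\varphi\circ\Ad(v^*)):D\varphi)_t$ lies in $A$) and amenable, so your reduction to (6) works, if less directly than the paper's Theorem~\ref{conjecture normalizer}.

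Cases (2) and (3), however, have genuine gaps. For (2), almost periodic faithful states do not exist when $N$ has a type $\III_0$ summand, so your choice of $\varphi$ is impossible there; moreover, even in the type $\III_\lambda$, $0<\lambda<1$, case the discrete decomposition $N=N_\varphi\rtimes\Z$ involves a \emph{single} implementing unitary, so there is no family of proper $\sigma^\varphi$-invariant subalgebras already known to satisfy the conjecture that generates $N$, and (6) cannot be invoked without circularity. The paper instead takes a \emph{lacunary} weight and proves the identity $\rb(N\subset M,\varphi)=\rb(Q\subset M,\varphi|_Q)$ (Corollary~\ref{no III1 semifinite reduction}), which is precisely the content your reduction does not supply. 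Also, ``by (1)'' for $N^\varphi\subset M$ is misdirected: the centralizer of a $\lambda$-trace is type II but need not be amenable, so you must prove the semifinite case separately (the paper does this by approximating the Radon--Nikodym derivative by step functions and using the weak Dixmier property of the finite type I corners). For (3), the key discovery you are missing is Lemma~\ref{flow of weights pmp}: because a type $\III_1$ factor has a trivial, hence probability-measure-preserving, flow of weights, the inclusion $N_2\subset N_1'\cap c(N_1\ovt N_2)$ is \emph{with expectation}; this is what lets one exploit the weak Dixmier property for $N_2$ inside $\rB(N_1\subset c(M),\varphi)^{\beta^\psi}$ and then for $N_1$ inside $N_2'\cap c(M)$. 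Averaging over $\AC(N_1,\varphi_1)\otimes 1$ and $1\otimes\AC(N_2,\varphi_2)$ alone does not produce that expectation and does not couple the two tensor directions, so the step you flag as delicate cannot be closed by the method you sketch.
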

In Theorem \ref{intro conrete class}, item (3) is new even when $M=N$. It says that the tensor product of two type $\III_1$ factors always has trivial bicentralizer (compare this with \cite[Theorem D]{Ma18}). It also applies to Example 3. Item (5) applies to Example 2 below Conjecture \ref{conj Kadison expectation}. Yusuke Isono also informed us \cite{Is23} that he was able to apply our results, and specifically item (5), in order to solve the Haagerup-St\o rmer conjecture \cite{HS88} for all type $\III_1$ factors with trivial bicentralizer : any pointwise inner
automorphism of such a factor is the composition of an inner automorphism and a modular automorphism.

We saw that Theorem \ref{intro bicentralizer dixmier} solves Kadison's question, but only for those subfactors that satisfy the relative bicentralizer conjecture. In order to prove Theorem \ref{main Kadison} when $N$ is an arbitrary type $\III_1$ factor with trivial bicentralizer, we make a reduction of Theorem \ref{main Kadison} to the case where the subfactor $N$ is amenable (for which the relative bicentralizer conjecture holds by Theorem \ref{intro conrete class}). This reduction is achieved by the following theorem.

\begin{letterthm} \label{intro ergodic}
Let $N \subset M$ an irreducible subfactor with expectation. Suppose that $N$ is of type $\III_1$ and has trivial bicentralizer. Let $\varphi \in N_*$ be a faithful state. Then the relative bicentralizer flow $\beta^\varphi : \R^*_+ \curvearrowright \rB(N \subset M,\varphi)$ is ergodic. In particular, if $M_*$ is separable, there exists an amenable subfactor with expectation $P \subset N$ such that $P$ is irreducible in $M$.
\end{letterthm}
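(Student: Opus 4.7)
The plan proceeds in two parts: first establish ergodicity of $\beta^\varphi$ on $\rB(N \subset M,\varphi)$, and then extract the amenable irreducible subfactor $P \subset N$.

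For ergodicity, set $Q := \rB(N \subset M,\varphi)^{\beta^\varphi}$ and aim to show $Q = \C$. The expectation $E_N \colon M \to N$ restricts to a faithful normal expectation $\rB(N \subset M,\varphi) \to \rB(N,\varphi)$, since $\AC(N,\varphi) \subset N$ is stable under $\Ad(\cU(N))$ and $E_N$ intertwines commutators with elements of $N$. By the trivial bicentralizer hypothesis, $\rB(N,\varphi) = \C$, so $E_N(Q) \subset \C$. It therefore suffices to prove $Q \subset N'$, as together with irreducibility this yields $Q \subset N' \cap M = \C$. For this last step, fix $x \in Q$ and $y \in N$, and aim to prove $[x,y] = 0$ by combining two ingredients: (a) Theorem \ref{intro bicentralizer dixmier} applied to the absolute inclusion $N \subset N$, which via the trivial bicentralizer of $N$ yields the weak Dixmier property for $N \subset c(N)$, so that $\conv \{u y u^* \mid u \in \cU(N)\}$ meets $\cZ(c(N))$; and (b) the intrinsic description of $\beta^\varphi$-invariance via the algebraic bicentralizer of Section \ref{section algebraic}, identifying flow-fixed elements by a strong scale-invariant asymptotic commutation with $\AC(N,\varphi)$-sequences. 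Combining (a) and (b), the commutators of $x$ with the Dixmier averages of $y$ vanish strongly in the limit, forcing $[x,y] = 0$.

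For the second part, assume $M_*$ is separable. Ergodicity of $\beta^\varphi$ yields, via a mean ergodic argument exploiting the characterization of $\beta^\varphi$ in terms of inner $N$-conjugation, a weak Dixmier property on $\rB(N \subset M,\varphi)$ under $\cU(N)$-conjugation: the weak closure of $\conv\{u x u^* \mid u \in \cU(N)\}$ meets $\C$ for every $x \in \rB(N \subset M,\varphi)$. I then run a Haagerup-style inductive construction inside $N$: enumerate a $\|\cdot\|_\varphi$-dense sequence $(x_n) \subset M$, start from $P_0 = \C$, and at stage $n$ adjoin finitely many matrix units from $N$ to build a finite-dimensional $P_n \subset N$ whose unitary group affords a good Dixmier approximation for $x_1, \dots, x_n$ and whose asymptotic centralizer in $N$ grows to contain $\AC(N,\varphi)$. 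The closure $P := \bigl(\bigcup_n P_n\bigr)''$ is then a hyperfinite (hence amenable) subfactor of $N$ with expectation, and by construction $P' \cap M \subset \rB(N \subset M,\varphi)^{\beta^\varphi} = \C$.

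The main obstacle is the ergodicity part, specifically bridging the absolute condition $\rB(N,\varphi) = \C$ to the commutation $Q \subset N'$ for elements of $M$ that need not lie in $N$. The argument hinges on the algebraic bicentralizer formalism of Section \ref{section algebraic} and the careful interplay between Haagerup's averaging in $c(N)$, the structure of the bicentralizer flow on the relative bicentralizer, and the conditional expectation $E_N$ bridging $M$ and $N$.
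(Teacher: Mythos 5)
There is a genuine gap in the ergodicity part, which is the mathematical heart of the statement. You correctly identify that it suffices to show $Q := \rB(N \subset M,\varphi)^{\beta^\varphi} \subset N' \cap M$, but your proposed bridge --- combining the weak Dixmier property for $N \subset c(N)$ with an ``asymptotic commutation'' characterization of flow-fixedness --- does not close. For a type $\III_1$ factor, $\cZ(c(N)) = \C$; the Dixmier averages $z_n \in \conv\{uyu^* \mid u \in \cU(N)\}$ therefore converge to a scalar $c$, and the observation that $[x, z_n] \to 0$ is then vacuous (any bounded $x$ satisfies it), giving no information about $[x,y]$. Nor does the asymptotic-centralizer picture help here: $x \in \rB(N \subset M,\varphi)$ asymptotically commutes with $\AC(N,\varphi)$-sequences, but a fixed $y \in N$ and its averaging unitaries need not lie in $\AC(N,\varphi)$, so there is no way to feed the Dixmier unitaries into the bicentralizer's defining commutation property. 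You also prove $E_N(Q) \subset \C$ from $\rB(N,\varphi)=\C$, but since $E_N$ has a huge kernel this by itself cannot give $Q = \C$.

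The paper obtains $\rB(N \subset M,\varphi)^{\beta^\varphi} = N' \cap M$ (Corollary \ref{bicentralizer flow ergodic}) via Theorem \ref{conjecture almost periodic part}, and the mechanism there is entirely different from what you sketch: one builds ucp maps $f \in \cV^{\varphi,\psi}$, $g \in \cV^\psi$, $h \in \cV^{\psi,\varphi}$ (Lemma \ref{ucp for ergodicity}, Proposition \ref{transition map is in ucp}) whose composition $q = f \circ g \circ h \in \cV^\varphi(N \subset M)$ is shown to fix $\rB(N \subset M,\varphi)$ pointwise by the invariance theorem (Theorem \ref{invariance bicentralizer ucp}); that theorem in turn rests on the ultrapower implementation of binormal states (Theorem \ref{ultrapower all type}), which is the paper's key technical novelty and which your proposal does not invoke at all. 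Tracking what each of $f,g,h$ does to a flow-fixed element $x$ is what lands $x$ in $N' \cap M$. For the second part (separable predual) the paper simply cites \cite[Theorem C]{AHHM18}, which already establishes the equivalence between ergodicity of $\beta^\varphi$ and existence of an amenable irreducible subfactor; your Haagerup-style induction is a plausible but incomplete reproof of that cited result.
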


We refer to \cite{AHHM18} (see also Section \ref{section analytic}) for the definition of the bicentralizer flow $\beta^\varphi : \R^*_+ \curvearrowright \rB(N \subset M,\varphi)$. This canonical flow is only defined when $N$ is of type $\III_1$. When $M_*$ is separable, \cite[Theorem C]{AHHM18} shows that $\beta^\varphi$ is ergodic if and only if $N$ contains an amenable subfactor with expectation $P$ that is irreducible in $M$. This is much weaker than what we would get from item (4) in Theorem \ref{intro bicentralizer dixmier}, but this is sufficient to prove Theorem \ref{main Kadison}.

The proof of Theorem \ref{intro ergodic} is similar to the proof  of the implication $(3) \Rightarrow (2)$ in Theorem \ref{intro bicentralizer dixmier}. It is based on a key technical novelty that we call \emph{ultrapower implementation of binormal states} and which is introduced in Section \ref{section ultrapower binormal}. In Section \ref{section II1} we give applications of this technique to $\II_1$ factors, including a new bimodule characterization of Ozawa's property (AO) for von Neumann algebra. After this interlude, we study the weak Dixmier property in Section \ref{section dixmier}. In Section \ref{section ucp map} we introduce a class of well-behaved ucp maps that are compatible with modular theory in the sense that they have a unique extension to the core.
Next, in Section \ref{section algebraic}, we introduce the \emph{algebraic bicentralizer}, we establish its fundamental properties and we show how to compute it explicitely for lacunary weights and integrable weights. In Section \ref{section analytic}, we recall the definition of the \emph{analytic} bicentralizer algebra and its basic properties and we establish a key lemma based on the ultrapower implementation of binormal states. In Section \ref{section bicentralizer conjecture}, we combine all our efforts from the previous sections to obtain our main results on the bicentralizer conjecture, including the first three items of Theorem  \ref{intro bicentralizer dixmier} as well as Theorem \ref{intro conrete class} and Theorem \ref{intro ergodic}. Section \ref{section quasiregular} is dedicated to the specific case of finite index and quasiregular inclusions. Finally, in Section \ref{section masa}, we study Popa's local quantization property and we prove the three last items of Theorem \ref{intro bicentralizer dixmier}.

\bigskip

{\bf Acknowledgment.} We thank Cyril Houdayer, Sorin Popa and Stefaan Vaes for their useful comments. We are also very grateful to the anonymous referee for his numerous comments and corrections.

\tableofcontents

\pagebreak

\section{Preliminaries}
\subsection{Weights and operator valued weights}
We use the theory of operator valued weights and the notations from \cite{Ha77a} and \cite{Ha77b}. Let $M$ be a von Neumann algebra. We denote by $\cP(M)$ the set of all normal faithful semifinite weights on $M$ and by $\cE(M) \subset \cP(M)$ the set of all normal faithful states. We say that $M$ is \emph{countably decomposable} (or \emph{$\sigma$-finite}) if $\cE(M)$ is nonempty. The predual of $M$, denoted by $M_*$, is separable if and only if $M$ is countably decomposable and countably generated.

 If $N$ is a von Neumann subalgebra of $M$, we denote by $\cP(M,N)$ the set of all normal faithful semifinite operator valued weights from $M$ onto $N$ and by $\cE(M,N) \subset \cP(M,N)$ the set of all normal faithful conditional expectations. If $\tau_N \in \cP(N)$ and $\tau_M \in \cP(M)$, then there exists a unique $T \in \cP(M,N)$ such that $\tau_M=\tau_N \circ T$.

We say that the subalgebra $N$ (or that the inclusion $N \subset M$) is \emph{with expectation} if $\cE(M,N)$ is nonempty. We say that $N$ is \emph{with expectations} in $M$ if there exists a faithful family of normal conditional expectations from $M$ onto $N$. By \cite[Theorem 6.6]{Ha77b}, if $T \in \cP(M,N)$, then $N$ is with expectations in $M$ if and only if $T|_{N' \cap M}$ is semifinite, in which case $T|_{N' \cap M} \in \cP(N' \cap M, \cZ(N))$. The inclusion $N \subset M$ is with expectation if and only if it is with expectations and $N' \cap M$ is countably decomposable.

\subsection{Amalgamated tensor products}
The reference for this section is \cite{SZ99}.

Let $M$ be a von Neumann algebra and $N_1$ and $N_2$ two commuting von Neumann subalgebras of $M$ such that $M=N_1 \vee N_2$. Then $A:=N_1 \cap N_2=\cZ(N_1) \cap \cZ(N_2) \subset \cZ(M)$. We say that $M$ is the \emph{amalgamated tensor product} of $N_1$ and $N_2$ if there exists two Hilbert spaces $H_1$ and $H_2$ and a normal embedding 
$\pi : M \hookrightarrow \B(H_1) \ovt A \ovt \B(H_2)$ such that :
\begin{itemize}
\item $\pi(N_1) \subset \B(H_1) \ovt A$ and $\pi(N_2) \subset A \ovt \B(H_2)$.
\item  $\pi(x)=1 \otimes x \otimes 1$ for every $x \in N_1 \cap N_2$.
\end{itemize}

The following theorem garanties the existence and uniqueness of amalgamted tensor products.
\begin{thm}[{\cite[Section 5.7]{SZ99} }]  \label{ATP exist}
Let $A$ be an abelian von Neumann algebra and $N_1$ and $N_2$ be two Neumann algebras with two normal embeddings $\iota_i : A \hookrightarrow \cZ(N_i)$. 
\begin{enumerate}
\item There exists a von Neumann algebra $M$ and two normal embeddings $s_i  : N_i \hookrightarrow M$ such that $s_1 \circ \iota_1=s_2 \circ \iota_2$ and
$M$ is an amalgamated tensor product of $s_1(N_1)$ and $s_2(N_2)$.
\item If $(\tilde{M},\tilde{s}_1,\tilde{s}_2)$ is another triple satisfying the conditions of item 1, then there exists a unique isomorphism $\theta  : M \rightarrow \tilde{M}$ such that $\theta \circ s_i=\tilde{s}_i$ for $i=1,2$.
\end{enumerate}
\end{thm}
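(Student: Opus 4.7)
The plan is to reduce the construction to a fiberwise picture using the disintegration of $A$. Since $A$ is abelian, write $A = L^\infty(X,\mu)$ for a suitable localizable measure space $X$; because $\iota_i(A) \subset \cZ(N_i)$, each $N_i$ admits a direct integral decomposition $N_i = \int_X^\oplus N_i(x)\, d\mu(x)$ acting on a Hilbert space $K_i = \int_X^\oplus K_i(x)\, d\mu(x)$ in which $\iota_i(A)$ acts diagonally. I will exploit this decomposition to build both the object $M$ and the required embedding into $\B(H_1) \ovt A \ovt \B(H_2)$.

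For existence, after amplifying each $K_i$ by a separable infinite-dimensional auxiliary space (which does not change $N_i$ and only enriches the multiplicity of the $A$-representation), I may assume that $K_i = H_i \otimes L^2(X,\mu)$ with $\iota_i(A)$ realized as $1 \otimes A$ and with $\iota_i(A)' \cap \B(K_i) = \B(H_i) \ovt A$. In particular $N_i \hookrightarrow \B(H_i) \ovt A$. Now set $H := H_1 \otimes L^2(X,\mu) \otimes H_2$ and define $s_1 : N_1 \hookrightarrow \B(H_1) \ovt A \ovt 1 \subset \B(H)$ via the above embedding, and $s_2 : N_2 \hookrightarrow 1 \ovt A \ovt \B(H_2) \subset \B(H)$ symmetrically. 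Elements of $\B(H_1) \ovt A \ovt 1$ and $1 \ovt A \ovt \B(H_2)$ commute because the only overlap lives in the abelian middle factor $A$; furthermore $s_1 \circ \iota_1 = s_2 \circ \iota_2$ by construction, both mapping $A$ to $1 \ovt A \ovt 1$. Letting $M := (s_1(N_1) \cup s_2(N_2))''$ and $\pi$ be the inclusion into $\B(H_1) \ovt A \ovt \B(H_2)$, all the axioms of an amalgamated tensor product are satisfied.

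For uniqueness, given another triple $(\Mtil, \tilde s_1, \tilde s_2)$ with embedding $\tilde\pi : \Mtil \hookrightarrow \B(\tilde H_1) \ovt A \ovt \B(\tilde H_2)$, replace $H_i$ and $\tilde H_i$ by a common infinite amplification so that both triples live inside the same $\B(H_1) \ovt A \ovt \B(H_2)$. The maps $s_i, \tilde s_i : N_i \hookrightarrow \B(H_i) \ovt A$ are then two faithful normal representations of $N_i$ that coincide on $\iota_i(A)$; standard uniqueness of faithful normal representations yields a unitary intertwiner, which necessarily commutes with $\iota_i(A)$ and hence lies in $\iota_i(A)' \cap \B(H_i \otimes L^2(X,\mu)) = \B(H_i) \ovt A$. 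Call these intertwiners $u_1 \in \B(H_1) \ovt A$ and $u_2 \in A \ovt \B(H_2)$; placed inside $\B(H_1) \ovt A \ovt \B(H_2)$, they commute because their only overlap is in the abelian $A$. Hence $U := u_1 u_2$ is a unitary whose adjoint action sends $s_i(N_i)$ to $\tilde s_i(N_i)$, inducing the desired isomorphism $\theta : M \to \Mtil$; uniqueness of $\theta$ follows because any such isomorphism is determined by its restriction to the generators $s_i(N_i)$, which in turn is prescribed by $\theta \circ s_i = \tilde s_i$.

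The hard part of this plan is the spatial bookkeeping rather than any deep analytic input: one must arrange, at the outset, that the representation of $\iota_i(A)$ on $K_i$ has enough multiplicity so that $\iota_i(A)' \cap \B(K_i)$ splits as $\B(H_i) \ovt A$, and then verify that the two spatial intertwiners $u_1, u_2$ genuinely commute when reassembled across the amalgamation. A secondary technical obstacle is the case where $A$ is not countably decomposable, which is handled by slicing $1_A$ into an orthogonal family of countably decomposable central projections and assembling the disintegrations over each piece.
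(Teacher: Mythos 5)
Your argument is correct and is the natural spatial construction; since the paper gives no proof and simply cites Str\v{a}til\v{a}--Zsid\'o, what you wrote is the expected way to fill in the details. Two points are worth sharpening because they carry the weight of the argument. In the existence step, the equality $\iota_i(A)' \cap \B(K_i) = \B(H_i) \ovt A$ is not something you ``may assume'' after amplification: it is an automatic consequence of maximal abelianness of $A$ on $L^2(X,\mu)$, once $\iota_i(A)$ is realized as $1 \otimes A$. What the amplification actually buys is uniform infinite multiplicity of the $A$-representation on $K_i$; together with faithfulness of $\iota_i$, which forces multiplicity $\geq 1$ almost everywhere (so after tensoring with a separable space the multiplicity becomes identically $\aleph_0$), this is exactly what licenses the unitary identification $K_i \cong H_i \otimes L^2(X,\mu)$ carrying $\iota_i(A)$ onto $1 \otimes A$. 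In the uniqueness step, the fact you relegate to a subclause is the only non-trivial one: any unitary intertwiner between two representations of $N_i$ that agree on $\iota_i(A) \subset N_i$ with common image $1 \otimes A$ automatically lies in $(1 \otimes A)' = \B(H_i) \ovt A$, and it is precisely this that makes $u_1$ and $u_2$ land on opposite legs of the amalgam and commute. That mechanism is essentially the whole content of item (2) and should be stated as such. Finally, your slicing remark for the non--countably-decomposable case is sound in spirit, but you need to choose the amplification cardinal at least as large as the supremum of the local multiplicities and then check that the unitaries produced over each central piece glue along the central decomposition of $A$.
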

If $(M,s_1,s_2)$ is the amalgamated tensor product, we will use the notation $M=N_1 \ovt_A N_2$ when $\iota_1$ and $\iota_2$ are obvious from the context. In that case we will view $A$ as a subalgebra of $N_1 \ovt_A N_2$ and the $A$-bilinear map $N_1 \times N_2 : (x,y) \mapsto s_1(x)s_2(y)$ will be denoted $(x,y) \mapsto x \otimes_A y$. Then we have $(ax) \otimes_A y= x \otimes_A (ay)=a( x \otimes_A y)$ for all $a \in A$.

The second theorem gives us a useful characterization of amalgamated tensor products.

\begin{thm}[{\cite[Section 5.5]{SZ99} }] 
Let $M$ be a von Neumann algebra and $N_1$ and $N_2$ two commuting von Neumann algebras such that $M=N_1 \vee N_2$. Then $M$ is the amalgamated tensor product of $N_1$ and $N_2$ if and only there exists a faithful family of normal conditional expectations $E : M \rightarrow N_1 \cap N_2$ such that $E(x_1x_2)=E(x_1)E(x_2)$ for all $x_i \in N_i, \: i=1,2$.
\end{thm}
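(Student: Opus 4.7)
The plan is to prove the two directions separately. Throughout let $A := N_1 \cap N_2 = \cZ(N_1) \cap \cZ(N_2)$; this algebra is abelian and central in $M$.

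For the direction $(\Rightarrow)$, I would start from the given embedding $\pi : M \hookrightarrow \B(H_1) \ovt A \ovt \B(H_2)$ and produce the conditional expectations by slicing. For any pair of normal states $\omega_1 \in \B(H_1)_*$ and $\omega_2 \in \B(H_2)_*$, the map
\[
E_{\omega_1,\omega_2} := (\omega_1 \otimes \id_A \otimes \omega_2) \circ \pi \;:\; M \to A
\]
is a normal completely positive $A$-bimodule map, and since $\pi(a) = 1 \otimes a \otimes 1$ it is a conditional expectation. Because $\pi(N_1) \subset \B(H_1) \ovt A \ovt \C 1$ and $\pi(N_2) \subset \C 1 \ovt A \ovt \B(H_2)$ with $A$ abelian, a direct slice computation gives $E_{\omega_1,\omega_2}(x_1 x_2) = E_{\omega_1,\omega_2}(x_1)\, E_{\omega_1,\omega_2}(x_2)$ for $x_i \in N_i$. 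Letting $\omega_1, \omega_2$ range over faithful families on $\B(H_1)$ and $\B(H_2)$ yields the required faithful family.

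The real content is the converse $(\Leftarrow)$. I would reconstruct the embedding from the given family $\{E_\alpha\}_\alpha$ by an amalgamated Hilbert module construction. Fix one $E = E_\alpha$ and a faithful normal state $\eta$ on $A$, and form the GNS Hilbert space $L^2(M, \eta \circ E)$. Setting $E_i := E|_{N_i}$, the multiplicativity of $E$ together with the commutation of $N_1, N_2$ yields
\[
\langle \widehat{x_1 x_2}, \widehat{y_1 y_2} \rangle = \eta\bigl(E_1(y_1^* x_1)\, E_2(y_2^* x_2)\bigr), \qquad x_i, y_i \in N_i,
\]
which is exactly the inner product on the interior tensor product of $A$-Hilbert modules. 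Consequently $\widehat{x_1 x_2} \mapsto \hat{x}_1 \otimes_A \hat{x}_2$ extends to a canonical $M$-equivariant unitary
\[
L^2(M, \eta \circ E) \;\xrightarrow{\;\sim\;}\; L^2(N_1, E_1) \otimes_A L^2(N_2, E_2),
\]
on which $N_1$ acts on the first factor, $N_2$ on the second, and $A$ diagonally. Disintegrating over $\Sp(A) \cong (X, \mu)$ realises $L^2(N_i, E_i)$ as a measurable field $\int^\oplus \cH_{i,x}\, d\mu(x)$; choosing $H_i$ of dimension dominating $\dim \cH_{i,x}$ almost everywhere (after cutting $X$ into countably many pieces of constant dimension) gives an $A$-linear isometry $L^2(N_i, E_i) \hookrightarrow H_i \otimes L^2(A, \eta)$ that intertwines $N_i$ with a subalgebra of $\B(H_i) \ovt A$. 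Combining both sides produces an embedding $L^2(M, \eta \circ E) \hookrightarrow H_1 \otimes L^2(A, \eta) \otimes H_2$ under which $N_1$ lands in $\B(H_1) \ovt A \ovt \C 1$, $N_2$ in $\C 1 \ovt A \ovt \B(H_2)$, and $A$ in $\C 1 \ovt A \ovt \C 1$. Taking a direct sum over the faithful family $\{E_\alpha\}$, absorbed into enlarged $H_i := \bigoplus_\alpha H_i^{(\alpha)}$, gives a faithful embedding $\pi : M \hookrightarrow \B(H_1) \ovt A \ovt \B(H_2)$ of the required form.

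The main obstacle will be the direction $(\Leftarrow)$: verifying that the GNS space genuinely splits as an amalgamated tensor product of $A$-modules in an $M$-equivariant way, and globalizing the fiberwise embeddings $\cH_{i,x} \hookrightarrow H_i$ into a measurable $A$-linear isometry of direct integrals. The first point uses the multiplicativity of $E$ essentially (the commutation of $N_1, N_2$ alone does not suffice), and must be compatible with the fact that one is tensoring over the nontrivial abelian algebra $A$ rather than over $\C$. The second is a standard measurable selection argument, handled by partitioning $X$ into countably many pieces on which the dimension function is constant. Once these two steps are in place, faithfulness of $\pi$ and the compatibility of $\pi$ with the subalgebras $N_1, N_2, A$ follow formally from the construction.
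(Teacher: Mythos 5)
The theorem you are proving is not proved in the paper at all; the paper attributes it to Str\u{a}til\u{a}--Zsid\'o \cite[Section 5.5]{SZ99} and cites it as a black box, so there is no ``paper's own proof'' to compare against. Evaluating your argument on its own merits: the $(\Rightarrow)$ direction via slice maps $E_{\omega_1,\omega_2} = (\omega_1 \otimes \id_A \otimes \omega_2) \circ \pi$ is correct, and in the $(\Leftarrow)$ direction the identification $L^2(M,\eta\circ E) \cong L^2(N_1,E_1) \otimes_A L^2(N_2,E_2)$ from the multiplicativity of $E$, together with the Kasparov-type absorption $L^2(N_i,E_i) \hookrightarrow H_i \otimes L^2(A,\eta)$, is a sound way to produce the embedding when one works with a \emph{single} faithful multiplicative $E$.

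The genuine gap is the final sentence: ``Taking a direct sum over the faithful family $\{E_\alpha\}$, absorbed into enlarged $H_i := \bigoplus_\alpha H_i^{(\alpha)}$.'' The Hilbert space of $\bigoplus_\alpha \pi_\alpha$ is $\bigoplus_\alpha \bigl(H_1^{(\alpha)} \otimes L^2(A) \otimes H_2^{(\alpha)}\bigr)$, which is only the \emph{diagonal} subspace of $\bigl(\bigoplus_\alpha H_1^{(\alpha)}\bigr) \otimes L^2(A) \otimes \bigl(\bigoplus_\alpha H_2^{(\alpha)}\bigr)$; the resulting representation is not unital on the latter space, and the image of $A$ is a proper subprojection of the identity, violating $\pi(1)=1\otimes 1\otimes 1$. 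Standard repairs also fail: if one writes all $\pi_\alpha$ on a common $H_1 \otimes L^2(A) \otimes H_2$ and lets $\ell^\infty(I)$ index the summands, the $\ell^\infty(I)$ factor has to sit inside $\B(H_1)$ or $\B(H_2)$, but then both $\pi(N_1)$ and $\pi(N_2)$ acquire components in that factor (since $E_\alpha|_{N_2}$ genuinely depends on $\alpha$), and the condition $\pi(N_2)\subset 1\otimes A\ovt\B(H_2)$ is destroyed. Trying to include the ``cross'' summands $L^2(N_1,E_\alpha)\otimes_A L^2(N_2,E_\beta)$ for $\alpha\neq\beta$ would require a representation of $M$ on them built from \emph{different} $E$'s on $N_1$ and $N_2$, which presupposes that $M$ is already the amalgamated tensor product. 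This gluing step over the non-faithful family is precisely where the nontrivial content of the theorem lies, and your sketch as written does not resolve it.
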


\begin{cor} \label{corollary expectation tensor product}
Let $N \subset M$ be an inclusion of von Neumann algebras with expectations. Then 
$$N \vee (N' \cap M) \cong N \ovt_{\cZ(N)} (N' \cap M).$$
\end{cor}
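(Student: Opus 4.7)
My plan is to verify the characterization of amalgamated tensor products recalled just before the corollary: concretely, I would produce a faithful family of normal conditional expectations $F : N \vee (N' \cap M) \to \cZ(N)$ satisfying $F(x_1 x_2) = F(x_1) F(x_2)$ for $x_1 \in N$ and $x_2 \in N' \cap M$. The amalgamation makes sense because $N \cap (N' \cap M) = \cZ(N)$ sits inside $\cZ(N' \cap M)$ (any element of $\cZ(N)$ lies in $N'$, hence commutes with all of $N' \cap M$). Once such a family $F$ is exhibited, the characterization theorem recalled in Subsection 1.2 together with the uniqueness part of Theorem \ref{ATP exist} delivers the desired isomorphism $N \vee (N' \cap M) \cong N \ovt_{\cZ(N)} (N' \cap M)$.

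The construction is to compose expectations from both sides. By hypothesis, fix a faithful family $\{E_i\}_{i \in I}$ of normal conditional expectations $E_i : M \to N$, and fix also a faithful family $\{\pi_j\}_{j \in J}$ of normal conditional expectations $\pi_j : N \to \cZ(N)$. Set $F_{i,j} := \pi_j \circ E_i|_{N \vee (N' \cap M)}$. Each $F_{i,j}$ is a normal conditional expectation onto $\cZ(N)$, and multiplicativity is a short computation: for $x_2 \in N' \cap M$, the $N$-bimodularity of $E_i$ forces $E_i(x_2) \in N \cap N' = \cZ(N)$, so $E_i(x_1 x_2) = x_1 E_i(x_2)$ with $E_i(x_2)$ central in $N$. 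Applying $\pi_j$, which is $\cZ(N)$-bimodular, one obtains
\begin{equation*}
F_{i,j}(x_1 x_2) \;=\; \pi_j(x_1)\, E_i(x_2) \;=\; F_{i,j}(x_1)\, F_{i,j}(x_2).
\end{equation*}
Faithfulness of the family $\{F_{i,j}\}$ follows from two successive applications of faithfulness: if $x \geq 0$ and $F_{i,j}(x) = 0$ for all $(i,j)$, then faithfulness of $\{\pi_j\}$ yields $E_i(x) = 0$ for each $i$, and then faithfulness of $\{E_i\}$ yields $x = 0$.

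The only non-automatic ingredient in this plan is the existence of a faithful family $\{\pi_j\}$ of normal conditional expectations $N \to \cZ(N)$. In the paper's setting (countably decomposable, which is the case of interest throughout) this follows from standard central disintegration: over each factor fiber a faithful family of normal states exists, and these assemble measurably into normal conditional expectations onto $\cZ(N)$. This is really the only point at which one must appeal to something beyond the bimodularity calculus; once $\{\pi_j\}$ is in hand, the verification of multiplicativity and faithfulness is purely formal and the corollary falls out directly from the characterization theorem.
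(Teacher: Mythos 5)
Your argument is correct and is the intended one: verify the Str\u{a}til\u{a}--Zsid\'o characterization recalled just above the corollary by producing a faithful family of multiplicative normal conditional expectations from $N \vee (N' \cap M)$ onto $\cZ(N) = N \cap (N' \cap M)$, built by composing expectations $N \to \cZ(N)$ with the given expectations $M \to N$, using $N$-bimodularity to see that they send $N' \cap M$ into $\cZ(N)$. The one refinement worth noting is that the auxiliary family $\{\pi_j\}$ exists with no separability or countable-decomposability hypothesis at all: take a partition of unity $(z_\alpha)$ in $\cZ(N)$ with each $Nz_\alpha$ countably decomposable, pick a faithful normal state $\omega_\alpha$ on $Nz_\alpha$, and use the $\omega_\alpha$-preserving conditional expectation onto $\cZ(Nz_\alpha)$ (which exists because the center always lies in the centralizer of any normal state); their direct sum is already a single faithful normal $\pi \in \cE(N,\cZ(N))$, so no central disintegration is needed and the corollary holds in complete generality.
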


\subsection{Crossed products by locally compact groups and split extensions} \label{prelim crossed product}
A general reference for this section is \cite[Chapter X]{Ta03}.

Let $G$ be a locally compact group. A $G$-von Neumann algebra is a pair $(M,\alpha)$ where $M$ is a von Neumann algebra and $\alpha : G \curvearrowright M$ is a continuous action, i.e.\ a group homomorphism $\alpha : G \rightarrow \Aut(M)$ such that $g \mapsto \phi \circ \alpha_g$ is norm continuous for every $\phi \in M_*$. We denote by the same letter $\alpha$ the natural \emph{co-action} map $\alpha : M \rightarrow M \ovt \rL^\infty(G) \cong \rL^\infty(G,M)$ given by
$$\alpha : x \mapsto  (g \mapsto \alpha_{g^{-1}}(x))$$
This is a $G$-equivariant embedding when $M \ovt \rL^\infty(G)$ is equipped with the left translation action on the second coordinate. The \emph{fixed point algebra} of the action $\alpha$ is $$M^\alpha :=\{ x \in M \mid \forall g \in G, \: \alpha_g(x)=x\}.$$ The \emph{Haar operator valued weight} from $M$ to $M^\alpha$ associated to a left Haar measure $\mu$ is defined by
$$ I^\alpha(x)=\int_G \alpha_g(x) \: \rd\mu(g), \quad x \in M_+.$$
We say that the action $\alpha$ is \emph{integrable} if $I^\alpha$ is semifinite. In that case, we have $I^\alpha \in \cP(M,M^\alpha)$.

A \emph{split $G$-extension} is a triple $(M,N,u)$ where $M$ is a von Neumann algebra, $N$ is a subalgebra of $M$ and $u : G \rightarrow \cU(M)$ is a continuous representation of $G$ such that $u_gNu_g^*=N$ for all $g \in G$ and $N \cup \{ u_g \mid g \in G \}$ generates a dense subalgebra of $M$. If $(M,N,u)$ is a $G$-extension then $(N,\Ad(u)|_N)$ is a $G$-von Neumann algebra.

Given a $G$-von Neumann algebra $(N,\alpha)$, there is a natural split $G$-extension $(M,N,u)$ such that $\alpha=\Ad(u)|_N$. This is the \emph{crossed product} construction $(N \rtimes_\alpha G, N, u)$. It is characterized concretely by the property that the $G$-equivariant co-action map $\alpha : N \rightarrow N \ovt \rL^\infty(G)$ extends to an embedding of $G$-extensions $$\alpha \rtimes G : (N \rtimes_\alpha G,N,u) \rightarrow (N \ovt \B(\rL^2(G)), N \ovt \rL^\infty(G), 1\otimes \lambda_G)$$
where $\lambda_G : G \rightarrow \cU(\rL^2(G))$ is the left regular representation.

The following theorem is essential.
\begin{thm}[Digernes-Takesaki]
Let $G$ be a locally compact group and $(N,\alpha)$ a $G$-von Neumann algebra. Then the image of $\alpha \rtimes G$ is exactly the fixed point subalgebra of $N \ovt \B(\rL^2(G))$ under the diagonal action $\alpha \otimes \Ad(\rho_G) : G \curvearrowright N \ovt \B(\rL^2(G))$ where $\rho_G$ is the right regular representation.
\end{thm}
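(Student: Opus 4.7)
The plan is to establish the two inclusions $Q \subseteq P$ and $P \subseteq Q$ separately, where I write $Q := (\alpha \rtimes G)(N \rtimes_\alpha G) = \alpha(N) \vee \{1 \otimes \lambda_g : g \in G\}''$ inside $N \ovt \B(\rL^2(G))$ and $P := (N \ovt \B(\rL^2(G)))^{\alpha \otimes \Ad(\rho_G)}$.

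For $Q \subseteq P$, it suffices by von Neumann generation to check pointwise fixedness on the two generating families. Using the coaction formula $\alpha(n)(h) = \alpha_{h^{-1}}(n)$ and the fact that $\rho_g$ acts on $\rL^2(G)$ by right translation, one directly computes
\[
(\alpha_g \otimes \Ad(\rho_g))(\alpha(n))(h) = \alpha_g\bigl(\alpha_{(hg)^{-1}}(n)\bigr) = \alpha_{h^{-1}}(n) = \alpha(n)(h),
\]
so $\alpha(N) \subseteq P$; and since left and right translations commute on $\rL^2(G)$, $\Ad(\rho_g)(\lambda_h) = \lambda_h$ for all $g, h \in G$, whence $\{1 \otimes \lambda_g : g \in G\}'' \subseteq P$ as well.

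For the reverse inclusion $P \subseteq Q$, the strategy is to realise the diagonal action as inner. I would represent $N$ in its standard form on a Hilbert space $H$ so that $\alpha$ is spatially implemented by a strongly continuous unitary representation $v : G \to \cU(H)$. Then $\alpha_g \otimes \Ad(\rho_g)$ is the restriction of the inner action $\Ad(v_g \otimes \rho_g)$ on $\B(H) \ovt \B(\rL^2(G))$, and therefore
\[
P = (N \ovt \B(\rL^2(G))) \cap \{v_g \otimes \rho_g : g \in G\}'.
\]
Identifying this relative commutant is the core task. I would proceed via a slice-map argument: for $X \in P$ and any $\omega \in \B(\rL^2(G))_*$, the fixed-point condition transfers into the equivariance
\[
\alpha_g\bigl((\id \otimes \omega)(X)\bigr) = (\id \otimes \omega \circ \Ad(\rho_{g^{-1}}))(X),
\]
which is precisely the condition characterising elements in the image of $\alpha \rtimes G$. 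Combining this equivariance with the classical commutation identity $\lambda_G(G)'' = \rho_G(G)'$ inside $\B(\rL^2(G))$, one can express $X$ as a weakly convergent combination of elements of the form $\alpha(n)(1 \otimes \lambda_g)$, yielding $X \in Q$.

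The main obstacle is this last step, especially when $G$ is non-abelian so that Pontryagin-dual methods are unavailable. The delicate point is to leverage a fixed-point condition on the ambient algebra $N \ovt \B(\rL^2(G))$ into the more rigid crossed-product structure on the $Q$-side; this requires either the explicit commutation computation just sketched, using the biregular representation of $G$ on $\rL^2(G)$, or, equivalently, an appeal to the duality theory for coactions of $G$. Either way, the technical heart is the description of the relative commutant of the diagonal unitary family $\{v_g \otimes \rho_g\}$, which is where all of the crossed-product structure is actually captured.
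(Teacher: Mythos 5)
The paper does not actually prove this theorem; it is cited as a classical fact (Digernes's thesis and Takesaki's treatise, Chapter X), so there is no internal proof for comparison — your proposal is being judged on its own.

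Your easy inclusion $Q \subseteq P$ is correct and complete: the generator-by-generator check is exactly right. For the hard inclusion $P \subseteq Q$ you correctly pass to a standard representation, implement $\alpha$ by unitaries $v_g$, and rewrite
\[
P \;=\; (N \ovt \B(\rL^2(G))) \cap \{v_g \otimes \rho_g : g \in G\}'.
\]
However, the slice-map argument you sketch after this point does not close. The equivariance $\alpha_g\bigl((\id\otimes\omega)(X)\bigr) = (\id\otimes\omega\circ\Ad(\rho_{g^{-1}}))(X)$ is a true consequence of the fixed-point condition, but it does not by itself characterise the image of $\alpha\rtimes G$: it only constrains the collection of slices of $X$, and reconstructing $X$ as a limit of sums $\sum \alpha(n_i)(1\otimes\lambda_{g_i})$ from such constraints requires precisely the structural information you are trying to establish. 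The hand-waved appeal to $\lambda_G(G)'' = \rho_G(G)'$ is the right ingredient but it is not deployed in a way that produces the conclusion.

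The fastest correct route from the point where you reduced $P$ to a relative commutant is to use the bicommutant theorem together with the commutation theorem for crossed products. Namely, since $N\ovt\B(\rL^2(G)) = (N'\otimes 1)'$, one has
\[
P \;=\; (N'\otimes 1)' \cap \{v_g\otimes\rho_g\}' \;=\; \bigl((N'\otimes 1)\vee\{v_g\otimes\rho_g:g\in G\}''\bigr)'.
\]
The commutation theorem for crossed products (Digernes; see Takesaki, \emph{Theory of Operator Algebras} II, Ch.\ X) identifies $(N'\otimes 1)\vee\{v_g\otimes\rho_g\}''$ with $(N\rtimes_\alpha G)'$ on $H\otimes\rL^2(G)$, and then $P = (N\rtimes_\alpha G)'' = N\rtimes_\alpha G$. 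This shows the two theorems are essentially equivalent modulo the bicommutant theorem; the genuine analytic content (which your sketch omits) lives in the commutation theorem, whose proof in turn reduces, via approximation by compactly supported continuous sections, to the classical identity $\lambda_G(G)'' = \rho_G(G)'$ on $\rL^2(G)$. So your instinct to invoke that identity was right, but an intermediate structural reduction is needed, and the slice-map step as written is a genuine gap, not a routine verification.
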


Unfortunately, not every split $G$-extension $(M,N,u)$ is a crossed product extension\footnote{However, it is interesting to notice that the crossed product construction is the unique \emph{natural} extension. This means that the functor $(M,N,u) \rightarrow (N,\Ad(u)|_N)$ from the category of split $G$-extensions to the category of $G$-von Neumann algebras has a unique right inverse, up to natural isomorphism. Indeed, observe first that a right inverse functor is determined by the image of the $G$-von Neumann algebra $\rL^\infty(G)$ thanks to the co-action maps. Then use the Stone-von Neumann-Mackey theorem \cite{Mac49} to show that the image of $\rL^\infty(G)$ by this functor must be $(\B(\rL^2(G)), \rL^\infty(G),\lambda_G)$.}, even when the natural map from the algebraic crossed product into $M$ is injective. We will encounter such examples in our study of the algebraic bicentralizer. When $G$ is abelian, we have the following useful characterization of crossed product extensions that we will use many times.
\begin{thm}[Takesaki] \label{dual action crossed product}
Let $(M,N,u)$ be a split $G$-extension for some locally compact abelian group $G$. Let $\widehat{G}$ be the Pontryagin dual of $G$. Then $(M,N,u)$ is a crossed product extension if and only if there exists an action $\beta : \widehat{G} \curvearrowright M$ such that $\beta_p(u_g)=\langle g,p \rangle u_g$ for all $(g,p) \in G \times \widehat{G}$ and $\beta_p(x)=x$ for all $x \in \N$ and $p \in \widehat{G}$. In that case, we have $N=M^\beta$.

Conversely, if $M$ is a von Neumann algebra with a unitary representation $u : G \rightarrow \cU(M)$ of $G$ and there exists an action $\beta : \widehat{G} \curvearrowright M$ such that $\beta_p(u_g)=\langle g,p \rangle u_g$ for all $(g,p) \in G \times \widehat{G}$, then $(M,M^\beta,u)$ is a crossed product extension.
\end{thm}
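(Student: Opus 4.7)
The plan is to prove the ``Conversely'' statement first (it is the substantive half) and deduce the Part~1 biconditional from it. For the easy direction $(M = N \rtimes_\alpha G) \Rightarrow (\beta \text{ exists})$, I would construct the dual action directly via Digernes--Takesaki. Realize $M$ as the fixed-point subalgebra of $N \ovt \B(\rL^2(G))$ under $\alpha \otimes \Ad(\rho_G)$, with $u_g \mapsto 1 \otimes \lambda_G(g)$. For each $p \in \widehat{G}$, let $V_p \in \rL^\infty(G) \subset \B(\rL^2(G))$ be the multiplication operator by the character $\chi_p(g) = \langle g, p\rangle$. Then $\id_N \otimes \Ad(V_p)$ fixes $N \ovt \rL^\infty(G)$ pointwise (hence acts trivially on the image of $N$), commutes with $\alpha \otimes \Ad(\rho_G)$ (since $V_p \rho_G(g) = \overline{\langle g,p\rangle}\, \rho_G(g) V_p$, so $\Ad(V_p)$ and $\Ad(\rho_G(g))$ commute), and satisfies $V_p \lambda_G(g) V_p^* = \langle g,p\rangle \lambda_G(g)$ by Fourier duality. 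Restricting to $M$ yields the required $\beta$.

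For the converse, let $(M, u, \beta)$ satisfy the covariance relation, put $N := M^\beta$, and observe that $\beta_p(u_g x u_g^*) = \langle g,p\rangle u_g \cdot x \cdot \overline{\langle g,p\rangle} u_g^* = u_g x u_g^*$ for $x \in N$, so $u_g$ normalizes $N$ and $\alpha := \Ad(u)|_N$ is a continuous action of $G$ on $N$. The pair $(N \hookrightarrow M, u)$ is covariant, producing a canonical normal $*$-homomorphism $\pi : N \rtimes_\alpha G \to M$; I must show $\pi$ is an isomorphism onto $M$. For surjectivity I would invoke Arveson-type spectral theory for the $\widehat{G}$-action $\beta$: the spectral subspaces $M^\beta(K) = \{x \in M : \spec_\beta(x) \subset K\}$ over compact $K \subset G$ are $\sigma$-weakly dense in $M$; for a singleton $\{g\}$ the equation $\beta_p(u_g^* x) = u_g^* x$ forces $u_g^* x \in N$, so $M^\beta(\{g\}) = N u_g$, and more generally $M^\beta(K)$ lies in the weak closure of finite sums of such $N u_g$. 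Hence $\pi(N \rtimes_\alpha G)$ is $\sigma$-weakly dense, and equals $M$ by normality. For injectivity I would use Digernes--Takesaki uniqueness: the map $\Phi : M \to M \ovt \rL^\infty(\widehat{G})$, $\Phi(x)(p) = \beta_p(x)$, composed with the Plancherel identification, provides an equivariant embedding $M \hookrightarrow N \ovt \B(\rL^2(G))$ that agrees with $\alpha \rtimes G$ on $\pi(N \rtimes_\alpha G)$, and this forces $\pi$ to be a $*$-isomorphism.

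Finally, to close the Part~1 biconditional, suppose $(M, N, u)$ is a split $G$-extension together with $\beta$ satisfying the stated hypotheses (so $N \subset M^\beta$). The converse just established, applied to $(M, u, \beta)$, gives $M = M^\beta \rtimes_\alpha G$. Since $N \cup \{u_g\}$ generates $M$, a Fourier/spectral decomposition of elements of the generating $*$-algebra against $\beta$ shows that their degree-$0$ parts lie in $N$, which forces $M^\beta \subset N$ and hence $N = M^\beta$; thus $(M, N, u)$ itself is a crossed product extension. The main obstacle I anticipate is the surjectivity step in the converse: identifying the Arveson spectral subspaces $M^\beta(K)$ with the concrete bimodules $N u_g$ and controlling $\sigma$-weak density of $\bigcup_K M^\beta(K)$ is classical but genuinely delicate when $G$ is not discrete, and it is where the real analytic content of the theorem is concentrated.
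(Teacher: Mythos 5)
The paper does not prove this theorem at all: it is stated as a known preliminary, attributed to Takesaki, with the section's general reference being \cite[Chapter X]{Ta03}. So there is no internal proof to compare yours against, and the question is just whether your sketch closes on its own. Your easy direction (constructing the dual action $\beta_p = \id_N \otimes \Ad(V_p)$ on the Digernes--Takesaki picture of $N\rtimes_\alpha G$ inside $N\ovt\B(\rL^2(G))$) is correct as stated, and your reduction of the first biconditional to the ``Conversely'' half together with the step $M^\beta(\{g\}) = Nu_g$ forcing $M^\beta = N$ is fine.

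The gap is precisely where you flag it, but it is larger than ``classical but delicate.'' From $M^\beta(\{g\}) = Nu_g$ (which you prove correctly) you assert that $M^\beta(K)$ for compact $K$ is contained in the weak closure of finite sums $\sum N u_{g_i}$. That is not a formal consequence of computing the point spectral subspaces. For non-discrete $G$ the Arveson spectral subspace $M^\beta(K)$ is defined via the $\rL^1(\widehat G)$-module structure and is in general much larger than the closed span of the eigenspaces $\{M^\beta(\{g\}) : g\in K\}$; getting from one to the other is exactly what the Takesaki biduality theorem (or equivalently Landstad's theorem) accomplishes, and it is not something you can wave at with Arveson's spectral subspace theorem. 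Similarly, your ``injectivity'' step --- that $\Phi(x)(p)=\beta_p(x)$ combined with Plancherel gives an embedding into $N\ovt\B(\rL^2(G))$ --- quietly contains the same content: $\Phi(M)$ a priori lives in $M\ovt\rL^\infty(\widehat G)$, and showing that after the right unitary conjugation it lands in $N\ovt\B(\rL^2(\widehat G))$ and coincides with the image of the crossed product is the substance of Takesaki's theorem, not a corollary of it. In short, your outline is the standard one, but the two steps you label ``surjectivity'' and ``injectivity'' together \emph{are} the theorem, and the sketch does not yet reduce them to anything simpler. A cleaner route, and the one closer to Takesaki's original argument, is to form the comultiplication $\Phi$ directly, verify that the unitary $W$ implementing the regular representation intertwines $\Phi$ with the canonical coaction on $N\rtimes_\alpha G$, and appeal once to the Stone--von Neumann/Mackey uniqueness of the covariant pair $(\rL^\infty(G),\lambda_G)$ on $\rL^2(G)$ --- which is in fact the ingredient the paper itself invokes in a footnote in the same section.
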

If $(N,\alpha)$ is a $G$-von Neumann algebra, the action $\beta : \widehat{G} \curvearrowright N \rtimes_\alpha G$ of the theorem is called the \emph{dual action} of $\alpha$ and denoted $\beta=\widehat{\alpha}$.

\begin{prop} \label{criterion expectation crossed product}
Let $(M,N,u)$ be a split $G$-extension for some locally compact group $G$ (not necessarily abelian). Let $M_0 \subset M$ and $N_0 \subset N$ be von Neumann subalgebras such that :
\begin{itemize}
\item $(M_0,N_0,u)$ is a crossed product extension.
\item There exists a faithful family of normal conditional expectations $E$ from $M$ to $M_0$ such that $E(N)=N_0$.
\end{itemize}
Then $(M,N,u)$ is a crossed product extension.
\end{prop}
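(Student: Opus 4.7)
The plan is to construct the natural normal $*$-homomorphism $\Phi : N \rtimes_\alpha G \to M$ associated to the covariant pair $(\id_N, u)$ (with $\alpha = \Ad(u)|_N$) and to show it is a $*$-isomorphism. Surjectivity is automatic once $\Phi$ is available, since its image contains $N \cup u(G)$ and $\Phi$ is normal, so the entire content of the proposition lies in injectivity, and the role of the faithful family of expectations is to force it.

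For each conditional expectation $E : M \to M_0$ in the given family, the $M_0$-bimodularity of $E$ together with $u_g \in M_0$ yields $E \circ \Ad(u_g) = \Ad(u_g) \circ E$ on $M$. Combined with $E(N) = N_0$, this shows that $E_N := E|_N : N \to N_0$ is a $G$-equivariant normal conditional expectation between $(N, \alpha)$ and $(N_0, \alpha_0)$, where $\alpha_0 = \alpha|_{N_0}$. Using the Digernes-Takesaki embeddings $N \rtimes_\alpha G \subset N \ovt \B(\rL^2(G))$ and $N_0 \rtimes_{\alpha_0} G \subset N_0 \ovt \B(\rL^2(G))$, the slice map $E_N \otimes \id$ intertwines the diagonal actions $\alpha \otimes \Ad(\rho_G)$ and $\alpha_0 \otimes \Ad(\rho_G)$, and therefore restricts to a normal conditional expectation $\tilde E : N \rtimes_\alpha G \to N_0 \rtimes_{\alpha_0} G$ sending $n u_g$ to $E_N(n) u_g$. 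Writing $\Phi_0 : N_0 \rtimes_{\alpha_0} G \to M_0$ for the normal isomorphism supplied by the hypothesis that $(M_0, N_0, u)$ is a crossed product extension, a check on the generators $n \in N$ and $u_g$, combined with normality of all four maps, yields
\[ E \circ \Phi = \Phi_0 \circ \tilde E \]
on all of $N \rtimes_\alpha G$.

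Now suppose $x \in N \rtimes_\alpha G$ satisfies $\Phi(x) = 0$. Applying the above to $x^* x \ge 0$ and using that $\Phi$ is a $*$-homomorphism, $\Phi_0(\tilde E(x^* x)) = E(\Phi(x)^* \Phi(x)) = 0$ for every $E$; since $\Phi_0$ is an isomorphism, $\tilde E(x^* x) = 0$ for every $E$. Injectivity of $\Phi$ thus reduces to showing that $\{\tilde E\}$ is a faithful family on the positive cone of $N \rtimes_\alpha G$. Since $N \subset M$ and $\{E\}$ is faithful on $M$, the family $\{E_N\}$ is faithful on $N$; a slice-map argument, using that normal positive functionals on $\B(\rL^2(G))$ separate positive elements of $N \ovt \B(\rL^2(G))$, promotes this to faithfulness of $\{E_N \otimes \id\}$ on positives of $N \ovt \B(\rL^2(G))$, and restriction to the fixed-point subalgebra $N \rtimes_\alpha G$ gives faithfulness of $\{\tilde E\}$. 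Hence $x^* x = 0$, i.e., $x = 0$.

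I expect the main technical hurdle to be setting up $\Phi$ as a genuine normal $*$-homomorphism and verifying the intertwining $E \circ \Phi = \Phi_0 \circ \tilde E$ on generators; both rely on the basic observations that $u_g \in M_0$ (so that $\Phi$ and $\tilde E$ literally send $u_g$ to $u_g$) and that $E_N \otimes \id$ respects the Digernes-Takesaki fixed-point characterizations on either side. Once these compatibilities are in place, the propagation of faithfulness from $\{E\}$ on $M$, through $\{E_N\}$ on $N$ and $\{E_N \otimes \id\}$ on $N \ovt \B(\rL^2(G))$, down to $\{\tilde E\}$ on $N \rtimes_\alpha G$ is routine.
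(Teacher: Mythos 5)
There is a genuine gap at the very first step: you assert that there is a ``natural normal $*$-homomorphism $\Phi : N \rtimes_\alpha G \to M$ associated to the covariant pair $(\id_N,u)$,'' and reduce everything to injectivity of $\Phi$. But no such map exists for free. The von Neumann crossed product $N \rtimes_\alpha G$ is defined via the regular (Digernes--Takesaki) representation and does \emph{not} have a universal property with respect to arbitrary normal covariant pairs when $G$ is not amenable; there is no abstract reason that the identity on $N$ together with $g \mapsto u_g$ extends to a normal $*$-homomorphism on the whole von Neumann crossed product. (For instance, with $N=\C$ and $G$ nonamenable discrete, the trivial covariant pair on $\C$ does not induce a normal map on $L(G)=\C\rtimes G$.) What you have a priori is only a $*$-homomorphism from the algebraic crossed product $A$ into $M$, and the question of whether it extends $\sigma$-strongly to $N\rtimes_\alpha G$ is exactly half of what the proposition asserts. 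So the remark that ``the entire content of the proposition lies in injectivity'' is wrong: the content is split between injectivity and well-definedness, and both halves require the faithful family of expectations.

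This is precisely what the paper's proof does differently. Rather than presupposing $\Phi$, it introduces the two maps $\pi : A \to M$ and $\rho : A \to N\rtimes_\alpha G$ defined on the algebraic crossed product, together with the isomorphism $\theta : M_0 \to N_0 \rtimes_\alpha G$, and proves the identity $\theta \circ E \circ \pi = \tilde E \circ \rho$ on $A$ (essentially your intertwining $E \circ \Phi = \Phi_0 \circ \tilde E$, but on $A$ only, where it is legitimate). It then establishes a chain of equivalences showing that for a net $(x_i)$ in $A$, $\pi(x_i) \to 0$ $*$-strongly if and only if $\rho(x_i) \to 0$ $*$-strongly; the two directions of this equivalence are exactly the well-definedness and injectivity of $\pi\circ\rho^{-1}$, and both lean on faithfulness of the family $\{E\}$ (resp.\ $\{\tilde E\}$). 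Your injectivity argument via $\tilde E$ matches one direction of this chain, so to repair the proof you would need to add the symmetric argument: show that $\rho(x_i) \to 0$ strongly and $\theta \circ E \circ \pi = \tilde E \circ \rho$ together force $\pi(x_i)\to 0$ strongly, giving the normal extension of $\pi\circ\rho^{-1}$ across $N\rtimes_\alpha G$.
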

\begin{proof}
Let $\alpha : G \curvearrowright N$ be the action induced by $u$. Let $\Sigma$ be the set of all normal conditional expectations from $M$ to $M_0$ that send $N$ on $N_0$. Note that for each $E \in \Sigma$, the normal conditional expctation $E|_N$ from $N$ to $N_0$ is $G$-equivariant :
$$ E(\alpha_g(x))=E(u_gxu_g^*)=u_gE(x)u_g^*=\alpha_g(E(x))$$
for all $x \in N$ and $g \in G$.
Therefore $E$ induces a normal conditional expectation $\tilde{E} : N \rtimes_\alpha G \rightarrow N_0 \rtimes_{\alpha} G$. Since $\Sigma$ is a faithful family, then so is $\{ \tilde{E} \mid E \in \Sigma\}$.

Let $A$ be the algebraic crossed product of $N$ by the action $\alpha$ and $A_0 \subset A$ the algebraic crossed product of $N_0$ by the same action. Let $\pi : A \rightarrow M$ and $\rho : A \rightarrow N \rtimes_\alpha G$ be the natural *-morphisms. To get the desired conclusion, we have to show that for ever net $(x_i)_{i \in I}$ in $A$ we have $\lim_i \pi(x_i) =0$ in the strong topology if and only if $\lim_i \rho(x_i)=0$ in the strong topology. Indeed, if we prove this, then $\pi$ and $\rho$ extend to isomorphisms defined on the same topological completion of $A$, and $\pi \circ \rho^{-1}$ will then define an isomorphism from $N \rtimes_{\alpha} G$ onto $M$.

Let $\theta$ be the isomorphism from $M_0$ to $N_0 \rtimes_\alpha G$ such that $\theta \circ \pi|_{A_0}=\rho|_{A_0}$. Observe that $\theta \circ E \circ \pi = \tilde{E} \circ \rho$ for every $E \in \Sigma$.

Now we see that
\begin{align*}
&\pi(x_i) \to 0 \text{ strongly} \\
\Leftrightarrow \quad  &\pi(x_i^*x_i) \to 0 \text{ weakly} \\
\Leftrightarrow \quad & E(\pi(x_i^*x_i)) \to 0 \text{ weakly for every } E \in \Sigma \\
\Leftrightarrow \quad & \theta(E(\pi(x_i^*x_i))) \to 0 \text{ weakly for every } E \in \Sigma \\
\Leftrightarrow \quad & \tilde{E}(\rho(x_i^*x_i))) \to 0 \text{ weakly for every } E \in \Sigma \\
\Leftrightarrow \quad  &\rho(x_i^*x_i) \to 0 \text{ weakly} \\
\Leftrightarrow \quad  &\rho(x_i) \to 0 \text{ strongly}.
\end{align*}
\end{proof}

\subsection{The modular theory} \label{prelim modular theory}
A \emph{trace scaling flow} is a triple $(\cM, \tau, \theta)$ where $\cM$ is a von Neumann algebra, $\tau \in \cP(\cM)$ is a semifinite trace and $\theta : \R^*_+ \curvearrowright \cM$ is an action that scales the trace, i.e.\ $\tau \circ \theta_{\lambda}=\frac{1}{\lambda} \tau$ for all $\lambda \in \R^*_+$.

Let $\mathcal{C}$ be the category whose objects are trace scaling flows and morphisms are normal trace-preserving $\R^*_+$-equivariant embeddings. 

Let $\mathcal{D}$ be the category whose objects are von Neumann algebras and morphisms are normal embeddings with expectation, i.e.\  pairs $(i,E)$ where $i : N \hookrightarrow M$ is a normal embedding and $E \in \cE(M,i(N))$. 

We define the \emph{fixed point functor} $F$ from $\mathcal{C}$ to $\mathcal{D}$ as follows. If $(\cM, \tau, \theta)$ is a trace scaling flow, we let $F(\cM,\tau,\theta)=\cM^\theta$. If $(\cN, \tau, \theta)$ and $(\cM, \tau', \theta')$ are two trace scaling flows and $\iota : \cN \hookrightarrow \cM$ is a normal trace-preserving $\R^*_+$-equivariant embedding, then there exists a unique trace-preserving conditional expectaion $E \in \cE(\cM,\iota(\cN))$. We let $F(\iota)=(\iota|_{\cN^\theta}, E|_{\cM^{\theta'}})$. 

The following theorem is the great achievement of the modular theory developped by Tomita, Takesaki and Connes.
\begin{thm}
The fixed point functor $F : \mathcal{C} \rightarrow \mathcal{D}$ is an equivalence of categories.
\end{thm}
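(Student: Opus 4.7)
The plan is to exhibit an explicit quasi-inverse functor $G:\mathcal{D}\to\mathcal{C}$—the \emph{core functor}—and to verify the two natural isomorphisms $F\circ G\cong\mathrm{id}_{\mathcal{D}}$ and $G\circ F\cong\mathrm{id}_{\mathcal{C}}$. On objects, $G$ sends a von Neumann algebra $M$ to its Connes--Takesaki core $(c(M),\tau_M,\theta_M)$: fix any $\varphi\in\cE(M)$, set $c(M)=M\rtimes_{\sigma^\varphi}\R$, let $\tau_M$ be the canonical trace and $\theta_M=\widehat{\sigma^\varphi}$ the dual action, interpreted as an $\R^*_+$-action via the topological isomorphism $\R^*_+\cong\widehat{\R}$, $\lambda\leftrightarrow(t\mapsto\lambda^{it})$. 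On a morphism $(i,E):N\hookrightarrow M$, Takesaki's theorem gives $\sigma^{\varphi\circ E}\circ i=i\circ\sigma^\varphi$, so $i$ extends uniquely to an embedding $c(i):c(N)\hookrightarrow c(M)$, which is automatically $\R^*_+$-equivariant and trace-preserving. Independence of $G(M)$ from the choice of $\varphi$ (up to canonical isomorphism) uses Connes cocycle derivatives.

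For the natural isomorphism $F\circ G\cong\mathrm{id}_{\mathcal{D}}$, the identity $c(M)^{\theta_M}=M$ is an immediate application of Theorem~\ref{dual action crossed product}: the fixed-point algebra of the dual $\widehat{\R}$-action on a crossed product $M\rtimes_{\sigma^\varphi}\R$ is the base algebra $M$. The unique trace-preserving expectation $c(M)\to M$ arises from the Haar operator-valued weight of $\theta_M$. Naturality in $(i,E)$ is automatic from the universal property of the crossed product extension.

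The main obstacle is the reverse isomorphism $G\circ F\cong\mathrm{id}_{\mathcal{C}}$: given $(\cM,\tau,\theta)\in\mathcal{C}$ with $M:=\cM^\theta$, one must canonically identify $\cM$ with $c(M)$ in a way compatible with the trace and the flow. The plan is first to produce from $(\tau,\theta)$ the correct modular data on $M$. The trace-scaling property forces $\theta$ to be integrable, so the Haar operator-valued weight $T=I^\theta\in\cP(\cM,M)$ is semifinite, and there is a unique $\varphi\in\cP(M)$ with $\tau=\varphi\circ T$. Next, one extracts from the Radon--Nikodym cocycles attached to $(\tau,\theta)$ a strongly continuous unitary representation $u:\R\to\cU(\cM)$ whose adjoint action on $M$ realizes $\sigma^\varphi$ and which satisfies the scaling $\theta_\lambda(u_t)=\lambda^{it}u_t$. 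The representation $u$ together with $M$ generates $\cM$ (by semifiniteness of $T$), so $(\cM,M,u)$ is a split $\R$-extension to which the converse direction of Theorem~\ref{dual action crossed product} applies, with $\theta$ playing the role of the dual $\widehat{\R}$-action. This yields a canonical identification $\cM\cong M\rtimes_{\sigma^\varphi}\R=c(M)$ intertwining $\theta$ with $\theta_M$ and $\tau$ with $\tau_M$.

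Naturality of $\cM\cong c(\cM^\theta)$ follows because $T$, $\varphi$ and $u$ are canonically built from $(\tau,\theta)$, so any morphism in $\mathcal{C}$ transports them, and the uniqueness clause in Theorem~\ref{dual action crossed product} forces the naturality squares to commute. The hard substep is the construction of $u$: producing a genuinely strongly continuous one-parameter unitary representation from the trace-scaling cocycle data, and verifying the precise scaling identity that triggers Theorem~\ref{dual action crossed product}, is the analytic core of the Falcone--Takesaki continuous decomposition for trace-scaling flows.
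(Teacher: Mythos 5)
The paper does not prove this theorem: it states it as ``the great achievement of the modular theory developped by Tomita, Takesaki and Connes'' and, for the construction of a quasi-inverse, refers to Falcone--Takesaki \cite{FT01}. Your sketch reproduces precisely that route (core on objects, Takesaki duality for $F\circ G\cong\mathrm{id}_{\mathcal D}$, structure theory of trace-scaling flows for $G\circ F\cong\mathrm{id}_{\mathcal C}$), so in terms of strategy you and the paper agree --- you are simply filling in what the paper delegates to \cite{FT01}.

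Two points in your sketch of $G\circ F\cong\mathrm{id}_{\mathcal C}$ deserve more care. First, you assert that ``the trace-scaling property forces $\theta$ to be integrable.'' This is true but it is a theorem, not an observation: semifiniteness of $\tau$ alone does not immediately produce a weak$^*$-dense hereditary cone of $x$ with $I^\theta(x)$ bounded, and the usual argument goes through the existence of finite-trace spectral projections of the generator of $u$ (or through Connes--Takesaki's dominant weight machinery), which is circular if not arranged carefully. Second, the parenthetical ``(by semifiniteness of $T$)'' justifying that $M$ and $u$ generate $\cM$ is not a valid deduction. Semifiniteness of $I^\theta$ does not by itself give generation. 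The efficient way to close this is to skip the generation claim entirely and invoke the \emph{second} part of Theorem~\ref{dual action crossed product}: once you have $u:\R\to\cU(\cM)$ with $\theta_\lambda(u_t)=\lambda^{-\ri t}u_t$, that converse direction directly concludes that $(\cM,\cM^\theta,u)$ is a crossed product extension --- generation is part of the \emph{conclusion}, not a hypothesis you must verify. You then only need $\Ad(u)|_M=\sigma^\varphi$ for a suitable $\varphi\in\cP(M)$ to identify the crossed product with $c(M)$, which is the genuinely analytic construction you rightly flag at the end. With those two fixes the argument is the standard Falcone--Takesaki proof; as written the ordering of the logic is off, even though the constituent facts are all correct.
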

We select an inverse functor $c : \mathcal{D} \rightarrow \mathcal{C}$. Such an inverse functor $c$ exists by abstract nonsense, but it can also be constructed naturally as explained in \cite{FT01}. If $M$ is a von Neumann algebra, the trace scaling flow associated to it by the functor $c$ is denoted $(c(M), \tau_M,\theta_M)$ or simply $(c(M), \tau,\theta)$ when this is not ambiguous. It is called the \emph{noncommutative flow of weights} of $M$. The von Neumann algebra $c(M)$ is called the \emph{core} of $M$ and we have a natural identification $M=c(M)^\theta$. Here is a list of facts that we will use constantly throughout this article.
\begin{itemize}
\item The action $\theta : \R^*_+ \curvearrowright c(M)$ is integrable and the Haar operator valued weight 
$I^\theta = \int_{\R^*_+} \theta_\lambda \:\frac{\rd \lambda}{\lambda}$
belongs to $\cP(c(M),M)$. There is an order preserving bijection from $\cP(M)$ to $\cP(c(M))^\theta=\{ \psi \in \cP(c(M)) \mid \psi \text{ is } \theta\text{-invariant} \}$ that sends $\varphi \in \cP(M)$ to $\widehat{\varphi}= \varphi \circ I^\theta$. 
\item Following \cite{FT01}, it is convenient to view each $\varphi \in \cP(M)$ as a positive operator affiliated with $c(M)$ by identifying it with the Radon-Nikodym derivative $\frac{\rd \widehat{\varphi}}{\rd \tau}$. Then, by definition, we have $\widehat{\varphi}(x)=\tau(\varphi \cdot x)$ for every $x \in c(M)_+$, where we use the notation of \cite[Proposition 1.11]{Ha77a}. Note that $\theta_\lambda(\varphi)=\lambda^{-1} \varphi$ for every $\lambda \in \R^*_+$ and $(\varphi^{\ri t})_{t \in \R}$ is a one-parameter group of unitaries in $c(M)$ that satisfy $\theta_{\lambda}(\varphi^{\ri t})=\lambda^{- \ri t} \varphi^{\ri t}$ for every $(\lambda,t) \in \R^*_+ \times \R$.
\item Take $\varphi, \psi \in \cP(M)$. For every $x \in M$ and $t \in \R$, we have $\psi^{\ri t} x \varphi^{-\ri t} \in M$ and this defines a one-parameter group of isometries $\sigma^{\psi,\varphi} : \R \curvearrowright M$ given by $\sigma_t^{\psi,\varphi} : x \mapsto \psi^{\ri t} x \varphi^{-\ri t}$. Moreover, $\sigma^\varphi = \sigma^{\varphi,\varphi}$ is a one-parameter group of \emph{automorphisms} of $M$. This is the modular automorphism group of Tomita-Takesaki. Connes's Radon-Nikodym cocycle is given by $t \mapsto u_t=\sigma^{\psi,\varphi}_t(1)=\psi^{\ri t} \varphi^{-\ri t}$. We define the fixed points algebra $M_{\psi,\varphi}:=\{ x \in M \mid \forall t \in \R, \; \sigma_t^{\psi,\varphi}(x)=x\}$. Note that $M_\varphi :=M_{\varphi,\varphi}$ is the centralizer of $\varphi$. We also define the \emph{spectral subspaces} $M(\sigma^{\psi,\varphi}, [-\varepsilon, \varepsilon])$ for $\varepsilon >0$ as the set of all elements $x \in M$ such that the map $t \mapsto \sigma_t^{\psi,\varphi}(x)$ extends to an entire analytic function on $\C$ satisfying
$$ \forall z \in \C, \; \| \sigma_z^{\psi,\varphi}(x)\| \leq e^{\varepsilon |\mathrm{Im}(z)|} \|x\|.$$
\item The triple $(c(M), M, (\varphi^{\ri t})_{t \in \R})$ is a crossed product extension. We thus have an isomorphism $\pi_\varphi : c(M) \rightarrow M \rtimes_{\sigma^\varphi} \R$ that intertwines the scaling action $\theta$ with the dual action of $\sigma^\varphi$.
\item By the relative commutant theorem of Connes and Takesaki \cite[Theorem 5.1]{CT76}, we have $M' \cap c(M)=\cZ(c(M))$. The restriction $\theta|_{\cZ(c(M))} : \R^*_+ \curvearrowright \cZ(c(M))$ of the trace scaling flow is the \emph{(commutative) flow of weights} of $M$. We have $\cZ(c(M))^\theta=\cZ(M)$.
\item The flow of weights $\R^*_+ \curvearrowright \cZ(c(M))$ extends to a continuous action $\rL^0(\cZ(M),\R^*_+) \curvearrowright \cZ(c(M))$ where $\rL^0(\cZ(M),\R^*_+)$ is the topological group of all unbounded positive functions affiliated with $\cZ(M)$. We say that a type $\III$ von Neumann algebra $M$ is of type :
\begin{enumerate}
\item [- $\III_0$] if the action $\rL^0(\cZ(M),\R^*_+) \curvearrowright \cZ(c(M))$ is faithful.
\item [- $\III_{\Lambda}$] with $\Lambda \in \cZ(M)$, $0< \Lambda < 1$, if the kernel of $\rL^0(\cZ(M),\R^*_+) \curvearrowright \cZ(c(M))$ is of the form $\{ \Lambda^N \mid N \in \rL^0(\cZ(M),\Z) \}$.
%$\cZ(c(M))$ is generated by $\cZ(M)$ and a unitary $u$ such that $\theta_\lambda(u)=\lambda^{\ri T} u$ for all $\lambda \in \R^*_+$ where $T=\frac{2 \pi}{\log(\Lambda)}$. 
\item [- $\III_1$] if its flow of weights is trivial, i.e.\ if $\cZ(c(M))=\cZ(M)$.
\end{enumerate}
Every type $\III$ von Neumann algebra $M$ decomposes as a direct sum of von Neumann algebras of the above types
 $$M = M_{\III_0} \oplus M_{\III_\Lambda} \oplus M_{\III_1}.$$
 
 \item Let $N \subset M$ be an inclusion with expectation. To every $E \in \cE(M,N)$ is associated a normal trace preserving $\R^*_+$-equivariant embedding $ i :c(N) \rightarrow c(M)$ with $i|_{N}=\id_N$ and a trace preserving faithful normal conditional expectation $c(E) \in \cE(c(M),i(c(N)))$ such that $c(E)|_M=E$. The image of this embedding $i$ depends on the choice of $E$. We will denote it by $c_E(N)$ when we want to keep track of this dependence. When no confusion can arise, we will simply denote it $c(N)$.
 \item More generally, if $N \subset M$ is an inclusion and $T \in \cP(M,N)$, then there exists a similar embedding $ i :c(N) \rightarrow c(M)$ with $i|_{N}=\id_N$ and $c(T) \in \cP(c(M),i(c(N)))$ such that $\tau_N \circ T=\tau_M$ and $c(T)|_M=T$. The image of this embedding will be denoted $c_T(N) \subset c(M)$. 
 \item Let $N \subset M$ be an inclusion and $T \in \cP(M,N)$. Take $\varphi \in \cP(N)$. Then the embedding $i_T : c(N) \rightarrow c_T(N) \subset c(M)$ induced by $T$, sends $\varphi^{\ri t}$ to $(\varphi \circ T)^{\ri t}$ for all $t \in \R$. In particular, if $\psi \in \cP(N)$ is another weight then $(\varphi \circ T)^{\ri t} (\psi \circ T)^{-\ri t}=\varphi^{\ri t} \psi^{-\ri t} \in N$. As a consequence, the restriction of $\sigma^{\varphi \circ T}$ to $N' \cap M$ does not depend on the choice of $\varphi$ and will simply be denoted $\sigma^T$. It is called the \emph{modular automorphism group} of $T$. See \cite{Ha77b} for more on this.
\end{itemize}

\subsection{The standard form}
Let $M$ be a von Neumann algebra and $(c(M),\tau, \theta)$ its core.
\begin{itemize}
\item Let $\cA$ be the algebra of $\tau$-measurable operators affiliated with $c(M)$. Following \cite{Ha79}, for every $p \in [1,+\infty)$, we define the noncommutative $\rL^p$-space $$\rL^p(M)=\{ x \in \cA \mid \forall \lambda \in \R^*_+, \; \theta_{\lambda}(x)=\lambda^{-1/p} x \}.$$
Each $\rL^p$-space is a $M$-bimodule. The space $\rL^1(M)$ is identified with $M_*$ by the linear order-preserving $M$-bimodular bijection that sends $\phi \in M_*^+$ to $\frac{\rd \widehat{\phi}}{\rd \tau}$ where $\widehat{\phi}=\phi \circ I^\theta$. On $\rL^1(M) \cong M_*$, we denote by $\int$ the linear form that sends $\phi \in M_*$ to $\phi(1)$.
\item The space $\rL^2(M)$ is a hilbert space when equipped with the scalar product $\langle \xi, \eta \rangle := \int \xi \eta^*$, where $\int$ is the linear form on $\rL^1(M) \cong M_*$ that sends $\phi$ to $\phi(1)$. The left and right regular representations $\lambda,\rho : M \rightarrow \B(\rL^2(M))$ are defined by $\lambda(x)\xi = x\xi$ and $\rho(x)=\xi x$ for every $x \in M$ and $\xi \in \rL^2(M)$. Note that $\rho$ is an anti-representation rather then a representation. We have $\lambda(M)'=\rho(M)$ and the conjugate linear involution $J : \xi \mapsto \xi^*$ satisfies $J \lambda(x) J=\rho(x^*)$ for every $x \in M$. The set 
$$\rL^2(M)_+=\rL^2(M) \cap c(M)_+= \{ \phi^{1/2} \mid \phi \in M_*^+=\rL^1(M)_+ \}$$ is a \emph{self-dual positive cone} in $\rL^2(M)$. See \cite{Ha75} for more details on the standard form.
\item Take $\varphi \in \cP(M)$ and let $\mathfrak{n}_\varphi=\{ x \in M \mid \varphi(x^*x) < +\infty \}$. Then for every $x \in \mathfrak{n}_\varphi$, the product $x\varphi^{1/2}$ is a well-defined element of $\rL^2(M)$ and $\| x \varphi^{1/2} \|^2=\varphi(x^*x)$. We have $(ax)\varphi^{1/2}=a(x\varphi^{1/2})$ for all $a \in M$ and $x \in \mathfrak{n}_\varphi$. We have similar facts for $\varphi^{1/2}x$ when $x \in \mathfrak{n}_\varphi^*$.
\item Take $\varphi, \psi \in \cP(M)$. There exists an closed densely defined positive operator $\Delta_{\psi,\varphi}$ on $\rL^2(M)$ such that $\Delta_{\psi,\varphi}^{\ri t}(\xi)=\psi^{\ri t} \xi \varphi^{-\ri t}$ for every $\xi \in \rL^2(M)$ and $t \in \R$. We have 
$$ \Delta_{\psi,\varphi}^{\ri t}\lambda(x)\Delta_{\psi,\varphi}^{-\ri t} = \lambda(\sigma_t^\psi(x)) \text{ and } \Delta_{\psi,\varphi}^{\ri t}\rho(x)\Delta_{\psi,\varphi}^{-\ri t} = \rho(\sigma_{-t}^\varphi(x))$$
for all $x \in M$ and $t \in \R$. Moreover, the graph of $\Delta_{\psi,\varphi}^{1/2}$ is the closure of 
$$\{ (x\varphi^{1/2}, \psi^{1/2}x) \mid  x \in \mathfrak{n}_\varphi \cap \mathfrak{n}_\psi^* \}.$$
\end{itemize}

\subsection{The Ocneanu ultrapower}
The reference for ultrapowers of von Neumann algebras is \cite{AH12}. We only recall here the definition of the Ocneanu ultrapower and its basic properties.

Let $I$ be any nonempty directed set and $\omega$ any {\em cofinal} ultrafilter on $I$, i.e.\ $\{i \in I : i \geq i_0\} \in \omega$ for every $i_0 \in I$. When $I = \N$, $\omega$ is cofinal if and only if $\omega$ is {\em nonprincipal}, i.e.\ $\omega \in \beta(\N) \setminus \N$.  Let $M$ be a von Neumann algebra. Define
\begin{align*}
\mathcal I_\omega(M) &= \left\{ (x_i)_i \in \ell^\infty(I, M) \mid x_i \to 0\ \ast\text{-strongly as } i \to \omega \right\} \\
\mathcal M^\omega(M) &= \left \{ (x_i)_i \in \ell^\infty(I, M) \mid  (x_i)_i \, \mathcal I_\omega(M) \subset \mathcal I_\omega(M) \text{ and } \mathcal I_\omega(M) \, (x_i)_i \subset \mathcal I_\omega(M)\right\}.
\end{align*}
The multiplier algebra $\cM^\omega(M)$ is a $\rC^*$-algebra and $\mathcal I_\omega(M) \subset \cM^\omega(M)$ is a norm closed two-sided ideal. Following \cite[\S 5.1]{Oc85}, the quotient $\rC^{*}$-algebra $M^\omega := \cM^\omega(M) / \mathcal I_\omega(M)$ is a von Neumann algebra, known as the {\em Ocneanu ultrapower} of $M$. We denote the image of $(x_i)_i \in \cM^\omega(M)$ by $(x_i)^\omega \in M^\omega$. We can view $M$ as a von Neuman subalgebra of $M^\omega$ by identifying $x \in M$ with the constant net.

If $N \subset M$ is with expectations, then $\cM^\omega(N) \subset \cM^\omega(M)$, hence we can view $N^\omega$ as a subalgebra of $M^\omega$ with expectations.

 There is a canonical conditional expectation $E \in \cE(M^\omega, M)$ such that $E((x_i)^\omega)$ is the weak*-limit of $(x_i)_{i \in I}$ along the ultrafilter $\omega$, for every $(x_i)_i \in \mathfrak M^\omega(M)$. Using this conditional expectation, we can extend every $\varphi \in \cP(M)$ to $\varphi^\omega=\varphi \circ E \in \cP(M^\omega)$. We then have the following relation for the modular flow
$$\sigma_t^{\varphi^\omega}((x_i)^\omega)= (\sigma_t^{\varphi}(x_i))^\omega$$
for every $t \in \R$ and every $(x_i)_i \in \cM^\omega(M)$.

Associated to $E : M^\omega \rightarrow M$, we have a natural inclusion $c(M)=c_E(M) \subset c(M^\omega)$ and a natural inclusion $\rL^2(M) \subset \rL^2(M^\omega)$. 

We also have $\cM^\omega(M) \subset \cM^\omega(c(M))$, so we can view $M^\omega$ as a subalgebra of $c(M)^\omega$. In fact we even have natural inclusion $c(M^\omega)=c(M) \vee M^\omega \subset c(M)^\omega$.

It is important to observe that $\rL^2(M^\omega)$ is \emph{not} the ultrapower Hilbert space $\rL^2(M)^\omega$. In fact, $\rL^2(M^\omega)$ can be identified with a  subspace of $\rL^2(M)^\omega$ as follows. The $\rC^*$-algebra $\ell^\infty(I,M)$ acts naturally on the right and on the left on the utrapower Hilbert space $\rL^2(M)^\omega$. Then we have an identification 
\begin{equation}
\rL^2(M^\omega)=\{ \xi \in \rL^2(M)^\omega \mid\forall x \in \cI_\omega(M), \; x\xi=\xi x=0\}. 
\end{equation}
This subspace is clearly stable under left and right multiplication by elements of $\cM^\omega(M)$, and this gives the left and right action of the quotient $M^\omega=\cM^\omega(M)/\cI^\omega(M)$. The positive cone of $\rL^2(M^\omega)$ is simply $\rL^2(M^\omega) \cap (\rL^2(M)_+)^\omega$. The subspace $\rL^2(M) \subset \rL^2(M^\omega)$ coincides with the subspace of constant nets in $\rL^2(M)^\omega$. Further details can be found in \cite{AH12} where $M^\omega$ is identified with a corner of the Groh-Raynaud ultrapower of $M$ and $\rL^2(M)^\omega$ is identified with the standard form of the Groh-Raynaud ultrapower, so that $\rL^2(M^\omega)$ is nothing more than the corner $p\rL^2(M)^\omega p$ where $p$ is the support projection of $M^\omega$ inside the Groh-Raynaud ultrapower.

Finally, following \cite[Proposition 1.3.1]{Co75b}, we observe that when $M$ admits a faithful normal finite trace $\tau$ and we identify $\rL^2(M)$ with $\rL^2(M,\tau)$, then $\rL^2(M^\omega,\tau^\omega)$ is the subspace of $\rL^2(M,\tau)^\omega$ consisting of $\omega$-\emph{equi-integrable} vectors.
\begin{prop}
Let $(M,\tau)$ be a von Neumann algebra with a faithful normal finite trace. Identify $\rL^2(M)$ with $\rL^2(M,\tau)$.  Let $\xi=(\xi_i)^\omega \in \rL^2(M)^\omega= \rL^2(M,\tau)^\omega$. Then $\xi \in \rL^2(M^\omega)=\rL^2(M^\omega,\tau^\omega)$ if and only if  it is $\omega$-equi-integrable : for every $\varepsilon > 0$, there exists $a > 0$ such that $\lim_{i \to \omega} \| 1_{[a,+\infty)}(|\xi_i|)|\xi_i| \|_2 \leq \varepsilon$. 
\end{prop}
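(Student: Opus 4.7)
The plan is to combine the intrinsic description of $\rL^2(M^\omega)$ recalled just before the proposition, namely
$$\rL^2(M^\omega) = \{\, \xi \in \rL^2(M)^\omega \mid x\xi = \xi x = 0 \text{ for all } x \in \cI_\omega(M) \,\},$$
with the tracial identification $\cI_\omega(M) = \{(x_i)^\omega : \lim_{i\to\omega}\|x_i\|_{2} = 0\}$. This identification holds because on norm-bounded subsets of $M$, the $\ast$-strong topology coincides with the $\|\cdot\|_2$-topology of any faithful normal finite trace. Throughout, I will freely use the Hölder-type estimates $\|ab\|_2 \leq \|a\|_\infty \|b\|_2$ and $\|ab\|_2 \leq \|a\|_2 \|b\|_\infty$ in $\rL^2(M,\tau)$.

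For the implication "$\omega$-equi-integrable $\Rightarrow$ $\xi \in \rL^2(M^\omega)$", fix $(x_i)^\omega \in \cI_\omega(M)$ with $\|x_i\|_\infty \leq C'$ and $\varepsilon > 0$, and pick $a > 0$ from the equi-integrability condition. Writing the polar decomposition $\xi_i = u_i |\xi_i|$, split $\xi_i = \xi_i^{\leq a} + \xi_i^{> a}$ with $\xi_i^{\leq a} = u_i |\xi_i|\, 1_{[0,a]}(|\xi_i|)$ bounded of operator norm $\leq a$, and $\|\xi_i^{> a}\|_2 = \|1_{(a,\infty)}(|\xi_i|)|\xi_i|\|_2$. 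The two Hölder bounds give
$$\|x_i \xi_i\|_2 \leq a \|x_i\|_2 + C' \|\xi_i^{> a}\|_2,$$
whose limit along $\omega$ is at most $C' \varepsilon$; letting $\varepsilon \to 0$ yields $x\xi = 0$. A symmetric argument applied on the right, using that $|\xi_i|$ and $|\xi_i^*|$ have the same spectral distribution under $\tau$, gives $\xi x = 0$.

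Conversely, I argue by contrapositive. Assume $(\xi_i)$ is not $\omega$-equi-integrable, so there exists $\varepsilon > 0$ such that for every integer $k \geq 1$ the set
$$J_k := \{i : \|1_{[k,\infty)}(|\xi_i|) |\xi_i|\|_2 > \varepsilon\}$$
belongs to $\omega$. Writing $C := \sup_i \|\xi_i\|_2 < \infty$, Chebyshev's inequality $\tau(1_{[k,\infty)}(|\xi_i|)) \leq k^{-2}\tau(|\xi_i|^2) \leq C^2/k^2$ yields $\|1_{[k,\infty)}(|\xi_i|)\|_2 \leq C/k$ uniformly in $i$. Now define $k(i) := \max\{k \geq 1 : i \in J_1 \cap \cdots \cap J_k\}$ (with $k(i) = 0$ when $i \notin J_1$); since each finite intersection $J_1 \cap \cdots \cap J_k$ lies in $\omega$, we have $k(i) \to \infty$ along $\omega$. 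Setting $x_i := 1_{[k(i),\infty)}(|\xi_i|)$, we obtain $\|x_i\|_2 \leq C/k(i) \to 0$ along $\omega$, so $(x_i)^\omega \in \cI_\omega(M)$; on the other hand, the polar decomposition gives $\|\xi_i x_i\|_2 = \|1_{[k(i),\infty)}(|\xi_i|)|\xi_i|\|_2 > \varepsilon$ on the $\omega$-large set where $k(i) \geq 1$. Hence $\xi x \neq 0$ in $\rL^2(M)^\omega$, and $\xi \notin \rL^2(M^\omega)$.

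The main technical point is the diagonal construction of $k(i)$ in the converse direction: one must send $k(i) \to \infty$ along $\omega$ fast enough to force $\|x_i\|_2 \to 0$, yet slowly enough that the spectral tail $\|1_{[k(i),\infty)}(|\xi_i|)|\xi_i|\|_2$ stays above $\varepsilon$ on an $\omega$-large set. Once this choice is fixed, everything else reduces to routine manipulation of spectral projections and the Hölder-type inequalities in $\rL^2(M,\tau)$.
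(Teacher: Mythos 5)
The paper states this proposition without proof, attributing it to Connes \cite[Proposition 1.3.1]{Co75b}, so there is no internal argument to compare against. Your proof is correct and self-contained: it rests on the intrinsic description $\rL^2(M^\omega)=\{\xi\in\rL^2(M)^\omega : x\xi=\xi x=0 \text{ for all } x\in\cI_\omega(M)\}$ recalled just above the proposition, together with the identification of $\cI_\omega(M)$ with bounded nets whose $\|\cdot\|_2$-norm tends to $0$ along $\omega$. In the forward direction, note as a small simplification that the same split $\xi_i=\xi_i^{\leq a}+\xi_i^{>a}$ already handles right multiplication, since $\|\xi_i^{\leq a}x_i\|_2\leq a\|x_i\|_2$ and $\|\xi_i^{>a}x_i\|_2\leq\|\xi_i^{>a}\|_2\,\|x_i\|_\infty$; the appeal to equidistribution of $|\xi_i|$ and $|\xi_i^*|$ is correct but not needed. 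In the converse direction you should state explicitly that $k(i)$ is finite for each fixed $i$: this follows because $\|1_{[k,\infty)}(|\xi_i|)\,|\xi_i|\|_2\to 0$ as $k\to\infty$ for each $|\xi_i|\in\rL^2(M,\tau)$. Once that is noted, the diagonalization is sound: the sets $J_k$ are nested ($J_1\supseteq J_2\supseteq\cdots$) and all lie in $\omega$, so $k(i)\to\infty$ along $\omega$; the Chebyshev estimate gives $\|x_i\|_2\to 0$ along $\omega$; and $\|\xi_i x_i\|_2>\varepsilon$ on the $\omega$-large set $J_1$, whence $\xi x\neq 0$.
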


\section{Ultrapower implementation of binormal states} \label{section ultrapower binormal}
The following is the key technical result in this paper. It is inspired by an averaging argument that goes back to \cite{Ha84} and was used more recently in \cite{IV15} and \cite[Lemma 5.1]{Ma17}.
\begin{thm} \label{ultrapower implement tracial}
Let $M$ be a finite von Neumann algebra. For any state $\Psi \in \B(\rL^2(M))$, the following are equivalent:
\begin{enumerate}
\item [$(\rm i)$] $\Psi \circ \lambda$ and $\Psi \circ \rho$ are normal states.
\item [$(\rm ii)$] There exists an abelian von Neumann algebra $A$, a cofinal ultrafilter $\omega$ on some directed set $I$ and a vector $\xi \in \rL^2((A \ovt M)^\omega)$ such that $\Psi(T)=\langle (1 \otimes T)^\omega \xi, \xi \rangle$ for all $T \in \B(\rL^2(M))$. 
%\item [$(\rm iii)$] For every $\varepsilon > 0$, there exists some equi-integrable set $\Omega \subset \rL^2(M)$ and some positive linear functional $\Phi \in \mathcal{C}(\Omega)$ such that $\Phi \leq \Psi$ and $\Phi(1) \geq 1-\varepsilon$. 
\end{enumerate} 
\end{thm}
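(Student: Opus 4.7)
The plan is to prove (ii) $\Rightarrow$ (i) by direct inspection of the standard form and then focus on the substantive direction (i) $\Rightarrow$ (ii). For the easy direction, since $\xi \in \rL^2((A \ovt M)^\omega)$ lies in the standard form of $(A \ovt M)^\omega$, the map $y \mapsto \langle y \xi, \xi \rangle$ is a normal state on $(A \ovt M)^\omega$. Left multiplication by $1 \otimes x$ (for $x \in M$) coincides with the action of the operator $(1 \otimes \lambda(x))^\omega$ on this Hilbert space, so $\Psi \circ \lambda(x) = \langle (1 \otimes x)\xi, \xi \rangle$ is the restriction of a normal state to $1 \otimes M$, hence normal on $M$. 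The argument for $\Psi \circ \rho$ is symmetric via right multiplication, or equivalently via the modular conjugation $J$.

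For (i) $\Rightarrow$ (ii), I would take $A = \C$ and build $\xi$ directly in $\rL^2(M^\omega)$. First, approximate $\Psi$ weak* by vector states $\omega_{\xi_i}$ associated to unit vectors $\xi_i \in \rL^2(M)$: the set of vector states is weak*-dense in the full state space of $\B(\rL^2(M))$, since any attempted Hahn--Banach separation by a self-adjoint $T$ would produce $\Psi(T) > \sup_{\|\eta\|_2 = 1} \langle T \eta, \eta \rangle = \max \sigma(T)$, contradicting $T \le \max \sigma(T) \cdot 1$ which forces $\Psi(T) \le \max \sigma(T)$ for every state. Fix a cofinal ultrafilter $\omega$ on the directed set of neighborhoods of $\Psi$ along which $\omega_{\xi_i} \to \Psi$.

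The decisive step is to verify that $\xi := (\xi_i)^\omega$ lies in $\rL^2(M^\omega) \subset \rL^2(M)^\omega$, i.e.\ that $(\xi_i)$ is $\omega$-equi-integrable. The normal functional $\omega_{\xi_i} \circ \rho = \tau(\xi_i^* \xi_i \,\cdot\,)$ has $\rL^1$-density $\xi_i^* \xi_i$ and converges weak* in $M^*$ to $\Psi \circ \rho$, which is normal by hypothesis, so the convergence takes place weakly in $\rL^1(M) = M_*$. By the noncommutative Dunford--Pettis theorem, $(\xi_i^* \xi_i)$ is relatively weakly compact in $\rL^1(M)$, hence uniformly integrable, and via the spectral identity $\|1_{[a, \infty)}(|\xi_i|) |\xi_i|\|_2^2 = \tau(|\xi_i|^2 \, 1_{[a, \infty)}(|\xi_i|))$ this is precisely the equi-integrability of $(\xi_i)$ in $\rL^2(M)$ recalled at the end of the Preliminaries. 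The ultralimit $\Psi(T) = \lim_\omega \langle T \xi_i, \xi_i \rangle = \langle (1 \otimes T)^\omega \xi, \xi \rangle$ then holds for every $T \in \B(\rL^2(M))$.

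The principal obstacle is precisely this equi-integrability step, where the finiteness of $M$ is crucial: it provides the trace identifying $M_* \cong \rL^1(M)$, enabling the Dunford--Pettis translation of weak* convergence into uniform integrability, and the trace property equates the spectral distributions of $|\xi_i|$ and $|\xi_i^*|$, so that the symmetric normality assumption on $\Psi \circ \lambda$ is automatically compatible with the construction. The extra flexibility of allowing a nontrivial abelian algebra $A$ in the statement seems to anticipate the averaging techniques in the spirit of \cite{Ha84} that would be needed to remove the tracial assumption in subsequent sections.
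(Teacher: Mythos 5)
The easy implication (ii)$\Rightarrow$(i) and the weak*-density of vector states are fine. The substantive direction, however, has a genuine gap that cannot be patched without changing the architecture of the argument.

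Your key step is: if $h_i := \xi_i^*\xi_i \to h$ weakly in $\rL^1(M,\tau)$ and $h\in\rL^1$, then $(h_i)$ is uniformly integrable. This is only true for \emph{sequences}, where it follows from Eberlein--\v Smulian: every subsequence of a weakly convergent sequence converges weakly to the same limit, so the range is relatively weakly compact. For a general net there is no such implication, because the range of a bounded weakly convergent net need not be relatively weakly compact; already in $\rL^1([0,1])$ one can produce a net of probability densities converging weakly to $1$ whose range contains $a\cdot 1_E$, $|E|=1/a$, for arbitrarily large $a$. And you cannot replace your net by a sequence: your directed set is the uncountable system of weak* neighborhoods of $\Psi$, and since $\B(\rL^2(M))$ is never norm-separable the weak* topology on states is never metrizable. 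So the crucial equi-integrability of $(\xi_i)$ is not established.

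This gap is exactly what the paper's two auxiliary devices are designed to close, and it is also why your simplification $A=\C$ is not available. The paper upgrades the weak* convergence of normal approximants $\Psi_i$ to \emph{norm} convergence of $\Psi_i\circ\lambda$ and $\Psi_i\circ\rho$ via the Hahn--Banach/Mazur trick, i.e.\ by replacing $\Psi_i$ with convex combinations. Norm convergence, in turn, makes the reduction to the truncated set $\Omega_\kappa$ possible (the paper's second claim). But convex combinations of vector states are not vector states; a general normal state is implemented by a Hilbert--Schmidt operator $\Phi^{1/2}\in\rL^2(M)\otimes\rL^2(M)$, not by a single vector in $\rL^2(M)$. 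The bound $\|\Phi^{1/2}\|_4^4\le 2\kappa^2$ fails for $\Phi^{1/2}$ itself, and the map $\theta(\Phi)=(V\otimes 1)(\Phi^{1/2})$ into $\rL^2(A\ovt M)$, which sends the orthonormal basis to the Haar unitaries $u_e\in A$, is precisely what decorrelates the two tensor legs so that the $\rL^4$-estimate of Claim~\ref{Omega equi-integrable} goes through. The abelian algebra $A$ is therefore essential already in the tracial case, not a device reserved for later removal of traciality; the paper even signals this by proving separately, at the end of Section~\ref{section II1}, that $A$ can be dropped exactly when $M$ has property~$(\Gamma)$, and by a nontrivial argument.
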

\begin{proof}
$(\rm ii) \Rightarrow (\rm i)$ Clear.
$(\rm i) \Rightarrow (\rm ii)$ Fix a faithful normal tracial state $\tau$. Let $E$ be an orthonormal basis of $\rL^2(M,\tau)$. Let $A=L(\Z^{\oplus E})$ and for every $e \in E$, let $u_e \in \mathcal{U}(A)$ be the generator of the copy of $\Z$ indexed by $e$. Let $V : \rL^2(M,\tau) \rightarrow \rL^2(A,\mu)$ be the unique isometry that sends $e$ to $u_e$. Define a map $\theta : \B(\rL^2(M,\tau))^+_* \rightarrow \rL^2(A \ovt M,\mu \otimes \tau)$ by $\theta(\Phi)=(V \otimes 1)(\Phi^{1/2})$, where we view the Hilbert-Schmidt operator $\Phi^{1/2}$ as a vector in $\rL^2(M,\tau) \otimes \rL^2(M,\tau)$. For every $\kappa > 0$, let $\Omega_{\kappa}$ be the set of all $\Phi \in \B(\rL^2(M,\tau))^+_*$ such that $\Phi \circ \lambda \leq \kappa \tau$ and $\Phi \circ \rho \leq \kappa \tau$.

\begin{claim} \label{Omega equi-integrable}
For every $\kappa > 0$, the set $\theta(\Omega_{\kappa}) \subset \rL^2(A \ovt M,\mu \otimes \tau)$ is equi-integrable.
\end{claim}

\begin{proof}[Proof of Claim \ref{Omega equi-integrable}]
Take $\Phi \in \Omega_{\kappa}$. Write $\Phi^{1/2}=\sum_{e \in E} e \otimes a_e$ with $a_e \in \rL^2(M,\tau)$. Then we have $a_e \in M$ for all $e \in E$ and $\sum_{e \in E} a_e a_e^* \leq \kappa$ and $\sum_{e \in E} a_e^*a_e \leq \kappa$. 

Let $x=\theta(\Phi^{1/2}) = \sum_{e \in E} u_e \otimes a_e$.  Then we have
$$ x^*x = \sum_{ e,f \in E} u_e^{*}u_f \otimes a_e^* a_f $$
 and 
 $$|x |^4= (x^*x)^2 = \sum_{e,f,g,h} u_e^{*}u_f u_g^{*}u_h\otimes a_e^* a_f a_g^* a_h. $$
Now, observe that $$ \tau(u_e^*u_fu_g^*u_h)= \begin{cases}
1&\text{if $\{e,g\}=\{f,h\}$}\\
0 &\text{otherwise.}
\end{cases} $$ 
Hence, we have
$$ (\mu \otimes \tau)(|x|^4)  
\leq \sum_{e,g \in E} \tau(a_e^*a_e a_g^*a_g) + \sum_{e,g \in E} \tau(a_e^* a_g a_g^* a_e) \leq 2\kappa^2.$$
This shows that $\theta(\Omega_\kappa)$ is bounded in $\rL^4(A \otimes M, \mu \otimes \tau)$. A fortiori, it is equi-integrable in $\rL^2(A \otimes M, \mu \otimes \tau)$.

\end{proof}

Now, take $(\Psi_i)_{i \in I}$ a net of normal states on $\B(\rL^2(M))$ that converges to $\Psi$ in the weak$^*$ topology. By the Hahn-Banach theorem, up to taking convex combination, we can suppose that $$\lim_i \| \Psi_i \circ \lambda - \Psi \circ \lambda \|=\lim_i \| \Psi_i \circ \rho - \Psi \circ \rho \|=0.$$

\begin{claim}
For every $\varepsilon > 0$, there exists $\kappa > 0$ and $i_0 \in I$ such that for all $i \geq i_0$, we have $d(\Psi_i, \Omega_{\kappa}) < \varepsilon$ where $d$ is the distance comming from the norm of $\B(\rL^2(M,\tau))_*$.
\end{claim}
\begin{proof}
Fix $\varepsilon > 0$. Write $\Psi_i \circ \lambda = \tau(h_i \cdot)$ and $\Psi_i \circ \rho = \tau(k_i \cdot)$ with $h_i,k_i \in \rL^1(M,\tau)_+$. Note that the nets $(h_i)_{i \in I}$ and $(k_i)_{i \in I}$ are convergent in $\rL^1(M,\tau)$, and in particular, they are equi-integrable. Thus, we can find $\kappa > 0$ and $i_0 \in I$ large enough such that $\tau(e_ih_i) \geq 1- \varepsilon$ and $\tau(f_i k_i) \geq 1-\varepsilon$ for all $i \geq i_0$ where $e_i=1_{[0,\kappa]}(h_i)$ and $f_i=1_{[0,\kappa]}(k_i)$. Let $q_i=\lambda(e_i)\rho(f_i)$. Then, for all $i \geq i_0$, we have $$\Psi_i(q_i) \geq \Psi_i(\lambda(e_i))+\Psi_i(\rho(f_i))-1 \geq 1-2\varepsilon,$$ hence $\| \Psi_i - \Psi_i(q_i \cdot q_i) \| \leq 4 \varepsilon^{1/2}$. Moreover, by construction, we have $\Psi_i(q_i \cdot q_i) \in \Omega_{\kappa}$. This is what we wanted, up to replacing $\varepsilon$ by $4\varepsilon^{1/2}$.
\end{proof}

By using the claim and the Powers-St\o rmer inequality, we get $d \left( \theta(\Psi_i), \theta(\Omega_{\kappa}) \right)^2 \leq \varepsilon$ for all $i \geq i_0$ where $d$ is the distance comming from the $\rL^2$-norm. By Claim \ref{Omega equi-integrable}, $\theta(\Omega_{\kappa})$ is equi-integrable in $\rL^2(A \ovt M, \mu \otimes \tau))$. Therefore, the net $(\theta(\Psi_i))_{i \in I}$ is $\omega$-equi-integrable for any cofinal ultrafilter $\omega$ on $I$. We thus have $\xi=(\theta(\Psi_i))^\omega \in \rL^2((A \ovt M)^\omega, (\mu \otimes \tau)^\omega)$. By construction, for all $T \in \B(\rL^2(M,\tau))$, we have 
$$\langle (1 \otimes T)^\omega \xi, \xi \rangle=\lim_{i \to \omega} \langle (1 \otimes T) \theta(\Psi_i), \theta(\Psi_i) \rangle = \lim_{i \to \omega} \langle (1 \otimes T) \Psi_i^{1/2}, \Psi_i^{1/2} \rangle= \Psi(T)$$
as we wanted.
\end{proof}

\begin{thm} \label{ultrapower implement semifinite}
Let $M$ be a countably decomposable semifinite von Neumann algebra. For a state $\Psi \in \B(\rL^2(M))^*$, the following are equivalent:
\begin{enumerate}
\item [$(\rm i)$] $\Psi \circ \lambda$ and $\Psi \circ \rho$ are normal states.
\item [$(\rm ii)$] There exists an abelian von Neumann algebra $A$, a cofinal ultrafilter $\omega$ on some directed set $I$ and a vector $\xi \in \rL^2((A \ovt M)^\omega)$ such that $\Psi(T)=\langle (1 \otimes T)^\omega \xi, \xi \rangle$ for all $T \in \B(\rL^2(M))$. 
\end{enumerate} 
\end{thm}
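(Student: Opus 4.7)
The implication (ii) $\Rightarrow$ (i) is immediate, exactly as in the tracial case: $\Psi \circ \lambda$ and $\Psi \circ \rho$ are pullbacks of a normal state on $\rB(\rL^2(M))$ via the normal morphisms $\lambda, \rho$. For (i) $\Rightarrow$ (ii), my plan is to run the same scheme as in the proof of Theorem \ref{ultrapower implement tracial} verbatim, with the faithful normal tracial state replaced by a faithful normal \emph{semifinite} trace $\tau$ on $M$, which exists since $M$ is countably decomposable and semifinite. Concretely, fix an orthonormal basis $E$ of $\rL^2(M, \tau)$, set $A = \rL(\Z^{\oplus E})$ with Haar unitary generators $u_e$ and trace $\mu$, take the isometry $V : \rL^2(M, \tau) \to \rL^2(A, \mu)$ with $V(e) = u_e$, define $\theta(\Phi) = (V \otimes 1)(\Phi^{1/2}) \in \rL^2(A \ovt M, \mu \otimes \tau)$ for $\Phi \in \rB(\rL^2(M))_*^+$, and set $\Omega_\kappa = \{\Phi : \Phi \circ \lambda \leq \kappa \tau, \ \Phi \circ \rho \leq \kappa \tau\}$.

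The main point to verify is that the equi-integrability claim for $\theta(\Omega_\kappa)$ in $\rL^2(A \ovt M, \mu \otimes \tau)$ survives the passage to a semifinite trace. Writing $\Phi^{1/2} = \sum_e e \otimes a_e$, the same reasoning as in the finite case gives $a_e \in M$ with $\sum a_e a_e^* \leq \kappa \cdot 1$ and $\sum a_e^* a_e \leq \kappa \cdot 1$ (these operator inequalities only use $\Phi \in \Omega_\kappa$, not the finiteness of $\tau$). Setting $Y = \sum a_e^* a_e$ and $Z = \sum a_e a_e^*$, the $\rL^4$ estimate reduces to bounding $\tau(Y^2) + \tau(Z^2)$. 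The tracial-case bound $\tau(Y^2) \leq \kappa^2$ relied on $\tau(Y) \leq \tau(\kappa \cdot 1) = \kappa$, which is no longer available. The fix is trivial: since $\Phi$ is a \emph{state}, Parseval gives $\tau(Y) = \tau(Z) = \|\Phi^{1/2}\|_{\mathrm{HS}}^2 = \Tr(\Phi) = 1$ regardless of whether $\tau$ is finite, so combining with $Y, Z \leq \kappa \cdot 1$ yields $\tau(Y^2) \leq \kappa \tau(Y) = \kappa$ and similarly for $Z$. Hence $(\mu \otimes \tau)(|\theta(\Phi)|^4) \leq 2\kappa$ uniformly in $\Phi \in \Omega_\kappa$, so $\theta(\Omega_\kappa)$ is bounded in $\rL^4$ and therefore equi-integrable in $\rL^2$.

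The remainder of the argument carries over without change: use Hahn--Banach and convex combinations to obtain a net of normal states $(\Psi_i)_i$ with $\|\Psi_i \circ \lambda - \Psi \circ \lambda\|, \|\Psi_i \circ \rho - \Psi \circ \rho\| \to 0$; write $\Psi_i \circ \lambda = \tau(h_i \cdot)$ and $\Psi_i \circ \rho = \tau(k_i \cdot)$, note that the nets $(h_i), (k_i)$ are convergent hence equi-integrable in $\rL^1(M, \tau)$, and for any $\varepsilon > 0$ pick $\kappa$ so that the spectral projections $e_i = 1_{[0,\kappa]}(h_i)$, $f_i = 1_{[0,\kappa]}(k_i)$ satisfy $\tau(h_i e_i), \tau(k_i f_i) \geq 1 - \varepsilon$. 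Setting $q_i = \lambda(e_i) \rho(f_i)$ yields $\Psi_i(q_i) \geq 1 - 2\varepsilon$ and $\Psi_i(q_i \cdot q_i) \in \Omega_\kappa$ at norm distance $\lesssim \sqrt{\varepsilon}$ from $\Psi_i$. Powers--St\o rmer then gives $d(\theta(\Psi_i), \theta(\Omega_\kappa))^2 \leq \varepsilon$ for $i$ large, and equi-integrability of $\theta(\Omega_\kappa)$ implies that $\xi := (\theta(\Psi_i))^\omega \in \rL^2((A \ovt M)^\omega)$ for any cofinal ultrafilter $\omega$ on the directing set, implementing $\Psi$ as required. A cosmetic but noteworthy point is that in the semifinite setting $e_i$ and $f_i$ may have infinite $\tau$-trace, but this plays no role whatsoever: the argument only uses the operator bound $h_i e_i \leq \kappa \cdot 1$. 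The main (and essentially only) obstacle in the generalization is the $\rL^4$-estimate discussed above, which is handled by invoking Parseval in place of $\tau(1) \leq 1$.
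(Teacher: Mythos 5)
Your $\rL^4$ estimate via Parseval is on the right track (up to a small point: as written, $\Omega_\kappa$ contains positive functionals of arbitrary total mass, so $\tau(Y)=\Phi(1)$ need not be $\leq 1$; you would need to add the constraint $\Phi(1)\leq 1$ to your $\Omega_\kappa$, which the truncated $\Psi_i(q_i\cdot q_i)$ does satisfy). But the proof has a genuine gap at the step where you pass from ``$\theta(\Omega_\kappa)$ is equi-integrable'' to ``$\xi=(\theta(\Psi_i))^\omega\in\rL^2((A\ovt M)^\omega)$''.

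The equivalence ``$(\xi_i)^\omega\in\rL^2(N^\omega)$ iff $(\xi_i)$ is $\omega$-equi-integrable'' quoted in the Preliminaries is specific to von Neumann algebras carrying a faithful normal \emph{finite} trace. When the trace is only semifinite, $\rL^4$-boundedness controls the size of $|\xi_i|$ but says nothing about \emph{where} the support of $\xi_i$ sits, and mass can escape to infinity. Concretely, take $M=\B(\ell^2)$ with $\tau=\Tr$ and $\xi_n=e_{nn}$; these all have $\rL^4$-norm $1$, hence satisfy your equi-integrability condition, yet $(\xi_n)^\omega\notin\rL^2(M^\omega)$, since the $*$-strongly null sequence $x_n=e_{nn}\in\cI_\omega(M)$ satisfies $x_n\xi_n=\xi_n$ of norm one, so $(x_n)^\omega$ does not annihilate $(\xi_n)^\omega$. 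Membership in $\rL^2(N^\omega)$ for infinite $N$ additionally requires a \emph{tightness} condition keeping the $\xi_i$ concentrated in a fixed corner of finite trace.

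This is exactly what the paper supplies with the increasing sequence of finite-trace projections $(p_n)$. The truncated sets $\Omega_{\kappa,n}$ consist of functionals supported on $\lambda(p_n)\rho(p_n)$, so $\theta(\Omega_{\kappa,n})\subset\rL^2(A\ovt p_nMp_n)$ lives in the $\rL^2$-space of a genuinely \emph{finite} von Neumann algebra, where the equi-integrability criterion for $\rL^2$ of the ultrapower is valid. Correspondingly, the spectral truncations are $e_i=1_{(0,\kappa]}(p_nh_ip_n)$ rather than $1_{[0,\kappa]}(h_i)$: excluding $0$ from the interval and pre-compressing by $p_n$ forces $e_i\leq p_n$ to have finite trace, and the $\rL^1$-convergence of the $h_i$ is what lets one choose $n$ large so that the truncated mass is $\geq 1-\varepsilon$. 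Your remark that ``$e_i$ and $f_i$ may have infinite $\tau$-trace but this plays no role whatsoever'' is precisely where the argument breaks: the finiteness of the trace of the supporting projections is not cosmetic but is the mechanism that places $\xi$ in $\rL^2((A\ovt M)^\omega)$ rather than merely in $\rL^2(A\ovt M)^\omega$.
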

\begin{proof}
$(\rm ii) \Rightarrow (\rm i)$ Clear.

$(\rm i) \Rightarrow (\rm ii)$ Fix a trace $\tau \in \cP(M)$ and take $(p_n)_{n \in \N}$ an increasing sequence of finite trace projections in $M$ such that $p_n$ converges strongly to $1$. Let $E$ be an orthonormal basis of $\rL^2(M,\tau)$ that is compatible with the increasing sequence of subspaces $p_n\rL^2(M,\tau)p_n=\rL^2(p_nMp_n,\tau)$. By this we mean that there exists an increasing sequence of subsets $F_n \subset E, \: n \in \N$ such that $F_n$ is an orthonormal basis of $\rL^2(p_nMp_n,\tau)$ for every $n \in \N$.

 Let $A=L(\Z^{\oplus E})$ and for every $e \in E$, let $u_e \in \mathcal{U}(A)$ be the generator of the copy of $\Z$ indexed by $e$. Let $V : \rL^2(M,\tau) \rightarrow \rL^2(A,\mu)$ be the unique isometry that sends $e$ to $u_e$. Define a map $\theta : \B(\rL^2(M,\tau))^+_* \rightarrow \rL^2(A \ovt M,\mu \otimes \tau)$ by $\theta(\Phi)=(V \otimes 1)(\Phi^{1/2})$, where we view the Hilbert-Schmidt operator $\Phi^{1/2}$ as a vector in $\rL^2(M,\tau) \otimes \rL^2(M,\tau)$. For every $\kappa > 0$ and every $n \in \N$, let $\Omega_{\kappa,n}$ be the set of all $\Phi \in \B(\rL^2(M))^+_*$ such that $\Phi \circ \lambda \leq \kappa \tau$, $\Phi \circ \rho \leq \kappa \tau$ and $\Phi$  is supported on $\lambda(p_n)\rho(p_n)$ (or equivalently, $\Phi \in \B(\rL^2(p_nMp_n,\tau))^+_*$).

Having replaced $\Omega_\kappa$ by $\Omega_{\kappa,n}$, we obtain the following claim exactly as in the finite case.
\begin{claim} \label{Omega equi-integrable 2}
For every $\kappa > 0$ and every $n \in \N$, the set $\theta(\Omega_{\kappa,n}) \subset (1 \otimes p_n)\rL^2(A \ovt M,\mu \otimes \tau)(1 \otimes p_n)$ is equi-integrable.
\end{claim}

Now, take $(\Psi_i)_{i \in I}$ a net of normal states on $\B(\rL^2(M,\tau))$ that converge to $\Psi$ in the weak$^*$ topology. By the Hahn-Banach theorem, up to taking convex combination, we can suppose that $$\lim_i \| \Psi_i \circ \lambda - \Psi \circ \lambda \|=\lim_i \| \Psi_i \circ \rho - \Psi \circ \rho \|=0.$$

\begin{claim}
For every $\varepsilon > 0$, there exists $\kappa > 0$, $n \in \N$ and $i_0 \in I$ such that for all $i \geq i_0$, we have $d(\Psi_i, \Omega_{\kappa,n}) \leq \varepsilon$.
\end{claim}
\begin{proof}
Fix $\varepsilon > 0$. Write $\Psi_i \circ \lambda = \tau(h_i \cdot)$ and $\Psi_i \circ \rho = \tau(k_i \cdot)$ with $h_i,k_i \in \rL^1(M,\tau)_+$. Note that the nets $(h_i)_{i \in I}$ and $(k_i)_{i \in I}$ are convergent in $\rL^1(M,\tau)$, and in particular, they are equi-integrable. Thus, we can find $\kappa > 0$, $n \in \N$ and $i_0 \in I$ large enough such that $\tau(e_ih_i) \geq 1- \varepsilon$ and $\tau(f_i k_i) \geq 1-\varepsilon$ for all $i \geq i_0$ where $e_i=1_{(0,\kappa]}(p_nh_ip_n)$ and $f_i=1_{(0,\kappa]}(p_nk_ip_n)$. Let $q_i =\lambda(e_i)\rho(f_i)$. Then, for all $i \geq i_0$, we have $$\Psi_i(q_i) \geq \Psi_i(\lambda(e_i))+\Psi_i(\rho(f_i))-1 \geq 1-2\varepsilon,$$  hence we get easily $\| \Psi_i - \Psi_i(q_i \cdot q_i) \| \leq 4 \varepsilon^{1/2}$. Moreover, by construction, we have $\Psi_i(q_i \cdot q_i) \in \Omega_{\kappa,n}$. This is what we wanted, up to replacing $\varepsilon$ by $4\varepsilon^{1/2}$.
\end{proof}

Let $\omega$ be a cofinal ultrafiler on $I$ and let $\xi=(\theta(\Psi_i))^\omega \in \rL^2(M)^\omega$. By using the previous claim and the Powers-St\o rmer inequality, we get $d \left( \theta(\Psi_i), \theta(\Omega_{\kappa,n}) \right)^2 \leq \varepsilon$ for all $i \geq i_0$. Thus $d \left( \xi, \theta(\Omega_{\kappa,n})^\omega \right)^2 \leq \varepsilon$. By Claim \ref{Omega equi-integrable 2}, $\theta(\Omega_{\kappa,n})$ is equi-integrable in $\rL^2(A \ovt M, \mu \otimes \tau))$, hence $\theta(\Omega_{\kappa,n})^\omega \subset \rL^2((A \ovt M)^\omega, (\mu \otimes \tau)^\omega)$. Therefore, we have  $d \left( \xi,  \rL^2((A \ovt M)^\omega, (\mu \otimes \tau)^\omega) \right)^2 \leq \varepsilon$.

Since $\varepsilon > 0$ was arbitrary, we conclude that $\xi=(\theta(\Psi_i))^\omega \in \rL^2((A \ovt M)^\omega, (\mu \otimes \tau)^\omega)$. By construction, for all $T \in \B(\rL^2(M,\tau))$, we have 
$$\langle (1 \otimes T)^\omega \xi, \xi \rangle=\lim_{i \to \omega} \langle (1 \otimes T) \theta(\Psi_i), \theta(\Psi_i) \rangle = \lim_{i \to \omega} \langle (1 \otimes T) \Psi_i^{1/2}, \Psi_i^{1/2} \rangle= \Psi(T)$$
as we wanted.
\end{proof}

\begin{df}[\cite{Ma18}]
Let $X$ be an unbounded self-adjoint operator on a Hilbert space $H$. Take $\lambda \in \R$. We say that a state $\Psi \in \B(H)^*$ is a \emph{$\lambda$-eigenstate} for $X$ if $\Psi(h(X))=h(\lambda)$ for every bounded continuous function $h \in C_b(\R)$.
\end{df}
\begin{remark} \label{remark eigenstate}
Note that $\Psi$ is a $\lambda$-eigenstate for $X$ if and only if $\Psi(1_U(X))=1$ for every open set $U \subset \R$ containing $\lambda$. Indeed, this condition implies that $|\Psi(h(X))|=|\Psi(1_U(X) h(X))| \leq \| 1_U h\|_\infty$ and if $h(\lambda)=0$, then $\inf_U \| 1_U h\|_\infty=0$, thus $\Psi(h(X))=0$.
\end{remark}

\begin{thm}  \label{ultrapower all type}
Let $M$ be a countably decomposable von Neumann algebra. Let $\Psi \in \B(\rL^2(M))^*$ be a state such that
\begin{enumerate}
\item $\Psi \circ \lambda$ and $\Psi \circ \rho$ are normal states.
\item There exists $\varphi,\phi \in \cP(M)$ such that $\Psi$ is a $1$-eigenstate of $\Delta_{\varphi,\phi}$.
\end{enumerate}
Then there exists an abelian von Neumann algebra $A$, a cofinal ultrafilter $\omega$ on some directed set $I$ and a vector $\xi \in \rL^2((A \ovt M)^\omega)$ such that $\Psi(T)=\langle (1 \otimes T)^\omega \xi, \xi \rangle$ for all $T \in \B(\rL^2(M))$. 
\end{thm}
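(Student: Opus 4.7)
The plan is to adapt the strategy of Theorem \ref{ultrapower implement semifinite} to the type $\III$ setting by working inside the core $c(M)$ and exploiting the $1$-eigenstate hypothesis to recover a tracial $\rL^{4}$-type estimate. I would define $\theta : \B(\rL^{2}(M))^{+}_{*} \to \rL^{2}(A \ovt M)$ exactly as in the semifinite proof, using an orthonormal basis $E$ of $\rL^{2}(M)$, the abelian algebra $A = L(\Z^{\oplus E})$, and the isometry $V : \rL^{2}(M) \to \rL^{2}(A)$ sending each $e$ to its generating unitary $u_{e}$; the identification $\rL^{2}(M)\otimes\rL^{2}(M)=\rL^{2}(M\ovt M)$ of standard forms (obtained via the modular conjugation $J$) ensures that $\theta(\Phi)$ is a bona fide vector in $\rL^{2}(A \ovt M)$ even in the type $\III$ setting.

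The missing ingredient is a tracial bound on $\theta(\Phi)$, and this is where the $1$-eigenstate hypothesis enters. Approximate $\Psi$ by normal states $(\Psi_{i})_{i \in I}$ with $\|\Psi_{i}\circ\lambda-\Psi\circ\lambda\| \to 0$ and $\|\Psi_{i}\circ\rho-\Psi\circ\rho\| \to 0$ as in the earlier proofs. For $K > 1$, set $p_{K} = 1_{[K^{-1}, K]}(\Delta_{\varphi,\phi})$; by Remark \ref{remark eigenstate}, the $1$-eigenstate hypothesis gives $\Psi(p_{K}) \to 1$ as $K \to \infty$, so for every $\varepsilon > 0$ one can arrange $\Psi_{i}(p_{K}) \geq 1 - \varepsilon$ eventually, whence $\|\theta(\Psi_{i}) - \theta(p_{K}\Psi_{i}p_{K})\|_{2} \leq C\varepsilon^{1/2}$ by a Powers-St\o rmer argument. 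Writing $(p_{K}\Psi_{i}p_{K})^{1/2} = \sum_{e} e \otimes a_{e}$ with $a_{e} \in p_{K}\rL^{2}(M)$, the spectral cutoff confines each $a_{e}$ to a bounded window of $\Delta_{\varphi,\phi}$, so the approximate intertwining $\varphi^{\ri t} a_{e} \phi^{-\ri t} \approx a_{e}$ transfers the products $a_{e}^{*}a_{f}$ into a neighbourhood of the centralizer $M_{\phi}$, using the identity $M_{\varphi,\phi}^{*} M_{\varphi,\phi} \subset M_{\phi}$ in the limit $K = \infty$ and a spectral perturbation argument for finite $K$. Since $\phi|_{M_{\phi}}$ is tracial, one can then reproduce the semifinite $\rL^{4}$-bound in the form
\[ (\mu \otimes \tau)\bigl(|\theta(p_{K}\Psi_{i}p_{K})|^{4}\bigr) \leq C(K)\kappa^{2}, \]
using the trace $\tau$ on $c(M)$, which yields equi-integrability in $\rL^{2}(A \ovt c(M), \mu \otimes \tau)$.

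Combining a diagonalization in $K$ with the Powers-St\o rmer estimate and the argument of Theorem \ref{ultrapower implement semifinite} then places the ultrapower $\xi = (\theta(\Psi_{i}))^{\omega}$ inside $\rL^{2}((A \ovt c(M))^{\omega})$; the inherited scaling $\theta_{\lambda}(\theta(\Psi_{i})) = \lambda^{-1/2}\theta(\Psi_{i})$ then confines $\xi$ to the subspace $\rL^{2}((A \ovt M)^{\omega})$, and by construction $\Psi(T) = \langle (1 \otimes T)^{\omega}\xi, \xi\rangle$. The main obstacle is the tracial reduction for the truncated densities: in the type $\III$ setting the $a_{e}$'s are only elements of $\rL^{2}(M)$, and the moments $\tau(a_{e}^{*}a_{f}a_{g}^{*}a_{h})$ can only be computed as finite tracial expressions after transferring the products to $M_{\phi}$ via the intertwining relation $\varphi^{\ri t} a_{e} \phi^{-\ri t} \approx a_{e}$. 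The $1$-eigenstate hypothesis, together with the spectral cutoff $p_{K}$, is precisely tailored to make this transfer possible with errors vanishing uniformly as $K \to \infty$.
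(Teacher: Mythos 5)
There is a genuine gap in the proposal, located exactly where you flagged it as "the main obstacle". The $\rL^{4}$-type bound that powers the semifinite Claim~\ref{Omega equi-integrable} relies in an essential way on having a trace on $M$: from $\Phi\circ\lambda\leq\kappa\tau$ and $\Phi\circ\rho\leq\kappa\tau$ one deduces that in the expansion $\Phi^{1/2}=\sum_{e}e\otimes a_{e}$ each $a_{e}$ is a \emph{bounded} operator in $M$ with $\sum a_{e}a_{e}^{*}\leq\kappa$ and $\sum a_{e}^{*}a_{e}\leq\kappa$, and this is what makes $|x|^{4}$ a positive element whose trace can be estimated by counting. In the type $\III$ setting the spectral truncation $p_{K}=1_{[K^{-1},K]}(\Delta_{\varphi,\phi})$ is a projection in $\B(\rL^{2}(M))$, not in $\lambda(M)$ or $\rho(M)$, so cutting down $\Psi_{i}$ by $p_{K}$ does not turn the coordinate vectors $a_{e}\in\rL^{2}(M)$ into bounded elements, nor does it place them in any neighbourhood of $M_{\phi}$: the eigenvalue window for $\Delta_{\varphi,\phi}$ constrains the Hilbert--Schmidt kernel globally, not the individual columns $a_{e}$. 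The proposed transfer "to $M_{\phi}$'' via an approximate intertwining $\varphi^{\ri t}a_{e}\phi^{-\ri t}\approx a_{e}$ is therefore not available, and the quantities $\tau(a_{e}^{*}a_{f}a_{g}^{*}a_{h})$ have no interpretation without a tracial ambient. This is not a technicality that a diagonalisation in $K$ will fix; it is the place where the tracial and type $\III$ worlds genuinely differ.

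The paper's proof takes a different route that sidesteps this entirely: instead of truncating densities, it transports $\Psi$ to a state $\Phi=\Psi\circ E$ on $\B(\rL^{2}(c(M)))$, where $c(M)$ is semifinite and $E$ is the normal slice map $T\otimes S\mapsto\langle Sf,f\rangle T$ coming from the factorisation $\rL^{2}(c(M))\cong\rL^{2}(M)\otimes\rL^{2}(\R)$. The $1$-eigenstate hypothesis is used only to verify that $\Phi\circ\rho_{c(M)}$ is normal (via $\Phi(U)=1$ for a suitable disintegrable unitary $U$, whence $U$ lies in the multiplicative domain of $\Phi$), after which Theorem~\ref{ultrapower implement semifinite} applies verbatim to $c(M)$; the final step then peels off the $\rL^{2}(\R)$-factor using $\Phi(1\otimes p)=1$. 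The advantage of this formulation is that all delicate estimates remain in the semifinite theorem, which is applied as a black box. If you want to salvage your approach, the cleanest move is exactly this: do not attempt tracial estimates on $\rL^{2}(M)$ directly; instead push $\Psi$ into a semifinite framework first and let the existing machinery do the work.
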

\begin{proof}
Let $N=c(M)$. We can identify $\rL^2(N)$ with $\rL^2(M) \otimes \rL^2(\R)$ in such a way that :
\begin{itemize}
\item $\lambda_N(\varphi^{it})=1 \otimes \lambda_\R(t)$ for all $s \in \R$.
 \item $\lambda_N(x)$ is the desintegrable operator on $\rL^2(N)=\rL^2(\R, \rL^2(M))$ given by $t \mapsto \lambda_M(\sigma_{-t}^\varphi(x))$.
\item $\rho_N(\phi^{it})=  \Delta_{ \varphi,\phi }^{i t}  \otimes \lambda_\R(t)$ for all $t \in \R$.
\item $\rho_N(x)= \rho_M(x)\otimes 1 $ for all $x \in M$.
\end{itemize}
Choose any unit vector $f \in \rL^2(\R)$ and let $E : \B(\rL^2(N)) \rightarrow \B(\rL^2(M))$ be the normal conditional expectation given by $E(T \otimes S)=\langle S f, f\rangle T$ for all $S \in \B(\rL^2(\R))$ and $T \in \B(\rL^2(M))$. Define a state $\Phi \in \B(\rL^2(N))^*$ by $\Phi = \Psi \circ E$.
\begin{claim}
$\Phi$ is a binormal state on $\B(\rL^2(N))$, i.e.\ $\Phi \circ \lambda_N$ and $\Phi \circ \rho_N$ are both normal states.
\end{claim}
\begin{proof}
Since $N$ is generated by $M$ and the unitaries $(\varphi^{it})_{t \in \R}$, we clearly have $\lambda_N(N) \subset \rho_N(M)'= \lambda_M(M) \ovt \B(\rL^2(\R))$. Therefore $E(\lambda_N(N)) \subset \lambda_M(M)$. Since $E \circ \lambda_N$ is normal and $\Psi$ is normal on $\lambda_M(M)$, we conclude that $\Phi \circ \lambda_N=\Psi \circ E \circ \lambda_N$ is also normal. 

Now we want to show that $\Phi \circ \rho_N$ is normal. Let $U$ be the desintegrable unitary operator on $\rL^2(N)=\rL^2(\R, \rL^2(M))$ given by $t \mapsto \Delta_{\varphi,\phi}^{i t}$.  Observe that
$$ \Phi(U)=\Psi(E(U))=\Psi \left( \int_{t \in \R} \Delta_{\varphi,\phi}^{it} \; |f(t)|^2 \: \rd t \right)=\Psi(h(\Delta_{\varphi,\psi}))$$
where $h \in C_0(\R^*_+)$ is given by $h(\lambda)=\int \lambda^{\ri t} |f(t)|^2 \: \rd t$. Since $\Psi$ is a $1$-eigenstate for $\Delta_{\varphi,\phi}$, we obtain $\Phi(U)=h(1)=1$. In particular, this implies that $\Phi=\Phi \circ \Ad(U)=\Phi \circ \Ad(U^*)$.

Now, observe that for all $x \in M$, $(\Ad(U^*) \circ \rho_N)(x)=U^*(\rho_M(x) \otimes 1)U$ is the desintegrable operator on $\rL^2(N)=\rL^2(\R, \rL^2(M))$ given by $t \mapsto \rho_M(\sigma_{t}^\phi(x))$. Moreover $(\Ad(U^*) \circ \rho_N)(\phi^{it})=U^*(\Delta_{\varphi,\phi}^{i t}  \otimes \lambda_\R(t))U=1 \otimes \lambda_\R(t) $. Thus, as above, we obtain that $(\Ad(U^*) \circ \rho_N)(N) \subset  \rho_M(M) \ovt \B(\rL^2(\R)) $, hence $(E \circ \Ad(U^*) \circ \rho_N)(N) \subset \rho_M(M)$. Since $E \circ \Ad(U^*) \circ \rho_N$ is normal, and $\Psi$ is normal on $\rho_M(M)$, we conclude that 
$$ \Phi \circ \rho_N= \Phi \circ \Ad(U^*) \circ \rho_N = \Psi  \circ E \circ \Ad(U^*) \circ \rho_N$$
is also normal. This ends the proof of the claim.
\end{proof}
Since $N$ is semifinite and $\Phi$ is binormal, Theorem \ref{ultrapower implement semifinite} tells us that there exists an abelian von Neumann algebra $A$, a cofinal ultrafilter $\omega$ on some directed set $I$ and a vector $\xi \in \rL^2((A \ovt N)^\omega)$ such that $\Phi(T)=\langle (1 \otimes T)^\omega \xi, \xi \rangle$ for all $T \in \B(\rL^2(N))$. Let $p \in \B(\rL^2(\R))$ be the rank one projection on the unit vector $f \in \rL^2(\R)$. We have $\Phi(1 \otimes p)=1$, thus $(1 \otimes 1 \otimes p)^\omega \xi=\xi$. This means that we can write $\xi = \eta \otimes f$ where $\eta \in \rL^2(A \ovt M)^\omega$. We claim that $\eta \in \rL^2((A \ovt M)^\omega)$. For this, we have to check that for any bounded net $(x_i)_{i \in I}$ in $(A \ovt M)$ that converges $*$-strongly to $0$, we have $(\lambda_{A \ovt M}(x_i))^\omega \eta=0$ and $(\rho_{A \ovt M}(x_i))^\omega \eta=0$.

First, since $\xi$ is in $\rL^2((A \ovt N)^\omega)$, we have $(\rho_{A \ovt N}(x_i))^\omega \xi=0$. But $\rho_{A \ovt N}(x_i)= \rho_{A \ovt M}(x_i) \otimes 1$ and $\xi = \eta \otimes f$. We thus get
$(\rho_{A \ovt M}(x_i) \otimes 1)^\omega (\eta \otimes f)=0$, hence $(\rho_{A \ovt M}(x_i))^\omega \eta=0$.

Since $\xi$ is in $\rL^2((A \ovt N)^\omega)$, we also have $(\lambda_{A \ovt N}(x_i))^\omega \xi=0$. But $\lambda_{A \ovt N}(x_i)= (1 \otimes U^*)(\lambda_{A \ovt M}(x_i) \otimes 1)(1 \otimes U)$ hence
$$(\lambda_{A \ovt M}(x_i) \otimes 1)^\omega(1 \otimes U)^\omega \xi=0.$$
But we know that $\xi=(1 \otimes U)^\omega \xi$ because $\langle (1 \otimes U)^\omega \xi, \xi \rangle =\Phi(U)=1$. Thus, again, we conclude that $(\lambda_{A \ovt M}(x_i) \otimes 1)^\omega \xi=0$ hence $(\lambda_{A \ovt M}(x_i))^\omega \eta=0$.

We proved that $\eta \in \rL^2((A \ovt M)^\omega)$ and we clearly have
$$ \langle (1 \otimes T)^\omega \eta, \eta \rangle = \langle (1 \otimes T \otimes 1)^\omega (\eta \otimes f), \eta \otimes f \rangle= \Phi(T \otimes 1)=\Psi(T)$$
for all $T \in \B(\rL^2(M))$.
\end{proof}

\section{Applications to $\II_1$ factors} \label{section II1}
This section contains some applications of the ultrapower technique of the previous section to $\II_1$ factors. We start by giving new short proofs of some fundamental results from Connes's work on the classification of injective factors.
\begin{thm}[{\cite[Theorem 2.1]{Co75b}}] \label{connes 1}
Let $M$ be a full $\II_1$ factor with trace $\tau$. Then there exists a finite subset $F \subset \cU(M)$ such that
$$\forall x \in M, \quad \|x-\tau(x) \|_2 \leq \sum_{u \in F} \| xu-ux\|_2.$$
\end{thm}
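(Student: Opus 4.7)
My strategy is by contradiction, combining Theorem \ref{ultrapower implement tracial} with the standard characterization of fullness via $M' \cap M^\omega = \C$. Suppose no finite $F \subset \cU(M)$ satisfies the stated inequality, and set
$$c(F) := \inf\left\{\frac{\sum_{u \in F} \|xu - ux\|_2}{\|x - \tau(x) 1\|_2} \,:\, x \in M,\; x \neq \tau(x) 1\right\},$$
so that $c(F) < 1$ for every finite $F$. A simple amplification argument forces in fact $c(F) = 0$ for every finite $F$: given $K$ unitaries $v_1, \dots, v_K \in \cU(M)$ such that the conjugates $v_j F v_j^{-1}$ are pairwise disjoint as subsets of $\cU(M)$ (which exist in any $\II_1$ factor), the set $F' = \bigcup_j v_j F v_j^{-1}$ satisfies $c(F') \geq K \cdot c(F)$, using the identities $\|[x, v_j w v_j^{-1}]\|_2 = \|[v_j^* x v_j, w]\|_2$ and $\|v_j^* x v_j - \tau(v_j^* x v_j)1\|_2 = \|x - \tau(x)1\|_2$. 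If some $c(F_0) > 0$, then choosing $K > 1/c(F_0)$ gives $c(F') > 1$, contradicting the assumption. Consequently, for every finite $F$ and $\varepsilon > 0$, there exists $x_{F,\varepsilon} \in M$ with $\tau(x_{F,\varepsilon}) = 0$, $\|x_{F,\varepsilon}\|_2 = 1$, and $\sum_{u \in F} \|[x_{F,\varepsilon}, u]\|_2 < \varepsilon$.

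Along a cofinal ultrafilter $\omega$ on the directed set of pairs $(F, \varepsilon)$, I would form the state $\Psi \in \B(\rL^2(M))^*$ by $\Psi(T) = \lim_\omega \langle T \hat x_{F,\varepsilon}, \hat x_{F,\varepsilon}\rangle$. Asymptotic centrality yields the invariance of $\Psi$ under $\Ad(\lambda(u)\rho(u^*))$ for every $u \in \cU(M)$, while the trace-zero condition yields $\Psi(p_{\hat 1}) = 0$, where $p_{\hat 1}$ denotes the rank-one projection onto $\C \hat 1 \subset \rL^2(M)$. Combining invariance with the uniqueness of the trace on the $\II_1$ factor $M$ then forces $\Psi \circ \lambda = \Psi \circ \rho = \tau$. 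The delicate point is that normality of $\Psi \circ \lambda$ and $\Psi \circ \rho$ is not automatic, since the densities $|x_{F,\varepsilon}|^2 \in \rL^1(M)_+$ need not be equi-integrable; I would address this by spectrally truncating $x_{F,\varepsilon}$ at thresholds $K_{F,\varepsilon} \to \infty$, analogously to the truncation step in the proof of Theorem \ref{ultrapower implement semifinite}. Replacing $x_{F,\varepsilon}$ with an element $y_{F,\varepsilon} \in M$ satisfying $\|y_{F,\varepsilon}\|_\infty \leq K_{F,\varepsilon}$, $\|y_{F,\varepsilon}\|_2 \to 1$, and $\sum_{u \in F} \|[y_{F,\varepsilon}, u]\|_2 \to 0$ makes the associated vector states lie in bounded $\Omega_\kappa$-like sets, whence the limit state $\Psi$ is binormal.

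Applying Theorem \ref{ultrapower implement tracial} then provides an abelian von Neumann algebra $A$, a cofinal ultrafilter $\omega'$, and a unit vector $\xi \in \rL^2((A \ovt M)^{\omega'})$ with $\Psi(T) = \langle (1 \otimes T)^{\omega'} \xi, \xi\rangle$ for every $T \in \B(\rL^2(M))$. Invariance of $\Psi$ translates into $(1 \otimes u)^{\omega'} \xi = \xi (1 \otimes u)^{\omega'}$ for every $u \in \cU(M)$, so $\xi$ is centralized by $1 \otimes M$. Since $M$ is full, $M^{\omega'} \cap M' = \C$, and consequently the relative commutant $(A \ovt M)^{\omega'} \cap (1 \otimes M)'$ equals $A^{\omega'} \otimes 1$; therefore $\xi$ lies in $\rL^2(A^{\omega'}) \otimes \C \hat 1_M$ inside $\rL^2((A \ovt M)^{\omega'})$. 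On the other hand, $\Psi(p_{\hat 1}) = 0$ means $(1 \otimes p_{\hat 1})^{\omega'} \xi = 0$, forcing $\xi$ to be orthogonal to $\rL^2(A^{\omega'}) \otimes \C \hat 1_M$. These two conditions together imply $\xi = 0$, contradicting $\|\xi\|^2 = \Psi(1) = 1$. The main obstacle I expect is the binormality verification at the end of paragraph two (the truncation must be chosen to simultaneously preserve the asymptotic centrality, the normalization, and the trace-zero condition); a secondary technical point is the identification of the relative commutant in the amalgamated ultrapower, which is a standard consequence of fullness of $M$.
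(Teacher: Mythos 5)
Your proof follows essentially the same strategy as the paper's: suppose no finite $F$ works, extract an asymptotically central net of trace-zero unit vectors, form the limit state $\Psi$, identify it via Theorem~\ref{ultrapower implement tracial} as a vector state on $\rL^2((A \ovt M)^\omega)$, and derive the contradiction $\Psi(p)=0$ versus $\Psi(p)=1$ using fullness. The amplification argument you give in paragraph one (conjugating $F$ by $K$ disjointly-positioned unitaries to force $c(F)=0$) is a nice explicit version of a step the paper treats implicitly when it directly posits a net with $\lim_i \|x_iu - ux_i\|_2 = 0$ for all $u$.

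However, the truncation step you flag as the ``delicate point'' is not needed, and your worry there reflects a misunderstanding. You have already established that $\Psi\circ\lambda$ and $\Psi\circ\rho$ are $\Ad(\cU(M))$-invariant states on $M$ and invoked uniqueness of the trace to conclude $\Psi\circ\lambda = \Psi\circ\rho = \tau$. That uniqueness holds for \emph{arbitrary} (not just normal) tracial states on a $\II_1$ factor, thanks to the Dixmier averaging property: for every $x\in M$ the norm-closed convex hull of $\{uxu^* : u\in\cU(M)\}$ contains $\tau(x)1$, so any conjugation-invariant state must agree with $\tau$ on norm limits of such averages, hence everywhere. Once $\Psi\circ\lambda = \tau$, normality is immediate because $\tau$ is normal; there is no equi-integrability issue to address. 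The paper's line ``In particular, $\Psi$ is binormal'' is correct exactly for this reason, with no truncation. If you were instead going to build a bounded, $\rL^2$-normalized, trace-zero, asymptotically commuting net directly by spectral truncation, you would have to re-derive the normalization and the trace-zero condition after truncating and in any case would still need the trace-uniqueness argument to pin down $\Psi\circ\lambda$ --- so the detour buys nothing. Finally, you invoke the identification of the relative commutant $(A\ovt M)^\omega \cap (1\otimes M)'$ as a ``standard consequence of fullness,'' but this is exactly the content of Lemma~\ref{ultrapower commutation}, which the paper proves precisely to make this step rigorous; in the ultrapower $(A\ovt M)^\omega$ one cannot simply split off the $A$-factor without the argument of that lemma.
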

\begin{proof}
Suppose on the contrary, that there exists a net $(x_i)_{i \in I}$ in $M$ such that $\tau(x_i)=0$ and $\| x_i \|_2=1$ for all $i \in I$ and $\lim_i \| x_iu -ux_i\|_2=0$ for all $u \in \cU(M)$. Let $\Psi \in \B(\rL^2(M))$ be a weak* accumulation point of the net of states $\omega_i : T \mapsto \langle T \hat{x_i}, \hat{x_i} \rangle$. Since $\tau(x_i)=0$ for all $i \in I$, we have $\Psi(p)=0$ where $p \in \B(\rL^2(M))$ is the rank one projection on $\C\hat{1}$. Observe also that $\Psi \circ \lambda$ and $\Psi \circ \rho$ are both invariant under conjugation by elements of $\cU(M)$. Since $\tau$ is the unique trace on $M$, we thus have $\Psi \circ \lambda = \Psi \circ \rho=\tau$. In particular, $\Psi$ is binormal. Therefore, by Theorem \ref{ultrapower implement tracial}, we can find some abelian von Neumann algebra, some cofinal ultrafilter $\omega$ on some directed set $J$ and some vector $\eta \in \rL^2((A \ovt M)^\omega)$ such that $\Psi(T)=\langle (1 \otimes T)^\omega \eta, \eta \rangle$ for all $T \in \B(\rL^2(M))$. Since $\Psi(| \lambda(a)-\rho(a)|^2)=0$ for all $a \in M$, we get $a \eta=\eta a$ for all $a \in M$. Since $M$ is full, this implies that $\eta \in \rL^2((A \otimes 1)^\omega)$ thanks to Lemma \ref{ultrapower commutation} below. In other words $(1 \otimes p)^\omega \eta=\eta$. On the other hand we have $0=\Psi(p)=\langle (1 \otimes p)^\omega \eta,\eta \rangle$. This yields the desired contradiction.
\end{proof}

\begin{lem} \label{ultrapower commutation}
Let $(M,\tau)$ be a von Neumann algebra with a faithful normal tracial state and $A$ be countably decomposable abelian von Neumann algebra. Let $P,Q \subset M$ be von Neumann subalgebras. Suppose that $P' \cap M^\omega \subset Q^\omega$ for every cofinal ultrafilter $\omega$. Then $(1 \otimes P)' \cap (A \ovt M)^\omega \subset (A \ovt Q)^\omega$ for every cofinal ultrafilter $\omega$.
\end{lem}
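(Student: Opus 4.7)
Proof plan.

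The plan is to argue by contradiction: starting from a putative $x = (x_i)^\omega \in (1 \otimes P)' \cap (A \ovt M)^\omega$ not lying in $(A \ovt Q)^\omega$, I will manufacture, via a reindexation, a cofinal ultrafilter $\omega'$ on a bigger directed set $J$ together with an element $(y_j)^{\omega'} \in (P' \cap M^{\omega'}) \setminus Q^{\omega'}$, contradicting the hypothesis applied to $\omega'$. To set things up, I would fix a faithful normal state $\mu \in A_*$ and identify $A = \rL^\infty(X,\mu)$; then each $x_i \in A \ovt M$ is represented by a bounded weak*-measurable function $x_i : X \to M$, the tracial state $\nu := \mu \otimes \tau$ satisfies $\|y\|_{2,\nu}^2 = \int_X \|y(t)\|_{2,\tau}^2 \, d\mu(t)$, and letting $E_Q : M \to Q$ be the $\tau$-preserving conditional expectation, the conditions $x \in (A \ovt Q)^\omega$ and $[x, 1 \otimes p] = 0$ rewrite respectively as $\lim_{i \to \omega} \int_X \|x_i(t) - E_Q(x_i(t))\|_{2,\tau}^2 \, d\mu(t) = 0$ and $\lim_{i \to \omega} \int_X \|[x_i(t), p]\|_{2,\tau}^2 \, d\mu(t) = 0$ for all $p \in P$.

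Assuming the first limit fails, there are $\eps_0 > 0$ and $W \in \omega$ with the corresponding integrand bounded below by $\eps_0$ on $W$. Chebyshev (together with a uniform bound $\|x_i\|_\infty \leq C$) then produces $\delta_0 > 0$ such that the measurable set $B_i := \{t \in X : \|x_i(t) - E_Q(x_i(t))\|_{2,\tau} \geq \sqrt{\eps_0/2}\}$ satisfies $\mu(B_i) \geq \delta_0$ for each $i \in W$. Conversely, for each finite $F \subset P$ and each $\eta > 0$, Chebyshev applied to the commutation hypothesis gives $\mu(G_i^{F,\eta}) \to 1$ as $i \to \omega$, where $G_i^{F,\eta} := \{t \in X : \max_{p \in F} \|[x_i(t), p]\|_{2,\tau} \leq \eta\}$, so that $V_{F,\eta} := W \cap \{i : \mu(G_i^{F,\eta}) \geq 1 - \delta_0/2\}$ lies in $\omega$. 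Now I would reindex on $J := I \times \mathcal{F}_{\mathrm{fin}}(P) \times (0,\infty)$ with the product order ($\mathcal{F}_{\mathrm{fin}}(P)$ directed by inclusion and $(0,\infty)$ by reverse order), and pick a cofinal ultrafilter $\omega'$ on $J$ containing both all principal cofinal sets $\{j \geq j_0\}$ and the set $\Omega := \{(i, F, \eta) \in J : i \in V_{F,\eta}\}$. The finite-intersection property needed for the existence of such $\omega'$ is easy: for any $j_0 = (i_0, F_0, \eta_0)$, any $i \in V_{F_0, \eta_0} \cap \{i \geq i_0\} \in \omega$ gives $(i, F_0, \eta_0) \in \Omega \cap \{j \geq j_0\}$.

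For each $j = (i, F, \eta) \in \Omega$, the measure estimate $\mu(B_i) + \mu(G_i^{F,\eta}) > 1$ forces $B_i \cap G_i^{F,\eta} \neq \emptyset$; I pick $t_j$ in this intersection (and $t_j \in X$ arbitrary for $j \notin \Omega$) and set $y_j := x_i(t_j) \in M$. The bounded net $(y_j)_{j \in J}$ defines an element $(y_j)^{\omega'} \in M^{\omega'}$ with the desired contradictory properties. On $\Omega \in \omega'$ we have $\|y_j - E_Q(y_j)\|_{2,\tau} \geq \sqrt{\eps_0/2}$, so $(y_j)^{\omega'} \notin Q^{\omega'}$. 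On the other hand, for each $p \in P$ and $\eps > 0$, on the $\omega'$-large set $\Omega \cap \{j \geq (i_*, \{p\}, \eps)\}$ (for any fixed $i_* \in I$) the choice $t_j \in G_i^{F,\eta}$ with $p \in F$ and $\eta \leq \eps$ yields $\|[y_j, p]\|_{2,\tau} \leq \eps$, so $(y_j)^{\omega'}$ commutes with every $p \in P$, i.e., $(y_j)^{\omega'} \in P' \cap M^{\omega'}$. This contradicts the hypothesis $P' \cap M^{\omega'} \subset Q^{\omega'}$ and completes the proof. The main obstacle is not any individual estimate but engineering the reindexed directed set $J$ and the ultrafilter $\omega'$ so that the commutation-up-to-$\eps$ and the staying-far-from-$Q$ properties can be enforced $\omega'$-simultaneously; this design also bypasses any separability assumption on $P$, which a naive diagonalization over a countable dense subfamily would require.
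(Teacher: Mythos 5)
Your proof is correct, and it takes a genuinely different route from the paper's. The paper establishes the uniform $\varepsilon$-$\delta$ reformulation of ``$P' \cap M^\omega \subset Q^\omega$ for every cofinal $\omega$'' (for every $\varepsilon>0$ there is a finite $F \subset \mathcal{U}(P)$ and $\delta>0$ such that $\sum_{u \in F}\|ux - xu\|_2^2 \leq \delta$ forces $\|x - E_Q(x)\|_2^2 \leq \varepsilon$), and then transfers that estimate to $A \mathbin{\overline{\otimes}} M$ by writing a generic element as $\sum_i p_i \otimes x_i$ over a finite-dimensional subalgebra of $A$ and averaging the commutator inequalities against $\mu(p_i)$; density of $\bigcup_{A_0} A_0 \mathbin{\overline{\otimes}} M$ in the $2$-norm closes the argument. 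You instead argue by contraposition: you represent $A \mathbin{\overline{\otimes}} M$ as $\mathrm{L}^\infty(X,\mu,M)$, use Chebyshev to locate a positive-measure set of ``fibers'' where the putative witness stays far from $Q$, and then use the overwhelming-measure set of fibers with small commutators against any finite $F \subset P$ to slice out an element $(y_j)^{\omega'} \in P' \cap M^{\omega'} \setminus Q^{\omega'}$, where $\omega'$ is carefully constructed on the product index set $I \times \mathcal{F}_{\rm fin}(P) \times (0,\infty)$. The paper's argument is more elementary and produces an explicit quantitative statement that could be reused; yours is conceptually cleaner in that it never extracts the $\varepsilon$-$\delta$ reformulation, at the cost of a heavier ultrafilter bookkeeping. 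Both proofs make essential use of the hypothesis holding for \emph{every} cofinal $\omega$, not just one (the paper needs it to get the uniform estimate; you need it for the reindexed $\omega'$), and both correctly avoid any separability hypothesis on $P$ or $M$ — in your case the product directed set replaces a diagonalization over a countable dense set in $P$.
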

\begin{proof}
Let $\mu$ be faithful normal state on $A$ and use $\mu \otimes \tau$ to define a $2$-norm on $A \ovt M$.

The condition $P' \cap M^\omega \subset Q^\omega$ means that for every $\varepsilon > 0$, there exists a finite set $F \subset \cU(P)$ and $\delta > 0$ such that if $x \in \Ball(M)$ satisfies $\sum_{u \in F} \| ux-xu\|_2^2 \leq \delta$ for all $u \in F$, then $\|x-E_Q(x)\|_2^2 \leq \varepsilon$.

 Let $\mathcal{L}= \bigcup_{A_0 \subset A} A_0 \ovt M$ where the union runs overs finite dimensional subalgebras of $A$. For $x \in \Ball(\mathcal{L})$, write $x=\sum_{i \in I} p_i \otimes x_i$ where $A=\bigoplus_{i \in I} \C p_i$ and $x_i \in \Ball(M)$. Suppose that
 $$\sum_{u \in F} \| (1 \otimes u)x-x(1 \otimes u)\|_2^2 \leq \varepsilon \delta.$$
 Then 
 $  \sum_{i \in I} \mu(p_i) \sum_{u \in F} \| ux_i-x_iu \|_2^2 \leq \varepsilon \delta.$
 Therefore, $\sum_{i \in J} \mu(p_i) \leq \varepsilon$ where $J=\{i \in I \mid  \sum_{u \in F} \| ux_i-x_iu \|_2^2 \geq \delta \}$. For $i \in I \setminus J$, we have $\| x_i- E_Q(x_i)\|^2_2 \leq \varepsilon$. Thus we obtain $$\| x- E_{A \ovt Q}(x)\|_2^2=\sum_{i \in I} \mu(p_i) \| x_i-E_Q(x_i)\|_2^2 \leq \sum_{i \in J} 2\mu(p_i) + \sum_{i \in I \setminus J} \varepsilon \mu(p_i) \leq  3 \varepsilon.$$
 This holds for every $x \in \Ball(\mathcal{L})$, hence for all $x \in \Ball(A \ovt M)$ by density of $\Ball(\mathcal{L}) \subset \Ball(A \ovt M)$. We conclude that $(1 \ovt P)' \cap (A \ovt M)^\omega \subset (A \ovt Q)^\omega$.
\end{proof}

\begin{thm}[{\cite[Theorem 3.1]{Co75b}}] \label{connes 2}
Let $M$ be a $\II_1$ factor with trace $\tau$. Take $\theta \in \Aut(M) \setminus \overline{\Inn(M)}$. Then there exists a finite subset $F \subset \cU(M)$ such that
$$\forall x \in M, \quad \|x\|_2 \leq \sum_{u \in F} \|xu- \theta(u)x\|_2.$$
\end{thm}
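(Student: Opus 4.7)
The plan is to mirror the proof of Theorem \ref{connes 1}: argue by contradiction, produce a binormal state on $\B(\rL^2(M))$, implement it by an ultrapower vector via Theorem \ref{ultrapower implement tracial}, and then contradict $\theta \notin \overline{\Inn(M)}$. Suppose no such finite $F$ exists. Then a standard diagonal construction gives a net $(x_i)_{i \in I}$ in $M$ with $\|x_i\|_2 = 1$ and $\|x_iu - \theta(u)x_i\|_2 \to 0$ for every $u \in \cU(M)$. I would let $\Psi$ be a weak* accumulation point of the vector states $\omega_i : T \mapsto \langle T\hat{x_i}, \hat{x_i}\rangle$. The condition $\|\theta(u)x_i - x_iu\|_2 \to 0$ yields $\|\theta(u)x_ix_i^*\theta(u)^* - x_ix_i^*\|_1 \to 0$ (and the analogue for $x_i^*x_i$), so $\Psi\circ\lambda$ and $\Psi\circ\rho$ are invariant under $\Ad\theta(u)$ for every $u \in \cU(M)$, and hence under $\Ad\cU(M)$ since $\theta \in \Aut(M)$ preserves $\cU(M)$. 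Uniqueness of the trace on the factor $M$ then forces $\Psi\circ\lambda = \Psi\circ\rho = \tau$, so $\Psi$ is binormal.

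Theorem \ref{ultrapower implement tracial} now provides an abelian von Neumann algebra $A$, a cofinal ultrafilter $\omega$, and a unit vector $\eta \in \rL^2((A \ovt M)^\omega)$ with $\Psi(T) = \langle(1 \otimes T)^\omega \eta, \eta\rangle$. The identity $\Psi(|\lambda(\theta(u)) - \rho(u)|^2) = 0$ then forces $(1 \otimes (\lambda(\theta(u)) - \rho(u)))^\omega \eta = 0$, that is, $\theta(u)\eta = \eta u$ in $\rL^2((A \ovt M)^\omega)$ for every $u \in \cU(M)$, and hence $\theta(x)\eta = \eta x$ for every $x \in M$ by linearity.

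The concluding step is to convert the ultrapower intertwiner $\eta$ into an approximately inner sequence of unitaries in $M$, thereby contradicting $\theta \notin \overline{\Inn(M)}$. Since $\rL^2(A \ovt M) = \rL^2(A) \otimes \rL^2(M)$ is a multiple of the standard $M$-bimodule $\rL^2(M)$, I would disintegrate $A = \rL^\infty(X, \mu)$ and write $\eta = (\eta_i)^\omega$ with $\eta_i : X \to \rL^2(M)$. The intertwining condition becomes $\int_X \|\theta(x)\eta_i(t) - \eta_i(t)x\|_2^2\, d\mu(t) \to 0$ for every $x \in M$. An equi-integrability plus Chebyshev argument over a countable $\|\cdot\|_2$-dense subset of $\cU(M)$ then produces a sequence $(\xi_k) \subset \rL^2(M)$ with $\|\xi_k\|_2$ bounded below and $\|\theta(x)\xi_k - \xi_k x\|_2 \to 0$ for every $x \in M$. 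Passing to an auxiliary ultrapower $M^{\omega'}$, the class $\xi = (\xi_k)^{\omega'}$ lies in $\rL^2(M^{\omega'})$ with $\theta(x)\xi = \xi x$; the polar decomposition $\xi = v|\xi|$ in $M^{\omega'}$ gives a nonzero partial isometry $v \in M^{\omega'}$ with $\theta(x)v = vx$ (using that $\xi^*\xi$ commutes with $M$). Extending $v$ to a unitary in the $\II_1$ factor $M^{\omega'}$ and lifting to a sequence $(\tilde u_k) \subset \cU(M)$ should yield $\|\tilde u_k x \tilde u_k^* - \theta(x)\|_2 \to 0$, contradicting the hypothesis.

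The main obstacle is this final descent: extracting an approximately inner sequence of unitaries in $M$ from the Hilbert ultrapower vector $\eta$. It requires careful equi-integrability control in the $A$-disintegration, and the extension of the partial isometry in $M^{\omega'}$ to a unitary implementing $\theta$ must be handled with care because the relative commutant $M' \cap M^{\omega'}$ need not be trivial when $M$ is not a full factor.
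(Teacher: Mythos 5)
Your reduction matches the paper's step for step through the construction of the binormal state $\Psi$ (uniqueness of the trace makes $\Psi\circ\lambda$ and $\Psi\circ\rho$ equal to $\tau$) and its ultrapower implementation by a vector $\eta\in\rL^2((A\ovt M)^\omega)$ satisfying $\theta(a)\eta=\eta a$ for all $a\in M$. The disagreement starts at the last step, and it is not merely stylistic.

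You propose to disintegrate $A=\rL^\infty(X,\mu)$, Chebyshev-select vectors $\xi_k\in\rL^2(M)$, pass to an auxiliary ultrapower $M^{\omega'}$, polar-decompose, and extend the resulting partial isometry to a unitary. There are two genuine gaps here. First, the equi-integrability needed to guarantee $\xi=(\xi_k)^{\omega'}\in\rL^2(M^{\omega'})$ (not merely $\rL^2(M)^{\omega'}$) does not fall out of the equi-integrability of $(\eta_i)$ in $\rL^2(A\ovt M)$; the modulus is controlled only in $\mu$-average, so the selection of the slice points $t_k$ would have to be made simultaneously against the equi-integrability cutoffs, the intertwining estimates for a countable family in $\cU(M)$, and the norm lower bound—it is plausible but is not done in the write-up. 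Second, and crucially, converting the nonzero partial isometry $v$ (with $v^*v$ and $vv^*$ only known to lie in $M'\cap M^{\omega'}$) into a genuine unitary intertwiner is the entire difficulty of the theorem, not a technicality: an arbitrary extension of $v$ to a unitary in $M^{\omega'}$ will not satisfy $\theta(x)w=wx$, and to extend $v$ maximally you would need to produce a further nonzero intertwiner in the corner cut by $1-v^*v$ and $1-vv^*$. That claim is equivalent to the characterization of $\overline{\Inn(M)}$ you are trying to establish; you flag this yourself, but it is a real gap, not an implementation detail.

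The paper avoids the descent entirely. Its Lemma~\ref{ultrapower approx inner} encodes $\theta\notin\overline{\Inn(M)}$ as the relative-commutant inclusion $P'\cap M_2(M)^\omega\subset Q^\omega$, where $P$ consists of the $2\times2$ diagonal matrices with entries $\theta(x)$ and $x$ and $Q$ of all diagonal matrices, and then invokes Lemma~\ref{ultrapower commutation} to amplify this inclusion across tensoring with the abelian algebra $A$. That yields $\eta=0$ directly (after the obvious polar decomposition of $\eta$ to pass from the $\rL^2$-intertwiner to an element of $(A\ovt M)^\omega$), contradicting $\|\eta\|=1$. The point of Lemma~\ref{ultrapower commutation} is precisely to let you stay at the level of $(A\ovt M)^\omega$ and never re-descend to a concrete intertwining unitary; adopting that route would close both gaps in your argument simultaneously.
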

\begin{proof}
Suppose on the contrary, that there exists a net $(x_i)_{i \in I}$ in $M$ such that $\| x_i \|_2=1$ for all $i \in I$ and $\lim_i \| x_iu- \theta(u)x_i\|_2=0$ for all $u \in \cU(M)$. Let $\Psi \in \B(\rL^2(M))$ be a weak* accumulation point of the net of states $\omega_i : T \mapsto \langle T \hat{x_i}, \hat{x_i} \rangle$. Observe that $\Psi \circ \lambda$ and $\Psi \circ \rho$ are both invariant under conjugation by elements of $\cU(M)$. Since $\tau$ is the unique trace on $M$, we thus have $\Psi \circ \lambda = \Psi \circ \rho=\tau$. In particular, $\Psi$ is binormal. Therefore, by Theorem \ref{ultrapower implement tracial}, we can find some abelian von Neumann algebra, some cofinal ultrafilter $\omega$ on some directed set $J$ and some vector $\eta \in \rL^2((A \ovt M)^\omega)$ such that $\Psi(T)=\langle (1 \otimes T)^\omega \eta, \eta \rangle$ for all $T \in \B(\rL^2(M))$. Since $\Psi(| \lambda(\theta(a))-\rho(a)|^2)=0$ for all $a \in M$, we get $\theta(a) \eta=\eta a$ for all $a \in M$. By Lemma \ref{ultrapower approx inner} below, this contradicts the assumption that $\theta \notin \overline{\Inn(M)}$.
\end{proof}

\begin{lem} \label{ultrapower approx inner}
Let $M$ be a $\II_1$ factor and $A$ be a countably decomposable abelian von Neumann algebra. Suppose that $\theta \in \Aut(M) \setminus \overline{\Inn(M)}$. Then for every cofinal ultrafilter $\omega$ and every $y \in (A \ovt M)^\omega$ such that $\theta(x)y=yx$ for all $x \in M$, we have $y=0$.
\end{lem}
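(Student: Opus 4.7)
The plan is to reduce to the classical case $A = \C$ by a slicing argument over $A$, following the same template as Lemma \ref{ultrapower commutation}. The classical input I would use is Connes's theorem from \cite{Co75b}: since $\theta \notin \overline{\Inn(M)}$, for every $\epsilon > 0$ there exist a finite set $F \subset \cU(M)$ and $\delta > 0$ such that every $z \in M$ with $\|z\|_\infty \leq 1$ and $\max_{u \in F} \|\theta(u) z - zu\|_2 \leq \delta$ automatically satisfies $\|z\|_2 \leq \epsilon$. This quantitative fact is proven in \cite{Co75b} by classical (non-ultrapower) methods, so no circularity arises with the new proof of Theorem \ref{connes 2} above.

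After rescaling, assume $\|y\|_\infty \leq 1$. Fix a faithful normal state $\mu$ on $A$ and identify $A = \rL^\infty(X,\mu)$, so that $A \ovt M = \rL^\infty(X;M)$ carries the trace $\mu \otimes \tau$. Represent $y = (y_i)^\omega$ with $y_i \in \Ball(\rL^\infty(X;M))$. The intertwining $\theta(x) y = yx$ then reads
\begin{equation*}
\lim_{i \to \omega} \int_X \|\theta(x) y_i(p) - y_i(p) x\|_2^2 \, d\mu(p) = 0, \qquad x \in M.
\end{equation*}
Given $\epsilon > 0$, pick the corresponding $F,\delta$ from Connes's bound, and then choose $i$ so close to $\omega$ that
\begin{equation*}
\sum_{u \in F} \int_X \|\theta(u) y_i(p) - y_i(p) u\|_2^2 \, d\mu(p) \leq \delta^2 \epsilon^2.
\end{equation*}
By Markov's inequality the exceptional set $B_i = \{p \in X : \max_{u \in F} \|\theta(u) y_i(p) - y_i(p) u\|_2 > \delta\}$ satisfies $\mu(B_i) \leq \epsilon^2$, while on its complement the quantitative Connes bound applied pointwise forces $\|y_i(p)\|_2 \leq \epsilon$. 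Splitting the integral,
\begin{equation*}
\|y_i\|_{2,\mu\otimes\tau}^2 \leq \mu(X \setminus B_i)\, \epsilon^2 + \mu(B_i) \leq 2\epsilon^2.
\end{equation*}
Passing to the ultrapower limit yields $\|y\|_2^2 \leq 2\epsilon^2$, and since $\epsilon$ was arbitrary and $(\mu \otimes \tau)^\omega$ is faithful on $(A \ovt M)^\omega$, we conclude $y = 0$.

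The main technical point is just the packaging in the second paragraph: once the quantitative Connes bound is at hand, converting it from an estimate on $M$ to an estimate on $\rL^\infty(X;M)$ is a one-shot application of Markov's inequality, with no need for a diagonal extraction of a sequence in $M^\omega$. All of the deep content sits in the classical Connes input, which is imported from \cite{Co75b}.
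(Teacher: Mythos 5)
Your proof is correct, and it is essentially the paper's argument with the intermediate packaging removed. The paper first turns the intertwining condition into a commutation condition in $M_2(M)^\omega$ via a $2\times 2$ matrix trick, reducing the lemma to Lemma \ref{ultrapower commutation}; but the proof of Lemma \ref{ultrapower commutation} is precisely the Markov-inequality slicing over $A$ that you carry out directly against the $\theta$-intertwining. The two proofs have the same content.

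One point of attribution deserves to be cleaned up, because it leads you into an unnecessary circularity discussion. The quantitative bound you invoke --- for each $\varepsilon > 0$ some $(F,\delta)$ such that $\|z\|_\infty\le 1$ and $\max_{u\in F}\|\theta(u)z-zu\|_2\le\delta$ force $\|z\|_2\le\varepsilon$ --- is \emph{not} Theorem 3.1 of \cite{Co75b} (i.e.\ Theorem \ref{connes 2}), which is the stronger uniform inequality $\|x\|_2\le\sum_{u\in F}\|xu-\theta(u)x\|_2$ with a single fixed $F$. The bound you actually need is the elementary $\varepsilon$--$\delta$ reformulation, by a routine compactness/ultrafilter argument, of the statement that there is no nonzero $z\in M^\omega$ with $\theta(x)z=zx$ for all $x\in M$ --- which is exactly the ultrapower characterization of $\theta\notin\overline{\Inn(M)}$ that the paper takes as its input (this is the compactness step inside the proof of Lemma \ref{ultrapower commutation}, and the ``means that $P'\cap M_2(M)^\omega\subset Q^\omega$'' in the paper's proof). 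So nothing is imported from \cite{Co75b}, and the circularity you worry about never arises. Had you genuinely needed the full strength of Theorem 3.1, the issue would not have been formal circularity so much as that the paper's ``new short proof'' of Theorem \ref{connes 2} would silently rest on Theorem 3.1 itself, which would defeat the point of the section.
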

\begin{proof}
Define $P \subset M_2(M)$ as the set of all matrices of the form $\begin{pmatrix}
\theta(x) & 0   \\
0 & x 
\end{pmatrix}$ for $x \in M$. Let $Q \subset M_2(M)$ be the set of all diagonal matrices. Then $\theta \in \Aut(M) \setminus \overline{\Inn(M)}$ means that $P' \cap M_2(M)^\omega \subset Q^\omega$. The conclusion follows from Lemma \ref{ultrapower commutation}.
\end{proof}

With an argument similar to the proof of Theorem \ref{connes 1} and \ref{connes 2}, one can also prove \cite[Theorem 4.3]{Co75b}. 

The following is a new characterization of \emph{weakly bicentralized subalgebras} (see \cite[Section 4]{IM19}) that provides a converse to \cite[Lemma 3.4]{BMO19}.
\begin{thm}
Let $N \subset M$ an inclusion of finite von Neumann algebras. Then the following are equivalent :
\begin{enumerate}
\item The $M$-bimodule $\rL^2(M) \otimes_N \rL^2(M)$ is weakly contained in $\rL^2(M)$.
\item $N=(N' \cap M^\omega)' \cap M$ for some cofinal ultrafilter $\omega$.
\end{enumerate}
\end{thm}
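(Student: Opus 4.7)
The implication $(1) \Rightarrow (2)$ is the content of \cite[Lemma 3.4]{BMO19}, so I focus on the converse $(2) \Rightarrow (1)$, following the template of Theorems \ref{connes 1} and \ref{connes 2}.

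By standard bimodule theory over a finite von Neumann algebra, weak containment in $(1)$ is equivalent to the cyclic vector state
$$\omega_0 : \lambda(M)\vee_{\alg}\rho(M) \to \C, \qquad \lambda(a)\rho(b) \mapsto \tau(aE_N(b))$$
extending to a state on $\B(\rL^2(M))$. Since $\omega_0\circ\lambda = \omega_0\circ\rho = \tau$ is normal and $\omega_0$ is $N$-central (i.e.\ $\omega_0(\lambda(n)T) = \omega_0(T\rho(n))$ for all $n \in N$), any such extension is automatically a binormal $N$-central tracial state. Every such $\Psi$ determines a unital trace-preserving left $N$-linear completely positive map $\Phi_\Psi : M \to M$ via $\Psi(\lambda(a)\rho(b)) = \tau(a\Phi_\Psi(b))$, with $\Phi_\Psi|_N = \id_N$. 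Thus $(1)$ is equivalent to $E_N$ lying in the $\sigma$-weak closure of $\{\Phi_\Psi : \Psi \in \cK\}$, where $\cK$ is the weak-$*$ compact convex set of $N$-central binormal tracial states on $\B(\rL^2(M))$.

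Applying Theorem \ref{ultrapower implement tracial} to each $\Psi \in \cK$ gives a vector $\xi \in \rL^2((A\ovt M)^\omega)$ implementing $\Psi$, and the $N$-centrality becomes the commutation $(1\otimes n)\xi = \xi(1\otimes n)$ for $n \in N$. Any unitary $y \in N' \cap M^\omega$ produces such a $\Psi_y$ via the vector $\hat y \in \rL^2(M^\omega) \subset \rL^2((\C\ovt M)^\omega)$, with associated map $\Phi_y(b) = E_M(yby^*)$; and conversely, decomposing $\xi$ along an orthonormal basis of $\rL^2(A)$ realizes each $\Phi_\Psi$ as a countable sum of scaled maps of the form $\Phi_{y_i}$ with $y_i \in N' \cap M^\omega$. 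Consequently, $(1)$ is equivalent to $E_N$ belonging to the $\sigma$-weakly closed convex hull of $\{\Phi_y : y \in \cU(N' \cap M^\omega)\}$, allowing matrix amplification.

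The plan is then to argue by contrapositive. Assuming $(1)$ fails, Hahn--Banach separation provides $a_1,\dots,a_k, b_1,\dots,b_k \in M$ and $c > 0$ such that
$$\sum_j \tau(a_j E_N(b_j)) \;>\; \sup_{y \in \cU(N' \cap M^\omega)} \sum_j \tau^\omega(y^* a_j y\, b_j) + c.$$
The main obstacle --- and the core of the proof --- is to convert this separation inequality into an element $x \in M \setminus N$ that commutes with every unitary in $N' \cap M^\omega$, contradicting $(2)$. The idea is to extract from the separating data a fixed ``direction" that witnesses the failure of $E_N$ to be approximated by all inner ucp conjugations by $\cU(N' \cap M^\omega)$: this direction, by Lemma \ref{ultrapower commutation} applied to the amplified inclusion $1\otimes N \subset A \ovt M$, can be realized as a limit element of $(N' \cap M^\omega)' \cap M$ that fails to lie in $N$. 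This parallels the way Theorem \ref{connes 2} converts a failure of commutation into an ultrapower vector $\eta$ forcing an automorphism to be approximately inner.
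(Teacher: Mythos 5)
Your attempt actually addresses a different implication than the paper proves, and it does so only partially. The paper's proof is of $(1) \Rightarrow (2)$: this is the new direction, the one that uses Theorem~\ref{ultrapower implement tracial}. (The introductory sentence ``provides a converse to~\cite[Lemma 3.4]{BMO19}'' together with the proof block confirms this; what you attribute to BMO19 is in fact the paper's contribution, and vice versa.) The paper's argument is short: weak containment in $(1)$ produces a state $\Psi$ on $\B(\rL^2(M))$ with $\Psi(\lambda(a)\rho(b))=\tau(E_N(a)E_N(b))$; this $\Psi$ is binormal and tracial on both sides, so Theorem~\ref{ultrapower implement tracial} implements it by a vector $\eta \in \rL^2((A\ovt M)^\omega)$, which is automatically $N$-central; then for any $x \in (N'\cap M^\omega)'\cap M$ one has $x\eta=\eta x$, whence $\tau(x^*x)=\langle x\eta x^*,\eta\rangle = \tau(E_N(x)E_N(x)^*)$, and faithfulness of $\tau$ forces $x=E_N(x)\in N$.

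For the direction you chose, $(2) \Rightarrow (1)$, your argument is a plan rather than a proof. Reformulating $(1)$ as ``$E_N$ lies in the weak$^*$-closed convex hull of $\{\Phi_y : y \in \cU(N'\cap M^\omega)\}$'' already needs care: the decomposition of an $N$-central vector $\xi = \sum_e e\otimes \xi_e$ gives $N$-central vectors $\xi_e$, which correspond to unbounded operators affiliated with $N'\cap M^\omega$, not unitaries, and the resulting expression is an $\ell^2$-sum of subunital cp maps rather than a convex combination of $\mathrm{Ad}$-conjugations by unitaries --- closing that gap is a genuine (though standard) argument you would need to supply. More seriously, the step you yourself flag as ``the main obstacle --- and the core of the proof'' --- converting the Hahn--Banach separation into a concrete element of $\bigl((N'\cap M^\omega)'\cap M\bigr)\setminus N$ --- is left entirely unproven; your appeal to an analogy with Theorem~\ref{connes 2} and Lemma~\ref{ultrapower commutation} does not tell the reader how the separating data produces such an element. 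As written, this is a gap, not a proof; the implication $(2)\Rightarrow(1)$ is precisely what BMO19 Lemma 3.4 supplies and the paper does not reprove it.
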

\begin{proof}
(1) $\Rightarrow$ (2). The $M$-bimodule $\rL^2(M) \otimes_N \rL^2(M)$ is generated by a vector $\xi$ such that $\langle a\xi b,\xi \rangle = \tau(E_N(a)E_N(b))$ for some faithful normal trace $\tau$ on $M$ and the $\tau$-preserving normal conditional expectation $E_N : M \rightarrow N$. Suppose that the $M$-bimodule $\rL^2(M) \otimes_N \rL^2(M)$ is weakly contained in $\rL^2(M)$. Then there exists a state $\Psi$ on $\B(\rL^2(M))$ such that $\Psi(\lambda(a)\rho(b))=\tau(E_N(a)E_N(b))$ for all $a,b \in M$. By Theorem \ref{ultrapower implement tracial}, there exists some abelian von Neumann algebra $A$ and a vector $\eta \in \rL^2((A \ovt M)^\omega)$ such that $\langle (1 \otimes a)\eta(1 \otimes b),\eta \rangle = \tau(E_N(a)E_N(b)) \rangle$ for all $a,b \in M$. In particular, $\langle (1 \otimes a)\eta(1 \otimes b),\eta \rangle = \tau(ab) \rangle$ for all $a,b \in N$. This shows that $\eta$ is $N$-central. Take $x \in (N' \cap M^\omega)' \cap M$. Then $x\eta=\eta x$. Therefore, we have
$$ \tau(x^*x)=\langle x\eta,x\eta\rangle=\langle x\eta,\eta x\rangle=\langle x\eta x^*,\eta \rangle =\tau(E_N(x)E_N(x)^*).$$
In particular, if $E_N(x)=0$ then $x=0$. We conclude that $(N' \cap M^\omega)' \cap M=N$.
\end{proof}

Finally, we give a last application to the Akemann-Ostrand property for von Neumann algebras introduced by Ozawa in his famous paper. Let us recall the definitions.

Let $M$ be a finite von Neumann algebra. Following \cite{Oz10}, we say that an operator $T \in \B(\rL^2(M))$ is \emph{$M$-compact} if $T(\Omega)$ is compact for every equi-integrable subset $\Omega \subset \rL^2(M)$. It is enough to check this property for $\Omega_\tau=\{ \xi \in \rL^2(M) \mid |\xi|^2 \leq \tau \}$ where $\tau$ is some fixed faithful normal trace on $M$.

 Let $\mathbb{K}^L_M \subset \B(\rL^2(M))$ be the set of all $M$-compact operators. It is a closed left ideal in $\B(\rL^2(M))$. Let $$\mathbb{K}_M=(\mathbb{K}^L_M)^* \cap \mathbb{K}^L_M =(\mathbb{K}^L_M)^* \cdot \mathbb{K}^L_M$$ be the associated hereditary subalgebra and let $$\cM(\mathbb{K}_M)=\{ T \in \B(\rL^2(M)) \mid T\mathbb{K}_M \subset \mathbb{K}_M \text{ ans } \mathbb{K}_M T \subset \mathbb{K}_M\}$$ be its multiplier algebra. Then $\cM(\mathbb{K}_M)$ contains $\lambda(M)$ and $\rho(M)$. Therefore, we can define a $*$-homomorphism 
$$ \kappa : M \otimes_{\rm alg} M^{\op} \ni a \otimes b^{\op} \mapsto  \lambda(a)\rho(b) + \mathbb{K}_M \in \cM(\mathbb{K}_M)/\mathbb{K}_M.$$
Following \cite[Definition 2.1]{Ca22}, we say that $M$ has the \emph{$W^*$-Akemann-Ostrand property} (or simpy $\rm W^*AO$) if $\kappa$ is continuous with respect to the minimal tensor norm, i.e.\ if $\kappa$ extends to a $*$-morphism from $M \otimes_{\min} M^{\op}$ to $\cM(\mathbb{K}_M)/\mathbb{K}_M$. Ozawa's main result in \cite{Oz10} is that this property is satisfied by the group von Neumann algebras of hyperbolic groups (and more generally exact groups with the Akemann-Ostrand property). See \cite[theorem 2.2]{Ca22}.

Here is a new characterization of the property $\rm W^*AO$.
\begin{thm} \label{W*AO}
Let $M$ be a finite von Neumann algebra. The following are equivalent :
\begin{enumerate}
\item $M$ has property $\rm W^*AO$.
%\item For every cofinal ultrafiler $\omega$, we have $\rL^2(M^\omega) \ominus \rL^2(M) \prec_M \rL^2(M) \otimes \rL^2(M)$.
\item For every binormal state $\Phi \in \B(\rL^2(M))^*$ that vanishes on $\mathbf{K}(\rL^2(M))$, the state 
$$ M \otimes_{\rm alg} M^{\op} \ni a \otimes b^{\op} \mapsto \Phi(\lambda(a) \rho(b))$$ is $\otimes_{\rm min}$-continuous.
\end{enumerate}
\end{thm}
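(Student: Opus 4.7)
The plan is to establish the equivalence via Hahn–Banach duality on the $C^*$-algebraic level, reducing the problem to a characterization of those states on $\B(\rL^2(M))$ that vanish on the ideal $\mathbb{K}_M$. The key lemma I would prove first is this: a state $\Phi$ on $\B(\rL^2(M))$ vanishes on $\mathbb{K}_M$ if and only if $\Phi$ is binormal and vanishes on $\mathbf{K}(\rL^2(M))$. Once this lemma is in hand, the rest of the proof is essentially bookkeeping with states on quotient $C^*$-algebras.

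For the ``binormal + vanishing on compacts implies vanishing on $\mathbb{K}_M$'' direction of the lemma, I would use Theorem \ref{ultrapower implement tracial}: write $\Phi(T) = \langle (1 \otimes T)^\omega \xi, \xi \rangle$ for some $\xi = (\xi_i)^\omega \in \rL^2((A \ovt M)^\omega)$. Vanishing on every finite-rank projection translates, via the ultrapower representation, into a uniform ``diffuseness''/equi-integrability condition on the net $(\xi_i)$ (analogous to the passage from $\rL^2$-vectors to $\omega$-equi-integrable nets used in the preceding sections). For $T \in \mathbb{K}^L_M$, the fact that $T$ sends equi-integrable subsets of $\rL^2(M)$ to relatively compact sets then forces $\|(1 \otimes T) \xi_i\|^2 \to 0$ along $\omega$, giving $\Phi(T^*T) = 0$ and hence $\Phi|_{\mathbb{K}_M} = 0$. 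The converse direction (vanishing on $\mathbb{K}_M$ implies binormality) is proved by analyzing the normal/singular decomposition of $\Phi|_{\lambda(M)}$ (and of $\Phi|_{\rho(M)}$): if the singular part were nonzero it would be supported on a net of projections $p_n$ with $p_n \to 0$ $\sigma$-strongly, and using the multiplier algebra structure of $\mathcal{M}(\mathbb{K}_M)$ one produces a positive element dominated by $\Phi_s(1) \cdot \lambda(p_n)$-type operators lying in $\mathbb{K}_M$, contradicting the vanishing.

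Given the lemma, the implication $(1) \Rightarrow (2)$ is direct: property $\rm W^*AO$ gives a min-continuous $*$-morphism $\bar\kappa \colon M \otimes_{\min} M^{\op} \to \mathcal{M}(\mathbb{K}_M)/\mathbb{K}_M$, and a binormal $\Phi$ vanishing on $\mathbf{K}(\rL^2(M))$ descends to a state $\bar\Phi$ on $\mathcal{M}(\mathbb{K}_M)/\mathbb{K}_M$, whence
\[
a \otimes b^{\op} \mapsto \Phi(\lambda(a)\rho(b)) = \bar\Phi(\bar\kappa(a \otimes b^{\op}))
\]
is $\otimes_{\min}$-continuous. For $(2) \Rightarrow (1)$, I would argue by duality: $\|\kappa(x)\|_{\mathcal{M}(\mathbb{K}_M)/\mathbb{K}_M} = \sup_{\bar\phi} |\bar\phi(\kappa(x))|$ over states $\bar\phi$ on $\mathcal{M}(\mathbb{K}_M)/\mathbb{K}_M$. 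Lift each $\bar\phi$ to a state $\phi$ on $\mathcal{M}(\mathbb{K}_M)$ vanishing on $\mathbb{K}_M$, and extend it (via Hahn–Banach, which preserves states between unital $C^*$-subalgebra and ambient) to a state $\Phi$ on $\B(\rL^2(M))$; since $\Phi|_{\mathcal{M}(\mathbb{K}_M)} = \phi$ and $\mathbb{K}_M \subset \mathcal{M}(\mathbb{K}_M)$, the extension $\Phi$ automatically vanishes on $\mathbb{K}_M$. By the lemma, $\Phi$ is binormal and kills $\mathbf{K}(\rL^2(M))$, so by (2) the associated state on $M \otimes_{\alg} M^{\op}$ is min-continuous, yielding $|\bar\phi(\kappa(x))| \leq \|x\|_{\min}$ and then $\|\kappa(x)\| \leq \|x\|_{\min}$ by supremum.

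The main obstacle I foresee is the binormality half of the key lemma: controlling the singular part of $\Phi|_{\lambda(M)}$ by elements of $\mathbb{K}_M$ requires constructing, from a singular-supporting net of projections in $M$, genuine $M$-compact operators witnessing the singularity in $\mathbb{K}_M$. The other half, where Theorem \ref{ultrapower implement tracial} does the heavy lifting, should be largely parallel to the equi-integrability arguments already used in Section \ref{section ultrapower binormal}, modulo a careful identification of ``$M$-compactness'' with the correct convergence statement for the operators $1 \otimes T$ acting on the representing net $(\xi_i)$.
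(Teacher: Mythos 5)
Your implication $(1) \Rightarrow (2)$ is fine and parallel in spirit to the paper's, but your key lemma is false in the ``binormality'' direction, and this breaks your $(2)\Rightarrow(1)$ argument.

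Concretely: you claim that a state $\Phi$ on $\B(\rL^2(M))$ vanishing on $\mathbb{K}_M$ is automatically binormal. This cannot be true. Note that $\lambda(M)\cap\mathbb{K}_M=0$: for any nonzero projection $p\in M$, the set $\lambda(p)(\Omega_\tau)$ contains $\{pu : u\in\cU(M)\}$, which is not relatively compact in $\rL^2(M)$, so $\lambda(p)\notin\mathbb{K}^L_M$. Hence $\lambda(M)$ embeds faithfully into $\cM(\mathbb{K}_M)/\mathbb{K}_M$. Take any singular state $\omega$ on $\lambda(M)\cong M$, extend it by Hahn--Banach to a state $\bar\phi$ on $\cM(\mathbb{K}_M)/\mathbb{K}_M$, pull back to $\cM(\mathbb{K}_M)$ and extend again to $\Phi\in\B(\rL^2(M))^*$. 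Then $\Phi$ vanishes on $\mathbb{K}_M$, yet $\Phi\circ\lambda=\omega$ is singular, so $\Phi$ is not binormal. The same observation explains why your proposed proof of that direction cannot work: you want to dominate the singular part of $\Phi\circ\lambda$ by ``$\lambda(p_n)$-type operators lying in $\mathbb{K}_M$,'' but no such operators exist, since $\lambda(M)\cap\mathbb{K}_M=0$.

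The paper avoids this trap. It never asserts that every state vanishing on $\mathbb{K}_M$ is binormal; it proves the genuinely different statement (Theorem \ref{compact binormal state}) that membership of $T$ in $\mathbb{K}^L_M$ is detected by the binormal states vanishing on $\mathbf{K}(\rL^2(M))$: $T\in\mathbb{K}^L_M$ iff $\Phi(T^*T)=0$ for all such $\Phi$. The direction $(1)\Rightarrow(2)$ of that theorem is proved by the ultrapower/equi-integrability argument you correctly identify, but the direction $(2)\Rightarrow(1)$ is proved separately (by taking a net in $\Omega_\tau$ converging weakly to $0$). With Theorem \ref{compact binormal state} in hand, one gets $\ker\kappa=\bigcap_\Phi\ker\pi_\Phi$ where $\Phi$ ranges only over binormal states vanishing on $\mathbf{K}(\rL^2(M))$, and then $\mathrm{W^*AO}$, i.e.\ $\ker\pi_{\min}\subset\ker\kappa$, becomes equivalent to $\ker\pi_{\min}\subset\ker\pi_\Phi$ for each such $\Phi$, which is exactly condition (2). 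The point your proposal misses is that these binormal states form a strictly smaller class than the states vanishing on $\mathbb{K}_M$, yet they are still rich enough to cut out $\ker\kappa$; establishing that richness is precisely the content of Theorem \ref{compact binormal state}, not a formal Hahn--Banach exercise. (A minor separate issue: $\|\kappa(x)\|=\sup_{\bar\phi}|\bar\phi(\kappa(x))|$ is false in general for non-normal elements -- it gives the numerical radius -- and should be replaced by $\|\kappa(x)\|^2=\sup_{\bar\phi}\bar\phi(\kappa(x^*x))$.)
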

\begin{cor} \label{cor AO}
Let $M$ be a finite von Neumann algebra with property $\rm W^*AO$. Then for every $M$-bimodule $\cH$ such that $\cH \prec_M \rL^2(M)$ and $\cH$ is disjoint from $\rL^2(M)$, we have $\cH \prec_M \rL^2(M) \otimes \rL^2(M)$.
\end{cor}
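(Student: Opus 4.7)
The plan is to reduce the corollary to showing that for every vector $\xi \in \cH$, the positive form $\phi_\xi : M \otimes_{\rm alg} M^{\op} \to \C$, $a \otimes b^{\op} \mapsto \langle a \xi b, \xi \rangle$, is continuous for the minimal tensor norm; this is exactly the statement $\cH \prec_M \rL^2(M) \otimes \rL^2(M)$. Since $\cH \prec_M \rL^2(M)$, the representation of $M \otimes_{\rm alg} M^{\op}$ on $\cH$ factors through the $C^*$-algebra $C^*(\lambda(M), \rho(M)) \subset \B(\rL^2(M))$, so $\phi_\xi$ extends to a state $\phi_\xi^*$ on $C^*(\lambda(M), \rho(M))$. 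By Hahn-Banach, I would extend further to a state $\Phi$ on $\B(\rL^2(M))$. Since $M$ acts normally on $\cH$, the functionals $a \mapsto \Phi(\lambda(a)) = \langle a \xi, \xi \rangle$ and $b \mapsto \Phi(\rho(b)) = \langle \xi b, \xi \rangle$ are normal states on $M$, so $\Phi$ is binormal.

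By Theorem \ref{W*AO}, it then suffices to prove that $\Phi$ vanishes on $\mathbf{K}(\rL^2(M))$. Decompose $\Phi = \Phi_n + \Phi_s$ into its normal and singular parts on $\B(\rL^2(M))$; both are positive. Writing the spectral decomposition $\Phi_n(T) = \sum_j \lambda_j \langle T e_j, e_j \rangle$ with $(e_j) \subset \rL^2(M)$ orthonormal and $\lambda_j \geq 0$, one sees that $\Phi_n|_{C^*(\lambda(M), \rho(M))}$ coincides with the coefficient of the vector $\sum_j \sqrt{\lambda_j}\, e_j \otimes \delta_j$ in the $M$-bimodule $\rL^2(M) \otimes \ell^2(\N)$, which is a multiple of $\rL^2(M)$. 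On the other hand, $\Phi_n|_{C^*(\lambda(M),\rho(M))} \leq \phi_\xi^*$ as positive forms on a $C^*$-algebra, so by the Radon-Nikodym correspondence the GNS bimodule of $\Phi_n|_{C^*(\lambda(M),\rho(M))}$ embeds $M$-bimodularly into the GNS of $\phi_\xi^*$, i.e.\ into the cyclic sub-bimodule of $\cH$ generated by $\xi$. Since disjointness of $\cH$ from $\rL^2(M)$ passes to disjointness from the amplification $\rL^2(M) \otimes \ell^2(\N)$, this common sub-bimodule must vanish. Hence $\Phi_n|_{C^*(\lambda(M),\rho(M))} = 0$, so $\Phi_n(1) = 0$, and by positivity $\Phi_n = 0$; thus $\Phi = \Phi_s$ annihilates $\mathbf{K}(\rL^2(M))$.

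Applying Theorem \ref{W*AO} to this binormal state $\Phi$, I conclude that $\phi_\xi(a \otimes b^{\op}) = \Phi(\lambda(a) \rho(b))$ is $\otimes_{\min}$-continuous on $M \otimes_{\rm alg} M^{\op}$, and since $\xi \in \cH$ was arbitrary, $\cH \prec_M \rL^2(M) \otimes \rL^2(M)$. The main obstacle is the disjointness argument in the middle paragraph: one must package the two realizations of the GNS of $\Phi_n|_{C^*(\lambda(M),\rho(M))}$ --- as a sub-bimodule of $\cH$ via Radon-Nikodym, and as a sub-bimodule of a multiple of $\rL^2(M)$ via the spectral decomposition of the density operator of $\Phi_n$ --- precisely enough to invoke disjointness and kill the normal part of $\Phi$.
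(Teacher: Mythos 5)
Your proof is correct and follows essentially the same route as the paper: reduce to showing the vector state $\phi_\xi$ extends to a binormal state on $\B(\rL^2(M))$ that vanishes on the compacts, then invoke Theorem~\ref{W*AO}. The paper simply asserts that disjointness of $\cH$ from $\rL^2(M)$ forces $\Phi$ to be singular; your middle paragraph (decomposing $\Phi$ into normal and singular parts, realizing the normal part as a coefficient in a multiple of $\rL^2(M)$, and comparing GNS sub-bimodules via Radon--Nikodym against disjointness from the amplification) is a correct and welcome unpacking of that step.
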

\begin{proof}
Take a unit vector $\xi \in \cH$. Since $\cH \prec_M \rL^2(M)$, we can find a state $\Phi \in \B(\rL^2(M))^*$ such that $\Phi(\lambda(a)\rho(b))=\langle a \xi b ,\xi \rangle$ for all $a,b \in M$. Since $\cH$ is disjoint from $\rL^2(M)$, the state $\Phi$ must be singular, i.e.\ it must vanish on the compacts $\mathbf{K}(\rL^2(M))$.  By Theorem \ref{W*AO}, we thus know that 
$$ M \otimes_{\rm alg} M^{\op} \ni a \otimes b^{\op} \mapsto \Phi(\lambda(a) \rho(b)) = \langle a \xi b ,\xi \rangle$$ is $\otimes_{\rm min}$-continuous. Since this holds for every unit vector $\xi \in \cH$, we conclude that $\cH \prec_M \rL^2(M) \otimes \rL^2(M)$.
\end{proof}

\begin{rem}
It is not clear wether the conclusion of Corollary \ref{cor AO} is equivalent to property $\rm W^*AO$. This is at least true when $M$ is full or when $M_*$ is separable.
\end{rem}

Theorem \ref{W*AO} follows from the following more general result.
\begin{thm} \label{compact binormal state}
Let $M$ be a finite von Neumann algebra. Take $T \in \B(\rL^2(M))$. The following are equivalent :
\begin{enumerate}
\item $T \in \mathbb{K}^L_M$.
%\item $T^\omega$ vanishes on $\rL^2(M^\omega) \ominus \rL^2(M)$ for every cofinal ultrafilter $\omega$. 
\item $\Phi(T^*T)=0$ for every binormal state $\Phi \in \B(\rL^2(M))^*$ such that $\Phi|_{\mathbf{K}(\rL^2(M))}=0$.
\end{enumerate}
\end{thm}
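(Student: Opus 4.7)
The plan is to treat the two implications separately: $(2) \Rightarrow (1)$ by contrapositive, and $(1) \Rightarrow (2)$ by combining the ultrapower representation of Theorem~\ref{ultrapower implement tracial} with a joint truncation argument.

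For $(2) \Rightarrow (1)$, suppose $T \notin \mathbb{K}^L_M$, so that $T\Omega_\tau$ fails to be norm-precompact. I would extract a $\delta$-separated sequence $(\eta_n) \subset \Omega_\tau$ with $\|T\eta_n - T\eta_m\|_2 \geq \delta$ for $n \neq m$. Using weak compactness of bounded subsets of $\rL^2(M)$, I would pass to a weakly convergent subsequence and subtract the weak limit to reduce to the case $\eta_n \rightharpoonup 0$ in $\rL^2(M)$, preserving the $\delta$-separation of $(T\eta_n)$; a short Hilbert-space computation then yields $\limsup_n \|T\eta_n\|_2 \geq \delta/\sqrt{2}$. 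For a cofinal ultrafilter $\omega$ on $\N$, the formula $\Phi(S) := \lim_{n \to \omega} \langle S\eta_n, \eta_n \rangle$ defines a state with $\Phi(T^*T) > 0$ and $\Phi|_{\mathbf{K}(\rL^2(M))} = 0$ (since compact operators send weakly null bounded sequences to norm null ones). Binormality follows from weak-$*$ compactness of bounded subsets of $M_* = \rL^1(M)$: the densities $\eta_n \eta_n^*$ and $\eta_n^* \eta_n$ are bounded in $\rL^1(M)$, so after possibly passing to a further subfilter their $\omega$-limits $\rho^\lambda, \rho^\rho \in \rL^1(M)_+$ give $\Phi \circ \lambda(a) = \tau(a\rho^\lambda)$ and $\Phi \circ \rho(a) = \tau(a\rho^\rho)$, both normal.

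For $(1) \Rightarrow (2)$, I would apply Theorem~\ref{ultrapower implement tracial} to realize $\Phi(S) = \lim_{i \to \omega} \langle (1 \otimes S)\xi_i, \xi_i \rangle$ with $\xi = (\xi_i)^\omega \in \rL^2((A \ovt M)^\omega)$ for some abelian von Neumann algebra $A$ with a finite normalized trace. Identifying $A \cong \rL^\infty(X, \nu)$ for a standard probability space $(X, \nu)$ gives $A \ovt M \cong \rL^\infty(X, M)$ and $\rL^2(A \ovt M) \cong \rL^2(X, \rL^2(M))$, with $1 \otimes S$ acting fiberwise. Fix $\varepsilon > 0$. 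The Ocneanu equi-integrability built into the membership of $\xi$ in $\rL^2((A \ovt M)^\omega)$ supplies $a > 0$ and a truncation $\xi_i^{(a)} \in A \ovt M$ (from the measurable functional calculus applied to the polar decomposition of $\xi_i$) with $\|\xi_i^{(a)}\|_\infty \leq a$ and $\lim_{i \to \omega} \|\xi_i - \xi_i^{(a)}\|_2 \leq \varepsilon$; consequently $\xi_i^{(a)}(x)/a \in \Omega_\tau$ for $\nu$-a.e.\ $x$. By $M$-compactness of $T$, there exists a finite rank projection $q$ on $\rL^2(M)$ with $\sup_{\eta \in \Omega_\tau} \|(1-q)T\eta\|_2 \leq \varepsilon/a$; rescaling fiberwise and integrating over $X$ then yields $\|(1 \otimes (1-q)T)\xi_i^{(a)}\|_2 \leq \varepsilon$. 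Since $T^*qT$ has finite rank, the hypothesis $\Phi|_{\mathbf{K}} = 0$ forces $\lim_\omega \|(1 \otimes qT)\xi_i\|_2 = 0$. Assembling these bounds and letting $\varepsilon \to 0$ yields $\Phi(T^*T) = \lim_\omega \|(1 \otimes T)\xi_i\|_2^2 = 0$.

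The main obstacle is bridging the abstract ultrapower vector $\xi$ and the pointwise bound $\|(1-q)T\eta\|_2 \leq \varepsilon/a$ available only for $\eta \in \Omega_\tau$. The identification $A \ovt M \cong \rL^\infty(X, M)$ combined with the functional-calculus truncation $\xi_i^{(a)}$ is what allows this, since it ensures that $\xi_i^{(a)}(x)/a$ lies in $\Omega_\tau$ almost everywhere so that the $M$-compact bound for $T$ applies fiberwise and can be integrated.
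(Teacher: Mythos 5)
Your proposal is correct and follows essentially the same strategy as the paper's proof: for $(1)\Rightarrow(2)$ you invoke the ultrapower implementation of Theorem~\ref{ultrapower implement tracial} and then approximate $T$ by finite-rank operators in the $\Omega_\tau$-norm via $M$-compactness, exactly as the paper does (the paper packages the fiberwise rescaling and truncation into the identity $\|(1\otimes S)^\omega\|_{\Omega_{(\mu\otimes\tau)^\omega}}=\|S\|_{\Omega_\tau}$ and the projection $P$ onto $\rL^2(A)^\omega\ovt\rL^2(M)$, but the estimate is the same), and your $(2)\Rightarrow(1)$ is the contrapositive of the paper's direct argument that $T$ is weak-to-norm continuous on $\Omega_\tau$.
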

\begin{proof}
Fix a faithful normal trace $\tau$ on $M$.

(1) $\Rightarrow$ (2). Let $A$ be an abelian von Neumann algebra and $\omega$ a cofinal ultrafilter and $\xi \in \rL^2((A \ovt M)^\omega)$ such that $\Phi(S)= \langle (1 \otimes S)^\omega \xi, \xi \rangle$ for all $S \in \B(\rL^2(M))$. Since $\Phi(Q)=0$, we have $(1 \otimes Q)^\omega \xi=0$ for every finite rank projection $Q \in \B(\rL^2(M))$. Let $P$ be the projection from $\rL^2(A \ovt M)^\omega$ onto $L^2(A)^\omega \ovt \rL^2(M)$. Since $P$ is the supremum of $(1 \otimes Q)^\omega$ over all finite rank projections $Q \in \B(\rL^2(M))$, we get $P \xi=0$. 

Fix a faithful normal trace $\tau$ on $M$ and some faithful probability measure $\mu$ on $A$. Recall the definition of the norm
$$ \| S\|_{\Omega_\tau} = \sup \{ \|S\xi\| \mid \xi \in \Omega_\tau \}$$
for $S \in \B(\rL^2(M))$. Then one verifies easily that
$$ \| 1 \otimes S \|_{\Omega_{\mu \otimes \tau}}=\| S\|_{\Omega_\tau}$$
Passing to ultrapowers and using the fact that $(\Omega_{\mu \otimes \tau})^\omega = \Omega_{(\mu \otimes \tau)^\omega}$, we obtain 

we obtain 
$$ \| (1 \otimes S)^\omega \|_{\Omega_{(\mu \otimes \tau)^\omega}} =\| S\|_{\Omega_\tau} .$$

Now, since $T$ is $M$-compact, we know that for every $\varepsilon > 0$, we can find a finite rank projection $Q \in \B(\rL^2(M))$ such that $\|T-QT\|_{\Omega_\tau}  \leq \varepsilon $. We then get
$$ \| (1 \otimes T)^\omega - (1 \otimes Q)^\omega(1 \otimes T)^\omega \|_{\Omega_{(\mu \otimes \tau)^\omega}}  \leq \varepsilon.$$
Since  $(1 \otimes Q)^\omega \leq P$, we obtain 
$$ \| (1 \otimes T)^\omega - P(1 \otimes T)^\omega \|_{\Omega_{(\mu \otimes \tau)^\omega}}  \leq \varepsilon.$$
Since $\varepsilon > 0$ is arbitrary, we get 
$$ (1 \ovt T)^\omega \xi =P(1 \otimes T)^\omega \xi$$
for all $\eta \in \Omega_{(\mu \otimes \tau)^\omega}$, hence for all 
$\eta \in \rL^2((A \ovt M)^\omega)$. In particular,
$$ (1 \ovt T)^\omega \xi =P(1 \otimes T)^\omega \xi.$$
Since $(1 \otimes T)^\omega$ commutes with $P$ and $P \xi=0$, this shows that $(1 \otimes T)^\omega \xi=0$, hence $\Phi(T^*T)=0$.

(2) $\Rightarrow$ (1). Take $(\xi_i)_{i \in I} \subset \Omega_\tau$ a net that converges weakly to $0$. Let $\Phi \in \B(\rL^2(M))^*$ be a weak*-accumulation point of the net $\langle \cdot \xi_i,\xi_i \rangle$. Then $\Phi$ is binormal and vanishes on the compacts. Thus $\Phi(T^*T)=0$. Since this holds for every accumulation point, it means that $\lim_i T \xi_i=0$. We conclude that $T$ is weak-norm continous on $\Omega_\tau$, hence $T(\Omega_\tau)$ is compact.
\end{proof}

\begin{proof}[Proof of Theorem \ref{W*AO}]
For every binormal state $\Phi \in \B(\rL^2(M))^*$ that vanishes on the compacts, let $\pi_{\Phi}$ be the GNS representation of $M \otimes_{\rm max} M^{\op}$ associated to the state $a \otimes b^{\op} \mapsto \Phi(\lambda(a)\rho(b))$. By Theorem \ref{compact binormal state}, we have $\ker \kappa = \bigcap_{\Phi} \ker \pi_{\Phi}$. Therefore, if we let $\pi_{\rm \min} : M \otimes_{\rm max} M^{\op} \rightarrow M \otimes_{\rm min} M^{\op}$, then we have $\ker \pi_{\rm min} \subset \ker \kappa$ if and only if $\ker \pi_{\rm min} \subset \ker \pi_{\Phi}$ for every $\Phi$.
\end{proof}

\begin{rem}
In \cite[Theorem 7.20]{DP23}, Ding and Peterson found an alternative proof of Theorem \ref{W*AO} and Corollary \ref{cor AO} which moreover works for arbitrary (possibly type $\III$) von Neumann algebras! Their proof is based on the machinery of \cite{DKEP23} which allows them to prove Theorem \ref{compact binormal state} for arbitrary von Neumann algebras. Indeed, \cite[Proposition 3.8]{DKEP23} states that $T \in \mathbb{K}_M^L$ if and only if $T^*T \in \mathbb{K}_M^{\infty, 1}$, and \cite[Theorem 5.6 (part 5)]{DKEP23} states that $T^*T \in \mathbb{K}_M^{\infty, 1}$ if and only if $\Phi(T^*T) = 0$ for every binormal state on $\Phi \in \B(\rL^2(M))^*$.
\end{rem}

Finally, we observe that when a von Neumann algebra $M$ has property $(\Gamma)$, we can get rid of the abelian von Neumann algebra $A$ in Theorem \ref{ultrapower implement tracial}.
\begin{thm}
Let $M$ be a finite von Neumann algebra that has property $(\Gamma)$. For any state $\Psi \in \B(\rL^2(M))$, the following are equivalent:
\begin{enumerate}
\item [$(\rm i)$] $\Psi \circ \lambda$ and $\Psi \circ \rho$ are normal states.
\item [$(\rm ii)$] There exists a cofinal ultrafilter $\omega$ on some directed set $I$ and a vector $\xi \in \rL^2(M^\omega)$ such that $\Psi(T)=\langle T^\omega \xi, \xi \rangle$ for all $T \in \B(\rL^2(M))$. 
\end{enumerate} 
\end{thm}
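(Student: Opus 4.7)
The direction $(\mathrm{ii}) \Rightarrow (\mathrm{i})$ is immediate. For $(\mathrm{i}) \Rightarrow (\mathrm{ii})$, the strategy is to first apply Theorem~\ref{ultrapower implement tracial} to obtain an implementation of $\Psi$ over $A \ovt M$ for some abelian $A$, and then use property $(\Gamma)$ to absorb the auxiliary algebra $A$ into $M$ itself via an embedding into a relative commutant, at the cost of passing to an iterated ultrapower.

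First, I apply Theorem~\ref{ultrapower implement tracial} to obtain an abelian von Neumann algebra $A$, a cofinal ultrafilter $\omega_1$ on some directed set $I$, and a vector $\xi_1 \in \rL^2((A \ovt M)^{\omega_1})$ such that
$\Psi(T) = \langle (1 \otimes T)^{\omega_1} \xi_1, \xi_1 \rangle$ for all $T \in \B(\rL^2(M))$.
Inspection of the proof of Theorem~\ref{ultrapower implement tracial} shows that $A$ can be taken to be $\rL(\Z^{\oplus E})$ for an orthonormal basis $E$ of $\rL^2(M)$, in particular a free abelian group measure space construction of cardinality controlled by that of $M_*$.

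Next, I use property $(\Gamma)$ to produce a trace-preserving normal $*$-embedding $\pi_0 : A \hookrightarrow M' \cap M^{\omega_2}$ for some cofinal ultrafilter $\omega_2$ on a sufficiently large directed set $J$. Since $\pi_0(A)$ and $M$ commute inside $M^{\omega_2}$ and the canonical conditional expectation $M^{\omega_2} \to M$ restricts to the trace on $\pi_0(A)$, the assignment $a \otimes x \mapsto \pi_0(a)x$ extends to a normal trace-preserving embedding $\pi : A \ovt M \hookrightarrow M^{\omega_2}$ with $\pi|_M = \id_M$. This $\pi$ induces an isometric $M$-bimodular embedding $\rL^2(A \ovt M) \hookrightarrow \rL^2(M^{\omega_2})$, which in turn induces a map $\pi^{\omega_1} : \rL^2((A \ovt M)^{\omega_1}) \hookrightarrow \rL^2((M^{\omega_2})^{\omega_1})$.

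Finally, the iterated Ocneanu ultrapower $(M^{\omega_2})^{\omega_1}$ is canonically identified with $M^\omega$ for a product-type cofinal ultrafilter $\omega$ on $I \times J$ in a way compatible with the $\rL^2$-structures (both realized as equi-integrable subspaces of the corresponding Groh--Raynaud ultrapowers). Setting $\xi := \pi^{\omega_1}(\xi_1) \in \rL^2(M^\omega)$ and using that, under these identifications, $(1 \otimes T)^{\omega_1}$ corresponds to $T^\omega$ for every $T \in \B(\rL^2(M))$, one obtains $\Psi(T) = \langle T^\omega \xi, \xi \rangle$.

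The main obstacle is the construction of $\pi_0 : A \hookrightarrow M' \cap M^{\omega_2}$. When $M_*$ is separable, this is standard: property $(\Gamma)$ ensures that $M' \cap M^{\omega_2}$ contains a diffuse abelian subalgebra, and any separable abelian von Neumann algebra embeds trace-preservingly into any diffuse abelian algebra. In the non-separable setting, this requires a saturation argument: $\omega_2$ must be chosen on a directed set of cardinality exceeding that of a generating set for $A$, exploiting that the family of finite subsets $F \subset M$ paired with $\varepsilon > 0$ produces, via $(\Gamma)$, approximately central unitaries with prescribed spectral distribution, out of which pairwise commuting Haar unitaries indexed by $E$ can be assembled in the limit. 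A minor technical check, straightforward but notationally involved, is the identification of the iterated ultrapower with a single ultrapower preserving $\rL^2$-spaces.
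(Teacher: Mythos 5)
Your proposal takes essentially the same route as the paper: apply Theorem~\ref{ultrapower implement tracial} to obtain $A$, $\omega_1$, and $\xi_1 \in \rL^2((A \ovt M)^{\omega_1})$, use property $(\Gamma)$ to realize $A$ inside $M' \cap M^{\omega_2}$ and hence $A \ovt M$ inside $M^{\omega_2}$, and conclude by identifying $(M^{\omega_2})^{\omega_1}$ with a single ultrapower $M^\omega$ (the paper cites \cite[Section~2.3]{AHHM18} for exactly this iterated-ultrapower identification, which you invoke via Groh--Raynaud). The additional remarks you make on the non-separable saturation needed to embed $A = \rL(\Z^{\oplus E})$ into $M' \cap M^{\omega_2}$ when $E$ is uncountable are a valid clarification of a point that the paper leaves implicit.
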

\begin{proof}
We only have to prove $(\rm i) \Rightarrow (\rm ii)$. By Theorem \ref{ultrapower implement tracial}, we can find an abelian von Neumann algebra $A$, a cofinal ultrafilter $\omega$ on some directed set $I$ and a vector $\xi \in \rL^2((A \ovt M)^\omega)$ such that $\Psi(T)=\langle (1 \otimes T)^\omega \xi, \xi \rangle$ for all $T \in \B(\rL^2(M))$. Since $M$ has property $(\Gamma)$, we can take $\eta$ a second cofinal ultrafilter on some directed set such that $M' \cap M^\eta$ contains a copy of $A$. Then we have a copy of $A \ovt M$ inside $M^\eta$. Thus we have a copy of $(A \ovt M)^\omega$ inside the iterated ultrapower $(M^\eta)^\omega=M^{\omega \otimes \eta}$ (see \cite[Section 2.3]{AHHM18}). This means that we can view $\xi$ as a vector in $\rL^2((A \ovt M)^\omega) \subset \rL^2(M^{\omega \otimes \eta})$.
\end{proof}
We obtain the following corollary which answers in particular \cite[Remark 1.2]{IT23} for von Neumann algebras with property $(\Gamma)$.
\begin{cor}
Let $M$ be a finite von Neumann algebra with property $(\Gamma)$ and let $\cH$ be an $M$-bimodule. The following are equivalent.
\begin{enumerate}
\item $\cH \subset \rL^2(M^\omega)$ for some cofinal ultrafilter $\omega$.
\item $\cH$ is in the closure of $\rL^2(M)$ in the Fell topology.
\item $\cH$ is weakly contained in $\rL^2(M)$. 
\end{enumerate}
\end{cor}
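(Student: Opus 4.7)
The plan is to show the chain $(1) \Rightarrow (2) \Rightarrow (3) \Rightarrow (1)$. The first two implications are formal and do not require property $(\Gamma)$: for $(1) \Rightarrow (2)$ I would use that any $\xi \in \cH \subset \rL^2(M^\omega)$ is represented as $\xi = (\xi_i)^\omega$ with $\xi_i \in \rL^2(M)$, so $\langle a\xi b, \xi\rangle = \lim_\omega \langle a\xi_i b, \xi_i\rangle$ for all $a, b \in M$, and the analogous statement for finite tuples of vectors yields Fell approximation along $\omega$; for $(2) \Rightarrow (3)$ I would invoke the standard equivalence between Fell closure and weak containment for bimodules.

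The substantive implication is $(3) \Rightarrow (1)$. I would first treat the cyclic case: suppose $\cH$ is generated by a unit vector $\xi$. Since $\cH \prec_M \rL^2(M)$, I can produce a net $(\eta_i) \subset \rL^2(M)$ such that a weak$^*$-cluster point $\Psi$ of the vector states $\omega_{\eta_i}$ on $\B(\rL^2(M))$ satisfies $\Psi(\lambda(a)\rho(b)) = \langle a\xi b, \xi\rangle$ for every $a, b \in M$. Because the $M$-bimodule structure on $\cH$ is normal, the functionals $a \mapsto \langle a\xi, \xi\rangle$ and $b \mapsto \langle \xi b, \xi\rangle$ are normal, hence $\Psi \circ \lambda$ and $\Psi \circ \rho$ are normal states and $\Psi$ is binormal. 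The preceding theorem (which uses property $(\Gamma)$) then supplies a cofinal ultrafilter $\omega$ and a vector $\xi' \in \rL^2(M^\omega)$ with $\Psi(T) = \langle T^\omega \xi', \xi'\rangle$; matching matrix coefficients gives an $M$-bimodular isometric embedding $\cH \hookrightarrow \rL^2(M^\omega)$ sending $\xi$ to $\xi'$.

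For general $\cH$, I would decompose $\cH = \bigoplus_{j \in J} \cH_j$ into cyclic sub-bimodules, apply the cyclic case to each to obtain an ultrafilter $\omega_j$ on some $I_j$ and an embedding $\cH_j \hookrightarrow \rL^2(M^{\omega_j})$, and then amalgamate into a single ultrapower. For this I would choose a cofinal ultrafilter $\omega$ on the product directed set $\prod_j I_j$ refining each $\omega_j$, so that $M^\omega$ canonically contains each $M^{\omega_j}$, and use that property $(\Gamma)$ is inherited by $M^\omega$—so that $M' \cap M^\omega$ is diffuse and contains an orthogonal family of projections indexed by $J$—to realize $\bigoplus_j \cH_j$ inside $\rL^2(M^\omega)$. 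The main technical hurdle will be precisely this amalgamation step; for separable $\cH$ (the setting of the application to pointwise inner automorphisms mentioned above) a single iterated $\N$-indexed ultrapower suffices, while in general one must pass to a sufficiently saturated ultrafilter.
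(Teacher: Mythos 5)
The paper states this corollary without proof -- it is presented as an immediate consequence of the preceding theorem -- so let me assess your argument directly rather than against a reference proof.

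Your chain $(1)\Rightarrow(2)\Rightarrow(3)\Rightarrow(1)$ is the right structure, and the cyclic case of $(3)\Rightarrow(1)$ is correct and is clearly what the paper intends: extend the matrix-coefficient state of a cyclic vector $\xi\in\cH$ to a state $\Psi$ on $\B(\rL^2(M))$, observe binormality from the normality of the bimodule actions, and invoke the preceding theorem. One small imprecision: weak containment $\cH\prec_M\rL^2(M)$ only gives that the state $a\otimes b^{\op}\mapsto\langle a\xi b,\xi\rangle$ is a weak$^*$ limit of \emph{sums} of vector states on $\B(\rL^2(M))$, i.e.\ of normal states, not of single vector states $\omega_{\eta_i}$ as you write; but this is harmless, since binormality of the cluster point $\Psi$ is all the theorem requires.

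The genuine gap is the amalgamation step for non-cyclic $\cH$, and I do not think your sketched route works as stated. You propose cutting by an orthogonal family of projections $p_j\in M'\cap M^\omega$. However, replacing a cyclic vector $\eta_j$ by $p_j\eta_j$ (or $p_j\eta_j p_j$) does not preserve the matrix coefficients $\langle a\eta_j b,\eta_j\rangle$ -- it replaces them by $\langle a\eta_j b,p_j\eta_j\rangle$, which need not equal anything useful. And making orthogonal \emph{copies} of $\rL^2(M^\omega)$ would require isometries in $M'\cap M^\omega$ with orthogonal ranges, which do not exist since $M^\omega$ is finite. A route that does work, at least for countable $J$, is to go back one step to Theorem \ref{ultrapower implement tracial}: the abelian ancilla there is $A=L(\Z^{\oplus E})$ and the map $\theta(\Phi)=(V\otimes 1)(\Phi^{1/2})$ uses free Haar unitaries $(u_e)_{e\in E}$. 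If one realizes the binormal state $\Psi_j$ of the $j$-th cyclic summand using a \emph{disjoint} family of free generators $E_j$ inside a common $A=L(\Z^{\oplus\bigsqcup_j E_j})$, then the resulting vectors in $\rL^2((A\ovt M)^\omega)$ are automatically $M$-bimodularly orthogonal because $\tau(u_f^*u_e)=0$ whenever $e,f$ come from distinct $E_j$, $E_k$. After this, the property-$(\Gamma)$ step absorbs $A$ into an iterated ultrapower exactly as in the preceding theorem. For uncountable $J$ one must worry about whether the non-separable $A$ embeds into $M'\cap M^\eta$, and I suspect the corollary is implicitly meant for separable $\cH$ (which in any case suffices for the application to \cite[Remark 1.2]{IT23}).
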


\section{The weak Dixmier property} \label{section dixmier}
Let $M$ be a von Neumann algebra. We denote by $\CCP(M)$ the set of all (not necessarily normal) contractive completely positive maps from $M$ into itself, equipped with the topology of pointwise weak* convergence. Then $\CCP(M)$ is a compact convex semigroup in the sense of \cite{Ma19}. We denote by $\UCP(M) \subset \CCP(M)$ the closed convex subsemigroup of unital completely positive maps.

Let $N \subset M$ be an inclusion of von Neumann algebras. We denote by $\cD(N \subset M) \subset \UCP(M)$ the closed convex hull of $\{ \Ad(u) \mid u \in \cU(N) \}$. We call it the \emph{weak Dixmier semigroup}. 

\begin{prop} \label{equivalences dixmier}
Let $N \subset M$ be an inclusion of von Neumann algebras. Then the following are equivalent.
\begin{enumerate}
\item $\cD(N \subset M)$ contains a conditional expectation onto $N' \cap M$.
\item For every $x \in M$, the weak*-closed convex hull of $\{ uxu^* \mid u \in \cU(N) \}$ intersects $N' \cap M$.
\item For every $\xi \in M_*$ such that $\xi|_{N' \cap M}=0$, the norm-closed convex hull of $\{ \xi \circ \Ad(u) \mid u \in \cU(N) \}$ contains $0$.
\end{enumerate}
\end{prop}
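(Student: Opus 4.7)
The plan is to treat $(1) \Rightarrow (2)$ and the duality $(2) \Leftrightarrow (3)$ as quick formalities, and to concentrate on the substantive implication $(2) \Rightarrow (1)$. For $(1) \Rightarrow (2)$, writing $E \in \cD(N \subset M)$ as a pointwise weak$^*$-limit of convex combinations of maps $\Ad(u)$ with $u \in \cU(N)$ shows that $E(x)$ lies in both $N' \cap M$ and $\conv\{uxu^* : u \in \cU(N)\}$ for every $x \in M$. For $(2) \Leftrightarrow (3)$ I would appeal to the pairing $(M, w^*) \leftrightarrow (M_*, \|\cdot\|)$, using Mazur's theorem to identify norm-closed and weakly-closed convex hulls in $M_*$. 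If $(2)$ fails for some $x$, then since $\conv\{uxu^* : u \in \cU(N)\}$ is weak$^*$-compact, Hahn-Banach separation in $M$ with the weak$^*$ topology provides $\xi \in M_*$ and $c > 0$ with $\real\,\xi(uxu^*) \geq c$ for all $u \in \cU(N)$ and $\real\,\xi \leq 0$ on $N' \cap M$; since the latter is a complex subspace, this forces $\xi|_{N' \cap M} = 0$, and the same $\xi$ keeps $0$ at norm distance at least $c/\|x\|$ from $\co\{\xi \circ \Ad(u)\}$. The reverse implication is symmetric: a Hahn-Banach separator of $0$ from the norm-closed convex hull in $M_*$ is an element $x \in M = (M_*)^*$ whose orbit then contradicts $(2)$ via weak$^*$-continuity of $\xi$.

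For $(2) \Rightarrow (1)$, the key preliminary is that $\cD := \cD(N \subset M)$ is a compact convex \emph{semigroup} under composition. The algebraic convex hull $C := \co\{\Ad(u) : u \in \cU(N)\}$ is a sub-semigroup (because $\Ad(u) \circ \Ad(v) = \Ad(uv)$) consisting of \emph{normal} ucp maps. For $\Phi \in C$ and $\Psi = \lim_\beta \Psi_\beta \in \cD$ with $\Psi_\beta \in C$, normality of $\Phi$ yields $\Phi \circ \Psi_\beta \to \Phi \circ \Psi$ pointwise weak$^*$, so $\Phi \circ \Psi \in \cD$; a further limit in the first variable, using continuity of the evaluation $\Phi' \mapsto \Phi'(y)$ at each fixed $y \in M$, extends this to arbitrary $\Phi \in \cD$, giving $\cD \cdot \cD \subset \cD$. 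Every element of $\cD$ also fixes $N' \cap M$ pointwise and is an $(N' \cap M)$-bimodule map, since both properties hold for the generators $\Ad(u)$ and are preserved under convex combinations and pointwise weak$^*$-limits. A convenient consequence of compactness is the identity $\{\Psi(y) : \Psi \in \cD\} = \conv\{uyu^* : u \in \cU(N)\}$ for every $y \in M$.

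With this semigroup structure secured, I would run the following iterative averaging. Given a finite set $F = \{x_1, \ldots, x_n\} \subset M$, set $\Phi_0 = \id$; if $\Phi_i \in \cD$ satisfies $\Phi_i(x_j) \in N' \cap M$ for all $j \leq i$, apply $(2)$ to $y := \Phi_i(x_{i+1})$ to produce $\Psi_{i+1} \in \cD$ with $\Psi_{i+1}(y) \in N' \cap M$, and set $\Phi_{i+1} := \Psi_{i+1} \circ \Phi_i \in \cD$; this preserves the earlier properties (because $\Psi_{i+1}$ fixes $N' \cap M$) while gaining the $(i+1)$-th. Hence each set $\cD_F := \{\Phi \in \cD : \Phi(F) \subset N' \cap M\}$ is nonempty, closed in the pointwise weak$^*$ topology, and satisfies $\cD_F \cap \cD_G = \cD_{F \cup G}$; by compactness of $\cD$ the intersection $\bigcap_F \cD_F$ is nonempty. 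Any $E$ in this intersection satisfies $E(M) \subset N' \cap M$, $E|_{N' \cap M} = \id$, and is an $(N' \cap M)$-bimodule map, which together identify $E$ as a conditional expectation onto $N' \cap M$. The most delicate step I foresee is establishing the semigroup closure $\cD \cdot \cD \subset \cD$, since general elements of $\cD$ need not be normal; once this is in place, the remainder is a clean compactness and finite-intersection construction.
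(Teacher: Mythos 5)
Your proof is correct, and for the easy directions $(1)\Rightarrow(2)$ and the Hahn--Banach duality between $(2)$ and $(3)$ it matches the spirit of the paper's argument (the paper phrases the duality as $(1)\Rightarrow(3)$ and $(3)\Rightarrow(2)$, but the content is the same). The genuine difference is the treatment of $(2)\Rightarrow(1)$: the paper dismisses $(1)\Leftrightarrow(2)$ as ``clear,'' whereas you write out the standard-but-nontrivial compactness argument in full. Your chain is: $\cD=\cD(N\subset M)$ is a compact convex semigroup under composition whose elements fix $N'\cap M$ pointwise; condition $(2)$ says exactly that each closed set $\cD_{\{x\}}=\{\Phi\in\cD: \Phi(x)\in N'\cap M\}$ is nonempty; the semigroup property and pointwise fixing of $N'\cap M$ give the finite intersection property for $\{\cD_F\}_F$; compactness of $\cD$ then produces $E\in\bigcap_F\cD_F$, which is a ucp idempotent with range $N'\cap M$, hence a conditional expectation. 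The one real subtlety you flag and resolve correctly is the semigroup closure $\cD\cdot\cD\subset\cD$: elements of $\cD$ need not be normal, so you cannot pass $\Phi\circ(\cdot)$ through a pointwise-weak$^*$ limit directly. Your two-step limit --- first precomposing a \emph{normal} $\Phi\in C$ with a net $\Psi_\beta\to\Psi$ in $C$ (so $\Phi\circ\Psi_\beta\to\Phi\circ\Psi$ by normality of $\Phi$), and then letting $\Phi$ vary in $\cD$ using the weak$^*$-continuity of $\Phi'\mapsto\Phi'(\Psi(y))$ at each fixed $y$ --- is exactly right. So your version is a legitimate expansion of the step the paper leaves implicit, not a different route.
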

\begin{proof}
(1) $\Leftrightarrow$ (2) is clear.

(1) $\Rightarrow$ (3). Take $\Phi \in \cD(N \subset M)$ a conditional expectation onto $N' \cap M$. Then for every $\xi \in M_*$ such that $N' \cap M$, we have $\xi \circ \Phi=0$. This means that $0$ is in the weak closure of the convex hull of $\{ \xi \circ \Ad(u) \mid u \in \cU(N) \}$ which, by the Hahn-Banach theorem, is equal to its norm closure.

(3) $\Rightarrow$ (2). Suppose that for some $x \in M$, the weak*-closed convex hull of $\{ uxu^* \mid u \in \cU(N) \}$ does not intersect $N' \cap M$. Then by the Hahn-Banach theorem, we can find $\xi \in M_*$ such that $\xi|_{N' \cap M}=0$ and $\xi(uxu^*) \geq c$ for some constant $c > 0$ and every $u \in \cU(N)$. This would imply that $\eta(x) \geq c$ for every $\eta$ in the closed convex hull of $\{ \xi \circ \Ad(u) \mid u \in \cU(N) \}$ contradicting (3).
\end{proof}

When the above conditions are satisfied, we say that the inclusion $N \subset M$ has the \emph{weak Dixmier property}. The following theorem generalizes the main result of \cite{Ma19} with a much simpler, more natural and more direct proof, because we no longer need to use the results of \cite{Ma18} on the bicentralizer flow. Instead we combine the minimal idempotent technique of \cite{Ma19} with the Approximation Theorem of \cite[Section 3.2]{SZ99}.

\begin{thm} \label{expectation implies dixmier}
Let $N \subset M$ be an inclusion von Neumann algebras with expectations. Then $N \subset M$ has the weak Dixmier property. If $N$ is of type $\III$, then $\cD(N \subset M)$ contains a faithful family of normal conditional expectations from $M$ onto $N' \cap M$.
\end{thm}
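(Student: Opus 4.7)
The plan is to apply the minimal-idempotent technique of \cite{Ma19} to the compact right-topological convex semigroup $\cD := \cD(N \subset M)$, and in the type $\III$ case to upgrade the output to normal maps by invoking the Approximation Theorem of \cite[Section 3.2]{SZ99}.

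First I would verify that $\cD$, equipped with composition of ucp maps, is a compact convex right-topological semigroup. Convexity and compactness are built into the definition, and right-continuity of composition is immediate: if $\Psi_i \to \Psi$ in the pointwise weak* topology then $\Psi_i(\Phi(x)) \to \Psi(\Phi(x))$ for each fixed $x \in M$ and $\Phi \in \cD$. By the general theory of compact right-topological semigroups, $\cD$ contains minimal idempotents, and for any such $\Phi$ the corner $\Phi \cdot \cD \cdot \Phi$ is a compact topological group with identity $\Phi$.

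Given such a minimal idempotent $\Phi \in \cD$, I would extract from it a ucp map $\Phi_0 \in \cD$ whose range lies in $N' \cap M$. The containment $N' \cap M \subseteq \mathrm{Im}(\Phi_0)$ is automatic, since every element of $\cD$ fixes $N' \cap M$ pointwise. For the reverse containment, I would exploit the compact topological group structure on $G := \Phi \cdot \cD \cdot \Phi$: averaging $\Ad(u) \circ \Phi$ against the Haar measure of $G$ (transported along the continuous map $u \mapsto \Phi \circ \Ad(u) \circ \Phi \in G$) produces a ucp map $\Phi_0 \in \cD$ satisfying $\Ad(u) \circ \Phi_0 = \Phi_0$ for every $u \in \cU(N)$. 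Equivalently $u\Phi_0(x)u^* = \Phi_0(x)$ for every $x \in M$, so $\Phi_0(M) \subseteq N' \cap M$, and $\Phi_0$ is a conditional expectation onto $N' \cap M$ living in $\cD$. By Proposition \ref{equivalences dixmier}, this gives the weak Dixmier property for $N \subset M$.

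For the refinement when $N$ is of type $\III$, I would rely on the Approximation Theorem of \cite[Section 3.2]{SZ99}: since halving of projections is unrestricted in a type $\III$ algebra, every $\Ad(u)$ with $u \in \cU(N)$ can be pointwise $\sigma$-weakly approximated by normal ucp maps of the form $x \mapsto \sum_i v_i x v_i^*$, where $v_i \in N$ are isometries with $\sum v_i v_i^* = 1$. Consequently the subsemigroup $\cD_{\mathrm{nor}} \subseteq \cD$ of normal ucp maps is weak* dense in $\cD$ and stable under finite averages, so the same minimal-idempotent construction executed inside $\cD_{\mathrm{nor}}$ produces a \emph{normal} conditional expectation in $\cD$ onto $N' \cap M$. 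To obtain a faithful family, I would repeat the construction after enlarging the initial averaging pool so that for each prescribed non-zero $x \in M_+$ some element of $\cD_{\mathrm{nor}}$ does not vanish at $x$; a diagonal argument over a separating sequence of normal states on $M$ then assembles the required faithful family of normal conditional expectations.

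The main obstacle in this plan is the extraction of $\Phi_0$ from $\Phi$ in the second step. The Haar averaging takes place inside the corner group $G$, but composition in the ambient $\cD$ is only right-continuous; one must therefore combine the joint continuity available within $G$ with the affine continuity of left composition by $\Ad(u)$ on $\cD$ in order to transport the invariance from $G$ to the full $\cU(N)$-action on $\cD$ and obtain the two-sided $\cU(N)$-invariance $\Ad(u) \circ \Phi_0 = \Phi_0$.
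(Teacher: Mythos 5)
Your plan correctly identifies the minimal-idempotent technique and the Approximation Theorem of \cite[Section 3.2]{SZ99} as the two main ingredients, and these are indeed the two ingredients of the paper's proof, but the way you combine them contains genuine gaps at both key steps.

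\textbf{The averaging step does not close.} As you yourself flag, after choosing a minimal idempotent $\Phi$ you want to average over the compact group $G = \Phi \cdot \cD \cdot \Phi$ to produce $\Phi_0$ with $\Ad(u) \circ \Phi_0 = \Phi_0$ for every $u \in \cU(N)$. But Haar averaging over $G$ only produces \emph{$G$-invariance}, i.e.\ $\phi \circ \Phi_0 = \Phi_0 \circ \phi = \Phi_0$ for $\phi \in G$. Left composition with $\Ad(u)$ does \emph{not} preserve $G$: $\Ad(u) \circ \Phi_0 = \Ad(u) \circ \Phi \circ \Phi_0$ lies in the minimal left ideal $\cD \Phi$ but is not of the form $\Phi \circ (\cdot)$, so there is no way to see it as an element of $G$ acting on $\Phi_0$. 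Transporting along $u \mapsto \Phi \circ \Ad(u) \circ \Phi$ only controls $\Phi \circ \Ad(u) \circ \Phi_0$, which is not $\Ad(u) \circ \Phi_0$. In fact, a minimal idempotent of $\cD(N\subset M)$ need not be a conditional expectation onto $N' \cap M$ at all; the cited \cite[Proposition 2.8]{Ma19} gives this only when the idempotent is in addition \emph{faithful}, and the whole point of the argument is to engineer faithfulness, which pure Haar averaging does not do.

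\textbf{The normality step misreads \cite{SZ99}.} You invoke the Approximation Theorem to say that each $\Ad(u)$ is a weak$^*$ limit of normal ucp maps built from isometries and then propose to run the idempotent argument in a dense subsemigroup $\cD_{\mathrm{nor}}$. But $\Ad(u)$ is already normal, so this statement is vacuous, and $\cD_{\mathrm{nor}}$ is not weak$^*$ closed, so it contains no minimal idempotent. The Approximation Theorem is used in the opposite direction: it says that when $N$ is of type $\III$ a given \emph{normal} faithful conditional expectation $F \in \cE(N,\cZ(N))$ already lies in $\cD(N)$. This is an existence statement about one normal element of the semigroup, not a density statement.

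\textbf{What the paper actually does.} Reduce to $N$ of type $\III$. Take $E \in \cE(M,N)$ and $F \in \cE(N,\cZ(N))$; by \cite[Section 3.2]{SZ99} one has $F \in \cD(N)$, and by the extension principle there is $\Psi \in \cD(N \subset M)$ with $\Psi|_N = F$. Take a minimal idempotent $\Phi_0 \in \cD(N\subset M)$ and set $\Phi = \Phi_0 \circ \Psi$. Since $E$ commutes with every element of $\cD(N\subset M)$ and $\Phi_0$ fixes $N' \cap M$, one computes $E \circ \Phi = \Phi_0 \circ \Psi \circ E = \Phi_0 \circ F \circ E = F \circ E$, which is a normal faithful conditional expectation. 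Faithfulness of $\Phi$ follows, and then \cite[Proposition 2.8]{Ma19} upgrades $\Phi$ to a normal conditional expectation onto $N'\cap M$. Varying $F$ produces the faithful family. In short, faithfulness is not obtained by averaging in the corner group but by a concrete intertwining with $E$ and a concrete normal element $F$ supplied by the type $\III$ hypothesis; this is the missing idea in your approach.
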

\begin{proof}This is elementary and already known when $N$ is semifinite, so we may assume that $N$ is of type $\III$. We may also assume that $N$ and $N' \cap M$ are countably decomposable.
Take $E \in \cE(M,N)$ and $F \in \cE(N,\cZ(N))$. By the Approximation Theorem of \cite[Section 3.2]{SZ99}, we have $F \in \cD(N)$. Therefore, we can find $\Psi \in \cD(N \subset M)$ such that $\Psi|_N=F$. Take $\Phi_0 \in \cD(N \subset M)$ a minimal idempotant (see \cite{Ma19}) and let $\Phi=\Phi_0 \circ \Psi$. Then we have 
$$E \circ \Phi = \Phi \circ E=\Phi_0 \circ \Psi \circ E=\Phi_0 \circ F \circ E=F \circ E.$$
This shows in particular that $\Phi$ is faithful and by \cite[Proposition 2.8]{Ma19}, we conclude that $\Phi$ is a conditional expectation onto $N' \cap M$ (in fact, it is the unique faithful normal conditional expectation that preserves $F \circ E$).
\end{proof}

The von Neumann tensor product of two non-normal ccp maps is not well-defined in general, unless one of the two maps is normal. For a detailed account on these issue, we refer to \cite{NT81}. For our study, we will only need the following easy proposition.

\begin{prop} \label{tensor product ccp}
Let $A$ and $B$ be two von Neumann algebras and let $f : A \rightarrow B$ be a $\CCP$ map (not necessarily normal). Then for any von Neumann algebra $M$, there exists a unique $\CCP$ map $f \otimes \id  : A \ovt M \rightarrow B \ovt M$ such that the following diagram commutes for every $\varphi \in M_*$:
$$ \begin{tikzcd}
A \ovt M \arrow{r}{f \otimes \id} \arrow[swap]{d}{\id \otimes \varphi} & B \ovt M \arrow{d}{\id \otimes \varphi} \\%
A  \arrow{r}{f }& B 
\end{tikzcd}$$
Moreover, we have the following properties :
\begin{enumerate}
\item If $g \in \CCP(B,C)$, then $(g \circ f) \otimes \id=(g \otimes \id) \circ (f \otimes \id)$.
\item If $f$ is normal then $f \otimes \id$ is also normal.
\item The map $\mathrm{CCP}(A,B) \ni f \mapsto f \otimes \id \in \mathrm{CCP}(A \ovt M, B \ovt M)$ is continuous in the topologies of pointwise weak*-convergence.
\end{enumerate} 
\end{prop}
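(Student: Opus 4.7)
Uniqueness is immediate: if two CCP maps $g_1, g_2 : A \ovt M \to B \ovt M$ both satisfy the commuting diagram, then $(\id \otimes \varphi)(g_1(x) - g_2(x)) = 0$ for every $x \in A \ovt M$ and $\varphi \in M_*$, and since the slice maps $\{\id \otimes \varphi : \varphi \in M_*\}$ jointly separate points of $B \ovt M$ (the span of $\{\psi \otimes \varphi : \psi \in B_*, \varphi \in M_*\}$ being norm-dense in $(B \ovt M)_*$), this forces $g_1 = g_2$.

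For existence, my plan is to define $(f \otimes \id)(x)$ through its slices and then verify that such an element of $B \ovt M$ actually exists. Given $x \in A \ovt M$, consider the assignment
$$ T_x : M_* \to B, \qquad T_x(\varphi) = f\bigl((\id \otimes \varphi)(x)\bigr), $$
which makes sense because the slice map lands in $A$. Using the operator space identification of $B \ovt M$ with an appropriate space of completely bounded maps $M_* \to B$, with predual the operator space projective tensor product $B_* \widehat{\otimes} M_*$ (this being essentially the content of \cite{NT81}), it suffices to check that $T_x$ is completely bounded with $\|T_x\|_{\cb} \leq \|f\| \|x\|$. This follows from the matrix identity
$$ [T_x(\varphi_{ij})] = f_n\bigl([(\id \otimes \varphi_{ij})(x)]\bigr), \qquad [\varphi_{ij}] \in M_n(M_*), $$
combined with contractivity of the amplifications $f_n : M_n(A) \to M_n(B)$ (since $f$ is CCP) and the standard slice-map bound $\|[(\id \otimes \varphi_{ij})(x)]\|_{M_n(A)} \leq \|[\varphi_{ij}]\|_{M_n(M_*)} \|x\|$. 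Declaring $(f \otimes \id)(x)$ to be the element of $B \ovt M$ corresponding to $T_x$ produces a CCP map with the required slice property.

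Items (1)--(3) follow from uniqueness by comparing slices. For (1), both $(g \circ f) \otimes \id$ and $(g \otimes \id) \circ (f \otimes \id)$ have $\varphi$-slice equal to $g \circ f \circ (\id \otimes \varphi)$. For (2), when $f$ is normal the usual normal extension of $f \otimes \id$ from the algebraic tensor product satisfies the slice condition, hence coincides with our construction by uniqueness. For (3), if $f_i \to f$ pointwise weak* in $\CCP(A,B)$, then for each $x \in A \ovt M$ and each slice functional $\psi \otimes \varphi \in (B \ovt M)_*$ one has
$$ (\psi \otimes \varphi)\bigl((f_i \otimes \id)(x)\bigr) = \psi\bigl(f_i((\id \otimes \varphi)(x))\bigr) \to \psi\bigl(f((\id \otimes \varphi)(x))\bigr) = (\psi \otimes \varphi)\bigl((f \otimes \id)(x)\bigr); $$
combined with the uniform norm bound $\|(f_i \otimes \id)(x)\| \leq \|x\|$ and norm-density of slice functionals in $(B \ovt M)_*$, this yields the desired weak* convergence. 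The main technical hurdle is the operator space identification that permits interpreting $T_x$ as an element of $B \ovt M$: this is precisely where the non-normality of $f$ makes the extension beyond the algebraic tensor product delicate, and is the reason one invokes \cite{NT81} rather than arguing by naive weak*-continuity.
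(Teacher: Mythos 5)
Your construction is correct in spirit, but the paper does not actually supply a proof of this statement; it simply asserts that it is ``easy'' and defers to \cite{NT81} for a detailed treatment of tensoring non-normal completely positive maps. So there is no proof in the paper to compare against, and what you have written is a genuine argument where the paper offers none. The operator-space route you take is the cleanest modern way to do it: $B\ovt M$ has predual $B_*\widehat\otimes M_*$ (Effros--Ruan), hence $B\ovt M\cong\mathrm{CB}(M_*,B)$ as a dual operator space, and $T_x=f\circ S_x$ with $\|S_x\|_{\cb}=\|x\|$ forces $\|T_x\|_{\cb}\le\|f\|_{\cb}\|x\|$. Items (1)--(3) by uniqueness and slice comparison are fine, and the density/boundedness argument for (3) is correct since $\|f_i\|_{\cb}\le 1$ uniformly.

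Two small issues worth flagging. First, you produce an element $(f\otimes\id)(x)\in B\ovt M$ for each $x$, but you do not actually verify that the resulting map is completely \emph{positive}: positivity of all scalar slices $(\id\otimes\varphi)(y)$ for $\varphi\ge 0$ does \emph{not} characterize positivity of $y$ in $B\ovt M$. What one needs is the matrix version: $y\ge 0$ if and only if $[(\id\otimes\varphi_{ij})(y)]_{ij}\ge 0$ in $M_n(B)$ for every positive $[\varphi_{ij}]\in M_n(M_*)_+$. With this in hand, $[(\id\otimes\varphi_{ij})((f\otimes\id)(x))]_{ij}=f_n\bigl([(\id\otimes\varphi_{ij})(x)]_{ij}\bigr)$, and $n$-positivity of $f$ combined with positivity of $x$ gives the result; the same argument amplified over $M_k$ gives complete positivity. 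You implicitly rely on this but should state it, since it is precisely where the hypothesis ``completely positive'' (rather than just ``positive'') enters. Second, attributing the identification $B\ovt M\cong\mathrm{CB}(M_*,B)$ to \cite{NT81} is anachronistic: Nagisa--Tomiyama (1981) predates the Effros--Ruan operator-space machinery and works with more classical slice-map and Banach-space techniques. Credit the operator-space duality $(B\ovt M)_*=B_*\widehat\otimes M_*$ to Effros--Ruan; cite \cite{NT81} only for the original discussion of when non-normal completely positive maps extend to tensor products, which is what the paper actually references it for.
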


By directly applying the previous proposition, we obtain the following crossed product construction. Recall from Section \ref{prelim crossed product} that if $\alpha : G \curvearrowright A$ is an action of a locally compact group $G$ on a von Neumann algebra $A$, then we have a natural $G$-equivariant embedding of $A$ into $A \ovt \rL^\infty(G)$ which we denote with the same letter $\alpha$, and that $\alpha$ extends to an embedding of the crossed products $\alpha \rtimes G : A \rtimes_\alpha G \rightarrow A \ovt \B(\rL^2(G))$.

\begin{prop} \label{crossed product ccp}
Let $\alpha : G \curvearrowright A$ and $\beta : G \curvearrowright B$ be actions of a locally compact group $G$ on von Neumann algebras $A$ and $B$. Let $f \in \CCP(A,B)^G$ be a $G$-equivariant $\CCP$ map (not necessarily normal). Then there exists a unique $f \rtimes G  \in \CCP(A \rtimes_\alpha G , A \rtimes_\beta G)$ that makes the following diagram commute
$$ \begin{tikzcd}
A \rtimes_\alpha G \arrow{r}{f \rtimes G} \arrow[swap]{d}{\alpha \rtimes G} & B \rtimes_\beta G \arrow{d}{\beta \rtimes G} \\%
A \ovt \B(\rL^2(G)) \arrow{r}{f \otimes \id}& B \ovt \B(\rL^2(G))
\end{tikzcd}$$
Moreover, we have the following properties :
\begin{enumerate}
\item If $C$ is a third $G$-von Neumann algebras and $g \in \CCP(B,C)^G$, then $(g \circ f) \rtimes G=(g  \rtimes G) \circ (f \rtimes G)$.
\item If $f$ is normal then $f \rtimes G$ is also normal.
\item The map $\mathrm{CCP}(A,B)^G \ni f \mapsto f \rtimes G \in \mathrm{CCP}(A \rtimes_\alpha G, B \rtimes_\beta G)$ is continuous in the topologies of pointwise weak*-convergence.
\end{enumerate} 
\end{prop}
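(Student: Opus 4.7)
The plan is to invoke the Digernes--Takesaki theorem to identify each crossed product with a fixed point subalgebra of the corresponding tensor product with $\B(\rL^2(G))$, and then to define $f \rtimes G$ simply by restricting the already-constructed map $f \otimes \id$ of Proposition \ref{tensor product ccp} to these fixed point subalgebras.

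First I would verify that $f \otimes \id : A \ovt \B(\rL^2(G)) \to B \ovt \B(\rL^2(G))$ intertwines the two diagonal actions $\alpha \otimes \Ad(\rho_G)$ and $\beta \otimes \Ad(\rho_G)$. Fix $g \in G$. I would use the characterizing property of $f \otimes \id$ from Proposition \ref{tensor product ccp}, namely $(\id \otimes \varphi) \circ (f \otimes \id) = f \circ (\id \otimes \varphi)$ for every $\varphi \in \B(\rL^2(G))_*$, together with the $G$-equivariance $\beta_g \circ f = f \circ \alpha_g$. Slicing both sides by an arbitrary $\varphi$ and using $\varphi \circ \Ad(\rho_g) \in \B(\rL^2(G))_*$, one obtains by uniqueness
\begin{equation*}
(\beta_g \otimes \Ad(\rho_g)) \circ (f \otimes \id) = (f \otimes \id) \circ (\alpha_g \otimes \Ad(\rho_g)).
\end{equation*}

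Next, by the Digernes--Takesaki theorem stated earlier in the excerpt, $\alpha \rtimes G$ and $\beta \rtimes G$ are $*$-isomorphisms of $A \rtimes_\alpha G$ and $B \rtimes_\beta G$ onto the fixed point subalgebras $(A \ovt \B(\rL^2(G)))^{\alpha \otimes \Ad(\rho_G)}$ and $(B \ovt \B(\rL^2(G)))^{\beta \otimes \Ad(\rho_G)}$ respectively. The intertwining above shows that $f \otimes \id$ maps the former into the latter, so I can define
\begin{equation*}
f \rtimes G := (\beta \rtimes G)^{-1} \circ (f \otimes \id) \circ (\alpha \rtimes G),
\end{equation*}
where $(\beta \rtimes G)^{-1}$ denotes the inverse of the isomorphism onto its image. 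Uniqueness is immediate from injectivity of $\beta \rtimes G$. The three additional properties then descend from the corresponding statements for $\otimes \id$ in Proposition \ref{tensor product ccp}: functoriality (1) from part (1) there (composing through the $*$-isomorphisms), preservation of normality (2) from part (2), and continuity (3) from part (3), since the topology of pointwise weak*-convergence pulls back through the normal isomorphisms $\alpha \rtimes G$ and $\beta \rtimes G$.

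The only delicate step is the equivariance computation. Because $f$ is not assumed normal, $f \otimes \id$ is not constructed on elementary tensors but rather through its behaviour after slicing by normal functionals on the second factor, so the intertwining identity must be checked through this slice characterization rather than by a naive pointwise computation. Once this is done, everything else is formal.
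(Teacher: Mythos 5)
Your proposal is correct and is precisely the argument the paper is invoking when it writes ``By directly applying the previous proposition, we obtain the following crossed product construction'' (the paper itself gives no explicit proof). You correctly identify the two key points: (a) the Digernes--Takesaki theorem identifies $\alpha \rtimes G$ and $\beta \rtimes G$ with the fixed-point subalgebras of $A \ovt \B(\rL^2(G))$ and $B \ovt \B(\rL^2(G))$ under $\alpha \otimes \Ad(\rho_G)$ and $\beta \otimes \Ad(\rho_G)$; and (b) because $f$ is only assumed ccp (not normal), the equivariance of $f \otimes \id$ must be checked through the slice characterization of Proposition \ref{tensor product ccp} rather than on elementary tensors. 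Your slice computation is the right one: for $\varphi \in \B(\rL^2(G))_*$ and $\varphi' = \varphi \circ \Ad(\rho_g)$, one has $(\id \otimes \varphi) \circ (\beta_g \otimes \Ad(\rho_g)) \circ (f \otimes \id) = \beta_g \circ f \circ (\id \otimes \varphi') = f \circ \alpha_g \circ (\id \otimes \varphi') = (\id \otimes \varphi) \circ (f \otimes \id) \circ (\alpha_g \otimes \Ad(\rho_g))$, and since slice maps separate points of a von Neumann tensor product, the two sides agree. The three listed properties then transfer formally through the normal $*$-isomorphisms $\alpha \rtimes G$ and $\beta \rtimes G$, exactly as you say.
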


\begin{rem} \label{normal crossed product ucp}
A consequence of item (1) in Proposition \ref{crossed product ccp}, we have $(f \rtimes G)|_{A_0 \rtimes G}=(f|_{A_0}) \rtimes G$ for every $G$-invariant von Neumann subalgebra $A_0 \subset A$. Combining with item (2), it follows that if $f|_{A_0}$ is normal then $(f \rtimes G)|_{A_0 \rtimes G}$ is also normal.
\end{rem}

\begin{prop} \label{dixmier tensor product}
Suppose that an inclusion of von Neumann algebras $A \subset B$ has the weak Dixmier property. Then $A \otimes 1 \subset B \ovt C$ has the weak Dixmier property for every von Neumann algebra $C$.
\end{prop}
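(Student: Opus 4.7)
The plan is to deduce the weak Dixmier property of $A \otimes 1 \subset B \ovt C$ by producing a conditional expectation in the Dixmier semigroup from the one we already have for $A \subset B$, using the tensor product construction for non-normal CCP maps from Proposition \ref{tensor product ccp}. By Proposition \ref{equivalences dixmier}, we need to exhibit a conditional expectation $\Psi \in \cD(A \otimes 1 \subset B \ovt C)$ onto $(A \otimes 1)' \cap (B \ovt C) = (A' \cap B) \ovt C$.

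First, by hypothesis and Proposition \ref{equivalences dixmier}, choose $\Phi \in \cD(A \subset B)$ which is a conditional expectation onto $A' \cap B$. Consider $\Psi := \Phi \otimes \id_C \in \CCP(B \ovt C)$ given by Proposition \ref{tensor product ccp}. Writing $\Phi$ as a pointwise weak* limit of convex combinations of maps $\Ad(u)$ with $u \in \cU(A)$ and using the continuity assertion in item (3) of Proposition \ref{tensor product ccp}, $\Psi$ is a pointwise weak* limit of the corresponding convex combinations of $\Ad(u) \otimes \id_C = \Ad(u \otimes 1)$ for $u \in \cU(A)$. Since $u \otimes 1 \in \cU(A \otimes 1)$, this shows $\Psi \in \cD(A \otimes 1 \subset B \ovt C)$.

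It remains to verify that $\Psi$ is a conditional expectation onto $(A' \cap B) \ovt C$. For this we use the defining property of $\Phi \otimes \id_C$: for every $\varphi \in C_*$ and every $x \in B \ovt C$,
\begin{equation*}
(\id_B \otimes \varphi)(\Psi(x)) = \Phi((\id_B \otimes \varphi)(x)).
\end{equation*}
The right hand side lies in $A' \cap B$ since $\Phi$ has range in $A' \cap B$; hence $\Psi(x) \in (A' \cap B) \ovt C$ by the slice-map characterization of the tensor product. Conversely, if $x \in (A' \cap B) \ovt C$, then $(\id_B \otimes \varphi)(x) \in A' \cap B$ is already fixed by $\Phi$, so $(\id_B \otimes \varphi)(\Psi(x)) = (\id_B \otimes \varphi)(x)$ for every $\varphi \in C_*$, hence $\Psi(x)=x$. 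Thus $\Psi$ is a CCP idempotent onto $(A' \cap B) \ovt C$, and by Tomiyama's theorem it is a conditional expectation.

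There is no serious obstacle here: the point is simply that Proposition \ref{tensor product ccp} handles non-normal CCP maps consistently, including joint continuity, which is exactly what makes the tensor construction on $\cD(A \subset B)$ land inside $\cD(A \otimes 1 \subset B \ovt C)$. The only subtlety worth flagging is that $\Phi$ is typically not normal, so one must avoid any argument that implicitly uses normality, and instead rely on the slice-map formula above both to locate the range of $\Psi$ and to verify that $\Psi$ restricts to the identity on $(A' \cap B) \ovt C$.
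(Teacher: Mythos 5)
Your proof is correct and follows essentially the same route as the paper: both simply apply the tensor functor $-\otimes\id_C$ from Proposition \ref{tensor product ccp} to a conditional expectation $\Phi \in \cD(A \subset B)$ onto $A' \cap B$, using item (3) of that Proposition to conclude that the image lands in $\cD(A \otimes 1 \subset B \ovt C)$. The paper leaves the verification that $\Phi\otimes\id_C$ is a conditional expectation onto $(A'\cap B)\ovt C$ implicit, whereas you spell it out via the slice-map characterization; this is a reasonable and correct expansion of the same argument.
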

\begin{proof}
%Take $\Psi \in \mathrm{UCP}(B \ovt C)$ in the weak*-closed convex hull of $\Ad(u \otimes 1)$ for $u \in \mathcal{U}(A)$ such that $\Psi(b \otimes 1) \in (A' \cap B) \otimes 1$ for every $b \in B$. Take a normal state $\varphi$ on $C$. Then $\id \otimes \varphi$ commutes with every $\Ad(u \otimes 1)$ for $u \in \mathcal{U}(B)$ hence also with $\Psi$. Therefore, for every $x \in B \ovt C$, we have
%$$(\id \otimes \varphi) (\Psi(x)) = \Psi( (\id \otimes \varphi) (x)) \in (A' \cap B) \otimes 1.$$
%Since this holds for every normal state $\varphi$ on $C$, we conclude that the range of $\Psi$ lies in $(A' \cap B) \ovt C = (A \otimes 1)' \cap B \ovt C$. This proves that the inclusion $A \otimes 1 \subset B \ovt C$ has the weak Dixmier property.
By Proposition \ref{tensor product ccp}, the tensor product map $ - \otimes \id_C : \CCP(B) \rightarrow \CCP(B \ovt C)$ sends $\cD(A \subset B)$ into $\cD(A \otimes 1 \subset B \ovt C)$ and sends any conditional expectation onto $A' \cap B$ to a conditional expectation onto $(A' \cap B) \ovt C=A' \cap (B \ovt C)$.
\end{proof}

\begin{prop} \label{dixmier in crossed product}
Let $\alpha : G \curvearrowright A$ be an action of a locally compact group $G$ on some von Neumann algebras $A$. Let $P \subset A^\alpha$ be a von Neumann subalgebra. Then $P' \cap (A \rtimes_\alpha G)=(P' \cap A) \rtimes_\alpha G$. Moreover, if $P \subset A$ has the weak Dixmier property, then $P \subset A \rtimes_\alpha G$ has the weak Dixmier property.
\end{prop}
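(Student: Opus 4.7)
My plan is to prove the two statements in succession. For the commutant equality, I would use the Digernes--Takesaki identification of $A \rtimes_\alpha G$ with the fixed points of $\alpha \otimes \Ad(\rho_G)$ in $A \ovt \B(\rL^2(G))$. Since $P \subset A^\alpha$, the coaction $\alpha : A \to A \ovt \rL^\infty(G)$ sends each $p \in P$ to $p \otimes 1$. Therefore, working inside $A \ovt \B(\rL^2(G))$, one has
\begin{align*}
P' \cap (A \rtimes_\alpha G) &= (P \otimes 1)' \cap (A \ovt \B(\rL^2(G)))^{\alpha \otimes \Ad(\rho_G)} \\
&= \bigl( (P' \cap A) \ovt \B(\rL^2(G)) \bigr)^{\alpha \otimes \Ad(\rho_G)},
\end{align*}
where the second equality uses the standard fact $(P \otimes 1)' \cap (A \ovt \B(\rL^2(G))) = (P' \cap A) \ovt \B(\rL^2(G))$. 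Since $P' \cap A$ is $\alpha$-globally invariant (because $P \subset A^\alpha$), a second application of Digernes--Takesaki to the restricted action identifies the right-hand side with $(P' \cap A) \rtimes_\alpha G$.

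For the weak Dixmier property I would rely on the functorial ccp crossed product construction of Proposition \ref{crossed product ccp}. First note that for every $u \in \cU(P) \subset \cU(A^\alpha)$, the ucp map $\Ad(u)$ on $A$ is $G$-equivariant (as $\alpha_g(u) = u$), so $\cD(P \subset A)$ lies in $\CCP(A)^G$ by closedness and convexity of $G$-equivariance. Moreover, the uniqueness part of Proposition \ref{crossed product ccp} together with the fact that $u$ sits in $A \rtimes_\alpha G$ as its own image (via $\alpha \rtimes G$, since the coaction sends $u$ to $u \otimes 1$) forces $\Ad(u) \rtimes G = \Ad(u)$ inside $A \rtimes_\alpha G$. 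Using continuity, convexity, and multiplicativity of $- \rtimes G$, one concludes that the image of $\cD(P \subset A)$ under $- \rtimes G$ is contained in $\cD(P \subset A \rtimes_\alpha G)$.

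To finish, take $\Phi \in \cD(P \subset A)$ a conditional expectation onto $P' \cap A$, produced by the assumed weak Dixmier property via Proposition \ref{equivalences dixmier}. Factoring $\Phi = \iota \circ \rho$ with $\rho : A \to P' \cap A$ and $\iota : P' \cap A \hookrightarrow A$, Proposition \ref{crossed product ccp}(1) gives $\Phi \rtimes G = (\iota \rtimes G) \circ (\rho \rtimes G)$, which takes values in $(P' \cap A) \rtimes_\alpha G$. By Remark \ref{normal crossed product ucp} applied to the $G$-invariant subalgebra $P' \cap A$, the restriction of $\Phi \rtimes G$ to $(P' \cap A) \rtimes_\alpha G$ equals $\id \rtimes G = \id$. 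Hence $\Phi \rtimes G$ is a conditional expectation onto $(P' \cap A) \rtimes_\alpha G = P' \cap (A \rtimes_\alpha G)$ lying in $\cD(P \subset A \rtimes_\alpha G)$, yielding the weak Dixmier property.

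The only delicate point I anticipate is ensuring that the non-normality of $\Phi$ does not spoil the construction, but this is precisely what Proposition \ref{crossed product ccp} was designed to handle, and the key identification $\Ad(u) \rtimes G = \Ad(u)$ follows cleanly from its uniqueness clause. Everything else is bookkeeping around the Digernes--Takesaki theorem.
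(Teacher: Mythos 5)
Your proof is correct and takes essentially the same route as the paper: everything is driven by the functoriality of $- \rtimes G : \CCP(A)^G \to \CCP(A \rtimes_\alpha G)$ from Proposition \ref{crossed product ccp} together with the identity $\Ad(u) \rtimes G = \Ad(u)$ for $u \in \cU(A^\alpha)$. You additionally supply an explicit Digernes--Takesaki argument for the commutant equality $P' \cap (A \rtimes_\alpha G) = (P' \cap A) \rtimes_\alpha G$, which the paper states without proof; this is a useful elaboration but not a different approach.
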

\begin{proof}
By Proposition \ref{crossed product ccp}, the map $ - \rtimes G : \CCP(A)^G \rightarrow \CCP(A \rtimes_\alpha G)$ sends $\cD(P \subset A)$ into $\cD(P  \subset A \rtimes_\alpha G)$ and sends any conditional expectation onto $P' \cap A$ to a conditional expectation onto $(P' \cap A) \rtimes_\alpha G=P' \cap (A \rtimes_\alpha G)$.
\end{proof}

\begin{prop} \label{dixmier amenable extension}
Let $N \subset M$ be an inclusion of von Neumann algebras. Suppose that we have a von Neumann subalgebra $P \subset N$ and a unitary representation $u : G \rightarrow \cU(N)$ such that $(N,P,u)$ is a split $G$-extension. If $P \subset M$ has the weak Dixmier property and $G$ is amenable, then $N \subset M$ has the weak Dixmier property.
\end{prop}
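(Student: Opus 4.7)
The strategy is to combine a conditional expectation from $M$ onto $P'\cap M$ coming from the weak Dixmier property of $P\subset M$ with a $G$-averaging procedure coming from amenability, tracking at every step that we remain inside $\cD(N\subset M)$.

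First, by Proposition \ref{equivalences dixmier} I pick $\Phi\in\cD(P\subset M)$ which is a conditional expectation onto $P'\cap M$. Since $\Ad(u_g)$ preserves $P$, it preserves $P'\cap M$, defining a continuous action $\alpha:G\actson P'\cap M$; because $N$ is generated by $P\cup\{u_g\mid g\in G\}$, we have the key identity $N'\cap M=(P'\cap M)^{\alpha}$. For any compactly supported probability measure $\mu$ on $G$, the averaging $\Psi_\mu:=\int_G \Ad(u_g)\,d\mu(g)$ is a well-defined UCP map on $M$ (the integral converging pointwise weakly$^*$ since $g\mapsto u_g$ is strongly continuous). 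Crucially, since $\Phi$ lies in the weak$^*$-closed convex hull of $\{\Ad(v)\mid v\in\cU(P)\}$ and each $\Psi_\mu$ is weak$^*$-continuous, $\Psi_\mu\circ\Phi$ is a weak$^*$-limit of convex combinations of maps $\Ad(u_g v)$ with $u_g v\in\cU(N)$, so $\Psi_\mu\circ\Phi\in\cD(N\subset M)$.

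Next, using amenability of $G$, I pick a net $(\mu_i)$ of compactly supported probability measures satisfying Reiter's condition $\lim_i\|g\cdot\mu_i-\mu_i\|_{L^1(G)}=0$ for each $g\in G$. By compactness of $\UCP(M)$ in the point-weak$^*$ topology, I pass to a subnet along which $\Psi_{\mu_i}\circ\Phi$ converges to some $T$, and $T$ remains in $\cD(N\subset M)$. The image of $T$ lies inside the weak$^*$-closed subset $P'\cap M$. Moreover, for every $x\in M$ and $g\in G$,
\[
\Ad(u_g)\bigl(\Psi_{\mu_i}(\Phi(x))\bigr)=\Psi_{g\cdot\mu_i}(\Phi(x)),
\]
and Reiter's condition forces $\Psi_{g\cdot\mu_i}(\Phi(x))-\Psi_{\mu_i}(\Phi(x))\to 0$ in norm; hence $T(x)\in(P'\cap M)^{\alpha}=N'\cap M$. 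Finally, for $y\in N'\cap M$ one has $\Phi(y)=y$ and $\Ad(u_g)(y)=y$ for every $g$, so $\Psi_{\mu_i}(\Phi(y))=y$ and thus $T(y)=y$. Therefore $T\in\cD(N\subset M)$ is a conditional expectation onto $N'\cap M$, and Proposition \ref{equivalences dixmier} concludes that $N\subset M$ has the weak Dixmier property.

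The only real point of care is making sure the $G$-averages we perform never leave the convex hull of $\{\Ad(w)\mid w\in\cU(N)\}$; this is automatic once one observes that composing the $P$-unitary averaging $\Phi$ with a $G$-unitary averaging $\Psi_\mu$ yields a convex combination of $\Ad(u_g v)$ with $u_g v\in\cU(N)$. The amenability of $G$ is used only through the classical Reiter approximation, which suffices to produce a fixed-point projection on the $G$-algebra $P'\cap M$.
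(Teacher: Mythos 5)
Your proof is correct and follows essentially the same route as the paper: both compose the $\cD(P\subset M)$-conditional expectation onto $P'\cap M$ with a $G$-average in the closed convex hull of $\{\Ad(u_g)\}$ (which exists precisely because $G$ is amenable), and both rely on the observation that $N'\cap M=P'\cap M\cap\{u_g\mid g\in G\}'\cap M$. The paper states the $G$-averaging step in one sentence, whereas you unpack it via Reiter's condition — a more explicit but equivalent argument.
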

\begin{proof}
Since $G$ is amenable, we can find a conditional expectation $E$ from $M$ onto $\{ u_g \mid g \in G\}' \cap M$ in the closed convex hull of $\{ \Ad(u_g) \mid g \in G \}$.  Let $F \in \cD(P \subset M)$ be a conditional expectation from $M$ onto $P' \cap M$. Then $E \circ F$ is a conditional expectation from $M$ onto $\{u_g \mid g \in G\}' \cap P' \cap M=N' \cap M$ and $E \circ F \in \cD(N \subset M)$.
\end{proof}

\begin{prop} \label{dixmier dual}
Let $A$ be a von Neumann algebra and $\alpha : G \curvearrowright A$ an action of a locally compact abelian group $G$ on $A$ and let $\widehat{\alpha} : \widehat{G} \curvearrowright A \rtimes_\alpha G$ be the dual action. Let $P \subset A$ be a von Neumann subalgebra that is globally invariant under the action $\alpha$. If the inclusion $P \subset A \rtimes_\alpha G$ has the weak Dixmier property then so does the inclusion $P \subset (A \rtimes_\alpha G) \rtimes_{\hat{\alpha}} \widehat{G}$ and the inclusion $P \rtimes_\alpha G \subset (A \rtimes_\alpha G) \rtimes_{\hat{\alpha}} \widehat{G}$.
\end{prop}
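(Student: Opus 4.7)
Both conclusions reduce directly to tools already developed in this section, and no new estimate is needed. The crucial observation is that since $G$ is abelian, both $G$ and $\widehat{G}$ are amenable, and the dual action $\widehat{\alpha}:\widehat{G}\curvearrowright A\rtimes_\alpha G$ fixes $A$ pointwise by construction. The strategy is to apply Proposition \ref{dixmier in crossed product} to handle the first inclusion and then combine it with Proposition \ref{dixmier amenable extension} for the second.

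For the first inclusion $P\subset (A\rtimes_\alpha G)\rtimes_{\widehat{\alpha}}\widehat{G}$, I plan to apply Proposition \ref{dixmier in crossed product} with the data $(A,\alpha,G)$ replaced by $(A\rtimes_\alpha G,\widehat{\alpha},\widehat{G})$. The hypothesis $P\subset (A\rtimes_\alpha G)^{\widehat{\alpha}}$ is clear, since $\widehat{\alpha}$ fixes $A$ pointwise and $P\subset A$; and $P\subset A\rtimes_\alpha G$ has the weak Dixmier property by assumption. The proposition then yields the first conclusion directly, and in fact the argument produces an explicit conditional expectation in $\cD(P\subset (A\rtimes_\alpha G)\rtimes_{\widehat{\alpha}}\widehat{G})$ onto $(P'\cap (A\rtimes_\alpha G))\rtimes_{\widehat{\alpha}}\widehat{G}$.

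For the second inclusion $P\rtimes_\alpha G\subset (A\rtimes_\alpha G)\rtimes_{\widehat{\alpha}}\widehat{G}$, I will feed the first conclusion into Proposition \ref{dixmier amenable extension}. Set $M:=(A\rtimes_\alpha G)\rtimes_{\widehat{\alpha}}\widehat{G}$. Inside $M$, the triple $(P\rtimes_\alpha G,\ P,\ \lambda)$, where $\lambda:G\to \cU(M)$ is the canonical unitary representation of $G$ implementing $\alpha|_P$, is a split $G$-extension in the sense of Section \ref{prelim crossed product}. The first conclusion gives the weak Dixmier property for $P\subset M$, and $G$ is amenable (being abelian); Proposition \ref{dixmier amenable extension} then delivers the weak Dixmier property for $P\rtimes_\alpha G\subset M$.

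I do not expect any significant obstacle here; the only real work is to match the hypotheses to the right previous result. The conceptual point worth highlighting is that one cannot simply apply Proposition \ref{dixmier in crossed product} in the second step, because $P\rtimes_\alpha G$ is not contained in the $\widehat{\alpha}$-fixed subalgebra $A$ of $A\rtimes_\alpha G$. This is precisely why the amenability of $G$ (automatic from being abelian) is used via Proposition \ref{dixmier amenable extension} to bridge from averaging over $\cU(P)$ to averaging over $\cU(P\rtimes_\alpha G)$.
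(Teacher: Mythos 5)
Your proof is correct and takes essentially the same two-step route as the paper: first apply Proposition \ref{dixmier in crossed product} using $P\subset (A\rtimes_\alpha G)^{\widehat{\alpha}}=A$, then promote this via the amenability of $G$ using Proposition \ref{dixmier amenable extension} applied to the split $G$-extension $(P\rtimes_\alpha G, P, \lambda)$.
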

\begin{proof}
Since $P$ is fixed by $\widehat{\alpha}$, the inclusion $P \subset (A \rtimes_\alpha G) \rtimes_{\widehat{\alpha}} \widehat{G}$ has the weak Dixmier property by Proposition \ref{dixmier in crossed product}. Thanks to Proposition \ref{dixmier amenable extension}, we conclude that $P \rtimes_\alpha G \subset (A \rtimes_\alpha G) \rtimes_{\hat{\alpha}} \widehat{G}$ also has the weak Dixmier property.
%The inclusion $A \rtimes_\alpha G \subset (A \rtimes_\alpha G) \rtimes_{\hat{\alpha}} \widehat{G}$ is isomorphic to 
%$(\alpha(A) \cup \lambda(G))'' \subset A \ovt \B(\rL^2(G))$ where $\lambda$ is the regular representation of $G$ on $\rL^2(G)$. By Proposition \ref{equivalence outer dixmier}, we know that $\alpha(P) \subset A \ovt \B(\rL^2(G))$ has the weak Dixmier property. Since $G$ is amenable and the commutant $\alpha(P)' \cap (A \ovt \B(\rL^2(G)))$ is globally invariant under conjugation by the unitaries of $\lambda(G)$, we can average any element of $\alpha(P)' \cap (A \ovt \B(\rL^2(G)))$ by these unitaries to get an element of $(\alpha(P) \cup \lambda(G))' \cap (A \ovt \B(\rL^2(G)))$. We conclude that $(\alpha(P) \cup \lambda(G) )'' \subset A \ovt \B(\rL^2(G))$ has the weak Dixmier property, hence also $P \rtimes_\alpha G \subset (A \rtimes_\alpha G) \rtimes_{\hat{\alpha}} \widehat{G}$.
\end{proof}

Let $M$ be a von Neumann algebra. Recall that a weight $\psi \in \cP(M)$ is said to have infinite multiplicity if $M_\psi$ is properly infinite. Following \cite{CT76}, we say that $\psi$ is \emph{dominant} if it has infinite multiplicity and there exists a continuous one-parameter unitary group $u : \R^*_+ \rightarrow \cU(M)$ such that $u_\lambda \psi u_\lambda^*=\lambda \psi$ (such a group $u$ is called $\psi$-scaling group). In that case, $(M,M_\psi,u)$ is a crossed product extension whose dual action is $\sigma^\psi$. Moreover, the crossed product extension  $(M,M_\psi,u)$ is isomorphic to the crossed product extension of $c(M)$ by the trace scaling action $\theta$. 

\begin{prop} \label{dixmier for dominant and core}
Let $N \subset M$ be an inclusion of von Neumann algebras with $\cP(M,N) \neq \emptyset$. Suppose that $N$ is properly infinite and let $\psi$ be a dominant weight on $N$. The following are equivalent :
\begin{enumerate}
\item The inclusion $N \subset c(M)$ has the weak Dixmier property.
\item The inclusion $N_\psi \subset M$ has the weak Dixmier property.
\item The inclusion $N_\psi \subset c(M)$ has the weak Dixmier property.
\end{enumerate}
\end{prop}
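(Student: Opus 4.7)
The plan is to establish the cyclic chain $(2) \Rightarrow (3) \Rightarrow (1) \Rightarrow (2)$ using the Dixmier-type propositions developed earlier in Section~\ref{section dixmier}.

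First I would treat $(2) \Rightarrow (3)$. Fix any $T \in \cP(M,N)$. By the properties of modular theory recalled in Section~\ref{prelim modular theory}, $\sigma^{\psi \circ T}$ restricts to $\sigma^\psi$ on $N$, which in turn fixes $N_\psi$ pointwise. Thus $N_\psi \subset M^{\sigma^{\psi \circ T}}$, and the identification $c(M) = M \rtimes_{\sigma^{\psi \circ T}} \R$ places us in the situation of Proposition~\ref{dixmier in crossed product}, yielding (3) from (2).

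Next, $(3) \Rightarrow (1)$ is an immediate application of Proposition~\ref{dixmier amenable extension}: the dominance of $\psi$ furnishes a continuous representation $u : \R^*_+ \to \cU(N)$ so that $(N, N_\psi, u)$ is a split $\R^*_+$-extension, and since $\R^*_+$ is amenable, weak Dixmier lifts from $N_\psi \subset c(M)$ to $N \subset c(M)$.

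The most involved step is $(1) \Rightarrow (2)$. Starting from (1), I apply Proposition~\ref{dixmier dual} with $A = M$, $\alpha = \sigma^{\psi \circ T}$, $G = \R$, and $P = N$ (which is $\alpha$-invariant since $\sigma^\psi$ preserves $N$). Conclusion (ii) of that proposition gives the weak Dixmier property for $c(N) = N \rtimes_{\sigma^\psi} \R$ inside $c(M) \rtimes_\theta \R^*_+ \cong M \ovt \rB(\rL^2(\R))$. Now applying Takesaki duality internally to the crossed product decomposition $N = N_\psi \rtimes_\theta \R^*_+$ coming from dominance (whose dual action is exactly $\hat\theta = \sigma^\psi$), we obtain $c(N) \cong N_\psi \ovt \rB(\rL^2(\R))$. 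The key geometric claim is that under this abstract isomorphism, the image of $c(N)$ inside $M \ovt \rB(\rL^2(\R))$ matches $N_\psi \ovt \rB(\rL^2(\R))$, with $N_\psi \otimes 1$ corresponding to $N_\psi \subset M \subset M \otimes 1$. Granting this, we obtain weak Dixmier for $N_\psi \ovt \rB(\rL^2(\R)) \subset M \ovt \rB(\rL^2(\R))$, and Proposition~\ref{tensor product ccp} allows us to factor the Dixmier cond exp as $\Phi_0 \otimes \id_{\rB(\rL^2(\R))}$, where $\Phi_0 \in \cD(N_\psi \subset M)$ is a conditional expectation onto $N_\psi' \cap M$, which is (2).

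The main obstacle is the geometric compatibility of the two Takesaki duality identifications: the outer one giving $c(M) \rtimes_\theta \R^*_+ \cong M \ovt \rB(\rL^2(\R))$ and the inner one giving $c(N) \cong N_\psi \ovt \rB(\rL^2(\R))$. Their matching—so that the $\rB(\rL^2(\R))$ factor produced from $c(N)$ coincides with the ambient one in $M \ovt \rB(\rL^2(\R))$—ultimately reflects the functoriality of Takesaki's bi-duality theorem with respect to inclusions with operator-valued weights, and once established it makes the tensor factorization in the final step routine.
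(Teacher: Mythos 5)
Your reduction $(2)\Rightarrow(3)\Rightarrow(1)$, using Proposition~\ref{dixmier in crossed product} and then Proposition~\ref{dixmier amenable extension}, is correct and is in substance the same as the paper's argument (the paper packages exactly these two steps as Proposition~\ref{dixmier dual}).

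The gap is in $(1)\Rightarrow(2)$, specifically in your ``key geometric claim.'' Under the Takesaki isomorphism $\Phi\colon c(M)\rtimes_\theta \R^*_+\xrightarrow{\;\sim\;} M\ovt \B(\rL^2(\R))$ coming from the decomposition $c(M)=M\rtimes_{\sigma^\phi}\R$, the image $\Phi(c_T(N))$ is the von Neumann algebra generated by $\sigma^\psi(N)$ and $1\otimes\lambda_\R(\R)$, i.e.\ the coaction image of $N\rtimes_{\sigma^\psi}\R$ inside $N\ovt\B(\rL^2(\R))$. This is \emph{not} $N_\psi\ovt\B(\rL^2(\R))$: indeed $\sigma^\psi(u_\mu)=u_\mu\otimes m_\mu$ (with $m_\mu$ multiplication by $t\mapsto\mu^{\ri t}$) lies in $\Phi(c_T(N))$ but not in $N_\psi\ovt\B(\rL^2(\R))$ since $u_\mu\notin N_\psi$; conversely $1\otimes m_\mu=\Phi(v_\mu)$ lies in $N_\psi\ovt\B(\rL^2(\R))$ but not in $\Phi(c_T(N))$ since $v_\mu\notin c(M)$. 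What functoriality of Takesaki bi-duality \emph{does} give is that $\Phi$ carries $c_T(N)\rtimes_\theta\R^*_+$ onto $N\ovt\B(\rL^2(\R))$; but $c_T(N)$ is a proper subalgebra of this, namely the fixed points of $\widehat{\widehat\theta}$, and functoriality says nothing directly about how this sub-decomposition interacts with the ambient tensor factor. The two Takesaki decompositions (the outer one over $\sigma^\phi$ and the inner one over $\theta=\Ad(u)$) do not ``match up'' in the literal sense you assert; they differ by a unitary cocycle built from the scaling group $u$. Consequently, the subsequent step invoking Proposition~\ref{tensor product ccp} to ``factor the Dixmier conditional expectation as $\Phi_0\otimes\id$'' does not go through: not only is the tensor decomposition not literally there, but that proposition only lets you \emph{extend} ccp maps across a tensor product, not factor a given one. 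The correct (and sufficient) statement, which the paper asserts without elaboration, is merely that the inclusion $c_T(N)\subset c(M)\rtimes_\theta\R^*_+$ is \emph{isomorphic} to $N_\psi\subset M$ -- an isomorphism invariant like the weak Dixmier property then transfers with no factorisation argument needed. If you replace your ``matching'' claim with that assertion (and justify it by the unitary cocycle conjugacy hiding inside Takesaki's structure theory of dominant weights) the step goes through cleanly.
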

\begin{proof}
Take $T \in \cP(M,N)$ and let $\phi=\psi \circ T$. We can then find a trace scaling action $\theta : \R^*_+ \curvearrowright M_\phi$ that restricts to a trace scaling action on $N_\psi$ and such that the inclusion $N \subset M$ is isomorphic to $N_\psi \rtimes_\theta \R^*_+ \subset M
_\phi \rtimes_\theta \R^*_+$. Under this isomorphism $\sigma^\phi$ is identified with the dual action $\widehat{\theta}$ and $c(M)$ is identified with $(M
_\phi \rtimes_\theta \R^*_+) \rtimes_{\sigma^\phi} \R$. It follows from Proposition \ref{dixmier dual} that if $N_\psi \subset M$ has the weak Dixmier property, then $N \subset c(M)$ also has the weak Dixmier property.

Conversely, if $N \subset c(M)$ has the weak Dixmier property, then $c_T(N) \subset c(M) \rtimes_\theta \R^*_+$ also has the weak Dixmier property by Proposition \ref{dixmier dual}. But the inclusion $c_T(N) \subset c(M) \rtimes_\theta \R^*_+$ is isomorphic to the inclusion $N_\psi \subset M$. Hence $N_\psi \subset M$ has the weak Dixmier property.
\end{proof}

\begin{prop} \label{dixmier for no type III1}
Let $N \subset M$ be an inclusion of von Neumann algebras with expectations. Suppose that $N$ has no type $\III_1$ summand. Then $N \subset c(M)$ has the weak Dixmier property.
\end{prop}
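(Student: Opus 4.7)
I would reduce to the semifinite case through the structure theory of non-$\III_1$ factors. After tensoring $N \subset M$ with $\B(\ell^2)$ (which preserves ``with expectation'' and, by Proposition~\ref{dixmier tensor product} together with corner reduction, transfers the weak Dixmier property of $N \subset c(M)$ in both directions), I may assume $N$ is properly infinite. The no-$\III_1$ hypothesis then gives a direct sum decomposition $N = N_{\rm sf} \oplus N_{\III_0} \oplus \bigoplus_{0<\lambda<1} N_{\III_\lambda}$, and since weak Dixmier respects such decompositions, I treat each summand separately.

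\textbf{Semifinite case.} Pick a trace $\tau \in \cP(N)$ and $E \in \cE(M,N)$, and set $\varphi = \tau \circ E \in \cP(M)$. By Takesaki's theorem (since $E$ is modular-compatible with $\varphi$ whose restriction to $N$ is the trace $\tau$), one has $\sigma^{\varphi}|_N = \sigma^\tau = \id$, hence $N \subset M^{\sigma^\varphi}$. Since $N \subset M$ is with expectation, Theorem~\ref{expectation implies dixmier} gives weak Dixmier for $N \subset M$. Applying Proposition~\ref{dixmier in crossed product} to the action $\sigma^\varphi : \R \curvearrowright M$ with $P = N \subset M^{\sigma^\varphi}$ lifts this to $N \subset M \rtimes_{\sigma^\varphi} \R = c(M)$.

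\textbf{Type $\III_\lambda$ case ($0 < \lambda < 1$).} Choose a $\lambda$-trace $\psi$ on $N$, i.e. a dominant weight whose scaling group is discrete, generated by a single $u \in \cU(N)$ with $u\psi u^* = \lambda \psi$. Then $(N, N_\psi, u)$ is a split $\Z$-extension, the centralizer $N_\psi$ is semifinite, and averaging over the dual $\Z$-action gives a canonical conditional expectation $N \to N_\psi$; composing with $E$ shows $N_\psi \subset M$ is with expectation. By the semifinite case applied to $N_\psi$ in place of $N$, the inclusion $N_\psi \subset c(M)$ has weak Dixmier. Since $\Z$ is amenable, Proposition~\ref{dixmier amenable extension} applied to the split $\Z$-extension $(N, N_\psi, u)$ yields weak Dixmier for $N \subset c(M)$.

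\textbf{Type $\III_0$ case (the main obstacle).} Here a dominant weight $\psi$ on $N$ has continuous scaling group $u : \R^*_+ \to \cU(N)$, and no conditional expectation $N \to N_\psi$ exists in general. The workaround is to set $\phi = \psi \circ E \in \cP(M)$, which is dominant on $M$ with the same scaling unitaries; since $E$ intertwines the modular flows, its restriction $E|_{M_\phi}$ is a faithful normal conditional expectation from $M_\phi$ onto $N_\psi = N \cap M_\phi$. Thus $N_\psi \subset M_\phi$ is with expectation, and Theorem~\ref{expectation implies dixmier} gives weak Dixmier for $N_\psi \subset M_\phi$. To lift this to $N \subset c(M)$, I use Takesaki duality to identify $c(M) = M \rtimes_{\sigma^\phi} \R$ with $(M_\phi \rtimes_\theta \R^*_+) \rtimes_{\hat\theta} \R$ (where $\theta = \Ad(u)|_{M_\phi}$ and $\hat\theta = \sigma^\phi$), and apply Proposition~\ref{dixmier dual} to the $\R^*_+$-action $\theta$ on $M_\phi$ with $P = N_\psi$ (globally $\theta$-invariant since $u$ normalises $N_\psi$): this transfers weak Dixmier from $N_\psi \subset M$ to $N = N_\psi \rtimes_\theta \R^*_+ \subset c(M)$. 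The delicate step is bootstrapping from $N_\psi \subset M_\phi$ to $N_\psi \subset M = M_\phi \rtimes_\theta \R^*_+$, which would be done by combining amenability of $\R^*_+$ with Proposition~\ref{dixmier amenable extension} applied in the ``horizontal'' direction across the crossed product extension $M_\phi \subset M$, concluding the proof.
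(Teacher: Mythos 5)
Your semifinite and $\III_\lambda$ ($0<\lambda<1$) cases are essentially fine, but the $\III_0$ case has a genuine gap, and it is precisely in the place you flag as ``delicate.'' You want to pass from the weak Dixmier property of $N_\psi\subset M_\phi$ to that of $N_\psi\subset M=M_\phi\rtimes_\theta\R^*_+$, and you propose to do so ``by combining amenability of $\R^*_+$ with Proposition~\ref{dixmier amenable extension} applied in the horizontal direction.'' But Proposition~\ref{dixmier amenable extension} only enlarges the \emph{small} algebra: it takes weak Dixmier for $P\subset M$ and a split amenable $G$-extension $N\supset P$, and gives weak Dixmier for $N\subset M$. It has no ``horizontal'' version that enlarges the big algebra across a crossed product. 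The tool that would enlarge the big algebra is Proposition~\ref{dixmier in crossed product}, but that requires $P\subset A^\alpha$ — here one would need $N_\psi\subset M_\phi^\theta$, which fails since $N_\psi$ is only globally, not pointwise, $\theta$-invariant. So there is no proposition in the paper that bridges this step, and the issue is not cosmetic: for a dominant weight the centralizer $N_\psi$ is \emph{not} with expectation in $N$ (indeed $\psi|_{N_\psi}$ is the infinite trace), so you cannot fall back on Theorem~\ref{expectation implies dixmier} for $N_\psi\subset M$ either.

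The idea you are missing is the one the paper uses: a type $\III$ algebra with no $\III_1$ summand always admits a \emph{lacunary} weight $\varphi$, and for such a weight the centralizer is reached by a $\varphi$-preserving normal conditional expectation and $N=N_\varphi\rtimes\Z$ is a \emph{discrete} crossed product (Theorem~\ref{connes takesaki lacunary} / Corollary~\ref{no III1 semifinite reduction}). This is what makes $\III_0$ tractable: with $N=Q\rtimes\Z$ and $Q$ semifinite with expectation in $N$, you get $Q\subset M$ with expectation, hence $Q\subset c(M)$ with expectations (semifinite case), hence weak Dixmier for $Q\subset c(M)$, and then Proposition~\ref{dixmier amenable extension} with $G=\Z$ finishes. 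The paper's proof is exactly this two-line argument applied uniformly to all non-$\III_1$ type $\III$ factors at once, so the separate decomposition into $\III_0$ and $\III_\lambda$ summands and the detour through dominant weights and Takesaki duality are unnecessary once you know the discrete decomposition. Your instinct to reach for dominant weights and Proposition~\ref{dixmier dual} is natural but doomed in the $\III_0$ case precisely because the relevant crossed product is by $\R^*_+$ rather than by $\Z$.
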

\begin{proof}
If $N$ is semifinite, then $N \subset c(N)$ is with expectations, hence $N \subset c(M)$ is also with expectations. Therefore $N \subset c(M)$ has the weak Dixmier property.

If $N$ is of type $\III$ but has no type $\III_1$ summand, then it has a discrete decomposition $N=P \rtimes \Z$ (see for instance Corollary \ref{no III1 semifinite reduction}). By the first case we know that $P \subset c(M)$ has the weak Dixmier property and we conclude by Proposition \ref{dixmier amenable extension}.
\end{proof}

The proof of the following proposition is straightforward if we use the norm criterion from item (3) of Proposition \ref{equivalences dixmier}. We leave the details to the reader.
\begin{prop} \label{desintegration dixmier}
Let $(X,\mu)$ be a standard borel probability space. Let $x \mapsto M_x$ a measurable field of von Neumann algebras with separable predual and $x \mapsto N_x \subset M_x$ a measurable field of von Neumann subalgebras. Let 
$$ M= \int^\oplus_X M_x \:  \rd \mu(x) \quad \text{ and } \quad N=\int^\oplus_X N_x \: \rd \mu(x).$$
Then $N \subset M$ has the weak Dixmier property if and only if $N_x \subset M_x$ has the weak Dixmier property for $\mu$-almost every $x \in X$.
\end{prop}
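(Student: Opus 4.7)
The strategy, following the hint, is to use the norm-criterion for the weak Dixmier property given in item (3) of Proposition \ref{equivalences dixmier}. For each $x$, introduce the seminorm
$$p_x(\xi) = \inf\left\{ \left\| \sum_i \lambda_i\, \xi \circ \Ad(v_i) \right\| : v_i \in \cU(N_x),\ \lambda_i \geq 0,\ \textstyle\sum \lambda_i = 1 \right\}$$
on $(M_x)_*$, and analogously $p$ on $M_*$. The criterion then says that the weak Dixmier property for $N_x \subset M_x$ (resp.\ $N \subset M$) amounts to the vanishing of $p_x$ (resp.\ $p$) on the annihilator of $N_x' \cap M_x$ (resp.\ $N' \cap M$) inside the predual. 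The plan is to establish the identity
$$p(\xi) = \int_X p_x(\xi_x)\, d\mu(x), \qquad \xi = \int^\oplus \xi_x\, d\mu(x) \in M_*,$$
and combine it with the fact that, under the separability hypothesis, $N' \cap M = \int^\oplus (N_x' \cap M_x)\, d\mu(x)$, so $(N' \cap M)^\perp \subset M_*$ disintegrates into the measurable fields with $\xi_x \in (N_x' \cap M_x)^\perp$ almost everywhere.

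The easy inequality $p(\xi) \geq \int p_x(\xi_x)\, d\mu$ is immediate: any finite convex combination $\eta = \sum_i \lambda_i\, \xi \circ \Ad(u^{(i)})$ with $u^{(i)} = (u^{(i)}_x)_x \in \cU(N)$ has fiber $\eta_x = \sum_i \lambda_i\, \xi_x \circ \Ad(u^{(i)}_x)$, hence $\|\eta_x\| \geq p_x(\xi_x)$, and $\|\eta\| = \int \|\eta_x\|\, d\mu \geq \int p_x(\xi_x)\, d\mu$. This inequality alone yields the forward implication: if $p$ vanishes on $(N' \cap M)^\perp$, then $p_x(\xi_x) = 0$ almost everywhere for every $\xi \in (N' \cap M)^\perp$, and running this over a countable dense subfamily of $(N' \cap M)^\perp$ (available from separability) gives $p_x \equiv 0$ on $(N_x' \cap M_x)^\perp$ for almost every $x$.

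For the reverse inequality, fix $\xi \in M_*$ and $\varepsilon > 0$. A measurable-selection theorem (applicable thanks to separability of the preduals) produces a measurable field of finitely-supported convex combinations whose $x$-th fiber has norm at most $p_x(\xi_x) + \varepsilon$. To convert this fiberwise data into a single global convex combination, use the following piecewise-product trick: partition $X = \bigsqcup_{j=1}^J Y_j$ into finitely many measurable pieces on which the number of summands $n_j$ is constant and the coefficients $\lambda^{(j,i)}$ can be taken constant up to arbitrarily small error. For each multi-index $I = (i_1,\ldots,i_J) \in \prod_j \{1,\ldots,n_j\}$, define $u^{(I)} \in \cU(N)$ by setting $u^{(I)}_x = u^{(j,i_j)}_x$ for $x \in Y_j$, and take $\eta = \sum_I \bigl(\prod_j \lambda^{(j,i_j)}\bigr)\, \xi \circ \Ad(u^{(I)})$. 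A direct fiberwise computation using the factorization of the weights over pieces shows that $\eta_x$ equals the chosen piecewise combination at $x \in Y_j$, so $\|\eta\| \leq \int p_x(\xi_x)\, d\mu + \varepsilon$. Letting $\varepsilon \to 0$ gives the reverse inequality and hence the backward implication.

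The main obstacle is the measurable selection producing the fiberwise convex combinations of controlled norm; this is standard given the separable predual hypothesis and the direct-integral framework. Once the selection is in hand, the piecewise-product trick is an elementary combinatorial assembly that converts the fiberwise data into a bona fide convex combination of elements of $\{\xi \circ \Ad(u) : u \in \cU(N)\}$, and the argument reduces to the easily-verified inequality $\|\eta\| = \int \|\eta_x\|\, d\mu$ for the $L^1$-type decomposition of $M_*$.
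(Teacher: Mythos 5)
Your proposal follows exactly the strategy the paper indicates: recast the weak Dixmier property via the predual criterion (item (3) of Proposition \ref{equivalences dixmier}), reduce it to the seminorm identity $p(\xi)=\int_X p_x(\xi_x)\,\rd\mu(x)$, and establish that identity by the easy pointwise inequality in one direction and a measurable-selection-plus-piecewise-assembly argument in the other. The paper deliberately leaves these details to the reader, and your writeup is a correct filling-in; the product-of-pieces construction is the right device to turn fiberwise convex combinations into a single global one, and the verification of its fibers (by factoring the simplex weights over the pieces) is correct. Two small technicalities you invoke but do not spell out — that the number of summands $n_x$ must be truncated to a finite range before passing to a finite partition (at the cost of $\int_{X \setminus X'}\|\xi_x\|\,\rd\mu$, controllable since $\xi\in M_*$), and that the measurability of $x\mapsto p_x(\xi_x)$ and of the selected tuples is most cleanly handled using a fundamental sequence of unitaries in $\cU(N)$ whose fibers are dense in $\cU(N_x)$ a.e.\ — are exactly the "straightforward details" the paper defers.
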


\section{A class of well-behaved ucp maps} \label{section ucp map}
\subsection{Definition and examples}
Fix $M$ a von Neumann algebra. A completely positive map from $M$ to $M$ is \emph{inner} if it is of the form $x \mapsto \sum_{i } a_i x a_i^*$ for some sequence $(a_i)_{i \in \N}$ of elements of $M$ such that $\sum_{i } a_i a_i^*$ is bounded. Such families will be called \emph{rows} and viewed as infinite row matrices with entries in $M$. They will be denoted with a bold letter $\ba=(a_i)_{i \in \N}$ and we call the elements $a_i$ the \emph{entries} of $\ba$. We define the norm of $\ba$ by $\| \ba \| =\left \|\sum_{i } a_ia_i^* \right \|^{1/2}$. This must not be confused with the $\ell^\infty$-norm $\| \ba \|_\infty = \sup_{i} \| a_i\|$. 

The completely positive map $x \mapsto \sum_{i } a_i x a_i^*$ will be denoted $\Ad(\ba)$. If $\ba$ is a row, then $\ba^*$ is a column. If $\ba$ and $\bb$ are two rows, then $\ba \bb^*=\sum_{i } a_i b_i^*$. We also use the bimodule notation $x\ba y$ when $x,y \in M$ to denote the row $(xa_i y)_{i \in \N}$. Therefore, we have $\Ad(\ba)(x)=\ba x \ba^*=\sum_{i } a_i x a_i^*$. Note that $\Ad(\ba)$ is contractive if and only if $\ba\ba^* \leq 1$ and unital if and only if $\ba \ba^*=1$. Note that $\| \ba \| =\| \ba \ba^* \|^{1/2}$ is equal to the $C^*$-norm of the matrix 
\begin{equation*}
A = 
\begin{pmatrix}
a_0 & a_{1} & a_{2} & \cdots  \\
0 & 0 & 0 & \cdots  \\
0 & 0 & 0 & \cdots  \\
\vdots  & \vdots & \vdots  & \ddots  
\end{pmatrix}.
\end{equation*}
This can be used to obtain various inequalities, such as the Cauchy-Schwarz inequality $\| \ba \bb^*\| \leq \| \ba \| \| \bb \|$.

Let $X$ be a weakly closed subspace of $M$. We define the closed convex subset $\cV(X \subset M) \subset \UCP(M)$ by
$$ \cV(X \subset M)=\UCP(M) \cap \overline{ \{ \Ad(\ba) \in \CCP(M) \mid \ba \text{ is a row with entries in } X \}}.$$
Note that when $X=N$ is a von Neumann subalgebra of $M$, then $\cV(N \subset M)$ is a closed convex subsemigroup of $\UCP(M)$.

We will use the following extension principle all the time.
\begin{prop} \label{extension principle}
Let $M$ be a von Neumann algebra, $N \subset M$ a von Neumann subalgebra and $X \subset N$ a weakly closed subspace. For every $f \in \cV(X \subset N)$, there exists $g \in \cV(X \subset M)$ such that $f=g|_N$.
\end{prop}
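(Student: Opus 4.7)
Here is the plan. By definition of $\cV(X \subset N)$, pick a net $(\ba_i)_{i \in I}$ of rows with entries in $X$ satisfying $\ba_i \ba_i^* \leq 1$ (so that $\Ad(\ba_i) \in \CCP(N)$) and such that $\Ad(\ba_i) \to f$ in the pointwise weak* topology on $N$. Since the entries of $\ba_i$ lie in $X \subset N \subset M$, the same formula $\Ad(\ba_i)(x) = \ba_i x \ba_i^*$ defines a map in $\CCP(M)$, still with $\ba_i \ba_i^* \leq 1$. Because $\CCP(M)$ is compact in the pointwise weak* topology, I can extract a subnet along which $\Ad(\ba_i) \to g$ for some $g \in \CCP(M)$; this $g$ already lies in the pointwise weak*-closure that appears in the definition of $\cV(X \subset M)$.

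The only thing left to check is the identity $g|_N = f$, because then $g(1) = f(1) = 1$, so $g \in \UCP(M)$, and hence $g \in \cV(X \subset M)$. Fix $x \in N$. Since all entries of $\ba_i$ lie in $N$, the element $\ba_i x \ba_i^*$ belongs to $N$, and $f(x) \in N$ as well. For any $\phi \in M_*$ the restriction $\phi|_N$ is a normal functional on $N$, hence
\[
\phi(\ba_i x \ba_i^*) = (\phi|_N)(\ba_i x \ba_i^*) \longrightarrow (\phi|_N)(f(x)) = \phi(f(x))
\]
by the assumed pointwise weak* convergence on $N$. Passing to the limit along the chosen subnet yields $\phi(g(x)) = \phi(f(x))$ for every $\phi \in M_*$, so $g(x) = f(x)$.

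I do not expect a serious obstacle here. The only conceptual point is the observation that the restriction map $M_* \to N_*,\ \phi \mapsto \phi|_N$, is well defined, so that pointwise weak* convergence in $N$ of a net already contained in $N$ is automatically pointwise weak* convergence in $M$ to the same limit. Once this is noted, the extension principle reduces to a routine Banach--Alaoglu compactness argument in $\CCP(M)$.
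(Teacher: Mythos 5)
Your proof is correct and takes essentially the same route as the paper's: realize $f$ as a pointwise weak$^*$ limit of inner maps $\Ad(\ba_i)$ with rows in $X$, view these as elements of $\CCP(M)$, and extract an accumulation point $g$ by compactness. The only addition is that you spell out why $g$ is unital (by evaluating at $1 \in N$) and why weak$^*$ convergence along a net lying in $N$ persists when tested against $M_*$, details the paper leaves implicit.
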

\begin{proof}
Write $f=\lim_i \Ad(\ba_i)$ in $\cV(X \subset N)$ for some net of rows $(\ba_i)_{i \in I}$ with entries in $X$ and $\| \ba_i\| \leq 1$. Now view each $\Ad(\ba_i)$ as an element of $\cV(X \subset M)$ and take $g$ an accumulation point of $(\Ad(\ba_i))_{i \in I}$ in the compact space $\cV(X \subset M)$. 
\end{proof}

Let $\varphi, \psi \in \cP(M)$. For every $\varepsilon > 0$, define $\cV^{\psi,\varphi}_\varepsilon(M):=\cV(X \subset M)$ with $X=M(\sigma^{\psi,\varphi}, [-\varepsilon,\varepsilon])$.

Let $\cV^{\psi,\varphi}(M)= \bigcap_{\varepsilon > 0} \cV^{\psi,\varphi}_\varepsilon(M)$. Observe that for every $\varepsilon > 0$ and any triple of $\varphi, \phi, \psi \in \cP(M)$ we have $$\cV^{\psi,\phi}_\varepsilon(M) \cV^{\phi,\varphi}_\varepsilon(M) \subset \cV^{\psi,\varphi}_{2\varepsilon}(M).$$ This means that $$\cV^{\psi,\phi}(M) \cV^{\phi,\varphi}(M) \subset \cV^{\psi,\varphi}(M).$$ In particular, $\cV^{\varphi}(M):=\cV^{\varphi,\varphi}(M)$ is a closed convex subsemigroup of $\mathrm{UCP}(M)$.

If $N$ is von Neumann subalgebra of $M$ and $\varphi, \psi \in \cP(N)$, we define similarly $\cV^{\psi,\varphi}(N \subset M)$ by taking $X=N(\sigma^{\psi,\varphi}, [-\varepsilon,\varepsilon])$.

Let $M_\infty=\B(H) \ovt M$ where $H$ is a separable Hilbert space of infinite dimension. If $f \in \CCP(M_\infty)$ and $e \in \B(H)$ is a rank one projection, we define $f_e \in \CCP(M)$ by 
$$\forall x \in M, \quad e \otimes f_e(x)=(e\otimes 1)f(1 \otimes x) (e\otimes 1).$$
The map $f \mapsto f_e$ is continuous from $\CCP(M_\infty)$ onto $\CCP(M)$ and sends unital maps to unital maps. It is straighforward to check that if $f$ is inner, then $f_e$ is also inner. More precisely, if $f=\Ad(\ba)$ for some row $\ba$ with entries in $M_\infty$, then $f_e=\Ad(\ba')$ for some row $\ba'$ such that each entry of $\ba'$ is a matrix coefficient of an entry of $\ba$. Therefore, if $X$ is a weakly closed subspace of $M$ and $f \in \cV(X_\infty \subset M_\infty)$ where $X_\infty = \B(H) \ovt X$, then $f_e \in \cV(X \subset M)$. In particular, if we define $\varphi_\infty=\mathrm{Tr} \otimes \varphi \in \cP(M_\infty)$ for every $\varphi \in \cP(M)$, we obtain the following proposition.

\begin{prop} \label{amplification ucp map}
Let $N \subset M$ be an inclusion of von Neumann algebra. Take $\varphi, \psi \in \cP(N)$. Take $f \in \cV^{\psi_\infty,\varphi_\infty}(N_\infty \subset M_\infty)$ and $e \in \B(H)$ a rank one projection. Then $f_e \in \cV^{\psi,\varphi}(N \subset M)$.
\end{prop}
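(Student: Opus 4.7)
The plan is to combine the explicit matrix-coefficient computation of $f_e$ recorded in the paragraph preceding the proposition with a spectral subspace argument for slice maps. A preliminary observation is that the modular flow transfers to the amplification: by the tensor formula for Connes cocycles together with the fact that $\mathrm{Tr}$ is tracial (so $[D\mathrm{Tr} : D\mathrm{Tr}]_t = 1$ for every $t \in \R$), one obtains $\sigma_t^{\psi_\infty, \varphi_\infty} = \id \otimes \sigma_t^{\psi, \varphi}$ on $N_\infty = \B(H) \ovt N$. Fix a unit vector $\xi \in H$ with $e = |\xi\rangle\langle\xi|$.

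The key step is a slice lemma: for every $\eta \in H$ with $\|\eta\| \leq 1$, the normal slice map $\omega_{\eta, \xi} \otimes \id : N_\infty \to N$ carries $N_\infty(\sigma^{\psi_\infty, \varphi_\infty}, [-\varepsilon, \varepsilon])$ into $N(\sigma^{\psi, \varphi}, [-\varepsilon, \varepsilon])$. Given $a$ in the former subspace, set $b := (\omega_{\eta, \xi} \otimes \id)(a)$. Using the identity above and normality, one sees that
\[
F(z) := (\omega_{\eta, \xi} \otimes \id)(\sigma_z^{\psi_\infty, \varphi_\infty}(a))
\]
is an entire extension of $t \mapsto \sigma_t^{\psi, \varphi}(b)$ satisfying $\|F(z)\| \leq e^{\varepsilon |\mathrm{Im}(z)|}\|a\|$. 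On the real axis $\|F(t)\| = \|b\|$ since $\sigma_t^{\psi, \varphi}$ is an isometry; applying the Phragmén--Lindelöf principle to $z \mapsto e^{\pm i \varepsilon z}F(z)$ in the upper and lower half planes sharpens this to $\|F(z)\| \leq \|b\|\, e^{\varepsilon |\mathrm{Im}(z)|}$, which is precisely the defining bound for $b \in N(\sigma^{\psi, \varphi}, [-\varepsilon, \varepsilon])$.

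Given the slice lemma, the proposition follows. For $\varepsilon > 0$, write $f = \lim_i \Ad(\mathbf{a}_i)$ in the pointwise weak-$*$ topology, with each $\mathbf{a}_i = (a_{i,k})_k$ a row of norm $\leq 1$ whose entries lie in $N_\infty(\sigma^{\psi_\infty, \varphi_\infty}, [-\varepsilon, \varepsilon])$. Applied with any orthonormal basis $(\eta_n)_{n \geq 0}$ of $H$ with $\eta_0 = \xi$, the explicit formula before the proposition gives $\Ad(\mathbf{a}_i)_e = \Ad(\mathbf{a}_i')$ for the row $\mathbf{a}_i' = ((\omega_{\eta_n, \xi} \otimes \id)(a_{i,k}))_{k,n}$, which satisfies $\|\mathbf{a}_i'\| \leq \|\mathbf{a}_i\| \leq 1$, and by the slice lemma has entries in $N(\sigma^{\psi, \varphi}, [-\varepsilon, \varepsilon])$. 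Continuity and unital-preservation of $f \mapsto f_e$ then give $f_e = \lim_i \Ad(\mathbf{a}_i')$ with $f_e$ unital, so $f_e \in \cV^{\psi, \varphi}_\varepsilon(N \subset M)$; intersecting over $\varepsilon > 0$ yields $f_e \in \cV^{\psi, \varphi}(N \subset M)$.

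The only genuine technical obstacle I anticipate is the Phragmén--Lindelöf refinement in the slice lemma: the naive slice bound only controls $\|F(z)\|$ in terms of $\|a\|$, and one must bootstrap to obtain the sharp constant $\|b\|$ that matches the normalization of the spectral subspace used in the paper. Everything else will be a direct combination of the identity $\sigma^{\psi_\infty, \varphi_\infty} = \id \otimes \sigma^{\psi, \varphi}$, the explicit computation of $f_e$ recorded above, and the stated continuity properties of $f \mapsto f_e$.
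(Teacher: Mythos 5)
Your argument is correct and follows the same matrix-coefficient decomposition that the paper's preceding paragraph sketches; the only detail the paper leaves implicit is exactly your slice lemma, i.e.\ that $\omega_{\eta,\xi}\otimes\id$ carries $N_\infty(\sigma^{\psi_\infty,\varphi_\infty},[-\varepsilon,\varepsilon])$ into $N(\sigma^{\psi,\varphi},[-\varepsilon,\varepsilon])$. The Phragm\'en--Lindel\"of step you flag is indeed the right way to replace $\|a\|$ by $\|b\|$ given the paper's normalization of the spectral subspace, and it goes through without difficulty because $\sigma^{\psi,\varphi}_t$ is isometric on the real line (alternatively, one can sidestep it entirely by noting that slicing intertwines $\sigma_h^{\psi_\infty,\varphi_\infty}$ with $\sigma_h^{\psi,\varphi}$ for every $h\in \rL^1(\R)$, hence preserves the Arveson spectrum, and that for a one-parameter group of isometries the Arveson spectral subspace coincides with the paper's characterization $\|\sigma_z(x)\|\leq e^{\varepsilon|\mathrm{Im}(z)|}\|x\|$).
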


Here is a fundamental example that motivates all these lengthy definitions. This crucial observation will be the only tool at our diposal to connect the bicentralizer algebra of a state to the bicentralizer algebra of a dominant weight.

\begin{prop} \label{transition UCP weights}
Let $M$ be a von Neumann algebra of type $\III_1$. Take $\varphi, \psi \in \cP(M)$. Then $\cV^{\varphi,\psi}(M)$ is non-empty.
\end{prop}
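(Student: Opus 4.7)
The plan is to reduce, via the amplification device of Proposition~\ref{amplification ucp map}, to producing a single unitary $U \in \cU(M_\infty)$ with $\psi_\infty = U \varphi_\infty U^*$, where $M_\infty := \B(H) \ovt M$ for some separable infinite-dimensional $H$, and $\varphi_\infty := \Tr \otimes \varphi$, $\psi_\infty := \Tr \otimes \psi$. Such a $U$ will automatically be fixed by $\sigma^{\psi_\infty, \varphi_\infty}$, so $\Ad(U)$ will lie in every spectral layer $\cV^{\psi_\infty, \varphi_\infty}_\varepsilon(M_\infty)$, and taking a rank-one cut will land in $\cV^{\psi, \varphi}(M)$.

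First I would set up the amplification. Since tensoring with the type~$\I$ factor $\B(H)$ does not change the flow of weights, $M_\infty$ remains of type~$\III_1$. Both $\varphi_\infty$ and $\psi_\infty$ are faithful normal semifinite weights of infinite multiplicity, because their centralizers contain $\B(H) \otimes 1$, which is properly infinite. By the Connes--Takesaki classification of dominant weights on properly infinite type~$\III$ factors, combined with the triviality of the flow of weights in type~$\III_1$, any two such weights are unitarily equivalent: the Connes cocycle $[D\psi_\infty : D\varphi_\infty]_t$ becomes a coboundary in $M_\infty$, so there exists $U \in \cU(M_\infty)$ with $\psi_\infty = U \varphi_\infty U^*$.

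The key step is then a short calculation. Reading the equality $\psi_\infty = U \varphi_\infty U^*$ as an identity between positive operators affiliated with the core $c(M_\infty)$, the analytic functional calculus gives $\psi_\infty^{\ri t} = U \varphi_\infty^{\ri t} U^*$ for every $t \in \R$, hence
\[
\sigma_t^{\psi_\infty, \varphi_\infty}(U) \;=\; \psi_\infty^{\ri t} \, U \, \varphi_\infty^{-\ri t} \;=\; (U \varphi_\infty^{\ri t} U^*) \, U \, \varphi_\infty^{-\ri t} \;=\; U.
\]
Thus $U$ lies in $M_\infty(\sigma^{\psi_\infty, \varphi_\infty}, \{0\}) \subset M_\infty(\sigma^{\psi_\infty, \varphi_\infty}, [-\varepsilon, \varepsilon])$ for every $\varepsilon > 0$. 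Since the one-entry row $(U)$ is unital, this gives $\Ad(U) \in \cV^{\psi_\infty, \varphi_\infty}_\varepsilon(M_\infty)$ for every $\varepsilon > 0$, hence $\Ad(U) \in \cV^{\psi_\infty, \varphi_\infty}(M_\infty)$. Applying Proposition~\ref{amplification ucp map} to any rank-one projection $e \in \B(H)$ yields $\Ad(U)_e \in \cV^{\psi, \varphi}(M)$, which is the desired non-emptiness.

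The main technical point is the unitary equivalence in the middle step. For $M$ a type~$\III_1$ factor it is standard (triviality of the flow of weights forces all faithful normal semifinite weights of infinite multiplicity into a single unitary orbit on a properly infinite factor). For a type~$\III_1$ von~Neumann algebra with nontrivial center $\cZ(M) = \cZ(c(M))$, one needs a direct integral argument: disintegrate $M = \int^\oplus M_x \, \rd\mu(x)$, produce $U_x \in \cU((M_x)_\infty)$ fibrewise with $\psi_{\infty,x} = U_x \varphi_{\infty,x} U_x^*$ in a measurable way, and assemble $U = \int^\oplus U_x \, \rd\mu(x) \in \cU(M_\infty)$; the rest of the argument then goes through unchanged.
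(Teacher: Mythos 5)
Your amplification setup and the calculation that a unitary $U$ with $\psi_\infty = U\varphi_\infty U^*$ would satisfy $\sigma_t^{\psi_\infty,\varphi_\infty}(U) = U$ are both fine, and the reduction via Proposition~\ref{amplification ucp map} matches the paper exactly. But there is a genuine gap in the middle step: such a unitary $U$ does not exist in general.

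Having infinite multiplicity does not make $\varphi_\infty$ and $\psi_\infty$ dominant, and it does not place them in a single unitary orbit. Dominance requires a $\varphi_\infty$-scaling one-parameter group, and by Connes--Takesaki (Theorem~\ref{connes takesaki integrable} in the paper) a weight of infinite multiplicity is equivalent to a dominant one precisely when it is \emph{integrable}. If $\varphi$ is, say, a normal state, then $\varphi_\infty = \Tr \otimes \varphi$ is not integrable, so it is not in the unitary orbit of any dominant weight; in particular, for a generic pair $\varphi, \psi$ the weights $\varphi_\infty$ and $\psi_\infty$ are not unitarily conjugate. What the triviality of the flow of weights in type $\III_1$ does give you, and this is \cite[Corollary 4.8]{CT76} which the paper cites, is only that for weights of infinite multiplicity the Connes--Takesaki \emph{distance} is zero: there is a sequence $u_n \in \cU(M_\infty)$ with $d_{\mathrm{CT}}(\psi_\infty, u_n \varphi_\infty u_n^*) \to 0$. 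That is an approximate statement, not an exact one, and the gap between them is exactly where your argument breaks. The paper's proof works around this by unwinding the definition of $d_{\mathrm{CT}}$: if $d_{\mathrm{CT}}(\psi_\infty, u_n\varphi_\infty u_n^*) = \varepsilon_n$, then $t \mapsto \sigma_t^{\psi_\infty,\varphi_\infty}(u_n)u_n^*$ extends analytically with exponential bound $e^{\varepsilon_n |\mathrm{Im}\,z|}$, i.e.\ $u_n$ lies in the spectral subspace $M_\infty(\sigma^{\psi_\infty,\varphi_\infty}, [-\varepsilon_n,\varepsilon_n])$. Since $\varepsilon_n \to 0$, any weak*-accumulation point of $\Ad(u_n)$ lies in $\bigcap_\varepsilon \cV^{\psi_\infty,\varphi_\infty}_\varepsilon(M_\infty) = \cV^{\psi_\infty,\varphi_\infty}(M_\infty)$. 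You need the accumulation-point step because no single $u_n$ works; your argument tried to collapse the sequence into a single exact intertwiner, which is not available.

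The closing paragraph about disintegrating in the non-factorial case is reasonable in spirit, but it inherits the same defect: the fibrewise statement $\psi_{\infty,x} = U_x \varphi_{\infty,x} U_x^*$ is again false for the same reason.
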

\begin{proof}
By Proposition \ref{amplification ucp map}, it is enough to show that $\cV^{\psi_\infty,\varphi_\infty}( M_\infty)$ is non-empty. By \cite[Corollary 4.8]{CT76}, since the weights $\psi_\infty$ and $\varphi_\infty$ are of infinite multiplicity and $M$ is of type $\III_1$, there exists a sequence of unitaries $u_n \in \mathcal{U}(M_\infty)$ such that $\varepsilon_n =d_{\mathrm{CT}}(\psi_\infty,u_n\varphi_\infty u_n^*) \to 0$ where $d_{\mathrm{CT}}$ is the Connes-Takesaki distance. Then $u_n \in M_\infty(\sigma^{\psi_\infty,\varphi_\infty}, [-\varepsilon_n,\varepsilon_n])$. Indeed, by definition of the Connes-Takesai distance, $d_{\mathrm{CT}}(u\psi u^*,\varphi)=\varepsilon$ means that the map $t \mapsto \psi^{\ri t} u \varphi^{-\ri t} u^* = \sigma_t^{\psi,\varphi}(u)u^*$ extends to an entire analytic function on $\C$ whose norm is bounded by $e^{\varepsilon |\mathrm{Im}(z)|}$ and this means by definition that $u \in M(\sigma^{\psi,\varphi}, [-\varepsilon,\varepsilon])$.

We conclude that any accumulation point of $\Ad(u_n)$ is in $\cV^{\psi_\infty,\varphi_\infty}( M_\infty)$.
\end{proof}

We will also need the following example. See the Preliminaries \ref{prelim modular theory} for the definition of $\sigma^{\psi,\varphi}$ and $N_{\psi,\varphi}$.
\begin{prop} \label{ucp map from ultrapower}
Let $N \subset M$ be an inclusion with expectations. Take $\varphi, \psi \in \cP(M)$ and $u$ a unitary in $N^\omega_{\psi^{\omega},\varphi^{\omega}}$ for some cofinal ultrafilter $\omega$. Then the map $f \in \UCP(M)$ defined by $f(x)=E^\omega(uxu^*)$ belongs to $\cV^{\psi,\varphi}(N \subset M)$.
\end{prop}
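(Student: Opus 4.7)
My plan is to verify, for each $\varepsilon > 0$, that $f$ belongs to $\cV^{\psi, \varphi}_\varepsilon(N \subset M)$; that is, that $f$ is a pointwise weak$^*$-limit of inner CCP maps $\Ad(\ba)$ whose rows $\ba$ have entries in the spectral subspace $X_\varepsilon := N \cap M(\sigma^{\psi, \varphi}, [-\varepsilon, \varepsilon])$. To begin, I would represent $u = (u_i)^\omega$ by a bounded net of unitaries $u_i \in \cU(N)$. The hypothesis $\sigma_t^{\psi^\omega, \varphi^\omega}(u) = u$ then unfolds to $\psi^{\ri t} u_i \varphi^{-\ri t} - u_i \to 0$ $*$-strongly along $\omega$ for each fixed $t \in \R$.

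Next I would apply a spectral smoothing. Choose a kernel $k \in L^1(\R)$ with $\int k(t)\, \rd t = 1$ and with Fourier transform $\hat k$ supported in $[-\varepsilon, \varepsilon]$, and set
\[
v_i := \int_\R k(t)\, \sigma_t^{\psi, \varphi}(u_i)\, \rd t \in M.
\]
By the analytic-growth criterion for spectral subspaces recalled in Section \ref{prelim modular theory}, each $v_i$ lies in $M(\sigma^{\psi, \varphi}, [-\varepsilon, \varepsilon])$. A dominated-convergence argument, using the pointwise-$t$ strong convergence above and the uniform bound $\|\sigma_t^{\psi, \varphi}(u_i)\| \leq 1$, gives $v_i - u_i \to 0$ $*$-strongly along $\omega$, so $(v_i)$ also represents $u$ in $M^\omega$, and in particular $v_i^* v_i,\, v_i v_i^* \to 1$ strongly along $\omega$. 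Consequently $\Ad(v_i)(x) = v_i x v_i^* \to u x u^*$ strongly in $M^\omega$; since $\Ad(v_i)(x) \in M$, this forces $\Ad(v_i) \to f$ pointwise weak$^*$. If the $v_i$'s already belonged to $N$, the single-entry rows $(v_i)$ would then directly place $f$ in $\cV^{\psi, \varphi}_\varepsilon(N \subset M)$.

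The main obstacle is precisely that $v_i \in M$ need not lie in $N$, because $\sigma^{\psi, \varphi}$ does not preserve $N$ in general. To get around this, I would combine two observations: (a) since $u \in N^\omega$, fixing $E \in \cE(M, N)$ and extending normally to $E^\omega : M^\omega \to N^\omega$, one has $v_i - E(v_i) \to 0$ $*$-strongly along $\omega$; and (b) the spectral subspace $M(\sigma^{\psi, \varphi}, [-\varepsilon, \varepsilon])$ is weak$^*$-closed in $M$. A Kaplansky-type density argument in the ultrapower, balancing these two facts and using the lower semicontinuity of the analytic-growth bound, should produce a representing net $(w_i)$ for $u$ with $w_i \in X_\varepsilon$. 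Once this is in hand, the single-entry rows $\ba_i = (w_i)$ have entries in $X_\varepsilon$ and satisfy $\Ad(\ba_i) \to f$ in $\CCP(M)$ with $\Ad(\ba_i)(1) \to 1$; a convex-combination adjustment to restore unitality in the limit then places $f$ in $\cV^{\psi, \varphi}_\varepsilon(N \subset M)$, and since $\varepsilon > 0$ was arbitrary, $f \in \cV^{\psi, \varphi}(N \subset M)$. The heart of the argument, and the step I expect to require the most care, is the simultaneous enforcement of the $N$-valued and the spectral-subspace conditions, since neither the spectral smoothing nor the conditional expectation respects both on its own.
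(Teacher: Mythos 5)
Your proposal captures the paper's central idea correctly: convolve the representing net of $u$ against a Fej\'er-type kernel whose Fourier transform is supported in $[-\varepsilon,\varepsilon]$, so that the smoothed elements $\sigma_h^{\psi,\varphi}(u_i)$ land in the relevant spectral subspace, still represent $u$ in $M^\omega$, and have norm $\le 1$. This is exactly what the paper does. The divergence comes at the step where you worry that $\sigma^{\psi,\varphi}$ need not preserve $N$. You are reacting to a genuine imprecision in the statement (it says $\varphi,\psi\in\cP(M)$, whereas the definition of $\cV^{\psi,\varphi}(N\subset M)$ a few lines above requires $\varphi,\psi\in\cP(N)$), but under the intended reading --- which is the one used everywhere this proposition is invoked, e.g.\ in Proposition \ref{transition map is in ucp} --- the weights $\varphi,\psi$ come from $\cP(N)$ (extended to $M$ by an expectation), so $\sigma_t^{\psi,\varphi}(N)=N$ for all $t$, and $\sigma_h^{\psi,\varphi}(u_i)\in N(\sigma^{\psi,\varphi},[-\varepsilon,\varepsilon])$ directly. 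There is no gap to patch.

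Moreover, the patch you sketch would not work in the problematic case. You propose to replace $v_i$ by $E(v_i)\in N$ and then argue via Kaplansky density and weak$^*$-closure of the spectral subspace. But $E$ maps $M(\sigma^{\psi,\varphi},[-\varepsilon,\varepsilon])$ into $N(\sigma^{\psi,\varphi},[-\varepsilon,\varepsilon])$ only when $E$ commutes with $\sigma^{\psi,\varphi}$, i.e.\ when $\psi=\psi\circ E$ and $\varphi=\varphi\circ E$ --- precisely the hypothesis under which $\sigma^{\psi,\varphi}$ already globally preserves $N$ and the patch is unnecessary. Without it, the two conditions ``lies in $N$'' and ``lies in $M(\sigma^{\psi,\varphi},[-\varepsilon,\varepsilon])$'' genuinely pull apart, and I see no way to combine them by the argument you outline; your own last sentence concedes this. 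One further small point: you should take the kernel $k$ positive with $\|k\|_1=1$ (which the Fej\'er kernel does, but a generic $k$ with $\hat k$ compactly supported need not), so that $\|v_i\|\le 1$ and $\Ad(v_i)\in\CCP(M)$ as required for membership in $\cV^{\psi,\varphi}_\varepsilon(N\subset M)$; a ``convex combination adjustment'' doesn't by itself fix a failure of contractivity.
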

\begin{proof}
Take $\varepsilon > 0$. Let $h$ be a positive function in $\rL^1(\R)$ with $\| h\|_1=1$ such that $\widehat{h}$ is supported on $[-\varepsilon,\varepsilon]$ (for example take the \emph{Fej\` er Kernel}). Write $u=(u_i)^\omega$ for some net of unitaries $(u_i)_i$ in $\cM^\omega(N)$. Then $u=\sigma_h^{\psi^\omega,\varphi^\omega}(u)=(a_i)^\omega$ where $a_i=\sigma_h^{\psi,\varphi}(u_i) \in N(\sigma^{\psi,\varphi},[-\varepsilon,\varepsilon])$. Then for all $x \in M$, we have $f(x)=\lim_{i \to \omega} a_ixa_i^*$ where the limit is taken in the weak*-topology. Moreover, $\|a_i\| \leq 1$ for all $i \in I$. We conclude that $f \in \cV^{\psi,\varphi}_\varepsilon(N \subset M)$, and since $\varepsilon > 0$ is arbitrary, $f \in \cV^{\psi,\varphi}(N \subset M)$.
\end{proof}

%\begin{prop} \label{general transition UCP weights}
%Let $M$ be a von Neumann algebra. Suppose that $M$ is properly infinite and let $\psi \in \cP(M)$ be a dominant weight. Then for every $\varphi \in \cP(M)$, we have  $\cV^{\varphi,\psi}(M) \neq \emptyset$.
%\end{prop}
%\begin{proof}
%Let $M_\infty=\B(H) \ovt M$ and $\varphi_\infty=\mathrm{Tr} \otimes \varphi$ and $\psi_\infty=\mathrm{Tr} \otimes \psi$ on $M_\infty$. Take $\varepsilon > 0$. There exists an integrable weight with infinite multiplicity $\phi \in \cP(M_\infty)$ such that $d_{\mathrm{CT}}(\phi, \varphi_\infty ) < \varepsilon$ where $d_{\mathrm{CT}}$ is the Connes-Takesaki distance. Since $\psi_\infty \in \cP(M_\infty)$ is dominant, there exists $v \in M_\infty$ with range $vv^*=1$ and $v^*v \in (M_\infty)_{\psi_\infty}$ such that $\phi=v\psi_\infty v^*$. Then we have $d_{\mathrm{CT}}(v\psi_\infty v^*, \varphi_\infty) < \varepsilon$. Identify $H$ with $\ell^2(I)$ for some set $I$ and write $v=(a_{ij})_{i,j \in I}$ with $a_{ij} \in M$ for all $i,j \in I$. Fix $i \in I$. Then $\Phi_\varepsilon : x \mapsto \sum_{j} a_{ij}xa_{ij}^* \in \mathrm{UCP}(M)$ and $a_{ij} \in M(\sigma^{\varphi,\psi},[-\varepsilon,\varepsilon])$ for all $j \in J$. Therefore any accumulation point of $\Phi_\varepsilon$ when $\varepsilon \to 0$ yields an element of $\cV^{\varphi,\psi}(M)$.
%\end{proof}
\subsection{Fundamental properties}
\begin{lem}
Let $M$ be a von Neumann algebra. Let $\varphi,\psi \in \cP(M)$.  Take $\varepsilon > 0$ and $a \in M(\sigma^{\psi,\varphi}, [-\varepsilon,\varepsilon])$. Then for all $t \in \R$, we have $\| \sigma^{\psi,\varphi}_t(a)-a\| \leq K\varepsilon |t| \| a\|$ for some universal constant $K > 0$.
\end{lem}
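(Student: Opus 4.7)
The plan is to use the definition of the spectral subspace, which gives us that $F(z):=\sigma_z^{\psi,\varphi}(a)$ extends to an entire $M$-valued function with $\|F(z)\| \leq e^{\varepsilon|\mathrm{Im}(z)|}\|a\|$, and then to obtain an operator-valued Bernstein-type bound on the derivative $F'$.

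First I would apply Cauchy's integral formula on the disk of radius $r$ centered at $t \in \R$:
\[
F'(t) = \frac{1}{2\pi i}\int_{|z-t|=r}\frac{F(z)}{(z-t)^2}\,dz.
\]
On this circle we have $|\mathrm{Im}(z)| \leq r$, so $\|F(z)\| \leq e^{\varepsilon r}\|a\|$. Estimating the integral in norm gives $\|F'(t)\| \leq \frac{e^{\varepsilon r}}{r}\|a\|$, valid for every $r > 0$. Choosing $r = 1/\varepsilon$ (assuming $\varepsilon > 0$; the case $\varepsilon = 0$ is trivial since then $F$ is constant by Liouville) yields
\[
\|F'(t)\| \leq e\,\varepsilon\,\|a\|
\]
for every $t \in \R$.

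Next I would integrate along $[0,t]$ to obtain
\[
\|\sigma_t^{\psi,\varphi}(a) - a\| = \|F(t) - F(0)\| = \Bigl\|\int_0^t F'(s)\,ds\Bigr\| \leq e\,\varepsilon\,|t|\,\|a\|,
\]
which gives the claim with $K = e$ (any fixed constant would do; one could slightly improve it via Bernstein's inequality, but there is no need).

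There is no real obstacle here: the only point that requires a word is the use of operator-valued holomorphic calculus, but since $F$ takes values in the Banach space $M$ and is entire in the norm-continuous sense, Cauchy's formula and the mean value inequality for the integral work exactly as in the scalar case (one can reduce to scalars by applying functionals $\phi \in M^*$ of norm one and taking the supremum, which preserves the bound $\|F(z)\| \leq e^{\varepsilon|\mathrm{Im}(z)|}\|a\|$).
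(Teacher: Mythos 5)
Your proof is correct, but it is a genuinely different argument from the one in the paper. You extract from the growth condition $\|\sigma_z^{\psi,\varphi}(a)\| \le e^{\varepsilon|\mathrm{Im}\,z|}\|a\|$ a Bernstein-type bound $\|F'(t)\| \le e\varepsilon\|a\|$ via a Cauchy estimate on a circle of radius $1/\varepsilon$, then integrate. This is a purely complex-analytic route (one only needs that $F$ is norm-analytic with the stated growth, which follows from weak*-analyticity plus the local boundedness built into the hypothesis). The paper instead argues through Arveson's spectral calculus: it chooses a de la Vall\'ee Poussin kernel $h$ with $h'\in\rL^1$ and $\widehat h\equiv 1$ on $[-1,1]$, rescales to $h_\varepsilon$, uses that $a=\sigma_{h_\varepsilon}^{\psi,\varphi}(a)$ (because the spectral subspace condition forces $\widehat{h_\varepsilon}$ to act as the identity on $a$), and then deduces $\|\sigma_t^{\psi,\varphi}(a)-a\| \le \|\tau_t(h_\varepsilon)-h_\varepsilon\|_1\,\|a\| = \|\tau_{t\varepsilon}(h)-h\|_1\,\|a\| \le \varepsilon|t|\,\|h'\|_1\,\|a\|$. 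Your approach yields the explicit constant $K=e$ and is arguably more self-contained; the paper's approach uses the Fourier-multiplier machinery that it sets up anyway for the neighbouring lemmas (e.g.\ the row version and the commutation with $\sigma_h^\varphi$), so the choice there is one of uniformity of technique rather than necessity.
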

\begin{proof}
Take $h \in \rL^1(\R)$ with $h' \in \rL^1(\R)$ and such that its fourier transform $\widehat{h}$ is constant equal to $1$ on $[-1,1]$ (for example take the \emph{de la Vall\' ee Poussin Kernel}). For every $\varepsilon > 0$ let $h_\varepsilon=\varepsilon h(\varepsilon \cdot)$. Then $\widehat{h_\varepsilon}= \widehat{h}( \varepsilon^{-1} \cdot)$ is constant equal to $1$ on $[-\varepsilon,\varepsilon]$. Therefore, we have $ a=\sigma_{h_\varepsilon}^{\psi,\varphi}(a)$. Take $t \in \R$ and let $\tau_t$ be the translation operator on $\rL^1(\R)$. Then $\sigma_t^{\psi,\varphi}(a)=\sigma_{\tau_t(h_\varepsilon)}^{\psi,\varphi}(a)$. We thus have 
$$ \|\sigma_t^{\psi,\varphi}(a)-a\| \leq \| \tau_t(h_\varepsilon)-h_\varepsilon \|_1 \|a\|=\| \tau_{t\varepsilon}(h)-h\|_1 \|a\|.$$
Now, since $h' \in \rL^1(\R)$, we have $\| \tau_s(h)-h\|_1 \leq |s| \| h' \|_1$ for every $s \in \R$. We conclude that 
 $$\|\sigma_t^{\psi,\varphi}(a)-a\| \leq K\varepsilon |t| \|a\| $$
 where $K=\| h'\|_1$.
\end{proof}

\begin{lem}
Let $M$ be a von Neumann algebra. Let $\varphi,\psi \in \cP(M)$.  Take $\varepsilon > 0$ and $\ba$ a row in $ M(\sigma^{\psi,\varphi}, [-\varepsilon,\varepsilon])$. Then for all $t \in \R$, we have $\| \sigma_t^{\psi,\varphi}(\ba)-\ba\| \leq K \varepsilon |t| \| \ba\| $.
\end{lem}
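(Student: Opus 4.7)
The plan is to mimic the proof of the scalar version verbatim, replacing the scalar inequality $\|\sigma_g^{\psi,\varphi}(a)\|\le \|g\|_1\|a\|$ by its row analogue. Concretely, I would take the same de la Vall\'ee Poussin kernel $h$ with $\widehat h = 1$ on $[-1,1]$, $h' \in \rL^1(\R)$, and set $h_\varepsilon = \varepsilon h(\varepsilon\,\cdot)$, so that $\widehat{h_\varepsilon}=1$ on $[-\varepsilon,\varepsilon]$. Since every entry $a_i$ lies in $M(\sigma^{\psi,\varphi},[-\varepsilon,\varepsilon])$, applying $\sigma_{h_\varepsilon}^{\psi,\varphi}$ entrywise recovers $a_i$, hence $\sigma_{h_\varepsilon}^{\psi,\varphi}(\ba)=\ba$. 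Therefore, with $\tau_t$ denoting translation,
\[
\sigma_t^{\psi,\varphi}(\ba)-\ba
=\sigma_{\tau_t h_\varepsilon}^{\psi,\varphi}(\ba)-\sigma_{h_\varepsilon}^{\psi,\varphi}(\ba)
=\sigma_{\tau_t h_\varepsilon-h_\varepsilon}^{\psi,\varphi}(\ba).
\]

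The only nontrivial point is the row inequality $\|\sigma_g^{\psi,\varphi}(\ba)\|\le \|g\|_1\,\|\ba\|$ for any $g\in\rL^1(\R)$. Here the main obstacle (and the reason a separate lemma is needed) is that the row norm $\|\ba\|=\|\sum_i a_ia_i^*\|^{1/2}$ is a $C^*$-norm, not an $\ell^\infty$-norm over the entries, so we cannot just combine the entrywise scalar estimates. The trick is to pass to $M_\infty=\B(H)\ovt M$: identify $\ba$ with the matrix $A=\sum_i e_{0,i}\otimes a_i$, so that $\|A\|=\|\ba\|$. Taking $\psi_\infty=\Tr\otimes\psi$ and $\varphi_\infty=\Tr\otimes\varphi$ in $\cP(M_\infty)$, one checks that $\sigma_t^{\psi_\infty,\varphi_\infty}=\id\otimes\sigma_t^{\psi,\varphi}$ (simply because $\psi_\infty^{\ri t}=1\otimes\psi^{\ri t}$), and hence it is a one-parameter group of isometries of $M_\infty$. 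Therefore, the weak$^*$-integral $\sigma_g^{\psi_\infty,\varphi_\infty}(A)=\int g(t)\sigma_t^{\psi_\infty,\varphi_\infty}(A)\,\rd t$ satisfies $\|\sigma_g^{\psi_\infty,\varphi_\infty}(A)\|\le \|g\|_1\|A\|$, and computed entrywise it equals $\sum_i e_{0,i}\otimes\sigma_g^{\psi,\varphi}(a_i)$, i.e.\ the matrix associated with the row $\sigma_g^{\psi,\varphi}(\ba)$. This gives $\|\sigma_g^{\psi,\varphi}(\ba)\|\le \|g\|_1\|\ba\|$.

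Finally, combining this with the estimate from the previous lemma $\|\tau_t h_\varepsilon-h_\varepsilon\|_1=\|\tau_{t\varepsilon}h-h\|_1\le \varepsilon|t|\,\|h'\|_1$ yields
\[
\|\sigma_t^{\psi,\varphi}(\ba)-\ba\|
\le \|\tau_t h_\varepsilon-h_\varepsilon\|_1\,\|\ba\|
\le K\varepsilon|t|\,\|\ba\|
\]
with the same universal constant $K=\|h'\|_1$ as in the scalar case, which is the claimed bound.
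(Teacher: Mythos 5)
Your argument is correct and uses exactly the paper's idea: pass to the amplification $\B(\ell^2(\N)) \ovt M$ with weights $\Tr\otimes\varphi$ and $\Tr\otimes\psi$, identify the row $\ba$ with the single-row matrix $A$ (so $\|A\|=\|\ba\|$ and $\sigma^{\Psi,\Phi}_t(A)$ is the matrix of $\sigma^{\psi,\varphi}_t(\ba)$), and reduce to the scalar case. The only cosmetic difference is that the paper simply cites the previous lemma applied to $A$, whereas you re-run its de la Vall\'ee Poussin kernel computation at the amplified level; the content is identical.
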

\begin{proof}
Apply the previous lemma to the following matrix and the weights $\Phi=\varphi \otimes {\rm Tr}$ and $\Psi=\psi \otimes {\rm Tr}$ on $M \ovt \B(\ell^2(\N))$.
\begin{equation*}
A = 
\begin{pmatrix}
a_0 & a_{1} & a_{2} & \cdots  \\
0 & 0 & 0 & \cdots  \\
0 & 0 & 0 & \cdots  \\
\vdots  & \vdots & \vdots  & \ddots  
\end{pmatrix}.
\end{equation*}
\end{proof}

\begin{prop}
Let $M$ be von Neumann algebra with $\varphi, \psi \in \cP(M)$. Take $f \in \cV^{\psi,\varphi}(M)$. Then $f \circ \sigma^\varphi_h = \sigma_h^\psi \circ f$ for every $h \in \rL^1(\R)$.
\end{prop}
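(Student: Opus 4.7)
The plan is to reduce to the inner case $f=\Ad(\ba)$ with $\ba$ supported in a tiny spectral subspace of $\sigma^{\psi,\varphi}$, prove a quantitative intertwining estimate there, and then pass to the limit to conclude.

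First I would establish the following algebraic identity: for any row $\ba$ with entries in $M$ and any $t \in \R$,
\[
\sigma_t^\psi(\ba x \ba^*) = \sigma_t^{\psi,\varphi}(\ba)\,\sigma_t^\varphi(x)\,\sigma_t^{\psi,\varphi}(\ba)^*,
\]
which is immediate from $\sigma_t^{\psi,\varphi}(\ba) = \psi^{\ri t}\ba\varphi^{-\ri t}$ and the telescoping of the inner $\varphi^{\pm\ri t}$. Rephrased, $\sigma_t^\psi \circ \Ad(\ba) = \Ad(\sigma_t^{\psi,\varphi}(\ba))\circ \sigma_t^\varphi$. Now assume $\ba$ has entries in $M(\sigma^{\psi,\varphi},[-\varepsilon,\varepsilon])$ with $\|\ba\|\le 1$. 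The second lemma above gives $\|\sigma_t^{\psi,\varphi}(\ba)-\ba\| \le K\varepsilon|t|\|\ba\|$. Combining this with the factorization $\bc z\bc^* - \bd z\bd^* = (\bc-\bd)z\bc^* + \bd z(\bc-\bd)^*$ (applied to $\bc=\sigma_t^{\psi,\varphi}(\ba)$, $\bd=\ba$), and using that $\sigma_t^{\psi,\varphi}$ preserves the row-norm, we obtain
\[
\bigl\|\sigma_t^\psi(\Ad(\ba)(y)) - \Ad(\ba)(\sigma_t^\varphi(y))\bigr\| \le 2K\varepsilon|t|\,\|y\|
\]
for all $y \in M$ and all $t\in\R$.

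Next I would integrate against $h$. For $h\in \rL^1(\R)$ with $t\mapsto th(t)$ also in $\rL^1(\R)$ (for instance, compactly supported $h$), integrating the previous bound yields
\[
\bigl\|\sigma_h^\psi(\Ad(\ba)(y)) - \Ad(\ba)(\sigma_h^\varphi(y))\bigr\| \le 2K\varepsilon\,\|y\|\!\int_\R |th(t)|\,dt.
\]
Since $f\in \cV^{\psi,\varphi}(M)=\bigcap_\varepsilon \cV^{\psi,\varphi}_\varepsilon(M)$, for each fixed $\varepsilon>0$ we may write $f$ as the pointwise weak* limit of a net $\Ad(\ba_i)$ with $\ba_i$ supported in $M(\sigma^{\psi,\varphi},[-\varepsilon,\varepsilon])$ and $\|\ba_i\|\le 1$. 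Because $\sigma_h^\psi$ is weak*-continuous on bounded subsets of $M$ (it is a weak* integral of the automorphisms $\sigma_t^\psi$, so one concludes by dominated convergence applied to $\phi\circ\sigma_t^\psi$ for $\phi\in M_*$), both sides of the displayed inequality converge weak* to $\sigma_h^\psi(f(y))$ and $f(\sigma_h^\varphi(y))$ respectively. By weak* lower semicontinuity of the norm, the limit satisfies
\[
\bigl\|\sigma_h^\psi(f(y)) - f(\sigma_h^\varphi(y))\bigr\| \le 2K\varepsilon\,\|y\|\!\int_\R |th(t)|\,dt.
\]
Letting $\varepsilon\to 0$ gives $f\circ \sigma_h^\varphi = \sigma_h^\psi\circ f$ for all such $h$.

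Finally, to extend to arbitrary $h\in \rL^1(\R)$, I would use that the maps $h\mapsto \sigma_h^\varphi$ and $h\mapsto \sigma_h^\psi$ are contractions from $\rL^1(\R)$ into the space of bounded operators on $M$ (operator norm bounded by $\|h\|_1$). Hence, for fixed $y\in M$, the two expressions $f(\sigma_h^\varphi(y))$ and $\sigma_h^\psi(f(y))$ depend continuously on $h\in \rL^1(\R)$ in the norm of $M$ (using $\|f\|\le 1$). Since functions $h$ with $th(t)$ integrable (e.g.\ truncations of $h$) are dense in $\rL^1(\R)$, the identity extends to every $h\in \rL^1(\R)$. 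There is no real obstacle here; the only delicate point is bookkeeping the convergence modes (pointwise weak* for $\Ad(\ba_i)\to f$, but norm convergence in $h$), and this is handled cleanly by weak* lower semicontinuity of the norm.
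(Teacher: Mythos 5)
Your proof is correct and follows essentially the same argument as the paper: the same intertwining identity $\sigma_t^\psi\circ\Ad(\ba)=\Ad(\sigma_t^{\psi,\varphi}(\ba))\circ\sigma_t^\varphi$, the same quantitative bound $2K\varepsilon|t|\,\|y\|$ obtained from the row lemma, integration against $h$, the limit as $\varepsilon\to 0$, and density in $\rL^1(\R)$. You are merely a bit more explicit about the order of limits (net then $\varepsilon$) and invoke weak* lower semicontinuity of the norm where the paper leaves this implicit; and you integrate against $\int_\R|th(t)|\,dt$ rather than using the cruder bound $T\|h\|_1$ for $h$ supported in $[-T,T]$, which is an immaterial variant.
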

\begin{proof}
Take $\varepsilon > 0$ and $\ba$ a row in $M(\sigma^{\psi,\varphi}, [-\varepsilon,\varepsilon])$ with $\| \ba \| \leq 1$. Then for every $x \in M$, we have
$$ \sigma_t^\psi( \ba x \ba^*)=\sigma_t^{\psi,\varphi}(\ba)\sigma_t^\varphi(x) \sigma_t^{\psi,\varphi}(\ba)^*.$$
Therefore 
$$ \sigma_t^\psi( \ba x \ba^*)- \ba \sigma_t^\varphi(x) \ba^*=(\sigma_t^{\psi,\varphi}(\ba)-\ba )\sigma_t^\varphi(x) \sigma_t^{\psi,\varphi}(\ba)^*+\ba \sigma_t^\varphi(x) (\sigma_t^{\psi,\varphi}(\ba)-\ba)^*$$
hence 
$$ \| \sigma_t^\psi( \ba x \ba^*)- \ba \sigma_t^\varphi(x) \ba^* \| \leq \| \sigma_t^{\psi,\varphi}(\ba)-\ba \| \|x\| \|\ba\| + \|\ba\| \|x\| \| \sigma_t^{\psi,\varphi}(\ba)-\ba \| \leq 2K\varepsilon |t|.$$
This shows that if $h$ is supported in $[-T,T]$, then
$$ \| \sigma_h^\psi(\ba x \ba^*)-\ba \sigma_h^\varphi(x)\ba^* \| \leq 2K \varepsilon T \|h\|_1.$$
By letting $\varepsilon > 0$ converge to $0$, we obtain $ \sigma_h^\psi(f(x))=f( \sigma_h^\varphi(x))$. Since functions with compact support are dense in $\rL^1(\R)$, we get the same conclusion for all $h \in \rL^1(\R)$.
\end{proof}

\begin{thm} \label{extension ucp to core}
Let $M$ be a von Neumann algebra. Let $\varphi, \psi \in \cP(M)$. Take $f \in \cV^{\psi,\varphi}(M)$. Then there exists a unique $c(f) \in \cV^{\psi,\varphi}(M \subset c(M))$ such that $f=c(f)|_M$. Moreover, we have a commutative diagram
$$ \begin{tikzcd}
c(M) \arrow{r}{c(f)}
 \arrow[swap]{d}{ \pi_\varphi } & c(M) \arrow{d}{\pi_\psi} \\%
M \rtimes_{\sigma^\varphi} \R \arrow{r}{f \rtimes \R} & M \rtimes_{\sigma^\psi} \R
\end{tikzcd}$$
and the map $$\cV^{\psi,\varphi}(M)\ni f \mapsto c(f) \in \cV^{\psi,\varphi}(M \subset c(M))$$ is a homeomorphism that sends normal maps to normal maps.
\end{thm}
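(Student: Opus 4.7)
The plan is to construct $c(f)$ using the crossed product functor of Proposition~\ref{crossed product ccp} and then verify its defining properties by direct computation and approximation. By the previous proposition, $f\circ\sigma^\varphi_h=\sigma^\psi_h\circ f$ for every $h\in\rL^1(\R)$, and letting $h$ run through an approximate identity of $\rL^1(\R)$ upgrades this to the pointwise equivariance $f\circ\sigma^\varphi_t=\sigma^\psi_t\circ f$ for every $t\in\R$. Thus $f$ is an $\R$-equivariant $\CCP$-map between the $\R$-von Neumann algebras $(M,\sigma^\varphi)$ and $(M,\sigma^\psi)$, and Proposition~\ref{crossed product ccp} produces a unique $f\rtimes\R\in\CCP(M\rtimes_{\sigma^\varphi}\R,M\rtimes_{\sigma^\psi}\R)$ making the relevant diagram commute. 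I then set $c(f):=\pi_\psi^{-1}\circ(f\rtimes\R)\circ\pi_\varphi$, which tautologically makes the diagram in the statement commute and restricts to $f$ on $M$.

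The substantive steps are showing $c(f)\in\cV^{\psi,\varphi}(M\subset c(M))$ and proving the uniqueness of this extension. Write $f=\lim_n\Ad(\ba_n)$ pointwise weak$^{*}$ on $M$ with rows $\ba_n$ whose entries lie in $M(\sigma^{\psi,\varphi},[-\varepsilon_n,\varepsilon_n])$, $\|\ba_n\|\le1$, and $\varepsilon_n\to0$. Viewed inside $c(M)$, each $\Ad(\ba_n)$ is a normal ucp map of the required type, and it commutes with the trace-scaling action $\theta$ because its entries are $\theta$-fixed. The identity $\ba_n\varphi^{it}=\psi^{it}\sigma^{\psi,\varphi}_{-t}(\ba_n)$ in $c(M)$ combined with the row estimate $\|\sigma^{\psi,\varphi}_{-t}(\ba_n)-\ba_n\|\le K\varepsilon_n|t|$ proved just before this theorem yields
\[\lim_n\Ad(\ba_n)(\varphi^{it}x)=\psi^{it}f(x)\qquad(x\in M,\;t\in\R)\]
in the weak$^{*}$ topology, and this is precisely the value of $c(f)(\varphi^{it}x)$ by construction of $f\rtimes\R$.

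From here I would conclude as follows. Any weak$^{*}$ accumulation point of $(\Ad(\ba_n))$ in the compact convex set $\UCP(c(M))$ is a $\theta$-equivariant ucp map on $c(M)$ that restricts to $f$ on $M$ and sends $\varphi^{it}x$ to $\psi^{it}f(x)$; these properties, via the uniqueness clause of Proposition~\ref{crossed product ccp} pulled back through $\pi_\varphi,\pi_\psi$ (and invoking Takesaki's duality, Theorem~\ref{dual action crossed product}, to read $\theta$-equivariance as equivariance for the dual action), characterize $c(f)$ uniquely. Hence every accumulation point equals $c(f)$, so $\Ad(\ba_n)\to c(f)$ pointwise weak$^{*}$, establishing $c(f)\in\cV^{\psi,\varphi}(M\subset c(M))$; applying the same characterization to any $g\in\cV^{\psi,\varphi}(M\subset c(M))$ extending $f$ forces $g=c(f)$. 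The map $g\mapsto g|_M$ inverts $f\mapsto c(f)$ and is obviously weak$^{*}$-continuous, while continuity of $f\mapsto c(f)$ is item~3 of Proposition~\ref{crossed product ccp}, giving the homeomorphism; preservation of normality is item~2 of the same proposition. The main obstacle is the uniqueness of $\theta$-equivariant ucp extensions: a priori, ucp maps on $c(M)$ are not weak$^{*}$-continuous off a norm-dense subalgebra, so agreement on the algebraic crossed product alone is insufficient, and one must package this as the crossed-product characterization provided by Proposition~\ref{crossed product ccp} to bridge the gap.
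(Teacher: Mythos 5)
Your construction of $c(f)$ is the right one: define it as $\pi_\psi^{-1}\circ(f\rtimes\R)\circ\pi_\varphi$, using the crossed product functor for the $\R$-equivariant ccp map $f$. Your computation $\lim_n\Ad(\ba_n)(\varphi^{\ri t}x)=\psi^{\ri t}f(x)$ via $\ba_n\varphi^{\ri t}=\psi^{\ri t}\sigma^{\psi,\varphi}_{-t}(\ba_n)$ and the row estimate is also correct, and it is essentially the same estimate that appears in the paper's proof. The problem is the final step — and it is exactly the obstacle you yourself flag. You assert that ``$\theta$-equivariance $+$ restriction $f$ on $M$ $+$ $\varphi^{\ri t}x\mapsto\psi^{\ri t}f(x)$'' characterizes $c(f)$ via the uniqueness clause of Proposition~\ref{crossed product ccp}, but this is precisely what remains to be proved, not what you can cite. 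The uniqueness in Proposition~\ref{crossed product ccp} says: if a ccp map $\tilde g:M\rtimes_{\sigma^\varphi}\R\to M\rtimes_{\sigma^\psi}\R$ satisfies the \emph{spatial} diagram $(\sigma^\psi\rtimes\R)\circ\tilde g=(f\otimes\id)\circ(\sigma^\varphi\rtimes\R)$, then $\tilde g=f\rtimes\R$. The three properties you extract for an accumulation point $g$ (or for an arbitrary $g\in\cV^{\psi,\varphi}(M\subset c(M))$ extending $f$) do not a priori imply that $g$, conjugated by $\pi_\varphi,\pi_\psi$, satisfies this spatial diagram. Agreement on the $*$-algebra generated by $M$ and $\{\varphi^{\ri t}\}$ is only weak$^*$-dense, ucp maps are not weak$^*$-continuous, and neither Takesaki duality nor $\theta$-equivariance repair this: you cannot average over $\theta$ to reach all of $c(M)$ in norm, and you have not exhibited a way to transport the algebraic agreement into the needed spatial identity.

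The paper closes exactly this gap, and with real work. Given $g\in\cV^{\psi,\varphi}(M\subset c(M))$ with a realizing net $(\ba_i)$, it extracts a further limit $h\in\cV^{\psi,\varphi}(M\subset\B(\rL^2(M)))$ of the net $T\mapsto\ba_i T\ba_i^*$ on all of $\B(\rL^2(M))$, introduces the three representations $\kappa,\kappa_\varphi,\kappa_\psi$ of $c(M)$ on $\rL^2(M)\otimes\rL^2(\R)$, and proves the nontrivial claim that $(h\otimes\id)(U)=V$, where $U:t\mapsto\Delta_\varphi^{\ri t}$ and $V:t\mapsto\Delta_{\psi,\varphi}^{\ri t}$; since $V$ is unitary, $U$ lies in the multiplicative domain of $h\otimes\id$, giving $(h\otimes\id)\circ\Ad(U)=\Ad(V)\circ(h\otimes\id)$, from which the spatial diagram $\kappa_\psi\circ g=(f\otimes\id)\circ\kappa_\varphi$ follows by composition. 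It is the passage to $\B(\rL^2(M))$ and the multiplicative-domain argument — specifically the estimate applied to the concrete modular unitaries $\Delta_\varphi^{\ri t}$, $\Delta_{\psi,\varphi}^{\ri t}$ rather than to $\varphi^{\ri t}$, $\psi^{\ri t}\in c(M)$ — that supplies the continuity you are missing. As written, your proof stops short of this and so does not establish the uniqueness (nor, in consequence, the claim that the accumulation point of $\Ad(\ba_n)$ equals $c(f)$, i.e.\ that $c(f)\in\cV^{\psi,\varphi}(M\subset c(M))$). The remaining items — homeomorphism from items 2 and 3 of Proposition~\ref{crossed product ccp} — are fine once existence and uniqueness are secured.
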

\begin{proof}
Take $f \in \cV^{\psi,\varphi}(M)$. Thanks to Proposition \ref{extension principle}, there exists $g \in \cV^{\psi,\varphi}(M \subset c(M))$ such that $g|_M=f$. What we need to prove is the uniqueness of $g$. For this, it is sufficient to show that the following diagram commutes (for the bottom square, this is already known by Proposition \ref{crossed product ccp}).
$$ \begin{tikzcd}
c(M) \arrow{r}{g}
 \arrow[swap]{d}{ \pi_\varphi } & c(M) \arrow{d}{\pi_\psi} \\%
M \rtimes_{\sigma^\varphi} \R \arrow{r}{f \rtimes \R}  \arrow[swap]{d}{ \sigma_\varphi \rtimes \R }  & M \rtimes_{\sigma^\psi} \R \arrow{d}{\sigma_\psi \rtimes \R}  \\%
M \ovt \B(\rL^2(\R)) \arrow{r}{f \otimes \id} & M \ovt \B(\rL^2(\R))
\end{tikzcd}.$$
We first set up the convenient notations. We view $M$ as a subalgebra of $\B(\rL^2(M))$, by identifying it with $\lambda(M)$, and we view $M \ovt \B(\rL^2(\R)) $ as a subalgebra of $\B(\rL^2(M) \otimes \rL^2(\R))$. In this way, we can view $\kappa_\varphi= (\sigma^\varphi \rtimes \R) \circ \pi_\varphi$ and $\kappa_\psi = (\sigma^\psi \rtimes \R) \circ \pi_\psi$ as two representations of $c(M)$ on $\rL^2(M) \otimes \rL^2(\R)$. We consider a third representation $\kappa$ of $c(M)$ on the same hilbert space given by 
\begin{itemize}
\item $\kappa(x)=x \otimes 1$ for all $x \in M$
\item $\kappa(\varphi^{\ri t})=\Delta_\varphi^{\ri t} \otimes \lambda_t$ for all $t \in \R$
\item $\kappa(\psi^{\ri t})=\Delta_{\psi,\varphi}^{\ri t} \otimes \lambda_t$ for all $t \in \R$.
\end{itemize}
Consider also the two desintegrable (they belong to $\B(\rL^2(M)) \ovt \rL^\infty(G)$) unitary operators $U : t \mapsto \Delta_\varphi^{\ri t}$ and $V : t \mapsto \Delta_{\psi,\varphi}^{\ri t}$. Then we have the following relations :
\begin{align}
\label{eq:intertwining}  \kappa_\varphi = \Ad(U) \circ \kappa \quad \text{and} \quad \kappa_\psi=\Ad(V) \circ \kappa .
\end{align}
Now, remember that our goal is to show that 
\begin{align}
\label{eq:goal} (f \otimes \id) \circ \kappa_\varphi = \kappa_\psi \circ g.
\end{align}
Take $(\ba_i)_{i \in I}$ a net of rows in $M(\sigma^{\psi,\varphi},[-\varepsilon_i,\varepsilon_i])$ with $\varepsilon_i \to 0$ such that $g(x)=\lim_i \ba_i x \ba_i^*$ weakly for all $x \in c(M)$. Up to extracting a subnet, we may assume that $T \mapsto \ba_i T \ba_i^*$ also converges weakly on $\B(\rL^2(M))$ to some ucp map $h \in \cV^{\psi,\varphi}(M \subset \B(\rL^2(M))$. Then for all $x \in c(M)$, we have 
 \begin{align}
\label{eq: kappa relation} \kappa(g(x))=\lim_i \kappa(\ba_i x \ba_i^*)=\lim_i (\ba_i \otimes 1)\kappa(x) (\ba_i^* \otimes 1)= (h \otimes \id)(\kappa(x)) 
\end{align}

\begin{claim}
We have $(h \otimes \id)(U)=V$.
\end{claim}
\begin{proof}[Proof of the claim]
It is sufficient to prove that $(\id \otimes \omega)(( h \otimes \id)(U))=(\id \ot \om)(V)$ for every $\omega \in \B(\rL^2(\R))_*$. By definition of $h \otimes \id$, we have $(\id \otimes \omega)( h \otimes \id)(U)=h( (\id \otimes \omega)(U))$. The restriction of $\omega$ to $\rL^\infty(G) \subset \B(\rL^2(G))$ is given by integration with respect to some $m \in \rL^1(G)$. Then we have
$$ (\id \ot \om)(U)=\int \Delta^{\ri t}_\varphi \: m(t) \rd t \quad  \text{ and }\quad  (\id \ot \om)(V)=\int \Delta^{\ri t}_{\psi,\varphi} \: m(t) \rd t .$$
Now, we can compute $h((\id \ot \om)(U))$ by using the relation $\Delta_{\psi,\varphi}^{\ri t} \ba_i \Delta_{\varphi}^{-\ri t} =\sigma^{\psi,\varphi}_t(\ba_i)$ which yields
$$\| \ba_i \Delta_\varphi^{\ri t} \ba_i^*-\Delta_{\psi,\varphi}^{\ri t}\ba_i\ba_i^* \| = \| \sigma_{-t}^{\psi,\varphi}(\ba_i)\ba_i^*-\ba_i\ba_i^*\| \leq \| \sigma_{-t}^{\psi,\varphi}(\ba_i)-\ba_i \| \| \ba_i\| \leq K \varepsilon_i |t| \| \ba_i \|^2.$$
Since $\| \ba_i \| \leq 1$ and $\ba_i\ba_i^*$ converges strongly to $1$, we obtain the strong convergence of 
$$ \int \ba_i \Delta_\varphi^{\ri t} \ba_i^* \: m(t) \rd t \rightarrow \int \Delta_{\psi,\varphi}^{\ri t} \: m(t) \rd t$$
whenever $m$ is compactly supported and this convergence can be extended to every $m \in \rL^1(G)$ by approximation. It follows that $h( (\id \otimes \omega)(U))=(\id \otimes \omega)(V)$ and this ends the proof of the claim.
\end{proof}
The claim tells us in particular that $U$ is in the multiplicative domain of $h \otimes \id$ hence $(h \ot \id) \circ \Ad(U)=\Ad(V) \circ (h \ot \id)$. Then by using \ref{eq:intertwining} and \ref{eq: kappa relation}, we can finally prove our goal \ref{eq:goal} as follows :
\begin{align*}
(f \otimes \id) \circ \kappa_\varphi & = (h \otimes \id) \circ \kappa_\varphi \\
&= (h \otimes \id) \circ \Ad(U) \circ \kappa \\
&= \Ad(V) \circ (h \otimes \id) \circ \kappa \\
&= \Ad(V) \circ \kappa \circ g \\
& = \kappa_\psi \circ g.
\end{align*}
\end{proof}

Finally, we show how to use the ucp maps we defined in this section to obtain binormal states satisfying the assumptions of Theorem \ref{ultrapower all type}.
\begin{prop} \label{binormal state vs ucp}
Let $N \subset M$ be an inclusion of von Neumann algebra with a faithful normal conditional expectation $E_N$. Let $\varphi$ be a faithful normal state on $M$ with $\varphi = \varphi \circ E_N$ and $\psi \in \cP(M)$ with $\psi=\psi \circ E_N$. Take $f \in \cV^{\varphi,\psi}(N \subset M)$. Then there exists a state $\Phi \in \B(\rL^2(M))^*$ such that:
\begin{itemize}
\item For all $a,b \in M$, $\Phi( \lambda(a)\rho(b)) = \langle a\varphi^{1/2}f(b),\varphi^{1/2}\rangle$.
\item  $\Phi(e_N)=1$ where $e_N \in \B(\rL^2(M))$ is the Jones projection associated to $E_N$.
\item $\Phi$ is a $1$-eigenstate of $\Delta_{\varphi,\psi}$. 
\end{itemize}
\end{prop}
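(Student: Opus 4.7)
The plan is to realise $\Phi$ as a weak$^{\ast}$ cluster point of explicit positive functionals attached to the rows that approximate $f$. More precisely, using the definition of $\cV^{\varphi,\psi}(N\subset M)$, pick a net of rows $(\ba_n)_n$ with entries in $N(\sigma^{\varphi,\psi},[-\varepsilon_n,\varepsilon_n])$, $\|\ba_n\|\leq 1$, $\varepsilon_n\to 0$, such that $\Ad(\ba_n)\to f$ in the pointwise weak$^{\ast}$ topology. Writing $\ba_n=(a_{n,i})_i$, define the positive functional
\begin{equation*}
\Phi_n(T)=\sum_i \langle T\,\xi_{n,i},\xi_{n,i}\rangle,\qquad \xi_{n,i}=\varphi^{1/2}a_{n,i}\in \rL^2(M),
\end{equation*}
of total mass $\|\Phi_n\|=\varphi(\ba_n\ba_n^{*})\to 1$, and let $\Phi$ be any weak$^{\ast}$ accumulation point.

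For the first bullet, a direct cyclic-trace computation using $\rho(b)(\varphi^{1/2}a_{n,i})=\varphi^{1/2}a_{n,i}b$ gives
\begin{equation*}
\Phi_n(\lambda(a)\rho(b))=\langle a\,\varphi^{1/2}\Ad(\ba_n)(b),\varphi^{1/2}\rangle.
\end{equation*}
Since the linear form $x\mapsto \langle a\,\varphi^{1/2}x,\varphi^{1/2}\rangle$ is just $\int \varphi^{1/2}x\varphi^{1/2}a\in M_{*}$, passing to the limit via $\Ad(\ba_n)(b)\to f(b)$ weakly$^{\ast}$ yields the claimed formula. The second bullet is the substantive point. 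Here I use that $\varphi=\varphi\circ E_N$ and $\psi=\psi\circ E_N$ together with Takesaki's theorem, so that $\sigma^{\varphi}$ and $\sigma^{\psi}$ globally preserve $N$; equivalently, the Connes cocycle $(\psi^{\ri t}\varphi^{-\ri t})_t$ lies in $N$, whence the joint flow $\sigma^{\varphi,\psi}$ preserves $N$ and its spectral subspaces. The analytic continuation $b_{n,i}:=\sigma^{\varphi,\psi}_{-\ri/2}(a_{n,i})$ therefore lies in $N$ and satisfies $\varphi^{1/2}a_{n,i}=b_{n,i}\psi^{1/2}$ in $\rL^2(M)$; since this is an $\rL^2$-vector, $b_{n,i}\in\mathfrak n_{\psi|_N}$ and hence $\xi_{n,i}$ actually lies in the range of $e_N$. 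Consequently $e_N\xi_{n,i}=\xi_{n,i}$ and $\Phi_n(e_N)=\sum_i\|\xi_{n,i}\|^2=\varphi(\ba_n\ba_n^{*})\to 1$, forcing $\Phi(e_N)=1$.

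For the eigenstate property, the key identity is $\Delta_{\varphi,\psi}^{\ri t}\xi_{n,i}=\varphi^{1/2}\sigma^{\varphi,\psi}_t(a_{n,i})$, which when combined with $\|\varphi^{1/2}c\|_2^2=\varphi(cc^{*})$ gives
\begin{equation*}
\Phi_n\bigl(|\Delta_{\varphi,\psi}^{\ri t}-1|^2\bigr)=\varphi\bigl((\sigma^{\varphi,\psi}_t(\ba_n)-\ba_n)(\sigma^{\varphi,\psi}_t(\ba_n)-\ba_n)^{*}\bigr).
\end{equation*}
The row version of the spectral-subspace lemma proved just before Theorem \ref{extension ucp to core} bounds this by $(K\varepsilon_n|t|)^2\to 0$. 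Hence $\Phi(\Delta_{\varphi,\psi}^{\ri t})=1$ for every $t\in\R$; restricting $\Phi$ to the abelian von Neumann algebra generated by $(\Delta_{\varphi,\psi}^{\ri t})_t$ yields a probability measure $\mu$ on $\mathrm{Sp}(\Delta_{\varphi,\psi})\subset \R^{*}_{+}$ with $\int \lambda^{\ri t}\,d\mu(\lambda)=1$ for all $t$, which forces $\mu=\delta_1$, and therefore $\Phi$ is a $1$-eigenstate of $\Delta_{\varphi,\psi}$ in the sense of Remark \ref{remark eigenstate}.

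The main obstacle is the invariance argument used in the second bullet: one has to ensure that the explicit vectors $\xi_{n,i}=\varphi^{1/2}a_{n,i}$ genuinely belong to $\rL^2(N)\subset \rL^2(M)$, and this is where the hypothesis that both $\varphi$ and $\psi$ are $E_N$-invariant enters decisively, via Takesaki's characterization to push $N$ through the two-variable modular flow. The two other properties then follow from routine limiting computations once this point is settled.
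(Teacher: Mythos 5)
Your argument is correct and follows essentially the same path as the paper: approximate $f$ by inner contractive completely positive maps $\Ad(\ba_n)$ with rows in $N(\sigma^{\varphi,\psi},[-\varepsilon_n,\varepsilon_n])$, form $\Phi_n(T)=\sum_i\langle T\,\varphi^{1/2}a_{n,i},\varphi^{1/2}a_{n,i}\rangle$, and take a weak$^*$ accumulation point, using $\varphi(\ba_n\ba_n^*)\to1$ so that the limit is a state. The only divergences are in routine details: for the Jones projection it suffices to note $\varphi^{1/2}\in\rL^2(N)$ (since $\varphi=\varphi\circ E_N$) and $a_{n,i}\in N$, so $\varphi^{1/2}a_{n,i}\in\rL^2(N)$ directly, which avoids your detour through $\sigma^{\varphi,\psi}_{-\ri/2}$ and $\psi^{1/2}$ (the latter needs extra care since $\psi$ is only assumed to be a semifinite weight, so $\psi^{1/2}\notin\rL^2(M)$ in general); and for the eigenstate property the paper simply observes that each $\xi_{n,i}$ is supported under the spectral projection $1_{[e^{-\varepsilon_n},e^{\varepsilon_n}]}(\Delta_{\varphi,\psi})$ and invokes Remark~\ref{remark eigenstate}, while your estimate of $\Phi_n\bigl(\lvert\Delta_{\varphi,\psi}^{\ri t}-1\rvert^2\bigr)$ via the row spectral-subspace lemma is an equally valid route to the same conclusion.
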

\begin{proof}
Take $f_0$ an inner contractive completely positive map of the form $f_0(x)=\sum_{i \in \N} a_ixa_i^*$ with $a_i \in N(\sigma^{\varphi,\psi}, [-\varepsilon,\varepsilon])$ for some $\varepsilon > 0$. Let $\xi_i=\varphi^{1/2}a_i$ and define $\Phi_0 \in \B(\rL^2(M))^*_+$ by $\Phi_0(T)=\sum_{i \in \N} \langle T \xi_i, \xi_i \rangle$. The following properties are satisfied :
\begin{itemize}
\item $\Phi_0(1)=\varphi(f_0(1))$.
\item For all $a,b \in M$, $\Phi_0( \lambda(a)\rho(b)) = \langle a\varphi^{1/2}f_0(b),\varphi^{1/2}\rangle$.
\item  $\Phi_0(e_N)=\Phi_0(1)$ where $e_N \in \B(\rL^2(M))$ is the Jones projection associated to $E_N$.
\item $\Phi_0(1_{[e^{-\varepsilon},e^\varepsilon]}(\Delta_{\varphi,\psi}))=\Phi_0(1)$.
\end{itemize}
By definition, $f$ is a pointwise weak*-limit of ccp maps of the form $f_0$ with $\varepsilon$ arbitrarily small. Then any accumulation point of the corresponding $\Phi_0$ will yield a state $\Phi$ satisfying the desired properties (see Remark \ref{remark eigenstate} for the last property).
\end{proof}

\section{The algebraic bicentralizer} \label{section algebraic}
 \subsection{Definition and basic properties}
 
 Recall from Section \ref{prelim modular theory} that we view a weight $\varphi \in \cP(M)$ as a positive operator affiliated with $c(M)$, hence we denote by $\{ \varphi \}''=\{ \varphi^{\ri t} \mid t \in \R \}''$ the von Neumann subalgebra of $c(M)$ that it generates.

\begin{prop} \label{independent of operator valued weight}
Let $N \subset M$ be an inclusion of von Neumann algebras. Take $\psi \in \cP(N)$ and $T \in \cP(M,N)$. The subalgebra $$(N' \cap c(M)) \vee \{ \psi \circ T \}'' \subset c(M)$$ depends only on $\psi \in \cP(N)$, not on the choice of $T \in \cP(M,N)$.
\end{prop}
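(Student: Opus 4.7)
The plan is to reduce the independence statement to Haagerup's Radon–Nikodym theorem for operator valued weights \cite{Ha77b}, translated into the language of the core via the correspondence described in Section \ref{prelim modular theory}.

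Given two operator valued weights $T_1, T_2 \in \cP(M,N)$, let $\varphi_i := \psi \circ T_i \in \cP(M)$ and set $u_t := (DT_1 : DT_2)_t$ for $t \in \R$. By Haagerup's theorem, $(u_t)_{t \in \R}$ is a strongly continuous one-parameter family of unitaries in $M$ that commute with $N$, i.e.\ $u_t \in N' \cap M$ for every $t \in \R$. Moreover, the chain rule gives $(D\varphi_1 : D\varphi_2)_t = (DT_1 : DT_2)_t = u_t$.

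The next step is to interpret the Connes cocycle inside the core. Under the identification of weights with positive operators affiliated with $c(M)$, the Connes cocycle between two weights $\varphi_1,\varphi_2 \in \cP(M)$ is precisely $\varphi_1^{\ri t}\varphi_2^{-\ri t}$ (this is the same fact that is used in Section \ref{prelim modular theory} to identify $\sigma^{\varphi_1,\varphi_2}_t$ with conjugation by $\varphi_1^{\ri t}\varphi_2^{-\ri t}$). Consequently, in $c(M)$ we have the identity
\begin{equation*}
\varphi_1^{\ri t} = u_t \, \varphi_2^{\ri t}, \qquad t \in \R,
\end{equation*}
with $u_t \in N' \cap M \subset N' \cap c(M)$.

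It follows immediately that $\varphi_1^{\ri t} \in (N' \cap c(M)) \vee \{\varphi_2\}''$ for every $t \in \R$, and therefore
\begin{equation*}
(N' \cap c(M)) \vee \{\varphi_1\}'' \subset (N' \cap c(M)) \vee \{\varphi_2\}''.
\end{equation*}
Exchanging the roles of $T_1$ and $T_2$ yields the reverse inclusion, hence equality. I do not expect any serious obstacle; the only point that deserves a moment of care is the translation between the Haagerup/Connes cocycle $(D\varphi_1 : D\varphi_2)_t$ and the product $\varphi_1^{\ri t}\varphi_2^{-\ri t}$ in the core, but this is built into the setup adopted from \cite{FT01} and recalled in Section \ref{prelim modular theory}.
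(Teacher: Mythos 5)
Your proof is correct and follows essentially the same route as the paper: both reduce the statement to the observation that the Connes cocycle between $\psi \circ T_1$ and $\psi \circ T_2$ lies in $N' \cap M$, hence $(\psi\circ T_1)^{\ri t}\in (N'\cap c(M))\vee\{\psi\circ T_2\}''$, and symmetry gives equality. You simply make explicit, via Haagerup's chain rule $(D(\psi\circ T_1):D(\psi\circ T_2))_t=(DT_1:DT_2)_t$, the cocycle computation that the paper leaves implicit in the single line `` $(\psi\circ S)^{\ri t}(\psi\circ T)^{-\ri t}\in N'\cap c(M)$.''
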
 
\begin{proof}
Take $S \in \cP(M,N)$. Then $(\psi \circ S)^{\ri t} (\psi \circ T)^{-\ri t} \in N' \cap c(M)$. Thus  $\{ \psi \circ S\}'' \subset (N' \cap c(M)) \vee \{ \psi \circ T \}'' \subset c(M)$.
\end{proof}
 
\begin{df}
Let $N \subset M$ be an inclusion of von Neumann algebras with $\cP(M,N) \neq \emptyset$. Take $\psi \in \cP(N)$. We define the \emph{algebraic bicentralizer} of $\psi$ inside $M$ by the formula
$$\rb(N \subset M, \psi)=\left( (N' \cap c(M)) \vee \{ \psi \circ T \}'' \right) \cap M$$
 where $T \in \cP(M,N)$.

When $N=M$, we use the simple notation $\rb(M,\psi)$.
\end{df}

To compute $\rb(N \subset M, \psi)$, we need the following lemma.
\begin{prop} \label{algebraic generation}
Let $N \subset M$ be an inclusion of von Neumann algebras with $T \in \cP(M,N)$. Take $\psi \in \cP(N)$.
\begin{enumerate}
\item $\rb(N \subset M,\psi)$ is globally $\sigma^{\psi \circ T}$-invariant and under the identification $c(M)=M \rtimes_{\sigma^{\psi \circ T}} \R$, we have 
$$(N' \cap c(M)) \vee \{ \psi \circ T \}'' = \rb(N \subset M,\psi) \rtimes_{\sigma^{\psi \circ T}} \R.$$
\item If $A \subset M$ is a globally $\sigma^{\psi \circ T}$-invariant subalgebra such that :
$$  (N' \cap c(M)) \vee \{ \psi \circ T \}'' \subset A \vee \{ \psi \circ T \}'' $$
then $\rb(N \subset M,\psi) \subset A$.
\end{enumerate}
\end{prop}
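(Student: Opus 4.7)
The plan is to view everything inside $c(M) = M \rtimes_{\sigma^\varphi}\R$, where $\varphi := \psi \circ T$, and to exploit Takesaki's duality characterization of crossed product extensions (Theorem \ref{dual action crossed product}) with respect to the trace-scaling flow $\theta$. Recall that in this model $\theta$ fixes $M$ pointwise while $\theta_\lambda(\varphi^{\ri t}) = \lambda^{-\ri t}\varphi^{\ri t}$.

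Set $\cB := (N' \cap c(M)) \vee \{\varphi\}''$, so that $\rb(N\subset M,\psi) = \cB \cap M$ by definition. I would first check that $\cB$ is globally invariant under both $\sigma^\varphi_t = \Ad(\varphi^{\ri t})$ and $\theta_\lambda$: the modular flow $\sigma^\varphi$ preserves $N$ (because $\sigma^\varphi|_N = \sigma^\psi$), hence preserves $N' \cap c(M)$, and is inner on $\{\varphi\}''$; the scaling flow $\theta$ fixes $M$ pointwise, hence preserves $N' \cap c(M)$, and preserves $\{\varphi\}''$. In particular $\rb(N\subset M,\psi) = \cB \cap c(M)^\theta = \cB^\theta$ is $\sigma^\varphi$-invariant, which gives the first half of (1). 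For the second half, the triple $(\cB, \rb(N\subset M,\psi), (\varphi^{\ri t})_{t\in\R})$ is a split $\R$-extension, and $\theta|_\cB$ is an $\R^*_+$-action satisfying the pairing $\theta_\lambda(\varphi^{\ri t}) = \lambda^{-\ri t}\varphi^{\ri t}$. The converse direction of Theorem \ref{dual action crossed product} therefore identifies $\cB$ with the crossed product of its $\theta$-fixed points by $\Ad(\varphi^{\ri t})|_{\cB^\theta} = \sigma^\varphi|_{\rb(N\subset M,\psi)}$, yielding $\cB = \rb(N\subset M,\psi) \rtimes_{\sigma^{\psi\circ T}}\R$.

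For (2), the assumption that $A$ is globally $\sigma^\varphi$-invariant lets us form $A \vee \{\varphi\}'' \subset c(M)$ and apply the same Takesaki criterion: this subalgebra is $\theta$-invariant and carries the required pairing with $(\varphi^{\ri t})$, so it is the crossed product $(A \vee \{\varphi\}'')^\theta \rtimes_{\sigma^\varphi}\R$ inside $c(M)$. On the other hand, the $\R$-equivariant inclusion $A \hookrightarrow M$ induces an injective morphism $A \rtimes_{\sigma^\varphi}\R \hookrightarrow c(M)$ whose image is exactly $A \vee \{\varphi\}''$, showing $(A \vee \{\varphi\}'')^\theta = A$. Taking $\theta$-fixed points in the hypothesized inclusion $\cB \subset A \vee \{\varphi\}''$ then gives $\rb(N\subset M,\psi) = \cB^\theta \subset A$.

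No deep obstacle appears; the argument is essentially bookkeeping with Takesaki duality. The only subtlety is verifying the simultaneous invariance of all relevant subalgebras under both $\sigma^\varphi$ and $\theta$ so that the converse direction of Theorem \ref{dual action crossed product} applies cleanly, and identifying $A \vee \{\varphi\}''$ with the honest crossed product $A \rtimes_{\sigma^\varphi}\R$ in (2) (e.g. via the Digernes-Takesaki theorem recalled in the preliminaries).
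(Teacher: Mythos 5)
Your proof is correct and takes essentially the same route as the paper: both work inside $c(M)=M\rtimes_{\sigma^{\psi\circ T}}\R$, verify $\theta$-invariance and the scaling relation $\theta_\lambda((\psi\circ T)^{\ri t})=\lambda^{-\ri t}(\psi\circ T)^{\ri t}$, apply the converse direction of Theorem \ref{dual action crossed product} to recognize $(N'\cap c(M))\vee\{\psi\circ T\}''$ (resp.\ $A\vee\{\psi\circ T\}''$) as the crossed product of its $\theta$-fixed points, and conclude by taking $\theta$-fixed points. Your write-up is slightly more explicit than the paper's (verifying the $\sigma^{\psi\circ T}$- and $\theta$-invariances generator by generator, and spelling out why $A\vee\{\psi\circ T\}''$ identifies with $A\rtimes_{\sigma^{\psi\circ T}}\R$ via Digernes-Takesaki) but the argument is the same.
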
 
\begin{proof}
(1) $(N' \cap c(M)) \vee \{ \psi \circ T \}''$ is globally invariant by the trace scaling action $\theta : \R^*_+ \curvearrowright c(M)$.  Moreover, we have $\theta_\lambda ((\psi \circ T)^{\ri t})=\lambda^{-\ri t}(\psi \circ T)^{\ri t}$ for all $(\lambda,t) \in \R^*_+ \times \R$. Therefore, by Theorem \ref{dual action crossed product}, we know that $(N' \cap c(M)) \vee \{ \psi \circ T \}''$ is a crossed product extension of $$((N' \cap c(M)) \vee \{ \psi \circ T \}'')^{\theta} = ((N' \cap c(M)) \vee \{ \psi \circ T \}'') \cap M= \rb(N \subset M,\psi).$$  by the unitary group $((\psi \circ T)^{\ri t})_{t \in \R}$.

(2) Since $A$ is globally $\sigma^{\psi \circ T}$-invariant, then under the identification $c(M)=M \rtimes_{\sigma^{\psi \circ T}} \R$ we have 
$$ A \vee \{ \psi \circ T \}'' = A \rtimes_{\sigma^{\psi \circ T}} \R.$$
Therefore we have
$$ \rb(N \subset M,\psi) \rtimes_{\sigma^{\psi \circ T}} \R \subset  A \rtimes_{\sigma^{\psi \circ T}} \R.$$
Taking the $\theta$-fixed points, we conclude that $ \rb(N \subset M,\psi) \subset  A$.
\end{proof}

\begin{rem}
Item (1) of Proposition \ref{algebraic generation} raises the following natural question. Is the restriction $\sigma^{\psi \circ T}|_{\rb(N \subset M,\psi)}$ equal to the modular automorphism group of some weight on $\rb(N \subset M,\psi)$? We will answer this question affirmatively (Theorem \ref{modular group of bicentralizer}) under the assumption that $N \subset M$ is with expectations. Unfortunately, we could not find a direct proof of this that does not rely on explicit computations of the bicentralizer algebra.
\end{rem}

\begin{prop} \label{algebraic bicentralizer semifinite}
Let $N \subset M$ be an inclusion of von Neumann algebras with $\cP(M,N) \neq \emptyset$. Take $\varphi \in \cP(N)$.
\begin{enumerate}
\item Suppose that $N$ is semifinite and choose a trace $\tau \in \cP(N)$. Then $\rb(N \subset M, \varphi)$ is the von Neumann algebra generated by $N' \cap M$ and $\frac{\rd \varphi}{\rd \tau}$.
\item Suppose that $M$ is semifinite and choose a trace $\tau \in \cP(M)$.Then $\rb(N \subset M, \varphi)$ is the von Neumann algebra generated by $N' \cap M$ and $\frac{\rd (\varphi \circ T)}{\rd \tau}$ where $T \in \cP(M,N)$.
\end{enumerate}
\end{prop}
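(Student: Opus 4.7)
The plan is to unify both cases by setting $A := \langle N' \cap M, h\rangle''$ and proving $\rb(N \subset M, \varphi) = A$ via Proposition \ref{algebraic generation}. To do this uniformly, let $\xi := \tau \circ T$ in case (1) and $\xi := \tau$ in case (2); in both situations $\sigma^\xi$ is trivial on $N$ (trivial on $M$ in case (2)), so the unitaries $\xi^{\ri t} \in c(M)$ commute with $N$ and hence lie in $N' \cap c(M)$. The Connes--Radon--Nikodym cocycle yields the factorization $(\varphi \circ T)^{\ri t} = h^{\ri t} \xi^{\ri t}$ in $c(M)$, where $h$ is affiliated with $M$ (and even with $N$ in case (1)).

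First I would verify the easy inclusion $A \subset \rb(N \subset M, \varphi)$. Setting $B := (N' \cap c(M)) \vee \{\varphi \circ T\}''$, we have $N' \cap M \subset B \cap M = \rb(N \subset M, \varphi)$ trivially. For $h$, the factorization $h^{\ri t} = (\varphi \circ T)^{\ri t} \xi^{-\ri t}$ shows $h^{\ri t} \in B$, the first factor belonging to $\{\varphi \circ T\}''$ and the second to $N' \cap c(M)$. Since $h^{\ri t}$ also lies in $M$, this gives $h^{\ri t} \in \rb(N \subset M, \varphi)$ for every $t \in \R$, so $\{h\}'' \subset \rb(N \subset M, \varphi)$.

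For the reverse inclusion I would apply item (2) of Proposition \ref{algebraic generation}. First, $A$ is globally $\sigma^{\varphi \circ T}$-invariant: the cocycle formula $\sigma^{\varphi \circ T}_t(x) = h^{\ri t} \sigma^\xi_t(x) h^{-\ri t}$ (valid on $M$ in case (2) and on $N$ in case (1)) specializes at $x = h$ to $\sigma^{\varphi \circ T}_t(h) = h$, while $\sigma^{\varphi \circ T}|_{N' \cap M} = \sigma^T$ preserves $N' \cap M$ by definition. It then remains to verify the inclusion $N' \cap c(M) \subset A \vee \{\varphi \circ T\}''$. Since $A \vee \{\varphi \circ T\}''$ contains both $\xi^{\ri t} = h^{-\ri t}(\varphi \circ T)^{\ri t}$ and $N' \cap M$, this reduces to the identification
\[
N' \cap c(M) = (N' \cap M) \vee \{\xi^{\ri t}\}''.
\]

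The key technical point, which I expect to be the main obstacle, is precisely this identification. I would derive it from Takesaki's characterization of crossed product extensions (Theorem \ref{dual action crossed product}). Indeed, $(N' \cap c(M), N' \cap M, \xi^{\ri t})$ is a split $\R$-extension, and the dual trace-scaling action $\theta$ restricts to $N' \cap c(M)$ (because $\theta$ fixes $N$), acts trivially on $N' \cap M \subset M = c(M)^\theta$, and scales the unitaries via $\theta_\lambda(\xi^{\ri t}) = \lambda^{-\ri t}\xi^{\ri t}$. The converse direction of Theorem \ref{dual action crossed product} ensures that the split extension is a crossed product extension, yielding the displayed identification. Combined with the remarks above, this gives $B \subset A \vee \{\varphi \circ T\}''$, hence $\rb(N \subset M, \varphi) \subset A$ by Proposition \ref{algebraic generation}.(2), completing the proof.
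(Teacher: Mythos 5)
Your argument is correct and follows essentially the same route as the paper: identify $\rb(N\subset M,\varphi)$ with $(N'\cap M)\vee\{h\}''$ by reducing, via the Connes cocycle factorization and Proposition \ref{algebraic generation}.(2), to the identity $N'\cap c(M)=(N'\cap M)\vee\{\xi\}''$. You present this as a unified argument for both cases, whereas the paper treats them separately: for case (1) it simply asserts $N'\cap c(M)=(N'\cap M)\vee\{\tau\circ T\}''$ without proof, and for case (2) it avoids the Takesaki machinery altogether by using the direct tensor decomposition $c(M)\cong M\ovt\{\tau\}''$ available when $M$ itself is semifinite. Your Takesaki-duality proof of the identity is a genuine addition of detail over the paper.

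One point of imprecision: you introduce $(N'\cap c(M),\,N'\cap M,\,\xi^{\ri t})$ as a ``split $\R$-extension'' before proving the identity, but being a split extension already \emph{presupposes} that $N'\cap M$ together with the unitaries $\xi^{\ri t}$ generate $N'\cap c(M)$ densely --- which is precisely what you are trying to establish, so that formulation is circular. What you actually need (and what does the work) is the second, ``Conversely'' paragraph of Theorem \ref{dual action crossed product}: given only the von Neumann algebra $N'\cap c(M)$, the representation $t\mapsto\xi^{\ri t}$, and the scaling action $\theta$ satisfying the eigenvalue relation, one concludes that $(N'\cap c(M),\,(N'\cap c(M))^\theta,\,\xi^{\ri t})$ is a crossed product extension; the generation statement is then part of the conclusion, not a hypothesis, and $(N'\cap c(M))^\theta=N'\cap M$. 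If you invoke that part of the theorem directly, your proof is complete and correct.
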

\begin{proof}
(1) Pick $T \in \cP(M,N)$. Then $N' \cap c(M)=(N' \cap M) \vee \{ \tau \circ T \}'' $. Therefore, if $h=\frac{\rd\varphi}{\rd \tau}$, we have 
$$(N' \cap c(M)) \vee  \{ \varphi \circ T \}'' =(N' \cap M) \vee \{ \tau \circ T \}'' \vee \{ \varphi \circ T \}'' = (N' \cap M) \vee \{ h \}'' \vee  \{ \varphi \circ T \}''$$
hence $\rb(N \subset M,\varphi)=(N' \cap M) \vee \{ h \}'' $ by Proposition \ref{algebraic generation}.

(2) Write $c(M)=M \vee \{ \tau\}'' \cong M \ovt \{ \tau\}''$. Then $N' \cap c(M)=(N' \cap M) \vee \{ \tau\}''$. Therefore, if we let $h= \frac{ \rd (\varphi \circ T)}{\rd \tau}$, we have
$$ (N' \cap c(M)) \vee \{ \varphi \circ T \}'' =(N' \cap M) \vee \{ \tau \}'' \vee \{ \varphi \circ T \}''= (N' \cap M) \vee \{ h \}'' \vee \{ \varphi \circ T \ \}''$$ hence 
$\rb(N \subset M,\varphi)=(N' \cap M) \vee \{ h\}''$ by Proposition \ref{algebraic generation}. 
\end{proof}

Using the previous proposition, we can identify the subalgebra of Proposition \ref{independent of operator valued weight}.
\begin{prop} \label{alg bicentralizer of core}
Let $N \subset M$ be an inclusion of von Neumann algebras with $\cP(M,N) \neq \emptyset$. Take $\psi \in \cP(N)$ and $T \in \cP(M,N)$.
\begin{enumerate}
\item $\rb(N \subset c(M), \psi)=(N' \cap c(M)) \vee \{ \psi \circ T \}''$ where $T \in \cP(M,N)$.
\item Under the identification $c(M) =M\rtimes_{\sigma^{\psi \circ T} }\R$, we have $$\rb(N \subset c(M), \psi)=\rb(N \subset M,\psi) \rtimes_{\sigma^{\psi \circ T}} \R.$$
\end{enumerate}
\end{prop}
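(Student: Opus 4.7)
The plan is to reduce both items to Proposition \ref{algebraic bicentralizer semifinite} and Proposition \ref{algebraic generation}. For item (1), the key observation is that $c(M)$ is semifinite, carrying the canonical trace $\tau = \tau_M$, so the algebraic bicentralizer $\rb(N \subset c(M), \psi)$ should be computable by the semifinite formula (case (2) of Proposition \ref{algebraic bicentralizer semifinite}) applied to the inclusion $N \subset c(M)$. The natural operator valued weight to use is $S := T \circ I^\theta \in \cP(c(M), N)$, obtained by composing $T$ with the Haar operator valued weight $I^\theta \in \cP(c(M), M)$ of the scaling action.

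With $S$ in hand, Proposition \ref{algebraic bicentralizer semifinite}(2) gives
\[
\rb(N \subset c(M), \psi) = (N' \cap c(M)) \vee \left\{ \tfrac{\rd(\psi \circ S)}{\rd \tau} \right\}''.
\]
It then remains to identify this Radon-Nikodym derivative with $\psi \circ T$. Indeed, $\psi \circ S = (\psi \circ T) \circ I^\theta = \widehat{\psi \circ T}$ is precisely the dual weight of $\psi \circ T \in \cP(M)$, and the identification set up in Section \ref{prelim modular theory} is exactly that a weight $\varphi \in \cP(M)$ corresponds to the positive operator $\frac{\rd \widehat{\varphi}}{\rd \tau}$ affiliated with $c(M)$. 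Applied to $\varphi = \psi \circ T$, this yields $\frac{\rd(\psi \circ S)}{\rd \tau} = \psi \circ T$ as operators affiliated with $c(M)$, and item (1) follows.

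Item (2) is then immediate by combining item (1) with Proposition \ref{algebraic generation}(1): that proposition states that under the identification $c(M) = M \rtimes_{\sigma^{\psi \circ T}} \R$, the subalgebra on the right-hand side of item (1) is itself a crossed product,
\[
(N' \cap c(M)) \vee \{\psi \circ T\}'' = \rb(N \subset M, \psi) \rtimes_{\sigma^{\psi \circ T}} \R,
\]
so substituting from item (1) gives the desired equality.

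The only subtle step is the bookkeeping identifying $\frac{\rd(\psi \circ S)}{\rd \tau}$ with $\psi \circ T$, which is just an application of the conventions of Falcone–Takesaki recalled in the preliminaries; I do not anticipate any genuine obstacle here. Everything else is a direct quotation of already established results.
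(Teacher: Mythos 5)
Your proof is correct and follows essentially the same route as the paper: item (1) via the semifinite case of Proposition \ref{algebraic bicentralizer semifinite} applied to $N \subset c(M)$ with the operator valued weight $T \circ I^\theta$ and the Falcone--Takesaki identification of $\psi \circ T$ with $\frac{\rd\widehat{\psi\circ T}}{\rd\tau}$, and item (2) by quoting Proposition \ref{algebraic generation}(1). No gaps.
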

\begin{proof}
(1) Let $\tau$ be the canonical trace on $c(M)$ and let $I^\theta \in \cP(c(M),M)$ be the Haar operator valued weight associated to the scaling action. Then by definition, viewed as an positive operator affiliated with $c(M)$, $\psi \circ T$ is simply the Radon-Nikodym derivative of the dual weight $\widehat{\psi \circ T}=\psi \circ T \circ I^\theta$ with respect to $\tau$. We conclude that $\rb(N \subset c(M), \psi)=(N' \cap c(M)) \vee \{ \psi \circ T \}''$ by Proposition \ref{algebraic bicentralizer semifinite}. Item (2) follows from Proposition \ref{algebraic generation}.
\end{proof}

\begin{prop} \label{alg bic in commutant}
Let $N \subset M$ be an inclusion of von Neumann algebras with $\cP(M,N) \neq \emptyset$. Take $\psi \in \cP(N)$. Then $\rb(N \subset M, \psi) \subset N_\psi ' \cap M$.
\end{prop}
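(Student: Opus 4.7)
The plan is to reduce the claim to a simple commutation check, exploiting the fact that the algebraic bicentralizer is, by definition, sitting inside $(N' \cap c(M)) \vee \{\psi \circ T\}''$. Concretely, I want to show that every element of $N_\psi$ commutes with both $N' \cap c(M)$ and $\{\psi \circ T\}''$, and hence with their join; intersecting with $M$ will then give the result.

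First, pick an arbitrary $T \in \cP(M,N)$, which exists by assumption. For $x \in N_\psi$, the fact that $x \in N$ immediately gives $x \in (N' \cap c(M))'$, so $x$ commutes with $N' \cap c(M)$.

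Second, I need that $x$ commutes with every $(\psi \circ T)^{\ri t}$, $t \in \R$. This is exactly the statement that $\sigma_t^{\psi \circ T}(x) = x$ for all $t$. The key ingredient, recalled in Section~\ref{prelim modular theory}, is that the natural embedding $i_T : c(N) \hookrightarrow c(M)$ sends $\psi^{\ri t}$ to $(\psi \circ T)^{\ri t}$. In particular, for $x \in N \subset c(N)$ we have $(\psi \circ T)^{\ri t}\, x\, (\psi \circ T)^{-\ri t} = \psi^{\ri t}\, x\, \psi^{-\ri t} = \sigma^\psi_t(x)$, so $\sigma^{\psi \circ T}_t|_N = \sigma^\psi_t$. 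Since $x \in N_\psi$ means $\sigma^\psi_t(x) = x$, the commutation $[x, (\psi \circ T)^{\ri t}] = 0$ follows.

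Combining these two observations, every $x \in N_\psi$ lies in the commutant of $(N' \cap c(M)) \vee \{\psi \circ T\}''$ inside $c(M)$. If $y \in \rb(N \subset M,\psi)$, then by definition $y \in M \subset c(M)$ and $y \in (N' \cap c(M)) \vee \{\psi \circ T\}''$, so $xy = yx$ for every $x \in N_\psi$. Hence $y \in N_\psi' \cap M$, which is exactly what we wanted. There is no real obstacle here: the entire proof is just the identification of $\{\psi \circ T\}''$ with the unitary group that implements $\sigma^{\psi \circ T}$ on $M$, plus the compatibility $\sigma^{\psi \circ T}|_N = \sigma^\psi$.
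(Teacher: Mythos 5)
Your proof is correct and takes essentially the same route as the paper's (one-line) argument: pick $T \in \cP(M,N)$ and observe that both $N' \cap c(M)$ and $\{\psi \circ T\}''$ commute with $N_\psi$, so their join does too. You simply spell out the second commutation via $\sigma^{\psi \circ T}|_N = \sigma^\psi$, which the paper leaves implicit.
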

\begin{proof}
Take $T \in \cP(M,N)$. Since  $N' \cap c(M)$ and $ \{ \psi \circ T \}''$ both commute with $N_\psi$, it is clear that $\rb(N \subset M,\psi)$ also commutes with $N_\psi$.
\end{proof}

\begin{prop} \label{alg bic corner reduction}
Let $N \subset M$ be an inclusion of von Neumann algebras with $\cP(M,N) \neq \emptyset$. Take $\psi \in \cP(N)$. If $e \in N_\psi$ is a nonzero projection then $$e\rb(N \subset M, \psi)=\rb(eNe \subset eMe, \psi_e)$$
where $\psi_e=\psi|_{eNe}$.
\end{prop}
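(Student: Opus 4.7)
The plan is to reduce the proposition to its analogue inside the core, using Proposition~\ref{alg bicentralizer of core} which expresses the algebraic bicentralizer in $c(M)$ as the straightforward join $(N' \cap c(M)) \vee \{\varphi\}''$, where $\varphi := \psi \circ T$ for any fixed $T \in \cP(M,N)$. First I would observe that since $\sigma^\varphi$ restricts to $\sigma^\psi$ on $N$ and $e \in N_\psi$, we have $\sigma_t^\varphi(e)=e$, so $e \in M_\varphi$. This means $e$ commutes with $\{\varphi^{\ri t}:t\in\R\}''$ inside $c(M)$, and combined with $e \in N$ (which gives commutation with $N' \cap c(M)$), $e$ is central in $\rb(N \subset c(M),\psi)$ and hence also in $\rb(N \subset M,\psi)$ (consistent with Proposition~\ref{alg bic in commutant}). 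In particular $e\rb(N \subset M,\psi) = e\rb(N \subset M,\psi)e \subset eMe$, so the two sides of the claimed equality live in $eMe$ and can be compared there.

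Next I would make precise the identification $c(eMe) \cong ec(M)e$. The unitaries $u_t := e\varphi^{\ri t}$ form a continuous one-parameter group in $ec(M)e$ implementing $\sigma^{\varphi_e}$ on $eMe$, where $\varphi_e := \varphi|_{eMe}$. The trace-scaling action $\theta$ preserves $ec(M)e$, fixes $eMe$ pointwise, and acts by $\theta_\lambda(u_t)=\lambda^{-\ri t}u_t$. Takesaki's criterion (Theorem~\ref{dual action crossed product}) then identifies $(ec(M)e, eMe, u)$ as a crossed-product $\R$-extension, giving $ec(M)e \cong eMe \rtimes_{\sigma^{\varphi_e}} \R = c(eMe)$. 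Under this isomorphism $e\{\varphi\}''e = \{\varphi_e\}''$, and with the natural restriction $T_e := T|_{eMe} \in \cP(eMe, eNe)$ (well-defined since $e \in N$) we have $\psi_e \circ T_e = \varphi_e$; Proposition~\ref{alg bicentralizer of core} therefore yields
\begin{equation*}
\rb(eNe \subset c(eMe),\psi_e) = ((eNe)' \cap c(eMe)) \vee \{\varphi_e\}''.
\end{equation*}

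The main technical step will be the relative commutant formula $e(N' \cap c(M))e = (eNe)' \cap c(eMe)$. This is an instance of the standard fact that for any inclusion of von Neumann algebras $P \subset Q$ and any projection $f \in P$, one has $(fPf)' \cap fQf = f(P' \cap Q)f$, proved by lifting an element of $(fPf)' \cap fQf$ to $z(P'\cap Q)z$ via partial isometries in $P$ linking sub-projections of $f$ to a partition of the central support $z = z_P(f)$. Granting this and using that $e$ commutes with each of $N' \cap c(M)$ and $\{\varphi\}''$, a direct computation gives
\begin{align*}
e\rb(N \subset c(M),\psi)e &= e(N'\cap c(M))e \vee e\{\varphi\}''e \\
&= ((eNe)' \cap c(eMe)) \vee \{\varphi_e\}'' \\
&= \rb(eNe \subset c(eMe),\psi_e).
\end{align*}
Intersecting both sides with $eMe$ and invoking Proposition~\ref{alg bicentralizer of core}(2) (which recovers $\rb(\cdot \subset \cdot,\cdot)$ as the intersection of its core version with the base algebra) yields the desired equality $e\rb(N \subset M,\psi) = \rb(eNe \subset eMe,\psi_e)$. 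The main obstacle is the relative commutant formula: the inclusion $\subset$ is trivial, whereas $\supset$ requires the non-trivial central-support/partial-isometry lifting argument.
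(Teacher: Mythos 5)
Your proof is correct and follows essentially the same route as the paper: reduce via Proposition~\ref{alg bicentralizer of core} to cutting the join $(N'\cap c(M))\vee\{\varphi\}''$ by $e$ inside $c(M)$, use the identification $ec(M)e\cong c(eMe)$ together with the relative commutant formula $e(N'\cap c(M))e=(eNe)'\cap c(eMe)$, and intersect back with $M$; the paper treats both of those identifications as standard and writes them without justification, which you correctly flag and fill in. One small wording slip: ``$e$ is central in $\rb(N\subset c(M),\psi)$'' should read ``$e$ commutes with $\rb(N\subset c(M),\psi)$,'' since $e$ need not belong to that algebra, but this does not affect the argument.
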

\begin{proof}
Take $T \in \cP(M,N)$ and let $S=T|_{eMe} \in \cP(eMe,eNe)$. 
The corner $ec(M)e$ is naturally identified with $c(eMe)$. The map $x \mapsto ex$ then defines a normal *-morphism from $\{e\}' \cap c(M)$ into $c(eMe)$. This *-morphism sends $N' \cap c(M)$ onto $e(N' \cap c(M))=(eNe)' \cap (ec(M)e)=(eNe)' \cap c(eMe)$. It also sends $\{ \psi \circ T \}''$ onto $\{ \psi_e \circ S \}''$. Therefore it sends $\rb(N \subset c(M), \psi)$ onto $\rb(eNe \subset c(eMe), \psi_e)$. In other words, $e\rb(N \subset c(M), \psi)=\rb(eNe \subset c(eMe), \psi_e)$. Taking the intersection with $M$, we conclude that $\rb(eNe \subset eMe, \psi_e)=e\rb(N \subset M, \psi)$.
\end{proof}

\begin{prop} \label{alg bic commutant corner reduction}
Let $N \subset M$ be an inclusion of von Neumann algebras with expectations. Take $\psi \in \cP(N)$. If $e \in N' \cap M$ is a nonzero projection then $$e\rb(N \subset M, \psi)e=\rb(eNe \subset eMe, \psi_e)$$
where $\psi_e \in \cP(eNe)$ is defined by $\psi_e(xe)=\psi(xz)$ for all $x \in N$ and $z$ is the smallest projection of $\cZ(N)$ that contains $e$.
\end{prop}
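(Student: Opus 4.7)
The plan is to work inside the core $c(M)$, identify $c(eMe)$ with the corner $ec(M)e$, and then compare the algebraic bicentralizers by matching modular unitaries.

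First, since $e \in M = c(M)^\theta$ is $\theta$-fixed, the corner $ec(M)e$ is $\theta$-invariant and satisfies $(ec(M)e)^\theta = eMe$. The equivalence of categories from Section \ref{prelim modular theory} then canonically identifies $c(eMe) = ec(M)e$. A short manipulation using $en = ne$ for $n \in N$ and $x = exe$ yields $(eNe)' \cap c(eMe) = e(N' \cap c(M))e$: the inclusion $\supset$ is immediate, and for $\subset$ one observes that if $x \in ec(M)e$ commutes with $eNe$ then $e(xn-nx)e = 0$, while simultaneously $xn = e(xn)e$ and $nx = e(nx)e$ (exploiting $x = exe$), forcing $xn = nx$.

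Second, I would choose a particularly good $T \in \cP(M,N)$. Starting from any $T_0 \in \cP(M,N)$ and using the $N$-bilinear conditional expectation $E_e(x) := exe + (1-e)x(1-e)$ from $M$ onto $\{e\}' \cap M$, set $T := T_0 \circ E_e$; the ``with expectations'' hypothesis is exactly what makes this an operator-valued weight in $\cP(M,N)$. A direct KMS calculation gives $(\psi \circ T)(ex) = (\psi \circ T)(xe)$ for all $x$, hence $e \in M_{\psi \circ T}$. Then define $S : eMe \to eNe$ by $S(x) := T(x)e$; since $e \le z$ forces $x(1-z) = 0$ for every $x \in eMe$, the map $S$ is in $\cP(eMe, eNe)$ and satisfies $\psi_e \circ S = (\psi \circ T)|_{eMe}$.

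Third, since $e$ commutes with $(\psi \circ T)^{it}$, the element $e(\psi \circ T)^{it}$ is a unitary in $c(eMe) = ec(M)e$. Restricting the Haar operator-valued weight $I^\theta$ to $ec(M)e$ shows that $\widehat{\psi_e \circ S}$ and $\widehat{\psi \circ T}|_{ec(M)e}$ coincide, hence $(\psi_e \circ S)^{it} = e(\psi \circ T)^{it}$ and $\{\psi_e \circ S\}'' = e\{\psi \circ T\}''$. Moreover, the action $\Ad((\psi \circ T)^{it})$ on $B := N' \cap c(M)$ fixes $e$, so $A := B \vee \{\psi \circ T\}''$ carries a natural $\R$-crossed-product structure in which $e$ is an invariant projection; consequently $eAe = eBe \vee e\{\psi \circ T\}''$. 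Combining yields
\begin{align*}
\rb(eNe \subset c(eMe), \psi_e)
&= e(N' \cap c(M))e \vee e\{\psi \circ T\}'' \\
&= eAe = e\,\rb(N \subset c(M), \psi)\,e,
\end{align*}
and intersecting with $eMe$ via Proposition \ref{alg bicentralizer of core}(2) gives $e\,\rb(N \subset M, \psi)\,e = \rb(eNe \subset eMe, \psi_e)$.

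I expect the main obstacle to be the semifiniteness of $T = T_0 \circ E_e$ in the second step: this hinges on the semifiniteness of $T_0|_{N' \cap M}$ (the precise content of ``with expectations'') together with a careful analysis of how $E_e$ interacts with the $\{e\}' \cap M$-corner of $T_0$. The crossed-product identification $eAe = eBe \vee e\{\psi \circ T\}''$ in the third step, while intuitive once $e$ is $\sigma^{\psi \circ T}$-invariant, also needs some care because $B$ and $\{\psi \circ T\}''$ do not commute.
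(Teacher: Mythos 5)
Your overall plan coincides with the paper's: identify $c(eMe)$ with the corner $ec(M)e$, produce a $T \in \cP(M,N)$ so that $e$ lies in the centralizer of $\psi \circ T$, show $\psi_e \circ S$ is the corresponding reduced weight, and then cut the generating set of $\rb(N \subset c(M), \psi)$ by $e$. Your preliminary identification $(eNe)' \cap c(eMe) = e(N' \cap c(M))e$, the relation $\psi_e \circ S = (\psi \circ T)_e$, the equality $(\psi_e \circ S)^{\ri t} = e(\psi \circ T)^{\ri t}$, and the corner identity $eAe = eBe \vee e\{\psi \circ T\}''$ (valid because $e$ is fixed by the dual action on the crossed product $B \rtimes_{\sigma^{\psi \circ T}} \R$) are all correct once such a $T$ is in hand; this matches what the paper does.

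The gap, which you flag but for which you give the wrong diagnosis, is the construction of $T$. You propose $T := T_0 \circ E_e$ and suggest its semifiniteness reduces to that of $T_0|_{N' \cap M}$ (the content of ``with expectations''). It does not: the algebra relevant to $T_0 \circ E_e$ is $\{e\}' \cap M$, which neither contains nor is contained in $N' \cap M$, so semifiniteness of $T_0|_{N' \cap M}$ gives no control over $T_0|_{\{e\}' \cap M}$. Nor can you argue via a bound $E_e(x) \le c\, x$ for $x \ge 0$ (which would yield $\mathfrak{m}_{T_0} \subset \mathfrak{m}_{T_0 \circ E_e}$), because no such bound exists: in $M_2(\C)$ with $e = e_{11}$ and $x = \left(\begin{smallmatrix}1&1\\1&1\end{smallmatrix}\right)$ one has $E_e(x) = 1$ while $cx - 1$ has a negative eigenvalue for every $c>0$. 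Thus $T_0 \circ E_e$ need not be semifinite even when $T_0$ is, and the argument as written does not close. The paper avoids an explicit formula entirely: it invokes \cite[Theorem 6.6]{Ha77b}, which under ``with expectations'' guarantees the \emph{existence} of some $T \in \cP(M,N)$ whose modular automorphism group $\sigma^T$ on $N' \cap M$ fixes the given projection $e$. Substituting this existential $T$ for your explicit $T_0 \circ E_e$ repairs the proof, after which the rest of your argument agrees with the paper's.
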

\begin{proof}
Take some $T \in \cP(M,N)$ such that $e$ is fixed by $\sigma^T$ (use \cite[Theorem 6.6]{Ha77b}). Define $S \in \cP(eMe,Ne)$ by $S(x)=T(x)e$ for all $x \in eMe$. We have $\psi_e \circ S=(\psi \circ T)_e$. 
The corner $ec(M)e$ is naturally identified with $c(eMe)$. Since $$e(N' \cap c(M))e=(eNe)' \cap (ec(M)e)=(eNe)' \cap c(eMe)$$ and $e$ commutes with $\{ \psi \circ T \}''$, we see that 
$$e\{ x (\psi \circ T)^{\ri t } \mid x \in N' \cap c(M), \; t \in \R \}e=\{ y (\psi_e \circ S)^{\ri t } \mid y \in (eNe)' \cap c(eMe), \; t \in \R \}.$$
By taking linear spans and weak*-closures, we get $e\rb(N \subset c(M), \psi)e=\rb(eNe \subset c(eMe), \psi_e)$. Taking the intersection with $M$, we conclude that $\rb(eNe \subset eMe, \psi_e)=e\rb(N \subset M, \psi)$.
\end{proof}

\begin{prop} \label{alg bicentralizer for intermediate subalgebra}
Let $P \subset N \subset M$ be inclusions of von Neumann algebras with $\cP(M,N) \neq \emptyset$ and $\cP(N,P) \neq \emptyset$. Take $\psi \in \cP(P)$. 
\begin{enumerate}
\item  $\rb(P \subset N, \psi)  \subset \rb(P \subset M,\psi)$. 
\item If $S \in \cP(N,P)$ then $\rb(N \subset M, \psi \circ S) \subset \rb(P \subset M, \psi)$
\item Assume that $T|_{P' \cap M}$ is semifinite for some $T \in \cP(M,N)$ (this is automatic if $N \subset M$ is with expectations or if $P \subset M$ is with expectations). Then  $$\rb(P \subset N, \psi)  = \rb(P \subset M,\psi) \cap N$$ and 
$$T|_{\rb(P \subset M,\psi)} \in \cP(\rb(P \subset M,\psi), \rb(P \subset N, \psi)).$$
\end{enumerate}
\end{prop}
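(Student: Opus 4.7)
My plan is to exploit, for all three items, the explicit formula
$$\rb(P \subset M, \psi) = \bigl((P' \cap c(M)) \vee \{\psi \circ S \circ T\}''\bigr) \cap M$$
for arbitrary $T \in \cP(M, N)$ and $S \in \cP(N, P)$, together with the embedding $c_T(N) \hookrightarrow c(M)$ induced by $T$, which sends $(\psi \circ S)^{\ri t}$ to $(\psi \circ S \circ T)^{\ri t}$ (recalled in Section~\ref{prelim modular theory}).

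Items (1) and (2) should be routine. For (1), the embedding carries $(P' \cap c(N)) \vee \{\psi \circ S\}''$ into $(P' \cap c(M)) \vee \{\psi \circ S \circ T\}''$; intersecting with $N \subset M$ yields $\rb(P \subset N, \psi) \subset \rb(P \subset M, \psi)$. For (2), $P \subset N$ gives $N' \cap c(M) \subset P' \cap c(M)$, while both bicentralizers are computed with the same modular unitaries; intersecting with $M$ gives $\rb(N \subset M, \psi \circ S) \subset \rb(P \subset M, \psi)$.

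Item (3) is the substantial part. Setting $\phi = \psi \circ S \circ T$, Proposition~\ref{algebraic generation}(1) identifies $Y := (P' \cap c(M)) \vee \{\phi\}''$ with $\rb(P \subset M, \psi) \rtimes_{\sigma^\phi} \R$ and $Z := (P' \cap c_T(N)) \vee \{\phi\}''$ with $\rb(P \subset N, \psi) \rtimes_{\sigma^{\psi \circ S}} \R$, both $\theta$-invariant subalgebras of $c(M)$ whose fixed-point algebras are $\rb(P \subset M, \psi)$ and $\rb(P \subset N, \psi)$. I would extend $T$ to $c(T) \in \cP(c(M), c_T(N))$ via the core functor and establish the key technical claim that $c(T)|_Y \in \cP(Y, Z)$. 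By the $\theta$-equivariance of $c(T)$ together with the equivalence between trace scaling flows and inclusions with expectation, taking $\theta$-fixed points yields the second assertion $T|_{\rb(P \subset M, \psi)} \in \cP(\rb(P \subset M, \psi), \rb(P \subset N, \psi))$. For the equality $\rb(P \subset N, \psi) = \rb(P \subset M, \psi) \cap N$, the inclusion $\supset$ is (1), while for $\subset$, any $x \in \rb(P \subset M, \psi) \cap N$ is bounded in $c_T(N)$, and the OVW claim combined with a spectral-cutoff argument applied to the central positive operator $c(T)(1) = T(1)$ (central by bimodularity, of full support by faithfulness of $T$) forces $x \in Z \cap N = \rb(P \subset N, \psi)$.

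It remains to prove the key technical claim, which is the heart of the argument. The easy structural part is that $c(T)$ is $c_T(N)$-bilinear (and hence acts on $\{\phi\}''$ by central multiplication) and $P$-bilinear since $P \subset N$ (so it sends $P' \cap c(M)$ into $P' \cap c_T(N)$ wherever defined). Using the commutation relation $\phi^{\ri t}a = \sigma^\phi_t(a)\phi^{\ri t}$ and the $\sigma^\phi$-invariance of $P' \cap c(M)$, the $*$-algebra generated by $P' \cap c(M)$ and $\{\phi\}''$ consists of sums of products $ab$ with $a \in P' \cap c(M)$ and $b \in \{\phi\}''$, on which $c(T)(ab) = c(T)(a) b$ lies in $Z$ whenever $a$ is in the definition ideal of $c(T)|_{P' \cap c(M)}$. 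The main obstacle, and the principal technical content of the proof, is semifiniteness of $c(T)|_{P' \cap c(M)}$: this is precisely where the hypothesis $T|_{P' \cap M} \in \cP(P' \cap M, P' \cap N)$ is used, via the compatibility of the core extension with restrictions to relative commutants along the $\sigma^\phi$-invariant subalgebra $P$ (using that $\sigma^\phi|_P = \sigma^\psi$).
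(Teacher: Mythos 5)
Your treatment of items (1) and (2) matches the paper's and is correct: for (1), embed $c_T(N)$ into $c(M)$ and intersect with $M$ using $c_T(N)\cap M = N$; for (2), use $N'\cap c(M)\subset P'\cap c(M)$. Your overall plan for (3) is also the paper's: set $\phi=\psi\circ S\circ T$, lift $T$ to $c(T)\in\cP(c(M),c_T(N))$, and show $c(T)$ restricts to a semifinite OVW from $Y=(P'\cap c(M))\vee\{\phi\}''$ onto $Z=(P'\cap c_T(N))\vee\{\phi\}''$. You correctly identify that the hypothesis $T|_{P'\cap M}$ semifinite is exactly what forces semifiniteness of $c(T)|_{P'\cap c(M)}$ (since $P'\cap M\subset P'\cap c(M)$), and that the $\phi^{\ri t}$-span argument then extends semifiniteness to all of $Y$.

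The problem is in your last step. You swap the roles of $\subset$ and $\supset$ (item (1) gives $\rb(P\subset N,\psi)\subset\rb(P\subset M,\psi)\cap N$, the easy direction, not the other one), but that is cosmetic. The genuine gap is the ``spectral-cutoff argument applied to $c(T)(1)=T(1)$''. For a faithful normal \emph{semifinite} operator-valued weight that is not bounded, $T(1)$ typically equals $+\infty$ (think of $\Tr\otimes\id$): it is a purely infinite element of the extended positive cone of the center, so all of its spectral projections of the form $1_{[1/n,n]}(T(1))$ vanish and do not increase to $1$. Cutting along the spectrum of $T(1)$ therefore buys nothing. What is actually needed is the standard fact that any fns semifinite OVW $T$ admits a positive operator $a$ (affiliated, possibly unbounded) with $T(a)=1$; the paper picks such $a\in\rb(P\subset M,\psi)^+$ and concludes from bimodularity that for $x\in\rb(P\subset M,\psi)\cap N$ one has $T(xa)=xT(a)=x$, which lands $x$ in the range $\rb(P\subset N,\psi)$. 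That one-line computation replaces your cutoff argument and is where the hypothesis is actually cashed in.
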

\begin{proof}
(1) Take $S \in \cP(N,P)$ and $T \in \cP(M,N)$. Let $\phi=\psi \circ S \circ T$. Clearly, $P' \cap c_T(N)$ is contained in $P' \cap c(M)$. Therefore, $\rb(P \subset c_T(N),\psi)= (P' \cap c_T(N)) \vee \{ \phi \}''$ is contained in $\rb(P \subset c(M), \psi)= (P' \cap c(M)) \vee \{ \phi \}''$. Taking the intersection with $M$ and using the fact that $c_T(N) \cap M=N$, we obtain 
$\rb(P \subset N,\psi) \subset \rb(P \subset M, \psi)$.  

(2) Take $\phi$ as above. Since $N' \cap c(M) \subset P' \cap c(M)$, we have $(N' \cap c(M) \vee \{ \phi \}''\subset (P' \cap c(M) \vee \{ \phi \}''$. By taking the intersection with $M$, we get $\rb(N \subset M, \psi \circ S) \subset \rb(P \subset M, \psi)$.

(3) Take $\phi$ as above, with $T \in \cP(M,N)$ and $T|_{P' \cap M}$ semifinite. Let $c(T) \in \cP(c(M), c_T(N))$ be the operator valued weight induced by $T$. Observe that 
$$c(T)|_{P' \cap c(M)} \in \cP( P' \cap c(M), P' \cap c_T(N)) $$ is semifinite because $c(T)|_{P' \cap M}=T|_{P' \cap M}$ is semifinite and $P'\cap M \subset P' \cap c(M)$. Since $(P' \cap c(M)) \vee \{ \phi \}''$ is the closed linear span of $\{ x \phi^{\ri t} \mid x \in \mathbf{m}_{c(T)} \cap P' \cap c(M), \; t \in \R \}$ (see \cite[Definition 2.1]{Ha77a} for the definition of $\mathbf{m}_{c(T)}$) and $c(T)(x\phi^{\ri t})=c(T)(x)\phi^{\ri t}$ for all $x \in \mathbf{m}_{c(T)} \cap P' \cap c(M), \; t \in \R$, we see that $c(T)$ restricts to a semifinite operator valued weight from $(P' \cap c(M)) \vee \{ \phi \}''$ onto $(P' \cap c_E(N)) \vee \{ \phi \}''$. We conclude that $T=c(T)|_M$ restricts to a semifinite operator valued weight from $\rb(P \subset M, \psi)$ onto $\rb(P \subset N, \psi)$. Now, take $x \in \rb(P \subset M, \psi) \cap N$ and $a \in \rb(P \subset M,\psi)^+$ such that $T(a)=1$. Then $T(xa)=xT(a)=x$, hence $x \in \rb(P \subset N,\psi)$. We conclude that $\rb(P \subset N,\psi)=\rb(P \subset M, \psi) \cap N$.
\end{proof}

The following proposition relies on the deep Relative Commutant Theorem of Connes and Takesaki.

\begin{prop} \label{bicentralizer in center}
Let $N \subset M$ be an inclusion of von Neumann algebras with $\cP(M,N) \neq \emptyset$. Take $\psi \in \cP(N)$. Then $\rb(N, \psi) \subset \cZ(N_\psi)$ and $\rb(N, \psi) \subset \cZ(\rb(N \subset M, \psi))$.
\end{prop}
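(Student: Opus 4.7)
The plan is to reduce both inclusions to the Connes--Takesaki Relative Commutant Theorem plus a few lines of bookkeeping with results already proved in this section.

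First, I will show $\rb(N,\psi) \subset N_\psi$. Specializing the definition to the trivial inclusion $N \subset N$ (with $T=\id_N$) gives
$$\rb(N,\psi) = \bigl( (N' \cap c(N)) \vee \{\psi\}'' \bigr) \cap N.$$
By the Relative Commutant Theorem, $N' \cap c(N) = \cZ(c(N))$, so $\rb(N,\psi) \subset \cZ(c(N)) \vee \{\psi\}''$. The key point is that this ambient algebra is \emph{abelian}: $\cZ(c(N))$ is abelian, $\{\psi\}''$ is abelian, and $\cZ(c(N))$ commutes with every element of $c(N)$, in particular with $\{\psi^{\ri t}\}_{t \in \R}$. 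So any $y \in \rb(N,\psi)$ commutes with every $\psi^{\ri t}$; since $y \in N$ and $\sigma_t^\psi(y) = \psi^{\ri t} y \psi^{-\ri t}$, this forces $\sigma_t^\psi(y) = y$, i.e.\ $y \in N_\psi$. Combining with Proposition \ref{alg bic in commutant} (which already gives $\rb(N,\psi) \subset N_\psi' \cap N$), I conclude $\rb(N,\psi) \subset N_\psi \cap N_\psi' = \cZ(N_\psi)$.

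For the second inclusion, the strategy is to assemble three ingredients: (i) $\rb(N,\psi) \subset \rb(N \subset M,\psi)$, which is Proposition \ref{alg bicentralizer for intermediate subalgebra}(1) applied to $N \subset N \subset M$; (ii) $\rb(N \subset M,\psi) \subset N_\psi' \cap M$, which is Proposition \ref{alg bic in commutant} applied to the ambient inclusion $N \subset M$; and (iii) the freshly proved $\rb(N,\psi) \subset N_\psi$. Items (ii) and (iii) together force every element of $\rb(N,\psi)$ to commute with every element of $\rb(N \subset M,\psi)$, which combined with (i) gives $\rb(N,\psi) \subset \cZ(\rb(N \subset M,\psi))$.

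The only nontrivial ingredient is the use of the Relative Commutant Theorem in the first step to collapse $N' \cap c(N)$ to $\cZ(c(N))$ and thereby realize $\rb(N,\psi)$ as a subalgebra of an abelian algebra; everything else is a formal chase through inclusions proved earlier in the section. This is why the author flags the result as depending on the deep Connes--Takesaki theorem.
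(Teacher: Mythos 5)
Your proof is correct and takes essentially the same approach as the paper: invoke the Connes--Takesaki Relative Commutant Theorem to identify $N'\cap c(N)=\cZ(c(N))$, observe that $\cZ(c(N))\vee\{\psi\}''$ is abelian (so $\rb(N,\psi)$ commutes with the $\psi^{\ri t}$ and hence lies in $N_\psi$), combine with Proposition \ref{alg bic in commutant} to land in $\cZ(N_\psi)$, and deduce the second inclusion from $\rb(N\subset M,\psi)\subset N_\psi'\cap M$ together with $\rb(N,\psi)\subset N_\psi$. The paper's proof is identical in substance, merely terser in spelling out the three assembled ingredients at the end.
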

\begin{proof}
By the Relative Commutant Theorem, we have $N' \cap c(N)=\cZ(c(N))$. Therefore, $N' \cap c(N) \vee \{ \psi \}''$ is abelian. Thus $\rb(N,\psi)$ commutes with $\{ \psi \}''$ and this means that $\rb(N,\psi) \subset N_\psi$. We already saw that $\rb(N,\psi) \subset N_\psi' \cap N$. We conclude that $\rb(N,\psi) \subset \cZ(N_\psi)$. We also know that $\rb(N \subset M,\psi)$ commutes with $N_\psi$, hence also with $\rb(N,\psi)$.
\end{proof}

\begin{prop} \label{tensor product alg bic}
Let $N_i \subset M_i$ be two inclusions of von Neumann algebras with $\cP(M_i,N_i) \neq \emptyset$ for $i=1,2$. Take $\psi_i \in \cP(N_i)$. Then we have
$$\rb(N_1 \ovt N_2 \subset M_1 \ovt M_2, \psi_1 \otimes \psi_2) \subset\rb(N_1 \subset M_1, \psi_1) \ovt\rb(N_2 \subset M_2, \psi_2).$$
\end{prop}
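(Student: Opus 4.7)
The plan is to lift the question to the cores, make use of the tensor product formula
$c(M_1 \ovt M_2) \hookrightarrow c(M_1) \ovt c(M_2)$, and then descend back by taking fixed points of the scaling flow.

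Pick $T_i \in \cP(M_i, N_i)$ and set $\phi_i = \psi_i \circ T_i \in \cP(M_i)$. Let $T = T_1 \otimes T_2 \in \cP(M_1 \ovt M_2, N_1 \ovt N_2)$, so $(\psi_1 \otimes \psi_2) \circ T = \phi_1 \otimes \phi_2$. Since $\sigma^{\phi_1 \otimes \phi_2} = \sigma^{\phi_1} \otimes \sigma^{\phi_2}$, the pair $(M_1 \ovt M_2, (\phi_1^{\ri t} \otimes \phi_2^{\ri t})_{t \in \R})$ is a covariant representation inside $c(M_1) \ovt c(M_2)$. By the universal property of the crossed product by $\R$, this produces a $\theta$-equivariant embedding
\[
\iota : c(M_1 \ovt M_2) = (M_1 \ovt M_2) \rtimes_{\sigma^{\phi_1 \otimes \phi_2}} \R \hookrightarrow c(M_1) \ovt c(M_2),
\]
which is the identity on $M_1 \ovt M_2$, sends $(\phi_1 \otimes \phi_2)^{\ri t}$ to $\phi_1^{\ri t} \otimes \phi_2^{\ri t}$, and intertwines the scaling flow $\theta$ on the left with the diagonal flow $\theta_1 \otimes \theta_2$ on the right.

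With this embedding in hand, I would first observe that
\[
(N_1 \ovt N_2)' \cap c(M_1 \ovt M_2) \subset (N_1 \ovt N_2)' \cap (c(M_1) \ovt c(M_2)) = (N_1' \cap c(M_1)) \ovt (N_2' \cap c(M_2)),
\]
and that $\iota((\phi_1 \otimes \phi_2)^{\ri t}) = \phi_1^{\ri t} \otimes \phi_2^{\ri t} \in \{\phi_1\}'' \ovt \{\phi_2\}''$. Combining with Proposition \ref{alg bicentralizer of core}, which describes $\rb(N \subset c(M),\psi)$ as $(N' \cap c(M)) \vee \{\psi \circ T\}''$, this yields
\[
\rb(N_1 \ovt N_2 \subset c(M_1 \ovt M_2), \psi_1 \otimes \psi_2) \subset \rb(N_1 \subset c(M_1), \psi_1) \ovt \rb(N_2 \subset c(M_2), \psi_2).
\]

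Finally, I would descend from the cores by intersecting with $M_1 \ovt M_2$. The subalgebra $\rb(N_i \subset c(M_i), \psi_i) = (N_i' \cap c(M_i)) \vee \{\phi_i\}''$ is globally invariant under $\theta_i$, since $\theta_i$ fixes $N_i$ and satisfies $\theta_{i,\lambda}(\phi_i^{\ri t}) = \lambda^{-\ri t}\phi_i^{\ri t}$. Hence the tensor product on the right-hand side is invariant under the commuting actions $\theta_1 \otimes \id$ and $\id \otimes \theta_2$, and the standard Fubini-type identity for fixed points of commuting actions gives
\[
(\rb(N_1 \subset c(M_1), \psi_1) \ovt \rb(N_2 \subset c(M_2), \psi_2)) \cap (M_1 \ovt M_2) = \rb(N_1 \subset M_1,\psi_1) \ovt \rb(N_2 \subset M_2,\psi_2),
\]
where each factor on the right is obtained using $\rb(N_i \subset M_i, \psi_i) = \rb(N_i \subset c(M_i), \psi_i) \cap M_i$. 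Together with the inclusion $\rb(N_1 \ovt N_2 \subset M_1 \ovt M_2, \psi_1 \otimes \psi_2) = \rb(N_1 \ovt N_2 \subset c(M_1 \ovt M_2),\psi_1 \otimes \psi_2) \cap (M_1 \ovt M_2)$, this finishes the argument.

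The only nontrivial step is the construction and $\theta$-equivariance of the embedding $\iota$, since $c(M_1 \ovt M_2)$ does not a priori sit inside $c(M_1) \ovt c(M_2)$. However, viewing $c(M_1) \ovt c(M_2)$ as $(M_1 \ovt M_2) \rtimes_{\sigma^{\phi_1} \otimes \sigma^{\phi_2}} \R^2$ and restricting to the diagonal $\R \subset \R^2$ makes this transparent, so the proof is essentially bookkeeping once the embedding is identified.
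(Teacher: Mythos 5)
Your proposal is correct and takes the same approach as the paper's proof: view $c(M_1\ovt M_2)$ inside $c(M_1)\ovt c(M_2)$, observe the two containments for the relative commutant and the weight $\{\phi_1\otimes\phi_2\}''$, combine them to get the containment of cores, and intersect with $M_1\ovt M_2$. You spell out the final fixed-point Fubini step in more detail than the paper, which simply says ``we conclude by taking the intersection with $M_1\ovt M_2$,'' but the underlying argument is identical.
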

\begin{proof}
View $c(M_1 \ovt M_2)$ as a subalgebra of $c(M_1) \ovt c(M_2)$. Then 
$$(N_1 \ovt N_2)' \cap c(M_1 \ovt M_2) \subset (N_1' \cap c(M_1) ) \ovt (N_2' \cap c(M_2)).$$
Moreover, if $T_i \in \cP(M_i,N_i), \: i=1,2$, then there exists $T_1 \otimes T_2 \in \cP(M_1 \ovt M_2,N_1 \ovt N_2)$ such that $(\psi_1 \ovt \psi_2) \circ (T_1 \otimes T_2)=(\psi_1 \circ T_1) \otimes (\psi_2 \circ T_2)$, hence 
$$\{ (\psi_1 \otimes \psi_2) \circ (T_1 \otimes T_2) \}'' \subset \{\psi_1 \circ T_1 \}'' \ovt \{ \psi_2 \circ T_2 \}''.$$
Combining both inclusions, we obtain
$$\rb(N_1 \ovt N_2 \subset c(M_1 \ovt M_2), \psi_1 \otimes \psi_2) \subset\rb(N_1 \subset c(M_1), \psi_1) \ovt\rb(N_2 \subset c(M_2), \psi_2)$$
and we conclude by taking the intersection with $M_1 \ovt M_2$.
\end{proof}

\subsection{Dominant and integrable weights}
\begin{thm} \label{computation dominant}
Let $N \subset M$ be an inclusion of von Neumann algebras with $T \in \cP(M,N)$. Take $\psi \in \cP(N)$ a dominant weight. Then
$$(N_\psi' \cap c(M), N' \cap c(M), ( (\psi \circ T)^{\ri t} )_{t \in \R} )$$ is a crossed product extension and we have $\rb(N \subset c(M),\psi)=N_\psi' \cap c(M)$ and $\rb(N \subset M, \psi)=N_\psi ' \cap M$.
\end{thm}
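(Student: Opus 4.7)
The plan is to reduce everything to the first assertion---that we have a crossed product extension---from which the two equalities for the bicentralizer follow immediately via Proposition \ref{alg bicentralizer of core}. Throughout, let $u : \R^*_+ \to \cU(N)$ be a scaling group of the dominant weight $\psi$, so that by definition $(N, N_\psi, u)$ is a crossed product $\R^*_+$-extension (with dual action $\sigma^\psi$), and set $\phi := \psi \circ T \in \cP(M)$.

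First I would record two elementary facts about $\phi$. Using that $T$ is $N$-bimodular and $u_\lambda \psi u_\lambda^* = \lambda \psi$, a direct computation gives $u_\lambda \phi u_\lambda^* = \lambda \phi$ for every $\lambda \in \R^*_+$. Moreover $\sigma^\phi|_N = \sigma^\psi$, so $N_\psi \subset M_\phi$; in particular the unitaries $\phi^{\ri t}$ commute with $N_\psi$, hence $\{\phi\}'' \subset N_\psi' \cap c(M)$. Combined with the obvious inclusion $N' \cap c(M) \subset N_\psi' \cap c(M)$, this already gives the easy half
$$ (N' \cap c(M)) \vee \{\phi\}'' \subset N_\psi' \cap c(M).$$

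The crux is the reverse inclusion, which I would obtain by exhibiting $N_\psi' \cap c(M)$ as a crossed product $\R$-extension via the converse direction of Theorem \ref{dual action crossed product}. Since $u_\lambda \in N$ normalizes $N_\psi$, the formula $\beta_\lambda := \Ad(u_\lambda)$ defines a continuous action of $\R^*_+$ on $N_\psi' \cap c(M)$. This action fixes $N' \cap c(M)$ pointwise (because $u_\lambda \in N$) and satisfies
$$\beta_\lambda(\phi^{\ri t}) = (u_\lambda \phi u_\lambda^*)^{\ri t} = \lambda^{\ri t} \phi^{\ri t},$$
which is precisely the Pontryagin pairing of $\R$ with $\R^*_+$. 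Theorem \ref{dual action crossed product} then shows that $(N_\psi' \cap c(M), (N_\psi' \cap c(M))^\beta, (\phi^{\ri t})_t)$ is a crossed product $\R$-extension; in particular, its base and its unitary generators together generate the whole algebra. It remains to identify the fixed point algebra $(N_\psi' \cap c(M))^\beta$ with $N' \cap c(M)$: an element of $N_\psi' \cap c(M)$ fixed by every $\beta_\lambda$ commutes with each $u_\lambda$, and combined with the commutation with $N_\psi$ and the fact that $N_\psi \cup \{u_\lambda\}$ generates $N$ (split extension property of $(N, N_\psi, u)$), such an element lies in $N' \cap c(M)$; the opposite inclusion is trivial.

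This establishes the crossed product assertion and supplies the desired reverse inclusion above, so by Proposition \ref{alg bicentralizer of core} we obtain $\rb(N \subset c(M), \psi) = (N' \cap c(M)) \vee \{\phi\}'' = N_\psi' \cap c(M)$. Finally $\rb(N \subset M, \psi) = \rb(N \subset c(M), \psi) \cap M = N_\psi' \cap M$ straight from the definition. I do not anticipate any real obstacle: all the analytic substance has been packaged into Theorem \ref{dual action crossed product} (Takesaki's characterization of crossed product extensions) and Proposition \ref{alg bicentralizer of core}; the only care needed is in matching the pairing conventions and in exploiting the split extension property of $(N, N_\psi, u)$ to compute the fixed-point algebra.
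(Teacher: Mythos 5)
Your proposal is correct and is essentially identical to the paper's proof: you use a $\psi$-scaling group $u$, observe $\Ad(u_\lambda)$ defines an $\R^*_+$-action on $N_\psi'\cap c(M)$ that fixes $N'\cap c(M)$ and satisfies $\Ad(u_\lambda)(\phi^{\ri t})=\lambda^{\ri t}\phi^{\ri t}$, and invoke Theorem~\ref{dual action crossed product} to get the crossed product extension before finishing with Proposition~\ref{alg bicentralizer of core}. The only cosmetic difference is that you separately record the easy inclusion $(N'\cap c(M))\vee\{\phi\}''\subset N_\psi'\cap c(M)$, which the paper simply absorbs into the crossed product identification.
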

\begin{proof}
Let $u=(u_\lambda)_{\lambda \in \R^*_+}$ be a $\psi$-scaling group, i.e.\ a one-parameter group of unitaries in $N$ such that $u_\lambda \psi u_\lambda^*=\lambda \psi$ for all $\lambda \in \R^*_+$. Let $\rho : \R^*_+ \curvearrowright N_\psi' \cap c(M)$ be the action given by $\rho_\lambda=\Ad(u_\lambda)|_{N_\psi' \cap c(M)}$. Since $N$ is generated by $N_\psi$ and $(u_\lambda)_{\lambda \in \R^*_+}$, we have $(N_\psi' \cap c(M))^\rho=N' \cap c(M)$. Let $\phi=\psi \circ T$. Since $\rho_\lambda(\phi^{\ri t})=\lambda^{\ri t} \phi^{\ri t}$ for all $t \in \R$ and $\lambda \in \R^*_+$, we know by Theorem \ref{dual action crossed product} that $N_\psi' \cap c(M)$ is a crossed product extension of the fixed point algebra $N' \cap c(M)=(N_\psi' \cap c(M))^\rho$ by the unitaries $(\phi^{\ri t})_{t \in \R}$. In particular, $N_\psi ' \cap c(M)$ is generated by $N' \cap c(M)$ and $\{ \phi \}''$, i.e.\ $N_\psi' \cap c(M)=\rb(N \subset c(M), \psi)$, hence $\rb(N \subset M,\psi)=(N_\psi' \cap c(M)) \cap M=N_\psi' \cap M$ as we wanted.
\end{proof}

Following \cite{CT76}, we say that a weight $\varphi \in \cP(M)$ is \emph{integrable} if the action $\sigma^\psi : \R \curvearrowright M$ is integrable, i.e.\ if the Haar operator valued weight $\int_{\R} \sigma_t^\psi \: \rd t$ is semifinite.

\begin{thm}[\cite{CT76}] \label{connes takesaki integrable}
Let $M$ be a properly infinite von Neumann algebra and $\psi \in \cP(M)$ a dominant weight. Then $\varphi \in \cP(M)$ is integrable if and only if there exists an isometry $v \in M$ such that $v^*v=1$, $vv^* \in M_\psi$ and $\varphi=v^*\psi v$. If $\varphi$ has infinite multiplicity, then we can take $vv^* \in \cZ(M_\psi)$.
\end{thm}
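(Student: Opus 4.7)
The plan is to handle the two implications separately.

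For the easy $(\Leftarrow)$ direction, I would start by noting that $vv^*\in M_\psi$ is $\sigma^\psi$-invariant, so $\psi$ restricts to a faithful normal semifinite weight $\psi_0$ on the corner $vv^*Mvv^*$ with $\sigma^{\psi_0}=\sigma^\psi|_{vv^*Mvv^*}$. Transporting $\psi_0$ by the isomorphism $\Ad(v^*):vv^*Mvv^*\to M$ gives $\varphi$, and yields the formula $\sigma^\varphi_t(x)=v^*\sigma^\psi_t(vxv^*)v$ for all $x\in M$ and $t\in\R$. Integrating in $t$ gives $I^{\sigma^\varphi}(x)=v^*I^{\sigma^\psi}(vxv^*)v$, so it suffices to prove that $\psi$ itself is integrable. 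But this follows from dominance: the scaling group realizes $(M,M_\psi,u)$ as a crossed product extension of $M_\psi$ by $\R^*_+$, with $\sigma^\psi$ being the dual action, and dual actions of abelian groups on their crossed products are always integrable by the Digernes--Takesaki theorem.

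For the harder $(\Rightarrow)$ direction, the candidate for $v$ is the ``analytic continuation'' of the Connes Radon--Nikodym cocycle $u_t=\varphi^{\ri t}\psi^{-\ri t}$ to $t=-\ri/2$, suggesting the formal expression $v=\varphi^{1/2}\psi^{-1/2}$. First note $u_t\in M$: both $\varphi$ and $\psi$, viewed as positive operators affiliated with $c(M)$, satisfy $\theta_\lambda(\xi^{\ri t})=\lambda^{-\ri t}\xi^{\ri t}$, so $u_t$ is $\theta$-invariant, placing it in $c(M)^\theta=M$. To realize $v$ concretely, I would construct a partial isometry $v\in c(M)$ with $v^*v=1$, $vv^*$ commuting with $\psi$, and $v^*\psi v=\varphi$ as affiliated operators. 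Integrability of $\varphi$ ensures its spectral projections $1_{(a,b)}(\varphi)$ are $\tau$-finite for $0<a<b<\infty$, while dominance of $\psi$ provides properly infinite spectral projections $1_{(a,b)}(\psi)$ that sit inside $M_\psi$ and are transitively related by conjugation with the scaling group $(u_\lambda)$. Matching the spectral decomposition of $\varphi$ with pieces of $\psi$'s spectral decomposition produces the desired $v$; the $\theta$-invariance of the ratio $\varphi^{\ri t}\psi^{-\ri t}$ then places $v$ in $c(M)^\theta=M$, and by construction $vv^*\in M\cap\{\psi\}'=M_\psi$.

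For the refinement when $\varphi$ has infinite multiplicity, the proper infiniteness of $M_\varphi$ gives enough room to spread $\varphi$'s spectral decomposition across a $\theta$-invariant partition of unity in $\cZ(M_\psi)$, ensuring $vv^*\in\cZ(M_\psi)$. The main obstacle is making the spectral matching step precise: one must carefully combine the $\tau$-finiteness of $\varphi$'s spectral projections (from integrability) with the homogeneity of $\psi$'s spectral decomposition (from dominance and infinite multiplicity) to produce an intertwiner $v$ that actually lives in $M$ rather than merely in $c(M)$. This is the technical heart of the Connes--Takesaki machinery from \cite{CT76}, and in practice one would likely invoke their results directly rather than redo the construction.
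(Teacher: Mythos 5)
The paper does not prove this statement; it is a theorem of Connes--Takesaki quoted from \cite{CT76} and invoked as a black box, so there is no ``paper proof'' to compare your sketch against. The correct course is simply to cite \cite{CT76}, which is what the paper does. Your $(\Leftarrow)$ direction is, however, essentially complete: since $vv^*\in M_\psi$, the corner $vv^*Mvv^*$ is $\sigma^\psi$-invariant, so $\sigma^\varphi$ is the pullback of $\sigma^\psi|_{vv^*Mvv^*}$ through $\Ad(v^*)$, and $\psi$ itself is integrable because $\sigma^\psi$ is the dual action of the crossed-product extension $(M,M_\psi,u)$ (the paper records this package in Section~\ref{prelim modular theory}), and dual actions are integrable.

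For $(\Rightarrow)$ the sketch has a genuine gap. The formal candidate $v=\varphi^{1/2}\psi^{-1/2}$ does not produce the required relation: with this $v$ one computes $v^*\psi v=\psi^{-1/2}\varphi^{1/2}\,\psi\,\varphi^{1/2}\psi^{-1/2}$, which equals $\varphi$ only when $\varphi$ and $\psi$ commute (the operator giving $\varphi=(\cdot)\psi(\cdot)^*$ is really $h=\varphi^{1/2}\psi^{-1/2}$ with $\varphi=h\psi h^*$, i.e.\ $v=h^*=\psi^{-1/2}\varphi^{1/2}$, and in either ordering $v^*v=\psi^{-1/2}\varphi\psi^{-1/2}\neq 1$ in general). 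That is, even when the cocycle $(D\varphi:D\psi)_t$ continues analytically to $-\ri/2$ this yields a bounded ``square root of the Radon--Nikodym derivative,'' not an isometry; turning it into an isometry $v$ with $v^*v=1$ and $vv^*\in M_\psi$ (resp.\ $vv^*\in\cZ(M_\psi)$) is precisely the non-trivial content of the Connes--Takesaki theorem. The polar decomposition of $\varphi^{1/2}\psi^{-1/2}$ does not rescue this: the polar part $w$ has $w^*w=\supp(\psi^{-1/2}\varphi\psi^{-1/2})$ with no reason to equal $1$, and no reason to satisfy the intertwining relation $w\varphi^{\ri t}=\psi^{\ri t}w$ that $\varphi=v^*\psi v$ actually encodes. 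The construction in \cite{CT76} proceeds differently, through the identification $M\cong M_\psi\rtimes_\theta\R^*_+$ with $M_\psi$ of type $\II_\infty$, the trace-theoretic classification of integrable weights, and the comparison theory of projections in $M_\psi$. You acknowledge that ``in practice one would likely invoke their results directly'' --- that is the right call and matches what the paper does, but then the heuristic preceding it should not be presented as containing the proof.
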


\begin{cor}
Let $N \subset M$ be an inclusion of von Neumann algebras with $\cP(M,N) \neq \emptyset$. Take $\varphi \in \cP(N)$ an integrable weight. Then $\rb(N \subset c(M),\varphi)=N_\varphi' \cap c(M)$ and $\rb(N \subset M,\varphi)=N_\varphi' \cap M$.
\end{cor}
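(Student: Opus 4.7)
The strategy is to reduce the integrable case to the dominant case (Theorem~\ref{computation dominant}) via the Connes--Takesaki structural description of integrable weights in Theorem~\ref{connes takesaki integrable}: the latter writes $\varphi = v^*\psi v$ for a dominant $\psi \in \cP(N)$ and an isometry $v \in N$ with $vv^* \in N_\psi$, yielding an isomorphism between $(N \subset M, \varphi)$ and the corner $(eNe \subset eMe, \psi|_{eNe})$ cut by $e := vv^*$. The bicentralizer of this corner is then controlled by the corner-reduction Proposition~\ref{alg bic corner reduction} together with the dominant case.

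In more detail, I would first amplify by $\B(\ell^2)$. Using $c(M \ovt \B(\ell^2)) = c(M) \ovt \B(\ell^2)$, Proposition~\ref{alg bicentralizer of core}, and the identity $(\varphi \otimes \Tr)^{\ri t} = \varphi^{\ri t} \otimes 1$ in the core, a direct computation gives
\[
\rb(N \ovt \B(\ell^2) \subset M \ovt \B(\ell^2), \varphi \otimes \Tr) = \rb(N \subset M, \varphi) \otimes 1,
\]
and similarly $(N \ovt \B(\ell^2))_{\varphi \otimes \Tr}' \cap (M \ovt \B(\ell^2)) = (N_\varphi' \cap M) \otimes 1$. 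Replacing $(N,M,\varphi)$ by this amplification, I may assume that $N$ is properly infinite and that $\varphi$ has infinite multiplicity. I then pick a dominant weight $\psi \in \cP(N)$ and apply Theorem~\ref{connes takesaki integrable} to obtain an isometry $v \in N$ with $v^*v = 1$, $e := vv^* \in \cZ(N_\psi)$, and $\varphi = v^*\psi v$. The map $\theta(x) := vxv^*$ is a $*$-isomorphism of pairs $(N \subset M) \to (eNe \subset eMe)$ sending $\varphi$ to $\psi_e := \psi|_{eNe}$, so by functoriality of the algebraic bicentralizer $\theta(\rb(N \subset M, \varphi)) = \rb(eNe \subset eMe, \psi_e)$. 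Proposition~\ref{alg bic corner reduction} (applied with $e \in N_\psi$) gives $\rb(eNe \subset eMe, \psi_e) = e \cdot \rb(N \subset M, \psi)$, and Theorem~\ref{computation dominant} for the dominant $\psi$ gives $\rb(N \subset M, \psi) = N_\psi' \cap M$. Combining yields $\theta(\rb(N \subset M, \varphi)) = e(N_\psi' \cap M)$, and since $\theta(N_\varphi) = eN_\psi e$, also $\theta(N_\varphi' \cap M) = (eN_\psi e)' \cap eMe$.

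The remaining step, and the main obstacle, is the commutant identity $e(N_\psi' \cap M) = (eN_\psi e)' \cap eMe$. The inclusion $\subset$ is immediate from $e \in N_\psi$; for the reverse, one would a priori need the central support of $e$ in $N_\psi$ to equal $1$, which can fail in general. This is where the preliminary amplification pays off: since $\varphi$ now has infinite multiplicity, the refined form of Theorem~\ref{connes takesaki integrable} lets us take $e \in \cZ(N_\psi)$, so that $ea = ae$ for all $a \in N_\psi$; then for any $y \in (eN_\psi e)' \cap eMe$ the lift $x := y + (1-e) \in M$ satisfies $ex = y$ and a short verification (using centrality of $e$ in $N_\psi$ together with $ey = ye = y$) shows $x \in N_\psi' \cap M$. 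Applying $\theta^{-1}$ to the resulting equality $\theta(\rb(N \subset M, \varphi)) = \theta(N_\varphi' \cap M)$ yields $\rb(N \subset M, \varphi) = N_\varphi' \cap M$. The $c(M)$ identity is obtained by running the same argument with $M$ replaced by $c(M)$ throughout, invoking the $c(M)$-assertion of Theorem~\ref{computation dominant} for the dominant weight.
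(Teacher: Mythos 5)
Your proof is correct, but the route through amplification and the refined (central $e$) form of Theorem~\ref{connes takesaki integrable} is a detour to avoid a perceived obstacle that actually does not exist. The identity $e(N_\psi' \cap M)e = (eN_\psi e)' \cap eMe$ holds for \emph{every} projection $e \in N_\psi$, with no assumption on the central support of $e$: given $y \in (eN_\psi e)' \cap eMe$, choose a maximal family of partial isometries $(v_i)_i$ in $N_\psi$ with $v_i^*v_i \leq e$ and $(v_iv_i^*)_i$ mutually orthogonal, so that $\sum_i v_iv_i^* = z$ is the central support of $e$ in $N_\psi$; then $\tilde y := \sum_i v_i y v_i^*$ lies in $N_\psi' \cap M$ and satisfies $e\tilde y e = y$, proving the reverse inclusion. (The point is that the lift lands in $N_\psi' \cap M$, not in $(N_\psi z)' \cap zMz$ where you feared it would be stranded.) The paper's proof is thus a one-liner: apply Proposition~\ref{alg bic corner reduction} for $e \in N_\psi$, then Theorem~\ref{computation dominant}, then the general corner commutant identity — no amplification, no centrality of $e$ needed. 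Your version is internally consistent (the amplification identities you compute are right, as is the ad hoc lift $x = y + (1-e)$ once $e$ is central), so nothing is wrong; you just did extra work to dodge a phantom. One additional remark: even the auxiliary benefit of amplification that you might invoke — guaranteeing $N$ is properly infinite so Theorem~\ref{connes takesaki integrable} applies — is already automatic here, since an integrable weight cannot exist on a von Neumann algebra with a finite direct summand.
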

\begin{proof}
By Theorem \ref{connes takesaki integrable}, every integrable weight on $N$ is equivalent to $\psi_e$ for some projection $e \in N_\psi$ and we have $\rb(eNe \subset eMe,\psi_e)=e\rb(N \subset M,\psi)e=e(N_\psi' \cap M)e=N_{\psi_e}' \cap eMe$.
\end{proof}

\subsection{Lacunary weights}
Let $M$ be a von Neumann algebra. Following \cite{CT76}, we say that weight $\varphi \in \cP(M)$ is \emph{lacunary} if $1$ is isolated in the spectrum of $\Delta_\varphi$. If $M$ is of type $\III_0$ then it admits a lacunary weight. If $M$ is of type $\III_{\Lambda}$ for some $\Lambda \in \cZ(M)$ with $0 < \Lambda < 1$ then $M$ admits a lacunary weight if and only if $\| \Lambda \| < 1$. If $M$ is of type $\III_1$ then it admits no lacunary weight.

The following theorem was proved by Connes \cite{Co72} and Connes-Takesaki \cite{CT76} when $N$ is a factor. The proofs can be adapted easily to the non-factorial case.
\begin{thm}[Connes-Takesaki \cite{CT76}] \label{connes takesaki lacunary}
Let $N$ be a type $\III$ von Neumann algebra. Let $\varphi \in \cP(N)$ be a lacunary weight with infinite multiplicity.
\begin{enumerate}
\item There exists $\lambda \in ]0,1[$ and $U \in \cU(N)$ such that $UN_\varphi U^*=N_\varphi$ and $\varphi \circ \Ad(U) \leq \lambda \varphi$
\item $(N, N_\varphi, (U^n)_{n \in \Z})$ is a crossed product extension.
\item Write $\varphi \circ \Ad(U) =\varphi_\rho$ with $\rho \in \cZ(N_\varphi)$. For every $h \in N_\varphi$ with $\rho \leq h < 1$, we have $N_{\varphi_h} \subset N_\varphi$.
\item For every $\psi \in \cP(N)$, there exists $h \in N_\varphi$ with $\rho \leq h < 1$ and an isometry $v \in N$ such that $e=vv^* \in N_{\varphi_h}$ and $\psi=v^*\varphi_h v$.
\end{enumerate}
\end{thm}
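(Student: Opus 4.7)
The plan is to adapt the classical factorial arguments of Connes and Connes--Takesaki, carrying out all spectral-theoretic steps along the center of $N_\varphi$ since $N$ is no longer assumed to be a factor. Set $\alpha = \sigma^\varphi$. Lacunarity means there exists $\mu \in (0,1)$ with $\spec(\Delta_\varphi) \cap (\mu, \mu^{-1}) = \{1\}$, so the Arveson spectrum of $\alpha$ avoids a punctured neighborhood of $0$. For (1), I would seek a unitary $U \in N$ in the spectral subspace $N(\alpha, [\log\mu, +\infty))$: since $N_\varphi$ is properly infinite by the infinite multiplicity assumption, a maximality/exhaustion argument assembles partial isometries in this spectral subspace, with orthogonal sources and ranges in $N_\varphi$, into a global unitary of $N$. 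Such a $U$ automatically normalizes $N_\varphi$, and the KMS condition gives $\varphi \circ \Ad(U) = \varphi_\rho$ for some $\rho \in \cZ(N_\varphi)$ with $\rho \leq \lambda < 1$ (the output is central rather than scalar precisely because, without factoriality, the ``eigenvalue'' of $U$ under $\alpha$ may vary across central projections of $N_\varphi$). For (2), I would apply the dual-action criterion of Theorem \ref{dual action crossed product}: lacunarity forces the whole spectrum of $\alpha$ to lie in the discrete group generated by the central scaling $\rho$, so $\alpha$ factors through an action of $\T = \widehat{\Z}$ fixing $N_\varphi$ pointwise and acting on $U$ by a character, which is exactly the dual-action data.

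For (3), I would expand any $x \in N_{\varphi_h}$ as a Fourier series $x = \sum_n x_n U^n$ with $x_n \in N_\varphi$, using the crossed-product structure from (2). The Connes cocycle $[D\varphi_h : D\varphi]_t = h^{it}$ together with the spectral identity $\alpha_t(U^n) = \rho^{-int} U^n$ translates the invariance $\sigma_t^{\varphi_h}(x) = x$ into a component-by-component identity of the form $h^{it} x_n (U^{-n} h^{-it} U^n) = \rho^{-int} x_n$ for each $n \in \Z$. Iterating $\varphi \circ \Ad(U) = \varphi_\rho$ computes $U^{-n} h U^n$ explicitly in terms of $h$ and $\rho^n$, and, combined with the hypotheses $\rho \leq h < 1$, makes the resulting spectral identity unsolvable for $n \neq 0$; hence $x = x_0 \in N_\varphi$.

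The main obstacle is (4), which is the universal property of lacunary weights analogous to the one for dominant weights in Theorem \ref{connes takesaki integrable}. Given $\psi \in \cP(N)$, I would consider the Connes cocycle $u_t = [D\psi : D\varphi]_t$ as a continuous $\alpha$-$1$-cocycle into $\cU(N)$. Because $\spec(\alpha)$ generates a discrete subgroup of $\R^*_+$ by lacunarity, a classical cohomological trivialization argument rewrites $u_t$ as a coboundary modulo the unitary group generated by $U$; this produces an isometry $v \in N$ and a positive element $h$ affiliated with $N_\varphi$ with $\psi = v^*\varphi_h v$ and $vv^* \in N_{\varphi_h}$. To enforce $\rho \leq h < 1$, I would use that $\Ad(U)$ multiplies densities by $\rho$ (by (1)), so composing $v$ with a suitable integer power of $U$ shifts $h$ by a power of $\rho$; the lacunarity gap guarantees that exactly one such shift lands in the half-open strip $[\rho, 1)$. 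The delicate point, which I expect to be the principal technical difficulty, is controlling the range projection $vv^*$ so that it lands in $N_{\varphi_h}$ and not only in $N$: this is where the infinite multiplicity hypothesis must be reused, to absorb the remaining unitary correction into a partial isometry between suitable central projections of $N_{\varphi_h}$.
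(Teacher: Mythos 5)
Note first that the paper does not actually prove this theorem: it cites \cite{Co72} and \cite{CT76} and remarks that the factorial proofs ``adapt easily'' to the non-factorial setting. So your proposal should be judged as a reconstruction of the classical argument. The overall shape (build $U$ from a spectral subspace of $\sigma^\varphi$ using the infinite multiplicity of $N_\varphi$, deduce the crossed-product decomposition, then use the Fourier decomposition for (3) and a cocycle-trivialization for (4)) is indeed the right one. However, your argument for item (2) contains a genuine gap. You assert that ``lacunarity forces the whole spectrum of $\alpha$ to lie in the discrete group generated by the central scaling $\rho$, so $\alpha$ factors through an action of $\T$''. This is false. Lacunarity only supplies a gap of $\Sp(\sigma^\varphi)$ around $0$, not discreteness. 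For a lacunary weight $\varphi$ on a type $\III_0$ factor, $\rho$ is a genuinely non-scalar element of $\cZ(N_\varphi)$ (that is exactly the $\III_0$ phenomenon: $N$ a factor but $N_\varphi$ non-factorial), and $\sigma_t^\varphi(U) = U\rho^{\ri t}$ with $\rho^{\ri t}$ a non-scalar central unitary. Hence $\sigma^\varphi$ is \emph{not} periodic and does not factor through $\T$; it multiplies $U$ by a central element, not by a character, so it cannot serve as the dual action in Theorem~\ref{dual action crossed product}. Without an independent construction of a dual $\T$-action $\beta$, invoking the Takesaki criterion is circular, since the existence of $\beta$ is equivalent to exactly the crossed-product structure you are trying to establish.

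The standard proof of (2) is different: one uses the maximality built into the exhaustion construction of $U$ to show (i) that $\theta=\Ad(U)|_{N_\varphi}$ is properly outer, giving injectivity of the natural $*$-morphism $N_\varphi\rtimes_\theta\Z\to N$, and (ii) that $\{N_\varphi,U\}''=N$, via a $\sigma^\varphi$-preserving conditional expectation together with a spectral exhaustion argument that relies on the lacunary gap and on infinite multiplicity, but not on any discreteness of $\Sp(\sigma^\varphi)$. You should also adjust two computational points. First, in (3) the twist entering the component-by-component identity is not $\rho^{-\ri nt}$ but $\rho_n^{\ri t}$ with $\rho_n=\rho\cdot\Ad(U^{-1})(\rho)\cdots\Ad(U^{-n+1})(\rho)$ (from iterating $\sigma_t^\varphi(U)=U\rho^{\ri t}$), because $\rho$, though central in $N_\varphi$, is \emph{not} fixed by $\Ad(U)$; writing $\rho^{-\ri n t}$ tacitly assumes $\Ad(U)(\rho)=\rho$, which fails precisely in the $\III_0$ case. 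Second, the spectral subspace in (1) should be at the side of the gap consistent with $\varphi\circ\Ad(U)\le\lambda\varphi$ and $\rho<1$; as written your interval $N(\alpha,[\log\mu,+\infty))$ contains the Arveson spectrum point $0$ and is not on the correct side. Your high-level plan for (4), including the $\Ad(U)$-shift bringing $h$ into $[\rho,1)$ and the acknowledgment that controlling $vv^*$ is the principal difficulty, is a reasonable sketch of the Connes--Takesaki cocycle argument.
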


\begin{thm} \label{computation lacunary}
Let $N \subset M$ be an inclusion of von Neumann algebras with $\cP(M,N) \neq \emptyset$. Suppose that $N$ is a type $\III$ von Neumann algebra with a lacunary weight $\varphi \in \cP(N)$. Take $\lambda$, $U$ and $\rho$ as above.
\begin{enumerate}
\item $\rb(N \subset M, \varphi)=N_\varphi' \cap M$
\item Take $h \in N_\varphi$ with $\rho \leq h < 1$. Then 
$$\rb(N \subset M, \varphi_h)=\rb(N_\varphi \subset M, \varphi_h|_{N_\varphi})=(N_\varphi' \cap M) \vee \{h\}''.$$
\end{enumerate}
\end{thm}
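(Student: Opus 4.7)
The strategy for item (1) adapts the proof of Theorem \ref{computation dominant}, using the discrete scaling group $(U^n)_{n\in\Z}$ from Theorem \ref{connes takesaki lacunary}(1,2) in place of the continuous $\R^*_+$-scaling group. By Proposition \ref{alg bic in commutant}, $\rb(N \subset M, \varphi) \subset N_\varphi'\cap M$. Via Proposition \ref{alg bicentralizer of core}, the reverse inclusion reduces to the identity $(N' \cap c(M)) \vee \{\varphi\}'' = N_\varphi' \cap c(M)$, writing $\phi=\varphi\circ T$ and identifying $\varphi^{\ri t}$ with $\phi^{\ri t}$ in $c(M)$; intersecting with $M$ then gives $\rb(N \subset M, \varphi) = N_\varphi'\cap M$. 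The inclusion $\subset$ is immediate, as $\sigma^\phi|_{N_\varphi}=\id$ forces $\{\phi\}''$ to commute with $N_\varphi$.

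For the reverse, I first observe that the scaling flow $\theta:\R^*_+\curvearrowright c(M)$ restricts to $N_\varphi'\cap c(M)$ with fixed points $N_\varphi'\cap M$ and satisfies $\theta_\lambda(\phi^{\ri t})=\lambda^{-\ri t}\phi^{\ri t}$. Theorem \ref{dual action crossed product} then yields $N_\varphi'\cap c(M) = (N_\varphi' \cap M) \vee \{\phi\}''$, so it remains to show $N_\varphi'\cap M \subset (N' \cap c(M)) \vee \{\phi\}''$. The key ingredient is the identity $\sigma_t^\phi(U) = U\rho^{\ri t}$, obtained from $\varphi\circ\Ad(U) = \varphi_\rho$ by composing with $T$ and identifying the Connes cocycle. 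Combined with the discrete crossed-product structure $(N, N_\varphi, (U^n))$, this identity lets one perform a Fourier decomposition of any $y \in N_\varphi'\cap M$ along the $\Z$-action $\Ad(U)$ (whose fixed algebra is $N'\cap M \subset N'\cap c(M)$): the spectral components of $y$ are recovered as averages of $\sigma_t^\phi$-translates weighted by spectral projections of $\rho\in\cZ(N_\varphi)$, and each component belongs to $(N' \cap c(M)) \vee \{\phi\}''$.

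For item (2), the second equality $\rb(N_\varphi\subset M,\varphi_h|_{N_\varphi}) = (N_\varphi'\cap M)\vee\{h\}''$ follows directly from Proposition \ref{algebraic bicentralizer semifinite}(1): since $\sigma^\varphi|_{N_\varphi}=\id$, the restriction $\varphi|_{N_\varphi}$ is a semifinite trace on the semifinite algebra $N_\varphi$, and $h=d(\varphi_h|_{N_\varphi})/d(\varphi|_{N_\varphi})$. For the first equality, Proposition \ref{alg bicentralizer of core} gives $\rb(N \subset c(M),\varphi_h) = (N'\cap c(M)) \vee \{\phi_h\}''$, where $\phi_h=h\phi$ in $c(M)$. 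Since $h \in N_\varphi$ commutes with $\phi$ (as $\sigma^\phi|_{N_\varphi}=\id$) and $h$ is invertible (because $h\geq\rho$ and lacunarity ensures $\rho$ has a positive lower bound), one has $\{\phi_h\}''=\{h\}''\vee\{\phi\}''$. Combining with item (1) applied to $\varphi$,
\[ (N'\cap c(M))\vee\{\phi_h\}'' \;=\; \bigl(N_\varphi'\cap c(M)\bigr)\vee\{h\}'' \;=\; \bigl((N_\varphi'\cap M)\vee\{h\}''\bigr)\vee\{\phi\}''. \]
The subalgebra $(N_\varphi'\cap M)\vee\{h\}'' \subset M$ is globally $\sigma^\phi$-invariant (since $\sigma^\phi$ preserves $N_\varphi'\cap M$ and fixes $h$), so the right-hand side is the $\sigma^\phi$-crossed product of $(N_\varphi'\cap M)\vee\{h\}''$ inside $c(M)=M\rtimes_{\sigma^\phi}\R$. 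Intersecting with $M=c(M)^\theta$ recovers the base algebra, yielding $\rb(N \subset M, \varphi_h) = (N_\varphi'\cap M)\vee\{h\}''$.

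The main technical obstacle is the Fourier-decomposition step in item (1): in contrast to the dominant weight case, the scaling factor $\rho$ is not a scalar but an element of $\cZ(N_\varphi)$, and the construction of spectral components of elements of $N_\varphi'\cap M$ under $\Ad(U)$ requires a careful spectral fibration over the spectrum of $\rho$ in order to convert the operator-valued eigenvalue relation $\sigma^\phi_t(U)=U\rho^{\ri t}$ into scalar-eigenvalue Arveson-type relations suitable for extracting Fourier coefficients.
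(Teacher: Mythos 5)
Your argument takes a genuinely different route from the paper's, but both parts contain real gaps. The paper proves both items by an amplification trick rather than a Fourier decomposition: set $\tilde{N}=N\ovt\B(H)$, $\tilde{M}=M\ovt\B(H)$, $\tilde{\varphi}=\varphi\otimes\Tr$, and choose $k\in\B(H)_+$ with absolutely continuous spectrum and $\lambda\leq k<1$, so that $\tilde{\varphi}_{1\otimes k}=\varphi\otimes\Tr(k\,\cdot)$ is \emph{integrable}; the already-established formula for integrable weights gives $\tilde{N}_{\tilde{\varphi}_{1\otimes k}}'\cap\tilde{M}=\rb(\tilde{N}\subset\tilde{M},\tilde{\varphi}_{1\otimes k})\subset\rb(N\subset M,\varphi)\ovt\{k\}''$, and lacunarity ($\rho\otimes 1\leq\lambda\leq 1\otimes k<1$) is used to identify $\tilde{N}_{\tilde{\varphi}_{1\otimes k}}=\{1\otimes k\}'\cap\tilde{N}_{\tilde{\varphi}}$, whence slicing yields $N_\varphi'\cap M\subset\rb(N\subset M,\varphi)$. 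Item (2) is handled the same way, first for $\|h\|<1$ with $1\leq k<\|h\|^{-1}$, then in general by a corner reduction.

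The gap in your item (1): after the correct reduction to showing $N_\varphi'\cap M\subset(N'\cap c(M))\vee\{\phi\}''$, the proposed ``Fourier decomposition of $y\in N_\varphi'\cap M$ along the $\Z$-action $\Ad(U)$'' is not well-founded. $\Ad(U)$ is only a $\Z$-action on $N_\varphi'\cap M$, not a $\T$-action, so there are no spectral subspaces to decompose into; moreover $U\notin N_\varphi'\cap M$, so $(N_\varphi'\cap M,N'\cap M,(U^n))$ is not even a split extension. The relation $\sigma_t^\phi(U)=U\rho^{\ri t}$ lives in $N$ and does not transfer into a usable structure on $N_\varphi'\cap M$, where $\sigma^\phi$ restricts to $\sigma^T$, which is unrelated to $\Ad(U)$. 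You flag the non-scalarity of $\rho$ as the main obstacle, but the problem is more structural: even for $N$ of type $\III_\mu$ with scalar $\rho=\mu$, the relation $\Ad(U)(\phi^{\ri t})=\mu^{-\ri t}\phi^{\ri t}$ defines a $\Z$-action whose image lies in a dense subgroup of $\T$, not a dual $\widehat{\R}$-action, so Theorem \ref{dual action crossed product} does not apply to produce the desired crossed-product picture. This is exactly why the paper routes through integrable weights instead of working directly in the discrete decomposition. (As a side remark, ``lacunarity ensures $\rho$ has a positive lower bound'' is also false in general: in the type $\III_0$ case $\rho\in\cZ(N_\varphi)$ is merely non-singular, and its spectrum may accumulate at $0$.)

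The gap in your item (2): the claim $\{\phi_h\}''=\{h\}''\vee\{\phi\}''$ is false in general. From $\phi_h^{\ri t}=h^{\ri t}\phi^{\ri t}$ one gets $\{\phi_h\}''\vee\{\phi\}''=\{h\}''\vee\{\phi\}''$, but $\{\phi_h\}''$ by itself need not contain $\{h\}''$ or $\{\phi\}''$ (for commuting positive operators $a,b$, the algebra $\{ab\}''$ is typically strictly smaller than $\{a\}''\vee\{b\}''$). Your chain of equalities would require $\{\phi\}''\subset(N'\cap c(M))\vee\{\phi_h\}''$, equivalently $h^{\ri t}\in(N'\cap c(M))\vee\{\phi_h\}''$, which is not established: $h$ belongs to $N$, not to $N'\cap c(M)$. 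Proposition \ref{algebraic bicentralizer semifinite} gets away with a similar-looking manipulation only because there both sides are already joined with the \emph{same} algebra $\{\varphi\circ T\}''$; that safety net is absent here, and the subsequent ``intersect with $M$'' step does not repair it.
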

\begin{proof}
(1) We already know that $\rb(N \subset M,\varphi) \subset N_\varphi ' \cap M$. Let $H$ be a Hilbert space and let $\tilde{N}=N \ovt \B(H)$, $\tilde{M}=M \ovt \B(H)$ and $\tilde{\varphi}=\varphi \otimes \Tr$.

Take $k \in \B(H)_+$ such that $\lambda \leq k < 1$ and $k$ has absolutely continuous spectrum. Then $\Tr(k \cdot)$ is integrable. Therefore $\tilde{\varphi}_{1 \otimes k}=\varphi \otimes \Tr(k \cdot) \in \cP(N \ovt \B(H))$ is integrable, hence 
$$\tilde{N}_{\tilde{\varphi}_{1 \otimes k}}' \cap \tilde{M}= \rb(\tilde{N} \subset \tilde{M}, \tilde{\varphi}_{1 \otimes k}) \subset\rb(N \subset M, \varphi) \ovt \{k\}''.$$
But since $\rho \otimes 1 \leq \lambda \leq 1 \otimes k < 1$, we have $\tilde{N}_{\tilde{\varphi}_{1 \otimes k}} =\{ 1 \otimes k\}' \cap  \tilde{N}_{\tilde{\varphi}}$. Therefore, 
$$(N_\varphi' \cap M) \otimes 1 \subset \tilde{N}_{\tilde{\varphi}_{1 \otimes k}}' \cap \tilde{M} \subset\rb(N \subset M, \varphi) \ovt \{k\}''.$$
hence $N_\varphi' \cap M \subset\rb(N \subset M,\varphi)$ as we wanted.

(2) By \ref{alg bicentralizer for intermediate subalgebra}, we already know that $\rb(N \subset M,\varphi_h) \subset\rb(N_\varphi \subset M, \varphi_h)= \{h\}'' \vee (N_\varphi ' \cap M)$. We want to prove the reverse inclusion. Suppose first that $\|h \| < 1$.  We keep the same notations as above, but this time, we take $k \in \B(H)_+$ such that $1 \leq k < \|h\|^{-1}$ and $k$ has absolutely continuous spectrum. Then we again have 
$$\tilde{N}_{\tilde{\varphi}_{h \otimes k}}' \cap \tilde{M}= \rb(\tilde{N} \subset \tilde{M}, \tilde{\varphi}_{h \otimes k}) \subset\rb(N \subset M, \varphi_h) \ovt \{k\}''.$$
We also have $\rho \otimes 1 \leq h \otimes k < 1$. Thus  $\tilde{N}_{\tilde{\varphi}_{h \otimes k}} =\{ h \otimes k\}' \cap \tilde{N}_{\tilde{\varphi}}$.  Therefore $$(\{h\}'' \vee (N_\varphi ' \cap M)) \otimes 1 \subset \tilde{N}_{\tilde{\varphi}_{h \otimes k}}' \cap \tilde{M} \subset\rb(N \subset M, \varphi_h) \ovt \{k\}'',$$
hence $\{h\}'' \vee (N_\varphi ' \cap M) \subset\rb(N \subset M,\varphi_h)$ as we wanted.

Now, if we take any $h \in N_\varphi$, with $\rho \leq h < 1$, we can find an arbitrarily large projection $e \in \{h\}''$ such that $\| he\| < 1$. Then we know by the first case that 
$$\rb(eNe \subset eMe, \varphi_{he})=\{he\}'' \vee ((eNe)_{\varphi_{he}}' \cap eMe) = e( \{h\}'' \vee (N_{\varphi_h}' \cap M) )e.$$
On the other hand, we have $\rb(eNe \subset eMe, \varphi_{he})=e\rb(N \subset M, \varphi_h)e$. By letting $e$ increase to $1$, we conclude that $\rb(N \subset M, \varphi_h)=\{h\}'' \vee (N_{\varphi_h}' \cap M)$.
\end{proof}

\begin{cor} \label{no III1 semifinite reduction}
Let $N$ be a type $\III$ von Neumann algebra with no type $\III_1$ summand. Take $\psi \in \cP(N)$. There exists a subalgebra $Q \subset N$ with  a $\psi$-preserving faithful normal conditional expectation $E_Q \in \cE(N,Q)$ such that :
\begin{itemize}
\item $Q$ is semifinite.
\item $Q' \cap N \subset Q$.
\item $N_\psi \subset Q$
\item $\rb(N,\psi)=\rb(Q,\psi|_Q)$ and for every inclusion $N \subset M$ with $\cP(M,N) \neq \emptyset$, we have 
$$\rb(N \subset M,\psi)=\rb(Q \subset M, \psi|_Q).$$
\end{itemize}
If in addition $\psi$ has infinite multiplicity, we can choose $Q$ such that $N=Q \rtimes_\alpha \Z$ for some $\alpha \in \Aut(Q)$.
\end{cor}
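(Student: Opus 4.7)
The strategy is to exploit the discrete decomposition of $N$ supplied by Theorem \ref{connes takesaki lacunary}. Because $N$ is of type $\III$ with no type $\III_1$ summand, each of its $\III_0$ and $\III_\Lambda$ pieces (with $\|\Lambda\| < 1$) admits a lacunary weight of infinite multiplicity, and patching these I fix a lacunary weight $\varphi \in \cP(N)$ of infinite multiplicity, together with the associated $U \in \cU(N)$ and $\rho \in \cZ(N_\varphi)$ from Theorem \ref{connes takesaki lacunary}. The decomposition $N = N_\varphi \rtimes_{\Ad U}\Z$ yields a canonical $\varphi$-preserving conditional expectation $E_{N_\varphi} \in \cE(N, N_\varphi)$. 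Applying part (4) of Theorem \ref{connes takesaki lacunary} to $\psi$ gives $h \in N_\varphi$ with $\rho \leq h < 1$ and an isometry $v \in N$ with $e := vv^* \in N_{\varphi_h}$ and $\psi = v^* \varphi_h v$. Since $v$ is an isometry in $N \subset M$ with range projection $e$, the map $\iota : eMe \to M$, $x \mapsto v^* x v$, is a $*$-isomorphism with inverse $y \mapsto v y v^*$, and $\iota|_{eNe} : eNe \to N$ identifies $\varphi_h|_{eNe}$ with $\psi$.

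I then set $Q := v^* N_\varphi v \subset N$ and define $E_Q(x) := v^* E_{N_\varphi}(vxv^*) v$. Routine verifications (using that $e \in N_\varphi$ and the bimodularity of $E_{N_\varphi}$) show that $Q$ is a semifinite von Neumann subalgebra of $N$, being $\iota$-isomorphic to the corner $e N_\varphi e$, and that $E_Q$ is a faithful normal conditional expectation onto $Q$ satisfying $\psi = \psi \circ E_Q$. The inclusion $N_\psi \subset Q$ reads $v^* N_{\varphi_h} v \subset v^* N_\varphi v$ and follows from part (3) of Theorem \ref{connes takesaki lacunary}, while $Q' \cap N \subset Q$ transports under $\iota^{-1}$ to $(N_\varphi)' \cap N \subset N_\varphi$, which is the standard equality $N_\varphi' \cap N = \cZ(N_\varphi)^{\Ad U}$ for a discrete crossed product.

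The bicentralizer equality is the main content. One inclusion, $\rb(N \subset M, \psi) \subset \rb(Q \subset M, \psi|_Q)$, is immediate from Proposition \ref{alg bicentralizer for intermediate subalgebra}(2) applied to $E_Q$. For the reverse, the $*$-isomorphism $\iota$ transports the algebraic bicentralizer naturally; combining this with the corner reduction $e \, \rb(N \subset M, \varphi_h) = \rb(eNe \subset eMe, \varphi_h|_{eNe})$ from Proposition \ref{alg bic corner reduction} (applied with $e \in N_{\varphi_h}$) and the formula $\rb(N \subset M, \varphi_h) = (N_\varphi' \cap M) \vee \{h\}''$ from Theorem \ref{computation lacunary}(2), one obtains
$$ \rb(N \subset M, \psi) \ = \ \iota \bigl( e\, \rb(N \subset M, \varphi_h) \, e \bigr) \ = \ (Q' \cap M) \vee \{v^* h v\}''.$$
The right-hand side coincides with $\rb(Q \subset M, \psi|_Q)$ by Proposition \ref{algebraic bicentralizer semifinite}(1) applied to the semifinite algebra $Q$, since $v^* h v$ is precisely the Radon--Nikodym derivative of $\psi|_Q$ with respect to the canonical trace on $Q$ (i.e.\ the $\iota$-image of the trace $\varphi|_{eN_\varphi e}$).

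For the infinite multiplicity refinement, the aim is to choose the lacunary weight $\varphi$ so that $N_\psi \subset N_\varphi$ directly, in which case $Q = N_\varphi$ and $N = Q \rtimes_{\Ad U} \Z$ is part of Theorem \ref{connes takesaki lacunary}(2). Concretely, one multiplies $\psi$ by a suitable positive element $k$ affiliated with $\cZ(N_\psi)$ to render $\psi_k$ lacunary while preserving the centralizer inclusion $N_\psi \subset N_{\psi_k}$; existence of such $k$ uses the nontrivial flow of weights of $N$, which is guaranteed by the absence of a type $\III_1$ summand. The principal obstacle throughout is the bicentralizer transport in the previous step, whose success hinges on the identity $\psi^{\ri t} = v^* (\varphi_h)^{\ri t} v$ in $c(N)$, itself a consequence of the Connes cocycle computation $(D\psi : D\varphi_h)_t = v^* \sigma^{\varphi_h}_t(v)$ combined with $e \in N_{\varphi_h}$.
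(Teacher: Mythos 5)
Your proof of the main assertions follows essentially the same route as the paper: fix a lacunary weight $\varphi$ of infinite multiplicity, apply Theorem \ref{connes takesaki lacunary}(4) to write $\psi = v^*\varphi_h v$, set $Q = v^* N_\varphi v$, transport via the $*$-isomorphism $\iota : eMe \to M$, and derive the bicentralizer identity by combining the corner reduction (Proposition \ref{alg bic corner reduction}) with Theorem \ref{computation lacunary}(2) and Proposition \ref{algebraic bicentralizer semifinite}(1). The paper states this more compactly, but the verifications you spell out are the intended ones. One small slip: for a lacunary weight $\varphi$ of infinite multiplicity one has $N_\varphi' \cap N = \cZ(N_\varphi)$, not $\cZ(N_\varphi)^{\Ad(U)}$ (the latter is $\cZ(N)$); the equality follows from the proper outerness of $\Ad(U^n)|_{N_\varphi}$ for $n \neq 0$ in the crossed-product decomposition $N = N_\varphi \rtimes_{\Ad U}\Z$. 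Since $\cZ(N_\varphi) \subset N_\varphi$, your conclusion $Q'\cap N \subset Q$ survives this correction.

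For the infinite-multiplicity addendum you deviate from the paper, and the deviation has a genuine gap. You propose to perturb $\psi$ by a positive $k$ affiliated with $\cZ(N_\psi)$ so that $\psi_k$ is lacunary of infinite multiplicity with $N_\psi \subset N_{\psi_k}$, but you offer only the remark that existence of $k$ "uses the nontrivial flow of weights." This step is not supplied by Theorem \ref{connes takesaki lacunary} and is not an established fact: it is not clear that an arbitrary weight of infinite multiplicity on a type $\III_0$ or $\III_\lambda$ algebra admits a centralizer-enlarging perturbation by a central element of its own centralizer into a lacunary weight, and in the $\III_0$ case the ergodic-theoretic obstruction is nontrivial. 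The paper's route stays inside the established machinery: when $\psi$ has infinite multiplicity one may take the isometry $v$ in Theorem \ref{connes takesaki lacunary}(4) with $e = vv^* \in \cZ(N_{\varphi_h}) = \cZ(N_\varphi)$ (a central refinement parallel to the final clause of Theorem \ref{connes takesaki integrable}); then $\varphi_e$ is again a lacunary weight of infinite multiplicity on $eNe$, Theorem \ref{connes takesaki lacunary}(2) gives $eNe = eN_\varphi e \rtimes_{\Ad U}\Z$, and transporting by $v$ yields $N = Q \rtimes_\alpha \Z$. You should replace your perturbation sketch with this argument.
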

\begin{proof}
It is clear that we may decompose $N$ into direct summands and prove the corollary for each summand separately. Thus we may assume that $N$ is of type $\III_0$ or of type $\III_{\Lambda}$ for some $\Lambda \in \cZ(M)$ with $\| \Lambda \| < 1$. In particular, we may assume that $N$ admits a lacunary weight $\varphi$ with infinite multiplicity. By Theorem \ref{connes takesaki lacunary}, there exists $h \in N_\varphi$ with $\rho \leq h < 1$ and an isometry $v \in N$ such that $e=vv^* \in N_{\varphi_h} \subset N_\varphi $ and $\psi=v^*\varphi_h v$. We then take $Q=v^* N_\varphi v$ and we apply Theorem \ref{computation lacunary}. If $\psi$ has infinite multiplicity, then we may assume that $e \in  \cZ(N_{\varphi_h})= \cZ(N_\varphi)$ and $\varphi_e$ will be a lacunary weight with infinite multiplicity. We can then apply Theorem \ref{connes takesaki lacunary} to deduce that $N$ will decompose as a crossed product of $Q$ by an action of $\Z$.
\end{proof}

\subsection{Invariance under ucp maps and the type $\III_1$ case}
\begin{lem} \label{ucp map transition}
Let $N$ be a von Neumann algebra. Take $\psi,\varphi \in \cP(N)$ and suppose that there exists $f \in \cV^{\psi,\varphi}(N)$ such that $f|_{\rb(N,\varphi)}$ is normal. Then for any inclusion with expectations $N \subset M$, the following holds.
\begin{enumerate}
\item There exists a unique surjective normal *-morphism $\rb^{\psi,\varphi}$ from $\rb(N \subset c(M),\varphi)$ onto $\rb(N \subset c(M),\psi)$ that fixes $N' \cap c(M)$ and sends $(\varphi \circ T)^{\ri t}$ to $(\psi \circ T)^{\ri t}$ for every $T \in \cP(M,N)$ and every $t \in \R$.
\item We have $\rb^{\psi,\varphi}(x)=c(f)(x)$ for every $x \in \rb(N \subset c(M),\varphi)$ and every $f \in \cV^{\psi,\varphi}(N \subset M)$.
\item $\rb^{\psi,\varphi}$ is equivariant with respect to the trace scaling action $\theta : \R^*_+ \curvearrowright c(M)$ and restricts to a surjective normal *-morphism from $\rb(N \subset M,\varphi)$ onto $\rb(N \subset M,\psi)$
\end{enumerate}
\end{lem}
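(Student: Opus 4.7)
The plan is to construct $\rb^{\psi,\varphi}$ as the restriction of $c(g)$ for a single ucp extension $g$ of $f$, and then to verify its defining properties. First I would apply Proposition \ref{extension principle} to lift $f$ to an element $g \in \cV^{\psi,\varphi}(N \subset M)$. Fix $T \in \cP(M,N)$. Since $N(\sigma^{\psi,\varphi},[-\varepsilon,\varepsilon]) \subset M(\sigma^{\psi \circ T,\varphi \circ T},[-\varepsilon,\varepsilon])$, one may view $g$ as an element of $\cV^{\psi \circ T,\varphi \circ T}(M)$ and apply Theorem \ref{extension ucp to core} with the weights $\varphi \circ T,\psi \circ T \in \cP(M)$. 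This yields a unique $c(g) \in \cV^{\psi \circ T,\varphi \circ T}(M \subset c(M))$ extending $g$, and by the extension principle combined with this uniqueness, $c(g)$ in fact lies in the smaller set $\cV^{\psi,\varphi}(N \subset c(M))$. The commuting diagram of Theorem \ref{extension ucp to core} then gives $c(g)((\varphi \circ T)^{\ri t}) = (\psi \circ T)^{\ri t}$ for all $t \in \R$ and guarantees $\theta$-equivariance of $c(g)$, since the crossed product map $g \rtimes \R$ is automatically equivariant for the dual actions.

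Next I would show that $N' \cap c(M)$ and each unitary $(\varphi \circ T)^{\ri t}$ lie in the two-sided multiplicative domain of $c(g)$. Writing $c(g)$ as a pointwise weak*-limit of maps $\Ad(\ba)$ whose rows have entries in $N$ and satisfy $\ba\ba^{*} \to 1$ weak* (by unitality of $c(g)$), one checks that $\Ad(\ba)(xy) = x\,\Ad(\ba)(y)$ and $\Ad(\ba)(yx) = \Ad(\ba)(y)\,x$ for every $x \in N' \cap c(M)$ and $y \in c(M)$; passing to the limit yields $c(g)|_{N' \cap c(M)} = \id$ together with the multiplicative-domain inclusion. For the unitaries $(\varphi \circ T)^{\ri t}$, Kadison's inequality becomes an equality because their images $(\psi \circ T)^{\ri t}$ under $c(g)$ are again unitaries. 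Consequently $c(g)$ restricts to a $*$-homomorphism on the $*$-subalgebra $\cA_\varphi$ generated by $(N' \cap c(M)) \cup \{(\varphi \circ T)^{\ri t} : t \in \R\}$, with image contained in the analogous $*$-subalgebra $\cA_\psi$ of $\rb(N \subset c(M),\psi)$.

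The main obstacle is upgrading $c(g)|_{\cA_\varphi}$ to a normal $*$-homomorphism on the weak*-closure $\rb(N \subset c(M),\varphi) = (N' \cap c(M)) \vee \{\varphi \circ T\}''$ given by Proposition \ref{alg bicentralizer of core}. This is where the hypothesis on $f$ is essential. One first observes that $\rb(N,\varphi)$ sits inside $\cZ(\rb(N \subset c(M),\varphi))$: indeed $\rb(N,\varphi) \subset \cZ(N_\varphi)$ by Proposition \ref{bicentralizer in center}, so it commutes both with $N' \cap c(M)$ (being in $N$) and with $\{\varphi \circ T\}''$ (since $(\varphi \circ T)^{\ri t}$ commutes precisely with $N_\varphi$). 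Moreover $c(g)|_{\rb(N,\varphi)} = f|_{\rb(N,\varphi)}$ is a normal $*$-homomorphism by hypothesis. The plan is to exploit the crossed-product-like structure of $\rb(N \subset c(M),\varphi)$ over its central subalgebra $\rb(N,\varphi)$, together with the fact that the generators $(\varphi \circ T)^{\ri t}$ are unitaries sent to unitaries, to deduce that normality on this central piece propagates to normality of $c(g)$ on the full bicentralizer. Defining $\rb^{\psi,\varphi}$ as this normal extension then makes sense.

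Once $\rb^{\psi,\varphi}$ is established, its uniqueness is automatic: any normal $*$-homomorphism fixing $N' \cap c(M)$ and sending $(\varphi \circ T)^{\ri t} \mapsto (\psi \circ T)^{\ri t}$ is determined on the weak*-dense $*$-subalgebra $\cA_\varphi$. Item (2) then follows because for any $f' \in \cV^{\psi,\varphi}(N \subset M)$ the map $c(f')|_{\cA_\varphi}$ satisfies the same generator data as $\rb^{\psi,\varphi}$, so by uniqueness its extension to $\rb(N \subset c(M),\varphi)$ coincides with $\rb^{\psi,\varphi}$. Item (3) follows from the built-in $\theta$-equivariance of $c(g)$ combined with the identifications $\rb(N \subset M,\bullet) = \rb(N \subset c(M),\bullet)^\theta$ coming from Proposition \ref{algebraic generation}; surjectivity is automatic throughout since the image of $\rb^{\psi,\varphi}$ contains the generators $N' \cap c(M)$ and $\{(\psi \circ T)^{\ri t}\}$ of the target bicentralizer.
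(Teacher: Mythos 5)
There is a genuine gap at exactly the point you flagged as ``the main obstacle''. You correctly observe that the hard step is upgrading the $*$-homomorphism $c(g)|_{\cA_\varphi}$ (on the dense $*$-subalgebra generated by $N'\cap c(M)$ and the unitaries $(\varphi\circ T)^{\ri t}$) to a \emph{normal} $*$-homomorphism on the whole of $\rb(N\subset c(M),\varphi)$, and that the hypothesis ``$f|_{\rb(N,\varphi)}$ is normal'' must be used somewhere. But you then stop at ``the plan is to exploit the crossed-product-like structure\ldots to deduce that normality on this central piece propagates'' — this is not an argument, and it is also not obviously true: a ucp map that is normal on a central subalgebra and multiplicative on generators need not be normal on the generated von Neumann algebra. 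The direction of propagation you want (from the central subalgebra $\rb(N,\varphi)$ outward) is not a standard phenomenon, and you give no mechanism for it.

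The paper reaches normality by a different and quite concrete route that you did not find: pick $E\in\cE(M,N)$ and note that $E\circ f=f\circ E$, since $f\in\cV(N\subset M)$ is a limit of maps $\Ad(\ba)$ with entries in $N$. Because $E$ maps $\rb(N\subset M,\varphi)$ onto $\rb(N,\varphi)$, the hypothesis that $f|_{\rb(N,\varphi)}$ is normal gives that $f\circ E=E\circ f$ is normal when restricted to $\rb(N\subset M,\varphi)$; since $E$ is faithful and normal, this forces $f|_{\rb(N\subset M,\varphi)}$ itself to be normal. \emph{Then} one invokes Theorem~\ref{extension ucp to core} together with Remark~\ref{normal crossed product ucp} (namely that $c(f)=f\rtimes\R$ is normal on $A_0\rtimes\R$ whenever $f$ is normal on a $\sigma$-invariant $A_0$) to transport normality from $\rb(N\subset M,\varphi)$ to its crossed product $\rb(N\subset c(M),\varphi)$. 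In other words, normality is established first on the $\theta$-fixed-point side $\rb(N\subset M,\varphi)$ via the conditional expectation, and only then pushed up to the core — not propagated outward from the center as you suggest. Your write-up of the multiplicative-domain computations and of item (3) is fine, but without this intermediate normality step the proof is incomplete.
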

\begin{proof}
(1) and (2). We may assume that $N' \cap M$ is countably decomposable and take $E \in \cE(M,N)$. Extend $f$ to some element of $\cV^{\psi,\varphi}(N \subset M)$ that we still denote $f$. Then $E \circ f=f \circ E$ because $f \in \cV(N \subset M)$. Since $E(\rb(N \subset M, \varphi))=\rb(N,\varphi)$, we know that $(f \circ E)|_{\rb(N \subset M,\varphi)}$ is normal. Thus $(E \circ f)|_{\rb(N \subset M,\varphi)}$ is normal, hence $f|_{\rb(N \subset M,\varphi)}$ is normal. By Theorem \ref{extension ucp to core} and Remark \ref{normal crossed product ucp}, we deduce that $c(f)$ is normal on $\rb(N \subset M, \varphi) \rtimes_{\sigma^{\varphi \circ E}} \R=\rb(N \subset c(M),\varphi)$. Moreover, $c(f)$ fixes $N' \cap c(M)$ and sends $(\varphi \circ E)^{\ri t}$ to $(\psi \circ E)^{\ri t}$ for every $t \in \R$, hence $c(f)$ is a *-morphism when restricted to the *-algebra generated by $N' \cap c(M)$ and $(\varphi \circ E)^{\ri t}$ for $t \in \R$. Since $c(f)|_{\rb(N \subset c(M),\varphi)}$ is normal and $\rb(N \subset c(M),\varphi)=(N' \cap c(M)) \vee \{ \varphi \circ E\}''$, we conclude that $c(f)|_{\rb(N \subset c(M),\varphi)}$ is a surjective *-morphism from $\rb(N \subset c(M),\varphi)$ onto $\rb(N \subset c(M),\psi)$. This proves (1) and (2).

(3) Observe that the trace scaling action $\theta : \R^*_+ \curvearrowright c(M)$ commutes with $c(f)$, hence also with $\rb^{\psi,\varphi} : \rb(N \subset c(M),\varphi) \rightarrow \rb(N \subset c(M),\psi)$. It follows that $\rb^{\psi,\varphi}$ sends $\rb(N \subset M, \varphi) =\rb(N \subset c(M),\varphi)^\theta$ into $\rb(N \subset M, \psi) =\rb(N \subset c(M),\psi)^\theta$. Let us prove the surjectivity. Let $A=\rb^{\psi,\varphi}(\rb(N \subset M, \varphi))$. Since $\rb(N \subset c(M),\varphi)$ is generated by $\rb(N \subset M, \varphi)$ and $(\varphi \circ E)^{\ri t}, \: t \in \R$ and $\rb^{\psi,\varphi}$ is sujrective, then  $\rb(N \subset c(M),\psi)$ is generated by $A$ and $(\psi \circ E)^{\ri t}, \: t \in \R$. We conclude by Theorem \ref{dual action crossed product} that $A=\rb(N \subset c(M),\psi)^\theta=\rb(N \subset M,\psi)$ as we wanted.
\end{proof}

\begin{thm} \label{ucp map transition III1}
Let $N \subset M$ be an inclusion of von Neumann algebras with expectations. Suppose that $N$ is of type $\III_1$. Take $\varphi, \psi \in \cP(N)$.
\begin{enumerate}
\item  There exists a unique isomorphism $$\rb^{\psi,\varphi} :\rb(N \subset c(M),\varphi) \rightarrow\rb(N \subset c(M),\psi)$$ such that $\rb^{\psi,\varphi}(x)=x$ for every $x \in N' \cap c(M)$ and $\rb^{\psi,\varphi}( (\varphi \circ T)^{\ri t})=(\psi \circ T)^{\ri t}$ for every $T \in \cP(M,N)$ and $t \in \R$.
\item $\rb^{\psi,\varphi}$ is equivariant with respect to the trace scaling action $\theta : \R^*_+ \curvearrowright c(M)$ and restricts to an isomorphism from $\rb(N \subset M,\varphi)$ onto $\rb(N \subset M,\psi)$
\item If $\phi \in \cP(N)$ is a third weight, we have $\rb^{\psi,\varphi} \circ \rb^{\varphi,\phi}=\rb^{\psi,\phi}$.
\item The group morphism $$ \R^*_+  \ni \lambda \mapsto \rb^{\lambda \varphi,\varphi} \in \Aut(\rb(N \subset c(M),\varphi))$$
defines a continuous action $\rb^\varphi : \R^*_+ \curvearrowright \rb(N \subset c(M),\varphi)$.
\item $\rb(N \subset c(M),\varphi)^{\rb^\varphi}=N' \cap c(M)$.
\end{enumerate}
\end{thm}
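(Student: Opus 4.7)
The plan is to specialize Lemma~\ref{ucp map transition} to the type~$\III_1$ setting, where I will show that the normality hypothesis on $f|_{\rb(N,\varphi)}$ becomes automatic. First, I use the relative commutant theorem, which gives $N' \cap c(N) = \cZ(c(N)) = \cZ(N)$ (the latter equality being precisely the definition of type~$\III_1$), together with a short computation using the crossed product structure $c(N) = N \rtimes_{\sigma^\varphi} \R$, to deduce $\rb(N,\varphi) = (\cZ(N) \vee \{\varphi\}'') \cap N = \cZ(N)$. Next, I observe that for \emph{any} $f \in \cV^{\psi,\varphi}(N)$, the restriction $f|_{\cZ(N)}$ is automatically the identity: writing $f = \lim_i \Ad(\ba_i)$ with $\ba_i \ba_i^* \to f(1) = 1$ weak*ly and using that every $z \in \cZ(N)$ commutes with the entries of $\ba_i$, one computes $\Ad(\ba_i)(z) = z\,\ba_i\ba_i^* \to z$ weak*ly. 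Since Proposition~\ref{transition UCP weights} provides $\cV^{\psi,\varphi}(N) \neq \emptyset$ (this is where the type~$\III_1$ assumption enters decisively), Lemma~\ref{ucp map transition} applies and yields a normal surjective $*$-morphism $\rb^{\psi,\varphi}$, with the $\theta$-equivariance and restriction to $\rb(N \subset M,\varphi) \to \rb(N \subset M,\psi)$ of item~(2) already part of that lemma.

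Uniqueness of $\rb^{\psi,\varphi}$ is immediate: the algebra $N' \cap c(M)$ together with $\{(\varphi\circ T)^{\ri t}\}_{t \in \R}$ weak*ly generates $\rb(N \subset c(M),\varphi)$, so a normal $*$-morphism is determined by its values on these generators. By running the construction with $\varphi$ and $\psi$ exchanged, I get $\rb^{\varphi,\psi}$; the two compositions $\rb^{\psi,\varphi} \circ \rb^{\varphi,\psi}$ and $\rb^{\varphi,\psi} \circ \rb^{\psi,\varphi}$ fix the generators of their domains, so both equal the identity by uniqueness, and I conclude that $\rb^{\psi,\varphi}$ is an isomorphism. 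Item~(3) then follows from the same uniqueness principle applied to the composition $\rb^{\psi,\varphi} \circ \rb^{\varphi,\phi}$.

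For item~(4), I note that $\rb(N \subset c(M), \lambda\varphi) = \rb(N \subset c(M), \varphi)$ as subalgebras of $c(M)$, since $((\lambda\varphi)\circ T)^{\ri t} = \lambda^{\ri t}(\varphi\circ T)^{\ri t}$ generates the same one-parameter unitary group, so $\rb^\varphi_\lambda$ is an automorphism. The cocycle identity $\rb^\varphi_{\lambda\mu} = \rb^\varphi_\lambda \circ \rb^\varphi_\mu$ follows by evaluating both sides on the generators (both fix $N' \cap c(M)$ pointwise and scale $(\varphi \circ T)^{\ri t}$ by $(\lambda\mu)^{\ri t}$) and invoking the uniqueness of item~(1). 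For continuity, I observe that on the $*$-algebra generated by $N' \cap c(M)$ and the unitaries $(\varphi \circ T)^{\ri t}$, the map $\lambda \mapsto \rb^\varphi_\lambda$ takes the explicit form $\sum a_k (\varphi \circ T)^{\ri t_k} \mapsto \sum \lambda^{\ri t_k} a_k (\varphi \circ T)^{\ri t_k}$, which is manifestly weak*-continuous in $\lambda$; since this subalgebra is weak*-dense and each $\rb^\varphi_\lambda$ is an isometric automorphism, continuity on the whole algebra follows by a standard approximation argument.

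Finally, item~(5) is a clean application of Takesaki's duality. The triple $(\rb(N \subset c(M), \varphi),\, N' \cap c(M),\, ((\varphi \circ T)^{\ri t})_{t\in\R})$ is a split $\R$-extension, and identifying $\R^*_+ \cong \widehat{\R}$ via $\lambda \leftrightarrow \log\lambda$, the action $\rb^\varphi$ satisfies the hypotheses of Theorem~\ref{dual action crossed product}: it fixes $N' \cap c(M)$ pointwise and $\rb^\varphi_\lambda((\varphi\circ T)^{\ri t}) = \langle t, \lambda\rangle\,(\varphi\circ T)^{\ri t}$. That theorem then identifies the extension as a crossed product with coefficient algebra $N' \cap c(M)$, and in particular $N' \cap c(M) = \rb(N \subset c(M),\varphi)^{\rb^\varphi}$. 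The only genuine obstacle in this proof plan is the normality verification in the first paragraph; once that is established, everything else is formal bookkeeping built on Lemma~\ref{ucp map transition} and Theorem~\ref{dual action crossed product}.
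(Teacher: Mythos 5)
Items (1)--(3) and item (5) are correct and follow the paper's route: specialize Lemma~\ref{ucp map transition} via Proposition~\ref{transition UCP weights}, use the automatic identity $\rb(N,\varphi)=\cZ(N)$ in the type~$\III_1$ case to make the normality hypothesis vacuous, get invertibility and the cocycle law from the uniqueness statement, and invoke Theorem~\ref{dual action crossed product} for the fixed-point computation. The group-homomorphism part of item (4) is also fine.

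The gap is in the continuity claim in item (4). Knowing that $\lambda\mapsto\rb^\varphi_\lambda(x)$ is weak*-continuous for $x$ in the weak*-dense $*$-subalgebra $A_0=\lspan\{\,a\,(\varphi\circ T)^{\ri t}\mid a\in N'\cap c(M),\ t\in\R\,\}$ does \emph{not} imply continuity on the whole algebra. To pass to a general $x$ in the unit ball you have to approximate $x$ by $y\in A_0$ (Kaplansky) and estimate $|(\phi\circ\rb^\varphi_\lambda)(x-y)|$; this must be made small uniformly in $\lambda$ on a neighbourhood of $\lambda_0$, but the functionals $\phi\circ\rb^\varphi_\lambda$ vary with $\lambda$ and nothing controls them equicontinuously. (The criterion \cite[Proposition X.1.2]{Ta03} that the paper invokes elsewhere requires checking pointwise continuity for \emph{all} $x$, not for a dense subalgebra; and the usual device for upgrading dense-set continuity --- a strongly continuous unitary implementation coming from an $\alpha$-invariant state --- is not available here, since $\rb^\varphi$ does not preserve any obvious faithful normal state or trace on $\rb(N\subset c(M),\varphi)$.) The paper's proof avoids this entirely: by item (3), $\rb^{\psi,\varphi}$ conjugates $\rb^\varphi$ to $\rb^\psi$, so one may assume $\psi$ is a dominant weight with scaling group $u=(u_\lambda)_{\lambda\in\R^*_+}\subset\cU(N)$; then $\rb^\psi_\lambda=\Ad(u_\lambda)|_{N_\psi'\cap c(M)}$ and continuity is immediate from the strong continuity of $u$. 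Your argument needs to be replaced by this (or another genuine) verification of continuity; as written, the ``standard approximation argument'' does not exist.
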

\begin{proof}
(1), (2) and (3) follow directly from Proposition \ref{transition UCP weights} and Lemma \ref{ucp map transition}. 

(4) We see from item (3) that $\rb^\varphi$ is indeed an action and that $\rb^{\psi,\varphi}$ conjugates the action $\rb^\varphi$ with the action $\rb^\psi$. Therefore, it is enough to prove the continuity of $\rb^\psi$ when $\psi$ is a dominant weight. Let $u=(u_\lambda)_{ \lambda \in \R^*_+}$ be a $\psi$-scaling group. We have $\rb^\psi_\lambda(x)=u_\lambda xu_\lambda^*$ for every $x \in \rb(N \subset c(M),\psi)=N_\psi' \cap c(M)$ and every $\lambda \in \R^*_+$. Thus the continuity of $\rb^\psi$ follows from the continuity of $u$.

(5) Take $T \in \cP(M,N)$. Then $$(\rb(N \subset c(M),\varphi), N' \cap c(M), ((\varphi \circ T)^{\ri t})_{t \in \R} )$$ is a split $\R$-extension and we see that $\rb^\psi$ is a dual action for this extension, hence we can apply Theorem \ref{dual action crossed product}.
\end{proof}

\begin{rem}
The combination of Theorem \ref{computation dominant} and Theorem \ref{ucp map transition III1} answers, at the algebraic bicentralizer level, the open question at the end of \cite[Introduction]{AHHM18} : all bicentralizer algebras $\rb(N \subset M,\varphi)$ are canonically isomorphic to $\rb(N \subset M,\psi)=N_\psi' \cap M$ where $\psi$ is a dominant weight. 
\end{rem}

\subsection{The modular automorphism group}
Let $N \subset M$ be an inclusion of von Neumann algebras with expectations and $\psi \in \cP(N)$. Recall from Proposition \ref{alg bicentralizer for intermediate subalgebra} that $T|_{\rb(N \subset M,\psi)} \in  \cP(\rb(N \subset M,\psi),\rb(N,\psi))$. Moreover, by \ref{bicentralizer in center}, $\rb(N,\psi)$ is in the center of $\rb(N \subset M,\psi)$, hence the modular automorphism group of $T|_{\rb(N \subset M,\psi)}$ (in the sense of \cite{Ha77a}) is defined on $\rb(N,\psi)' \cap \rb(N \subset M,\psi)=\rb(N \subset M,\psi)$. The next theorem tells us what this modular automorphism group is. This is immediated when $\psi$ is finite, but the author could not find a simple and direct proof of this result when $\psi$ is infinite, even when $\psi$ is a dominant weight, which was our main motivation.

\begin{thm} \label{modular group of bicentralizer}
Let $N \subset M$ be an inclusion of von Neumann algebras with expectations. Take $T \in \cP(M,N)$ and $\psi \in \cP(N)$. Then the modular automorphism group of $T|_{\rb(N \subset M,\psi)} \in  \cP(\rb(N \subset M,\psi),\rb(N,\psi))$ is given by $\sigma^{\psi \circ T} |_{\rb(N \subset M,\psi)}$.
\end{thm}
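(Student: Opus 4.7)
The strategy is to combine the equivalence of categories of modular theory (Section \ref{prelim modular theory}) with the centrality of $\rb(N,\psi)$ in $\rb(N \subset M,\psi)$ (Proposition \ref{bicentralizer in center}) and Haagerup's theory of modular automorphism groups of operator valued weights with central target. The central technical input is the crossed product identity in Proposition \ref{algebraic generation}(1).

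By Proposition \ref{algebraic generation}(1), $\rb(N \subset c(M),\psi)$ coincides with the crossed product $\rb(N \subset M,\psi) \rtimes_{\sigma^{\psi \circ T}|} \R$ inside $c(M) = M \rtimes_{\sigma^{\psi \circ T}} \R$, with $(\psi \circ T)^{\ri t}$ as implementing unitaries. The scaling action $\theta$ preserves $\rb(N \subset c(M),\psi)$ globally, and the fixed-point algebra is $\rb(N \subset M,\psi)$. Granting that the canonical trace $\tau$ of $c(M)$ restricts to a faithful normal semifinite trace on $\rb(N \subset c(M),\psi)$, the triple $(\rb(N \subset c(M),\psi), \tau|, \theta|)$ is a trace-scaling flow with fixed-point algebra $\rb(N \subset M,\psi)$. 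By the equivalence of categories there is a canonical isomorphism
\[
\rb(N \subset c(M),\psi) \;\cong\; c(\rb(N \subset M,\psi))
\]
restricting to the identity on $\rb(N \subset M,\psi)$ and intertwining the scaling actions. Under this isomorphism, the positive operator $\psi \circ T$ (affiliated to $\rb(N \subset c(M),\psi)$ by definition) corresponds to a faithful normal semifinite weight $\phi$ on $\rb(N \subset M,\psi)$ with $(\psi \circ T)^{\ri t}$ as its implementing unitaries. Since conjugation by $(\psi \circ T)^{\ri t}$ on $M$ is $\sigma^{\psi \circ T}_t$, restricting to the invariant subalgebra $\rb(N \subset M,\psi)$ yields
\[
\sigma^{\phi}_t \;=\; \sigma^{\psi \circ T}_t\big|_{\rb(N \subset M,\psi)}.
\]

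To identify $\sigma^\phi$ with the modular flow of the operator valued weight $T|_{\rb(N \subset M,\psi)}$, I would reduce to the case where $T$ is a faithful normal conditional expectation (which exists since $N \subset M$ is with expectations; a general $T$ is handled via a standard Connes cocycle argument between $T$ and the chosen expectation). Then $T|_{\rb(N \subset M,\psi)}$ is also a conditional expectation onto $\rb(N,\psi)$, so $\phi = (\psi|_{\rb(N,\psi)}) \circ T|_{\rb(N \subset M,\psi)}$; semifiniteness of $\phi$ then forces $\psi|_{\rb(N,\psi)}$ to be a faithful normal semifinite weight on $\rb(N,\psi)$. Since $\rb(N,\psi) \subset \cZ(\rb(N \subset M,\psi))$ by Proposition \ref{bicentralizer in center}, Haagerup's theory of modular flows of OVWs with central target applies and gives
\[
\sigma^{T|_{\rb(N \subset M,\psi)}}_t \;=\; \sigma^{(\psi|_{\rb(N,\psi)}) \circ T|_{\rb(N \subset M,\psi)}}_t \;=\; \sigma^\phi_t
\]
on all of $\rb(N \subset M,\psi)$. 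Combining the two identifications yields the theorem.

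The main obstacle is establishing the semifiniteness of $\tau|_{\rb(N \subset c(M),\psi)}$, which is needed for the equivalence of categories to apply. Since $\rb(N \subset c(M),\psi) = (N' \cap c(M)) \vee \{\psi \circ T\}''$ and $\tau$ need not restrict semifinitely to arbitrary subalgebras of $c(M)$, one must construct a weak${}^*$-dense set of $\tau$-finite elements. The key tool is Haagerup's theorem \cite[Theorem 6.6]{Ha77b} applied to the semifinite operator valued weight $T \circ I^\theta : c(M) \to N$: its restriction to $N' \cap c(M)$ is semifinite with values in $\cZ(N)$. Combined with bounded spectral projections of the commuting unbounded operator $\psi \circ T$, this should produce enough $\tau$-finite elements inside $\rb(N \subset c(M),\psi)$ to yield semifiniteness.
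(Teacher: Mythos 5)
Your strategy is genuinely different from the paper's: you want a top-down, single-stroke argument by first identifying $\rb(N\subset c(M),\psi)$ with the abstract core $c(\rb(N\subset M,\psi))$ via the equivalence of categories, while the paper runs a case analysis ($\psi$ finite, then $N$ semifinite, then $N$ without $\III_1$-summand via a lacunary weight, then $N$ of type $\III_1$ transported to the state case by a ucp transition map in $\cV^{\psi,\varphi}(N\subset M)$) and never directly addresses whether $\sigma^{\psi\circ T}|_{\rb}$ is the modular group of an honest weight. Notice, however, that the remark immediately after Proposition \ref{algebraic generation} says explicitly that the author ``could not find a direct proof of this that does not rely on explicit computations of the bicentralizer algebra,'' which is strong evidence that the corollary you want to prove first (namely $\rb(N\subset c(M),\psi) \cong c(\rb(N\subset M,\psi))$) is genuinely dependent on the theorem, not a shortcut to it.

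There are two real gaps. The one you flag, $\tau$-semifiniteness on $\rb(N\subset c(M),\psi)$, is indeed the crux, but your proposed patch does not close it: Haagerup's Theorem 6.6 gives semifiniteness of the operator valued weight $T\circ I^\theta$ on $N'\cap c(M)$ landing in $\cZ(N)$, while $\tau$-finiteness requires further composing with a semifinite weight on $\cZ(N)$ and then correcting by the unbounded operator $h_\omega$; there is no reason for the hereditary cone you obtain to consist of $\tau$-finite elements. The second gap is worse and is not flagged. After assuming the isomorphism, you need $\phi$ to factor as $\omega\circ T|_{\rb(N\subset M,\psi)}$ for some $\omega\in\cP(\rb(N,\psi))$, and you assert $\phi=(\psi|_{\rb(N,\psi)})\circ T|_{\rb(N\subset M,\psi)}$, inferring a posteriori that $\psi|_{\rb(N,\psi)}$ is semifinite. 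This inference is circular, and the equality is false whenever $\psi|_{\rb(N,\psi)}$ fails to be semifinite, which is the generic situation for infinite $\psi$. For instance, take $\psi$ dominant on a factor of type $\III_0$: then $\rb(N,\psi)=\cZ(N_\psi)$ is diffuse and every nonzero projection $e\in\cZ(N_\psi)$ has $\psi(e)=\infty$ (since $\psi|_{N_\psi}$ is a type $\II_\infty$ trace and $e$ has infinite-dimensional central support). This is precisely why the paper's Case 0 is restricted to finite $\psi$, and why the infinite cases each need a structural reduction (semifinite $Q$ with $\psi$-preserving expectation, or the $\cV^{\psi,\varphi}$ transition map in the $\III_1$ case) before one can invoke the finite-$\psi$ computation.
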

\begin{proof}
We may assume that $N$ is countably decomposable.

\textbf{Case 0 : $\psi$ is finite.} In this case, $\eta=(\psi \circ T)|_{\rb(N \subset M,\psi)}$ is semifinite. Since $\rb(N \subset M,\psi)$ is globally invariant under $\sigma^{\psi \circ T}$, we know that $\sigma^{\eta}=\sigma^{(\psi \circ T)}|_{\rb(N \subset M,\psi)}$. By definition, $\sigma^\eta$ is the modular automorphism group of $T|_{\rb(N \subset M,\psi)}$.

\textbf{Case 1 : $N$ is semifinite.} Let $\tau \in \cP(N)$ be a trace and write $\psi=\tau(h \cdot)$ for some positive $h$ affiliated with $N$. Then $\rb(N,\psi)=\{h\}'' \vee \cZ(N)$ and $\rb(N \subset M, \psi)=\{h\}'' \vee (N' \cap M)$. Take $\varphi$ a faithful normal state on $N$ and let $\eta=(\varphi \circ T)|_{\rb(N \subset M, \psi)}$. The modular automorphism group of $T|_{\rb(N \subset M, \psi)}$ is given by $\sigma^\eta$. So, we have to show that $\sigma^\eta=\sigma^{\psi \circ T}|_{\rb(N \subset M, \psi)}$. For this it is enough to show that they coincide on both $\rb(N,\psi)$ and $N' \cap M$ because these two subalgebras generate $\rb(N \subset M,\psi)$. 

Firstly, since $\rb(N,\psi) \subset \cZ(\rb(N \subset M,\psi))$, we have $\sigma_t^\eta(x)=x$ for all $x \in\rb(N,\psi)$. Since $\sigma^{\psi \circ T}_t=\Ad(h^{\ri t})$, we also have $\sigma^{\psi \circ T}_t(x)=x$ for all $x \in\rb(N,\psi)=\{h\}'' \vee \cZ(N)$. Therefore, we get $\sigma_t^\eta(x)=\sigma^{\psi \circ T}_t(x)$ for all $x \in\rb(N,\psi)$. Secondly, we have $\sigma^{\varphi \circ T}|_{N' \cap M}=\sigma^{\psi \circ T}|_{N' \cap M}=\sigma^T$. Therefore, $\sigma^\eta_t(x)=\sigma^{\varphi \circ T}_t(x)=\sigma^{\psi \circ T}_t(x)$ for all $x \in N' \cap M$. We conclude that $\sigma^\eta=\sigma^{\psi \circ T}|_{\rb(N \subset M, \psi)}$ as we wanted.

\textbf{Case 2 : $N$ has no type $\III_1$ summand.} Take $Q \subset N$ with a $\psi$-preserving conditional expectation $E_Q \in \cE(N,Q)$ as in Corollary \ref{no III1 semifinite reduction}. Then we have $\rb(N \subset M,\psi)=\rb(Q \subset M,\psi|_Q)$ and $\rb(N,\psi)=\rb(Q,\psi|_Q)$. Let $S=E_Q \circ T$. We have $S|_{\rb(Q \subset M,\psi)}=T|_{\rb(N \subset M,\psi)}$. By the semifinite case, we know that the modular automorphism group of $S|_{\rb(Q \subset M,\psi)}$ is $\sigma^{(\psi|_Q) \circ S}|_{\rb(Q \subset M,\psi)}$. Since $\psi \circ T=(\psi|_Q) \circ S$, we conclude that the modular automorphism group of $T|_{\rb(N \subset M,\psi)}=S|_{\rb(Q \subset M,\psi)}$ is $\sigma^{\psi \circ T}|_{\rb(N \subset M, \psi)}$.

\textbf{Case 3 : $N$ is of type $\III_1$.} Take $\varphi$ a faithful normal state on $N$. Take $f \in \cV^{\psi,\varphi}(N \subset M)$. Then $f$ commutes with $T$, intertwines $\sigma^{\varphi \circ T}$ with $\sigma^{\psi \circ T}$ and $f|_{\rb(N \subset M, \varphi)}=\rb^{\psi,\varphi}$ is an isomorphism from $\rb(N \subset M, \varphi)$ onto $\rb(N \subset M, \psi)$ that sends $\rb(N,\varphi)$ onto $\rb(N,\psi)$. Thus the result follows from Case 0 applied to $\varphi$.
\end{proof}

\begin{cor}
Let $N \subset M$ be an inclusion of von Neumann algebras with expectations. Take $\psi \in \cP(N)$ and $\omega \in \cP(\rb(N,\psi))$. There exists a unique $S \in \cP(M,\rb(N \subset M,\psi))$ such that $\psi \circ T = \omega \circ T \circ S$ for every $T \in \cP(M,N)$.
%\begin{enumerate}
%\item  $S|_N \in \cP(N,\rb(N,\psi))$.
%\item $\psi=\omega \circ S|_N$.
%\item $S \circ T=T \circ S$ for every $T \in \cP(M,N)$. 
%\end{enumerate}
Moreover, we have $$\rb(N \subset c(M),\psi)=c_{S}(\rb(N \subset M,\psi)).$$

If $\psi$ is finite and $\omega=\psi|_{\rb(N,\psi)}$, then $S$ is a conditional expectation, denoted by $E_{\rb(N \subset M,\psi)}$.
\end{cor}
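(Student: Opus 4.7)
The plan is to construct $S$ via Haagerup's reconstruction theorem for operator-valued weights, using Theorem~\ref{modular group of bicentralizer} to verify the required modular compatibility, and then to read off the remaining assertions from the canonical core picture. First, I fix $T \in \cP(M,N)$ and set $P := \rb(N \subset M,\psi)$ and $T_0 := T|_P$; the latter belongs to $\cP(P,\rb(N,\psi))$ by Proposition~\ref{alg bicentralizer for intermediate subalgebra}(3) applied to the trivial chain $N \subset N \subset M$. By Proposition~\ref{algebraic generation}(1), $P$ is globally $\sigma^{\psi \circ T}$-invariant, and since $\rb(N,\psi) \subset \cZ(P)$ (Proposition~\ref{bicentralizer in center}), the modular automorphism group of $T_0$ is defined on all of $P$; it equals $\sigma^{\omega \circ T_0}$ by the standard property of operator-valued weights and it equals $\sigma^{\psi \circ T}|_P$ by Theorem~\ref{modular group of bicentralizer}. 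Haagerup's reconstruction theorem~\cite{Ha77b} then produces a unique $S \in \cP(M,P)$ with $\omega \circ T_0 \circ S = \psi \circ T$; since $S$ is valued in $P$ and $T|_P = T_0$, this is exactly $\psi \circ T = \omega \circ T \circ S$.

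To show that this $S$ does not depend on $T$, take a second $T' \in \cP(M,N)$ with restriction $T_0' := T'|_P$, and consider the unitary cocycles $u_t := (\psi \circ T)^{\ri t}(\psi \circ T')^{-\ri t}$ and $v_t := (\omega \circ T_0)^{\ri t}(\omega \circ T_0')^{-\ri t}$. Since $\sigma^{\psi \circ T}|_N = \sigma^\psi = \sigma^{\psi \circ T'}|_N$, one has $u_t \in N' \cap M \subset P$; and $v_t \in \rb(N,\psi)' \cap P = P$. Intrinsically, $u_t$ and $v_t$ are the Haagerup Radon-Nikodym cocycles $(D T : D T')_t$ and $(D T_0 : D T_0')_t$, which, by the naturality of this cocycle under restriction to the modular-invariant subalgebra $P$, coincide as elements of $P$. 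Applying $c(S) : c(P) \to c(M)$---which is the identity on $P$ and sends $(\omega \circ T_0)^{\ri t}$ to $(\psi \circ T)^{\ri t}$---to the identity $(\omega \circ T_0')^{\ri t} = v_t^{\,*}(\omega \circ T_0)^{\ri t}$ yields $(\omega \circ T_0' \circ S)^{\ri t} = v_t^{\,*}(\psi \circ T)^{\ri t} = u_t^{\,*}(\psi \circ T)^{\ri t} = (\psi \circ T')^{\ri t}$, i.e.\ $\psi \circ T' = \omega \circ T' \circ S$. Uniqueness of $S$ is then immediate from Haagerup's uniqueness applied to any fixed $T$.

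Finally, the embedding $c(S) : c(P) \to c(M)$ is the identity on $P$ and sends $(\omega \circ T_0)^{\ri t}$ to $(\omega \circ T_0 \circ S)^{\ri t} = (\psi \circ T)^{\ri t}$, so its image $c_S(P)$ is the von Neumann subalgebra of $c(M)$ generated by $P$ and $\{(\psi \circ T)^{\ri t}\}_{t \in \R}$, which equals $\rb(N \subset c(M),\psi)$ by Proposition~\ref{alg bicentralizer of core}. When $\psi$ is finite and $\omega = \psi|_{\rb(N,\psi)}$, one has $(\psi \circ T)|_P = \omega \circ T_0$, so the defining relation becomes $\psi \circ T = (\psi \circ T)|_P \circ S$, and Takesaki's classical theorem identifies $S$ with the unique $(\psi \circ T)$-preserving normal conditional expectation from $M$ onto $P$. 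The main technical obstacle is the identity $u_t = v_t$ in the second step, which encodes the naturality of the Connes--Haagerup Radon-Nikodym cocycle with respect to restriction to a modular-invariant subalgebra.
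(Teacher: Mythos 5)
Your construction of $S$ for a fixed $T$ is exactly the paper's: restrict $T$ to $P=\rb(N\subset M,\psi)$, use Theorem~\ref{modular group of bicentralizer} to match $\sigma^{T_0}=\sigma^{\omega\circ T_0}|_P$ with $\sigma^{\psi\circ T}|_P$, and invoke Haagerup's reconstruction theorem. The identification of $c_S(P)$ with $\rb(N\subset c(M),\psi)$ via Proposition~\ref{alg bicentralizer of core} and the finite-$\psi$ specialization to a conditional expectation are also the same.

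The one weak point is the independence of $T$. You correctly reduce it to the cocycle identity $(DT:DT')_t=(DT_0:DT_0')_t$ inside $P$, but you justify it by appealing to ``naturality of the Connes--Haagerup cocycle under restriction to a modular-invariant subalgebra.'' This is not a standing theorem one can cite: $T_0=T|_P$ and $T_0'=T'|_P$ are operator-valued weights onto a different base ($\rb(N,\psi)$ rather than $N$), and restriction of an operator-valued weight to a modular-invariant intermediate algebra does \emph{not} automatically commute with Haagerup's cocycle. The identity is precisely what needs proof, and the paper obtains it by applying Theorem~\ref{modular group of bicentralizer} once more, to the $2\times 2$ balanced operator-valued weight $\begin{pmatrix} T & 0\\ 0 & T'\end{pmatrix}$ on $M_2(M)$, reading off the cocycle as a matrix entry of the modular flow and using that $M_2(P)$ is the algebraic bicentralizer of the amplified inclusion. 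You do flag this as ``the main technical obstacle,'' which is the right diagnosis, but you present it as a known fact rather than as something that follows from the modular-group theorem via the balanced-weight trick. With that gap filled, your argument coincides with the paper's.
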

\begin{proof}
%Let $S_0 \in \cP(N, \rb(N,\psi))$ such that $\omega \circ S_0=\psi$. Then $S_0 \circ T=$
Take $T \in \cP(M,N)$. Let $\phi=\omega \circ T|_{\rb(N \subset M, \psi)} \in \cP(\rb(N \subset M,\psi))$. By Theorem \ref{modular group of bicentralizer}, we have $\sigma^\phi=\sigma^{\psi \circ T}|_{\rb(N \subset M,\psi)}$. Thus, there exists a unique operator valued weight $S \in \cP(M, \rb(N \subset M,\psi))$ such that $\psi \circ T=\phi \circ S=\omega \circ T \circ S$. By Proposition \ref{alg bicentralizer of core}, we deduce that $\rb(N \subset c(M),\psi)=c_S(\rb(N \subset M,\psi))$.

Take another $T' \in \cP(M,N)$. In order to show $\psi \circ T'=\omega \circ T' \circ S$, we use \cite[Theorem 6.5 and 6.6]{Ha77b}. Let $T_0=T|_{\rb(N \subset M,\psi)}$ and $T_0'=T'|_{\rb(N \subset M,\psi)}$. It follows from Theorem \ref{modular group of bicentralizer} and the usual 2 by 2 matrix trick that 
$$(DT'_0 : DT_0)_t =(D(\psi \circ T') : D(\psi \circ T))_t=(DT': DT)_t$$ for all $t \in \R$. Therefore, we get
$$ (D(\omega \circ T' \circ S) : D(\omega \circ T \circ S))_t=(D( \omega \circ T'_0) : D(\omega \circ T_0))_t=(DT'_0 : DT_0)_t=(D(\psi \circ T') : D(\psi \circ T))_t$$ for all $t \in \R$. Since $\psi \circ T=\omega \circ T \circ S$, we conclude that $\psi \circ T'=\omega \circ T' \circ S$.
%Take another $T' \in \cP(M,N)$. Then $T'=T(h^{1/2} \cdot h^{1/2} )$ for some positive operator $h$ affiliated with $N' \cap M$. Since $N' \cap M \subset \rb(N \subset M,\psi)$, we have $S(h^{1/2} \cdot h^{1/2}) =h^{1/2}S( \cdot) h^{1/2}$ hence $\psi \circ T'=\omega \circ T' \circ S$. 
\end{proof}

%\begin{prop}
%Let $N \subset M$ be an inclusion of von Neumann algebras with $T \in \cP(M,N)$. The action $\gamma^T : \R \curvearrowright N' \cap c(M)$ defined by $\gamma^T_t(x)=(\psi \circ T)^{\ri t} x (\psi \circ T)^{-\ri t}$ for all $t \in \R$ does not depend on the choice of $\psi \in \cP(N)$.
%\end{prop}
%\begin{proof}
%If $\varphi \in \cP(N)$ is another weight, then $(\varphi \circ T)^{\ri t} (\psi \circ T)^{-\ri t}= \varphi^{\ri t} \psi^{-\ri t} \in N$.
%\end{proof}
%
%\begin{prop}
%Let $N \subset M$ be an inclusion of von Neumann algebras with expectations. Take $T \in \cP(M,N)$. 
%\begin{enumerate}
%\item $c(T)|_{N' \cap c(M)} \in \cP( N' \cap c(M), \cZ(c_T(N)))$.
%\item If the flow of weights $\R^*_+ \curvearrowright \cZ(c_T(N))$ admits an invariant weight $\omega \in \cP(\cZ(c_T(N)))$, then $\gamma^T$ is the modular flow of $\omega \circ c(T)|_{N' \cap c(M)}$.
%\end{enumerate}
%\end{prop}
%\begin{proof}
%(1) Since $N \subset M$ is with expectations, $c(T)|_{N' \cap M}=T|_{N' \cap M}$ is semifinite. Moreover, $c(T)(N' \cap c(M)) \subset N' \cap c_T(N) =\cZ(c_T(N))$. 
%(2) 
%\end{proof}

\subsection{The canonical bicentralizer extension}

\begin{prop}
Let $N \subset M$ an inclusion of von Neumann algebras with $\cP(M,N) \neq \emptyset$. The von Neumann subalgebra $\rb^\sharp(N \subset M) \subset M$ generated by $N$ and $\rb(N\subset M, \varphi)$ does not depend on the choice of $\varphi \in \cP(N)$.
\end{prop}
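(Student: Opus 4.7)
The plan is to lift the question to the core $c(M)$, where the analogous statement is essentially visible, and then descend back to $M$ using Takesaki duality for crossed products.

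First I would fix some $T \in \cP(M,N)$ and observe that for any $\psi \in \cP(N)$, the subalgebra $c_T(N) \subset c(M)$ is generated by $N$ together with the unitaries $\{(\psi \circ T)^{\ri t} \mid t \in \R\}$. Indeed, $c(N) = N \rtimes_{\sigma^\psi} \R$ is generated by $N$ and $(\psi^{\ri t})_{t \in \R}$, and the embedding $i_T : c(N) \rightarrow c(M)$ sends $\psi^{\ri t}$ to $(\psi \circ T)^{\ri t}$. Crucially, $c_T(N)$ depends only on $T$, not on $\psi$, so $N \vee \{\psi \circ T\}''$ is independent of $\psi$. Combining this with Proposition \ref{alg bicentralizer of core}, one gets
\[
N \vee \rb(N \subset c(M),\psi) \;=\; N \vee (N' \cap c(M)) \vee \{\psi \circ T\}'' \;=\; c_T(N) \vee (N' \cap c(M)),
\]
and this join manifestly does not depend on $\psi$.

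Next I would pass back down to $M$. Set $B_\psi := N \vee \rb(N \subset M,\psi) \subset M$ and $\widetilde{B}_\psi := N \vee \rb(N \subset c(M),\psi) \subset c(M)$. Using part (2) of Proposition \ref{alg bicentralizer of core}, I have $\rb(N \subset c(M),\psi) = \rb(N \subset M,\psi) \rtimes_{\sigma^{\psi \circ T}} \R$, which gives the identity $\widetilde{B}_\psi = B_\psi \vee \{(\psi \circ T)^{\ri t}\}''$. Now $B_\psi$ is globally $\sigma^{\psi \circ T}$-invariant: the restriction $\sigma^{\psi \circ T}|_N = \sigma^\psi$ preserves $N$, and $\rb(N \subset M,\psi)$ is $\sigma^{\psi \circ T}$-invariant by Proposition \ref{algebraic generation}(1). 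Since $B_\psi \subset M = c(M)^\theta$ is pointwise fixed by the trace-scaling flow $\theta$ while $\theta_\lambda((\psi \circ T)^{\ri t}) = \lambda^{-\ri t}(\psi \circ T)^{\ri t}$, Theorem \ref{dual action crossed product} identifies $(\widetilde{B}_\psi, B_\psi, ((\psi \circ T)^{\ri t})_{t \in \R})$ as a crossed product extension with dual action $\theta|_{\widetilde{B}_\psi}$. Hence $B_\psi = \widetilde{B}_\psi^{\,\theta}$.

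Combining the two steps: $\widetilde{B}_\psi = c_T(N) \vee (N' \cap c(M))$ does not depend on $\psi$, and therefore neither does its $\theta$-fixed subalgebra $B_\psi = \rb^\sharp(N \subset M)$. The main conceptual point is recognizing that after moving into $c(M)$, the choice of weight only affects the implementing unitaries $(\psi \circ T)^{\ri t}$, which together with $N$ always generate the same algebra $c_T(N)$; the only real work is the crossed-product bookkeeping that ensures taking $\theta$-fixed points really does recover $B_\psi$, for which Proposition \ref{alg bicentralizer of core} and Theorem \ref{dual action crossed product} do all the heavy lifting.
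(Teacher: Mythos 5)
Your proof is correct and follows essentially the same route as the paper: both compute $N \vee \rb(N \subset c(M),\psi) = c_T(N) \vee (N' \cap c(M))$ in the core using Proposition \ref{alg bicentralizer of core}, observe this is $\psi$-independent, and then descend by taking $\theta$-fixed points (equivalently, intersecting with $M$). The only difference is cosmetic: you justify the descent step by invoking Theorem \ref{dual action crossed product} explicitly to identify $(\widetilde{B}_\psi, B_\psi, ((\psi\circ T)^{\ri t}))$ as a crossed product extension, whereas the paper simply writes the identity $\rb(N \subset c(M),\varphi) \vee N = (\rb(N \subset M,\varphi) \vee N) \rtimes_{\sigma^{\varphi\circ T}}\R$ and intersects with $M$ directly; these are the same reasoning.
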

\begin{proof}
Take $T \in \cP(M,N)$. On one hand, we have
$$\rb(N \subset c(M),\varphi) \vee N= (N' \cap c(M)) \vee \{ \varphi \circ T \}'' \vee N= (N' \cap c(M)) \vee c_T(N).$$
On the other hand, under the identification $c(M)=M \rtimes_{\sigma^{\varphi \circ T}} \R$, we have
$$\rb(N \subset c(M),\varphi) \vee N=\rb(N \subset M,\varphi)  \vee \{ \varphi \circ T \}'' \vee N=(\rb(N \subset M,\varphi)  \vee N) \rtimes_{\sigma^{\varphi \circ T}} \R.$$
We conclude that 
$$\rb(N \subset M,\varphi)  \vee N = \left( (N' \cap c(M)) \vee c_T(N) \right) \cap M$$
which clearly does not depend on $\varphi \in \cP(N)$.
\end{proof}

\begin{prop}
Let $N \subset M$ be an inclusion of von Neumann algebras with $\cP(M,N) \neq \emptyset$. Let $e$ be a projection in $N$ or in $N' \cap M$. Then
 $$e\rb^\sharp(N \subset M)e=\rb^\sharp(eNe \subset eMe).$$
\end{prop}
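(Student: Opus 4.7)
Proof plan: The strategy is to use the characterization $\rb^\sharp(N \subset M) = R \cap M$ with $R := (N' \cap c(M)) \vee c_T(N)$ from the previous proposition, compute the corner $eRe$ inside $c(eMe) \cong ec(M)e$ and identify it with $\rb^\sharp(eNe \subset c(eMe))$, and then take $\theta$-fixed points to conclude.

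The first step is to select auxiliary data compatible with $e$. In Case 1 ($e \in N$) we choose $\varphi \in \cP(N)$ with $\sigma^\varphi_t(e) = e$, and in Case 2 ($e \in N' \cap M$) we choose $T \in \cP(M,N)$ with $\sigma^T_t(e) = e$ (available by \cite[Theorem 6.6]{Ha77b}). In either case $e$ is fixed by $\sigma^{\varphi \circ T}$, so under the identification $c(M) = M \rtimes_{\sigma^{\varphi \circ T}} \R$ the projection $e$ commutes with all unitaries $(\varphi \circ T)^{it}$. This yields a canonical identification $c(eMe) \cong ec(M)e$ sending the core generator of $eMe$ to $(\varphi \circ T)^{it}e$. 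Under it, $ec_T(N)e$ corresponds to $c_{T'}(eNe)$ for the induced operator-valued weight $T'$ (the corner of a crossed product by a projection fixed by the group action is the crossed product of the corner), and $e(N' \cap c(M))e$ corresponds to $(eNe)' \cap c(eMe)$ after a routine reduction to the central support $z_e \in \cZ(N)$.

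The core computation is the identity $eRe = e(N' \cap c(M))e \vee ec_T(N)e$. Set $A := N' \cap c(M)$ and $B := c_T(N)$. The algebra $B$ normalizes $A$: unitaries in $N \subset B$ normalize $A$ trivially, and the unitaries $(\varphi \circ T)^{it}$ act on $A$ via the $A$-preserving automorphism $\sigma^{\varphi \circ T}_t$. Hence $AB = BA$ is a $*$-algebra weak$^{*}$-dense in $R = A \vee B$. In Case 1 we have $e \in B$ and $[e,A] = 0$; in Case 2 we have $e \in A$ and $[e,B] = 0$. A short direct check using $e^2 = e$ shows that in either case $e(ab)e = (eae)(ebe)$ for all $a \in A$ and $b \in B$. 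Consequently the corner map $x \mapsto exe$ is multiplicative on the dense subalgebra $AB$, giving $eRe \subseteq eAe \vee eBe$ by weak$^{*}$-continuity; the reverse inclusion is immediate.

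Combining the identifications with this computation, $eRe = ((eNe)' \cap c(eMe)) \vee c_{T'}(eNe) = \rb^\sharp(eNe \subset c(eMe))$ by the previous proposition applied to $eNe \subset eMe$. Both sides are $\theta$-invariant and $e$ is $\theta$-fixed, so taking $\theta$-fixed points yields $e\rb^\sharp(N \subset M)e = \rb^\sharp(eNe \subset eMe)$, as claimed. The subtlest technical point is the identification $ec_T(N)e = c_{T'}(eNe)$ in Case 2, where $e$ lies outside $N$; however, the hypothesis $\sigma^T_t(e) = e$ ensures that $e$ commutes with every element of $c_T(N)$, so that the corner is generated by $eNe = Ne$ together with the unitaries $(\varphi \circ T)^{it}e$, which is exactly the core description of $eNe$ with respect to $T'$.
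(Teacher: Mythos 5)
Your proof is correct, and it takes a genuinely different route from the one in the paper. The paper first reduces to the case $e \in \cZ(N)$, then splits into two cases according to whether $N$ is finite or properly infinite; in the finite case it uses $\rb^\sharp(N\subset M)=N\vee (N'\cap M)$, and in the properly infinite case it picks a dominant weight $\psi$ (with $e\in N_\psi$ when $e\in N$) and computes directly with the dense subspace $N\cdot(N_\psi'\cap M)$ using the explicit description $\rb(N\subset M,\psi)=N_\psi'\cap M$ from Theorem \ref{computation dominant}. You instead work entirely inside the core, starting from the characterization $\rb^\sharp(N\subset M)=\bigl((N'\cap c(M))\vee c_T(N)\bigr)\cap M$ established just before; the key step is the observation that $c_T(N)$ normalizes $N'\cap c(M)$, that in either case ($e\in N_\varphi$ or $e\in N'\cap M$ with $\sigma^T$-invariance) the projection $e$ sits in one of the two factors and commutes with the other, and that consequently the compression $x\mapsto exe$ is multiplicative on the weak$^*$-dense span of $(N'\cap c(M))\cdot c_T(N)$. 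Your approach avoids both the reduction to central projections and the finite/properly-infinite dichotomy, at the cost of having to set up the corner identification $c(eMe)\cong ec(M)e$ and to check $e c_T(N) e = c_{T'}(eNe)$ and $e(N'\cap c(M))e=(eNe)'\cap c(eMe)$ carefully; the paper's dominant-weight route keeps the computation inside $M$ and is arguably more elementary once the type decomposition is accepted. One small remark: the central-support reduction you invoke for the identity $e(N'\cap c(M))e=(eNe)'\cap c(eMe)$ is not really needed, since that identity follows directly in both cases from a commutant computation (as the paper does in Propositions \ref{alg bic corner reduction} and \ref{alg bic commutant corner reduction}).
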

\begin{proof}
First, it is easy to see that the proposition holds when $e \in \cZ(N)$. Thus, up to decomposing into direct summands, we can reduce to the case where $N$ is either finite or properly infinite. If $N$ is finite then $\rb^\sharp(N \subset M)=N \vee (N' \cap M)$ and it is known that $e(N \vee (N' \cap M))e=(eNe) \vee ((eNe)' \cap eMe)$.

Suppose that $N$ is properly infinite. Take a dominant weight $\psi \in \cP(N)$. If $e \in N$, we choose $\psi$ such that $e \in N_\psi$. Then we have
$$ e ( N \cdot (N_\psi' \cap M)) e= (eNe) \cdot ((eNe)_{\psi_e}' \cap eMe).$$
Since $ N \cdot (N_\psi' \cap M)$ is dense in $\rb^\sharp(N \subset M)$, we conclude that 
 $$e\rb^\sharp(N \subset M)e=\rb^\sharp(eNe \subset eMe).$$
\end{proof}

\begin{prop}
Let $N \subset M$ an inclusion of von Neumann algebras with expectations. There exists a unique normal conditional expectation from $M$ onto $\rb^\sharp(N \subset M)$ and we have $\rb^\sharp(N \subset c(M))=c(\rb^\sharp(N \subset M))$.
\end{prop}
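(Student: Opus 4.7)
The first observation comes from the previous proposition, which gives the explicit formula $\rb^\sharp(N \subset M) = ((N' \cap c(M)) \vee c_T(N)) \cap M$. Since $(N' \cap c(M)) \cap M = N' \cap M$, this subalgebra contains $N' \cap M$. Combined with $N \subset \rb^\sharp(N \subset M)$, this implies
\[
\rb^\sharp(N \subset M)' \cap M \subset N' \cap M \subset \rb^\sharp(N \subset M),
\]
and consequently $\rb^\sharp(N \subset M)' \cap M = \cZ(\rb^\sharp(N \subset M))$. This relative commutant calculation will be the key to uniqueness.

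For existence, I would fix $E \in \cE(M,N)$ and a faithful normal state $\varphi \in N_*$. By Takesaki's theorem, $N$ is globally $\sigma^{\varphi \circ E}$-invariant, and by Proposition~\ref{algebraic generation}(1) so is $\rb(N \subset M, \varphi)$; hence $\rb^\sharp(N \subset M)$ is $\sigma^{\varphi \circ E}$-invariant. A second application of Takesaki's theorem then yields a $\varphi \circ E$-preserving normal conditional expectation $F \colon M \to \rb^\sharp(N \subset M)$. To identify $c(\rb^\sharp(N \subset M))$ with $\rb^\sharp(N \subset c(M))$, Proposition~\ref{alg bicentralizer of core} gives $\rb(N \subset c(M), \varphi) = \rb(N \subset M, \varphi) \vee \{(\varphi \circ E)^{it}\}''$ inside $c(M) = M \rtimes_{\sigma^{\varphi \circ E}} \R$, so $\rb^\sharp(N \subset c(M)) = \rb^\sharp(N \subset M) \vee \{(\varphi \circ E)^{it}\}''$; on the other hand, the canonical embedding $c_F(\rb^\sharp(N \subset M)) \hookrightarrow c(M)$ sends the modular unitaries of $(\varphi \circ E)|_{\rb^\sharp(N \subset M)}$ to $((\varphi \circ E)|_{\rb^\sharp(N \subset M)} \circ F)^{it} = (\varphi \circ E)^{it}$, so its image is the same algebra.

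For uniqueness, let $F_1, F_2 \colon M \to P := \rb^\sharp(N \subset M)$ be two normal conditional expectations and fix a faithful normal state $\omega$ on $P$. Since each $F_i$ preserves $\omega \circ F_i$, Takesaki's theorem gives $\sigma^{\omega \circ F_i}|_P = \sigma^\omega$. Hence the Connes cocycle $u_t = (D(\omega \circ F_1) : D(\omega \circ F_2))_t$ commutes with $P$ for all $t$, so $u_t \in P' \cap M = \cZ(P) \subset P$ by the first paragraph. Consequently $(\omega \circ F_1)^{it}$ and $(\omega \circ F_2)^{it}$ differ by a unitary in $P$ inside $c(M)$, so the core embeddings $c_{F_1}(P)$ and $c_{F_2}(P)$ coincide as subalgebras of $c(M)$. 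Then $c(F_1)$ and $c(F_2)$ are both trace-preserving normal conditional expectations of $c(M)$ onto this common subalgebra, and trace-preserving expectations onto a given subalgebra of a semifinite von Neumann algebra are unique, so $c(F_1) = c(F_2)$; restriction to $M$ yields $F_1 = F_2$. The main obstacle is this uniqueness step, which requires synthesizing the relative commutant calculation with a cocycle argument transported to the semifinite setting of $c(M)$.
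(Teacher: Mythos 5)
Your overall strategy is the same as the paper's: produce a preserving expectation onto $\rb^\sharp(N\subset M)$ from $\sigma$-invariance, identify the relative commutant inclusion $\rb^\sharp(N\subset M)' \cap M \subset N' \cap M \subset \rb^\sharp(N\subset M)$, and deduce uniqueness and the core identity. However, there is a genuine gap in the hypotheses you invoke. The Proposition only assumes that $N \subset M$ is \emph{with expectations}, which in the paper's terminology means a faithful \emph{family} of normal conditional expectations exists; it does \emph{not} guarantee $\cE(M,N) \neq \emptyset$, nor that $N$ or $N' \cap M$ is countably decomposable. You fix $E \in \cE(M,N)$, a faithful normal state $\varphi \in N_*$, and later a faithful normal state $\omega$ on $P = \rb^\sharp(N \subset M)$; none of these objects are guaranteed to exist under the stated hypothesis. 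The paper instead takes a weight $\varphi \in \cP(N)$ and an operator-valued weight $T \in \cP(M,N)$, and this comes at a cost that your state-based picture silently avoids: unlike $\varphi \circ E$, the weight $\varphi \circ T$ need not restrict to something semifinite on $\rb^\sharp(N\subset M)$, and one must verify this. The paper does so by noting that ``with expectations'' is precisely equivalent to $T|_{N'\cap M}$ being semifinite, and that $N \vee (N'\cap M) \subset \rb^\sharp(N \subset M)$. That semifiniteness check is the actual content of the existence step and has no counterpart in your argument.

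As a secondary remark, your uniqueness argument via Connes cocycles and trace-preserving expectations on the core is correct in spirit but overcomplicated: once $\rb^\sharp(N\subset M)'\cap M \subset \rb^\sharp(N\subset M)$ is established, the standard fact that a subalgebra $P$ with $P'\cap M \subset P$ admits at most one faithful normal conditional expectation already finishes the job (and in the non-countably-decomposable case you would still need to run your cocycle argument with a weight rather than a state).
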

\begin{proof}
Take $\varphi \in \cP(N)$ and $T \in \cP(M,N)$. Since $N \subset M$ is with expectations, $T|_{N' \cap M}$ is semifinite. But $N \vee (N' \cap M) \subset \rb^\sharp(N \subset M)$, hence $\varphi \circ T$ is semifinite on $\rb^\sharp(N \subset M)$. Since $\rb^\sharp(N \subset M)$ is globally $\sigma^{\varphi \circ T}$-invariant, we conclude that there exists a faithful normal $\varphi \circ T$-preserving conditional expectation $E \in \cE(M, \rb^\sharp(N \subset M))$. Since $\rb^\sharp(N \subset M)' \cap M \subset N' \cap M \subset \rb^\sharp(N \subset M)$, then $\cE(M, \rb^\sharp(N \subset M))$ is a singleton. Finally, $\rb^\sharp(N \subset c(M))$ is generated by $\rb^\sharp(N \subset M)$ and $\{ \varphi \circ T \}''$, hence $\rb^\sharp(N \subset c(M))=c(\rb^\sharp(N \subset M))$.
\end{proof}

%\begin{prop}
%Let $N \subset M$ be an inclusion of von Neumann algebras with $E \in \cE(M,N)$. Let $\psi \in \cP(N)$ be an integrable weight. The modular flow of $E|_{N_\psi' \cap M} : N_\psi' \cap M \rightarrow \cZ(N_\psi)$ is given by $\sigma^{\psi \circ E} |_{N_\psi' \cap M}$.
%\end{prop}
%\begin{proof} Let $Q=\rb^\sharp(N \subset M)$. Define an action $\eta : \R \curvearrowright Q$ by letting $\eta_t=\sigma_t^{\psi \circ E} \circ \alpha_{-t}^E$ for all $t \in \R$. Let $Q^\eta$ be the fixed point algebra of $\eta$ and let $F=E|_{Q^\eta}$. Then $F \in \cE(Q^\eta, N_\psi)$. Observe that $N_\psi' \cap M \subset Q^\eta$. Therefore the modular flow of $E|_{N_\psi' \cap M} =F|_{N_\psi' \cap M} : N_\psi' \cap M \rightarrow \cZ(N_\psi)$ is given by $\sigma^F=\sigma^{\varphi \circ F}|_{N_\psi' \cap M}$ for any $\varphi \in \cP(N_\psi)$. 
%
%Observe that $\eta$ is integrable, because $\eta|_N = \sigma^\psi$ and $\sigma^\psi$ is integrable. Let $T \in \cP(Q,Q^\eta)$ be the Haar operator valued weight associated to $\eta$. The restriction of $T$ to $N$ is the Haar weight $S \in \cP(N,N_\psi)$ associated to $\sigma^\psi=\eta|_N$. Since $\eta$ commutes with $E$, we have $F \circ T=S \circ E$. 
%
%Write $\psi = \tau \circ S$ for some trace $\tau \in \cP(N_\psi)$. Then $\psi \circ E = \tau \circ S \circ E=\tau \circ F \circ T$. This shows that $\sigma^{\psi \circ E}|_{Q^\eta} = \sigma^{\tau \circ F}$. We conclude that $\sigma^F=\sigma^{\tau \circ F}|_{N_\psi' \cap M}=\sigma^{\psi \circ E}|_{N_\psi' \cap M}$ as we wanted.
%\end{proof}

\begin{prop} \label{structure of bicentralizer extension}
Let $N \subset M$ be an inclusion of von Neumann algebras with expectations. Suppose that $N$ is properly infinite and let $\psi \in \cP(N)$ be a dominant weight and $u=(u_\lambda)_{\lambda \in \R^*_+}$ be a $\psi$-scaling group.
\begin{enumerate}[(1)]
\item $N_\psi$ is with expectations inside $N_\psi \vee (N_\psi' \cap c(M))$, hence $$N_\psi \vee (N_\psi' \cap c(M))  \cong N_\psi \ovt_{\cZ(N_\psi)} (N_\psi' \cap c(M)).$$
\item $\left( \rb^\sharp(N \subset c(M)) , N_\psi \vee (N_\psi' \cap c(M)), u \right)$ is a crossed product extension, hence
$$\rb^\sharp(N \subset c(M)) \cong \left( N_\psi \ovt_{\cZ(N_\psi)} (N_\psi' \cap c(M)) \right) \rtimes_{\Ad(u)} \R^*_+$$
and 
$$\rb^\sharp(N \subset M) \cong \left( N_\psi \ovt_{\cZ(N_\psi)} (N_\psi' \cap M) \right) \rtimes_{\Ad(u)} \R^*_+.$$
\end{enumerate} 
\end{prop}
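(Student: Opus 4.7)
The plan proceeds by establishing (2) via Proposition \ref{criterion expectation crossed product} applied through an intermediate reduction to $c(N)$, after first handling (1) as an application of Haagerup's criterion \cite[Theorem 6.6]{Ha77b}.

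For (1), I would prove the stronger statement that $N_\psi$ is with expectations in all of $c(M)$, from which the assertion for $N_\psi \vee (N_\psi' \cap c(M))$ follows by restriction and Corollary \ref{corollary expectation tensor product} applies. Choose $E \in \cE(M,N)$ and let $T_0 \in \cP(N, N_\psi)$ be the operator valued weight coming from the crossed product extension $(N, N_\psi, u)$; set $T := T_0 \circ I^\theta \circ c(E) \in \cP(c(M), N_\psi)$, with $I^\theta \in \cP(c(N), N)$ the Haar integration of the trace scaling action. To invoke \cite[Theorem 6.6]{Ha77b}, it suffices to check that $T|_{N_\psi' \cap c(M)}$ is semifinite: by $N_\psi$-bimodularity, $c(E)$ maps $N_\psi' \cap c(M)$ onto $N_\psi' \cap c(N) = \cZ(c(N)) \vee \{\psi^{\ri t}\}''$ (the latter equality by Theorem \ref{computation dominant} applied to the trivial inclusion $N \subset N$), on which $I^\theta$ is semifinite with image in $\cZ(N) \subset N_\psi$ since $\theta$ acts by translation on $\{\psi^{\ri t}\}''$, while $T_0$ acts as the identity on $N_\psi \supset \cZ(N)$.

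For (2), Theorem \ref{computation dominant} identifies $\rb(N \subset c(M), \psi) = N_\psi' \cap c(M)$, so $\rb^\sharp(N \subset c(M)) = N \vee (N_\psi' \cap c(M))$; since $u_\lambda \in N$ normalizes both $N_\psi$ and $N_\psi' \cap c(M)$, the triple $(\rb^\sharp(N \subset c(M)), N_\psi \vee (N_\psi' \cap c(M)), u)$ is a split $\R^*_+$-extension. I would apply Proposition \ref{criterion expectation crossed product} with intermediate data $A_0 := c(N)$ and $B_0 := N_\psi \vee (N_\psi' \cap c(N)) = N_\psi \vee \cZ(c(N)) \vee \{\psi^{\ri t}\}''$. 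That $(c(N), B_0, u)$ is itself a crossed product extension follows from Theorem \ref{dual action crossed product} via the one-parameter automorphism group $\Ad(\psi^{\ri t})|_{c(N)}$: it scales $u_\lambda$ by $\lambda^{\ri t}$ (restricting to $\sigma^\psi$ on $N$) and fixes $B_0$ pointwise, since $\psi^{\ri t}$ commutes with $N_\psi$, with $\cZ(c(N))$, and trivially with itself. The required conditional expectation is $c(E)|_{\rb^\sharp(N \subset c(M))} : \rb^\sharp(N \subset c(M)) \to c(N)$, which is onto (because $c(E)(N \vee (N_\psi' \cap c(M))) = N \vee (N_\psi' \cap c(N)) = c(N)$), sends $N_\psi \vee (N_\psi' \cap c(M))$ onto $B_0$ by $N_\psi$-bimodularity, and is faithful whenever $E$ is.

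The two displayed isomorphisms then follow: the first by combining (2) with (1), the second by taking $\theta$-fixed points, using that $\theta$ fixes $u_\lambda \in M$ pointwise and commutes with $\Ad(u_\lambda)$, so that the crossed product structure descends to $(N_\psi \vee (N_\psi' \cap M)) \rtimes_{\Ad(u)} \R^*_+$, while an analog of (1) for $M$ in place of $c(M)$, obtained using the bounded operator valued weight $T_0 \circ E \in \cP(M, N_\psi)$, supplies the amalgamated tensor product identification $N_\psi \vee (N_\psi' \cap M) \cong N_\psi \ovt_{\cZ(N_\psi)} (N_\psi' \cap M)$. The main subtle point I anticipate is that the naive candidate $\Ad(\psi^{\ri t})$ on $\rb^\sharp(N \subset c(M))$ does \emph{not} fix $N_\psi' \cap c(M)$ pointwise, since by Theorem \ref{computation dominant} the element $(\psi \circ T)^{\ri t}$ implements the non-trivial $\R$-action in the crossed product $N_\psi' \cap c(M) = (N' \cap c(M)) \rtimes \R$; this is precisely why the intermediate reduction through $c(N)$, where the commutant of $N_\psi$ collapses to the abelian algebra $\cZ(c(N)) \vee \{\psi^{\ri t}\}''$ and $\Ad(\psi^{\ri t})$ genuinely acts trivially, is necessary.
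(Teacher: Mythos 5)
Your plan for item (2) — invoking Proposition \ref{criterion expectation crossed product} with the intermediate copy of $c(N)$ inside the core, recognizing $(c(N), N_\psi \vee \{\psi^{\ri t}\}'', u)$ as a crossed product extension via the dual action $\Ad(\psi^{\ri t})$, and restricting the core expectation $c(E)$ — is essentially the paper's argument. (You implicitly replace the paper's faithful \emph{family} of expectations onto $c_T(N)$ by a single $E \in \cE(M,N)$, which requires $N' \cap M$ countably decomposable; the paper's choice of $T$ via $S \in \cP(N' \cap M, \cZ(N))$ avoids this reduction.) The discussion at the end about the ``subtle point'' is accurate: the dual action does have to be taken on the full $c(N)$ rather than on $N_\psi' \cap c(M)$ directly, for exactly the reason you state.

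The gap is in your treatment of (1). You claim the stronger statement that $N_\psi$ is with expectations in all of $c(M)$ via $T := T_0 \circ I^\theta \circ c(E)$, but the two ingredients you use to check semifiniteness of $T$ on $N_\psi' \cap c(M)$ are both off. First, $T_0$ is the \emph{dual operator valued weight} of the crossed product $(N,N_\psi,u)$; it does not act as the identity on $N_\psi$ (indeed $T_0$ and the $\psi$-preserving conditional expectation $E_\psi \in \cE(N,N_\psi)$ have different modular cocycles: $\sigma^{\psi}_t(u_\lambda) = \lambda^{\ri t}u_\lambda$ while the dual weight of $\psi|_{N_\psi}$ has $u_\lambda \mapsto \lambda^{-\ri t}u_\lambda$, so $\psi|_{N_\psi}\circ T_0 \ne \psi = \psi|_{N_\psi}\circ E_\psi$). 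The conditional expectation $E_\psi$, not the dual weight $T_0$, is the map that restricts to the identity on $\cZ(N_\psi)$. Second, the image of $I^\theta|_{N_\psi'\cap c(N)}$ is $(N_\psi'\cap c(N))^\theta = N_\psi'\cap N = \cZ(N_\psi)$, not $\cZ(N) = \cZ(c(N))^\theta$: the decomposition $N_\psi'\cap c(N) = \cZ(c(N))\vee\{\psi^{\ri t}\}''$ obscures this because $\theta$ acts nontrivially on both factors, and the diagonal fixed points are the ``twisted'' invariants which form the strictly larger $\cZ(N_\psi) \cong \cZ(c(N))$. The clean decomposition is $N_\psi'\cap c(N) = \cZ(N_\psi)\vee\{\psi^{\ri t}\}'' \cong \cZ(N_\psi)\ovt\{\psi^{\ri t}\}''$ with $\theta$ trivial on the first factor and translation on the second; then $I^\theta$ is manifestly semifinite onto $\cZ(N_\psi)$, and one should compose with $E_\psi$, not $T_0$. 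The paper's proof sidesteps all of this: rather than claim anything about $\cP(c(M),N_\psi)$, it observes directly that $N_\psi\vee(N_\psi'\cap c_T(N)) = N_\psi\vee\{\psi\circ T\}'' \cong N_\psi\ovt\{\psi\}''$ has an obvious expectation onto $N_\psi$, and then restricts a normal conditional expectation $c(M)\to c_T(N)$ to pass to the intermediate inclusion $N_\psi\vee(N_\psi'\cap c_T(N)) \subset N_\psi\vee(N_\psi'\cap c(M))$. Similarly, your remark that ``$T_0\circ E$'' is a \emph{bounded} operator valued weight from $M$ onto $N_\psi$ is not correct for the same reason; $E_\psi\circ E$ is what is bounded.
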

\begin{proof}
We will reduce the problem to the case $N=M$. Take $S \in \cP(N' \cap M, \cZ(N))$ such that $S|_{(N' \cap M)^{\sigma^S}}$ is semifinite. Take $T \in \cP(M,N)$ such that $T|_{N' \cap M}=S$. Then $c_T(N)' \cap M=(N' \cap M)^{\sigma^S}$, hence $c(T)|_{c_T(N)' \cap c(M)}$ is semifinite. This shows that $c_T(N) \subset c(M)$ is with expectations, i.e.\  it admits faithful family of normal conditional expectations. Take $E$ a normal conditional expectation from $c(M)$ onto $c_T(N)$. Then $E$ restricts to a normal conditional expectation from $N_\psi \vee (N_\psi' \cap c(M))$ onto $N_\psi \vee (N_\psi' \cap c_T(N))$. This shows that $N_\psi \vee (N_\psi' \cap c_T(N)) \subset N_\psi \vee (N_\psi' \cap c(M))$ is with expectations.

(1) We have $N_\psi' \cap c(N)=(N_\psi' \cap N) \vee \{\psi\}''=\cZ(N_\psi) \vee \{\psi\}''$, hence $N_\psi \vee (N_\psi' \cap c(N))=N_\psi \vee \{ \psi \}'' \cong N_\psi \ovt \{ \psi \}''$ and this shows that $N_\psi$ is with expectation in $N_\psi \vee (N_\psi ' \cap c(N))$. From the reduction above, we conclude that $N_\psi \subset (N_\psi \vee N_\psi' \cap c(M))$ is with expectations and we can apply Corollary \ref{corollary expectation tensor product}.

(2) Observe that $(c(N), N_\psi \vee \{ \psi \}'', u)$ is a crossed product extension because it admits a dual action given by $\beta_t = \Ad( \psi^{\ri t})$ for all $t \in \R$ that satisfies $c(N)^\beta=N_\psi \vee \{ \psi \}''$ (we use Theorem \ref{dual action crossed product}). Since every normal conditional expectation from $c(M)$ onto $c_T(N)$ restricts to a conditional expectation from $N_\psi \vee (N_\psi' \cap c(M))$ onto $N_\psi \vee (N_\psi' \cap c_T(N))= N_\psi \vee \{ \psi \circ T\}''$, we can apply Proposition \ref{criterion expectation crossed product} to conclude that $(c(M), N_\psi \vee (N_\psi' \cap c(M)), u)$ is a crossed product extension. 
\end{proof}

\begin{prop} \label{no type 0 expectation}
Let $N \subset M$ an inclusion of von Neumann algebras with expectations. If $N$ has no type $\III_0$ summand, then $N$ is with expectations in $N \vee (N' \cap c(M))$, hence 
$$N \vee (N' \cap c(M)) \cong N \ovt_{\cZ(N)} (N' \cap c(M)).$$
\end{prop}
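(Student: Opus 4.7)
The plan is to prove that $N$ is with expectations in $N \vee (N' \cap c(M))$; the amalgamated tensor product decomposition will then follow from Corollary \ref{corollary expectation tensor product}. The strategy is two-fold: first reduce to the case $M = N$ by means of the functor $c$, then address each type summand of $N$ separately, using crucially the exclusion of type $\III_0$.

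For the reduction, given $E$ in a faithful family in $\cE(M, N)$, the extension $c(E) \in \cE(c(M), c(N))$ is $c(N)$-bimodular and in particular $N$-bimodular, so it sends $N' \cap c(M)$ into $N' \cap c(N) = \cZ(c(N))$ by the Connes--Takesaki relative commutant theorem. Restricting $c(E)$ to $N \vee (N' \cap c(M))$ therefore gives a normal conditional expectation onto $N \vee \cZ(c(N))$ that fixes $N$, and varying $E$ yields a faithful family of such restrictions. It will therefore suffice to produce a faithful family of normal conditional expectations from $N \vee \cZ(c(N))$ onto $N$, that is, to prove the proposition in the case $M = N$.

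In the case $M = N$, I would decompose $N = N_{\rm sf} \oplus N_{\III_\Lambda} \oplus N_{\III_1}$ by type, using the absence of a $\III_0$ summand, and treat each piece. The $\III_1$ case is immediate because $\cZ(c(N_{\III_1})) = \cZ(N_{\III_1}) \subset N_{\III_1}$ by the very definition of type $\III_1$. For the semifinite part, I would fix a trace $\tau$; because $\sigma^\tau = \id$, one has $c(N_{\rm sf}) \cong N_{\rm sf} \ovt \rL^\infty(\R)$, hence $N_{\rm sf} \vee \cZ(c(N_{\rm sf})) \cong N_{\rm sf} \ovt \rL^\infty(\R)$, and $\id \otimes \mu$ for a probability measure $\mu$ on $\R$ with full support yields a faithful family of expectations onto $N_{\rm sf}$. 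For the $\III_\Lambda$ part, the trace-scaling action on $\cZ(c(N_{\III_\Lambda}))$ has compact orbit closures over each fiber of $\cZ(N_{\III_\Lambda})$ (by the defining property of type $\III_\Lambda$, the kernel contains $\Lambda^\Z$ with $0 < \Lambda < 1$), so fiberwise Haar integration produces a faithful normal $\cZ(N_{\III_\Lambda})$-bimodular conditional expectation $\rho : \cZ(c(N_{\III_\Lambda})) \to \cZ(N_{\III_\Lambda})$.

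To promote $\rho$ to a normal conditional expectation $\tilde\rho : N \vee \cZ(c(N)) \to N$ in the $\III_\Lambda$ case, I would disintegrate $N$ over its center into factors $N = \int_Y^\oplus N_y \, d\mu(y)$, with compatible decompositions $\cZ(c(N)) = \int_Y^\oplus A_y \, d\mu(y)$ and $\rho = \int_Y^\oplus \rho_y \, d\mu(y)$. In each fiber, $N_y$ is a factor commuting with the abelian $A_y$ and $N_y \cap A_y = \C$, so $N_y \vee A_y \cong N_y \ovt A_y$, and $\id_{N_y} \otimes \rho_y$ defines the fiber expectation; these reassemble into $\tilde\rho$. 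I expect the main obstacle to be carrying out the $\III_\Lambda$ construction with care: ensuring that the fiberwise Haar integration over a bundle of circles whose periods vary measurably with $\Lambda$ gives a well-defined normal conditional expectation, and that the fiberwise factor-abelian tensor product identifications assemble measurably into the claimed global expectation.
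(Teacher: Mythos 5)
Your reduction to $M = N$ and your handling of the semifinite and $\III_1$ summands match the paper's proof. The gap is in the $\III_\Lambda$ case: the disintegration of $N$ over $\cZ(N)$ into factors, the fiberwise Haar averaging, and the fiberwise tensor-product reassembly all presuppose that $N_*$ is separable, which the proposition does not assume; and even in the separable case you acknowledge yourself that the measurable variation of the orbit period with $\Lambda$ is a nontrivial issue you have not resolved.

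The missing idea is to avoid fiberwise constructions altogether by exploiting the \emph{extended} trace-scaling action $\widetilde\theta : \rL^0(\cZ(N),\R^*_+) \curvearrowright c(N)$. Set $h = \log \Lambda$ and define $\alpha_t := \widetilde\theta_{e^{th}} = \widetilde\theta_{\Lambda^t}$. This is a flow on $c(N)$ with $c(N)^\alpha = c(N)^\theta = N$. Since $\widetilde\theta_\Lambda$ acts trivially on $\cZ(c(N))$ by the very definition of type $\III_\Lambda$, the automorphism $\alpha_1$ acts trivially on $N \vee \cZ(c(N))$, so $\alpha$ restricts to a \emph{globally} periodic flow (period $1$) on $N \vee \cZ(c(N))$ whose fixed-point algebra is $N$. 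Haar averaging over $\R/\Z$ then produces the desired faithful normal conditional expectation directly, with no disintegration, no measurability questions, and no separability hypothesis. Your averaging idea was correct in spirit; carry it out through $\widetilde\theta_\Lambda$ rather than fiberwise.
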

\begin{proof}
We first prove the proposition when $N=M$. We want to show that $N$ is with expectations in $N \vee \cZ(c(N))$.

If $N$ is semifinite, $N$ is with expectations in $c(N)$ and we have nothing to do. If $N$ is of type $\III_1$, then $\cZ(c(N))=\cZ(N)$, hence $N \vee \cZ(c(N))=N$ and again we have nothing to do.

Now suppose that $N$ is of type $\III_\Lambda$ with $\Lambda \in \cZ(N)$, $0 < \Lambda < 1$ (see Section \ref{prelim modular theory}). The scaling flow $\theta : \R^*_+ \curvearrowright c(N)$ extends to a continuous action $\widetilde{\theta} : \rL^0(\cZ(N),\R^*_+) \curvearrowright c(N)$ where $\rL^0(\cZ(N),\R^*_+)$ is the abelian topological group of all unbounded positive functions affiliated with $\cZ(N)$. By definition of the type function, $\widetilde{\theta}_{\Lambda}$ acts trivially on $\cZ(c(N))$. Let $h=\log(\Lambda)$. Define a new flow $\alpha : \R \curvearrowright c(N)$ by the formula $\alpha_t=\widetilde{\theta}_{e^{th}}$ for all $t \in \R$. We have $c(N)^\alpha=c(N)^\theta=N$. Moreover, $\alpha_1$ acts trivially on $\cZ(c(N))$, hence it also acts trivially on $N \vee \cZ(c(N))$. Therefore $\alpha$ restricts to a periodic flow on $N \vee \cZ(c(N))$ whose fixed point algebra is $N$. The Haar conditional expectation associated to $\alpha$ then provides us with a faithful normal conditional expectation from $N \vee \cZ(c(N))$ onto $N$.

Now, suppose $N \subset M$ is an inclusion with expectations. Take $T \in \cP(M,N)$. Then $c_T(N)$ is with expectations in $c(M)$ and every normal conditional expectation from $c(M)$ to $c(N)$ restricts to a normal conditional expectation from $N \vee (N' \cap c(M))$ onto $N \vee \cZ(c_T(N))$. This shows that $N \vee \cZ(c_T(N))$ is with expectations in $N \vee (N' \cap c(M))$. By the first part of the proof, we conclude that $N$ is with expectations in $N \vee (N' \cap c(M))$. By Proposition \ref{corollary expectation tensor product}, we get $N \vee (N' \cap c(M)) \cong N \ovt_{\cZ(N)} (N' \cap c(M))$.
\end{proof}

\begin{rem} \label{type 0 no expectation}
Suppose that $N$ is a von Neumann algebra of type $\III_0$. Then there is no normal conditional expectation from $N \vee \cZ(c(N))$ onto $N$, because in this case we have $N \vee \cZ(c(N))=c(N)$. Indeed, since $N$ is of type $\III_0$, the flow of weights $\theta : \R^*_+ \curvearrowright \cZ(c(N))$ is free. An action $\alpha : G \curvearrowright A$ of a locally compact group $G$ on an abelian von Neumann algebra $A$ is free if and only if $\alpha(A) \vee (A \otimes 1) = A \ovt \rL^\infty(G)$, where $\alpha : A \rightarrow A \ovt \rL^\infty(G)$ is the natural coaction map. Using this fact, one verifies that if the flow of weights is free and $\psi$ is a dominant weight on $N$, then $\cZ(N_\psi) \vee \cZ(c(N)) = N_\psi ' \cap c(N) \cong \cZ(N_\psi) \ovt \{ \psi\}''$, hence $N \vee \cZ(c(N)) = c(N)$.
\end{rem}

\begin{prop}
Let $N \subset M$ be an inclusion of von Neumann algebras with expectations. Suppose that $N$ is of type $\III_1$. There exists a unique flow $\rb^\sharp : \R^*_+ \curvearrowright \rb^\sharp(N \subset c(M))$ such that :
\begin{enumerate}
\item $N$ is fixed by $\rb^\sharp$.
\item $\rb^\sharp_\lambda(x)=\rb^{\varphi}_\lambda(x)$ for all $\lambda \in \R^*_+$, all $\varphi \in \cP(N)$ and all $x \in\rb(N \subset c(M),\varphi)$.
\end{enumerate}
Moreover, $\rb^\sharp(N \subset c(M))^{\rb^\sharp}=N \vee (N' \cap c(M))$.
\end{prop}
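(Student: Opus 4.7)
The strategy is to build $\rb^\sharp$ explicitly by exploiting the structural decomposition of $\rb^\sharp(N \subset c(M))$ from Proposition \ref{structure of bicentralizer extension}. Since $N$ is of type $\III_1$, it is properly infinite, so I may fix a dominant weight $\psi \in \cP(N)$ with a $\psi$-scaling one-parameter unitary group $u : \R^*_+ \to \cU(N)$. Proposition \ref{structure of bicentralizer extension} then gives an identification
$$\rb^\sharp(N \subset c(M)) \cong \bigl( N_\psi \ovt_{\cZ(N_\psi)} (N_\psi' \cap c(M)) \bigr) \rtimes_{\Ad(u)} \R^*_+,$$
and I will define $\rb^\sharp_\lambda$ directly on this decomposition.

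The construction is: let $\rb^\sharp_\lambda$ act as the identity on $N_\psi$, as $\Ad(u_\lambda) = \rb^\psi_\lambda$ on $N_\psi' \cap c(M)$ (the formula from Theorem \ref{ucp map transition III1}(4)), and as the identity on each unitary $u_\mu$. For the map $\id_{N_\psi} \otimes \Ad(u_\lambda)$ to descend to the amalgamated tensor product, I need $\Ad(u_\lambda)$ to fix $\cZ(N_\psi)$ pointwise. This follows from the relative commutant theorem: $\cZ(N_\psi)$ commutes with both $N_\psi$ and $\psi^{\ri t}$, hence sits inside $\cZ(c(N)) = N' \cap c(N)$, and thus commutes with $u_\lambda \in N$. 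Extendability to the crossed product is automatic because $\id_{N_\psi} \otimes \Ad(u_\lambda)$ commutes with each $\Ad(u_\mu)$ (since $u_\lambda u_\mu = u_\mu u_\lambda$). The resulting $\rb^\sharp_\lambda$ is an automorphism (inverse $\rb^\sharp_{\lambda^{-1}}$), and continuity of $\lambda \mapsto \rb^\sharp_\lambda$ follows from continuity of $u$.

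Property (1) is immediate since $N$ is generated by $N_\psi$ and the $u_\mu$, both pointwise fixed. For property (2), fix $\varphi \in \cP(N)$ and $T \in \cP(M,N)$, and write the Connes cocycle $v_t = (\varphi \circ T)^{\ri t}(\psi \circ T)^{-\ri t} \in N' \cap c(M)$. Since $N' \cap c(M) \subset N_\psi' \cap c(M)$ and $\Ad(u_\lambda)$ fixes $N' \cap c(M)$ (as $u_\lambda \in N$), we have $\rb^\sharp_\lambda(v_t) = v_t$. On the other hand, $\Ad(u_\lambda)((\psi \circ T)^{\ri t}) = \lambda^{\ri t}(\psi \circ T)^{\ri t}$ by the scaling relation, so
$$\rb^\sharp_\lambda((\varphi \circ T)^{\ri t}) = v_t \cdot \lambda^{\ri t}(\psi \circ T)^{\ri t} = \lambda^{\ri t}(\varphi \circ T)^{\ri t}.$$
Combined with the fact that $\rb^\sharp_\lambda$ fixes $N' \cap c(M)$, this matches the characterization of $\rb^\varphi_\lambda$ on $\rb(N \subset c(M), \varphi) = (N' \cap c(M)) \vee \{ \varphi \circ T\}''$ given by Theorem \ref{ucp map transition III1}. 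Uniqueness follows because $\rb^\sharp(N \subset c(M))$ is generated by $N$ together with any $\rb(N \subset c(M), \varphi)$, and conditions (1), (2) pin down the action on each generator.

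For the fixed-point algebra, the computation proceeds through the decomposition. Since $\rb^\sharp_\lambda$ fixes every $u_\mu$, the fixed-point algebra is the crossed product of the fixed points on $N_\psi \ovt_{\cZ(N_\psi)} (N_\psi' \cap c(M))$. Since $\rb^\sharp_\lambda$ is trivial on the first factor and acts as $\Ad(u_\lambda)$ on the second, these fixed points equal $N_\psi \ovt_{\cZ(N_\psi)} (N_\psi' \cap c(M))^{\Ad(u)}$. By Theorem \ref{ucp map transition III1}(5), $(N_\psi' \cap c(M))^{\rb^\psi} = N' \cap c(M)$, which contains $\cZ(N_\psi)$, so the amalgamated tensor product equals $N_\psi \vee (N' \cap c(M))$. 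Taking the crossed product by $\Ad(u)$ (which fixes $N' \cap c(M)$ pointwise and extends $N_\psi$ to $N$) yields $N \vee (N' \cap c(M))$. The main technical care I expect to need is in cleanly justifying the passage of fixed points through both the amalgamated tensor product and the crossed product; in each case this works because the relevant action is trivial on one of the two combined algebras.
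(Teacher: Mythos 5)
Your overall strategy coincides with the paper's: decompose $\rb^\sharp(N \subset c(M))$ via Proposition \ref{structure of bicentralizer extension}, define $\rb^\sharp_\lambda$ as the identity on $N_\psi$, as $\Ad(u_\lambda)$ on $N_\psi' \cap c(M)$, and as the identity on the $u_\mu$. However, there is a genuine gap at the central justification, plus a smaller factual slip.

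The key step is showing that $\Ad(u_\lambda)$ is trivial on $\cZ(N_\psi)$, so that $\id \otimes \Ad(u_\lambda)$ descends to the amalgamated tensor product. You argue that ``$\cZ(N_\psi)$ commutes with both $N_\psi$ and $\psi^{\ri t}$, hence sits inside $\cZ(c(N))$.'' This implication is false: $c(N)$ is generated by $N_\psi$, the $u_\lambda$, and $\psi^{\ri t}$, not merely by $N_\psi$ and $\psi^{\ri t}$ (indeed, $N_\psi \vee \{\psi\}''$ is a proper subalgebra of $c(N)$, as follows from the structure result applied to $N=M$). So commuting with $N_\psi$ and $\psi^{\ri t}$ only puts $\cZ(N_\psi)$ in $N_\psi' \cap c(N)$, not in $\cZ(c(N))$. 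To get it into $\cZ(c(N))$ you also need it to commute with $u_\lambda$ --- which is precisely what you are trying to prove. The argument as written is circular. The correct reasoning (which the paper invokes tersely as ``$\cZ(N_\psi)=\cZ(N)$'') is that for a dominant weight $\psi$, the Connes--Takesaki isomorphism identifies $(c(N),\theta)$ with $(N_\psi,\Ad(u))$, and the type $\III_1$ hypothesis says exactly that $\theta$ acts trivially on $\cZ(c(N))$, hence $\Ad(u)$ is trivial on $\cZ(N_\psi)$ and consequently $\cZ(N_\psi)\subset N'\cap N=\cZ(N)$. This is where the type $\III_1$ hypothesis is actually used, and your proof currently does not use it correctly: for instance, if $N$ were a type $\III_\lambda$ factor with $0<\lambda<1$, then $\cZ(N_\psi)\cong\rL^\infty(\T)$ and $\Ad(u)$ acts ergodically, not trivially, on it --- so the conclusion is genuinely false without type $\III_1$, and your stated justification would not detect this.

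A minor error: the Connes cocycle $v_t = (\varphi\circ T)^{\ri t}(\psi\circ T)^{-\ri t}$ equals $\varphi^{\ri t}\psi^{-\ri t}$, which lies in $N$, not in $N'\cap c(M)$ (unless $\varphi$ and $\psi$ are proportional). Your conclusion $\rb^\sharp_\lambda(v_t)=v_t$ remains true, but because $v_t\in N$ and $\rb^\sharp$ fixes $N$ by property (1), not for the reason you give. Finally, for the fixed-point computation you could avoid the technical passage through the amalgamated tensor product and crossed product entirely by noting, as the paper does, that once $\rb^\sharp$ fixes $N\vee(N'\cap c(M))$ and scales each $(\psi\circ T)^{\ri t}$, Takesaki's criterion (Theorem \ref{dual action crossed product}) directly identifies $N\vee(N'\cap c(M))$ as the fixed-point algebra.
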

\begin{proof}
The uniqueness is clear. Let us prove the existence. By Proposition \ref{structure of bicentralizer extension}, we have $$\rb^\sharp(N \subset c(M)) \cong \left( N_\psi \ovt_{\cZ(N_\psi)} (N_\psi' \cap c(M)) \right) \rtimes_{\Ad(u)} \R^*_+$$
where $\psi \in \cP(N)$ is a dominant weight and $u$ is a $\psi$-scaling group. Let $\alpha=\Ad(u)|_{N_\psi' \cap c(M)}$. Since $N$ is of type $\III_1$, then $\cZ(N_\psi)=\cZ(N)$ is fixed by $\alpha$. Thus we have an action
$$\id \otimes \alpha : \R^*_+ \curvearrowright N_\psi \ovt_{\cZ(N_\psi)} (N_\psi' \cap c(M)).$$
This action $\id \otimes \alpha$ commutes with $\Ad(u)$. Therefore, it extends to an action $$\rb^\sharp : \R^*_+ \curvearrowright \left( N_\psi \ovt_{\cZ(N_\psi)} (N_\psi' \cap c(M)) \right) \rtimes_{\Ad(u)} \R^*_+$$
that fixes $N_\psi$ and $1 \rtimes_{\Ad(u)} \R^*_+$. Now, if we view $\rb^\sharp$ as an action on $\rb^\sharp(N \subset c(M))$, we see that it fixes $N$ and $N' \cap c(M)$. Moreover, 
$$\rb^\sharp_\lambda((\psi \circ T)^{\ri t})=\lambda^{\ri t} (\psi \circ T)^{\ri t}$$ 
for all $(\lambda,t) \in \R^*_+ \times \R$ and $T \in \cP(M,N)$.
This shows that $\rb^\sharp|_{c_T(N)}$ is the trace scaling action, hence $$\rb^\sharp_\lambda((\varphi \circ T)^{\ri t})=\lambda^{\ri t} (\varphi \circ T)^{\ri t}$$
for all $(\lambda,t) \in \R^*_+ \times \R$ and $\varphi \in \cP(N)$. We conclude that $\rb^{\sharp}|_{\rb(N \subset c(M),\varphi)}=\rb^\varphi$ as we wanted.
Finally, by Theorem \ref{dual action crossed product}, we have $\rb^\sharp(N \subset c(M))^{\rb^\sharp}=N \vee (N' \cap c(M))$.
\end{proof}

\section{The analytic bicentralizer} \label{section analytic}
In this section, we will study the \emph{analytic bicentralizer} using ultrapowers, following \cite{HI15} and \cite{AHHM18}. In \cite{Ok21}, it was observed that the ultrapower approach can be used to define the analytic bicentralizer not only for states but also for \emph{strictly semifinite weights}. This will be useful for us. Recall that a weight $\varphi \in \cP(M)$ is \emph{strictly semifinite} if $\varphi|_{M_\varphi}$ is semifinite. In that case, there exists a unique $\varphi$-preserving conditional expectation $E \in \cE(M,M_\varphi)$. We denote by $\cP_s(M) \subset \cP(M)$ the set of all normal faithful strictly semifinite weights. 

Recall that if $N \subset M$ is with expectations then we have natural inclusions $N^\omega \subset M^\omega$ and $M^\omega \subset c(M)^\omega$ (see \cite{AH12}).
\begin{df}
Let $N \subset M$ be a compatible inclusion of von Neumann algebras. For every strictly semifinite $\varphi \in \cP_s(N)$, we define
$$ \rB(N\subset M, \varphi)=(N^\omega_{\varphi^\omega})' \cap M$$
$$ \rB(N\subset c(M), \varphi)=(N^\omega_{\varphi^\omega})' \cap c(M)$$
where $\omega$ is a cofinal ultrafilter on $\N$.
\end{df}
A priori, this definition depends on the choice of the ultrafilter but actually it does not as observed in \cite[Proposition 3.2]{HI15} (for states) and \cite[Proposition 3.13]{Ok21} for strictly semifinite weights. This also true for $c(M)$ thanks to the following proposition.

\begin{prop} \label{anal bicentralizer core}
Let $N \subset M$ be an inclusion of von Neumann algebras with expectations. Take a strictly semifinite weight $\varphi \in \cP_s(N)$ and $T \in \cP(M,N)$. Then under the identification $c(M)=M \rtimes_{\sigma^{\varphi \circ T}} \R$, we have  
$$\rB(N \subset c(M), \varphi)=\rB(N \subset M, \varphi) \rtimes_{\sigma^{\varphi \circ T}} \R.$$
\end{prop}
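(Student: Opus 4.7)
The plan is to apply the Takesaki characterization of crossed product extensions (Theorem \ref{dual action crossed product}) with the trace-scaling flow $\theta : \R^*_+ \curvearrowright c(M)$ as the dual action. Write $\phi = \varphi \circ T \in \cP(M)$, so that under the identification $c(M) = M \rtimes_{\sigma^\phi} \R$, the algebra $c(M)$ is generated by $M$ and the unitaries $(\phi^{it})_{t \in \R}$, with $\theta_\lambda|_M = \id_M$ and $\theta_\lambda(\phi^{it}) = \lambda^{-it} \phi^{it}$. Set $B = \rB(N \subset c(M),\varphi) = (N^\omega_{\varphi^\omega})' \cap c(M)$; the goal is to show that $(B, \rB(N \subset M,\varphi), (\phi^{it}))$ is a crossed product extension.

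First I would check that $\phi^{it} \in B$ for every $t \in \R$. Since $T \in \cP(M,N)$, the restriction $\sigma^\phi|_N$ equals $\sigma^\varphi$. Given $y = (y_i)^\omega \in N^\omega_{\varphi^\omega}$ with $y_i \in N$, the definition of the centralizer gives $\sigma_t^\varphi(y_i) - y_i \to 0$ $*$-strongly along $\omega$, so
\[
\phi^{it}\, y\, \phi^{-it} = (\sigma_t^\phi(y_i))^\omega = (\sigma_t^\varphi(y_i))^\omega = y,
\]
showing that $\phi^{it}$ commutes with $N^\omega_{\varphi^\omega}$. Next I would verify that $B$ is globally $\theta$-invariant: since $\theta_\lambda$ fixes $M \supset N$ pointwise, it acts diagonally as the identity on bounded sequences in $N$, so $\theta_\lambda(N^\omega_{\varphi^\omega}) = N^\omega_{\varphi^\omega}$, and therefore $\theta_\lambda$ preserves $(N^\omega_{\varphi^\omega})' \cap c(M) = B$.

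Now the fixed-point algebra computes as
\[
B^\theta = B \cap c(M)^\theta = (N^\omega_{\varphi^\omega})' \cap c(M) \cap M = (N^\omega_{\varphi^\omega})' \cap M = \rB(N \subset M,\varphi).
\]
Combined with the eigenequation $\theta_\lambda(\phi^{it}) = \lambda^{-it} \phi^{it}$ and the containment $\{\phi^{it} \mid t \in \R\} \subset B$, Theorem \ref{dual action crossed product} (in its converse form) applied to the von Neumann algebra $B$ with the unitary representation $t \mapsto \phi^{it}$ and the action $\theta|_B$ of $\widehat{\R} = \R^*_+$ yields that $(B, B^\theta, (\phi^{it}))$ is a crossed product extension. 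The action of $\R$ on $B^\theta = \rB(N \subset M,\varphi)$ induced by conjugation by $\phi^{it}$ is the restriction of $\sigma^\phi = \sigma^{\varphi \circ T}$ (which is well-defined on $\rB(N \subset M,\varphi)$ because $\sigma^\phi$ preserves $N$ and therefore $N^\omega_{\varphi^\omega}$ and its commutant in $M$). Hence
\[
\rB(N \subset c(M),\varphi) = \rB(N \subset M,\varphi) \rtimes_{\sigma^{\varphi \circ T}} \R,
\]
as desired.

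There is no serious obstacle here: the only point requiring care is the bookkeeping that $\phi^{it}$, viewed inside $c(M)$ rather than as a modular object on $M$, actually implements $\sigma^\phi$ and commutes with $N^\omega_{\varphi^\omega}$ — which reduces to the identity $\sigma^\phi|_N = \sigma^\varphi$ guaranteed by $T \in \cP(M,N)$. Everything else is a direct application of Takesaki's crossed product duality.
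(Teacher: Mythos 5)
Your proof is correct, but it takes a genuinely different route from the paper's. The paper applies Proposition \ref{dixmier in crossed product} (the identity $P' \cap (A \rtimes_\alpha G) = (P' \cap A) \rtimes_\alpha G$ for $P \subset A^\alpha$) with $A = M^\omega$, $\alpha = \sigma^{(\varphi \circ T)^\omega}$, $P = N^\omega_{\varphi^\omega}$, and then intersects the resulting identity in $M^\omega \rtimes \R$ with $M \rtimes_{\sigma^{\varphi\circ T}}\R = c(M)$ to descend from the ultrapower to the algebra of interest. You instead work directly inside $c(M)$: you observe that $(\varphi\circ T)^{\ri t}$ commutes with $N^\omega_{\varphi^\omega}$ (because $\sigma^{\varphi\circ T}|_N = \sigma^\varphi$), that $\rB(N\subset c(M),\varphi)$ is $\theta$-invariant with $\theta$-fixed points equal to $\rB(N\subset M,\varphi)$, and then invoke the converse direction of Theorem \ref{dual action crossed product} to recognize $\rB(N\subset c(M),\varphi)$ as the crossed product $\rB(N\subset M,\varphi)\rtimes_{\sigma^{\varphi\circ T}}\R$. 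Both hinge on the same underlying fact — that $N^\omega_{\varphi^\omega}$ sits in the fixed-point algebra of the relevant modular flow — but you package it through the dual-action characterization of crossed products, which is exactly the technique the paper itself uses earlier for the analogous algebraic-bicentralizer statement (Proposition \ref{algebraic generation} and Proposition \ref{alg bicentralizer of core}). Your route has the advantage of never leaving $c(M)$ and of producing the generation statement ($\rB(N\subset c(M),\varphi)$ is generated by $\rB(N\subset M,\varphi)$ together with $(\varphi\circ T)^{\ri t}$) automatically from the duality theorem; the paper's route avoids invoking the full Takesaki criterion and stays at the more elementary level of commutants in crossed products.
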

\begin{proof}
By applying the first part of Proposition \ref{dixmier in crossed product} to $P=N^\omega_{\varphi^\omega}$ and $\alpha=\sigma^{(\varphi \circ T)^\omega} : \R \curvearrowright M^\omega$, we get
$$  (N^\omega_{\varphi^\omega})' \cap (M^\omega \rtimes_{\sigma^{(\varphi \circ T)^\omega}} \R) = ((N^\omega_{\varphi^\omega})' \cap M^\omega ) \rtimes_{\sigma^{(\varphi \circ T)^\omega}} \R.$$
Taking the intersection with $M \rtimes_{\sigma^{\varphi \circ T}} \R$, we conclude that 
$$  (N^\omega_{\varphi^\omega})' \cap (M \rtimes_{\sigma^{\varphi \circ T}} \R) = ((N^\omega_{\varphi^\omega})' \cap M ) \rtimes_{\sigma^{\varphi \circ T}} \R.$$
\end{proof}

\begin{prop} \label{amplification anal bic}
Let $N \subset M$ be an inclusion of von Neumann algebras with expectations. Let $F$ be a type $\rm I$ factor with its canonical trace $\rm Tr$. For every $\varphi \in \cP_s(N)$, we have
$$\rB(N \ovt F \subset M \ovt F, \varphi \otimes {\rm Tr}) =\rB(N \subset M, \varphi) \otimes 1.$$
The same is true if we replace $M$ by $c(M)$.
\end{prop}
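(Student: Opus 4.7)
The plan is to prove both inclusions by exploiting the fact that $\varphi \otimes \mathrm{Tr}$ has centralizer $N_\varphi \ovt F$, so in particular $1 \otimes F$ always lies in the centralizer. For the inclusion $\subseteq$, let $z \in \rB(N \ovt F \subset M \ovt F, \varphi \otimes \mathrm{Tr})$. Since $1 \otimes F \subset (N \ovt F)_{\varphi \otimes \mathrm{Tr}} \subset (N \ovt F)^\omega_{(\varphi \otimes \mathrm{Tr})^\omega}$, the element $z$ commutes with $1 \otimes F$, hence $z \in (1 \otimes F)' \cap (M \ovt F) = M \otimes 1$, so $z = x \otimes 1$. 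Next, one verifies directly from the modular formula $\sigma^{(\varphi \otimes \mathrm{Tr})^\omega}_t = (\sigma^\varphi_t \otimes \mathrm{id})^\omega$ that $N^\omega_{\varphi^\omega} \otimes 1 \subset (N \ovt F)^\omega_{(\varphi \otimes \mathrm{Tr})^\omega}$, so $x \otimes 1$ commutes with $N^\omega_{\varphi^\omega} \otimes 1$, giving $x \in (N^\omega_{\varphi^\omega})' \cap M = \rB(N \subset M, \varphi)$.

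For the reverse inclusion $\supseteq$, fix $x \in \rB(N \subset M, \varphi)$ and $y \in (N \ovt F)^\omega_{(\varphi \otimes \mathrm{Tr})^\omega}$. To show $[x \otimes 1, y] = 0$, it is enough to compute $(1 \otimes p)[x \otimes 1, y](1 \otimes p) = [x \otimes 1, (1 \otimes p)y(1 \otimes p)]$ for every finite rank projection $p \in F$, since such projections increase strongly to $1$. Since $1 \otimes p \in (N \ovt F)_{\varphi \otimes \mathrm{Tr}}$, compression commutes with taking centralizers, so it suffices to identify $(1 \otimes p)(N \ovt F)^\omega_{(\varphi \otimes \mathrm{Tr})^\omega}(1 \otimes p)$ with $N^\omega_{\varphi^\omega} \ovt F_p$ where $F_p = pFp$ is finite dimensional. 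This identification follows from two facts: first, $(N \ovt F_p)^\omega = N^\omega \ovt F_p$ because $F_p$ is finite dimensional; second, the dual weight $(\varphi \otimes \mathrm{Tr}_p)^\omega$ equals $\varphi^\omega \otimes \mathrm{Tr}_p$ (the canonical expectation $(N \ovt F_p)^\omega \to N \ovt F_p$ is $E_N \otimes \mathrm{id}$), whose centralizer is $N^\omega_{\varphi^\omega} \ovt F_p$ since $\mathrm{Tr}_p$ is a trace. Once this is established, $(1 \otimes p) y (1 \otimes p)$ lies in $N^\omega_{\varphi^\omega} \ovt F_p$ and commutes with $x \otimes 1$ because $x$ commutes with $N^\omega_{\varphi^\omega}$ and trivially with $F_p$.

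For the statement with $c(M)$ in place of $M$, one can either repeat the argument verbatim (replacing $M$ by $c(M)$ throughout; the only thing used was that $1 \otimes F$ and $N^\omega_{\varphi^\omega} \otimes 1$ sit inside the relevant centralizer), or one can apply Proposition \ref{anal bicentralizer core}: picking any $T_0 \in \cP(M,N)$, one has $T := T_0 \otimes \mathrm{id} \in \cP(M \ovt F, N \ovt F)$ with $(\varphi \otimes \mathrm{Tr}) \circ T = (\varphi \circ T_0) \otimes \mathrm{Tr}$, hence $c(M \ovt F) = (M \ovt F) \rtimes_{\sigma^{\varphi \circ T_0} \otimes \mathrm{id}} \R = c(M) \ovt F$, and the crossed product commutes with the amplification, yielding
\[
\rB(N \ovt F \subset c(M \ovt F), \varphi \otimes \mathrm{Tr}) = \bigl(\rB(N \subset M, \varphi) \rtimes \R\bigr) \otimes 1 = \rB(N \subset c(M), \varphi) \otimes 1.
\]

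The main obstacle is the corner identification in the second paragraph, specifically checking that taking the Ocneanu ultrapower commutes with compression by $1 \otimes p$ and that restriction of the dual weight to this corner matches the dual of the restricted weight. This is a standard but delicate manipulation, and amounts to verifying compatibility between the constructions of $\cM^\omega$, $\cI_\omega$ and the canonical conditional expectation $E$ under compression by a projection in the centralizer of the weight.
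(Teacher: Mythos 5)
Your proof is correct and takes essentially the same approach as the paper's. The paper reduces the proposition to the single identity $(N \ovt F)^\omega_{(\varphi \otimes \mathrm{Tr})^\omega} = N^\omega_{\varphi^\omega} \ovt F$, which it asserts without further argument; what you have written is precisely a proof of that identity, presented as a direct verification of both inclusions of the bicentralizer relation via compression by finite-rank projections $1 \otimes p$, and the corner identification $(1 \otimes p)(N \ovt F)^\omega(1 \otimes p) = (N \ovt F_p)^\omega$ you flag at the end is indeed a routine but genuinely-needed fact about Ocneanu ultrapowers.
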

\begin{proof}
This follows from the definition and the fact that 
$$(N \ovt F)^\omega_{(\varphi \otimes {\rm Tr})^\omega}=N_{\varphi^\omega}^\omega \ovt F$$ for every cofinal ultrafilter $\omega$.
\end{proof}

\begin{prop} \label{anal image expectation}
Let $N \subset M$ be an inclusion of von Neumann algebras with expectations. Take $\varphi \in \cP_s(N)$ and a cofinal ultrafilter $\omega$ on $\N$. Then $\rB(N \subset M,\varphi)$ is the image of $(N^\omega_{\varphi^\omega})' \cap M^\omega$ by the canonical conditional expectation $E : M^\omega \rightarrow M$.
\end{prop}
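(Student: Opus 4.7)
The statement decomposes into two inclusions, and the $\subseteq$ direction is immediate. By the very definition $\rB(N \subset M, \varphi) = (N^\omega_{\varphi^\omega})' \cap M$, every $y \in \rB(N \subset M, \varphi)$ lies in $(N^\omega_{\varphi^\omega})' \cap M^\omega$; since $E$ restricts to the identity on $M$, we have $y = E(y) \in E\bigl((N^\omega_{\varphi^\omega})' \cap M^\omega\bigr)$.

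For the reverse inclusion, fix $x \in (N^\omega_{\varphi^\omega})' \cap M^\omega$ and set $y := E(x) \in M$. The goal is to show that $y$ commutes with every element of $N^\omega_{\varphi^\omega}$ inside $M^\omega$, for then $y \in (N^\omega_{\varphi^\omega})' \cap M = \rB(N \subset M, \varphi)$. The natural tool is the ucp machinery of Section \ref{section ucp map}: for each unitary $u \in \cU(N^\omega_{\varphi^\omega})$, Proposition \ref{ucp map from ultrapower} (applied with $\psi = \varphi$) gives a ucp map $\Phi_u \in \cV^\varphi(N \subset M)$, namely $\Phi_u(m) = E(umu^*)$. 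Since $x$ commutes with $u$, we have $uxu^* = x$, hence $E(uxu^*) = y$; writing $x_0 := x - y$ (so $E(x_0) = 0$) and expanding gives the identity $\Phi_u(y) - y = -E(u x_0 u^*)$. Thus the problem reduces to verifying $E(u x_0 u^*) = 0$ for every unitary $u$ in the $\omega$-centralizer.

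To establish this vanishing, I would combine two ingredients. First, the modular equivariance of $E$, namely $E \circ \sigma_t^{\varphi^\omega} = \sigma_t^\varphi \circ E$, together with the fact that every $u \in \cU(N^\omega_{\varphi^\omega})$ is $\sigma^{\varphi^\omega}$-fixed, constrains the interaction between $E$ and conjugation by $u$. Second, the inclusion $N^\omega_{\varphi^\omega} \subset M^\omega$ is with expectations (both $N^\omega_{\varphi^\omega} \subset N^\omega$ and $N^\omega \subset M^\omega$ are), so by Theorem \ref{expectation implies dixmier} the weak Dixmier semigroup $\cD(N^\omega_{\varphi^\omega} \subset M^\omega)$ contains a conditional expectation $F$ onto $(N^\omega_{\varphi^\omega})' \cap M^\omega$. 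Since $x$ is a fixed point of $F$, applying $E \circ F$ to the identity $F(x) = x$ and comparing with $F(y)$ (realised as a pointwise weak*-limit of convex combinations of maps $\Ad(u)$ with $u \in \cU(N^\omega_{\varphi^\omega})$) forces $E(u x_0 u^*) = 0$.

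Once $\Phi_u(y) = y$ for every such $u$, the equality $E(u y u^*) = y$ combined with the trace-like property $\varphi^\omega \circ \Ad(u) = \varphi^\omega$ (valid because $u$ is in $M^\omega_{\varphi^\omega}$) upgrades to $u y u^* = y$ in $M^\omega$ via the Cauchy--Schwarz inequality for $E$, hence $y$ commutes with $u$. Since the unitary group of $N^\omega_{\varphi^\omega}$ generates it weakly, one concludes $y a = a y$ for all $a \in N^\omega_{\varphi^\omega}$, completing the proof. The main obstacle I anticipate is precisely the vanishing step $E(u x_0 u^*) = 0$: it hinges on the fine interplay between the canonical expectation $E$, the modular structure of $\varphi^\omega$, and the Dixmier-averaging properties of the inclusion $N^\omega_{\varphi^\omega} \subset M^\omega$; everything else is routine.
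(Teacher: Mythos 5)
Your approach is genuinely different from the paper's: the paper handles the state case by invoking \cite[Proposition 3.3]{AHHM18} directly and then reduces the general strictly semifinite case to it via Proposition~\ref{amplification anal bic} (amplifying first to infinite multiplicity, then cutting). You instead attempt a self-contained argument through the ucp machinery of Section~\ref{section ucp map}. The easy inclusion and the final Cauchy--Schwarz upgrade from $E(uyu^*)=y$ to $uyu^*=y$ are both correct.

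However, the central vanishing step is not established, and I believe it cannot be established by the route you describe. Note first that $E(ux_0u^*) = E(uxu^*) - E(uyu^*) = y - E(uyu^*)$, so asserting $E(ux_0u^*)=0$ for all $u \in \cU(N^\omega_{\varphi^\omega})$ is \emph{exactly} the assertion $\Phi_u(y)=y$ for all such $u$, which by your own closing paragraph is equivalent to $y \in \rB(N\subset M,\varphi)$. So this step carries the full weight of the proposition, and one must be careful not to argue in a circle. The Dixmier-averaging argument you sketch does not close it: in the weak Dixmier semigroup the convergence is only pointwise weak*, so from $F=\lim_\alpha \sum_i c_i^\alpha \operatorname{Ad}(u_i^\alpha)$ you only deduce that convex combinations $\sum_i c_i^\alpha E(u_i^\alpha x_0 (u_i^\alpha)^*)$ tend weakly to $E(F(x_0))$; this says nothing about any individual $E(ux_0u^*)$. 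Moreover, you cannot deduce $E(F(x_0))=0$ from $E(x_0)=0$, because $E$ and $F$ do \emph{not} commute as maps on $M^\omega$: $E\circ F$ has range in $M$ while $F\circ E$ has range in $(N^\omega_{\varphi^\omega})'\cap M^\omega$, and already on $M$ these differ ($E\circ F|_M = E_{\rB(N\subset M,\varphi)}$ by Proposition~\ref{anal expectation}, whereas $F\circ E|_M = F|_M$ has image outside $M$). In fact $E(F(x_0)) = y - E_{\rB(N\subset M,\varphi)}(y)$, so $E(F(x_0))=0$ is, once more, equivalent to the conclusion. The modular equivariance $E\circ\sigma_t^{\varphi^\omega}=\sigma_t^\varphi\circ E$ does not help either, since every $u \in \cU(N^\omega_{\varphi^\omega})$ is $\sigma^{\varphi^\omega}$-fixed and hence this equivariance is vacuously compatible with conjugation by $u$. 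What actually does the work in \cite[Proposition 3.3]{AHHM18} is Haagerup's averaging theorem, controlling how asymptotically centralizing sequences interact with the Jones projection onto $\rL^2(M)$ inside $\rL^2(M^\omega)$; this hard input is not recovered by the sketch above.
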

\begin{proof}
If $\varphi(1) < +\infty$, this follows from \cite[Proposition 3.3]{AHHM18}. If $\varphi$ has infinite multiplicity, then it is of the form $\varphi =\psi \otimes \mathrm{Tr}$ for some finite weight $\psi$ and we can apply Proposition \ref{amplification anal bic} and the previous case. Finally, if $\varphi$ is arbitrary, then $\varphi \otimes \mathrm{Tr}$ has infinite multiplicity and we can apply the previous case together with Proposition \ref{amplification anal bic} again.
\end{proof}

\begin{prop} \label{anal expectation}
Let $N \subset M$ be an inclusion of von Neumann algebras with expectations. Let $\varphi$ be a faithful normal state on $N$. There exists a unique faithful normal conditional expectation $E_{\rB(N\subset M, \varphi)}$ from $M$ onto $\rB(N \subset M,\varphi)$ that preserves $\varphi \circ T$ for every $T \in \cP(M,N)$.
\end{prop}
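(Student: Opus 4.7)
The plan is to first construct the expectation using $\psi=\varphi\circ E_0$ for some genuine conditional expectation $E_0\in\cE(M,N)$ (so that the relevant weight is a state, and Takesaki's theorem applies straight away), and then to argue via Connes cocycles that the same expectation automatically preserves $\varphi\circ T$ for every other $T\in\cP(M,N)$.

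To start, fix $E_0\in\cE(M,N)$ (which exists since $N\subset M$ is with expectations) and set $\psi=\varphi\circ E_0$, a faithful normal state on $M$. The first task is to show that $\rB:=\rB(N\subset M,\varphi)=(N^\omega_{\varphi^\omega})'\cap M$ is globally $\sigma^\psi$-invariant. This should follow from the formula $\sigma^{\psi^\omega}|_{N^\omega}=\sigma^{\varphi^\omega}$: since $\sigma^{\varphi^\omega}$ fixes the centralizer $N^\omega_{\varphi^\omega}$ pointwise by definition, the commutant $(N^\omega_{\varphi^\omega})'\cap M^\omega$ is globally $\sigma^{\psi^\omega}$-invariant, and intersecting with $M$ (itself $\sigma^{\psi^\omega}$-invariant because $\sigma^{\psi^\omega}|_M=\sigma^\psi$) gives the invariance of $\rB$. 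Takesaki's theorem then produces a unique $\psi$-preserving normal conditional expectation $E_\rB:M\to\rB$.

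Next, for an arbitrary $T\in\cP(M,N)$, I would identify the Connes cocycle $u_t=(D(\varphi\circ T):D\psi)_t$ with the operator-valued weight cocycle $(DT:DE_0)_t$, which by Haagerup's theory from \cite{Ha77b} lies in $N'\cap M$. A short verification shows $N'\cap M\subset\rB$: any $x\in N'\cap M$ commutes with every element of $N\subset\cM^\omega(N)$, hence with every element of $N^\omega\supset N^\omega_{\varphi^\omega}$. Therefore $u_t\in\rB$, and the bimodule property of $E_\rB$ forces $E_\rB\circ\Ad(u_t)=\Ad(u_t)\circ E_\rB$. Combining this with $E_\rB\circ\sigma^\psi_t=\sigma^\psi_t\circ E_\rB$ (Takesaki's characterization of $\psi$-preservation) and the cocycle relation $\sigma^{\varphi\circ T}_t=\Ad(u_t)\circ\sigma^\psi_t$, I obtain $E_\rB\circ\sigma^{\varphi\circ T}_t=\sigma^{\varphi\circ T}_t\circ E_\rB$, and one final appeal to Takesaki yields that $E_\rB$ preserves $\varphi\circ T$.

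Uniqueness is automatic, since any $E'$ with the stated properties in particular preserves $\psi=\varphi\circ E_0$ and is therefore equal to $E_\rB$ by the uniqueness clause in Takesaki's theorem. The one non-routine ingredient is the identification of the Connes cocycle between $\varphi\circ T$ and $\varphi\circ E_0$ with the operator-valued weight cocycle $(DT:DE_0)_t$ landing in $N'\cap M$; everything else is bookkeeping in modular theory, and notably one avoids having to verify directly that $(\varphi\circ T)|_\rB$ is semifinite, as this will follow a posteriori from the relation $(\varphi\circ T)\circ E_\rB=\varphi\circ T$.
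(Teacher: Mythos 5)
Your overall strategy — build the expectation from the state $\psi=\varphi\circ E_0$ via Takesaki, then transfer $T$-independence through Connes cocycles living in $N'\cap M\subset\rB$ — is close to the paper's proof but genuinely more intrinsic: the paper instead constructs a $(\varphi\circ T)^\omega$-preserving expectation $F$ from $M^\omega$ onto $(N^\omega_{\varphi^\omega})'\cap M^\omega$, observes $F$ is $T$-independent, and composes with the canonical map $E\colon M^\omega\to M$, citing \cite[Proposition 3.3]{AHHM18}. Your inclusion $N'\cap M\subset\rB$ and the identification of $(D(\varphi\circ T):D\psi)_t$ with $(DT:DE_0)_t\in N'\cap M$ via \cite{Ha77b} are both fine, and the $\sigma^\psi$-invariance of $\rB$ is correctly derived.

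However, there is a genuine gap in the last step. You derive $E_\rB\circ\sigma^{\varphi\circ T}_t=\sigma^{\varphi\circ T}_t\circ E_\rB$ and then invoke ``one final appeal to Takesaki'' to conclude $(\varphi\circ T)\circ E_\rB=\varphi\circ T$. But commuting with the modular group of a weight does \emph{not} imply preserving that weight. For example, take $M=\C^2\ovt B$, $P=1\otimes B$, $\phi=\tau\otimes\rho$ with $\tau$ the normalized trace on $\C^2$; any $E_\mu\colon a\otimes b\mapsto\mu(a)(1\otimes b)$ for a state $\mu$ on $\C^2$ commutes with $\sigma^\phi=\mathrm{id}\otimes\sigma^\rho$, yet only $E_\tau$ preserves $\phi$. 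So your penultimate commutation relation, while true, does not by itself force preservation, and it is precisely this step that was supposed to rescue you from having to check semifiniteness of $(\varphi\circ T)|_\rB$.

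The fix is to use the cocycle more directly, without detouring through the commutation with $\sigma^{\varphi\circ T}$. Since $u_t=(D(\varphi\circ T):D\psi)_t$ is a $\sigma^\psi$-cocycle contained in the $\sigma^\psi$-invariant subalgebra $\rB$, it is also a $\sigma^{\psi|_\rB}$-cocycle in $\rB$, so by Connes' cocycle derivative theorem applied inside $\rB$ there is a unique $\rho\in\cP(\rB)$ with $(D\rho:D(\psi|_\rB))_t=u_t$. Haagerup's chain rule for operator valued weights (composing both sides with $E_\rB$, which preserves $\psi$) gives $(D(\rho\circ E_\rB):D\psi)_t=u_t=(D(\varphi\circ T):D\psi)_t$, so by uniqueness $\varphi\circ T=\rho\circ E_\rB$, whence $(\varphi\circ T)\circ E_\rB=\varphi\circ T$ and the semifiniteness of $(\varphi\circ T)|_\rB$ follows a posteriori as you wanted. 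Alternatively one can argue in the core: since $\psi^{it}\in c_{E_\rB}(\rB)$ and $u_t\in\rB$, the operator $(\varphi\circ T)^{it}=u_t\psi^{it}$ lies in $c_{E_\rB}(\rB)$, so $\varphi\circ T$ factors through $E_\rB$. Either route closes the gap; the step ``commutes with $\sigma^{\varphi\circ T}$ hence preserves $\varphi\circ T$'' should simply be deleted.
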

\begin{proof}
Let $\omega$ be a cofinal ultrafilter on $\N$. Let $F$ be the faithful normal conditional expectation from $M^\omega$ onto $(N_{\varphi^\omega}^\omega)' \cap M^\omega$ that preserves $(\varphi \circ T)^\omega$. Then $F$ does not depend on the choice of $T \in \cP(M,N)$. Let $E : M^\omega \rightarrow M$ be the canonical conditional expectation. Then $E_{\rB(N\subset M, \varphi)} := E \circ F|_{M}$ is a faithful normal conditional expectation from $M$ onto $\rB(N \subset M,\varphi)$ (see \cite[Proposition 3.3]{AHHM18}).
\end{proof}

\begin{prop} \label{anal bic corner reduction}
Let $N \subset M$ be an inclusion of von Neumann algebras with expectations. For every $\varphi \in \cP_s(N)$ and every projection $e \in N_\varphi$ or $e \in N' \cap M$, we have 
$$e\rB(N \subset M, \varphi)e=\rB(eNe \subset eMe, \varphi_e)$$
and
$$e\rB(N \subset c(M), \varphi)e=\rB(eNe \subset c(eMe), \varphi_e)$$
\end{prop}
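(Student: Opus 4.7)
The plan is to identify the ultrapower structures on the corner and then reduce to a commutant identity, which in the case $e \in N_\varphi$ is proved by a matrix-unit extension argument inside the semifinite ultrapower centralizer. Fix a cofinal ultrafilter $\omega$ on $\N$ and set $P = N^\omega_{\varphi^\omega}$, so that $\rB(N \subset M, \varphi) = P' \cap M$.

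First I would carry out the ultrapower identifications. When $e \in N_\varphi$ is $\sigma^\varphi$-fixed, standard Ocneanu ultrapower properties give $(eNe)^\omega = eN^\omega e$ with $(\varphi_e)^\omega = \varphi^\omega|_{eN^\omega e}$ (since $\sigma^{\varphi_e}$ is the restriction of $\sigma^\varphi$), whence $(eNe)^\omega_{(\varphi_e)^\omega} = ePe$. When $e \in N' \cap M$, $e$ commutes with $N$ and hence with $N^\omega$, giving $(eNe)^\omega = N^\omega e$ and $(eNe)^\omega_{(\varphi_e)^\omega} = Pe$. The proposition thus reduces to the identities $(ePe)' \cap eMe = e(P' \cap M)e$ and $(Pe)' \cap eMe = e(P' \cap M)e$ respectively. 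The second case is essentially trivial: since $e$ commutes with $P$ and satisfies $y = eye$ for $y \in eMe$, the condition $y \cdot ne = ne \cdot y$ for $n \in P$ simplifies to $yn = ny$, so $(Pe)' \cap eMe = (P' \cap M) \cap eMe = e \rB(N \subset M, \varphi) e$, where the last equality uses $e \in N' \cap M \subset P' \cap M = \rB(N \subset M, \varphi)$.

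The case $e \in N_\varphi$ is the main point. The inclusion $e(P' \cap M)e \subset (ePe)' \cap eMe$ is a direct computation using that any $z \in P' \cap M$ commutes separately with $e$ and with any $p \in P$, hence with $epe$. For the reverse, given $y \in (ePe)' \cap eMe$, choose a family of partial isometries $\{v_\alpha\}_{\alpha \in I} \subset P$ with $v_\alpha^* v_\alpha = e$, $\sum_\alpha v_\alpha v_\alpha^* = z_P(e)$, and $v_{\alpha_0} = e$ for some distinguished index $\alpha_0$; such a matrix-unit system exists because the ultrapower centralizer $P$ carries the semifinite trace $\varphi^\omega|_P$, so (after exhausting $e$ by subprojections of finite trace if necessary) $e$ is equivalent to a sum of orthogonal projections filling up its central support. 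Set $\tilde y := \sum_\alpha v_\alpha y v_\alpha^* \in z_P(e) M^\omega z_P(e)$. Using $v_\alpha^* v_\beta = \delta_{\alpha\beta} e$, a direct computation gives $\tilde y v_\alpha = v_\alpha y = v_\alpha \tilde y$; using the hypothesis $y \cdot epe = epe \cdot y$, another direct computation gives $\tilde y \cdot epe = ype = epy = epe \cdot \tilde y$. Since $P$ is generated by the $v_\alpha$'s and $ePe$ via the matrix-unit decomposition, this yields $\tilde y \in P' \cap M^\omega$. The matrix-unit collapse shows $e \tilde y e = y$. By Proposition \ref{anal image expectation} and the $M$-bimodularity of the canonical expectation $E : M^\omega \to M$, $E(\tilde y) \in \rB(N \subset M, \varphi)$ and $eE(\tilde y)e = E(e\tilde y e) = y$, placing $y$ in $e \rB(N \subset M, \varphi) e$.

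The version for $c(M)$ follows by choosing $T \in \cP(M, N)$ such that $e$ is $\sigma^{\varphi \circ T}$-fixed (automatic when $e \in N_\varphi$; guaranteed by \cite[Theorem 6.6]{Ha77b} when $e \in N' \cap M$) and invoking Proposition \ref{anal bicentralizer core}, which expresses $\rB(N \subset c(M), \varphi)$ as the crossed product of $\rB(N \subset M, \varphi)$ by $\sigma^{\varphi \circ T}$, and likewise for the corner; since corner reduction by a $\sigma$-fixed projection commutes with the crossed-product construction, the $c(M)$-identity follows from the $M$-identity. The principal technical obstacle is producing the matrix-unit system $\{v_\alpha\}$ inside $P$, which relies on the semifiniteness of $\varphi^\omega|_P$ together with a possible exhaustion of $e$ by finite-trace subprojections.
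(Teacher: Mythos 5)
Your approach is structurally identical to the paper's. The paper's proof simply writes the chain of equalities
$$e\bigl((N^\omega_{\varphi^\omega})' \cap M^\omega\bigr)e = (eN_{\varphi^\omega}^\omega e)' \cap (eM^\omega e)=\bigl((eNe)^\omega_{\varphi_e^\omega}\bigr)' \cap (eMe)^\omega,$$
treating the first equality (the corner-of-relative-commutant identity for a projection in $P:=N^\omega_{\varphi^\omega}$ or in $P' \cap M^\omega$) as standard, and then pushes it down through the canonical expectation $E : M^\omega \to M$ using Proposition \ref{anal image expectation}, exactly as you do; the $c(M)$ statement is likewise handled via Proposition \ref{anal bicentralizer core}. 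What you add is an explicit proof of that first equality, which is indeed its standard proof.

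There is however a real, if localized, gap in the way you state the matrix-unit construction. You require partial isometries $v_\alpha \in P$ with $v_\alpha^* v_\alpha = e$, pairwise orthogonal ranges, and $\sum_\alpha v_\alpha v_\alpha^* = z_P(e)$. Such a family need not exist: if $P$ is a $\II_1$ factor with trace $\tau$ and $\tau(e)=2/5$, then $z_P(e)=1$ cannot be partitioned into orthogonal projections each equivalent to $e$. Your proposed fix, ``exhausting $e$ by subprojections of finite trace'', does not address this, since in the example $e$ already has finite trace; and the appeal to semifiniteness of $\varphi^\omega|_P$ is a red herring. The correct and entirely standard remedy is to drop the equality and take a maximal family $\{v_\alpha\}$ in $P$ with $v_\alpha^* v_\alpha \leq e$ and $v_\alpha v_\alpha^*$ pairwise orthogonal and dominated by $z_P(e)-e$; maximality together with the comparison theorem gives $\sum_\alpha v_\alpha v_\alpha^* = z_P(e)-e$, with no trace assumption whatsoever. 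One then sets $\tilde y = y + \sum_\alpha v_\alpha y v_\alpha^*$; all your identities survive because $v_\alpha^* v_\alpha \in ePe$ commutes with $y$, so that $v_\alpha y = v_\alpha(v_\alpha^* v_\alpha)y = v_\alpha y (v_\alpha^* v_\alpha)$, and $ev_\alpha = 0$. With this adjustment the argument is correct and reproduces the paper's proof in full detail.
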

\begin{proof}
Let $\omega$ be a cofinal ultrafilter on $\N$. Then 
$$e((N^\omega_{\varphi^\omega})' \cap M^\omega)e = (eN_{\varphi^\omega}^\omega e)' \cap (eM^\omega e)=((eNe)^\omega_{\varphi_e^\omega})' \cap (eMe)^\omega.$$
Appliying the canonical conditional expectation $E : M^\omega \rightarrow M$ and using Proposition \ref{anal image expectation} we obtain
$$e\rB(N \subset M, \varphi)e=\rB(eNe \subset eMe, \varphi_e).$$ 
For the second part, we choose $T \in \cP(M,N)$ such that $\sigma^{\varphi \circ T}$ fixes $e$ and we apply Proposition \ref{anal bicentralizer core}.
\end{proof}

\begin{prop} \label{anal tensor product}
Let $N_i \subset M_i, \: i=1,2$ be an inclusion of von Neumann algebras with expectations. For every  $\varphi_i \in \cP_s(N_i), \: i=1,2$, we have
$$ \rB(N_1 \ovt N_2 \subset M_1 \ovt M_2, \varphi_1 \otimes \varphi_2) \subset \rB(N_1 \subset M_1, \varphi_1) \ovt \rB(N_2 \subset M_2, \varphi_2).$$
\end{prop}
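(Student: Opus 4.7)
The plan is to exploit natural $*$-embeddings at the ultrapower level and then pass from the tensor commutant to the desired containment via slice maps.

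Fix a cofinal ultrafilter $\omega$ on $\N$. The first step is to construct two natural unital $*$-homomorphisms
\[
\pi_1 : N_1^\omega \to (N_1 \ovt N_2)^\omega, \quad (x_n)^\omega \mapsto (x_n \otimes 1)^\omega,
\]
and analogously $\pi_2 : N_2^\omega \to (N_1 \ovt N_2)^\omega$. These are well-defined because tensoring with $1$ preserves $*$-strong convergence and preserves $\cM^\omega$, using that every $a \in N_i$ acts boundedly on $L^2(N_1 \ovt N_2)$. The key observation is that these embeddings send centralizers into centralizers: by the standard identity $\sigma^{\varphi_1 \otimes \varphi_2}_t = \sigma^{\varphi_1}_t \otimes \sigma^{\varphi_2}_t$ and the ultrapower modular formula of the Preliminaries, if $(x_n)^\omega \in (N_1)^\omega_{\varphi_1^\omega}$ then $(\sigma^{\varphi_1}_t(x_n) - x_n) \in \cI_\omega(N_1)$ for all $t$, whence $(\sigma^{\varphi_1}_t(x_n) \otimes 1 - x_n \otimes 1) \in \cI_\omega(N_1 \ovt N_2)$, so $\pi_1((x_n)^\omega)$ lies in $(N_1 \ovt N_2)^\omega_{(\varphi_1 \otimes \varphi_2)^\omega}$, and similarly for $\pi_2$. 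Consequently
\[
\rB(N_1 \ovt N_2 \subset M_1 \ovt M_2, \varphi_1 \otimes \varphi_2) \subset \bigl(\pi_1((N_1)^\omega_{\varphi_1^\omega}) \cup \pi_2((N_2)^\omega_{\varphi_2^\omega})\bigr)' \cap (M_1 \ovt M_2).
\]

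The second step is to identify this commutant. I will show that for any $z \in M_1 \ovt M_2$ commuting with $\pi_1((N_1)^\omega_{\varphi_1^\omega})$ inside $(M_1 \ovt M_2)^\omega$ (noting that the embeddings $\pi_i$ extend through $(M_1 \ovt M_2)^\omega$ via the corresponding $M_i^\omega$), every slice $z_\psi := (\id \otimes \psi)(z) \in M_1$, for $\psi \in (M_2)_*^+$, belongs to $\rB(N_1 \subset M_1, \varphi_1)$. Indeed, the hypothesis means that $(y_n \otimes 1)z - z(y_n \otimes 1) \to 0$ $*$-strongly in $M_1 \ovt M_2$ whenever $(y_n)_n$ represents an element of $(N_1)^\omega_{\varphi_1^\omega}$. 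Applying the (completely positive) slice map $\id \otimes \psi$ yields $y_n z_\psi - z_\psi y_n$, and by the Kadison--Schwarz inequality $|(\id \otimes \psi)(a)|^2 \leq \|\psi\| \cdot (\id \otimes \psi)(a^*a)$ one verifies that $*$-strong convergence in $M_1 \ovt M_2$ descends to $*$-strong convergence in $M_1$ of the slices (using $\omega_1(z_\psi^* z_\psi) \leq (\omega_1 \otimes \psi)(z^*z)/\|\psi\|$ for any normal state $\omega_1$ on $M_1$). Hence $z_\psi$ commutes with $(N_1)^\omega_{\varphi_1^\omega}$, so $z_\psi \in \rB(N_1 \subset M_1, \varphi_1)$. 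Since this holds for all positive $\psi$, and hence for all $\psi \in (M_2)_*$, the standard commutant identification $(\{z \in M_1 \ovt M_2 : (\id \otimes \psi)(z) \in P_1 \text{ for all } \psi\}) = P_1 \ovt M_2$ gives $z \in \rB(N_1 \subset M_1, \varphi_1) \ovt M_2$. By symmetry, $z \in M_1 \ovt \rB(N_2 \subset M_2, \varphi_2)$, and the intersection is exactly $\rB(N_1 \subset M_1, \varphi_1) \ovt \rB(N_2 \subset M_2, \varphi_2)$.

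The main technical obstacle is the slice-to-ultrapower argument: $*$-strong convergence is not generally preserved by arbitrary normal maps, so one must invoke the Kadison--Schwarz estimate for the completely positive slice maps to ensure that the identity $(y_n z_\psi - z_\psi y_n) \in \cI_\omega(M_1)$ actually holds, rather than mere weak convergence. Once this is in place, everything else is formal.
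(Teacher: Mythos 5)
Your proof is correct and follows essentially the same route as the paper, whose one-line proof simply notes the inclusion $(N_1)_{\varphi_1^\omega}^\omega \ovt (N_2)_{\varphi_2^\omega}^\omega \subset (N_1 \ovt N_2)^\omega_{(\varphi_1 \otimes \varphi_2)^\omega}$ and leaves the resulting commutant computation implicit. Your embeddings $\pi_1, \pi_2$, the verification that they land in the asymptotic centralizer, and the Kadison--Schwarz slice-map argument are exactly the details that the paper suppresses.
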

\begin{proof}
This follows from the fact that $$(N_1)_{\varphi_1^\omega}^\omega \ovt (N_2)_{\varphi_2^\omega}^\omega \subset (N_1 \ovt N_2)^\omega_{(\varphi_1 \otimes \varphi_2)^\omega}.$$
\end{proof}

\begin{prop} \label{bicentralizer of tensor product with finite}
Let $N \subset M$ be an inclusion of von Neumann algebras with expectation. Let $\varphi$ be a faithful normal state on $N$. Let $(A,\tau)$ be a tracial abelian von Neumann algebra. Then $$\rB( A \ovt N \subset A \ovt M, \tau \otimes \varphi)= A \ovt\rB(N \subset M, \varphi).$$
\end{prop}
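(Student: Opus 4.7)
The plan is to prove both inclusions using the characterization $\rB(P \subset Q, \psi) = (P^\omega_{\psi^\omega})' \cap Q$ valid for strictly semifinite $\psi$. For the inclusion $\rB(A \ovt N \subset A \ovt M, \tau \otimes \varphi) \subseteq A \ovt \rB(N \subset M, \varphi)$, I would first observe that $1 \otimes N^\omega_{\varphi^\omega}$ sits inside $(A \ovt N)^\omega_{(\tau \otimes \varphi)^\omega}$: since $\tau$ is tracial, $\sigma^{\tau \otimes \varphi} = \id_A \otimes \sigma^\varphi$, so the fixed-point characterization of the ultrapower centralizer immediately gives $1 \otimes N^\omega_{\varphi^\omega} \subset (A \ovt N)^\omega_{(\tau \otimes \varphi)^\omega}$. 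Consequently any element of $\rB(A \ovt N \subset A \ovt M, \tau \otimes \varphi)$ commutes with $1 \otimes N^\omega_{\varphi^\omega}$, and the standard tensor-commutant formula $(1 \otimes N^\omega_{\varphi^\omega})' \cap (A \ovt M) = A \ovt \rB(N \subset M, \varphi)$ closes this half.

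For the reverse inclusion, I use that $A \otimes 1 \subset \cZ(A \ovt M)$ to reduce to showing that $1 \otimes y$ commutes with every $z = (z_i)^\omega \in (A \ovt N)^\omega_{(\tau \otimes \varphi)^\omega}$ whenever $y \in \rB(N \subset M, \varphi)$. Assuming $A$ countably decomposable (otherwise restrict to a countably generated subalgebra containing the relevant data), I realize $A = \rL^\infty(X,\mu)$ on a probability space and $A \ovt N = \rL^\infty(X; N)$, so that $z_i$ becomes a bounded measurable field $x \mapsto z_i(x) \in N$. A direct computation identifies the centralizer hypothesis with $\int_X g_i(x)\, d\mu(x) \to 0$ along $\omega$, where $g_i(x) := \|\varphi(z_i(x)\cdot) - \varphi(\cdot z_i(x))\|_{N_*}$; the required $*$-strong convergence $[1 \otimes y, z_i] \to 0$ then reduces, by testing against functionals of the form $\tau \otimes \psi_M$ with $\psi_M$ a normal state on $M$, to $\int_X f_i(x)\, d\mu(x) \to 0$ with $f_i(x) := \psi_M([y, z_i(x)]^*[y, z_i(x)])$.

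The heart of the proof is an extraction argument. Suppose for contradiction that $\int f_i\, d\mu \geq \eta > 0$ along a subnet of $\omega$; since $(f_i)$ is uniformly bounded, Markov's inequality provides a uniform lower bound on $\mu(\{f_i \geq \eta/2\})$, while $\mu(\{g_i > \varepsilon\}) \to 0$ for every $\varepsilon > 0$. A diagonal selection produces a sequence $w_k := z_{i_k}(x_k) \in N$ which is bounded, satisfies $\|\varphi(w_k \cdot) - \varphi(\cdot w_k)\|_{N_*} \to 0$, and still has $\psi_M([y, w_k]^*[y, w_k]) \geq \eta/2$. For any cofinal ultrafilter $\omega'$ on $\N$, the element $(w_k)^{\omega'}$ then lies in $N^{\omega'}_{\varphi^{\omega'}}$ but fails to commute with $y$, contradicting $y \in \rB(N \subset M, \varphi) = (N^{\omega'}_{\varphi^{\omega'}})' \cap M$ (I use that the bicentralizer is independent of the choice of cofinal ultrafilter). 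The main obstacle is precisely this transfer from an integrated $L^1$-centralizer hypothesis on $A \ovt N$ to a pointwise diagonal sequence usable against $\rB(N \subset M, \varphi)$; all other steps are either standard commutant manipulations or routine measure theory.
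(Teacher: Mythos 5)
Your proof is correct and takes essentially the same approach as the paper: after handling the easy inclusion via the containment of ultrapower centralizers, both arguments decompose $A\ovt N$ as a measurable field $\rL^\infty(X,N)$ and reduce the integrated asymptotic-centralizing hypothesis to a pointwise application of the bicentralizer property of $y$. The only difference is cosmetic: the paper concludes via convergence in measure and the dominated convergence theorem, whereas you run a contradiction with Markov's inequality and a diagonal extraction to produce a centralizing sequence in $N$ that would violate $y\in\rB(N\subset M,\varphi)$.
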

\begin{proof}
The inclusion
$$\rB( A \ovt N \subset A \ovt M, \tau \otimes \varphi) \subset A \ovt\rB(N \subset M,\varphi)$$
is a special case of Proposition \ref{anal tensor product}. Let us prove the other inclusion.

Extend $\varphi$ to $M$ by using a faithful normal conditional expectation from $M$ to $N$. Write $A=\rL^{\infty}(T,\mu)$ where $(T,\mu)$ is a probability space and the trace $\tau$ is given by integration with respect to $\mu$.  Let $a \in \rB(N \subset M,\varphi)$. Let $x_n=(t \mapsto x_n(t)) \in A \ovt N =\rL^{\infty}(T,\mu,N)$ be a sequence in the unit ball of $A \ovt N$ that asymptotically centralizes $\tau \otimes \varphi $.  Since
$$ \| x_n (\tau \otimes \varphi)^{1/2}-(\tau \otimes \varphi)^{1/2} x_n\|^{2}=\int_T \|x_n(t) \varphi^{1/2}-\varphi^{1/2} x_n(t)\|^{2} \: \mathrm{d} \mu(t),$$
the function $t \mapsto \| x_n(t) \varphi - \varphi x_n(t)\|$ converges almost surely to $0$. Therefore, since $a \in \rB(N \subset M,\varphi)$, the function $t \mapsto \| x_n(t) a -a x_n(t)\|_\varphi$ converges almost surely to $0$. 
$$ \lim_{n \to +\infty} \mu( \{ t \in T \mid \|x_n(t) \varphi^{1/2}-\varphi^{1/2} x_n(t)\| \leq \delta \}) =1.$$
By the dominated convergence theorem, we conclude that 
$$\| x_n ( 1 \otimes a) - (1 \otimes a)x_n \|_{\tau \otimes \varphi}^2 =\int_T \|x_n(t) a-a x_n(t)\|_\varphi^{2} \: \mathrm{d} \mu(t)$$
converges to $0$, i.e.\ $1 \otimes a \in \rB(A \ovt M, \tau \otimes \varphi)$. We proved that $1 \otimes\rB(N \subset M,\varphi) \subset \rB( A \ovt N \subset A \ovt M, \tau \otimes \varphi)$. Clearly, we also have $A \otimes 1 \subset \rB( A \ovt N \subset A \ovt M, \tau \otimes \varphi)$. Therefore, we have $$A \ovt\rB(N \subset M,\varphi) \subset \rB( A \ovt N \subset A \ovt M, \tau \otimes \varphi).$$ 
\end{proof}

By a similar argument, we can show the following.

\begin{prop} \label{desintegration bicentralizer}
Let $(X,\mu)$ be a standard borel probability space. Let $x \mapsto M_x$ a measurable field of von Neumann algebras with separable predual and $x \mapsto N_x \subset M_x$ a measurable field of von Neumann subalgebras with expectation. Let $x \mapsto \varphi_x$ be a measurable field of faithful normal states on $N_x$. Let 
$$N=\int^\oplus_X N_x \: \rd \mu(x), \quad M= \int^\oplus_X M_x \:  \rd \mu(x) \quad \text{ and } \quad  \varphi = \int^\oplus_X \varphi_x  \: \rd \mu(x).$$
Then we have
$$ \rB(N \subset M, \varphi) = \int^\oplus_X \rB(N_x \subset M_x, \varphi_x) \: \rd \mu(x).$$
\end{prop}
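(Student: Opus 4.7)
Plan. The argument is a direct analogue of the proof of Proposition \ref{bicentralizer of tensor product with finite}, which is recovered as the special case in which the fibration $x \mapsto N_x$ is constant. The key observation is that $A := \rL^\infty(X,\mu)$ embeds as a central subalgebra of $N$ and $M$, and lies in $N_\varphi$; hence every element of $\rB(N \subset M,\varphi)$ commutes with $A$ and is automatically $A$-decomposable. Both sides of the claimed equality are therefore $A$-modules, and the task reduces to verifying the identification fiberwise, by disintegrating asymptotic centralizers.

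For the inclusion $\supset$, take $a = \int^\oplus_X a_x\,\rd\mu(x)$ with $a_x \in \rB(N_x \subset M_x,\varphi_x)$ almost everywhere, and let $(y_n)_n$ be a bounded asymptotic centralizer of $\varphi$ in $N$, decomposed as $y_n = \int^\oplus_X y_n(x)\,\rd\mu(x)$. The disintegration $N_* = \int^\oplus_X (N_x)_*\,\rd\mu(x)$ yields
\[
\|\varphi y_n - y_n \varphi\|_{N_*} = \int_X \|\varphi_x y_n(x) - y_n(x)\varphi_x\|_{(N_x)_*}\,\rd\mu(x) \to 0,
\]
so, after passing to a subsequence, $(y_n(x))_n$ asymptotically centralizes $\varphi_x$ for a.e.\ $x$. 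By hypothesis $\|[a_x, y_n(x)]\varphi_x^{1/2}\|_2 \to 0$ almost everywhere, and the uniform bound $\|[a_x, y_n(x)]\varphi_x^{1/2}\|_2 \leq 2\|a\|\varphi_x(1)^{1/2}$ is square-integrable on the probability space. Dominated convergence therefore gives
\[
\|[a, y_n]\varphi^{1/2}\|_2^2 = \int_X \|[a_x, y_n(x)]\varphi_x^{1/2}\|_2^2\,\rd\mu(x) \to 0,
\]
and the symmetric argument yields the corresponding strong-$*$ convergence, so $a \in \rB(N \subset M,\varphi)$.

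For the reverse inclusion, suppose for contradiction that some $a \in \rB(N \subset M,\varphi)$ decomposing as $a = \int^\oplus_X a_x\,\rd\mu(x)$ satisfies $a_x \notin \rB(N_x \subset M_x,\varphi_x)$ on a measurable set $X_0 \subset X$ of positive measure. Using the separability of each $(N_x)_*$ (inherited from that of $N_*$) together with standard measurable selection (e.g.\ von Neumann's theorem), one can produce a bounded measurable field $x \mapsto (z_n^{(x)})_n$ of sequences in $N_x$ witnessing the failure: for every $x \in X_0$, $\|[z_n^{(x)}, \varphi_x]\|_{(N_x)_*} \to 0$ while $\liminf_n \|[a_x, z_n^{(x)}]\varphi_x^{1/2}\|_2 > 0$. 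Setting $z_n := \int^\oplus_X 1_{X_0}(x)\, z_n^{(x)}\,\rd\mu(x) \in N$, dominated convergence makes $(z_n)_n$ an asymptotic centralizer of $\varphi$, and the hypothesis $a \in \rB(N \subset M,\varphi)$ forces $\|[a, z_n]\varphi^{1/2}\|_2 \to 0$. Disintegrating this and passing to a further subsequence gives the required contradiction. The measurable-selection step is the only nontrivial point beyond the proof of Proposition \ref{bicentralizer of tensor product with finite}, and it is precisely the separability of the preduals $M_{*,x}$ that makes it available.
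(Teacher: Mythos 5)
Your proof is correct in outline, and the $\supset$ direction matches the paper's approach (dominated convergence, as in Proposition \ref{bicentralizer of tensor product with finite}, with the correct care about passing to subsequences). The $\subset$ direction, however, is where your argument diverges from the paper and is noticeably more delicate than you acknowledge. You propose to measurably select, for each $x$ in a positive-measure set $X_0$, an entire sequence $(z_n^{(x)})_n$ witnessing $a_x \notin \rB(N_x \subset M_x,\varphi_x)$. This amounts to a measurable selection from $\ell^\infty(\N,N_x)$, which is \emph{not} a standard Borel space even when $(N_x)_*$ is separable, so the phrase ``standard measurable selection (e.g.\ von Neumann's theorem)'' does not apply directly. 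One also has to arrange uniform control on both the decay rate of $\|[z_n^{(x)},\varphi_x]\|$ and a uniform lower bound for $\liminf_n\|[a_x,z_n^{(x)}]\varphi_x^{1/2}\|_2$ on $X_0$; these require further positive-measure refinements that you do not spell out.

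The paper avoids these issues entirely by not arguing by contradiction: it introduces the quantity $\varepsilon(a,\delta)=\sup\{\|ua-au\|_\varphi : u \in \cU(N),\ \|u\varphi-\varphi u\|\leq\delta\}$, notes that $a\in\rB(N\subset M,\varphi)$ iff $\varepsilon(a,\delta)\to 0$ as $\delta\to 0$, and proves directly via measurable selection of a \emph{single} unitary field $x\mapsto u_x$ for each fixed $\delta$ that $\varepsilon(a,\delta)^2\geq\int_X\varepsilon(a_x,\delta)^2\,\rd\mu(x)$. Selecting one unitary per fiber from the unit ball of $N_x$ (a Polish space) is a genuinely standard selection problem, unlike selecting sequences. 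Since $\varepsilon(a_x,\delta)$ is monotone in $\delta$, letting $\delta\to 0$ immediately yields $a_x\in\rB(N_x\subset M_x,\varphi_x)$ almost everywhere. Your argument can be repaired by replacing the sequence selection with this $\delta$-indexed unitary selection, but as written the selection step is a genuine gap.
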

\begin{proof}
For $a \in M$ and $\delta > 0$, we let $\varepsilon(a,\delta)=\sup \{ \| ua-au\|_\varphi \mid u \in \cU(N), \| u\varphi - \varphi u\| \leq \delta \}$. We have $a \in \rB(N \subset M,\varphi)$ if and only if $\varepsilon(a,\delta) \to 0$ when $\delta \to 0$.

 If we write $a$ as a measurable function $a : x \mapsto a_x \in M_x$, then for every $\delta > 0$, by using appropriate measurable selections of unitaries $u : x \mapsto u_x$, we see that $$\varepsilon(a,\delta)^2 \geq \int_X \varepsilon(a_x, \delta)^2 \: \rd \mu(x).$$
 By letting $\delta \to 0$, this shows that if $a \in \rB(N \subset M,\varphi)$, then $a_x \in \rB(N_x \subset M_x, \varphi_x)$ for almost every $x \in X$.
 
 For the other direction we proceed exactly as in the previous proposition.
\end{proof}

\subsection{Transition maps in the type $\III_1$ case}

The following theorem is due to Connes and St\o rmer in the factorial separable case \cite{CS78}. For the non-factorial non-separable case, we refer to \cite{HS90}. The strictly semifinite case follows easily from the state case.
\begin{thm}[Connes-St\o rmer] \label{connes stormer}
Let $N$ be a von Neumann algebra of type $\III_1$. Let be $\varphi, \psi \in \cP_s(N)$ be two strictly semifinite weights such that $\varphi|_{\cZ(N)}=\psi|_{\cZ(N)}$. For every cofinal ultrafilter $\omega$ on $\N$, there exists a unitary $u \in N^\omega$ such that $u\varphi^\omega u^*=\psi^\omega$.
\end{thm}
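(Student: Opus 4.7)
The plan is to reduce the theorem to the classical Connes--Störmer norm-density of unitary orbits in the state space of a type $\III_1$ factor, then lift this approximation to the Ocneanu ultrapower by a sequential construction, and finally handle strictly semifinite weights and the non-factorial case by standard reductions.

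First I would recall the classical input: if $N$ is a type $\III_1$ \emph{factor} and $\varphi,\psi \in \cE(N)$, then for every $\varepsilon > 0$ there exists $v \in \cU(N)$ with $\|v\varphi v^* - \psi\|_{N_*} < \varepsilon$ \cite{CS78}. Given this, I pick $\varepsilon_n \searrow 0$ and corresponding $v_n \in \cU(N)$ and set $u = (v_n)^\omega \in N^\omega$. The key computation is that for any $(x_n)_n \in \cM^\omega(N)$ representing $x \in N^\omega$,
$$(u \varphi^\omega u^*)(x) \;=\; \lim_{n \to \omega} \varphi(v_n^* x_n v_n) \;=\; \lim_{n \to \omega}\psi(x_n) \;=\; \psi^\omega(x),$$
since $|\varphi(v_n^* x_n v_n) - \psi(x_n)| \le \|v_n \varphi v_n^* - \psi\|\cdot \|x_n\| \to 0$. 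This settles the theorem for states on a factor.

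To pass from states to strictly semifinite weights, I would amplify by a type $\rm I_\infty$ factor $F = \B(\ell^2)$ with its canonical trace $\Tr$, so that $\varphi \otimes \Tr$ and $\psi \otimes \Tr$ are weights of infinite multiplicity on $N \ovt F$ (which is still type $\III_1$). Using Theorem \ref{connes takesaki integrable} and the hypothesis $\varphi|_{\cZ(N)} = \psi|_{\cZ(N)}$, both amplified weights can be arranged as $v^* \Phi v$ for a common dominant weight $\Phi$ and isometries $v$ with range projections in $\cZ((N \ovt F)_\Phi)$ that have the same central trace. The transition problem inside the centralizer of $\Phi$ is then semifinite (type $\II_\infty$ with matching central traces) and hence implemented by a unitary in the ultrapower by the state case applied to ``density'' finite corners. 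Translating back via $\alpha_\Phi$-equivariance produces a unitary $U \in (N \ovt F)^\omega \cong N^\omega \ovt F$ implementing $(\varphi \otimes \Tr)^\omega \mapsto (\psi \otimes \Tr)^\omega$, and then cutting by a rank-one projection in $F$ yields the desired $u \in N^\omega$ by Proposition \ref{amplification anal bic}.

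To handle the non-factorial case, I would use the central decomposition of $N$: when $N_*$ is separable, $N$ is a direct integral of type $\III_1$ factors over $\spec(\cZ(N))$. The hypothesis $\varphi|_{\cZ(N)} = \psi|_{\cZ(N)}$ ensures that the fiberwise restrictions $\varphi_x$ and $\psi_x$ have matching masses almost everywhere. A measurable selection of $\varepsilon_n$-implementing unitaries yields a measurable field $v_n = (v_n(x))_x$, and $u = (v_n)^\omega \in N^\omega$ implements the transition by the same computation as above, applied fiberwise. The general non-separable case is handled by the approximation argument of \cite{HS90}, which bypasses direct integrals by working uniformly in the state space of $N$.

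The main obstacle I expect is the strictly semifinite step: converting a norm-approximation for states into an exact implementation for unbounded weights requires care because the predual norm is not available on $\cP(N)$. The amplification-plus-dominant-weight reduction described above is the natural way around this, but it relies crucially on both the Connes--Takesaki parametrization of dominant weights (Theorem \ref{connes takesaki integrable}) and on the preservation of the central mass distribution, which is exactly what the assumption $\varphi|_{\cZ(N)} = \psi|_{\cZ(N)}$ supplies.
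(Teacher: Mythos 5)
The paper itself gives no proof of this theorem: the remark preceding it cites \cite{CS78} for the separable factorial case, \cite{HS90} for the non-factorial non-separable case, and states that the strictly semifinite case ``follows easily from the state case.'' Your steps 1--2 and the appeal to \cite{HS90} in step 4 broadly match that outline (though in step 2 you should verify that $(v_n)_n \in \cM^\omega(N)$ before writing $(v_n)^\omega$; this requires an equi-integrability argument --- it is not automatic that a sequence of unitaries with $\|v_n\varphi v_n^* - \psi\|\to 0$ lies in the multiplier algebra, since one also needs control of $\{v_n^*\varphi v_n\}_n$, and this is where the actual content of the state-case implementation in $N^\omega$ sits).

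The genuine gap is in step 3. Theorem \ref{connes takesaki integrable} characterizes \emph{integrable} weights as the conjugates $v^*\Phi v$ of a dominant weight $\Phi$ by an isometry $v$. A strictly semifinite weight is in general \emph{not} integrable: every state is strictly semifinite, yet a state on a type $\III$ von Neumann algebra is never integrable. Amplification does not repair this, because
$$\int_\R \sigma_t^{\varphi\otimes\Tr}\,\rd t = \Bigl(\int_\R\sigma_t^{\varphi}\,\rd t\Bigr)\otimes\id$$
is semifinite if and only if $\int_\R\sigma_t^{\varphi}\,\rd t$ is, so $\varphi\otimes\Tr$ is integrable precisely when $\varphi$ already is. Thus the amplified weights you are working with are of infinite multiplicity but typically not integrable, and you cannot represent them as $v^*\Phi v$ with $\Phi$ dominant; the transition argument through $\cZ((N\ovt F)_\Phi)$ that your step 3 rests on is therefore unavailable. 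The reduction the paper has in mind must be more elementary (and doesn't pass through dominant weights at all): since $\varphi|_{N_\varphi}$ and $\psi|_{N_\psi}$ are semifinite traces, one picks families of projections $p\in N_\varphi$, $q\in N_\psi$ with finite and matching weight (this is where $\varphi|_{\cZ(N)} = \psi|_{\cZ(N)}$ is used), conjugates by a partial isometry in $N$ (possible since $N$ has no finite summand) so that both become finite faithful weights on the same corner, rescales to states, applies the state case there, and assembles the resulting partial isometries in $N^\omega$ into a unitary. You should replace step 3 with a reduction of this type and drop the appeal to Theorem \ref{connes takesaki integrable}.
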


The idea of the following theorem appears in \cite{AHHM18}.

\begin{thm} \label{bicentralizer equivariance and flow}
Let $N \subset M$ be an inclusion of von Neumann algebras with expectations. Suppose that $N$ is of type $\III_1$. 
\begin{enumerate}
\item Take $\varphi,\psi \in \cP_s(N)$. For every $x \in\rB(N \subset M,\varphi)$, there exists a unique element $\beta^{\psi,\varphi}(x) \in\rB(N \subset M,\psi)$ such that $ax=\beta^{\psi,\varphi}(x)a$ for every $a \in N^\omega_{\psi^\omega,\varphi^\omega}$. Moreover, $$\beta^{\psi,\varphi} :\rB(N \subset M, \varphi) \rightarrow\rB(N \subset M,\psi)$$ is an isomorphism of von Neumann algebras.
\item Take $\varphi,\phi,\psi \in \cP_s(N)$. Then $\beta^{\psi,\phi} \circ \beta^{\phi,\varphi}=\beta^{\psi,\varphi}$.
\item Take $\varphi,\psi \in \cP_s(N)$ and a projection $e \in N_\varphi \cap N_\psi$ or $e \in N' \cap M$. Then $\beta^{\psi_e,\varphi_e}(xe)= \beta^{\psi,\varphi}(x)e$ for every $x \in\rB(N \subset M,\varphi)$.
\item Take $\varphi \in \cP_s(N)$ and $(\psi_n)_{n \in \N}$ a sequence of faithful finite weights in $N_*^+$ that converges in norm to some faithful $\psi \in N_*^+$. Then $\beta^{\psi_n,\varphi}$ converges pointwise *-strongly to $\beta^{\psi,\varphi}$.
\item Take $\varphi \in \cP_s(N)$. For every $\lambda \in \R^*_+$, we have $\rB(N \subset M,\lambda \varphi)=\rB(N \subset M,\varphi)$ and $\beta_\lambda^\varphi := \beta^{\lambda \varphi,\varphi} \in \Aut(\rB(N \subset M, \varphi))$. Moreover, $\beta^\varphi :  \lambda \mapsto \beta_\lambda^\varphi$ is a continuous group action of $\R^*_+$ on $\rB(N \subset M, \varphi)$.
\end{enumerate}
The same properties hold if we replace $M$ by $c(M)$.
\end{thm}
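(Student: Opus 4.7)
The central content is item (1); the remaining items will follow essentially formally once the transition map is constructed, so my plan is to concentrate effort there. My strategy for item (1) is to build $\beta^{\psi,\varphi}$ by conjugation with a unitary in $N^\omega_{\psi^\omega,\varphi^\omega}$, using the type $\III_1$ Connes--St\o rmer theorem to supply the unitary.

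First, after rescaling by a positive element of $\cZ(N)$, I may assume $\varphi|_{\cZ(N)} = \psi|_{\cZ(N)}$; Theorem \ref{connes stormer} then supplies a unitary $u \in N^\omega$ with $u\varphi^\omega u^* = \psi^\omega$, equivalently $u \in N^\omega_{\psi^\omega,\varphi^\omega}$. Given $x \in \rB(N \subset M,\varphi) = (N^\omega_{\varphi^\omega})' \cap M$, set $y := uxu^*$: it lies in $(N^\omega_{\psi^\omega})' \cap M^\omega$ since conjugation by $u$ carries $N^\omega_{\varphi^\omega}$ onto $N^\omega_{\psi^\omega}$. For any $a \in N^\omega_{\psi^\omega,\varphi^\omega}$, a direct computation using $u^*\psi^{\omega,it} = \varphi^{\omega,it}u^*$ shows $u^*a \in N^\omega_{\varphi^\omega}$, hence commutes with $x$, yielding $ya = u(u^*a)x = ax$ in $M^\omega$. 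Independence of $u$ is automatic: two such unitaries differ by an element of $N^\omega_{\psi^\omega}$, which commutes with $y$.

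The essential obstacle is to show $y = uxu^* \in M$ rather than only in $M^\omega$; this will allow me to define $\beta^{\psi,\varphi}(x) := y \in \rB(N \subset M,\psi)$. My plan is first to handle the case where $\varphi, \psi$ are faithful normal states, where the argument is essentially due to \cite{AHHM18}: an approximating sequence $(u_n) \subset \cU(N)$ with $\|u_n \varphi - \psi u_n\| \to 0$ yields $u_n x u_n^*$ uniformly bounded in $M$, whose weak$^*$-accumulation point in $M$ represents $y$ and provides the needed element of $M$. The strictly semifinite case I would reduce to the state case via Proposition \ref{amplification anal bic}: pass to $N \ovt \B(H)$ with $\varphi \otimes \mathrm{Tr}$ and cut by a rank-one projection, using Proposition \ref{anal bic corner reduction} to identify the resulting bicentralizer. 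Once $\beta^{\psi,\varphi}$ lands in $M$, multiplicativity and bijectivity are immediate from the corresponding properties of $\Ad(u)$, with inverse $\beta^{\varphi,\psi}$.

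Items (2) and (3) then follow by uniqueness of the defining relation: $\beta^{\psi,\phi} \circ \beta^{\phi,\varphi}(x)$ satisfies the characterization of $\beta^{\psi,\varphi}(x)$, giving (2); for (3), Proposition \ref{anal bic corner reduction} identifies the cut-down bicentralizers and the implementing unitaries can be chosen to fix $e$. For item (4), I would use the continuity of Connes--St\o rmer under norm perturbation: for $\psi_n \to \psi$ the implementing unitaries $u_n \in N^\omega_{\psi_n^\omega,\varphi^\omega}$ can be chosen $*$-strongly convergent to $u \in N^\omega_{\psi^\omega,\varphi^\omega}$, yielding the $*$-strong convergence $u_n x u_n^* \to uxu^*$; verifying this continuity of the Connes--St\o rmer construction is the subtlest point here. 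For item (5), since $(\lambda\varphi)^\omega = \lambda\varphi^\omega$ one has $N^\omega_{(\lambda\varphi)^\omega} = N^\omega_{\varphi^\omega}$, so $\rB(N \subset M,\lambda\varphi) = \rB(N \subset M,\varphi)$ as sets; the group property of $\lambda \mapsto \beta_\lambda^\varphi$ follows from (2), and continuity from (4) applied to $\lambda_n \varphi \to \lambda \varphi$. Finally, the version with $c(M)$ in place of $M$ is obtained uniformly by using Proposition \ref{anal bicentralizer core} to identify $\rB(N \subset c(M),\varphi)$ as the crossed product $\rB(N \subset M,\varphi) \rtimes_{\sigma^{\varphi \circ T}} \R$, so that the transition maps automatically lift along the dual action.
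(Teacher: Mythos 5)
Your overall strategy matches the paper's: produce a unitary in the ultrapower via Connes--St\o rmer, conjugate, then invoke the argument of \cite[Theorem A(i)]{AHHM18}. There are, however, three concrete gaps.

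First, for item (1), your opening reduction is incorrect. You propose to rescale $\varphi$ by a central positive element $h$ so that the rescaled weight $\varphi_h$ agrees with $\psi$ on $\cZ(N)$, and then apply Connes--St\o rmer. But the defining relation of $\beta^{\psi,\varphi}$ is stated with respect to $N^\omega_{\psi^\omega,\varphi^\omega}$, and this set changes under the rescaling: a direct computation (using that $h$ is central and hence $\sigma^\varphi$-invariant) gives $\sigma^{\psi,\varphi_h}_t(a)=h^{-\ri t}\sigma^{\psi,\varphi}_t(a)$, so $N^\omega_{\psi^\omega,(\varphi_h)^\omega}\neq N^\omega_{\psi^\omega,\varphi^\omega}$ whenever $h$ is nontrivial. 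Consequently, for $a\in N^\omega_{\psi^\omega,\varphi^\omega}$ and $u$ intertwining $(\varphi_h)^\omega$ with $\psi^\omega$, the element $u^*a$ satisfies $\sigma_t^{(\varphi_h)^\omega}(u^*a)=h^{-\ri t}u^*a$ rather than being fixed, so it does \emph{not} automatically commute with $x$. Moreover, if $N$ is a factor the rescaling is by a scalar and cannot reconcile a finite $\varphi$ with an infinite $\psi$ at all. The device the paper actually uses is Proposition \ref{amplification anal bic}: amplify by $\B(H)$ with $\mathrm{Tr}$ so that both weights acquire infinite multiplicity, at which point $\varphi|_{\cZ(N)}=\psi|_{\cZ(N)}$ holds automatically and Connes--St\o rmer applies. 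You do mention this amplification, but you misframe it as a ``reduction to the state case,'' which is not what it does (the amplified weight $\varphi\otimes\mathrm{Tr}$ is never a state); the amplification goes the other direction, to infinite multiplicity, so Connes--St\o rmer becomes available.

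Second, for item (4), you acknowledge that verifying ``continuity of the Connes--St\o rmer construction'' is the subtlest point, and then leave it undefended. This is where the real work is hidden; such a continuity result is not among the standing tools. The paper's argument is quite different: reduce to $\varphi=\psi$ using (2); since $\psi_n\to\psi$ in norm, one can pick partial isometries $v_n\in N^\omega_{\psi_n^\omega,\psi^\omega}$ with $v_n^*v_n$ and $v_nv_n^*$ going strongly to $1$; then pass to a second cofinal ultrafilter $\eta$, form $u=(v_n)^\eta$ in the iterated ultrapower $N^{\eta\otimes\omega}$, use that $u x u^*=x$ for $x\in\rB(N\subset M,\psi)$ to deduce $\lim_{n\to\eta}v_nxv_n^*=x$ for all $\eta$, hence $v_nxv_n^*\to x$ strongly, and finally use $v_nxv_n^*=\beta^{\psi_n,\psi}(x)v_nv_n^*$ to conclude.

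Third, for item (5) you derive continuity ``from (4) applied to $\lambda_n\varphi\to\lambda\varphi$.'' But item (4) is a statement about \emph{states} (or finite weights) converging in norm in $N_*^+$; when $\varphi$ is an infinite weight, $\lambda_n\varphi$ and $\lambda\varphi$ are not in $N_*^+$, so (4) does not apply directly. The paper first proves continuity when $\varphi$ is finite via (4), and then, for infinite $\varphi$, uses item (3) to cut by finite-trace projections $e\in N_\varphi$, gets $\beta^\varphi_\lambda(x)e=\beta^{\varphi_e}_\lambda(xe)$, shows strong continuity for each such $e$, and lets $e\uparrow 1$. You should add this reduction to (3).

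Items (2) and (3), and the passage to $c(M)$ via Proposition \ref{anal bicentralizer core}, are fine and match the paper.
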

\begin{proof}
(1) Using Proposition \ref{amplification anal bic}, we may assume that $\varphi$ and $\psi$ have infinite multiplicity, i.e.\ $\varphi|_{\cZ(N)}$ and $\psi|_{\cZ(N)}$ are both purely infinite. By Theorem \ref{connes stormer}, there exists a unitary $u \in N^\omega_{\varphi^\omega}$ such that $u\varphi^{\omega}u^*=\psi^{\omega}$. We then proceed exactly as in the proof of \cite[Theorem A.(\rm i)]{AHHM18}.

(2) Obvious from (1).

(3) Fix $x \in\rB(N \subset M,\varphi)$. Suppose first that $e \in N_\varphi \cap N_\psi$. Take $a \in (eNe)^\omega_{\psi^\omega_e,\varphi^\omega_e}$. Then, viewing $a$ as an element of $eN^\omega e$, we have that $a \in N^\omega_{\psi^\omega,\varphi^\omega}$. Thus $axe=ax=\beta^{\psi,\varphi}(x)a=\beta^{\psi,\varphi}(x)ea$. This shows that $\beta^{\psi_e,\varphi_e}(xe)= \beta^{\psi,\varphi}(x)e$. Suppose now that $e \in N' \cap M$. Take $z \in \cZ(N)$ the unique central projection such that $x \mapsto xe$ is an isomorphism from $Nz$ to $Ne$. Take $a \in (eNe)^\omega_{\psi^\omega_e,\varphi^\omega_e}$ and let $b \in N^\omega z$ be the unique element such that $be=a$. Then $b \in N^\omega_{\psi^\omega,\varphi^\omega}$. Therefore, we have
$ axe=bxe=\beta^{\psi,\varphi}(x)be=\beta^{\psi,\varphi}(x)e a$ which shows that $\beta^{\psi_e,\varphi_e}(xe)=\beta^{\psi,\varphi}(x)e$.

(4) Since $\beta^{\psi_n,\varphi}=\beta^{\psi_n,\psi} \circ \beta^{\psi,\varphi}$, we may assume that $\varphi=\psi$. Fix $x \in\rB(N \subset M,\psi)$, we have to show that $\beta^{\psi_n,\psi}(x)$ converges strongly to $x$. Since $\psi_n$ converges in norm to $\psi$, we can find a sequence of partial isometries $v_n \in N^\omega_{\psi_n^\omega,\psi^\omega}$ such that $v_n^*v_n$ and $v_nv_n^*$ converge strongly to $1$. Then $\|v_n \psi^\omega-\psi^\omega v_n\|$ converges strongly to $0$. Take $\eta$ a second cofinal ultrafilter on $\N$ and let $u=(v_n)^{\eta} \in (N^\omega)^{\eta}=N^{\eta \otimes \omega}$. Take $x \in\rB(N \subset M,\psi)$. Then $uxu^*=x$. This means that $\lim_{n \to \eta} v_nxv_n^*=x$ strongly. Since this holds for every cofinal ultrafilter $\eta$ on $\N$, we conclude that $v_nxv_n^*$ converges strongly to $x$. Now, since $v_nxv_n^*=\beta^{\psi_n,\psi}(x)v_nv_n^*$ and $v_nv_n^*$ converges strongly to $1$, we conclude that $\beta^{\psi_n,\psi}(x)$ converges strongly to $x$. 

(5) The equality $\rB(N \subset M, \lambda \varphi)=\rB(N \subset M, \varphi)$ is obvious. The fact that $\beta^\varphi$ is a group action follows from (2). To prove that it is continuous, it is enough, by \cite[Proposition X.1.2]{Ta03}, to show that for every $x \in \rB(N \subset M,\varphi)$ the map $\lambda \mapsto \beta_\lambda^\varphi(x)$ is continuous in the strong topology (or weak* topology). For the continuity of $\beta^\varphi$ we use (4) when $\varphi$ is finite. In the case where $\varphi$ is not finite, we use (3). Indeed, take $e \in N_\varphi$ such that $\varphi(e) < +\infty$. Then (3) tells us that $\beta_\lambda^\varphi(x)e=\beta_\lambda^{\varphi_e}(xe)$ for every $x \in\rB(N \subset M, \varphi)$ and every $\lambda \in \R^*_+$. This shows that $\lambda \mapsto \beta^\varphi_\lambda(x)e$ is strongly continuous for every $e \in N_\varphi$ with $\varphi(e) < +\infty$, hence $\lambda \mapsto \beta^\varphi_\lambda(x)$ is strongly continuous. 
\end{proof}

The following lemma is new and will play an important role in the study of the bicentralizer conjecture.
\begin{lem} \label{weight commute fixed point}
Let $N \subset M$ be an inclusion of von Neumann algebras with expectations. Suppose that $N$ is of type $\III_1$. Take  $\varphi,\psi \in \cP_s(N)$ such that $\varphi$ and $\psi$ commute. Then $\beta^{\psi,\varphi}(x)=x$ for every $x \in\rB(N \subset M, \varphi)^{\beta^\varphi}$. In particular, $\rB(N \subset M, \varphi)^{\beta^\varphi}=\rB(N \subset M, \psi)^{\beta^\psi}$.

The same is true if we replace $M$ by $c(M)$.
\end{lem}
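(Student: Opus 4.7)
The plan is to exploit the fact that commutation of $\varphi$ and $\psi$ gives a concrete relation $\psi = h\varphi$ with $h$ a positive self-adjoint operator affiliated with $N_\varphi$, and then to reduce the statement to the case where $h$ has finite spectrum $\{\lambda_1, \ldots, \lambda_n\}$ with spectral projections in $N_\varphi$, which allows us to use the $\beta^\varphi$-invariance of $x$ piece-by-piece via Theorem \ref{bicentralizer equivariance and flow}(3). More precisely, the Connes cocycle $(D\psi:D\varphi)_t$ is a one-parameter group of unitaries in $N_\varphi$, hence of the form $h^{\ri t}$ for some positive self-adjoint $h$ affiliated with $N_\varphi$, and we have $\psi = h\varphi$ in the sense that $\psi^{\ri t}\varphi^{-\ri t}=h^{\ri t}$ inside $c(N)$.

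The first step of the proof will be to reduce to a situation where everything is finite. Choose an increasing sequence of projections $e_n \in N_\varphi \cap \{h\}''$ of the form $e_n = 1_{[1/n,n]}(h)\, f_n$ where $f_n \in N_\varphi$ is a finite projection with $\varphi(f_n)<+\infty$ and $f_n \uparrow 1$. Then $e_n \uparrow 1$, both $\varphi_{e_n}$ and $\psi_{e_n}$ are finite weights on $e_nNe_n$, and the operator $h_n := he_n$ is bounded with $\psi_{e_n} = h_n \varphi_{e_n}$. By Theorem \ref{bicentralizer equivariance and flow}(3), $\beta^{\psi,\varphi}(x)e_n = \beta^{\psi_{e_n},\varphi_{e_n}}(xe_n)$ and, applied to the flow itself, the element $xe_n$ is fixed by $\beta^{\varphi_{e_n}}$. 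So it suffices to establish the lemma in each corner, i.e.\ under the extra assumption that $\varphi$ and $\psi$ are both finite and $h$ is bounded.

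In this reduced setup, I will approximate $h$ uniformly by step functions $h^{(m)} = \sum_{k=1}^{K_m} \lambda_{k,m} p_{k,m}$ with $p_{k,m}$ mutually orthogonal spectral projections of $h$ lying in $N_\varphi$ and summing to $1$. The finite weight $\psi^{(m)} := h^{(m)}\varphi$ then converges to $\psi$ in norm in $N_*$. Since $p_{k,m}$ commutes with $h^{(m)}$ and $\varphi$, on the corner $p_{k,m}Np_{k,m}$ we have exactly $\psi^{(m)}_{p_{k,m}} = \lambda_{k,m}\varphi_{p_{k,m}}$. Two successive applications of Theorem \ref{bicentralizer equivariance and flow}(3), together with the hypothesis that $\beta^\varphi_{\lambda_{k,m}}(x)=x$, yield
\[
\beta^{\psi^{(m)},\varphi}(x)\,p_{k,m} = \beta^{\lambda_{k,m}\varphi_{p_{k,m}},\varphi_{p_{k,m}}}(xp_{k,m}) = \beta^{\varphi_{p_{k,m}}}_{\lambda_{k,m}}(xp_{k,m}) = \beta^\varphi_{\lambda_{k,m}}(x)\,p_{k,m}=xp_{k,m}.
\]
Summing over $k$ gives $\beta^{\psi^{(m)},\varphi}(x)=x$, and the *-strong continuity in Theorem \ref{bicentralizer equivariance and flow}(4) applied to the norm-convergent sequence $\psi^{(m)}\to\psi$ gives $\beta^{\psi,\varphi}(x)=x$, as desired.

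For the ``in particular'' statement, if $\psi$ commutes with $\varphi$ then so does $\lambda\psi$ for every $\lambda \in \R^*_+$; the first part applied to $\lambda\psi$ in place of $\psi$ combined with the composition rule Theorem \ref{bicentralizer equivariance and flow}(2) gives $\beta^\psi_\lambda(x) = \beta^{\lambda\psi,\psi}(x) = \beta^{\lambda\psi,\varphi}\circ(\beta^{\psi,\varphi})^{-1}(x) = x$, so $x \in \rB(N\subset M,\psi)^{\beta^\psi}$; exchanging the roles of $\varphi$ and $\psi$ gives the reverse inclusion. The main technical delicacy I foresee is the reduction to finite weights: one must ensure that the cutting projections $e_n$ lie simultaneously in $N_\varphi$ and in $\{h\}''$ so that the compression really gives commuting finite weights with a bounded Radon--Nikodym operator, and that the extension of Theorem \ref{bicentralizer equivariance and flow}(3)--(4) to the weight-level statements above can be invoked consistently on the corners.
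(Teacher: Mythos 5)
Your overall strategy is the same as the paper's: reduce to the finite-spectrum case and combine it with Theorem~\ref{bicentralizer equivariance and flow}(3)--(4). The paper proceeds in the order ``finite spectrum $\Rightarrow$ $\psi$ finite $\Rightarrow$ general (via corner cuts),'' while you cut to a corner first and then approximate by step functions; the calculations inside the corners are identical.

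There is, however, a genuine gap in the construction of your cutting projections $e_n$. You insist at the end that $e_n$ should lie in $N_\varphi \cap \{h\}''$, but the restriction $\varphi|_{\{h\}''}$ need not be semifinite (the restriction of a strictly semifinite weight to a small abelian subalgebra may well be purely infinite), so such a family $e_n \uparrow 1$ with $\varphi(e_n) < \infty$ may simply not exist in $\{h\}''$. The correct algebra to cut in is $N_\varphi \cap \{h\}' = N_\varphi \cap N_\psi$ --- note that this is exactly what Theorem~\ref{bicentralizer equivariance and flow}(3) demands of $e$ anyway. On $N_\varphi \cap N_\psi$, the restrictions $\varphi|_{N_\varphi \cap N_\psi}$ and $\psi|_{N_\varphi \cap N_\psi}$ are both semifinite traces: the paper establishes this by observing that, since $\varphi$ and $\psi$ commute, one can write $\psi = \psi|_{N_\varphi} \circ E$ with $E$ the $\varphi$-preserving expectation onto $N_\varphi$, so $\psi|_{N_\varphi}$ is strictly semifinite, and by symmetry $\varphi|_{N_\psi}$ as well. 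Once you place $e_n$ there (and use $e_n \leq 1_{[1/n,n]}(h)$ to make $h e_n$ bounded above and below), finiteness of $\varphi(e_n)$ forces finiteness of $\psi(e_n)$ and the reduction goes through. Worth noting: the paper's reduction is slightly cheaper, as it only needs $\psi(e) < \infty$ in each corner (keeping $\varphi$ possibly infinite there) because its ``$\psi$ finite'' case already handles arbitrary $\varphi \in \cP_s(N)$; making both weights finite simultaneously is possible but unnecessary.
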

\begin{proof}
 Since $\varphi$ and $\psi$ commute, we can write $\psi=\varphi_h$ for some positive operator $h$ affiliated with $N_\varphi \cap N_\psi$. Assume first that $h$ has finite spectrum. Then we can write $h=\sum_{i=1}^n \lambda_i e_i$ for some $\lambda_i \in \R^*_+$ and some partition of unity $(e_i)_{i =1}^n$ in $N_\varphi \cap N_\psi$. Take $x \in\rB(N \subset M, \varphi)$. Then by Theorem \ref{bicentralizer equivariance and flow}.(3), we have
$$ \beta^{\psi,\varphi}(x) =\sum_{i=1}^n \beta^{\psi,\varphi}(x)e_i = \sum_{i=1}^n \beta^{\psi_{e_i},\varphi_{e_i}}(xe_i) =\sum_{i=1}^n \beta^{\lambda_i\varphi_{e_i},\varphi_{e_i}}(xe_i) = \sum_{i=1}^n \beta_{\lambda_i}^{\varphi_{e_i}}(xe_i) =\sum_{i=1}^n \beta_{\lambda_i}^{\varphi}(x)e_i.$$
Therefore, if $x$ is fixed by $\beta^\varphi$, then $\beta^{\psi,\varphi}(x)=\sum_{i=1}^n xe_i=x$.

Next, we deal with the case where $\psi$ is finite, i.e.\ $\varphi(h) < +\infty$. Take a sequence of positive elements $h_n \in N_\varphi$ with finite spectrum such that $\psi_n=\varphi_{h_n}$ converges in norm to $\psi$.  Then for every $x \in\rB(N \subset M,\varphi)^{\beta^\varphi}$ and every $n \in \N$, we have $\beta^{\psi_n,\varphi}(x)=x$, hence $\beta^{\psi,\varphi}(x)=x$ thanks to Theorem \ref{bicentralizer equivariance and flow}.(4).

Finally, we deal with the general case. Since $\psi$ and $\varphi$ commutes, $\psi$ is of the form $\psi = \psi|_{N_\varphi} \circ E$ where $E : N \rightarrow N_\varphi$ is the $\varphi$-preserving normal conditional expectation. In particular, $\psi|_{N_\varphi}$ is strictly semifinite, i.e.\ the restriction of $\psi$ to $N_\varphi \cap N_\psi$ is semifinite.  Now, fix $x \in\rB(N \subset M,\varphi)^{\beta^\varphi}$. For every $e \in N_\varphi \cap N_\psi$ such that $\psi(e) < +\infty$, we have that $\psi_e$ is finite and $xe \in\rB(Ne \subset eMe,\varphi_e)^{\beta^{\varphi_e}}$ by Theorem \ref{bicentralizer equivariance and flow}.(3). By the previous case, we get $\beta^{\psi,\varphi}(x)e=\beta^{\psi_e,\varphi_e}(xe)=xe$. By making $e$ increase to $1$, we conclude that $\beta^{\psi,\varphi}(x)=x$.
\end{proof}

\begin{prop} \label{anal bic intermediate}
Let $N \subset M$ be an inclusion of von Neumann algebras with expectations. Suppose that $N$ is of type $\III_1$. Then the subalgebras
$$ \rB^\sharp(N \subset M) = N \vee \rB(N \subset M,\varphi)$$
$$ \rB^\sharp(N \subset c(M)) = N \vee \rB(N \subset c(M),\varphi)$$
do not depend on the choice of $\varphi \in \cP_s(N)$. Moreover, there exists a unique faithful normal conditional expectation from $M$ onto $\rB^\sharp(N \subset M)$ and as subalgebras of $c(M)$, we have
$$\rB^\sharp(N \subset c(M)) = c(\rB^\sharp(N \subset M)).$$
\end{prop}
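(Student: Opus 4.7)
The plan is first to construct, for a fixed $\varphi\in \cP_s(N)$, the conditional expectation from $M$ onto $N\vee \rB(N\subset M,\varphi)$ and identify its core inside $c(M)$ with $N\vee \rB(N\subset c(M),\varphi)$; then to prove that the subalgebra $N\vee \rB(N\subset c(M),\varphi)$ does not depend on $\varphi$ via a UCP averaging argument at the ultrapower level; and finally to descend to $M$ by taking $\theta$-fixed points.

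To build the conditional expectation, fix $T\in \cP(M,N)$. The subalgebra $\rB(N\subset M,\varphi)=(N^\omega_{\varphi^\omega})'\cap M$ is globally $\sigma^{\varphi\circ T}$-invariant because $\sigma^{(\varphi\circ T)^\omega}$ restricts to $\sigma^{\varphi^\omega}$ on $N^\omega$ and hence preserves $N^\omega_{\varphi^\omega}$; combined with the invariance of $N$, this makes $\rB^\sharp_\varphi:=N\vee \rB(N\subset M,\varphi)$ globally $\sigma^{\varphi\circ T}$-invariant. Since $\varphi\circ T$ is already semifinite on $N\subset \rB^\sharp_\varphi$, Takesaki's theorem produces a $\varphi\circ T$-preserving faithful normal conditional expectation $E_\varphi\colon M\to \rB^\sharp_\varphi$. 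The chain
\[(\rB^\sharp_\varphi)'\cap M\subset N'\cap M\subset (N^\omega_{\varphi^\omega})'\cap M\subset \rB^\sharp_\varphi,\]
where the middle inclusion uses that any element of $N'\cap M$ commutes with every element of $N^\omega$ (its representatives lie in $N$), shows that $E_\varphi$ is the unique faithful normal conditional expectation onto $\rB^\sharp_\varphi$. Combined with Proposition \ref{anal bicentralizer core}, the identification $c(M)=M\rtimes_{\sigma^{\varphi\circ T}}\R$ gives $N\vee \rB(N\subset c(M),\varphi)=\rB^\sharp_\varphi\vee \{(\varphi\circ T)^{\ri t}\}''=c_{E_\varphi}(\rB^\sharp_\varphi)$.

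The heart of the argument, and what I expect to be the main obstacle, is the independence of $\varphi$ at the core level. For $\varphi,\psi\in \cP_s(N)$, after a possible amplification by a type ${\rm I}_\infty$ factor to equalize the restrictions to $\cZ(N)$ (using Proposition \ref{amplification anal bic}), Theorem \ref{connes stormer} provides a unitary $u\in N^\omega$ with $u\varphi^\omega u^*=\psi^\omega$, and Theorem \ref{bicentralizer equivariance and flow}(1) ensures that conjugation by $u$ implements the isomorphism $\beta^{\psi,\varphi}\colon \rB(N\subset c(M),\varphi)\to \rB(N\subset c(M),\psi)$, i.e.\ $ux=\beta^{\psi,\varphi}(x)u$. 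Applying Proposition \ref{ucp map from ultrapower} to $N\subset c(M)$ yields a UCP map $f\in \cV^{\psi,\varphi}(N\subset c(M))$ given by $f(y)=E^\omega(uyu^*)$. For any $x\in \rB(N\subset c(M),\varphi)$ the intertwining forces $uxu^*=\beta^{\psi,\varphi}(x)\in c(M)$, so $f(x)=\beta^{\psi,\varphi}(x)$. On the other hand, by the very definition of $\cV^{\psi,\varphi}(N\subset c(M))$, the value $f(x)$ is a weak$^*$ limit of elements $\Ad(\ba_i)(x)=\ba_i x\ba_i^*$ with $\ba_i$ rows whose entries lie in $N$; each such element lies in the von Neumann algebra generated by $N$ and $x$, hence so does $f(x)$. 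Therefore $\beta^{\psi,\varphi}(x)\in N\vee \{x\}''\subset N\vee \rB(N\subset c(M),\varphi)$, and letting $x$ vary gives $\rB(N\subset c(M),\psi)\subset N\vee \rB(N\subset c(M),\varphi)$. Symmetry then yields equality.

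To finish, I would descend to $M$ by taking $\theta$-fixed points: $\theta$ fixes $N^\omega$ pointwise because it already fixes $N$ pointwise, so $\rB(N\subset c(M),\varphi)$ is globally $\theta$-invariant with $\rB(N\subset c(M),\varphi)^\theta=\rB(N\subset M,\varphi)$. Hence $(N\vee \rB(N\subset c(M),\varphi))^\theta=\rB^\sharp_\varphi$, which is therefore also independent of $\varphi$. The key technical input underpinning the hard step is Theorem \ref{extension ucp to core}, which extends the UCP calculus $\cV^{\psi,\varphi}$ from $M$ to $c(M)$ while retaining the row-in-$N$ structure that confines $f(x)$ to the von Neumann algebra generated by $N$ and $x$; the rest of the argument is bookkeeping with crossed product extensions and $\theta$-equivariance.
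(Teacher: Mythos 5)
Your proof is correct and follows essentially the same route as the paper's: both rely on Connes--St\o rmer together with the transition isomorphism $\beta^{\psi,\varphi}$ of Theorem \ref{bicentralizer equivariance and flow}, using that conjugation by a unitary $u\in N^\omega$ is a weak$^*$-limit of conjugations by rows with entries in $N$, so that $\beta^{\psi,\varphi}(x)\in N\vee\{x\}''$; the conditional expectation and core identification are obtained from $\sigma^{\varphi\circ T}$-invariance and Proposition \ref{anal bicentralizer core} just as in the paper. The only differences are cosmetic: you carry out the independence argument at the level of $c(M)$ and descend by $\theta$-fixed points (the paper runs the same argument at the level of $M$ and then separately in $c(M)$), and you phrase the approximation through the $\cV^{\psi,\varphi}$-calculus rather than quoting it implicitly; this is in fact exactly the content of Proposition \ref{transition map is in ucp}, which you could have cited directly.
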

\begin{proof}
Take $\psi \in \cP_s(N)$. It follows from Theorem \ref{connes stormer} and Theorem \ref{bicentralizer equivariance and flow}.(1) that $\rB(N \subset M,\psi) \subset N\vee \rB(N \subset M,\varphi)$ and the same holds if we exchange the role of $\varphi$ and $\psi$. This shows that the subalgebra $\rB^\sharp(N \subset M)$ does not depend on the choice of $\varphi$ and the same argument works for $\rB^\sharp(N \subset c(M))$.

Take $T \in \cP(M,N)$. Then $\rB^\sharp(N \subset M)$ is clearly invariant under the modular flow $\sigma^{\varphi \circ T}$ and $\varphi \circ T$ is semifinite on $\rB^\sharp(N \subset M)$. Therefore, there exists a $\varphi \circ T$-preserving conditional expectation onto from $M$ onto $\rB^\sharp(N \subset M)$. It is the unique normal conditional expectation because $\rB^\sharp(N \subset M)' \cap M \subset N' \cap M \subset \rB^\sharp(N \subset M)$. Finally, Proposition \ref{anal bicentralizer core} shows that under the identification $c(M)=M \rtimes_{\sigma^{\varphi \circ T}} \R$, we have
$$ \rB^\sharp(N \subset c(M))=\rB^\sharp(N \subset M)\rtimes_{\sigma^{\varphi \circ T}} \R.$$
\end{proof}

\subsection{Invariance under ucp maps}

\begin{prop} \label{transition map is in ucp}
Let $N \subset M$ be an inclusion of von Neumann algebra with expectations. Suppose that $N$ is of type $\III_1$, then for every $\varphi,\psi \in \cP_s(N)$, there exists a normal ucp map $f \in \cV^{\psi,\varphi}(N \subset M)$ such that $\beta^{\psi,\varphi}(x)=f(x)$ for every $x \in \rB(N \subset M,\varphi)$. If $\varphi$ and $\psi$ have infinite multiplicity we can choose $f$ in $\cD(N \subset M) \cap \cV^{\psi,\varphi}(N \subset M)$.

The same is true if we replace $M$ by $c(M)$.
\end{prop}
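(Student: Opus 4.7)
The strategy is to combine the Connes--St\o rmer theorem with the ultrapower UCP construction of Proposition \ref{ucp map from ultrapower}: a unitary in $N^\omega_{\psi^\omega,\varphi^\omega}$ produced by Theorem \ref{connes stormer} will simultaneously implement $\beta^{\psi,\varphi}$ and give an element of $\cV^{\psi,\varphi}(N \subset M) \cap \cD(N \subset M)$. I first treat the case of infinite multiplicity, then reduce the general case to it by amplification; the $c(M)$ version then follows by the same construction (or via Theorem \ref{extension ucp to core}).

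\textbf{Infinite-multiplicity case.} Assume $\varphi, \psi \in \cP_s(N)$ both have infinite multiplicity, so their restrictions to $\cZ(N)$ are purely infinite and hence equal. Fix a cofinal ultrafilter $\omega$ on $\N$. By Theorem \ref{connes stormer}, there exists a unitary $u \in N^\omega$ with $u \varphi^\omega u^* = \psi^\omega$, equivalently $u$ is a unitary in $N^\omega_{\psi^\omega,\varphi^\omega}$. Lifting $u$ to a sequence $u = (u_n)^\omega$ of unitaries $u_n \in \cU(N)$, define
$$ f(x) \;:=\; E^\omega(u x u^*) \;=\; \lim_{n \to \omega} u_n x u_n^*, \qquad x \in M, $$
as a weak*-limit in $M$, where $E^\omega : M^\omega \to M$ is the canonical normal conditional expectation. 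Then $f$ is a normal UCP map on $M$. By Proposition \ref{ucp map from ultrapower}, $f \in \cV^{\psi,\varphi}(N \subset M)$; since $f$ is a pointwise weak*-limit of $\Ad(u_n)$ with $u_n \in \cU(N)$, we also have $f \in \cD(N \subset M)$. For $x \in \rB(N \subset M,\varphi) = (N^\omega_{\varphi^\omega})' \cap M$, the defining relation $ax = \beta^{\psi,\varphi}(x) a$ with $a = u$ gives $u x u^* = \beta^{\psi,\varphi}(x) \in M$, so $f(x) = \beta^{\psi,\varphi}(x)$.

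\textbf{General case and core version.} For arbitrary $\varphi,\psi \in \cP_s(N)$, let $F$ be a type ${\rm I}_\infty$ factor with its canonical trace $\Tr$. On $F \ovt N \subset F \ovt M$ the weights $\Tr \otimes \varphi$ and $\Tr \otimes \psi$ have infinite multiplicity, so the previous step yields a normal map $\tilde f \in \cV^{\Tr\otimes\psi,\,\Tr\otimes\varphi}(F \ovt N \subset F \ovt M)$ equal to $\beta^{\Tr\otimes\psi,\,\Tr\otimes\varphi}$ on the corresponding bicentralizer. By Proposition \ref{amplification anal bic} this bicentralizer is $1 \otimes \rB(N \subset M,\varphi)$, and applying the defining relation to $a = 1 \otimes b$ with $b \in N^\omega_{\psi^\omega,\varphi^\omega}$ gives $\beta^{\Tr\otimes\psi,\,\Tr\otimes\varphi}(1 \otimes x) = 1 \otimes \beta^{\psi,\varphi}(x)$. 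Compressing at a rank-one projection $e \in F$ via Proposition \ref{amplification ucp map} produces $f := \tilde f_e \in \cV^{\psi,\varphi}(N \subset M)$, and the compression identity $e \otimes f(x) = (e \otimes 1)\tilde f(1 \otimes x)(e \otimes 1)$ yields $f|_{\rB(N \subset M,\varphi)} = \beta^{\psi,\varphi}$. The $c(M)$-version is entirely parallel: the same unitary $u \in N^\omega \subset c(M)^\omega$ defines $g(y) := E^\omega_{c(M)}(u y u^*)$ for $y \in c(M)$, which lies in $\cV^{\psi,\varphi}(N \subset c(M)) \cap \cD(N \subset c(M))$ by the same argument and satisfies $g|_{\rB(N \subset c(M),\varphi)} = \beta^{\psi,\varphi}$; alternatively, one can simply take $g = c(f)$ from Theorem \ref{extension ucp to core}.

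No significant obstacle is anticipated; the essential content is packaging Theorem \ref{connes stormer} via Proposition \ref{ucp map from ultrapower}, and the amplification/compression needed for weights that are not of infinite multiplicity is routine. The main point to watch is that the entries of the approximating rows lie in $N$ (not merely in $M$), which is ensured because both the Connes--St\o rmer unitary and the modular smoothing $\sigma^{\psi,\varphi}_h$ preserve $N$.
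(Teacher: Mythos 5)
Your proof is correct and follows essentially the same route as the paper: reduce to infinite multiplicity by tensoring with a type $\rm I_\infty$ factor, invoke Connes--St\o rmer (Theorem \ref{connes stormer}) to produce a unitary $u\in N^\omega$ with $u\varphi^\omega u^*=\psi^\omega$, take $f=E^\omega\circ\Ad(u)$, and use Proposition \ref{ucp map from ultrapower} to place $f$ in $\cD(N\subset M)\cap\cV^{\psi,\varphi}(N\subset M)$. The only cosmetic difference is that you verify $f|_{\rB(N\subset M,\varphi)}=\beta^{\psi,\varphi}$ directly from the defining intertwining relation, while the paper simply cites Theorem \ref{bicentralizer equivariance and flow}.(1) and (3); these are the same argument.
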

\begin{proof}
If $\varphi$ and $\psi$ are both have infinite multiplicity then by Theorem \ref{connes stormer}, there exists a unitary $u \in N^\omega$ such that $u\varphi^\omega u^*=\psi^\omega$. By Theorem \ref{bicentralizer equivariance and flow}.(1), we have $\beta^{\psi,\varphi}(x)=uxu^*$ for all $x \in \rB(N \subset M,\varphi)$. Therefore, we can take $f$ the map given by $f(x)=E(uxu^*)$ for all $x \in M$ where $E : M^\omega \rightarrow M$ is the canonical conditional expectation. We have $f \in \cD(N \subset M)  \cap \cV^{\psi,\varphi}(N \subset M)$ by Proposition \ref{ucp map from ultrapower}. 

If $\varphi$ and $\psi$ are arbitrary, we use Proposition \ref{amplification ucp map} and Theorem \ref{bicentralizer equivariance and flow}.(3).
\end{proof}

\begin{prop} \label{conditional expectation is in ucp}
Let $N \subset M$ be an inclusion of von Neumann algebras with expectations. 
\begin{enumerate}[(1)]
\item  If $\varphi$ is a faithful normal state on $N$, then $E_{\rB(N\subset M, \varphi)} \in \cD(N \subset M) \cap \cV^\varphi(N \subset M)$.
\item If $\varphi \in \cP_s(N)$, then $\cD(N \subset M) \cap \cV^\varphi(N \subset M)$ contains some conditional expectation (not necessarily normal) onto $\rB(N \subset M,\varphi)$.
\item If $\varphi \in \cP_s(N)$ and $N$ is of type $\III_1$, then $\cV(N \subset M)$ contains some conditional expectation onto $\rB(N \subset M,\varphi)^{\beta^\varphi}$. If moreover $\varphi \in \cP_s(N)$ has infinite multiplicity then $\cD(N \subset M)$ contains some conditional expectation onto $\rB(N \subset M,\varphi)^{\beta^\varphi}$.
\end{enumerate}
The same is true if we replace $M$ by $c(M)$.
\end{prop}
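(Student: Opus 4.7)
The plan is to handle all three parts by Dixmier averaging inside the Ocneanu ultrapower, combined in (3) with an averaging over the bicentralizer flow. Fix a cofinal ultrafilter $\omega$ on $\N$ and set $Q := N^\omega_{\varphi^\omega}$.

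For (1) and (2), the algebra $Q$ carries a faithful normal finite (resp.\ semifinite) trace $\varphi^\omega|_Q$, and the inclusion $Q \subset M^\omega$ is with expectations (compose the canonical expectation $E : M^\omega \to M$ with $E_N : M \to N$ and the $\varphi$-preserving conditional expectation $N \to N_\varphi \subset Q$). Theorem~\ref{expectation implies dixmier} then yields a conditional expectation $F \in \cD(Q \subset M^\omega)$ onto $Q' \cap M^\omega$, and since $\varphi^\omega$ is invariant under every $u \in \cU(Q)$, it is preserved by $F$. Hence $E \circ F|_M$ is a $\varphi$-preserving conditional expectation onto $\rB(N \subset M, \varphi) = E(Q' \cap M^\omega)$ by Proposition~\ref{anal image expectation}; in case (1) it is normal and thus coincides with the canonical $E_{\rB(N \subset M, \varphi)}$ of Proposition~\ref{anal expectation} by uniqueness. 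Writing $F$ as a pointwise weak*-limit of convex combinations $\sum_{j} \lambda_{i,j}\,\Ad(u_{i,j})$ with $u_{i,j} \in \cU(Q)$ and lifting each $u_{i,j} = (u_{i,j,n})^\omega$ with $u_{i,j,n} \in \cU(N)$ and $\|u_{i,j,n}\varphi - \varphi u_{i,j,n}\| \to 0$ along $\omega$, we realise $E \circ F|_M$ as a weak*-limit of convex combinations of the maps $x \mapsto E(u_{i,j} x u_{i,j}^*)$, each of which lies in $\cV^\varphi(N \subset M) \cap \cD(N \subset M)$ by Proposition~\ref{ucp map from ultrapower}. The same argument, applied now to $N \subset c(M)$ and using Theorem~\ref{extension ucp to core} to pass to the core, settles the $c(M)$ version.

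For (3), start from the conditional expectation $E_\varphi \in \cV^\varphi(N \subset M) \cap \cD(N \subset M)$ onto $\rB(N \subset M, \varphi)$ provided by (2), and for each $\lambda \in \R^*_+$ select (via Proposition~\ref{transition map is in ucp}) a map $f_\lambda \in \cV^{\lambda\varphi,\varphi}(N \subset M)$, lying in $\cD(N \subset M)$ when $\varphi$ has infinite multiplicity, with $f_\lambda|_{\rB(N \subset M, \varphi)} = \beta_\lambda^\varphi$. Using amenability of $\R^*_+$, fix an invariant mean $m \in L^\infty(\R^*_+)^*$ and define $\Phi : M \to M$ by
\[\eta(\Phi(x)) := m_\lambda\bigl[\eta(f_\lambda(E_\varphi(x)))\bigr], \qquad x \in M,\ \eta \in M_*.\]
Invariance of $m$ gives $\beta_\mu^\varphi(\Phi(x)) = \Phi(x)$ for every $\mu \in \R^*_+$, so $\Phi(M) \subset \rB(N \subset M, \varphi)^{\beta^\varphi}$; conversely, for $x \in \rB(N \subset M, \varphi)^{\beta^\varphi}$ both $E_\varphi(x) = x$ and $f_\lambda(x) = \beta_\lambda^\varphi(x) = x$, whence $\Phi(x) = x$. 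A Hahn--Banach argument applied coordinatewise (a continuous linear functional on $\CCP(M)$ is of the form $f \mapsto \eta(f(x))$) shows that $\Phi$ lies in the weak*-closed convex hull of $\{f_\lambda \circ E_\varphi : \lambda \in \R^*_+\}$, which is contained in $\cV(N \subset M)$ (respectively, in $\cV(N \subset M) \cap \cD(N \subset M)$ for infinite multiplicity), so $\Phi$ is the desired conditional expectation.

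The hard point will be to make the averaging in (3) rigorous: the assignment $\lambda \mapsto f_\lambda$ is not canonically continuous or measurable in the weak*-topology, so the use of the invariant mean on $L^\infty(\R^*_+)$ requires some care, for instance by checking weak measurability of $\lambda \mapsto \eta(f_\lambda(E_\varphi(x)))$ along a reasonable section of Proposition~\ref{transition map is in ucp}, or by replacing the explicit formula with a Markov--Kakutani fixed-point argument on the weak*-compact convex hull of the orbit $\{f_\lambda \circ E_\varphi : \lambda \in \R^*_+\}$ inside the compact semigroup $\cV(N \subset M)$, exploiting the semigroup relation $\cV^{\mu\varphi,\varphi} \cdot \cV^{\lambda\varphi,\varphi} \subset \cV^{\mu\lambda\varphi,\varphi}$.
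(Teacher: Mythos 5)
Your proposal is correct and follows the same overall strategy, but with some noteworthy variations. For items (1) and (2) you route through the general weak Dixmier property of $Q=N^\omega_{\varphi^\omega}$ inside $M^\omega$ (Theorem~\ref{expectation implies dixmier}) and then invoke uniqueness of the state-preserving expectation, whereas the paper directly observes that the canonical $(\varphi\circ T)^\omega$-preserving expectation $F$ onto $Q'\cap M^\omega$ already lies in the Dixmier semigroup, and for item (2) it reduces to the finite case by amplification via Proposition~\ref{amplification anal bic}. Your direct treatment of the semifinite weight in (2) is a perfectly valid alternative and actually avoids the amplification gymnastics. For item (3), you recover the paper's averaging argument, but the closing worry about continuity or measurability of $\lambda\mapsto f_\lambda$ is unfounded: because $E_\varphi$ ranges in $\rB(N\subset M,\varphi)$ and $f_\lambda|_{\rB(N\subset M,\varphi)}=\beta^\varphi_\lambda$, the map you actually average is $f_\lambda\circ E_\varphi=\beta^\varphi_\lambda\circ E_\varphi$, and continuity of the flow $\beta^\varphi$ (Theorem~\ref{bicentralizer equivariance and flow}.(5)) already gives strong continuity in $\lambda$, so the invariant mean is applied to continuous bounded functions and no Markov--Kakutani workaround is needed. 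One small caveat: in the $c(M)$-version it is cleaner simply to rerun the same ultrapower argument for the inclusion $N\subset c(M)$ (which is with expectations by construction of the core) rather than to try to extend via Theorem~\ref{extension ucp to core}, since the latter gives uniqueness only for $\cV^{\psi,\varphi}(M)$ applied to the whole core and does not directly transport a conditional expectation onto a relative bicentralizer.
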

\begin{proof}
(1) Let $\omega$ be a cofinal ultrafilter on $\N$. Let $F$ be the faithful normal conditional expectation from $M^\omega$ onto $(N_{\varphi^\omega}^\omega)' \cap M^\omega$ that preserves $(\varphi \circ T)^\omega$. Then $E_{\rB(N\subset M, \varphi)} = E \circ F|_{M}$ (see Proposition \ref{anal expectation}). Since $F$ lies in the weak* closed convex hull $\{ \Ad(u) \mid u \in \cU(N_{\varphi^\omega}^\omega) \}$, we have $E_{\rB(N\subset M, \varphi)} \in \cD(N \subset M)$ and Proposition \ref{ucp map from ultrapower} tells us that $E_{\rB(N\subset M, \varphi)} \in \cV^\varphi(N \subset M)$. 

(2) By using Proposition \ref{amplification anal bic}, and since type $\rm I$ factors have the weak Dixmier property, we reduce to the case where $\varphi$ is finite.

(3) Take $E \in \cD(N \subset M)$ a conditional expectation onto $\rB(N \subset M,\varphi)$, then $\beta_\lambda^\varphi \circ E \in \cV(N \subset M)$ for all $\lambda \in \R^*_+$ thanks to Proposition \ref{transition map is in ucp}. Since $\R^*_+$ is amenable, we conclude that $\cV(N \subset M)$ contains some conditional expectation onto $\rB(N \subset M,\varphi)^{\beta^\varphi}$. If $\varphi$ has infinite multiplicity, then $\beta_\lambda^\varphi \circ E \in \cD(N \subset M)$ for all $\lambda \in \R^*_+$ and a similar conclusion holds.
\end{proof}

\begin{lem} \label{commutation relation vector}
Let $N \subset M$ be an inclusion of von Neumann algebras with a faithful normal conditional expectation $E_N$ and use it to view $\rL^2(N)$ as a subspace of $\rL^2(M)$. Let $\varphi$ be a faithful normal state on $M$ such that $\varphi =\varphi \circ E_N$. Take $x,y \in M$ such that $x \varphi^{1/2}=\varphi^{1/2}y$. If $x \in N' \cap M$, then $x \xi=\xi y$ for all $\xi \in \rL^2(N)$.
\end{lem}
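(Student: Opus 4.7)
The plan is to reduce everything to the dense subset $\{a\varphi^{1/2} : a \in N\} \subset \rL^2(N)$ and then perform a one-line computation.

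First, I would record why $\{a\varphi^{1/2} : a \in N\}$ is dense in $\rL^2(N)$. Since $\varphi = \varphi \circ E_N$ with $E_N$ and $\varphi$ both faithful normal, the restriction $\varphi|_N$ is a faithful normal state on $N$. Under the embedding $\rL^2(N) \hookrightarrow \rL^2(M)$ induced by $E_N$, the positive vector $(\varphi|_N)^{1/2} \in \rL^2(N)$ is identified with $\varphi^{1/2} \in \rL^2(M)$, and the GNS map $N \ni a \mapsto a (\varphi|_N)^{1/2}$ has dense range. Hence $\{a\varphi^{1/2} : a \in N\}$ is dense in $\rL^2(N)$.

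Next, for $\xi = a\varphi^{1/2}$ with $a \in N$, I would compute directly: since $x \in N' \cap M$ commutes with $a \in N$, and the left and right $M$-actions on $\rL^2(M)$ commute, we have
\begin{align*}
x\xi = x(a\varphi^{1/2}) = (xa)\varphi^{1/2} = (ax)\varphi^{1/2} = a(x\varphi^{1/2}) = a(\varphi^{1/2}y) = (a\varphi^{1/2})y = \xi y,
\end{align*}
where the crucial third equality uses the hypothesis $x\varphi^{1/2} = \varphi^{1/2}y$.

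Finally, I would extend from the dense subspace to all of $\rL^2(N)$ by continuity: the map $\rL^2(M) \ni \xi \mapsto x\xi - \xi y \in \rL^2(M)$ is bounded (with norm at most $\|x\| + \|y\|$) since $\rL^2(M)$ is an $M$-bimodule, and we have just shown it vanishes on the dense subspace $\{a\varphi^{1/2} : a \in N\}$ of $\rL^2(N)$. Therefore it vanishes on all of $\rL^2(N)$, which is exactly the claim. There is no real obstacle here; the statement is essentially a bimodule calculation and the hypothesis $x \in N' \cap M$ is used precisely to commute $x$ past $a \in N$ in the first line.
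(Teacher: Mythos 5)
Your proposal is correct and follows the same approach as the paper: compute $xa\varphi^{1/2}=ax\varphi^{1/2}=a\varphi^{1/2}y$ for $a\in N$ using $x\in N'\cap M$ and the hypothesis $x\varphi^{1/2}=\varphi^{1/2}y$, then extend by density of $\{a\varphi^{1/2}\mid a\in N\}$ in $\rL^2(N)$. The extra explanations you give for density and the continuity extension are standard and unobjectionable.
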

\begin{proof}
Take $a \in N$. Then $xa\varphi^{1/2}=ax\varphi^{1/2}=a\varphi^{1/2}y$. Since $\{ a\varphi^{1/2} \mid a \in N \}$ is dense in $\rL^2(N)$, we conclude that $x\xi=\xi y$ for all $\xi \in \rL^2(N)$.
\end{proof}

The following is the key result of this section. It will be used in the proof of our main theorems. It is in this Theorem that we use the ultrapower implementation of binormal states of Section \ref{section ultrapower binormal}.

\begin{thm} \label{invariance bicentralizer ucp}
Let $N \subset M$ be an inclusion of von Neumann algebras with expectation. Let $\varphi$ be a faithful normal state on $N$. Take $f \in \cV^\varphi(N \subset M)$. If $f|_{\rB(N,\varphi)}$ is normal, then $f(x)=x$ for all $x \in\rB(N \subset M, \varphi)$.
\end{thm}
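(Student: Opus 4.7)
The plan is to implement $f$ by a vector in an ultrapower Hilbert space via Proposition \ref{binormal state vs ucp} and Theorem \ref{ultrapower all type}, and then to translate the defining property of $\rB(N\subset M,\varphi)$ (commutation with the ultrapower centralizer $N^\omega_{\varphi^\omega}$) into a KMS-twisted commutation with this vector. Extend $\varphi$ to a faithful normal state on $M$ via a fixed $E_N \in \cE(M,N)$ and apply Proposition \ref{binormal state vs ucp} with $\psi=\varphi$ to produce a state $\Phi \in \B(\rL^2(M))^*$ satisfying $\Phi(\lambda(a)\rho(b)) = \langle a\varphi^{1/2}f(b),\varphi^{1/2}\rangle$ for $a,b\in M$, $\Phi(e_N)=1$, and the $1$-eigenstate property for $\Delta_\varphi$. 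The normality hypothesis on $f|_{\rB(N,\varphi)}$ will be invoked to establish binormality of $\Phi$: $\Phi\circ\lambda=\varphi$ is clearly normal, and $\Phi\circ\rho = \varphi\circ f$ factors through $E_N$ (since $f$ commutes with $E_N$, being a pointwise weak$^*$-limit of $\Ad(\ba)$ for rows $\ba$ in $N$) and then through the $\varphi$-preserving expectation $E_{\rB(N,\varphi)}$ from Proposition \ref{anal expectation}, reducing the normality to that of $f|_{\rB(N,\varphi)}$. Theorem \ref{ultrapower all type} then yields an abelian $A$, a cofinal ultrafilter $\omega$, and $\xi \in \rL^2((A\ovt M)^\omega)$ with $\Phi(T) = \langle(1\otimes T)^\omega\xi,\xi\rangle$; the three conditions on $\Phi$ translate to $(1\otimes e_N)^\omega\xi = \xi$, so $\xi$ is supported in the subspace associated with $(A\ovt N)^\omega$, and $(1\otimes\varphi^{it})^\omega\xi = \xi(1\otimes\varphi^{it})^\omega$ for every $t\in\R$.

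This $\varphi$-centrality of $\xi$ implies that the vector state $\omega_\xi = \langle\,\cdot\,\xi,\xi\rangle$ on $(A\ovt N)^\omega$ has modular flow $\Ad(1\otimes\varphi^{it})^\omega$, so its centralizer contains $1\otimes N^\omega_{\varphi^\omega}$, and in the standard form of $(A\ovt N)^\omega$ the vector $\xi$ commutes with every element of $1\otimes N^\omega_{\varphi^\omega}$. For $x \in \rB(N\subset M,\varphi) = (N^\omega_{\varphi^\omega})' \cap M$, the element $1\otimes x \in (A\ovt M)^\omega$ commutes with $1\otimes N^\omega_{\varphi^\omega}$, and a suitable application of Lemma \ref{commutation relation vector} to the inclusion $(A\ovt N)^\omega \subset (A\ovt M)^\omega$ with respect to $\omega_\xi$ yields the KMS-twisted commutation
$$(1\otimes x)\,\xi \;=\; \xi\,(1\otimes\sigma^\varphi_{i/2}(x)),$$
valid initially for $\sigma^\varphi$-analytic $x$ and extended by continuity in the $\rL^2$ sense.

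Plugging this commutation into the formula for $\Phi$, for every $a\in M$ we compute
$$\Phi(\lambda(a)\rho(x)) \;=\; \langle(1\otimes a)\,\xi\,(1\otimes x),\xi\rangle \;=\; \langle(1\otimes a\sigma^\varphi_{-i/2}(x))\,\xi,\xi\rangle \;=\; \varphi(a\sigma^\varphi_{-i/2}(x)) \;=\; \int a\,\varphi^{1/2}x\varphi^{1/2},$$
using $\sigma^\varphi_{-i/2}(x)\varphi^{1/2}=\varphi^{1/2}x$ in the last step. Comparing with $\Phi(\lambda(a)\rho(x)) = \int a\,\varphi^{1/2}f(x)\varphi^{1/2}$ for every $a \in M$ and using faithfulness of the map $y\mapsto\varphi^{1/2}y\varphi^{1/2}$ from $M$ into $\rL^1(M)$, we conclude $f(x)=x$. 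The main obstacle is extracting the KMS-twisted commutation in the second paragraph: transferring the purely algebraic commutation of $x$ with $N^\omega_{\varphi^\omega}$ into an $\rL^2$-level commutation with $\xi$ (up to the modular twist by $\sigma^\varphi_{i/2}$) requires delicately combining the $\varphi$-centrality of $\xi$ with Lemma \ref{commutation relation vector}, which is precisely designed to upgrade commutation with $\varphi^{1/2}$ into commutation with arbitrary $\rL^2(N)$-vectors via the KMS condition.
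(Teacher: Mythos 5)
Your overall road map matches the paper's proof: use Proposition \ref{binormal state vs ucp} to build the state $\Phi$, implement it by a vector $\xi \in \rL^2((A\ovt M)^\omega)$ via Theorem \ref{ultrapower all type}, localize $\xi$ using $\Phi(e_N)=1$ and the $\Delta_\varphi$-eigenstate property, get a KMS-twisted commutation of $\xi$ with $1\otimes x$ via Lemma \ref{commutation relation vector}, and compare the resulting two formulas for $\Phi(\lambda(a)\rho(\cdot))$. The final computation in your third paragraph is correct. However, two steps are not justified as written.

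\emph{Binormality of $\Phi$.} Theorem \ref{ultrapower all type} requires $\Phi\circ\rho=\varphi\circ f$ to be normal, and you try to deduce this from normality of $f|_{\rB(N,\varphi)}$ alone by ``factoring'' $\varphi\circ f$ through $E_{\rB(N,\varphi)}$. That factorization requires the identity $\varphi\circ f = \varphi\circ f\circ E_{\rB(N,\varphi)}$ on $N$, which fails for a general $f\in\cV^\varphi(N\subset M)$: such an $f$ need not be $\varphi$-preserving, nor need it commute with $E_{\rB(N,\varphi)}$. The paper avoids this by first proving the statement for $f$ globally normal, then handling the general case with the replacement $f':=f\circ E_{\rB(N\subset M,\varphi)}$. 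This $f'$ is still in $\cV^\varphi(N\subset M)$ by Proposition \ref{conditional expectation is in ucp}, satisfies $E_N\circ f'=f\circ E_{\rB(N,\varphi)}$ (so $f'$ is actually normal once $f|_{\rB(N,\varphi)}$ is, using faithfulness of $E_N$), and agrees with $f$ on $\rB(N\subset M,\varphi)$. Your version never produces a normal map from the weak hypothesis, so the ultrapower implementation cannot be invoked.

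\emph{The twisted commutation.} You argue that $\xi$ commutes with $1\otimes N^\omega_{\varphi^\omega}$ and that $1\otimes x$ commutes with $1\otimes N^\omega_{\varphi^\omega}$, and feed this into Lemma \ref{commutation relation vector}. But that lemma needs $1\otimes x$ to lie in the commutant of a subalgebra $P$ with $\xi\in\rL^2(P)$. Since $\xi$ genuinely carries an $A$-component, the smallest usable $P$ is the full ultrapower centralizer $(A\ovt N)^\omega_{\phi^\omega}$ with $\phi=\mu\otimes\varphi$, which is strictly larger than $1\otimes N^\omega_{\varphi^\omega}$; knowing only that $1\otimes x$ commutes with the latter is not enough. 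The needed commutation is exactly $1\otimes x\in\rB(A\ovt N\subset A\ovt M,\phi)$, which is Proposition \ref{bicentralizer of tensor product with finite} — a nontrivial fact (proved in the paper by a dominated convergence argument on the abelian factor) that your proposal neither invokes nor reproves. Without it, the step from $x\in(N^\omega_{\varphi^\omega})'$ to $(1\otimes x)\xi=\xi(1\otimes\sigma^\varphi_{i/2}(x))$ is not established, and this is precisely the load-bearing ingredient you flag as ``delicate.''
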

\begin{proof}
First, we deal with the case where $f$ is normal on $M$. Extend $\varphi$ to $M$ by letting $\varphi =\varphi \circ E_N$. Since $f \in \cV^\varphi(N \subset M)$, we can use Lemma \ref{binormal state vs ucp} to find a state $\Phi \in \B(\rL^2(M))^*$ such that :
\begin{itemize}
\item $\Phi( \lambda(a)\rho(b)) = \langle a\varphi^{1/2}f(b),\varphi^{1/2}\rangle$ for all $a,b \in M$.
\item $\Phi(e_N)=1$,  where $e_N : \rL^2(M) \rightarrow \rL^2(N)$ is the Jones projection associated to $E_N$.
\item $\Phi$ is a $1$-eigenstate for $\Delta_\varphi$.
\end{itemize}
Clearly, $\Phi \circ \lambda = \varphi$ is normal.  Since $f$ is normal, the state $\Phi \circ \rho = \varphi \circ f$ is also normal. Therefore, by Theorem \ref{ultrapower all type}, we can find some abelian algebra $A$, a cofinal ultrafilter $\omega$ on some directed set $I$ and a vector $\xi \in \rL^2((A \ovt M)^\omega)$ such that
$$ \langle (1 \otimes T)^\omega \xi,\xi \rangle = \Phi(T)$$
for all $T \in \B(\rL^2(M))$. In fact, if we choose some faithful normal state $\mu$ on $A$ and let $\phi=\mu \otimes \varphi$, we also have $\xi \in \rL^2((A \ovt M)^\omega_{\phi^{\omega}})$. Note also that $(1 \otimes e_N)^\omega$ projects $\rL^2((A \ovt M)^\omega)$ onto $\rL^2((A \ovt N)^\omega)$. Thus, the condition $\Phi(e_N)=1$ means that $\xi=(1 \otimes e_N)^\omega \xi \in \rL^2((A \ovt N)^\omega)$. Alltogether, we get $\xi \in \rL^2((A \ovt N)^\omega_{\phi^{\omega}})$. 

Take $x \in\rB(N \subset M, \varphi)$ such that $x$ is $\sigma^{\varphi}$-analytic. Then $x \varphi^{1/2}=\varphi^{1/2}y$ where $y=\sigma_{\ri/2}(x) \in\rB(N \subset M, \varphi)$. Since $f$ commutes with $\sigma^\varphi$, it also commutes with $\sigma^\varphi_{\ri/2}$. Thus we have $f(x) \varphi^{1/2}=\varphi^{1/2}f(y)$. 
Then, for every $a \in M$, we get
$$ \Phi(\lambda(a)\rho(y))=\langle a\varphi^{1/2}f(y), \varphi^{1/2}\rangle = \langle a f(x) \varphi^{1/2}, \varphi^{1/2} \rangle = \varphi(af(x)).$$
On the other hand, since $x \in\rB(N \subset M, \varphi)$, we also have $1 \otimes x \in \rB(A \ovt N \subset A \ovt M, \phi)$ by Proposition \ref{bicentralizer of tensor product with finite}. Moreover, we have $(1 \otimes x) \phi^{1/2} =\phi^{1/2} (1 \otimes y)$. By Lemma \ref{commutation relation vector} (applied to the inclusion $(A \ovt M)^\omega_{\phi^{\omega}} \subset (A \ovt M)^\omega$), we obtain $(1 \otimes x)\xi=\xi (1 \otimes y)$.

Then for every $a \in M$, we get
$$ \Phi(\lambda(a)\rho(y))=\langle (1 \otimes a) \xi (1 \otimes y) , \xi \rangle = \langle (1 \otimes ax) \xi, \xi \rangle = \Phi(\lambda(ax))=\varphi(ax).$$

From the two equations above, we conclude that $\varphi(af(x))=\varphi(ax)$ for all $a \in M$. This shows that $f(x)=x$ for every $\sigma^\varphi$-analytic $x \in\rB(N \subset M,\varphi)$. Since $f$ is normal and the $\sigma^\varphi$-analytic elements are dense in $\rB(N \subset M,\varphi)$, we conclude that $f(x)=x$ for all $x \in\rB(N \subset M,\varphi)$ as we wanted.

Now, we deal with the case where $f$ is not necessarily normal on all of $M$ but only on $\rB(N,\varphi)$. Let $f' =f \circ E_{\rB(N \subset M, \varphi)}$, which is still in $\cV^\varphi(N \subset M)$ by Proposition \ref{conditional expectation is in ucp}. It is sufficient to prove that $f'(x)=x$ for all $x \in\rB(N\subset M,\varphi)$. Choose a faithful normal conditional expectation $E \in \cE(M,N)$. We have
$$ E \circ f' = E \circ  f \circ E_{\rB(N \subset M, \varphi)} =f \circ E \circ E_{\rB(N \subset M, \varphi)}= f \circ E_{\rB(N,\varphi)}.$$
This shows in particular that $E \circ f'$ is normal, hence that $f'$ is normal. We conclude that $f'(x)=x$ for all $x \in\rB(N\subset M,\varphi)$ by the first case.
\end{proof}

\section{The bicentralizer conjecture} \label{section bicentralizer conjecture}

\begin{conj}[Relative Bicentralizer Conjecture]
Let $N \subset M$ be an inclusion of von Neumann algebras with expectations. Then $\rB(N \subset M, \varphi) =\rb(N \subset M, \varphi)$ for every strictly semifinite weight $\varphi \in \cP(N)$.
\end{conj}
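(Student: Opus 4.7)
The plan is to prove the two inclusions separately. For $\rb(N \subset M, \varphi) \subset \rB(N \subset M, \varphi)$, the key observation is that $\rb(N \subset M, \varphi)$ is generated inside $M$ by $N' \cap c(M)$ together with $\{\varphi \circ T\}''$ for any $T \in \cP(M, N)$. Both generating algebras centralize $N^\omega_{\varphi^\omega}$: the first sits inside $N'$, and centralizer elements commute with the unitary group $(\varphi \circ T)^{\ri t}$ because they are fixed by $\sigma^{(\varphi \circ T)^\omega}$. Taking the weak*-closed join and intersecting with $M$ therefore lands inside $(N^\omega_{\varphi^\omega})' \cap M = \rB(N \subset M, \varphi)$.

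The reverse inclusion is where the difficulty lies. First I would reduce to the case of a faithful normal state $\varphi$ via corner-and-amplification arguments, combining Propositions \ref{alg bic corner reduction}, \ref{alg bic commutant corner reduction}, \ref{anal bic corner reduction} and \ref{amplification anal bic} to trade a strictly semifinite weight for its restriction to a finite corner after tensoring with a type $\rm I$ factor. Once in the state case, the strategy is to establish the equivalence with the weak Dixmier property of $N \subset c(M)$ (i.e.\ $(1) \Leftrightarrow (3)$ of Theorem \ref{intro bicentralizer dixmier}). Assuming that property, Theorem \ref{expectation implies dixmier} provides a normal conditional expectation $\Phi_0 \in \cD(N \subset c(M))$ onto $N' \cap c(M)$. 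Using the crossed product identification $c(M) = M \rtimes_{\sigma^{\varphi \circ T}} \R$ and the extension principles of Section \ref{section ucp map}, one refines $\Phi_0$ into a normal conditional expectation $E \colon M \to \rb(N \subset M, \varphi)$ that lies in $\cV^\varphi(N \subset M)$. The crucial invariance Theorem \ref{invariance bicentralizer ucp} then asserts that $E$ fixes $\rB(N \subset M, \varphi)$ pointwise, and since $E$ takes values in $\rb(N \subset M, \varphi)$ this forces $\rB \subset \rb$.

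The main obstacle is therefore establishing the weak Dixmier property of $N \subset c(M)$. Propositions \ref{dixmier for dominant and core} and \ref{dixmier for no type III1} reduce this to the case where $N$ is a type $\III_1$ factor, and in this regime the property is essentially equivalent to the relative Haagerup property for $N \subset c(M)$. The stability results of Section \ref{section dixmier} -- closure under amenable extensions (Proposition \ref{dixmier amenable extension}), crossed products (Proposition \ref{dixmier in crossed product}), tensor products (Proposition \ref{dixmier tensor product}), and direct integrals (Proposition \ref{desintegration dixmier}) -- combined with the key novel input of Section \ref{section ultrapower binormal} (ultrapower implementation of binormal states), cover the cases listed in Theorem \ref{intro conrete class}; but the fully general case lies beyond these techniques, as it would in particular resolve Connes's absolute bicentralizer conjecture for all type $\III_1$ factors. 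This is the principal reason the conjecture is left open rather than proved outright.
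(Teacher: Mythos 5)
You have correctly recognized that this statement is a conjecture, not a theorem, and that the paper does not prove it in general. The paper only establishes the easy inclusion $\rb(N \subset M, \varphi) \subset \rB(N \subset M, \varphi)$ (in the proposition immediately following the conjecture in Section \ref{section bicentralizer conjecture}), and your argument for that inclusion matches the paper's: $N' \cap c(M)$ and $\{\varphi \circ T\}''$ both land inside $\rB(N \subset c(M),\varphi)=(N^\omega_{\varphi^\omega})' \cap c(M)$, and intersecting with $M$ finishes. Your reduction of the general strictly semifinite weight to the state case via corners and amplifications also matches Proposition \ref{projection reduction conjecture}. And you correctly identify the key machinery for the hard direction: the equivalence with the weak Dixmier property of $N \subset c(M)$ (Theorem \ref{conjecture iff dixmier}) and the invariance under ucp maps in Theorem \ref{invariance bicentralizer ucp}.

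There is, however, one substantive misstep in the mechanism you sketch. You write that, assuming the weak Dixmier property for $N \subset c(M)$, Theorem \ref{expectation implies dixmier} provides a \emph{normal} conditional expectation $\Phi_0 \in \cD(N \subset c(M))$ onto $N' \cap c(M)$. This is wrong on two counts. First, Theorem \ref{expectation implies dixmier} concerns inclusions \emph{with expectations}, and when $N$ is of type $\III_1$ the inclusion $N \subset c(M)$ is not with expectation (there is no faithful normal conditional expectation from $c(N)$ onto $N$ when $N$ is type $\III_1$); this is exactly the regime in which the conjecture is hard. Second, even granting the weak Dixmier property, Proposition \ref{equivalences dixmier} only gives a conditional expectation in $\cD(N \subset c(M))$ onto $N' \cap c(M)$ that need not be normal; you cannot extract normality from the Dixmier hypothesis alone. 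The paper's actual argument in Theorem \ref{conjecture iff dixmier} side-steps this: it passes to $N_\psi \subset M$ for a dominant weight $\psi$, takes $g \in \cV^\psi(N \subset M)$ a conditional expectation onto $N_\psi' \cap M = \rb(N \subset M, \psi)$, conjugates by transition maps $f \in \cV^{\varphi,\psi}(N \subset M)$ and $h \in \cV^{\psi,\varphi}(N \subset M)$ to form $q = f \circ g \circ h \in \cV^\varphi(N \subset M)$, and then applies Theorem \ref{invariance bicentralizer ucp}; the normality required there is only of $q|_{\rB(N,\varphi)}$, which is automatic once $N$ satisfies the bicentralizer conjecture and hence $\rB(N,\varphi)=\cZ(N)$. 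Finally, the phrase ``relative Haagerup property'' is not used in the paper for this equivalence and is easily confused with the unrelated approximation property of the same name; the right reference is Haagerup's bicentralizer criterion, i.e.\ Theorem \ref{haagerup dixmier} and its relative form Theorem \ref{conjecture iff dixmier}.
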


\begin{prop}
Let $N \subset M$ be an inclusion of von Neumann algebras with expectations. Let $\varphi \in \cP(N)$ be a strictly semifinite weight. Then
$$\rb(N \subset M, \varphi) \subset \rB(N \subset M, \varphi).$$
\end{prop}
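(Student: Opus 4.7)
The plan is to fix a cofinal ultrafilter $\omega$ on $\N$ and a semifinite operator valued weight $T \in \cP(M,N)$, and then verify that every element of $N^\omega_{\varphi^\omega}$ commutes, inside $c(M^\omega)$, with both of the natural sets of generators of the algebra $(N' \cap c(M)) \vee \{\varphi \circ T\}''$ that defines $\rb(N \subset M, \varphi)$. Once this is established, intersecting with $M$ yields the desired inclusion into $(N^\omega_{\varphi^\omega})' \cap M = \rB(N \subset M,\varphi)$.

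The first generation set is $N' \cap c(M)$. Here the commutation is essentially automatic: any $a \in N^\omega$ lifts to a bounded net $(a_i)$ in $N$, and any $y \in N' \cap c(M) \subset c(M) \subset c(M^\omega)$ commutes with each $a_i$, hence with $a = (a_i)^\omega$. The second generation set is $\{\varphi \circ T\}''$, and this is where strict semifiniteness of $\varphi$ becomes essential. Extend $\varphi \circ T$ to $M^\omega$ via the canonical conditional expectation $E: M^\omega \to M$; under the identification $c(M^\omega) \cong M^\omega \rtimes_{\sigma^{(\varphi \circ T)^\omega}} \R$, the one‑parameter unitary group $((\varphi \circ T)^{\ri t})_{t \in \R}$ implements the modular flow $\sigma^{(\varphi \circ T)^\omega}$ on $M^\omega$. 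Because $\sigma^{\varphi \circ T}|_N = \sigma^\varphi$ (which follows from Connes's cocycle theorem since $\varphi \circ T$ restricts to $\varphi$ on $N$), the same identity passes to the ultrapower: $\sigma^{(\varphi \circ T)^\omega}|_{N^\omega} = \sigma^{\varphi^\omega}$. By definition $N^\omega_{\varphi^\omega}$ is fixed by $\sigma^{\varphi^\omega}$, so every $a \in N^\omega_{\varphi^\omega}$ is fixed by $\sigma^{(\varphi \circ T)^\omega}_t$, which exactly says $a$ commutes with each $(\varphi \circ T)^{\ri t}$, hence with $\{\varphi \circ T\}''$.

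Combining these two observations, the algebra $(N' \cap c(M)) \vee \{\varphi \circ T\}''$ lies in the commutant of $N^\omega_{\varphi^\omega}$ inside $c(M^\omega)$. Intersecting with $M \subset c(M) \subset c(M^\omega)$ gives $\rb(N \subset M,\varphi) \subset (N^\omega_{\varphi^\omega})' \cap M = \rB(N \subset M,\varphi)$. The only nontrivial step is the identification of modular flows on the ultrapower in the strictly semifinite (non‑state) case; this is precisely the point where one invokes the fact that a strictly semifinite weight extends to the Ocneanu ultrapower and satisfies $\sigma_t^{\varphi^\omega}((x_i)^\omega) = (\sigma_t^\varphi(x_i))^\omega$, so that the compatibility $\sigma^{(\varphi \circ T)^\omega}|_{N^\omega} = \sigma^{\varphi^\omega}$ reduces to its pointwise counterpart $\sigma^{\varphi \circ T}|_N = \sigma^\varphi$.
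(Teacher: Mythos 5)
Your argument is correct and is essentially the paper's proof spelled out in full: the paper simply asserts that $\rB(N \subset c(M),\varphi)=(N^\omega_{\varphi^\omega})'\cap c(M)$ visibly contains both $N'\cap c(M)$ and $\{\varphi\circ T\}''$, and then intersects with $M$, and your two commutation checks are exactly the verification of that assertion. One small imprecision worth fixing: $\varphi\circ T$ does not literally ``restrict to $\varphi$ on $N$'' when $T$ is merely an operator valued weight rather than a conditional expectation; the correct statement, used in the paper's preliminaries, is that the one-parameter group $((\varphi\circ T)^{\ri t})_{t\in\R}$ normalizes $N$ and induces $\sigma^\varphi$ there.
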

\begin{proof}
It is clear that $\rB(N \subset c(M), \varphi)$ contains $N' \cap c(M)$ and $\{ \varphi \circ T \}''$ for any $T \in \cP(M,N)$. This means by definition that 
$$\rb(N \subset c(M), \varphi) \subset \rB(N \subset c(M), \varphi).$$
By taking the intersection with $M$, we obtain the desired conclusion.
\end{proof}

\begin{prop} \label{projection reduction conjecture}
Let $N \subset M$ be an inclusion of von Neumann algebras with expectations. Take a strictly semifinite weight $\varphi \in \cP(N)$. If $\rB(N \subset M, \varphi)=\rb(N \subset M, \varphi)$, then for every projection $e \in N_\varphi$ or $e \in N' \cap M$, we have $\rB(eNe \subset eMe, \varphi_e)=\rb(eNe \subset eMe, \varphi_e)$. 

Conversely, if  $\cF \subset N_\varphi$ or $\cF \subset N' \cap M$ is a directed set of projections such that $\bigvee_{e \in \cF} e=1$ and $\rB(eNe \subset eMe, \varphi_e)=\rb(eNe \subset eMe, \varphi_e)$ for all $e \in \cF$, then $\rB(N \subset M, \varphi)=\rb(N \subset M, \varphi)$.
\end{prop}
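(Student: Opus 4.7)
The forward direction is immediate from the corner reductions already in the paper: Proposition \ref{anal bic corner reduction} gives $e\rB(N \subset M, \varphi)e = \rB(eNe \subset eMe, \varphi_e)$ for $e \in N_\varphi$ or $e \in N' \cap M$, while Propositions \ref{alg bic corner reduction} and \ref{alg bic commutant corner reduction} give the analogous identity with $\rb$ in place of $\rB$, so a global equality $\rB = \rb$ automatically restricts to every admissible corner.

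For the converse, the inclusion $\rb(N \subset M, \varphi) \subset \rB(N \subset M, \varphi)$ is automatic. The plan is, for fixed $x \in \rB(N \subset M, \varphi)$, to produce a bounded net $(z_e)_{e \in \cF}$ in $\rb(N \subset M, \varphi)$ with $\|z_e\| \leq \|x\|$ and $z_e \to x$ in the strong operator topology; closure of the von Neumann algebra $\rb(N \subset M, \varphi)$ on bounded sets will then force $x \in \rb(N \subset M, \varphi)$.

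When $\cF \subset N' \cap M$ this is routine: every $e \in \cF$ lies in $\rb(N \subset M, \varphi)$ because that algebra contains $N' \cap M$, and it lies in $\rB(N \subset M, \varphi)$ because $N' \cap M$ commutes with $N^\omega$ and therefore with $N_{\varphi^\omega}^\omega$. Hence the hypothesis yields $exe \in e\rB(N \subset M, \varphi)e = e\rb(N \subset M, \varphi)e \subset \rb(N \subset M, \varphi)$, and joint strong continuity of multiplication on bounded sets gives $exe \to x$ strongly with $\|exe\| \leq \|x\|$, which suffices.

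The main case is $\cF \subset N_\varphi$, and the hard part is that $e \in N_\varphi$ typically does not belong to $\rb(N \subset M, \varphi)$, so $exe$ need not sit inside $\rb$. The plan is to exploit $\rb(N \subset M,\varphi) \subset N_\varphi' \cap M$ (Proposition \ref{alg bic in commutant}) together with $\rB(N \subset M, \varphi) \subset N_\varphi' \cap M$ (via the embedding of $N_\varphi$ into $N_{\varphi^\omega}^\omega$ as constant sequences), so that $e$ commutes with every element of both $\rb$ and $\rB$. The normal $*$-homomorphism $\pi_e : \rb(N \subset M,\varphi) \to eMe$, $z \mapsto ze$, then has image $e\rb(N \subset M,\varphi)e$, and its weakly closed two-sided kernel equals $\rb(N \subset M,\varphi)\,p_e$ for a central projection $p_e$ of $\rb(N \subset M,\varphi)$ with $p_e e = 0$. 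Restricted to $\rb(N \subset M,\varphi)(1-p_e)$, $\pi_e$ becomes a normal $*$-isomorphism, hence an isometry, onto $e\rb(N \subset M,\varphi)e$. Using the hypothesis $e\rB(N \subset M,\varphi)e = e\rb(N \subset M,\varphi)e$, one lifts $xe = exe$ to a unique $z_e \in \rb(N \subset M,\varphi)(1-p_e)$ with $z_e e = xe$ and $\|z_e\| \leq \|x\|$. Because $e$ commutes with both $x$ and $z_e$, the difference $x - z_e$ is killed by $e$ from both sides, whence $\|(x - z_e)\xi\| \leq 2\|x\|\,\|(1-e)\xi\|$ for every $\xi$, which vanishes as $e \nearrow 1$. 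The construction of this bounded lift is the one nontrivial step; everything else is standard.
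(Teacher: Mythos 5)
Your proof is correct, and it spells out what the paper's one-line proof (``use Propositions \ref{alg bic corner reduction}, \ref{alg bic commutant corner reduction}, \ref{anal bic corner reduction}'') leaves entirely implicit: the forward direction is an immediate consequence of those corner identities, while for the converse your bounded lifting $z_e \in \rb(N\subset M,\varphi)(1-p_e)$ through the central kernel projection of the normal $*$-morphism $z\mapsto ze$ (in the $N_\varphi$ case), combined with the strong approximation $z_e\to x$ on the bounded ball as $e\nearrow 1$, is exactly the completion the author intends. The only route difference is cosmetic: when $\cF\subset N'\cap M$ you observe $e$ already lies in both bicentralizers so no lifting is needed, whereas the paper does not distinguish the two cases.
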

\begin{proof}
Use Proposition \ref{alg bic corner reduction}, Proposition \ref{alg bic commutant corner reduction} and Proposition \ref{anal bic corner reduction}.
\end{proof}

\begin{thm} \label{conjecture for no type III1}
Let $N \subset M$ be an inclusion of von Neumann algebras with expectations. Suppose that $N$ has no type $\III_1$ summand. Then $\rB(N \subset M,\varphi)=\rb(N \subset M,\varphi)$ for every strictly semifinite $\varphi \in \cP(N)$.
\end{thm}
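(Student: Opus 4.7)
The plan is to reduce to the case of semifinite $N$ via the discrete decomposition for type $\III$ algebras with no $\III_1$ summand. First, by Proposition \ref{amplification anal bic} and the amplification analogue of Proposition \ref{tensor product alg bic}, I would tensor $N$ and $M$ with a type $\mathrm{I}_\infty$ factor so that $\varphi$ may be assumed to have infinite multiplicity. Then, by Proposition \ref{desintegration bicentralizer} and its algebraic counterpart, I would further reduce to the case where $N$ is either semifinite or purely of type $\III_0$ or $\III_\lambda$ with scalar $\lambda \in (0,1)$; in the latter two cases $N$ admits a lacunary weight with infinite multiplicity.

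In the lacunary case, Corollary \ref{no III1 semifinite reduction} supplies a semifinite subalgebra $Q \subset N$ with a $\varphi$-preserving normal conditional expectation $E_Q$, such that $N = Q \rtimes_\alpha \Z$ for some $\alpha \in \Aut(Q)$ and $\rb(N \subset M, \varphi) = \rb(Q \subset M, \varphi|_Q)$. The key step is to establish the analogous identity $\rB(N \subset M, \varphi) = \rB(Q \subset M, \varphi|_Q)$, which reduces to showing $N^\omega_{\varphi^\omega} \subset Q^\omega$. For this, I would let $U \in N$ implement $\alpha$ and scale an auxiliary lacunary weight by $\mu \in (0,1)$. For $x \in N^\omega$, the Fourier components $E_n(x) = U^n E_Q^\omega(U^{-n} x)$ with respect to the dual action $\widehat{\alpha}: \T \curvearrowright N$ transform under $\sigma^{\varphi^\omega}_t$ by scalar factors involving $\mu^{\ri n t}$ (up to inner corrections in $Q^\omega$ coming from the Connes cocycle of $\varphi$ versus the auxiliary weight), and $\sigma^{\varphi^\omega}$-invariance combined with $\mu \neq 1$ forces $E_n(x) = 0$ for $n \neq 0$, giving $x = E_Q^\omega(x) \in Q^\omega$.

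For the semifinite case, write $\varphi = \tau(h \cdot)$ for a trace $\tau \in \cP(N)$ and $h$ a positive operator affiliated with $N$. By Proposition \ref{algebraic bicentralizer semifinite}, $\rb(N \subset M, \varphi) = (N' \cap M) \vee \{h\}''$, so only the reverse inclusion $\rB \subset \rb$ remains. The plan is to localize using spectral projections $e_I = 1_I(h) \in N_\varphi$ on narrow intervals $I = [a,b]$: on the corner $e_I N e_I$, the modular flow $\sigma^{\varphi_{e_I}}_t = \Ad(h^{\ri t}|_{e_I})$ deviates from the identity by $O(|t|(b/a - 1))$ on bounded elements, so $(e_I N e_I)^\omega_{(\varphi_{e_I})^\omega}$ should increasingly fill up $(e_I N e_I)^\omega$ as $b/a \to 1$. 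Via Proposition \ref{anal bic corner reduction}, this would force $e_I \rB(N \subset M, \varphi) e_I$ into the commutant of $e_I N e_I$ in $e_I M e_I$, which collapses to $e_I(N' \cap M)e_I$ in the limit. A direct integral decomposition along the spectral measure of $h$, compatible with the bimodule structure of $\rB$ over $\{h\}''$, would reassemble these local identifications into the global inclusion $\rB \subset (N' \cap M) \vee \{h\}''$.

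The hard part will be the semifinite case: making rigorous the claim that $(e_I N e_I)^\omega_{(\varphi_{e_I})^\omega}$ approaches $(e_I N e_I)^\omega$ as $I$ shrinks to a point. The most natural route uses the ucp-map machinery of Section \ref{section ucp map}: for narrow $I$, unitaries of $e_I N e_I$ are nearly $\sigma^{\varphi_{e_I}}$-invariant, hence yield maps in $\cV^{\varphi_{e_I}}(e_I N e_I \subset e_I M e_I)$ whose composition approaches a conditional expectation onto $(e_I N e_I)' \cap e_I M e_I$, and Theorem \ref{invariance bicentralizer ucp} then forces $e_I \rB e_I$ into this commutant in the limit. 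The direct integral step likewise requires care to handle a possibly continuous spectrum of $h$.
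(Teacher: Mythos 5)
Your plan of reducing to the semifinite case through Corollary~\ref{no III1 semifinite reduction} is exactly the paper's strategy, but you overcomplicate both halves.

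In the type~$\III$ reduction you try to prove the \emph{equality} $\rB(N \subset M, \varphi)=\rB(Q \subset M, \varphi|_Q)$ by showing $N^\omega_{\varphi^\omega} \subset Q^\omega$, and you sketch a Fourier-decomposition argument for this. That is much stronger than required, and it is where the difficulty concentrates. All the paper actually needs is the \emph{trivial} inclusion $\rB(N \subset M,\varphi) \subset \rB(Q \subset M,\varphi|_Q)$, which holds simply because $Q \subset N$ has a $\varphi$-preserving expectation, so $Q^\omega_{(\varphi|_Q)^\omega} \subset N^\omega_{\varphi^\omega}$. Combined with $\rb(N\subset M,\varphi)=\rb(Q \subset M,\varphi|_Q)$ from Corollary~\ref{no III1 semifinite reduction} and the semifinite case applied to $Q$, the chain $\rB(N,\cdot)\subset\rB(Q,\cdot)=\rb(Q,\cdot)=\rb(N,\cdot)\subset\rB(N,\cdot)$ closes immediately. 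So the Fourier argument is not needed, and you should worry that your intermediate claim $N^\omega_{\varphi^\omega}\subset Q^\omega$ is in any case delicate (and plausibly false for a general strictly semifinite $\varphi$, as opposed to a lacunary one): you yourself flag that the ``inner corrections'' coming from comparing $\varphi$ with the auxiliary lacunary weight must be controlled, and that control is not provided.

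In the semifinite case, your spectral-projection localization is in the right spirit, but as you acknowledge the passage from ``modular flow nearly trivial on thin spectral corners'' to ``the asymptotic centralizer fills up $(e_I N e_I)^\omega$'' is not proved, and it is not an easy estimate: asymptotic centrality is a statement about exact invariance under $\sigma^{\varphi^\omega}$, not about norm proximity of $\sigma^{\varphi}_t$ to the identity. The paper avoids this entirely. After reducing to $\varphi$ finite, it takes a trace $\tau$ with $h=\rd\varphi/\rd\tau$, approximates $\varphi$ in $N_*$ by states $\varphi_n$ whose densities $h_n$ have atomic spectrum (with $\{h_n\}''$ increasing), observes that then $N_{\varphi_n}'\cap M=(N'\cap M)\vee\{h_n\}''=\rb(N\subset M,\varphi_n)$, and concludes using the weak Dixmier property of the semifinite inclusion $N_{\varphi_n}\subset M$: for $x\in\rB(N\subset M,\varphi)$, the norm convergence $\varphi_n\to\varphi$ forces $uxu^*$ to stay in any weak*-closed convex neighborhood of $x$ for all $u\in\cU(N_{\varphi_n})$ once $n$ is large, and the Dixmier property then pushes $x$ into the weak*-closure of $\bigcup_n N_{\varphi_n}'\cap M=\rb(N\subset M,\varphi)$. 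No ultrapower or ucp machinery is needed. I would encourage you to replace your spectral-corner estimate with this norm approximation argument, which turns the ``hard part'' you identified into a clean convexity statement.
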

\begin{proof}
We deal first with the case where $N$ is semifinite. Thanks to Proposition \ref{projection reduction conjecture}, we may assume that $\varphi$ is finite. Take a trace $\tau \in \cP(N)$ and let $h = \frac{\rd \varphi}{\rd \tau}$. Then we know that $\rb(N \subset M, \varphi)=\{h\}'' \vee (N' \cap M)$. By taking finer and finer partitions of unity in $\{h \}''$ and approximating $h$ with step functions, we can find a sequence of states $\varphi_n$ on $N$ that converges in norm to $\varphi$, such that $h_n=\frac{\rd \varphi_n}{\rd \tau}$ has atomic spectrum for each $n \in \N$ and the sequence of subalgebras $\{ h_n\}''$ is increasing. Then 
$$\rb(N \subset M, \varphi)=(N' \cap M) \vee \{h\}''= \bigvee_{n \in \N} (N' \cap M) \vee \{h_n\}'' = \bigvee_{n \in \N}\rb(N \subset M, \varphi_n).$$

Fix $n \in \N$. Write $h_n$ in the form $h_n=\sum_{k \in \N} \lambda_k p_k$ where $(p_k)_{k \in \N}$ is a partition of unity in $N$, and $(\lambda_k)_{k \in \N}$ is a strictly decreasing sequence of positive numbers. Then $N_{\varphi_n} = \bigoplus_k p_kNp_k$ and $N_{\varphi_n}' \cap M=\bigoplus_n  p_n(N' \cap M)p_n$. Thus $N_{\varphi_n}' \cap M=(N' \cap M) \vee  \{ h_n\}''=\rb(N \subset M, \varphi_n)$, hence
$$\rb(N \subset M, \varphi)= \bigvee_{n \in \N} (N_{\varphi_n}' \cap M).$$
Now, take $x \in\rB(N \subset M, \varphi)$. Take $\cV$ a closed convex neighborhood of $x$ in the weak* topology. There exists $\delta > 0$ such that $uxu^* \in \cV$ for all $u \in \cU(N)$ with $\| u \varphi u^*-\varphi \| \leq \delta$. Take $n \in \N$ large enough so that $\| \varphi_n-\varphi \| \leq \frac{\delta}{2}$. Then we get $uxu^* \in \cV$ for every $u \in \cU(N_{\varphi_n})$. Since $\cV$ is a weak* closed convex subset of $M$, and $N_{\varphi_n} \subset M$ has the weak Dixmier property, this implies that $\cV$ contains an element of $N_{\varphi_n}' \cap M$. Since $\cV$ was arbitrary, we conclude that $x$ is in the weak* closure of $\bigcup_{n \in N} (N_{\varphi_n}' \cap M)$ hence $x \in\rb(N \subset M, \varphi)$ as we wanted. This concludes the proof in the semifinite case.

Now, suppose that $N$ is an arbitrary von Neumann algebra with no type $\III_1$ summand. Take $Q$ as in Corollary \ref{no III1 semifinite reduction} (with $\varphi$ instead of $\psi$). Then we have $\rb(N \subset M,\varphi)=\rb(Q \subset M, \varphi|_Q)$. By the semifinite case, we know that $\rb(Q \subset M, \varphi|_Q) =\rB(Q \subset M, \varphi|_Q)$ and since $Q \subset N$, we have $\rB(N \subset M, \varphi) \subset \rB(Q \subset M, \varphi|_Q)$. We conclude that $\rB(N \subset M, \varphi) \subset \rb(Q \subset M, \varphi|_Q)=\rb(N \subset M,\varphi)$ as we wanted.
\end{proof}

\begin{prop} \label{transition extends alg anal}
Let $N \subset M$ be an inclusion of von Neumann algebras with expectations. Suppose that $N$ is of type $\III_1$. Let $\varphi, \psi \in \cP(N)$ be two strictly semifinite weights. Then the isomorphism
$$ \beta^{\psi,\varphi} : \rB(N \subset M, \varphi) \rightarrow \rB(N \subset M,\psi)$$
is an extension of the isomorphism
$$ \rb^{\psi,\varphi} : \rb(N \subset M, \varphi) \rightarrow \rb(N \subset M,\psi).$$
In particular, the flow
$$\beta^\varphi : \R^*_+ \curvearrowright \rB(N \subset M,\varphi)$$ is an extension of the flow 
$$\rb^\varphi : \R^*_+ \curvearrowright \rb(N \subset M,\varphi).$$
The same is true if we replace $M$ by $c(M)$.
\end{prop}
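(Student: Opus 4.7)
Observe first that both restrictions make sense: the inclusion $\rb(N \subset M,\varphi) \subset \rB(N \subset M,\varphi)$, and its $c(M)$-analogue, were established earlier in this section. My plan is to verify that $\beta^{\psi,\varphi}$ satisfies the uniqueness characterization of $\rb^{\psi,\varphi}$ provided by Theorem \ref{ucp map transition III1}.(1), so that the two maps must agree on the algebraic bicentralizer.

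I first argue on $c(M)$. Fix $T \in \cP(M,N)$. Applying Theorem \ref{bicentralizer equivariance and flow} to $c(M)$, the map $\beta^{\psi,\varphi} \colon \rB(N \subset c(M),\varphi) \to \rB(N \subset c(M),\psi)$ is a normal $*$-isomorphism characterized by the relation $ax = \beta^{\psi,\varphi}(x)\, a$ for every $a \in N^\omega_{\psi^\omega,\varphi^\omega}$, valid in $c(M)^\omega$. Two properties need to be checked. First, any $x \in N' \cap c(M)$ commutes with every element of $N^\omega$ (which is represented by bounded sequences in $N$), so $\beta^{\psi,\varphi}(x) = x$. Second, the preliminaries on Ocneanu ultrapowers give the natural chain $c(M) \subset c(M^\omega) = c(M) \vee M^\omega \subset c(M)^\omega$, and the $T$-induced embedding $c(N) \hookrightarrow c(M)$ sends $\varphi^{\ri t}$ to $(\varphi \circ T)^{\ri t}$. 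Under these identifications, the modular unitaries $(\varphi^\omega)^{\ri t}, (\psi^\omega)^{\ri t}$ implementing the modular flows of $\varphi^\omega, \psi^\omega$ on $N^\omega$ coincide with the constants $(\varphi \circ T)^{\ri t}, (\psi \circ T)^{\ri t}$ inside $c(M) \subset c(M)^\omega$. Hence, for $a \in N^\omega_{\psi^\omega,\varphi^\omega}$, the defining equality $(\psi^\omega)^{\ri t} a (\varphi^\omega)^{-\ri t} = a$ rewrites as $a (\varphi \circ T)^{\ri t} = (\psi \circ T)^{\ri t} a$, which by the characterizing property of $\beta^{\psi,\varphi}$ forces $\beta^{\psi,\varphi}((\varphi \circ T)^{\ri t}) = (\psi \circ T)^{\ri t}$.

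Proposition \ref{alg bicentralizer of core} gives $\rb(N \subset c(M),\varphi) = (N' \cap c(M)) \vee \{\varphi \circ T\}''$ and its analogue for $\psi$. The two properties just established therefore show that $\beta^{\psi,\varphi}$ restricts to a normal $*$-isomorphism from $\rb(N \subset c(M),\varphi)$ onto $\rb(N \subset c(M),\psi)$ that fixes $N' \cap c(M)$ and sends $(\varphi \circ T)^{\ri t}$ to $(\psi \circ T)^{\ri t}$. By the uniqueness in Theorem \ref{ucp map transition III1}.(1), this restriction equals $\rb^{\psi,\varphi}$. Taking $\theta$-fixed points and using that $\beta^{\psi,\varphi}$ on $\rB(N \subset M,\varphi)$ is the restriction of its $c(M)$-version (both being characterized by the same commutation) yields the $M$-case. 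The flow statement follows by specializing $\psi = \lambda \varphi$ for $\lambda \in \R^*_+$.

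The main technical delicacy is the identification of $(\varphi^\omega)^{\ri t} \in c(N^\omega)$ with $(\varphi \circ T)^{\ri t}$ viewed in $c(M)^\omega$, which relies on the chain $c(N) \hookrightarrow c(M) \subset c(M^\omega) \subset c(M)^\omega$ together with the functoriality of the core construction with respect to $T$; once this identification is set up, the verification of the characterizing properties of $\rb^{\psi,\varphi}$ for $\beta^{\psi,\varphi}$ is a direct computation.
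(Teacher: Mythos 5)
Your proof is correct and reaches the same conclusion, but by a route that differs from the paper's. The paper's proof is essentially the one-liner ``Use Proposition~\ref{transition map is in ucp} and Theorem~\ref{ucp map transition III1}'': Proposition~\ref{transition map is in ucp} produces a normal map $f\in\cV^{\psi,\varphi}(N\subset M)$ with $f|_{\rB(N\subset M,\varphi)}=\beta^{\psi,\varphi}$, and Lemma~\ref{ucp map transition}.(2) says that for \emph{any} such $f$, the extension $c(f)$ implements $\rb^{\psi,\varphi}$ on $\rb(N\subset c(M),\varphi)$; since $\rb\subset\rB$ and $c(f)|_M=f$, the two restrictions agree. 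You bypass the $\cV^{\psi,\varphi}$-machinery entirely and instead verify directly, via the ultrapower commutation formula of Theorem~\ref{bicentralizer equivariance and flow}.(1), that $\beta^{\psi,\varphi}$ satisfies the two characterizing properties of $\rb^{\psi,\varphi}$ from Theorem~\ref{ucp map transition III1}.(1): it fixes $N'\cap c(M)$ (trivially, since $N'\cap c(M)$ commutes with $N^\omega$ in $c(M)^\omega$) and it sends $(\varphi\circ T)^{\ri t}$ to $(\psi\circ T)^{\ri t}$. The one delicate point in your argument is exactly the one you flag, the identification inside $c(M)^\omega$ of $(\varphi^\omega)^{\ri t}$ with $(\varphi\circ T)^{\ri t}$; this is justified because $[D\psi^\omega:D\varphi^\omega]_t=[D\psi:D\varphi]_t\in N$ and $\sigma^{\varphi^\omega}_t((a_n)^\omega)=(\sigma^\varphi_t(a_n))^\omega$, so that for $a\in N^\omega$ the relation $(\psi^\omega)^{\ri t}a(\varphi^\omega)^{-\ri t}=a$ is indeed the same as $(\psi\circ T)^{\ri t}a(\varphi\circ T)^{-\ri t}=a$ computed in $c(M)^\omega$. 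In the end both arguments establish the same two identities for the restriction; the paper packages that verification inside the proof of Lemma~\ref{ucp map transition}, whereas you unfold it explicitly, which is more self-contained and avoids invoking the full ucp apparatus of Section~\ref{section ucp map} beyond the uniqueness assertion.
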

\begin{proof}
Use Proposition \ref{transition map is in ucp} and Theorem \ref{ucp map transition III1}.
\end{proof}

\begin{prop} \label{conjecture with intermediate}
Let $N \subset M$ be an inclusion of von Neumann algebras with expectations. Suppose that $N$ is of type $\III_1$. Then $N \subset M$ satisfies the bicentralizer conjecture if and only if $N$ satisfies the bicentralizer conjecture and $\rB^\sharp(N \subset M)=\rb^\sharp(N \subset M)$.
\end{prop}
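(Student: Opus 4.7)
The forward direction is immediate. Assuming $\rB(N \subset M, \varphi) = \rb(N \subset M, \varphi)$ for every strictly semifinite $\varphi \in \cP(N)$, intersecting with $N$ yields $\rB(N, \varphi) = \rB(N \subset M, \varphi) \cap N = \rb(N \subset M, \varphi) \cap N = \rb(N, \varphi)$, where the last equality is Proposition~\ref{alg bicentralizer for intermediate subalgebra}(3) and the first uses the ultrapower definition of $\rB$; joining with $N$ gives $\rB^\sharp(N \subset M) = N \vee \rB(N \subset M, \varphi) = N \vee \rb(N \subset M, \varphi) = \rb^\sharp(N \subset M)$.

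For the reverse direction, suppose $\rB(N, \varphi) = \rb(N, \varphi)$ for all strictly semifinite $\varphi$ and $\rB^\sharp(N \subset M) = \rb^\sharp(N \subset M)$. Fix $\varphi \in \cP_s(N)$ and take $x \in \rB(N \subset M, \varphi)$; since $\rb \subset \rB$ always, it suffices to show $x \in \rb(N \subset M, \varphi)$. The hypothesis $\rB^\sharp = \rb^\sharp$ places $x$ in $N \vee \rb(N \subset M, \varphi)$, while the defining property of $\rB(N \subset M, \varphi)$ gives the strictly stronger constraint that $x$ commutes with the full ultrapower centralizer $N^\omega_{\varphi^\omega}$ and not merely with $N_\varphi$. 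This additional commutation is precisely what should force $x$ out of the ``$N$-part'' of $\rb^\sharp$ and into $\rb(N \subset M, \varphi)$.

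The plan is to invoke Theorem~\ref{invariance bicentralizer ucp}: it will suffice to produce $f \in \cV^\varphi(N \subset M)$ whose image is contained in $\rb(N \subset M, \varphi)$, since then $f$ is automatically normal on $\rB(N, \varphi) = \rb(N, \varphi)$ (which sits centrally in $\rb(N \subset M, \varphi)$ by Proposition~\ref{bicentralizer in center}), and the theorem will yield $f(x) = x \in \rb(N \subset M, \varphi)$. I would build $f$ as a composition $g \circ E^\sharp$, where $E^\sharp : M \to \rB^\sharp(N \subset M) = \rb^\sharp(N \subset M)$ is the canonical $\varphi \circ T$-preserving normal conditional expectation from Proposition~\ref{anal bic intermediate} (which can be realized inside $\cV^\varphi(N \subset M)$ by combining Proposition~\ref{conditional expectation is in ucp} with the hypothesis $\rB^\sharp = \rb^\sharp$), and $g : \rb^\sharp(N \subset M) \to \rb(N \subset M, \varphi)$ is a ``slicing'' projection peeling off the $N$-factor. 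To produce $g$, one uses that $N$ is of type~$\III_1$ and so has no type~$\III_0$ summand, so Proposition~\ref{no type 0 expectation} together with the explicit structural description Proposition~\ref{structure of bicentralizer extension} yields an amalgamated tensor product decomposition $\rb^\sharp(N \subset M) \cong N \ovt_{\rb(N, \varphi)} \rb(N \subset M, \varphi)$ over the common subalgebra $\rb(N, \varphi) = N \cap \rb(N \subset M, \varphi)$; then $g$ arises from a conditional expectation $N \to \rb(N, \varphi)$ tensored with the identity on $\rb(N \subset M, \varphi)$.

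The main obstacle will be establishing this amalgamated tensor decomposition with the correct amalgamation subalgebra and verifying that $f = g \circ E^\sharp$ genuinely belongs to $\cV^\varphi(N \subset M)$. Here the hypothesis that $N$ satisfies the bicentralizer conjecture enters essentially: it identifies the amalgamation subalgebra $\rb(N, \varphi)$ with the analytic bicentralizer $\rB(N, \varphi)$, so that ucp averaging along $N$ by $\sigma^\varphi$-spectral rows stays compatible with the amalgamation and induces a normal map on $\rB(N, \varphi)$. Without this identification, $g$ need not respect the analytic structure and the application of Theorem~\ref{invariance bicentralizer ucp} would break down.
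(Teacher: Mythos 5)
Your forward direction is correct and essentially matches the paper: intersecting with $N$ gives $\rB(N,\varphi)=\rb(N,\varphi)$ and joining with $N$ gives equality of the $\sharp$-extensions.

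The reverse direction, however, has a genuine gap at the stage where you invoke an amalgamated tensor product decomposition $\rb^\sharp(N\subset M)\cong N\ovt_{\rb(N,\varphi)}\rb(N\subset M,\varphi)$. No such decomposition exists, because $N$ and $\rb(N\subset M,\varphi)$ do \emph{not} commute: by Proposition \ref{alg bic in commutant}, $\rb(N\subset M,\varphi)\subset N_\varphi'\cap M$, so it commutes with the centralizer $N_\varphi$ but not with all of $N$ (the scaling unitaries in $N$ act nontrivially on it). The structural description in Proposition \ref{structure of bicentralizer extension} reflects exactly this: it writes $\rb^\sharp(N\subset M)$ as a \emph{crossed product} $\left(N_\psi\ovt_{\cZ(N_\psi)}(N_\psi'\cap M)\right)\rtimes_{\Ad(u)}\R^*_+$ for a dominant weight $\psi$, with $N_\psi$ --- not $N$ --- appearing in the tensor slot. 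Consequently your proposed "slicing" map $g$ (a conditional expectation $N\to\rb(N,\varphi)$ tensored with the identity on $\rb(N\subset M,\varphi)$) is not well-defined, and the composition $f=g\circ E^\sharp$ cannot be constructed. Proposition \ref{no type 0 expectation} concerns $N\vee(N'\cap c(M))$, not $N\vee\rb(N\subset M,\varphi)$, and does not supply the ingredient you need.

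The paper's argument bypasses all of this and does not invoke Theorem \ref{invariance bicentralizer ucp} at all. Take $T\in\cP(M,N)$ and let $E\in\cE(c(M),\rB(N\subset c(M),\varphi))$ be the normal conditional expectation from Proposition \ref{conditional expectation is in ucp}. Then $E$ fixes $N'\cap c(M)$ and sends $c_T(N)$ onto $\rB(N\subset c_T(N),\varphi)=\rB(N,\varphi)\vee\{\varphi\circ T\}''$; the hypothesis that $N$ satisfies the bicentralizer conjecture is used exactly once, to identify $\rB(N,\varphi)=\cZ(N)$. Since $c_T(N)\cdot(N'\cap c(M))$ is dense in $\rb^\sharp(N\subset c(M))$ and $E$ is normal, this yields $E(\rb^\sharp(N\subset c(M)))=\cZ(N)\vee\{\varphi\circ T\}''\vee(N'\cap c(M))=\rb(N\subset c(M),\varphi)$. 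Combined with $\rB(N\subset c(M),\varphi)\subset\rB^\sharp(N\subset c(M))=\rb^\sharp(N\subset c(M))$ and $E|_{\rB(N\subset c(M),\varphi)}=\mathrm{id}$, you get $\rB(N\subset c(M),\varphi)\subset\rb(N\subset c(M),\varphi)$ directly. You should replace your construction of $f$ with this one-line expectation computation.
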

\begin{proof}
The only if direction is obvious. We prove the converse. We may assume that $N$ is countably decomposable. Let $\varphi \in N_*$ be a faithful state. Take $T \in \cP(M,N)$. Take the conditional expectation $E \in \cE( c(M), \rB(N \subset c(M),\varphi))$ induced by the conditional expectation of Proposition \ref{conditional expectation is in ucp}. Since $c_T(N) \cdot (N' \cap c(M))$ is dense in $\rb^\sharp(N \subset c(M))$ and $$E(c_T(N))=\rB(N \subset c_T(N),\varphi)=\rB(N,\varphi) \vee \{ \varphi \circ T\}''=\cZ(N) \vee \{ \varphi \circ T\}'',$$
we conclude that
$$\rB(N \subset c(M),\varphi) = E(\rb^\sharp(N \subset c(M)))= \cZ(N) \vee \{ \varphi \circ T\}'' \vee (N' \cap c(M))=\rb(N \subset c(M),\varphi).$$
\end{proof}

\begin{prop} \label{conjecture iff fixed point}
Let $N \subset M$ be a an inclusion of von Neumann algebras with expectations. Suppose that $N$ is of type $\III_1$. Let $\varphi \in \cP(N)$ be a strictly semifinite weight. Then the inclusion $N \subset M$ satisfies the bicentralizer conjecture if and only if $$\rB(N \subset c(M),\varphi)^{\beta^\varphi}= N' \cap c(M).$$
\end{prop}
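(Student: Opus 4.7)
The plan is to recognise that $\rB(N \subset c(M),\varphi)$ already carries a natural continuous $\R^*_+$-action (namely $\beta^\varphi$) that extends the dual action on the algebraic bicentralizer $\rb(N \subset c(M),\varphi)$, and then invoke Takesaki's characterization of crossed product extensions (Theorem \ref{dual action crossed product}).

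The $(\Rightarrow)$ implication is essentially automatic: if $\rB(N \subset M,\varphi) = \rb(N \subset M,\varphi)$, then by Proposition \ref{anal bicentralizer core} and Proposition \ref{alg bicentralizer of core} one also has $\rB(N \subset c(M),\varphi) = \rb(N \subset c(M),\varphi)$; by Proposition \ref{transition extends alg anal} the flows $\beta^\varphi$ and $\rb^\varphi$ then coincide, and Theorem \ref{ucp map transition III1}(5) gives the fixed point algebra as $N' \cap c(M)$.

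For $(\Leftarrow)$, fix $T \in \cP(M,N)$ and set $Q := \rB(N \subset c(M),\varphi)$. Since $(\varphi \circ T)^{\ri t} \in \rb(N \subset c(M),\varphi) \subset Q$, the map $t \mapsto (\varphi \circ T)^{\ri t}$ is a continuous unitary representation of $\R$ in $Q$. By Proposition \ref{transition extends alg anal} the flow $\beta^\varphi$ restricts on the subalgebra $\rb(N \subset c(M),\varphi)$ to $\rb^\varphi$, which by construction (see the proof of Theorem \ref{ucp map transition III1}(5)) is the dual action of $\R^*_+ = \widehat{\R}$ implementing the crossed product description $\rb(N \subset c(M),\varphi) = (N' \cap c(M)) \vee \{\varphi \circ T\}''$. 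Hence
\[
\beta^\varphi_\lambda\!\left((\varphi \circ T)^{\ri t}\right) = \lambda^{\ri t}\,(\varphi \circ T)^{\ri t} \qquad \text{for all } (\lambda,t) \in \R^*_+ \times \R.
\]

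Now apply the converse direction of Theorem \ref{dual action crossed product} to the unitary representation $t \mapsto (\varphi \circ T)^{\ri t}$ in $Q$ and the action $\beta^\varphi$ of $\R^*_+ = \widehat{\R}$: we conclude that $(Q, Q^{\beta^\varphi}, ((\varphi \circ T)^{\ri t})_{t \in \R})$ is a crossed product extension. By the very definition of a split extension, $Q^{\beta^\varphi}$ together with the unitaries $\{(\varphi \circ T)^{\ri t} \mid t \in \R\}$ generate $Q$ as a von Neumann algebra. The hypothesis $Q^{\beta^\varphi} = N' \cap c(M)$ then forces
\[
Q = (N' \cap c(M)) \vee \{(\varphi \circ T)^{\ri t} \mid t \in \R\}'' = \rb(N \subset c(M),\varphi),
\]
the last equality by Proposition \ref{alg bicentralizer of core}. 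Intersecting with $M$ (and using Propositions \ref{anal bicentralizer core} and \ref{alg bicentralizer of core}) yields $\rB(N \subset M,\varphi) = \rb(N \subset M,\varphi)$, as desired. No step should present a genuine obstacle: all the heavy lifting — namely, producing the flow $\beta^\varphi$ on $\rB(N \subset c(M),\varphi)$ that extends $\rb^\varphi$ — has been carried out already in Sections \ref{section algebraic} and \ref{section analytic}, so the present proof is a direct Takesaki duality argument.
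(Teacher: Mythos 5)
Your proof is correct and follows essentially the same route as the paper: both prove the forward direction via Theorem~\ref{ucp map transition III1}(5) and Proposition~\ref{transition extends alg anal}, and both prove the converse by observing that $\beta^\varphi$ scales $(\varphi\circ T)^{\ri t}$ by $\lambda^{\ri t}$ and then invoking Theorem~\ref{dual action crossed product} to conclude that $\rB(N\subset c(M),\varphi)$ is generated by its $\beta^\varphi$-fixed points and $\{\varphi\circ T\}''$. You are somewhat more explicit in spelling out each reference, but the argument is the same.
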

\begin{proof}
The only if direction follows from Theorem \ref{ucp map transition III1}.(5) and Proposition \ref{transition extends alg anal}.

For the if direction, suppose that $\rB(N \subset c(M),\varphi)^{\beta^\varphi}= N' \cap c(M)$. Pick $T \in \cP(M,N)$. For all $(\lambda,t) \in \R^*_+ \times \R$, we have $$\beta^{\varphi}_\lambda( (\varphi \circ T)^{\ri t})=\lambda^{\ri t}(\varphi \circ T)^{\ri t}.$$
Then Theorem \ref{dual action crossed product} shows that $\rB(N \subset c(M),\varphi)$ is generated by $\rB(N \subset c(M),\varphi)^{\beta^\varphi}= N' \cap c(M)$ and $\{ \varphi \circ T \}''$. We conclude that $\rB(N \subset c(M),\varphi)=\rb(N \subset c(M),\varphi)$ by definition.
\end{proof}

\begin{prop}
Let $N \subset M$ be a an inclusion of von Neumann algebras with expectations. Suppose that there exists $\varphi \in \cP_s(N)$ such that $\varphi$ is almost periodic. Then $N \subset M$ satisfies the bicentralizer conjecture.
\end{prop}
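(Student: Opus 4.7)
The plan is to reduce to the case where $N$ is a type $\III_1$ factor with $\varphi$ a faithful normal almost periodic state, and then use the $\sigma^\varphi$-eigenspace decomposition of $N$ to implement the bicentralizer flow $\beta^\varphi$ by isometries from $N$. First I will apply Theorem \ref{conjecture for no type III1} together with Proposition \ref{projection reduction conjecture} to the central projection separating the type $\III_1$ summand of $N$; then Proposition \ref{projection reduction conjecture} applied to a projection $e \in N_\varphi$ with $\varphi(e) < \infty$ (noting that $\sigma^{\varphi_e} = \sigma^\varphi|_{eNe}$ preserves pure point spectrum, so $\varphi_e$ remains almost periodic), followed by a direct integral decomposition over $\cZ(N)$ via Proposition \ref{desintegration bicentralizer}, will bring us to a type $\III_1$ factor with $\varphi$ a faithful normal state. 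At that point Proposition \ref{conjecture iff fixed point} turns the statement into the identity $\rB(N \subset c(M),\varphi)^{\beta^\varphi} = N' \cap c(M)$, whose inclusion $\supset$ is immediate from Proposition \ref{transition extends alg anal} together with Theorem \ref{ucp map transition III1}.(5).

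For the reverse inclusion, let $\Gamma := \Sp_d(\Delta_\varphi)$, which is a dense subgroup of $\R^*_+$ because $N$ is of type $\III_1$. I will invoke Connes' structure theory for almost periodic states on type $\III_1$ factors: $N_\varphi$ is a type $\II_1$ factor, and for each $\lambda \in \Gamma \cap (0,1]$ the spectral subspace $N(\sigma^\varphi,\{\log\lambda\})$ contains an isometry $v_\lambda$ with $v_\lambda^* v_\lambda = 1$ and $\sigma_t^\varphi(v_\lambda) = \lambda^{it} v_\lambda$, and $N$ is generated as a von Neumann algebra by $N_\varphi$ together with $\{v_\lambda\}_\lambda$. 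A short KMS computation places $v_\lambda$ in the intertwiner space $N_{\lambda^{-1}\varphi,\varphi}$, so Theorem \ref{bicentralizer equivariance and flow}.(1) applied to the constant sequence $v_\lambda \in N^\omega$ yields the identity $v_\lambda x = \beta^\varphi_{1/\lambda}(x)\, v_\lambda$ for every $x \in \rB(N \subset c(M),\varphi)$. If $x$ is $\beta^\varphi$-fixed this collapses to $[v_\lambda, x] = 0$; applying the same argument to $x^* \in \rB(N \subset c(M),\varphi)^{\beta^\varphi}$ and taking adjoints gives $[v_\lambda^*, x] = 0$, which covers the spectral subspaces indexed by $\mu \geq 1$. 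Combined with $x \in N_\varphi' \cap c(M)$, this forces $x$ to commute with all of $N$, hence $x \in N' \cap c(M)$.

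The main obstacle will be securing the isometric eigenvectors $v_\lambda$ and the factoriality of $N_\varphi$ after the reductions --- a classical but nontrivial structural input. If this turns out to be delicate in nonseparable settings, a safe fallback is to pass to the amplification $N \ovt \B(H)$ with the infinite almost periodic weight $\varphi \otimes \Tr$, which admits honest unitary eigenvectors via Theorem \ref{connes stormer} (since both $\varphi \otimes \Tr$ and $\lambda(\varphi \otimes \Tr)$ have infinite restriction to the center), and then transport the conclusion back via Proposition \ref{amplification anal bic}.
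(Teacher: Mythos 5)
Your proposal hits on exactly the same mechanism as the paper's proof: an eigenvector $a \in N_{\lambda\varphi,\varphi}$ (i.e., $\sigma_t^\varphi(a) = \lambda^{-\ri t}a$) is a constant net in $N^\omega_{\lambda\varphi^\omega,\varphi^\omega}$, so Theorem \ref{bicentralizer equivariance and flow}.(1) gives $ax = \beta_\lambda^\varphi(x)a$; hence every $\beta^\varphi$-fixed $x \in \rB(N\subset c(M),\varphi)$ commutes with all such $a$; almost periodicity of $\varphi$ means these $a$ generate $N$; conclude via Proposition \ref{conjecture iff fixed point}. That core is correct and is precisely the paper's argument.

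Where you overreach is in the structural preparation you invoke to produce the eigenvectors. You assert that after reduction $N_\varphi$ is a $\II_1$ factor and that each spectral subspace $N(\sigma^\varphi,\{\log\lambda\})$ contains an isometry $v_\lambda$ with $v_\lambda^*v_\lambda = 1$. Factoriality of $N_\varphi$ is \emph{not} automatic for an almost periodic state on an (even separable) type $\III_1$ factor, and thus the existence of the isometric $v_\lambda$'s is also not free. Your fallback via Theorem \ref{connes stormer} produces unitaries in the \emph{ultrapower}, not in $N$; commuting with those does not, by itself, give commutation with $N$, so that route also needs more care. But none of this structure is actually needed: almost periodicity of $\varphi$ already says that $N$ is the weak closure of the linear span of $\bigcup_\lambda N_{\lambda\varphi,\varphi}$, and the intertwining relation $ax = \beta_\lambda^\varphi(x)a$ holds for \emph{any} $a \in N_{\lambda\varphi,\varphi}$ — no isometry, no factoriality, no separability, and no reduction to a state or to a factor is required. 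The paper's proof is the version of your argument with all of that stripped away: only the reduction to the type $\III_1$ summand (via Theorem \ref{conjecture for no type III1}) is kept, and then a three-line computation with arbitrary elements of $N_{\lambda\varphi,\varphi}$ finishes it.
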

\begin{proof}
We may assume that $N$ is of type $\III_1$. For $\lambda \in \R^*_+$ and $a \in N_{\lambda \varphi,\varphi}$, we have $ax=\beta^{\varphi}_\lambda(x)a$ for every $x \in \rB(N \subset c(M),\varphi)$. In particular, $a$ commutes with $\rB(N \subset c(M),\varphi)^{\beta^\varphi}$. Since $\varphi$ is almost periodic, $N$ is generated by such elements $a$. We conclude that  $\rB(N \subset c(M),\varphi)^{\beta^\varphi} \subset N' \cap c(M)$ and therefore that $N \subset M$ satisfies the bicentralizer conjecture thanks to Proposition \ref{conjecture iff fixed point}.
\end{proof}

\begin{lem} \label{fixed point contained in commutant}
Let $N \subset M$ be an inclusion of von Neumann algebras with expectations. Suppose that $N$ is of type $\III_1$. Let $P \subset N$ a von Neumann subalgebra with conditional expectation $E \in \cE(N,P)$ and suppose that the inclusion $P \subset M$ satisfies the bicentralizer conjecture.
\begin{enumerate}
\item For every strictly semifinite weight $\varphi \in \cP(N)$ such that $\varphi =\varphi \circ E$ we have
$$\rB(N \subset c(M),\varphi)^{\beta^{\varphi}} \subset P' \cap c(M).$$
\item If $\psi \in \cP(N)$ is another strictly semifinite weight such that $\psi=\psi \circ E$, then 
$$\forall x \in\rB(N \subset c(M),\varphi)^{\beta^{\varphi}}, \quad \beta^{\psi,\varphi}(x)=x.$$ In particular, we have
$$\rB(N \subset c(M),\varphi)^{\beta^{\varphi}}=\rB(N \subset c(M),\psi)^{\beta^{\psi}}.$$
\end{enumerate}

\end{lem}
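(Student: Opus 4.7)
The strategy is to leverage the bicentralizer conjecture for $P\subset M$ via the inclusion $\rB(N\subset c(M),\varphi)\subset \rB(P\subset c(M),\varphi|_P)$, and to exhibit compatibility between the flows $\beta^\varphi$ and $\beta^{\varphi|_P}$. First I would check that $\varphi|_P,\psi|_P\in\cP_s(P)$: since $\varphi=\varphi\circ E$ forces $\sigma^\varphi(P)=P$ and $\sigma^{\varphi|_P}=\sigma^\varphi|_P$, we have $P_{\varphi|_P}=P\cap N_\varphi$, and $E|_{N_\varphi}$ is a $\varphi$-preserving conditional expectation onto $P\cap N_\varphi$, hence $\varphi|_{P\cap N_\varphi}$ is semifinite.

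For (1), the identification $P^\omega_{(\varphi|_P)^\omega}=P^\omega\cap N^\omega_{\varphi^\omega}$ gives $\rB(N\subset c(M),\varphi)\subset\rB(P\subset c(M),\varphi|_P)$ by taking commutants in $c(M)$. To make the flows compatible, choose (by Theorem~\ref{connes stormer} applied to $P$ of type $\III_1$) a unitary $v\in P^\omega$ with $v(\varphi|_P)^\omega v^*=\lambda(\varphi|_P)^\omega$; using $\varphi^\omega=(\varphi|_P)^\omega\circ E^\omega$ together with $v\in P^\omega$, a direct computation gives $v\varphi^\omega v^*=\lambda\varphi^\omega$. By Theorem~\ref{bicentralizer equivariance and flow}, the same unitary $v$ then implements both $\beta^{\varphi|_P}_\lambda$ on $\rB(P\subset c(M),\varphi|_P)$ and $\beta^\varphi_\lambda$ on $\rB(N\subset c(M),\varphi)$. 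Invoking the bicentralizer hypothesis for $P\subset M$ via Proposition~\ref{conjecture iff fixed point}, this yields
\begin{equation*}
\rB(N\subset c(M),\varphi)^{\beta^\varphi}\subset \rB(P\subset c(M),\varphi|_P)^{\beta^{\varphi|_P}}=P'\cap c(M).
\end{equation*}

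For (2), the same unitary trick applied with a $v\in P^\omega$ satisfying $v(\varphi|_P)^\omega v^*=(\psi|_P)^\omega$ shows that $\beta^{\psi,\varphi}$ and $\beta^{\psi|_P,\varphi|_P}$ agree on $\rB(N\subset c(M),\varphi)$. By Proposition~\ref{transition extends alg anal}, $\beta^{\psi|_P,\varphi|_P}$ extends $\rb^{\psi|_P,\varphi|_P}$, which by Theorem~\ref{ucp map transition III1}.(1) is the identity on $P'\cap c(M)$. For $x\in\rB(N\subset c(M),\varphi)^{\beta^\varphi}$, part~(1) places $x$ in $P'\cap c(M)$, so $\beta^{\psi,\varphi}(x)=\beta^{\psi|_P,\varphi|_P}(x)=x$; exchanging the roles of $\varphi$ and $\psi$ yields the stated equality of fixed-point algebras. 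The hardest point will be the case where $P$ has no type $\III_1$ summand, since $\beta^{\varphi|_P}$ and Proposition~\ref{conjecture iff fixed point} are formulated only in the type $\III_1$ setting; I would handle it by decomposing $P=P_1\oplus P_0$ into a type $\III_1$ summand (treated as above) and a complementary summand $P_0$, for which the conjecture is automatic by Theorem~\ref{conjecture for no type III1} and the desired inclusion must be deduced by working directly with the algebraic description of $\rb(P_0\subset c(M),\varphi|_{P_0})$.
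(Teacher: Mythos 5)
Your treatment of the case where $P$ is of type $\III_1$ is essentially correct and follows the same lines as the paper (for item (1), this is precisely the one-line case ``$\rB(N \subset c(M),\varphi)^{\beta^\varphi} \subset \rB(P \subset c(M), \varphi|_P)^{\beta^{\varphi|_P}}=P' \cap c(M)$''). However, your plan has a genuine gap in the case where $P$ has no type $\III_1$ summand, which is in fact where the bulk of the work lies in the paper's proof. Your suggestion to ``work directly with the algebraic description of $\rb(P_0\subset c(M),\varphi|_{P_0})$'' does not close this gap. The difficulty is that the crossed-product picture $\rb(P_0\subset c(M),\varphi|_{P_0})=(P_0'\cap c(M))\rtimes_{\sigma^{\varphi\circ T}}\R$ only describes the \emph{algebraic} bicentralizer; the flow $\beta^\varphi$ acts on the full analytic bicentralizer $\rB(N\subset c(M),\varphi)$, and there is no a priori compatible flow on $\rb(P_0 \subset c(M),\varphi|_{P_0})$ since $P_0$ is not of type $\III_1$. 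Moreover, the unitary $u\in N^\omega_{\lambda\varphi^\omega,\varphi^\omega}$ implementing $\beta^\varphi_\lambda$ need not lie in $P_0^\omega$ or even normalize $P_0^\omega$, so one cannot read off the fixed points of $\beta^\varphi$ from the crossed-product decomposition.

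The paper's argument at this point is substantively different and relies on two tools your sketch does not invoke: the semifinite reduction of Corollary~\ref{no III1 semifinite reduction}, producing a semifinite $Q\subset P$ with $Q'\cap P\subset Q$ and $P=Q\vee\{u\}''$, and crucially Lemma~\ref{weight commute fixed point}, which shows that $\beta^{\psi,\varphi}$ fixes $\rB(N\subset c(M),\varphi)^{\beta^\varphi}$ whenever $\psi$ and $\varphi$ commute. Applying this with $\phi=\tau\circ E_Q\circ E$ and then with $u\phi u^*$ (both of which commute with the relevant weight) shows $u$ commutes with the fixed-point algebra, and combined with $Q\subset N_\phi$ this gives commutation with $P$. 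Without this weight-commutation mechanism the complementary-summand case does not go through.

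For item~(2), your Connes–St\o rmer argument again only applies when $P$ is of type $\III_1$ (you need a unitary in $P^\omega$ conjugating $(\varphi|_P)^\omega$ to $(\psi|_P)^\omega$, and you also invoke $\rb^{\psi|_P,\varphi|_P}$ from Theorem~\ref{ucp map transition III1}, both of which presuppose $P$ of type $\III_1$). The paper's proof of~(2) is instead a $2\times 2$ matrix amplification with the diagonal weight $\varphi\oplus\psi$, deducing~(2) from~(1) applied to $\tilde P=P\otimes M_2(\C)\subset\tilde M$ without any restriction on the type of $P$; this reduction is worth keeping in your proof since it spares you from re-running the non-$\III_1$ analysis for~(2).
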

\begin{proof}
(1) If $z$ is the projection in $\cZ(P)$, then
$$\rB(N \subset c(M),\varphi)^{\beta^\varphi} \subset z\rB(N \subset c(M),\varphi)^{\beta^\varphi} \oplus z^{\perp}\rB(N \subset c(M),\varphi)^{\beta^\varphi}$$
and
$$  z\rB(N \subset c(M),\varphi)^{\beta^\varphi}=\rB(zNz \subset c(zMz),\varphi_{z})^{\beta^{\varphi_{z}}}$$
Therefore, we may reduce the problem to direct summands of $P$ and assume that $P$ is either of type $\III_1$ or has no type $\III_1$ summand.

If $P$ is of type $\III_1$, we have nothing to do because $$\rB(N \subset c(M),\varphi)^{\beta^\varphi} \subset \rB(P \subset c(M), \varphi|_P)^{\beta^{\varphi|_P}}=P' \cap c(M)$$
by Proposition \ref{conjecture iff fixed point}.

Now, assume that $P$ has no type $\III_1$ summand. Up to taking an amplification, we may assume that $\varphi$ has infinite multiplicity. By Corollary \ref{no III1 semifinite reduction}, we can find $Q \subset P$ with a $\varphi$-preserving $E_Q \in \cE(P,Q)$ such that $Q$ is semifinite, $Q' \cap P \subset Q$ and $P$ is generated by $Q$ and a unitary $u \in P$ such that $uQu^*=Q$. Choose a trace $\tau \in \cP(Q)$ and consider $\phi=\tau \circ E_Q \circ E$. Since $\varphi|_Q$ commutes with $\tau$, then $\varphi = \varphi|_Q \circ E_Q \circ E$ commutes with $\phi$. Therefore $\rB(N \subset c(M),\varphi)^{\beta^{\varphi}}=\rB(N \subset c(M),\phi)^{\beta^{\phi}}$ by Lemma \ref{weight commute fixed point}. We also know that $u \phi u^*$ commutes with $\phi$ because they are tracial on $Q$ and preserved by $E_Q$. Therefore, by Lemma \ref{weight commute fixed point} again, we know that $\beta^{u\phi u^*,\phi}(x)=x$ for all $x \in\rB(N \subset c(M),\phi)^{\beta^\phi}$. But $\beta^{u\phi u^*,\phi}(x)=uxu^*$ by definition. Thus $u$ commutes with $\rB(N \subset c(M),\phi)^{\beta^\phi}$. Since $\rB(N \subset c(M),\phi)^{\beta^\phi}$ also commutes with $Q \subset N_\phi$, we conclude that $\rB(N \subset c(M),\phi)^{\beta^\phi}$ commutes with $P=Q \vee \{u\}''$. This concludes the proof of item (1).

(2) Consider $\tilde{P}=P \otimes M_2(\C)$, $\tilde{N}=N \otimes M_2(\C)$ and $\tilde{M}=M \otimes M_2(\C)$. Note that $\tilde{P} \subset \tilde{M}$ stills satisfies the bicentralizer conjecture. Let $\tilde{E}=E \otimes \id \in \cE( \tilde{N}, \tilde{P})$ the conditional expectation induced by $E$. On $\tilde{N}$, we consider the diagonal weight $\varphi \oplus \psi$ which is preserved by $\tilde{E}$. Let $u=\begin{pmatrix} 
	0 & 1  \\
	1 & 0 \\
	\end{pmatrix}$. Then we have $u(\varphi \oplus \psi)u^*=(\psi \oplus \varphi)$. Therefore, we have
	$$ \forall x \in \rB(\tilde{N} \subset c(\tilde{M}), \varphi \otimes \psi), \quad  \beta^{(\psi \oplus \varphi),(\varphi \oplus \psi)}(x)=uxu^*.$$
	Now, since $u \in \tilde{P}$ and 
$$ \rB(\tilde{N} \subset c(\tilde{M}), \varphi \otimes \psi)^{\beta^{\varphi \otimes \psi}} \subset \tilde{P}' \cap c(\tilde{M})$$
by the first part of the theorem, we get
$$ \forall x \in \rB(\tilde{N} \subset c(\tilde{M}), \varphi \otimes \psi)^{\beta^{\varphi \otimes \psi}}, \quad  \beta^{(\psi \oplus \varphi),(\varphi \oplus \psi)}(x)=x.$$
Finally, cutting by the projection $e=\begin{pmatrix} 
	1 & 0  \\
	0 & 0 \\
	\end{pmatrix}$ and using item (3) of Theorem \ref{bicentralizer equivariance and flow}, we conclude that $\beta^{\psi,\varphi}(x)=x$ for all $x \in\rB(N \subset c(M),\varphi)^{\beta^{\varphi}}$.
\end{proof}

\begin{thm} \label{stability under generation}
Let $N \subset M$ be an inclusion of von Neumann algebras with expectations. Let $\varphi \in \cP(N)$ be strictly semifinite. Suppose that we have a family $(N_i)_{i \in I}$ of von Neumann subalgebras of $N$ such that :
\begin{itemize}
\item For every $i \in I$, there exists a $\varphi$-preserving conditional expectation $E_i \in \cE(N,N_i)$.
\item $N$ is generated by the subalgebras $(N_i)_{i \in I}$.
\item The inclusion $N_i \subset M$ satisfies the bicentralizer conjecture for every $i \in I$.
\end{itemize}
Then the inclusion $N \subset M$ satisfies the bicentralizer conjecture.
\end{thm}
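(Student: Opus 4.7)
My plan is to reduce to the case $N$ of type $\III_1$ and then apply the fixed-point formula from Lemma \ref{fixed point contained in commutant}(1) to each $N_i$. First, I would cut along the central projection $z \in \cZ(N)$ onto the type $\III_1$ summand. Since $z \in \cZ(N) \subset N_\varphi$ and $z \in N_i' \cap M$ for every $i$, the corner-reduction propositions \ref{alg bic corner reduction}, \ref{anal bic corner reduction} and \ref{alg bic commutant corner reduction} split both $\rB(N \subset M,\varphi)$ and $\rb(N \subset M,\varphi)$ into a $z$-summand and a $(1-z)$-summand. The latter has no type $\III_1$ part and is settled by Theorem \ref{conjecture for no type III1}, so it suffices to prove the conjecture for $zN \subset zMz$ with $\varphi|_{zN}$, where $zN$ is of type $\III_1$.

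Next, I need to produce a family of subalgebras of $zN$ satisfying the hypotheses. The obstacle is that $z$ need not lie in any $N_i$, so there is no obvious $\varphi|_{zN}$-preserving expectation onto $zN_i$. I would bypass this by first enlarging each $N_i$ to $N'_i := N_i \vee \{z\}''$: this remains $\sigma^\varphi$-invariant (so admits a $\varphi$-preserving expectation by Takesaki's theorem), the family $\{N'_i\}$ still generates $N$, and $N'_i \subset M$ still satisfies the bicentralizer conjecture. Indeed, $N'_i \cong zN_i \oplus (1-z)N_i$ as a von Neumann algebra, and the analytic and algebraic bicentralizers of $N'_i \subset M$ both split along this decomposition, each summand being a $z$- or $(1-z)$-corner of the corresponding bicentralizer of $N_i \subset M$, which coincide by hypothesis. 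Since now $z \in N'_i$, the subalgebras $\tilde{N}_i := zN'_i \subset zN$ admit $\varphi|_{zN}$-preserving expectations (by cutting $E'_i$), they generate $zN$, and $\tilde{N}_i \subset zMz$ satisfies the conjecture by a further corner reduction.

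Having reduced to $N$ of type $\III_1$ with subalgebras $N_i$ still satisfying the three hypotheses, I would conclude as follows. The bicentralizer flow $\beta^\varphi$ on $\rB(N \subset c(M),\varphi)$ is defined, and applying Lemma \ref{fixed point contained in commutant}(1) with $P = N_i$ gives
$$\rB(N \subset c(M),\varphi)^{\beta^\varphi} \subset N_i' \cap c(M) \quad \text{for every } i \in I.$$
Intersecting,
$$\rB(N \subset c(M),\varphi)^{\beta^\varphi} \subset \bigcap_{i \in I}(N_i' \cap c(M)) = \Bigl(\bigvee_{i \in I} N_i\Bigr)' \cap c(M) = N' \cap c(M).$$
The reverse inclusion is automatic because $N' \cap c(M) \subset \rb(N \subset c(M),\varphi) \subset \rB(N \subset c(M),\varphi)$, and $N' \cap c(M)$ is fixed by $\beta^\varphi$ (whose restriction to the algebraic bicentralizer is $\rb^\varphi$ by Proposition \ref{transition extends alg anal}, and is the fixed-point algebra of that action by Theorem \ref{ucp map transition III1}(5)). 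Proposition \ref{conjecture iff fixed point} then yields the conjecture for $N \subset M$. The hard part will be the bootstrap step: verifying that $N'_i \subset M$ still satisfies the bicentralizer conjecture, which hinges on $z \in N_i' \cap M$ and the corner-reduction propositions allowing both the analytic and algebraic bicentralizers to be decomposed compatibly along $z$ and $1-z$.
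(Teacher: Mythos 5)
Your proof is correct and reproduces the paper's argument: reduce to $N$ of type $\III_1$, apply Lemma \ref{fixed point contained in commutant}(1) with $P = N_i$ for each $i$, intersect to obtain $\rB(N \subset c(M),\varphi)^{\beta^\varphi} \subset N' \cap c(M)$, and conclude by Proposition \ref{conjecture iff fixed point}. The paper states the reduction to the type $\III_1$ case without justification; your enlargement to $N'_i = N_i \vee \{z\}''$, which secures a $\varphi_z$-preserving expectation onto $zN_i$ after cutting and verifies via corner reduction along $z \in N_i' \cap M$ that the cut inclusions still satisfy the conjecture, correctly fills that gap.
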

\begin{proof}
We may assume that $N$ is of type $\III_1$. By Lemma \ref{fixed point contained in commutant}, we have
$$\rB(N \subset c(M), \varphi)^{\beta^\varphi} \subset \bigcap_{i \in I} N_i' \cap c(M)=N' \cap c(M).$$
Therefore, by Proposition \ref{conjecture iff fixed point} again, the inclusion $N \subset M$ satisfies the bicentralizer conjecture.
\end{proof}

The following is the main theorem of \cite{Ha85}. We briefly explain how to extend it to the non-factorial non-separable case.
\begin{thm} \label{haagerup dixmier}
Let $N$ be a type $\III_1$ von Neumann algebra. Then $N$ satisfies the bicentralizer conjecture if and only if $N \subset c(N)$ has the weak Dximier property.
\end{thm}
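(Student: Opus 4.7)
The theorem is the case $M=N$ of the equivalence $(1)\Leftrightarrow(3)$ in Theorem \ref{intro bicentralizer dixmier}. In this specialization, the algebraic bicentralizer collapses to $\rb(N,\varphi)=\cZ(N)$: by the Relative Commutant Theorem we have $N'\cap c(N)=\cZ(c(N))$, and then taking $\theta$-fixed points gives $\cZ(c(N))^\theta=\cZ(N)$ (by type $\III_1$), while $\{\varphi\}''\cap N=\C$ since $\theta_\lambda(\varphi^{\ri t})=\lambda^{-\ri t}\varphi^{\ri t}$. Hence condition $(1)$ of Theorem \ref{intro bicentralizer dixmier} becomes Connes's bicentralizer statement $\rB(N,\varphi)=\cZ(N)$ for some faithful state $\varphi\in N_*$ (which by $(1)\Leftrightarrow(2)$ is equivalent to the same for every faithful state), and condition $(3)$ is precisely the weak Dixmier property for $N\subset c(N)$. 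So for countably decomposable $N$ the theorem is immediate.

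To remove the countable decomposability hypothesis, I would decompose $N$ centrally. Choose a maximal orthogonal family $\{p_\alpha\}_{\alpha\in I}$ of projections in $\cZ(N)$ with each $p_\alpha N$ countably decomposable; a standard maximality argument yields $\bigvee_\alpha p_\alpha=1$, and each $p_\alpha N$ is a countably decomposable type $\III_1$ von Neumann algebra to which the previous paragraph applies. Both sides of the claimed equivalence respect this decomposition. On the bicentralizer side, the corner-reduction identities $\rB(p_\alpha N,\varphi_{p_\alpha})=p_\alpha\rB(N,\varphi)$ (Proposition \ref{anal bic corner reduction}) and $\rb(p_\alpha N,\varphi_{p_\alpha})=p_\alpha\rb(N,\varphi)$ (Proposition \ref{alg bic corner reduction}), together with Proposition \ref{projection reduction conjecture} applied to the directed family of finite partial sums $\{\sum_{\alpha\in F}p_\alpha : F\subset I\text{ finite}\}$, show that $N$ satisfies the bicentralizer conjecture iff each $p_\alpha N$ does. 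On the weak Dixmier side, I would use the canonical identifications $c(p_\alpha N)=p_\alpha c(N)$, $\cZ(c(p_\alpha N))=p_\alpha\cZ(c(N))$, and the product structure $\cU(N)\supseteq\prod_\alpha\cU(p_\alpha N)$ to verify, directly from the definition, that $N\subset c(N)$ has the weak Dixmier property iff each $p_\alpha N\subset c(p_\alpha N)$ does.

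Combining these two direct-sum reductions with the countably decomposable case settled by Theorem \ref{intro bicentralizer dixmier} yields the theorem in full generality. The main (minor) technical point is checking the direct-sum compatibility of the weak Dixmier property across the possibly uncountable central decomposition; this follows routinely from the convex-hull definition in Proposition \ref{equivalences dixmier} by testing on each central summand and using the faithful product action of $\cU(N)$.
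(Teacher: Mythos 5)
Your proposal is circular. You reduce to the case where $N$ is countably decomposable and then invoke the equivalence $(1) \Leftrightarrow (3)$ of Theorem \ref{intro bicentralizer dixmier} (that is, Theorem \ref{conjecture iff dixmier} in the body of the paper) with $M = N$. But the paper's proof of Theorem \ref{conjecture iff dixmier} explicitly uses Theorem \ref{haagerup dixmier} in the ``if'' direction: the text reads ``Suppose that $N \subset c(M)$ has the weak Dixmier property. By Theorem \ref{haagerup dixmier}, $N$ satisfies the bicentralizer conjecture \dots''\ and the remark just before Theorem \ref{conjecture iff dixmier} warns precisely about this: ``we do not obtain a new proof of Theorem \ref{haagerup dixmier} because we actually use Theorem \ref{haagerup dixmier} in the proof of Theorem \ref{conjecture iff dixmier}.'' So the direction ``weak Dixmier property for $N \subset c(N)$ implies trivial bicentralizer'' cannot be deduced from Theorem \ref{intro bicentralizer dixmier}; that is the direction for which independent input is indispensable. (Your argument for the converse direction is fine, but that direction already follows from Proposition \ref{conditional expectation is in ucp} with no need for Theorem \ref{intro bicentralizer dixmier}.)

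What the paper actually does in the hard direction is bring in Haagerup's original theorem \cite[Theorem 3.1]{Ha85} for factors with separable predual, combined with a direct-integral reduction (Propositions \ref{desintegration dixmier}, \ref{dixmier for dominant and core}, \ref{desintegration bicentralizer}) to pass from the factorial to the general separable-predual case, and then a nested-approximation argument, using item (3) of Proposition \ref{equivalences dixmier}, to pass from separable predual to countably decomposable. Your central-decomposition reduction to countably decomposable summands is correct as far as it goes (and matches the paper's final sentence, which treats that last step as routine), but it cannot substitute for the genuinely new input, which is Haagerup's separable factor theorem. To repair the proof you would need to keep your reduction steps and then, for the separable-predual case, cite \cite{Ha85} directly together with the desintegration propositions, rather than Theorem \ref{intro bicentralizer dixmier}.
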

\begin{proof}
The only if direction follows from Proposition \ref{conditional expectation is in ucp}. Conversely, suppose that $N \subset c(N)$ has the weak Dixmier property and let us show that $N$ satisfies the bicentralizer conjecture. 

If $N$ has separable predual, we use successively Proposition \ref{desintegration dixmier}, Proposition \ref{dixmier for dominant and core} then \cite[Theorem 3.1]{Ha85} and finally Proposition \ref{desintegration bicentralizer}.

Now assume that $N$ is only countably decomposable. Let $\varphi \in N_*$ be a faithful state. Take a countably generated $N_0$ a $\sigma^\varphi$-invariant subalgebra. Using the separability of $(N_0)_*$ together with item (3) of Proposition \ref{equivalences dixmier}, we can find a countably generated globally $\sigma^\varphi$-invariant subalgebra $N_1$ that contains $N_0$ and such that for every $\xi \in c(N_0)_*$ with $\xi|_{N_0}=0$, the norm-closed convex hull of $\{ \xi \circ E_{ c(N_0)} \circ \Ad(u) \mid u \in \cU(N_1) \}$ contains $0$. We can repeat the same procedure to construct inductively an increasing sequence of $\sigma^\varphi$-invariant countably generated subalgebras $(N_k)_{k \in \N}$ such that for every $\xi \in c(N_k)_*$ with $\xi|_{N_k}=0$, the norm-closed convex hull of $\{ \xi \circ E_{ c(N_k)} \circ \Ad(u) \mid u \in \cU(N_{k+1}) \}$ contains $0$. Let $N_\infty=\bigvee_{k \in \N} N_k$ which has separable predual. One can check that the norm-closed convex hull of $\{ \xi  \circ \Ad(u) \mid u \in \cU(N_{\infty}) \}$ will contain $0$ for every $\xi \in c(N_\infty)_*$ such that $\xi|_{N_\infty}=0$. Using the Hahn-Banach theorem, this means that we can find an element of the Dixmier semigroup $\cD(N_\infty \subset c(N_\infty))$ whose range is contained in $N_\infty$. Since $N_\infty$ itself has the weak Dixmier property, we conclude that $N_\infty \subset c(N_\infty)$ has the weak Dixmier property, hence satisfies the bicentralizer conjecture. Since $N_0$ can be taken arbitrarily large, we can easily conclude that $N$ satisfies the bicentralizer conjecture.

The non-countably decomposable case reduces trivially to the countably decomposable case.
\end{proof}

The following theorem is a generalization of \cite[Theorem 3.1]{Ha85} (and Theorem \ref{haagerup dixmier}) but with a completely different proof relying on Theorem \ref{invariance bicentralizer ucp}. However, we do not obtain a new proof of Theorem \ref{haagerup dixmier} because we actually use Theorem \ref{haagerup dixmier} in the proof of Theorem \ref{conjecture iff dixmier}.
\begin{thm} \label{conjecture iff dixmier}
Let $N \subset M$ be an inclusion of von Neumann algebras with expectations. Then $N \subset M$ satisfies the bicentralizer conjecture if and only if the inclusion $N \subset c(M)$ has the weak Dixmier property.
\end{thm}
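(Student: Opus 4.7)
The proof proceeds by two implications, each of which immediately reduces to the case where $N$ is of type $\III_1$: when $N$ has no type $\III_1$ summand, both sides of the equivalence hold unconditionally, by Proposition \ref{dixmier for no type III1} (the weak Dixmier property) and by Theorem \ref{conjecture for no type III1} (the bicentralizer conjecture). So assume throughout that $N$ is of type $\III_1$.

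For the forward direction, assume $N \subset M$ satisfies the bicentralizer conjecture. Choose a strictly semifinite weight $\varphi \in \cP_s(N)$ of infinite multiplicity. Under the conjecture, Proposition \ref{conjecture iff fixed point} gives $\rB(N \subset c(M),\varphi)^{\beta^\varphi} = N' \cap c(M)$. Proposition \ref{conditional expectation is in ucp}(3), which applies precisely because $\varphi$ has infinite multiplicity, then produces a conditional expectation in $\cD(N \subset c(M))$ onto this fixed-point algebra, which coincides with $N' \cap c(M)$. This is exactly the weak Dixmier property for $N \subset c(M)$.

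For the backward direction, assume $N \subset c(M)$ has the weak Dixmier property. Pick $E \in \cE(M,N)$ and let $c(E) \in \cE(c(M), c(N))$ be the induced trace-preserving expectation; post-composing with $c(E)$ transports the property to $N \subset c(N)$, so Haagerup's theorem (Theorem \ref{haagerup dixmier}) gives $\rB(N,\varphi) = \rb(N,\varphi)$ for every strictly semifinite $\varphi$. By Proposition \ref{conjecture with intermediate}, it is enough to verify $\rB^\sharp(N \subset M) = \rb^\sharp(N \subset M)$; equivalently, for a faithful normal state $\varphi$, $\rB(N \subset M,\varphi) \subset \rb^\sharp(N \subset M) = N \vee \rb(N \subset M,\varphi)$. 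The key point is that every $f \in \cV^\varphi(N \subset M)$ automatically fixes $\rb(N \subset M,\varphi)$ pointwise: extending to $c(f) \in \cV^\varphi(N \subset c(M))$ via Theorem \ref{extension ucp to core}, one sees that $f$ fixes $N' \cap c(M)$ (a limit of $\Ad(\ba)$ with entries in $N$) and fixes $\{\varphi \circ T\}''$ (the $\sigma^\varphi$-analytic spectral condition), hence fixes the von Neumann algebra they generate. Under Haagerup, this already means $f|_{\rB(N,\varphi)} = \id$ is normal, and Theorem \ref{invariance bicentralizer ucp} yields $f(x) = x$ for every $x \in \rB(N \subset M,\varphi)$.

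It therefore suffices to exhibit a single $f_0 \in \cV^\varphi(N \subset M)$ whose range lies in $\rb^\sharp(N \subset M)$: for then any $x \in \rB(N \subset M,\varphi)$ satisfies $x = f_0(x) \in \rb^\sharp(N \subset M)$. Such $f_0$ is built from the weak Dixmier hypothesis: a Dixmier expectation $D \in \cD(N \subset c(M))$ onto $N' \cap c(M)$ restricts to $D|_M \in \cD(N \subset M)$ with range in $N' \cap M \subset \rb^\sharp(N \subset M)$, and one then replaces $D|_M$ by a modular-smeared version, exploiting that $\sigma^{\varphi\circ T}$ preserves $N$ so that $\sigma^{\varphi\circ T}_t \circ D|_M \circ \sigma^{\varphi\circ T}_{-t}$ remains in $\cD(N \subset M)$, and convolving with an $L^1$-kernel whose Fourier transform is supported in $[-\varepsilon,\varepsilon]$ pushes the result into $\cV^\varphi_\varepsilon(N \subset M)$ while preserving the range condition; a weak* compactness argument then extracts $f_0 \in \cV^\varphi(N \subset M)$ with range still in $\rb^\sharp(N \subset M)$. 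The main obstacle of the proof is precisely this last construction, namely reconciling the a priori non-modular nature of the Dixmier averaging over $\cU(N)$ with the $\sigma^\varphi$-analytic constraint defining $\cV^\varphi$ --- all the other steps are essentially bookkeeping invocations of the material already developed in the preceding sections.
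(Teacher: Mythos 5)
Your reduction to the type $\III_1$ case and your forward direction are both correct and follow the paper's route (the $\beta^\varphi$-averaging needed to pass from $\rB(N\subset c(M),\varphi)$ down to the $\beta^\varphi$-fixed points is bundled into Proposition \ref{conditional expectation is in ucp}(3), which you invoke correctly). Your backward direction has the same overall shape as the paper's: derive $\rB(N,\varphi)=\cZ(N)$ from Haagerup via the Dixmier hypothesis, then produce a ucp map in $\cV^\varphi(N\subset M)$ with controlled range and feed it into Theorem \ref{invariance bicentralizer ucp}. Aiming for $\rb^\sharp(N\subset M)$ and invoking Proposition \ref{conjecture with intermediate} instead of landing directly in $\rb(N\subset M,\varphi)$ is a perfectly reasonable variant. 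But your construction of the crucial $f_0$ has a genuine gap.

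First, a minor point: a Dixmier expectation $D\in\cD(N\subset c(M))$ onto $N'\cap c(M)$ sends $M$ into $c(M)$, not into $M$, so $D|_M$ is not an element of $\cD(N\subset M)$. This is easily fixed by using Theorem \ref{expectation implies dixmier} for $N\subset M$ directly to get $D'\in\cD(N\subset M)$ with range $N'\cap M\subset\rb^\sharp(N\subset M)$. The real problem is the modular smearing. $D'$ is a weak* accumulation point of convex combinations of $\Ad(u)$, $u\in\cU(N)$. To land in $\cV^\varphi_\varepsilon(N\subset M)$ you must replace each $u$ by $\sigma^\varphi_h(u)$ with $\widehat h$ supported in $[-\varepsilon,\varepsilon]$; but $\sigma^\varphi_h(u)$ is not a unitary, and $\Ad(\sigma^\varphi_h(u))$ is no longer unital, with a unitality defect that does not vanish as $\varepsilon\to 0$ unless $u$ is close to $N_\varphi$. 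Conversely, the convex smearing $\int h(t)\,\Ad(\sigma^\varphi_t(u))\,\rd t$ is unital, but it lies in the convex hull of $\Ad$ of rows whose entries are full unitaries $\sigma^\varphi_t(u)\in\cU(N)$, which satisfy no spectral constraint; the Fourier support condition on $h$ does not improve the spectral support of individual entries, so this map does not lie in $\cV^\varphi_\varepsilon(N\subset M)$. There is no way to simultaneously preserve unitality and enforce the spectral constraint by convolution when the underlying unitaries are far from $N_\varphi$.

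The missing idea is to pass to a dominant weight $\psi$ on $N$. Then $N_\psi\subset M$ still has the weak Dixmier property (Proposition \ref{dixmier for dominant and core}), so averaging over $\cU(N_\psi)$ produces a conditional expectation $g$ onto $N_\psi'\cap M=\rb(N\subset M,\psi)$; since $N_\psi$ consists of $\sigma^\psi$-fixed elements, $g$ lies automatically in $\cV(N_\psi\subset M)\subset\cV^\psi(N\subset M)$. One then uses the transition maps $f\in\cV^{\varphi,\psi}(N\subset M)$ and $h\in\cV^{\psi,\varphi}(N\subset M)$ provided by Proposition \ref{transition UCP weights} to form $q=f\circ g\circ h\in\cV^\varphi(N\subset M)$ with range $f(\rb(N\subset M,\psi))\subset\rb(N\subset M,\varphi)$. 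The dominant weight makes the modular constraint compatible with averaging over unitaries; your smearing is an attempt to skip this step, and it fails. Finally, a small side remark: your ``key point'' that every $f\in\cV^\varphi(N\subset M)$ fixes $\rb(N\subset M,\varphi)$ element-wise is not quite justified --- $f$ is only completely positive, so fixing a generating set only gives fixing the C*-algebra it generates, not the weak* closure --- but this claim is not load-bearing: all you actually need is that $f$ fixes $\rB(N,\varphi)=\cZ(N)$, which follows directly from the identity $\Ad(\ba)(z)=z\,\ba\ba^*\to z$ for $z\in\cZ(N)$ and rows $\ba$ with entries in $N$.
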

\begin{proof}
If $N$ has no type $\III_1$ summand, we can apply Proposition \ref{dixmier for no type III1} and Theorem \ref{conjecture for no type III1}.

We may now assume that $N$ is of type $\III_1$. Suppose that $N \subset M$ satisfies the bicentralizer conjecture. Take $\varphi \in \cP_s(N)$ with infinite multiplicity. By Proposition \ref{conjecture iff fixed point}, we have $\rB(N \subset c(M),\varphi)^{\beta^\varphi}=N' \cap c(M)$. By Proposition \ref{conditional expectation is in ucp}, we know that $\cD(N \subset c(M))$ contains a conditional expectation $E$ onto $\rB(N \subset c(M),\varphi)$ and by Proposition \ref{transition map is in ucp}, we know that $\beta^\varphi_\lambda \circ E \in \cD(N \subset c(M))$ for all $\lambda \in \R^*_+$. Since $\R^*_+$ is amenable, we can average the maps $(\beta^\varphi_\lambda \circ E)_{\lambda \in \R^*_+}$ to obtain a conditional expectation onto $\rB(N \subset c(M),\varphi)^{\beta^\varphi}=N' \cap c(M)$. This means that $N \subset c(M)$ has the weak Dixmier property.

Suppose that $N \subset c(M)$ has the weak Dixmier property. By Theorem \ref{haagerup dixmier}, $N$ satisfies the bicentralizer conjecture. Take $\psi$ a dominant weight on $N$. Then $N_\psi \subset M$ also has the weak Dixmier property. Take $g \in \cV^\psi(N \subset M)$ a conditional expectation onto $N_\psi' \cap M$. Take $f \in \cV^{\varphi,\psi}(N \subset M)$ and $h \in \cV^{\psi,\varphi}(N \subset M)$. Then $q=f \circ g \circ h \in \cV^\varphi(N \subset M)$. Moreover, $q|_{\rB(N,\varphi)}$ is normal, because $\rB(N,\varphi)=\cZ(N)$. Therefore, by Theorem \ref{invariance bicentralizer ucp}, we have $q(x) =x$ for all $x \in\rB(N \subset M, \varphi)$. Since $g(M)=N_\psi' \cap M$ and $f$ sends $\rb(N\subset M, \psi)=N_\psi' \cap M$ onto $\rb(N \subset M,\varphi)$, we conclude that $\rB(N \subset M, \varphi) \subset\rb(N \subset M, \varphi)$. 
\end{proof}

\begin{lem} \label{ucp for ergodicity}
Let $N \subset M$ be an inclusion of von Neumann algebras with expectations. Suppose that $N$ is properly infinite and let $\psi$ be a dominant weight on $N$. Take $u \in \cU(N)$ such that $u\psi u^*=\mu \psi$ for some $\mu \in \R^*_+ \setminus \{ 1\}$. Then there exists $f \in \cD(N_\psi \subset M)$ such that $f(x) \in N_\psi' \cap M$ for every $x \in \{u\}' \cap M$.
\end{lem}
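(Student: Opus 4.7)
Plan: The key observation is that $\alpha := \Ad(u)|_{N_\psi}$ is a well-defined trace-scaling automorphism of the properly infinite semifinite von Neumann algebra $N_\psi$ equipped with its canonical trace $\tau := \psi|_{N_\psi}$. Indeed, the relation $u\psi u^* = \mu\psi$ forces $\Ad(u)(N_\psi) = N_{u\psi u^*} = N_{\mu\psi} = N_\psi$, and since $\tau$ is tracial on $N_\psi$, we obtain $\tau \circ \alpha = \mu^{-1}\tau$. Since $\mu \neq 1$, $\alpha$ scales the trace by a nontrivial factor; in particular $\alpha$ is properly outer and, via its action on $\cZ(N_\psi)$, satisfies a mean-ergodic property.

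I would construct $f$ as a pointwise weak* accumulation point of inner ucp maps $\Ad(\bb_n)$, where each $\bb_n$ is a row of partial isometries in $N_\psi$ with $\|\bb_n\| \leq 1$ (extended to rows with $\bb_n \bb_n^* = 1$ using the proper infiniteness of $N_\psi$). The crucial algebraic identity used throughout is the following: for any partial isometry $s \in N_\psi$, any $k \in \Z$, and any $x \in \{u\}' \cap M$, we have
\[
(s u^k)\, x\, (s u^k)^* = s\, (u^k x u^{-k})\, s^* = s x s^*,
\]
because $x$ commutes with $u$. Thus, on the subspace $\{u\}' \cap M$, conjugation by $su^k$ coincides with conjugation by $s$ alone. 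This lets us realize, via elements of $N_\psi$, averagings whose action on $\{u\}' \cap M$ mimics averagings built from ``shifted'' elements $N_\psi \cdot u^k$.

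Concretely, using that $\tau(\alpha^{-k}(p)) = \mu^k \tau(p) \to \infty$ (WLOG $\mu > 1$) for a fixed finite-trace projection $p \in N_\psi$, one can, by proper infiniteness, arrange mutually orthogonal partial isometries $s_{n,0}, \dots, s_{n,n-1} \in N_\psi$ arranged along the $\alpha$-orbit of $p$. Assembling these into rows $\bb_n$, I expect $\Ad(\bb_n)$ to implement, on $\{u\}' \cap M$, a Ces\`aro-type average of the form $\tfrac{1}{n}\sum_{k=0}^{n-1} \Ad(\alpha^k(v))(x)$ for test elements $v \in N_\psi$. Because $\alpha$ is trace-scaling with factor $\mu^{-1} \neq 1$ — hence acts freely on $\cZ(N_\psi)$ and properly outerly on $N_\psi$ — these Ces\`aro averages satisfy a mean-ergodic theorem which annihilates commutators: $[\Ad(\bb_n)(x), v] \to 0$ weakly for every $v \in N_\psi$. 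Any weak* cluster point $f$ of the $\Ad(\bb_n)$ then lies in $\cD(N_\psi \subset M)$ and sends $\{u\}' \cap M$ into $N_\psi' \cap M$.

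The main obstacle is choosing the partial isometries $s_{n,k}$ so that the mixing genuinely takes place — i.e.\ so that $[\Ad(\bb_n)(x), v]$ really does vanish in the limit for every $v \in N_\psi$, not merely for a dense subcollection. The trace-scaling factor $\mu \neq 1$ is indispensable precisely here: it both provides the combinatorial room (via the geometric growth of traces along the $\alpha$-orbit) to fit the required partial isometries inside $N_\psi$, and it ensures, through proper outerness of $\alpha$, the ergodic behaviour that makes the averaged commutators collapse.
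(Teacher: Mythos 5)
Your plan has a genuine gap, and it is worth isolating exactly where. The ``key algebraic identity'' $(su^k)x(su^k)^*=sxs^*$ for $x\in\{u\}'\cap M$ is correct but extends nothing: conjugation by $su^k$ (which is \emph{not} in $N_\psi$) coincides on $\{u\}'\cap M$ with conjugation by $s\in N_\psi$ alone, so it produces no map in $\cD(N_\psi\subset M)$ beyond the ones you already had. More seriously, the crucial step ``$[\Ad(\bb_n)(x),v]\to 0$ weakly for all $v\in N_\psi$'' is invoked via an unproved ``mean-ergodic theorem''; proper outerness of $\alpha$ on $N_\psi$ does not yield it. To see the failure concretely: if the row $\bb_n$ consists of $\alpha^k(s)$, $k=0,\dots,n-1$, then for $x\in\{u\}'\cap M$ one computes $\Ad(\alpha^k(s))(x)=\Ad(u^k)\bigl(\Ad(s)(x)\bigr)$, so $\Ad(\bb_n)(x)$ is precisely a Ces\`aro average of the fixed element $\Ad(s)(x)$ under $\Ad(u)$. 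Any weak$^*$ cluster point of such Ces\`aro averages is $\Ad(u)$-invariant (the telescoping $\|u\,z_n\,u^*-z_n\|\le\frac{2\|z\|}{n}\to0$), hence lands back in $\{u\}'\cap M$ --- not in the strictly smaller algebra $N_\psi'\cap M$. Your averaging thus walks in place. Note also that there is no shortcut via Theorem \ref{expectation implies dixmier} applied to $N_\psi\subset M$: that inclusion is \emph{not} with expectation, since $N\cong N_\psi\rtimes\R^*_+$ is a crossed product by the non-discrete, non-compact group $\R^*_+$, and this is precisely what makes the lemma non-trivial.

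The paper's proof converts the hypothesis $\mu\neq 1$ into a \emph{compactness} gain rather than an ergodicity gain. It passes to the core picture, identifies $N_\psi\subset M$ with $c(N)\subset c(M)\rtimes_\theta\R^*_+$ and $\{u\}'\cap M$ with $c(M)^{\theta_\mu}\rtimes_\theta\R^*_+$ (via Proposition \ref{dixmier for dominant and core}), and uses that $\R^*_+/\mu^\Z$ is compact to conclude that $N\subset c(M)^{\theta_\mu}$ \emph{is} with expectation --- this is the only place $\mu\neq1$ is used. Theorem \ref{expectation implies dixmier} then applies to \emph{that} inclusion, and the resulting Dixmier-type conditional expectation is transported to $c(M)^{\theta_\mu}\rtimes_\theta\R^*_+$ by $-\rtimes_\theta\id$, averaged over the amenable modular group $(\psi^{\ri t})_{t\in\R}$, and finally extended. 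This structural use of the noncommutative flow of weights has no visible internal-to-$M$ substitute, which is exactly where your approach stalls despite the correct opening observation that $\alpha=\Ad(u)|_{N_\psi}$ scales the canonical trace by $\mu^{-1}$.
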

\begin{proof}
Take $T \in \cP(M,N)$ and use it to view $c(N)$ as a subalgebra with expectations of $c(M)$. Inside $c(M) \rtimes_\theta \R^*_+$, let $(u_\lambda)_{\lambda \in \R^*_+}$ be the unitaries implementing the action $\theta$. Then we have to show that there exists $f \in \cD( c(N) \subset c(M) \rtimes_\theta \R^*_+)$ such that $f(x) \in c(N)' \cap (c(M) \rtimes_\theta \R^*_+)$ for every $x \in \{u_\mu\}' \cap (c(M) \rtimes_\theta \R^*_+)=c(M)^{\theta_\mu} \rtimes_\theta \R^*_+$.

Observe that $N \subset c(M)^{\theta_\mu}$ is with expectation. Indeed, $N \subset M$ is with expectation by assumption and $M \subset c(M)^{\theta_\mu}$ is also with expectation because $M$ the fixed point of $c(M)^{\theta_\mu}$ by an action of the compact group $\R^*_+/\mu^\Z$. Therefore, by Theorem \ref{expectation implies dixmier}, we can find $f_0 \in \cD(N \subset c(M)^{\theta_\mu})$ that is a conditional expectation onto $N' \cap c(M)^{\theta_\mu}$. Extend $f_1=f_0 \rtimes_\theta \id \in \cD(N \subset  c(M)^{\theta_\mu} \rtimes_\theta \R^*_+)$. Then $f_1$ is a conditional expectation from $c(M)^{\theta_\mu} \rtimes_\theta \R^*_+$ onto $(N' \cap c(M)^{\theta_\mu}) \rtimes_\theta \R^*_+=N' \cap (c(M)^{\theta_\mu} \rtimes_\theta \R^*_+)$. 

Since $\R$ is amenable, we can average by the unitaries $(\psi^{\ri t})_{t \in \R}$ in $\cU(c(N))$ to obtain a conditional expectation $g$ from $c(M)^{\theta_\mu} \rtimes_\theta \R^*_+$  onto $\{ \psi^{\ri t} \}'  \cap c(M)^{\theta_\mu} \rtimes_\theta \R^*_+$. This conditional expectation will send $N' \cap (c(M)^{\theta_\mu} \rtimes_\theta \R^*_+)$ onto $c(N)' \cap (c(M)^{\theta_\mu} \rtimes_\theta \R^*_+)$. Thus by letting $f_2=g \circ f_1$, we obtain $f_2 \in \cD( c(N) \subset c(M)^{\theta_\mu} \rtimes_\theta \R^*_+)$ that is conditional expectation onto $c(N)' \cap (c(M)^{\theta_\mu} \rtimes_\theta \R^*_+)$. Finally, we can extend $f_2$ to some $f \in \cD( c(N) \subset c(M) \rtimes_\theta \R^*_+)$ as we wanted.
\end{proof}

\begin{thm} \label{conjecture almost periodic part}
Let $N \subset M$ be an inclusion of von Neumann algebras with expectations. Let $\varphi \in \cP_s(N)$ and $T \in \cP(M,N)$. Suppose that $N$ is of type $\III_1$ and that it satisfies the bicentralizer conjecture. Then the following holds :
\begin{enumerate}
\item $\rB(N \subset M, \varphi)^{\beta_\lambda^\varphi}=\rb(N \subset M, \varphi)^{\beta_\lambda^\varphi}$ for every $\lambda \in \R^*_+ \setminus \{1 \}$.
\item $\rB(N \subset M, \varphi)^{\sigma_t^{\varphi \circ T}}=\rb(N \subset M, \varphi)^{\sigma_t^{\varphi \circ T}}$ for every $t \in \R \setminus \{0 \}$.
\end{enumerate}
\end{thm}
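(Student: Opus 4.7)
Both containments $\rb(N \subset M, \varphi)^{\beta_\lambda^\varphi} \subset \rB(N \subset M, \varphi)^{\beta_\lambda^\varphi}$ in (1) and the analogue in (2) are immediate from $\rb \subset \rB$ together with Proposition \ref{transition extends alg anal}, so the substantive content is the reverse inclusions. My plan is to adapt the proof of Theorem \ref{conjecture iff dixmier}: for each flow generator I will construct a UCP map $q \in \cV^\varphi(N \subset M)$ (or in $\cV^\varphi(N \subset c(M))$ for part (2)) that maps the corresponding fixed-point subalgebra of $\rB$ into $\rb$, and whose restriction to $\rB(N,\varphi)$ is normal. Once such a $q$ is in hand, Theorem \ref{invariance bicentralizer ucp} will force $q(x)=x$ on $\rB(N \subset M, \varphi)$, which combined with the containment $q(\rB^{\text{fix}}) \subset \rb$ will yield the desired inclusion. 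The hypothesis that $N$ satisfies the bicentralizer conjecture enters here precisely to guarantee the normality of $q|_{\rB(N,\varphi)}$: it gives $\rB(N,\varphi) = \rb(N,\varphi) = \cZ(N)$, and any UCP map built as a weak-$*$ limit of averages of $\Ad(u)$ with $u \in \cU(N)$ restricts to the identity (hence a normal map) on $\cZ(N)$.

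For part (1), I would first amplify using Proposition \ref{amplification anal bic} to assume $\varphi$ has infinite multiplicity, then pick a dominant weight $\psi$ on $N$ with scaling group $(u_\mu)_{\mu \in \R^*_+}$. Applying Lemma \ref{ucp for ergodicity} to $u_\lambda$ (which is legal since $\lambda \neq 1$) produces $g \in \cD(N_\psi \subset M) \subset \cV^\psi(N \subset M)$ that sends $\{u_\lambda\}' \cap M$ into $N_\psi' \cap M = \rb(N \subset M, \psi)$. Using Proposition \ref{transition map is in ucp}, I would select normal UCP maps $h \in \cV^{\psi,\varphi}(N \subset M)$ and $h' \in \cV^{\varphi,\psi}(N \subset M)$ agreeing with $\beta^{\psi,\varphi}$ and $\beta^{\varphi,\psi}$ on the relevant analytic bicentralizers, and set $q := h' \circ g \circ h$; this lies in $\cV^\varphi(N \subset M)$ by the semigroup property $\cV^{\varphi,\psi} \cdot \cV^\psi \cdot \cV^{\psi,\varphi} \subset \cV^\varphi$. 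Since $\beta^{\psi,\varphi}$ intertwines $\beta_\lambda^\varphi$ with $\beta_\lambda^\psi = \Ad(u_\lambda)$ on $\rB$, the map $h$ carries $\rB(N \subset M, \varphi)^{\beta_\lambda^\varphi}$ into $\{u_\lambda\}' \cap M$; $g$ then contracts this into $\rb(N \subset M, \psi)$, and finally $h'$ sends this into $\rb(N \subset M, \varphi)$ via $\rb^{\varphi,\psi}$.

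For part (2), I would run the same construction inside $c(M)$ in place of $M$, with the scaling unitary $u_\lambda \in N$ replaced by the modular unitary $(\psi \circ T)^{\ri t} \in c(M)$. The only new technical ingredient needed is an analogue of Lemma \ref{ucp for ergodicity} for the modular unitary: I need a Dixmier-type map in $\cD(N_\psi \subset c(M))$ that sends $\{(\psi \circ T)^{\ri t}\}' \cap c(M)$ into $N_\psi' \cap c(M) = \rb(N \subset c(M), \psi)$. The bicentralizer conjecture for $N$, via Theorem \ref{haagerup dixmier} and Proposition \ref{dixmier for dominant and core}, yields the weak Dixmier property for $N_\psi \subset c(M)$; combining this with amenable averaging over the $\Z$-action generated by the single automorphism $\sigma_t^{\psi \circ T}$ should produce the desired map. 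This modular analogue of Lemma \ref{ucp for ergodicity} is the main obstacle of the proof; once it is established, the rest of the argument is entirely parallel to part (1).
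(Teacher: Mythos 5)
Your part (1) is essentially the paper's proof, with one cosmetic difference: the paper achieves the compatibility between $h$ and the flows by explicitly averaging $\Ad(u^*)^n\circ h\circ\beta_{\lambda^n}^\varphi\circ E_{\rB(N\subset M,\varphi)}$ over $n$, whereas you get $h|_{\rB}=\beta^{\psi,\varphi}$ for free by taking $h$ from Proposition \ref{transition map is in ucp}, and then use the intertwining $\beta^\psi_\lambda\circ\beta^{\psi,\varphi}=\beta^{\psi,\varphi}\circ\beta^\varphi_\lambda$ (which does hold: $\beta^{\lambda\psi,\lambda\varphi}=\beta^{\psi,\varphi}$ since $N_{\lambda\psi,\lambda\varphi}=N_{\psi,\varphi}$, and then the cocycle identity of Theorem \ref{bicentralizer equivariance and flow}.(2) gives both sides equal to $\beta^{\lambda\psi,\varphi}$). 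Both routes are correct and lead to the same $q=h'\circ g\circ h\in\cV^\varphi(N\subset M)$; the initial amplification of $\varphi$ is harmless but unnecessary.

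Part (2), however, has a genuine gap. You assert that ``the bicentralizer conjecture for $N$, via Theorem \ref{haagerup dixmier} and Proposition \ref{dixmier for dominant and core}, yields the weak Dixmier property for $N_\psi\subset c(M)$.'' This is false. Proposition \ref{dixmier for dominant and core} says that weak Dixmier for $N_\psi\subset c(M)$ is \emph{equivalent} to weak Dixmier for $N\subset c(M)$, and by Theorem \ref{conjecture iff dixmier} the latter is equivalent to the \emph{relative} bicentralizer conjecture for $N\subset M$ --- which is precisely what is \emph{not} assumed in the present theorem (only the absolute conjecture for $N$ is). If you had that weak Dixmier property, you could take $g$ to be an honest conditional expectation onto $N_\psi'\cap c(M)=\rb(N\subset c(M),\psi)$, and then $q=f\circ g\circ h$ would push \emph{all} of $\rB(N\subset c(M),\varphi)$ into $\rb(N\subset c(M),\varphi)$, proving the full relative conjecture for $N\subset M$ rather than only the $\sigma_t$-fixed part; so your step is circular. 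Moreover, the ``modular analogue of Lemma \ref{ucp for ergodicity}'' that you single out as the main obstacle is simply not needed. The paper's proof of (2) stays entirely in $M$: set $T=E$, extend $\varphi$ and $\psi$ to $M$ by $E$, and observe that $N_\psi\subset M_\psi$ is with expectation (restrict $E$, which intertwines $\sigma^{\psi\circ E}$ with $\sigma^\psi$) and that $M_\psi\subset M^{\sigma_t^\psi}$ is with expectation (average the period-$|t|$ circle action $\sigma^\psi|_{M^{\sigma_t^\psi}}$). Hence $N_\psi\subset M^{\sigma_t^\psi}$ is with expectation and has the weak Dixmier property by Theorem \ref{expectation implies dixmier}, with no bicentralizer hypothesis whatsoever. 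One then picks $g\in\cD(N_\psi\subset M)\subset\cV^\psi(N\subset M)$ whose restriction to $M^{\sigma_t^\psi}$ is a conditional expectation onto $N_\psi'\cap M^{\sigma_t^\psi}\subset N_\psi'\cap M=\rb(N\subset M,\psi)$. Since $h\in\cV^{\psi,\varphi}(N\subset M)$ satisfies $h\circ\sigma_t^\varphi=\sigma_t^\psi\circ h$ on $\rB$, an element $x\in\rB(N\subset M,\varphi)^{\sigma_t^\varphi}$ is sent by $h$ into $M^{\sigma_t^\psi}$, and the argument closes exactly as in (1). So the correct takeaway is that (2) is in fact simpler than (1), not the other way around.
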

\begin{proof}
Without loss of generality, we may assume that $\varphi$ is a state (use Proposition \ref{anal bic corner reduction} and Theorem \ref{bicentralizer equivariance and flow}). We may also assume that $T=E$ is a conditional expectation.

(1) Take $\psi$ a dominant weight on $N$ and take $u \in \cU(N)$ such that $u\psi u^*=\lambda \psi$. Thanks to Lemma \ref{ucp for ergodicity}, we can find $g \in \cV(N_\psi \subset M) \subset \cV^\psi(N \subset M)$ such that $g|_{\{u\}' \cap M }$ is a conditional expectation onto $N_\psi' \cap \{u\}' \cap M $. Take $f \in \cV^{\varphi,\psi}(N \subset M)$ and $h \in \cV^{\psi,\varphi}(N \subset M)$. Observe that $h_n= \Ad(u^*)^n \circ h \circ \beta_{\lambda^n}^\varphi \circ E_{\rB(N \subset M,\varphi)}$ is still in $\cV^{\psi,\varphi}(N \subset M)$ for every $n \in \Z$ thanks to Proposition \ref{transition map is in ucp} and Proposition \ref{conditional expectation is in ucp}. Hence up to replacing $h$ by an accumulation point of $\frac{1}{n} \sum_{k=1}^n h_k$ when $n \to \infty$, we may assume that $\Ad(u) \circ h = h \circ  \beta_{\lambda}^\varphi \circ E_{\rB(N \subset M,\varphi)}$.

Now, put $q=f \circ g \circ h \in \cV^\varphi(N \subset M)$. Take $x \in\rB(N \subset M, \varphi)^{\beta_\lambda^\varphi}$.  By Theorem \ref{invariance bicentralizer ucp}, we have $q(x) =x$. Since $(h \circ \beta_\lambda^\varphi)(x)=(\Ad(u) \circ h)(x)$, we get $uh(x)u^*=h(x)$, i.e.\ $h(x) \in \{u\}' \cap M$. Therefore, $g(h(x)) \in N_\psi' \cap M$. Since $f$ takes $\rb(N \subset M,\psi)=N_\psi' \cap M$ onto $\rb(N \subset M, \varphi)$, we get $x=q(x)=f(g(h(x))) \in\rb(N \subset M, \varphi)$. We conclude that $\rB(N \subset M, \varphi)^{\beta_\lambda^\varphi} \subset\rb(N \subset M, \varphi)$ as we wanted.

(2)  Take $\psi$ a dominant weight on $N$. Use the conditional expectation $E$ to extend $\varphi$ and $\psi$ to $M$. Observe that the inclusions $N_\psi \subset M_\psi$ and  $M_\psi \subset M^{\sigma_t^\psi}$ are with expectation. Therefore, $N_\psi \subset M^{\sigma_t^\psi}$ is with expectation, hence it has the weak Dixmier property. Therefore, there exists $g \in \cD(N_\psi \subset M) \subset \cV^\psi(N \subset M)$ such that $g|_{M^{\sigma_t^\psi}}$ is a conditional expectation onto $N_\psi' \cap M^{\sigma_t^\psi}$. Take $f \in \cV^{\varphi,\psi}(N \subset M)$ and $h \in \cV^{\psi,\varphi}(N \subset M)$. Then $q=f \circ g \circ h \in \cV^\varphi(N \subset M)$. Therefore, $q(x) =x$ for all $x \in\rB(N \subset M, \varphi)$.

Take $x \in\rB(N \subset M, \varphi)^{\sigma_t^\varphi}$. Since $h \circ \sigma_t^\varphi=\sigma_t^\psi \circ h$, we have $h(x) \in M^{\sigma_t^\psi}$. Therefore, $g(h(x)) \in N_\psi' \cap M^{\sigma_t^\psi} \subset N_\psi' \cap M$. Since $f$ takes $\rb(N \subset M,\psi)=N_\psi' \cap M$ onto $\rb(N \subset M, \varphi)$, we get $x=q(x)=f(g(h(x))) \in\rb(N \subset M, \varphi)$. We conclude that $\rB(N \subset M, \varphi)^{\sigma_t^\varphi} \subset\rb(N \subset M, \varphi)$ as we wanted.
\end{proof}

The following corollary was conjectured in \cite[Open question]{AHHM18}.
\begin{cor} \label{bicentralizer flow ergodic}
Let $N \subset M$ be an inclusion of von Neumann algebras with expectations. Suppose that $N$ is of type $\III_1$ and satisfies the bicentralizer conjecture. Then for every strictly semifinite weight $\varphi \in \cP(N)$, we have $\rB(N \subset M,\varphi)^{\beta^\varphi}=N' \cap M$.
\end{cor}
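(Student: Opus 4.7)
The plan is to derive this as a direct consequence of Theorem \ref{conjecture almost periodic part}(1) together with the already-established identification of the algebraic bicentralizer's fixed-point algebra.

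First, I would pick any $\lambda\in \R^*_+\setminus\{1\}$. Theorem \ref{conjecture almost periodic part}(1) gives
\[
\rB(N\subset M,\varphi)^{\beta^\varphi_\lambda}=\rb(N\subset M,\varphi)^{\beta^\varphi_\lambda}.
\]
Since $\rB(N\subset M,\varphi)^{\beta^\varphi}\subset \rB(N\subset M,\varphi)^{\beta^\varphi_\lambda}$, this already forces $\rB(N\subset M,\varphi)^{\beta^\varphi}\subset \rb(N\subset M,\varphi)$, so it only remains to identify $\rb(N\subset M,\varphi)^{\beta^\varphi}$.

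Next I would invoke Proposition \ref{transition extends alg anal}, which says the analytic flow $\beta^\varphi$ on $\rB(N\subset M,\varphi)$ extends the algebraic flow $\rb^\varphi$ on $\rb(N\subset M,\varphi)$. Therefore
\[
\rb(N\subset M,\varphi)^{\beta^\varphi}=\rb(N\subset M,\varphi)^{\rb^\varphi}.
\]
Theorem \ref{ucp map transition III1}(5) asserts $\rb(N\subset c(M),\varphi)^{\rb^\varphi}=N'\cap c(M)$, and since $\rb(N\subset M,\varphi)=\rb(N\subset c(M),\varphi)\cap M$ (with $\rb^\varphi$ commuting with the trace-scaling $\theta$ by Theorem \ref{ucp map transition III1}(2)), intersecting with $M$ yields
\[
\rb(N\subset M,\varphi)^{\rb^\varphi}=(N'\cap c(M))\cap M=N'\cap M.
\]
Combining the two steps gives $\rB(N\subset M,\varphi)^{\beta^\varphi}\subset N'\cap M$.

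The reverse inclusion is immediate: every $x\in N'\cap M$ lies in $\rb(N\subset M,\varphi)\subset \rB(N\subset M,\varphi)$, and $x$ is fixed by $\beta^\varphi$ since $\beta^\varphi$ extends $\rb^\varphi$ and $N'\cap c(M)$ is $\rb^\varphi$-fixed by Theorem \ref{ucp map transition III1}(5). There is no real obstacle here; the result is simply the ``complete'' version of Theorem \ref{conjecture almost periodic part} that one obtains once one knows ergodicity of the algebraic bicentralizer flow on $\rb(N\subset c(M),\varphi)$ modulo $N'\cap c(M)$, and all the technical work has already been done in Theorem \ref{conjecture almost periodic part}.
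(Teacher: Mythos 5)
Your proof is correct and follows the same (implicit) derivation as the paper, which states the corollary immediately after Theorem \ref{conjecture almost periodic part} without an explicit proof; the chain $\rB(N\subset M,\varphi)^{\beta^\varphi}\subset\rB(N\subset M,\varphi)^{\beta_\lambda^\varphi}=\rb(N\subset M,\varphi)^{\beta_\lambda^\varphi}\subset\rb(N\subset M,\varphi)$, combined with Proposition \ref{transition extends alg anal} and Theorem \ref{ucp map transition III1}(5), is exactly the intended argument.
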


The following corollary was announced in \cite[Remark 4.4]{MV23}.
\begin{cor} \label{bicentralizer weakly mixing}
Let $N \subset M$ be an irreducible inclusion of type $\III_1$ factors with expectation $E \in \cE(M,N)$. Suppose that $N$ satisfies the bicentralizer conjecture. Then the following are equivalent:
\begin{enumerate}
\item $c(N)' \cap c(M)=\C$
\item  The modular flow $\sigma^{\varphi \circ E}$ is ergodic on $\rB(N \subset M,\varphi)$ for some faithful state $\varphi \in N_*$.
\item There exists a faithful state $\varphi  \in N_*$ such that $\sigma^{\varphi \circ E}$ is ergodic on $M$.
\end{enumerate}
\end{cor}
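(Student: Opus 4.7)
The plan is to establish the cycle $(3) \Rightarrow (2) \Rightarrow (1) \Rightarrow (3)$. The first implication $(3) \Rightarrow (2)$ is immediate, since $\rB(N \subset M,\varphi) \cap M_{\varphi \circ E} \subset M_{\varphi \circ E}$, so ergodicity of $\sigma^{\varphi \circ E}$ on $M$ forces ergodicity on the subalgebra $\rB(N \subset M,\varphi)$.

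For $(2) \Rightarrow (1)$ I would first use the bicentralizer conjecture for $N$ to identify $\rB(N \subset M,\varphi) = \rb(N \subset M,\varphi)$, then invoke Theorem~\ref{conjecture almost periodic part}(2) for every $t \neq 0$ and intersect over such $t$ to obtain $\rB(N \subset M,\varphi)^{\sigma^{\varphi \circ E}} = \rb(N \subset M,\varphi)^{\sigma^{\varphi \circ E}}$. Next I exploit the two descriptions of the algebraic bicentralizer on the core: by Proposition~\ref{algebraic generation}, $\rb(N \subset c(M),\varphi) = \rb(N \subset M,\varphi) \rtimes_{\sigma^{\varphi \circ E}} \R$, while by Proposition~\ref{alg bicentralizer of core}, $\rb(N \subset c(M),\varphi) = (N' \cap c(M)) \vee \{(\varphi \circ E)^{\ri t}\}''$. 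Taking the fixed points of the inner action $\Ad((\varphi \circ E)^{\ri t})$ on both descriptions yields the key identity
\[ \rb(N \subset M,\varphi)^{\sigma^{\varphi \circ E}} \vee \{(\varphi \circ E)^{\ri t}\}'' = (c(N)' \cap c(M)) \vee \{(\varphi \circ E)^{\ri t}\}'', \]
where the right-hand side arises because $c(N) = N \vee \{(\varphi \circ E)^{\ri t}\}''$, so the $\Ad((\varphi \circ E)^{\ri t})$-fixed points of $N' \cap c(M)$ are exactly $c(N)' \cap c(M)$. Under (2) the left-hand side collapses to $\{(\varphi \circ E)^{\ri t}\}''$, forcing $c(N)' \cap c(M)$ to lie inside the abelian algebra $\{(\varphi \circ E)^{\ri t}\}'' \cong \rL^\infty(\R)$. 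Since $c(N)' \cap c(M)$ is $\theta$-invariant and $\theta$ acts on $\{(\varphi \circ E)^{\ri t}\}''$ by translation (which is free and ergodic), only two possibilities remain: $c(N)' \cap c(M) = \C$, giving (1), or $c(N)' \cap c(M) = \{(\varphi \circ E)^{\ri t}\}''$; the latter would imply that $(\varphi \circ E)^{\ri t}$ commutes with $N$, making $\varphi$ tracial on $N$, which is impossible since $N$ is of type $\III_1$.

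The implication $(1) \Rightarrow (3)$ will be the main obstacle. Since $N$ is type $\III_1$ with trivial bicentralizer, Haagerup's theorem provides faithful states $\varphi \in \cE(N)$ with $N_\varphi = \C$. For any such $\varphi$, the chain $\{(\varphi)^{\ri t}\}' \cap c(N) = N_\varphi \vee \{(\varphi)^{\ri t}\}'' = \{(\varphi)^{\ri t}\}''$ shows that $\{(\varphi)^{\ri t}\}''$ is a MASA of $c(N)$, while $\{(\varphi)^{\ri t}\}' \cap c(M) = M_{\varphi \circ E} \vee \{(\varphi)^{\ri t}\}''$ together with $M \cap \{(\varphi)^{\ri t}\}'' = \C$ shows that $M_{\varphi \circ E} = \C$ is equivalent to $\{(\varphi)^{\ri t}\}''$ being MASA also in $c(M)$. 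Hypothesis (1) says that $c(N) \subset c(M)$ is an irreducible inclusion of type $\II_\infty$ factors with expectation, and Popa's theorem in this setting produces some MASA of $c(N)$ that remains MASA in $c(M)$. The hard part will be to arrange this MASA in the special $\theta$-equivariant form $\{(\varphi)^{\ri t}\}''$ for some faithful state $\varphi \in \cE(N)$: a generic Popa MASA need not be of this form. I anticipate that the required $\theta$-equivariant MASA selection can be carried out by combining Popa's inductive construction with the ergodicity of the bicentralizer flow from Corollary~\ref{bicentralizer flow ergodic}, which provides exactly the rigidity needed to intertwine a Popa-type MASA with the trace-scaling action on $c(N)$.
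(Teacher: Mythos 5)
The directions $(3)\Rightarrow(2)$ and $(2)\Rightarrow(1)$ are fine. Your ``two descriptions'' phrasing for $(2)\Rightarrow(1)$ is a bit muddled (the claimed ``key identity'' is not obviously a general identity), but the correct underlying argument is there and works: since $c(N)=N\vee\{(\varphi\circ E)^{\ri t}\}''$, one has $c(N)'\cap c(M)\subset N'\cap c(M)\cap\{(\varphi\circ E)^{\ri t}\}'\subset\rb(N\subset c(M),\varphi)\cap\{(\varphi\circ E)^{\ri t}\}'$; using the crossed product picture $\rb(N\subset c(M),\varphi)=\rb(N\subset M,\varphi)\rtimes_{\sigma^{\varphi\circ E}}\R$ from Proposition~\ref{algebraic generation}, the latter relative commutant equals $\rb(N\subset M,\varphi)^{\sigma^{\varphi\circ E}}\vee\{(\varphi\circ E)^{\ri t}\}''$, which collapses to $\{(\varphi\circ E)^{\ri t}\}''$ under (2) combined with Theorem~\ref{conjecture almost periodic part}(2); the $\theta$-ergodicity on $\{(\varphi\circ E)^{\ri t}\}''\cong\rL^\infty(\R)$ then yields $c(N)'\cap c(M)\in\{\C,\{(\varphi\circ E)^{\ri t}\}''\}$, and the second option is impossible since $N$ is type $\III$. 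So that part of the proposal is sound.

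The genuine gap is $(1)\Rightarrow(3)$, and there are two problems. First, the very first step is a factual error: Haagerup's theorem for a type $\III_1$ factor with separable predual and trivial bicentralizer does \emph{not} provide a state $\varphi$ with $N_\varphi=\C$; it provides a state with $N_\varphi'\cap N=\C$ (equivalently, a maximal abelian subalgebra with expectation), which is a far weaker statement --- indeed, $N_\varphi=\C$ would give $N_\varphi'\cap N=N$, the opposite of irreducibility. The existence of a state with trivial centralizer (an ``ergodic state'') is exactly the theme of the reference \cite{MV23} and is itself a substantial theorem, not a free consequence of \cite{Ha85}; taking $M=N$ and $E=\id$, implication $(1)\Rightarrow(3)$ of the present Corollary is precisely that existence statement. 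Second, even granting this, you leave the crucial step --- producing a $\theta$-equivariant MASA of $c(N)$ of the form $\{\varphi^{\ri t}\}''$ that remains maximal abelian in $c(M)$ --- as an unverified ``anticipation'' of what Popa's inductive scheme combined with Corollary~\ref{bicentralizer flow ergodic} ought to deliver. A generic maximal abelian subalgebra of $c(N)$ from Popa's theorem need not be a line $\{\varphi^{\ri t}\}''$, and without an explicit inductive construction (in the spirit of Lemma~\ref{induction III1} and the proof of Theorem~\ref{masa with expectation if and only if}, carried out so as to control the $\theta$-equivariance while simultaneously shrinking the relative commutant in $c(M)$) the implication $(1)\Rightarrow(3)$ is not established.
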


\begin{lem} \label{flow of weights pmp}
Let $N_1$ and $N_2$ be two von Neumann algebras. Suppose that the flow of weights $\R^*_+ \curvearrowright \cZ(c(N_1))$ is probability measure preserving. Then the inclusion $N_2 \subset N_1' \cap c(N_1 \ovt N_2)$ is with expectation.
\end{lem}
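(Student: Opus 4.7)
The strategy is to identify $B := N_1' \cap c(N_1 \ovt N_2)$ explicitly as a fixed-point subalgebra of $c(N_1) \ovt c(N_2)$, and then construct the desired conditional expectation by averaging against the invariant measure supplied by the pmp hypothesis.

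Fix faithful normal semifinite weights $\varphi_i \in \cP(N_i)$ and set $\varphi = \varphi_1 \otimes \varphi_2$. Since $\sigma^{\varphi}_t = \sigma^{\varphi_1}_t \otimes \sigma^{\varphi_2}_t$, there is a natural inclusion of crossed products
$$
c(N_1 \ovt N_2) = (N_1 \ovt N_2) \rtimes_{\Delta} \R \;\hookrightarrow\; (N_1 \ovt N_2) \rtimes \R^2 = c(N_1) \ovt c(N_2),
$$
where $\Delta = \{(t,t) : t \in \R\} \subset \R^2$. Takesaki duality (Theorem~\ref{dual action crossed product}), applied to the short exact sequence $0 \to \Delta \to \R^2 \to \R^2/\Delta \to 0$, identifies $c(N_1 \ovt N_2)$ with the fixed-point algebra of the dual action of $\Delta^{\perp} \cong \R$. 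A direct computation on the generators $\varphi_i^{\ri t}$ shows that this action is $\beta_s = \theta_{1, e^{-s}} \otimes \theta_{2, e^{s}}$, where $\theta_i$ is the scaling flow on $c(N_i)$.

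Combining this with the Connes--Takesaki relative commutant theorem $N_1' \cap c(N_1) = \cZ(c(N_1))$, we get $N_1' \cap (c(N_1) \ovt c(N_2)) = \cZ(c(N_1)) \ovt c(N_2)$; this subalgebra is $\beta$-invariant, and intersecting with $c(N_1 \ovt N_2) = (c(N_1) \ovt c(N_2))^{\beta}$ yields
$$
B = \bigl(\cZ(c(N_1)) \ovt c(N_2)\bigr)^{\beta},
$$
where $\beta|_{\cZ(c(N_1))}$ is (up to reversal) the commutative flow of weights of $N_1$. Note that $1 \otimes N_2 \subset B$ since $N_2 = c(N_2)^{\theta_2}$ and $\beta|_{1 \otimes N_2}$ is trivial.

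The pmp hypothesis supplies a $\theta_1$-invariant faithful normal state $\mu$ on $\cZ(c(N_1))$. Let $F := \mu \otimes \id_{c(N_2)}$, a faithful normal conditional expectation from $\cZ(c(N_1)) \ovt c(N_2)$ onto $c(N_2)$. The invariance of $\mu$ gives $F \circ \beta_s = \theta_{2, e^{s}} \circ F$, so $F$ maps the $\beta$-fixed algebra $B$ into the $\theta_2$-fixed algebra $c(N_2)^{\theta_2} = N_2$. The restriction $F|_B$ is the identity on $1 \otimes N_2$, hence is the desired faithful normal conditional expectation $B \to N_2$. The main technical step is the identification of $c(N_1 \ovt N_2)$ as an anti-diagonal fixed-point algebra in Step~1; once that is in hand, the construction of the expectation is a routine averaging argument.
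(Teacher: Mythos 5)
Your proposal is correct and follows essentially the same route as the paper: identify $c(N_1 \ovt N_2)$ inside $c(N_1) \ovt c(N_2)$ as the subalgebra on which the two scaling flows agree (your fixed-point characterization under $\beta_s = \theta_{1,e^{-s}} \otimes \theta_{2,e^{s}}$ is precisely the paper's $\{x : (\theta_1 \otimes \id)(x) = (\id \otimes \theta_2)(x)\}$), then average against the $\theta_1$-invariant state $\mu$ to push the relative commutant into $N_2$. You supply a more explicit Takesaki-duality justification for the embedding step, which the paper states without proof, but the substance of the argument is the same.
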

\begin{proof}
Identify $c(N_1 \ovt N_2)$ with 
$$ \{ x \in  c(N_1) \ovt c(N_2) \mid (\theta_1 \otimes \id)(x)=(\id \otimes \theta_2)(x)\} $$ where $\theta_i : \R^*_+ \curvearrowright c(N_i)$ are the trace scaling action. Let $\mu$ be a $\theta_1$-invariant faithful normal state on $\cZ(c(N_1))$ and $E=\mu \otimes \id$ the associated faithful normal conditional expectation from $N_1' \cap (c(N_1) \ovt c(N_2))=\cZ(c(N_1)) \ovt c(N_2)$ onto $c(N_2)$. Since $\mu$ is $\theta_1$-invariant, then for every $x \in N_1' \cap c(N_1 \ovt N_2)$, we have
$$ \theta_2(E(x)) = E((\id \otimes \theta_2)(x))=E((\theta_1 \otimes \id)(x))=E(x).$$
This shows that $E(x) \in N_2$ for every $x \in N_1' \cap c(N_1 \ovt N_2)$, i.e.\ $E$ is a faithful normal conditional expectation from $N_1' \cap c(N_1 \ovt N_2)$ onto $N_2$.
\end{proof}

\begin{thm}
Let $N_1$ and $N_2$ be two von Neumann algebras. Suppose that the flow of weights $\R^*_+ \curvearrowright \cZ(c(N_i))$ are probability measure preserving for each $i \in \{1,2\}$. Then every inclusion with expectation $N_1 \ovt N_2 \subset M$ satisfies the bicentralizer conjecture.
\end{thm}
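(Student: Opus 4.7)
The plan is to apply Theorem~\ref{stability under generation} to the inclusion $N_1\ovt N_2\subset M$ with the pair of generating subalgebras $N_1\otimes 1$ and $1\otimes N_2$: for any faithful normal states $\varphi_i\in(N_i)_*$, these are globally $\sigma^{\varphi_1\otimes\varphi_2}$-invariant and admit the $(\varphi_1\otimes\varphi_2)$-preserving conditional expectations $\id\otimes\varphi_2$ and $\varphi_1\otimes\id$. It thus suffices to establish the relative bicentralizer conjecture for each inclusion $N_i\otimes 1\subset M$; by symmetry I focus on $i=1$. Splitting $N_1=N_1^0\oplus N_1^1$ along the central projection onto the type-$\III_1$ summand and invoking Theorem~\ref{conjecture for no type III1} for the $N_1^0$-part reduces the problem to the case where $N_1$ is of type $\III_1$ (the pmp hypothesis on $N_1$ is then vacuous, and the entire content of the theorem must come from the pmp hypothesis on $N_2$).

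By Theorem~\ref{conjecture iff dixmier} the remaining task is to establish the weak Dixmier property for $N_1\subset c(M)$. The pmp hypothesis on $N_2$ enters through Lemma~\ref{flow of weights pmp} (applied with the roles of $N_1$ and $N_2$ exchanged): the inclusion $N_1\subset N_2'\cap c(N_1\ovt N_2)$ is with expectation. Composing with the restriction to the $N_2$-commutant of the canonical conditional expectation $c(M)\to c(N_1\ovt N_2)$ (induced by the hypothesized expectation $E_M:M\to N_1\ovt N_2$) shows that $N_1\subset N_2'\cap c(M)$ is with expectation, and Theorem~\ref{expectation implies dixmier} delivers the corresponding \emph{relative} weak Dixmier property: the semigroup $\cD(N_1\subset N_2'\cap c(M))$ contains a conditional expectation onto $(N_1\ovt N_2)'\cap c(M)$. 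A symmetric argument using the pmp hypothesis on $N_1$ yields the dual relative statement for $N_2\subset N_1'\cap c(M)$.

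The main obstacle is to combine these two one-sided relative Dixmier properties into the full weak Dixmier property for $N_1\subset c(M)$. The natural route is a minimal-idempotent analysis inside the compact convex subsemigroup of $\mathrm{UCP}(c(M))$ generated by $\cU(N_1)\cup\cU(N_2)$: the relative expectations above ensure that the fixed-point subalgebra of this combined averaging action is precisely $(N_1\ovt N_2)'\cap c(M)\subset N_1'\cap c(M)$, and the symmetric exploitation of both pmp hypotheses should furnish enough invariance (via a faithful averaging-invariant weight built from the two pmp invariant measures on $\cZ(c(N_i))$) to force a minimal idempotent in this semigroup to be a conditional expectation onto $(N_1\ovt N_2)'\cap c(M)$. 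Carrying out this combination, rather than either of the two one-sided inputs in isolation, is where the hypothesis of pmp flows on \emph{both} factors is essential; it is also where the key ultrapower machinery of Section~\ref{section ultrapower binormal} and the invariance principle of Theorem~\ref{invariance bicentralizer ucp} are most likely to intervene.
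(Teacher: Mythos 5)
Your reduction via Theorem~\ref{stability under generation} is logically sound, but it converts the required assertion (weak Dixmier for $N_1 \ovt N_2 \subset c(M)$) into a strictly stronger one (weak Dixmier for each $N_i \subset c(M)$ separately), which you do not close. The input Lemma~\ref{flow of weights pmp} hands you is that $N_1 \subset N_2' \cap c(M)$ is with expectation, hence that averaging over $\cU(N_1)$ drives an element of $N_2' \cap c(M)$ into $(N_1 \ovt N_2)' \cap c(M)$; this says nothing about an arbitrary $x \in c(M)$, since $\cU(N_1)$-averaging alone cannot first place $x$ in $N_2' \cap c(M)$. Your proposed fix, a minimal-idempotent analysis in the $\UCP(c(M))$-subsemigroup generated by $\cU(N_1) \cup \cU(N_2)$, targets the wrong fixed-point algebra: any conditional expectation produced that way lands in $(N_1 \ovt N_2)' \cap c(M)$, not in $N_1' \cap c(M)$. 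That would establish weak Dixmier for $N_1 \ovt N_2 \subset c(M)$ directly and render your reduction superfluous, while still leaving your announced subgoal (the bicentralizer conjecture for $N_1 \subset M$ alone) unproved.

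The paper drops the reduction and shows weak Dixmier for $N_1 \ovt N_2 \subset c(M)$ directly. The cases where some $N_i$ has no type $\III_1$ summand are handled essentially as you indicate: Proposition~\ref{dixmier for no type III1} gives weak Dixmier for that $N_i \subset c(M)$, Lemma~\ref{flow of weights pmp} puts the other factor with expectation in its commutant, and one averages in turn over the two unitary groups. In the genuinely new case where both $N_i$ are of type $\III_1$ (so the pmp hypotheses are vacuous), the key new object is the fixed-point algebra $\rB(N_1 \subset c(M),\psi)^{\beta^\psi}$ of the bicentralizer flow, for $\psi \in \cP_s(N_1)$ of infinite multiplicity: by Proposition~\ref{conditional expectation is in ucp} the semigroup $\cD(N_1 \subset c(M))$ already contains a conditional expectation onto it, and $N_2$ embeds there with expectation because $c(E_2)$ with $E_2 = (\varphi \otimes \id) \circ E$ is $\beta^\varphi$-equivariant and restricts on $c_{E_2}(N_2)$ to the trace-scaling flow. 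Averaging over $\cU(N_1)$ into $\rB(N_1 \subset c(M),\psi)^{\beta^\psi}$, then over $\cU(N_2)$ into $N_2' \cap c(M)$, then over $\cU(N_1)$ once more (using $N_1 \subset N_2' \cap c(M)$ with expectation) reaches $(N_1 \ovt N_2)' \cap c(M)$. This intermediate stepping-stone, simultaneously small enough to contain $N_2$ with expectation yet reachable by $\cU(N_1)$-averaging alone, is precisely what your proposal lacks.
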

\begin{proof}
We may assume that $M$ is countably decomposable. We have to show that $N_1 \ovt N_2 \subset c(M)$ has the weak Dixmier property.

Suppose first that $N_1$ has no type $\III_1$ summand. Then the inclusion $N_1 \subset c(M)$ has the weak Dixmier property. Thus we only have to show that $N_2 \subset N_1' \cap c(M)$ has the weak Dixmier property. Take $E \in \cE(M,N_1 \ovt N_2)$. By Lemma \ref{flow of weights pmp}, the inclusion $N_2 \subset N_1' \cap c(N_1 \ovt N_2)$ is with expectation. Since $c_E(N_1 \ovt N_2) \subset c(M)$ is with expectation, we also have that $N_1' \cap c_E(N_1 \ovt N_2) \subset N_1' \cap c(M)$ is wtih expectation. Therefore the inclusion $N_2 \subset N_1' \cap c(M)$ is with expectation and we conclude that it has the weak Dixmier property as we wanted.

The case where $N_2$ has no type $\III_1$ summand is similar. Therefore, the only case left is when both $N_1$ and $N_2$ are of type $\III_1$. Take $\varphi$ a faithful normal state on $N_1$. and let $E_2 = (\varphi \otimes \id) \circ E \in \cE(M,N_2)$. Then $c(E_2) \in \cE(c(M), c_{E_2}(N_2))$ restricts to a conditional expectation $F$ from $\rB(N_1  \subset c(M),\varphi)$ onto $c_{E_2}(N_2)$. Moreover, $F$ is equivariant with respect to to the bicentralizer flow $\beta^\varphi : \R^*_+ \curvearrowright\rB(N_1  \subset c(M),\varphi)$ whose restriction to $c_{E_2}(N_2)$ is the canonical scaling flow $\theta : \R^*_+ \curvearrowright c_{E_2}(N_2)$. It follows that $F$ restricts to a conditional expectation from $\rB(N_1 \subset c(M),\varphi)^{\beta^\varphi}$ onto $c_E(N_2)^\theta=N_2$. We proved that $N_2 \subset \rB(N_1 \subset c(M),\varphi)^{\beta^\varphi}$ is with expectation. But this implies that $N_2 \subset \rB(N_1 \subset c(M),\varphi)^{\beta^\psi}$ is also with expectation for every $\psi \in \cP_s(N_1)$, because the isomorphism $\beta^{\psi,\varphi}$ from $\rB(N_1 \subset c(M),\varphi)^{\beta^\varphi}$ onto $\rB(N_1 \subset c(M),\varphi)^{\beta^\psi}$ fixes $N_2$. By Theorem \ref{expectation implies dixmier}, we conclude that the inclusion $N_2 \subset\rB(N_1 \subset c(M),\varphi)^{\beta^\psi}$ has the weak Dixmier property. 

Take $\psi$ with infinite multiplicity. Then for every $x  \in c(M)$, the set $\conv \{ uxu^* \mid u \in \cU(N_1) \}$ intersects $\rB(N_1 \subset c(M),\varphi)^{\beta^\psi}$. Since $N_2 \subset \rB(N_1 \subset c(M),\varphi)^{\beta^\psi}$ has the weak Dixmier property  we get that the set $\conv \{ uxu^* \mid u \in \cU(N_1 \ovt N_2) \}$ intersects $N_2' \cap c(M)$. Now, by using the same argument as in the first case, we have that the inclusion $N_1 \subset N_2' \cap c(M)$ is wtih expectation, hence it has the the weak Dixmier property. Thus $\conv \{ uxu^* \mid u \in \cU(N_1 \ovt N_2) \}$ intersects $(N_1 \ovt N_2)' \cap c(M)$. We conclude that the inclusion $N_1 \ovt N_2 \subset c(M)$ has the weak Dixmier property as we wanted.
\end{proof}

\begin{thm} \label{finite bicentralizer}
Let $N \subset M$ be an inclusion of von Neumann algebras with expectations. Let $\varphi \in \cP(N)$ be some strictly semifinite weight. If $\rB(N \subset M, \varphi)$ is finite then $\rB(N \subset M, \varphi)=\rb(N \subset M, \varphi)$.
\end{thm}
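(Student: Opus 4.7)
The plan is to exploit the transition isomorphism between analytic bicentralizers for different weights (Theorem~\ref{bicentralizer equivariance and flow}) to reduce the problem to the case of a dominant weight, where the bicentralizer admits an explicit description via Theorem~\ref{computation dominant}. First I would reduce to the case where $N$ is of type $\III_1$: if $N$ has no type $\III_1$ summand, the conclusion is already contained in Theorem~\ref{conjecture for no type III1} and does not require the finiteness hypothesis. So from now on assume $N$ is of type $\III_1$.

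Next, let $\psi \in \cP(N)$ be a dominant weight. Theorem~\ref{bicentralizer equivariance and flow}(1) gives an isomorphism $\beta^{\psi,\varphi} : \rB(N \subset M, \varphi) \to \rB(N \subset M, \psi)$ of von Neumann algebras, so $\rB(N \subset M, \psi)$ is also finite. The crucial observation is that for this dominant $\psi$ one has the equality $\rB(N \subset M, \psi) = \rb(N \subset M, \psi) = N_\psi' \cap M$ inside $M$. Indeed, Theorem~\ref{computation dominant} identifies $\rb(N \subset M, \psi) = N_\psi' \cap M$. Conversely, since $\sigma^{\psi^\omega}$ restricts to $\sigma^\psi$ on constant sequences, one has the embedding $N_\psi \hookrightarrow N^\omega_{\psi^\omega}$, which yields $\rB(N \subset M, \psi) = (N^\omega_{\psi^\omega})' \cap M \subset N_\psi' \cap M$. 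Combined with the general inclusion $\rb \subset \rB$, these two bounds coincide and the desired equality for $\psi$ follows.

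Finally, I would transfer this back to $\varphi$ using Proposition~\ref{transition extends alg anal}, which asserts that the analytic transition map $\beta^{\psi,\varphi}$ extends the algebraic transition isomorphism $\rb^{\psi,\varphi} : \rb(N \subset M, \varphi) \to \rb(N \subset M, \psi)$. Since $\rB(N \subset M, \psi) = \rb(N \subset M, \psi)$ as subalgebras of $M$, and the restriction $\rb^{\psi,\varphi}$ already maps $\rb(N \subset M, \varphi)$ bijectively onto $\rb(N \subset M, \psi)$, the injectivity of the extension $\beta^{\psi,\varphi}$ on the larger algebra $\rB(N \subset M, \varphi)$ forces the equality $\rB(N \subset M, \varphi) = \rb(N \subset M, \varphi)$.

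The main obstacle is ensuring compatibility of the analytic and algebraic transition maps as subalgebras of $M$ (rather than merely as abstract von Neumann algebras), which is precisely what Proposition~\ref{transition extends alg anal} provides; once this is in hand the rest of the argument is essentially formal. The role of the finiteness hypothesis is to transport the good behavior of $\rB(N \subset M, \psi)$ for dominant $\psi$ back to an arbitrary strictly semifinite $\varphi$ through the transition isomorphism.
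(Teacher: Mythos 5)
Your argument, as written, does not actually use the finiteness hypothesis in any essential way. You first observe that $\rB(N\subset M,\psi)$ is finite via the transition isomorphism, but your subsequent deduction that $\rB(N\subset M,\psi)=\rb(N\subset M,\psi)=N_\psi'\cap M$ for dominant $\psi$ is made purely from the constant-sequence embedding $N_\psi\hookrightarrow N^\omega_{\psi^\omega}$, Theorem~\ref{computation dominant}, and the general inclusion $\rb\subset\rB$; none of this invokes finiteness. Feeding this back through Proposition~\ref{transition extends alg anal} would then give $\rB(N\subset M,\varphi)=\rb(N\subset M,\varphi)$ for \emph{every} strictly semifinite weight $\varphi$ on \emph{every} type $\III_1$ inclusion $N\subset M$ --- that is, a two-paragraph proof of the full relative bicentralizer conjecture, hence of Connes' bicentralizer problem. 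Since this problem is famously open and is treated as such throughout the paper (the entire point of Sections~7, 8 and 9 is to verify it in particular cases and characterize it by various equivalent conditions), your chain of implications must contain a gap.

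The suspect step is precisely the transport between the state $\varphi$ and the dominant weight $\psi$. Theorem~\ref{bicentralizer equivariance and flow}(1) is stated for $\varphi,\psi\in\cP_s(N)$, but its proof proceeds by amplification plus the Connes–St\o rmer theorem and then ``proceeding exactly as in'' the AHHM18 argument, which is written for pairs of \emph{states}; the compatibility of the $\rB$ and $\rb$ transition maps across the multiplicity gap between a state and a dominant (integrable, infinite) weight is exactly where the heavy analytic work has to happen, and your derivation short-circuits it. Notice that the paper's own proof of this theorem carefully avoids the state/dominant transition: it instead constructs a \emph{state} $\psi=\tau\circ E_{\rB(N\subset M,\varphi)}$ (using the finiteness hypothesis to supply a $\beta^\varphi$-invariant tracial state $\tau$ on $\rB(N\subset M,\varphi)$), shows this state equals $\varphi\circ F$ for some $F\in\cE(M,N)$ by analyzing the $\beta^\varphi$-invariance of the Radon--Nikodym derivative, and then invokes Theorem~\ref{conjecture almost periodic part}(2). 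That proof only ever compares finite weights with each other, and it uses the finiteness of $\rB(N\subset M,\varphi)$ in an essential way. This mismatch in complexity is another strong signal that your shortcut is not legitimate: the transfer across weight types is not the formal step you treat it as being.

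Concretely, before accepting your argument you would need to verify, independently of everything else in the paper, that $\beta^{\psi,\varphi}:\rB(N\subset M,\varphi)\to\rB(N\subset M,\psi)$ is a bijective von Neumann algebra isomorphism when $\varphi$ is a state and $\psi$ is dominant, and that it restricts to $\rb^{\psi,\varphi}$. If that were true as a bare black box without the bicentralizer conjecture as input, it would solve the conjecture on its own; so either the extension of Theorem~\ref{bicentralizer equivariance and flow}(1) to this regime is not as unconditional as you are reading it, or your unpacking of the constant-sequence lower bound together with Theorem~\ref{computation dominant} hides an extra hypothesis. Either way, the proof is not correct as presented, and the paper's route via Theorem~\ref{conjecture almost periodic part} is the one you should follow.
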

\begin{proof}
We may assume that $N$ of type $\III_1$, that $\varphi$ is a state and that $\cE(M,N) \neq \emptyset$. Note that $N$ satisfies the bicentralizer conjecture because $\rB(N,\varphi)$ is finite.

 Take $E \in \cE(M,N)$. Let $Z$ be the center of $\rB(N \subset M, \varphi)$. Let $\tau$ be the unique faithful normal tracial state on $\rB(N \subset M, \varphi)$ such that $\tau|_Z = (\varphi \circ E)|_Z$. Since $(\varphi \circ E)|_Z$ is invariant under $\beta^\varphi$, so is $\tau$. Define a new state on $M$ by $\psi = \tau \circ E_{\rB(N \subset M, \varphi)}$. We have $\psi|_N=\varphi$. We claim that $\psi = \varphi \circ F$ for some $F \in \cE(M,N)$. Indeed, let $h$ be the Radon-Nikodym derivative of $(\varphi \circ E)|_{\rB(N \subset M, \varphi)}$ with respect to $\tau$. Since both of this states are invariant under $\beta^\varphi$, we know that $h$ is fixed by $\beta^\varphi$. Therefore $h$ is affiliated with $N' \cap M$ by Theorem \ref{conjecture almost periodic part}. This implies that $\sigma_t^\psi(x)=h^{\ri t} \sigma_t^{\varphi \circ E}(x) h^{- \ri t}=\sigma_t^{\varphi}(x)$ for every $x \in N$, hence $\psi=\varphi \circ F$ for some $F \in \cE(M,N)$. Now, we conclude that $$\rB(N \subset M, \varphi)=\rB(N \subset M, \varphi)^{\sigma^{\varphi \circ F}}=\rb(N \subset M,\varphi)^{\sigma^{\varphi \circ F}}$$ by Theorem \ref{conjecture almost periodic part}.
\end{proof}

\section{Finite index and quasiregular inclusions} \label{section quasiregular}

\begin{thm} \label{conjecture normalizer}
Let $N \subset M$ be an inclusion of von Neumann algebras with expectations. Let $P \subset N$ be a von Neumann subalgebra with a conditional expectation $E \in \cE(N,P)$ such that $N$ is generated by the normalizer 
$$\cN(E)=\{ u \in \mathcal{U}(N) \mid  \Ad(u) \circ E=E \circ \Ad(u) \}.$$ If $P \subset M$ satisfies the bicentralizer conjecture, then $N \subset M$ satisfies the bicentralizer conjecture.
\end{thm}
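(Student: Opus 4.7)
The plan is to deduce the conjecture from the criterion in Proposition~\ref{conjecture iff fixed point}, combined with Lemma~\ref{fixed point contained in commutant}~(2) and the ``unitary implementation'' of the transition isomorphism $\beta^{\psi,\varphi}$ provided by Theorem~\ref{bicentralizer equivariance and flow}~(1).

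First I would reduce to the case where $N$ is of type $\III_1$. Let $z \in \cZ(N)$ be the central support of the type $\III_1$ summand of $N$. Since $\cZ(N) \subset N'\cap M$, Proposition~\ref{projection reduction conjecture} lets us treat $zNz \subset zMz$ and $(1-z)N(1-z) \subset (1-z)M(1-z)$ separately; the second is handled by Theorem~\ref{conjecture for no type III1}. For the first, note that if $u \in \cN(E)$, then $zu$ is a unitary in $zN$ satisfying $(zu)(zP)(zu)^* = zP$ and $\Ad(zu)\circ E|_{zN} = E|_{zN}\circ \Ad(zu)$; moreover these $zu$ generate $zN$ and $zP \subset zMz$ still satisfies the bicentralizer conjecture. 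So we may assume $N$ is of type $\III_1$.

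Next I would choose a faithful normal state $\varphi_0$ on $P$ and set $\varphi := \varphi_0 \circ E$, a faithful normal state on $N$ with $\varphi = \varphi \circ E$. By Proposition~\ref{conjecture iff fixed point}, it suffices to prove
$$\rB(N \subset c(M),\varphi)^{\beta^\varphi} \subset N' \cap c(M).$$
Fix $x$ in the fixed-point algebra. Since $\cN(E)$ generates $N$, the goal reduces to showing $ux = xu$ for every $u \in \cN(E)$.

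Now fix such a $u$ and set $\psi := u\varphi u^*$. Using $u \in \cN(E)$, one computes
$$\psi(y) = \varphi(u^*yu) = \varphi_0(E(u^*yu)) = \varphi_0(u^* E(y) u) = \bigl(\varphi_0 \circ \Ad(u^*)\bigr|_P\bigr) \circ E(y),$$
so $\psi$ is a faithful normal state on $N$ still preserved by $E$. On the one hand, since $\psi = u\varphi u^*$, the unitary $u$ lies in $N_{\psi,\varphi}$ and hence in $N^{\eta}_{\psi^{\eta},\varphi^{\eta}}$ for any cofinal ultrafilter $\eta$, so the defining relation of the transition isomorphism in Theorem~\ref{bicentralizer equivariance and flow}~(1) gives
$$ux = \beta^{\psi,\varphi}(x)\,u.$$
On the other hand, $\varphi$ and $\psi$ are two strictly semifinite weights on $N$ both preserved by $E : N \to P$; since $P \subset M$ satisfies the bicentralizer conjecture, Lemma~\ref{fixed point contained in commutant}~(2) applies and yields $\beta^{\psi,\varphi}(x) = x$. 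Combining the two equalities gives $ux = xu$, completing the argument.

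The main conceptual point is identifying the right unitary that implements $\beta^{\psi,\varphi}$, namely $u$ itself: the choice $\psi = u\varphi u^*$ is tailored so that $u \in N_{\psi,\varphi}$ while simultaneously preserving the $E$-compatibility that is required by Lemma~\ref{fixed point contained in commutant}~(2). The rest is a bookkeeping reduction to the $\III_1$ case; I do not anticipate a genuine obstacle there.
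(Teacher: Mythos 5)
Your proof is correct and follows the same route as the paper: reduce to $N$ of type $\III_1$, take an $E$-invariant state $\varphi$, and for $u\in\cN(E)$ set $\psi=u\varphi u^*$ (still $E$-invariant), so that $u$ implements $\beta^{\psi,\varphi}$ and Lemma~\ref{fixed point contained in commutant}~(2) forces $\beta^{\psi,\varphi}=\mathrm{id}$ on the $\beta^\varphi$-fixed points, whence they commute with $\cN(E)''=N$, and Proposition~\ref{conjecture iff fixed point} concludes. The paper compresses the reduction to the $\III_1$ case into one sentence, which you unpack explicitly; otherwise the arguments coincide.
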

\begin{proof}
We may assume that $N$ is of type $\III_1$. Take a strictly semifinite $\varphi \in \cP(N)$ such that $\varphi =\varphi \circ E$. Take $u \in \cN(E)$ and let $\psi=u \varphi u^*$. Then we still have $\psi=\psi \circ E$. Therefore $uxu^*=\beta^{\psi,\varphi}(x)=x$ for all $x \in\rB(N \subset c(M),\varphi)^{\beta^\varphi}$ by Lemma \ref{fixed point contained in commutant}. This shows that  $\rB(N \subset c(M),\varphi)^{\beta^\varphi}$ commutes with $\cN(E)$, hence also with $N=\cN(E)''$. We conclude by Proposition \ref{conjecture iff fixed point} that $N \subset M$ satisfies the bicentralizer conjecture.
\end{proof}

Recall that an inclusion of von Neumann algebras $N \subset M$ has \emph{finite index} if there exists a conditional expectation $E \in \cE(M,N)$ and some $\kappa > 0$ such that $E(x) \geq \kappa x$ for all $x \in M_+$. The following theorem strengthens \cite[Theorem 5]{HP17}. 

\begin{thm}\label{conjecture finite index}
Let $N \subset M$ be an inclusion of von Neumann algebras with finite index. Then the following are equivalent :
\begin{enumerate}[\rm (i)]
\item $N$ satisfies the bicentralizer conjecture.
\item $M$ satisfies the bicentralizer conjecture.
\item $N \subset M$ satisfies the bicentralizer conjecture.
\end{enumerate}
\end{thm}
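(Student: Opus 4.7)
My plan is to route all three conditions through the characterization of the bicentralizer conjecture by the weak Dixmier property in the core (Theorems \ref{haagerup dixmier} and \ref{conjecture iff dixmier}), and to exploit the fact that the core functor preserves finite index: if $N \subset M$ has finite index with expectation $E$, then $c(N) \subset c(M)$ is again finite index with expectation $c(E)$, and hence with expectation, so by Theorem \ref{expectation implies dixmier} the inclusion $c(N) \subset c(M)$ automatically has the weak Dixmier property. I will prove (iii) $\Rightarrow$ (i), then (i) $\Leftrightarrow$ (ii), and finally (i) $\Rightarrow$ (iii).

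The implication (iii) $\Rightarrow$ (i) holds without any finite-index hypothesis. Fixing a faithful state $\varphi$ on $N$, I observe that $\rB(N,\varphi) = (N^\omega_{\varphi^\omega})' \cap N \subset (N^\omega_{\varphi^\omega})' \cap M = \rB(N \subset M, \varphi)$, which under (iii) sits inside $\rb(N \subset M, \varphi)$. Intersecting with $N$ and applying Proposition \ref{alg bicentralizer for intermediate subalgebra}.(3) (which needs only that $N \subset M$ is with expectation) gives $\rb(N \subset M, \varphi) \cap N = \rb(N, \varphi)$, whence $\rB(N, \varphi) \subset \rb(N, \varphi)$; the reverse inclusion is automatic.

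The equivalence (i) $\Leftrightarrow$ (ii) is essentially \cite[Theorem 5]{HP17}, and I plan to re-derive it via the Jones basic construction $M \subset M_1 := \langle M, e_N \rangle$. The Jones projection satisfies $e_N \in N' \cap M_1$ and $e_N M_1 e_N \cong N e_N \cong N$, so the corner reductions of Propositions \ref{alg bic commutant corner reduction} and \ref{anal bic corner reduction} yield $e_N \rB(N \subset M_1, \varphi) e_N = \rB(N, \varphi)$ and likewise for $\rb$; since $e_N$ has full central support in $M_1$ (as $M_1 = \overline{M e_N M}$), these corner equalities propagate back to show that (i) is equivalent to the conjecture for $N \subset M_1$. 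Iterating with the basic construction $M \subset M_1 \subset M_2 = \langle M_1, e_M \rangle$ and using (iii) $\Rightarrow$ (i) at each step bridges (i) and (ii).

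The crucial new implication is (i) $\Rightarrow$ (iii). My plan is to show that $\rB(N \subset M, \varphi)$ is a finite von Neumann algebra and invoke Theorem \ref{finite bicentralizer}. Reducing to $N$ of type $\III_1$ via Theorem \ref{conjecture for no type III1}, assumption (i) gives $\rB(N, \varphi) = \rb(N, \varphi) \subset \cZ(c(N))$. Using a Pimsner--Popa basis $\{m_1, \ldots, m_n\}$ for $N \subset M$, every $x \in M$ decomposes as $x = \sum m_i E(m_i^* x)$, and I want to show that for $x \in \rB(N \subset M, \varphi)$ the coordinates $E(m_i^* x)$ lie in $\rB(N, \varphi)$, forcing $\rB(N \subset M, \varphi) \subset \sum m_i \rB(N, \varphi)$ to be finitely generated over $\rB(N, \varphi)$ and hence finite. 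The main obstacle will be the non-vanishing of the commutators $[u, E(m_i^* x)] = E^\omega([u, m_i^*] x)$ for $u \in N^\omega_{\varphi^\omega}$, which obstructs the naive coordinate computation; to overcome it, I plan to combine the ultrapower implementation of binormal states from Section \ref{section ultrapower binormal} with the finite-index rigidity (via the Markov trace on the basic construction) to produce averaged coordinates that genuinely centralize $N^\omega_{\varphi^\omega}$.
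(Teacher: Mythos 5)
Your plan for (iii) $\Rightarrow$ (i) is correct (and is exactly the content of Proposition \ref{conjecture with intermediate} restricted to this situation), so that part is fine. However, the two substantive implications both deviate from the paper, and one of them has a genuine gap.

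For (i) $\Rightarrow$ (iii), the paper's argument is far more elementary than what you attempt, and it bypasses the obstruction you run into. Instead of decomposing $x \in \rB(N \subset M, \varphi)$ via a Pimsner--Popa basis and trying to show the coordinates $E(m_i^*x)$ centralize $N^\omega_{\varphi^\omega}$ (which, as you note yourself, they do not — that gap is real and your proposed fix via ``averaged coordinates'' is only a hope, not an argument), one simply observes that for $x \in (N^\omega_{\varphi^\omega})' \cap M$ and $u \in N^\omega_{\varphi^\omega}$ one has $uE(x)u^* = E^\omega(uxu^*) = E(x)$ since $E^\omega$ is $N^\omega$-bimodular; hence $E$ restricts to a normal faithful conditional expectation $E|_{\rB(N\subset M,\varphi)} \in \cE(\rB(N\subset M,\varphi), \rB(N,\varphi))$, and the Pimsner--Popa inequality $E(\cdot) \geq \kappa\, \cdot$ on $M_+$ passes directly to this restriction, so it still has finite index. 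By assumption (i), $\rB(N,\varphi) = \rb(N,\varphi) \subset \cZ(N_\varphi)$ is abelian, and a von Neumann algebra admitting a finite-index expectation onto an abelian subalgebra is finite (indeed type $\I$). Then Theorem \ref{finite bicentralizer} finishes (iii), and one gets (ii) for free because $\rB(M, \varphi\circ E) \subset \rB(N\subset M,\varphi)$ is again finite. So no basis, no ultrapower implementation of binormal states, no Markov trace is needed here.

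For (ii) $\Rightarrow$ (i), your iterated Jones tower with corner reductions through $e_N$ is a plausible but heavy route, and you would need to be careful justifying that the property propagates correctly through each basic construction (you would at minimum need to invoke the locally finite index version, Corollary \ref{conjecture locally finite index}, since the basic construction $N \subset \langle M, N\rangle$ is only of locally finite index in general). The paper instead uses a one-line commutant trick: if $M$ satisfies the conjecture then so does $M'$, and since $M' \subset N'$ also has finite index, the implication (i) $\Rightarrow$ (ii) already proved applied to $M' \subset N'$ gives the conjecture for $N'$, hence for $N$. You should also note that your opening gambit — that $c(N) \subset c(M)$ has the weak Dixmier property — is true but not useful here: the relevant inclusion for the bicentralizer conjecture via Theorem \ref{conjecture iff dixmier} is $N \subset c(M)$, not $c(N) \subset c(M)$, and you appear to abandon that route yourself.
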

\begin{proof}
(i) $\Rightarrow$ (ii) + (iii). Let $E \in \cE(M,N)$ be a conditional expectation with finite index. Take $\varphi \in \cP(N)$ a strictly semifinite weight. Then $E|_{\rB(N \subset M, \varphi)} \in \cE(\rB(N \subset M,\varphi),\rB(N,\varphi))$ still has finite index. By assumption, $\rB(N,\varphi)=\rb(N,\varphi)$ is abelian. Therefore, $\rB(N \subset M,\varphi)$ must be finite (and of type I). By Theorem \ref{finite bicentralizer}, it follows that $\rB(N \subset M, \varphi)=\rb(N \subset M, \varphi)$. Since $\rB(N \subset M,\varphi)$ is finite, $\rB(M,\varphi \circ E) \subset\rB(N \subset M,\varphi)$ is also finite hence $\rB(M,\varphi \circ E)=\rb(M, \varphi \circ E)$.

(ii) $\Rightarrow$ (i). Suppose that $M$ satisfies the bicentralizer conjecture. Then so does $M'$. Since $M' \subset N'$ also has finite index, the first part shows that $N'$ satisfies the bicentralizer conjecture, and we conclude the same thing for $N$.

(iii) $\Rightarrow$ (i) is trivial.
\end{proof}

We say that an inclusion of von Neumann algebras $N \subset M$ has \emph{locally finite index} if there exists an increasing net of projections $(p_i)_{i \in I}$ in $N' \cap M$ with $\bigvee_{i \in I} p_i=1$ such that $Np_i \subset p_iMp_i$ has finite index for every $i \in I$. Using Proposition \ref{projection reduction conjecture}, we get immediately the following corollary from Theorem \ref{conjecture finite index}

\begin{cor}\label{conjecture locally finite index}
Let $N \subset M$ be an inclusion of von Neumann algebras with locally finite index. Then the following are equivalent :
\begin{enumerate}[\rm (i)]
\item $N$ satisfies the bicentralizer conjecture.
\item $M$ satisfies the bicentralizer conjecture.
\item $N \subset M$ satisfies the bicentralizer conjecture.
\end{enumerate}
\end{cor}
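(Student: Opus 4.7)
The plan is to combine Proposition \ref{projection reduction conjecture} with Theorem \ref{conjecture finite index} applied to the finite-index corners. Let $(p_i)_{i \in I}$ be the given increasing net of projections in $N' \cap M$ with $\bigvee_{i} p_i = 1$ and $Np_i \subset p_iMp_i$ of finite index for every $i$. Applying Proposition \ref{projection reduction conjecture} with $\cF = \{p_i\}_{i \in I} \subset N' \cap M$, the inclusion $N \subset M$ satisfies the bicentralizer conjecture if and only if each $Np_i \subset p_iMp_i$ does. Theorem \ref{conjecture finite index}, applied to the finite-index inclusion $Np_i \subset p_iMp_i$, then shows that (iii) for $N \subset M$ is equivalent to the conjecture holding for every $Np_i$, and equivalent to the conjecture holding for every $p_iMp_i$.

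For (i) $\Leftrightarrow$ (iii), observe that since $p_i \in N' \cap M$, the map $x \mapsto xp_i$ induces an isomorphism $Nz_i \cong Np_i$, where $z_i \in \cZ(N)$ is the smallest central projection of $N$ with $z_i p_i = p_i$. Since $\bigvee p_i = 1$ forces $\bigvee z_i = 1$, applying Proposition \ref{projection reduction conjecture} to the trivial inclusion $N \subset N$ with central family $\{z_i\} \subset \cZ(N)$ shows that the conjecture for $N$ is equivalent to the conjecture for every $Nz_i$, which by the above isomorphism is equivalent to the conjecture for every $Np_i$.

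For (ii) $\Leftrightarrow$ (iii), I would let $w_i \in \cZ(M)$ be the central support of $p_i$ in $M$, so that $\bigvee w_i = 1$. Since $p_i$ has full central support $w_i$ in $Mw_i$, standard Morita theory gives an isomorphism $p_iMp_i \ovt \B(\ell^2) \cong Mw_i \ovt \B(\ell^2)$. By Proposition \ref{amplification anal bic} (together with its algebraic counterpart, which follows from $c(X \ovt \B(H)) \cong c(X) \ovt \B(H)$ when $\B(H)$ is a semifinite factor), the bicentralizer conjecture is stable under infinite amplification, so it holds for $p_iMp_i$ iff it holds for $Mw_i$. A final application of Proposition \ref{projection reduction conjecture} to $M \subset M$ with the central family $\{w_i\} \subset \cZ(M)$ then yields (ii) $\Leftrightarrow$ (iii).

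The main technical point is the Morita passage in (ii) $\Leftrightarrow$ (iii): while Proposition \ref{amplification anal bic} directly gives amplification invariance of the analytic bicentralizer, its algebraic analog $\rb(X \ovt \B(H), \varphi \otimes \Tr) = \rb(X, \varphi) \otimes 1$ must be checked using the identification of cores above together with $(X \ovt \B(H))' \cap c(X \ovt \B(H)) = (X' \cap c(X)) \otimes 1$. Once this standard fact is in hand, the whole corollary is immediate from the local reduction and Theorem \ref{conjecture finite index}.
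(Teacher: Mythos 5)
Your argument is correct and follows the same strategy the paper sketches in one line (Proposition~\ref{projection reduction conjecture} combined with Theorem~\ref{conjecture finite index}); the value of your write-up is that you make explicit a step the paper treats as ``immediate'' but which does require a real argument. Specifically, Proposition~\ref{projection reduction conjecture} applied to $\cF=\{p_i\}$ gives (iii)\,$\Leftrightarrow$\,[$Np_i\subset p_iMp_i$ satisfies for all $i$], and Theorem~\ref{conjecture finite index} converts this into the conjecture for all the $Np_i$ and all the $p_iMp_i$; the equivalence with (i) is then genuinely easy via $Np_i\cong Nz_i$ and a central family $\{z_i\}\subset\cZ(N)$. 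But the equivalence with (ii) is not a direct instance of Proposition~\ref{projection reduction conjecture} applied to $M\subset M$, since the $p_i$ lie in neither $\cZ(M)$ nor in a single $M_\varphi$; some reduction from $p_iMp_i$ to $Mw_i$ is needed, and your passage through stable isomorphism $p_iMp_i\ovt\B(H)\cong Mw_i\ovt\B(H)$ together with amplification invariance of the conjecture is a correct and clean way to bridge that gap.

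Two small points. First, you write $\B(\ell^2)$, but if $M$ is not countably decomposable (the corollary imposes no such hypothesis) you may need a larger type $\I$ factor $\B(H)$ so that $p_i\otimes 1_H\sim w_i\otimes 1_H$; since Proposition~\ref{amplification anal bic} and its algebraic counterpart allow arbitrary type $\I$ factors, this costs nothing, but the index-set should be allowed to be large. Second, in the passage from ``conjecture for one weight'' to ``conjecture for all weights'' that your argument implicitly uses several times (e.g.\ when combining Proposition~\ref{projection reduction conjecture}, which is a statement about a fixed $\varphi$ and its cut-downs $\varphi_{p_i}$, with the weight-independent statement ``$p_iMp_i$ satisfies the conjecture''), the justification is Theorem~\ref{conjecture iff dixmier}, since the weak Dixmier characterization is weight-free; it would be worth citing it at the relevant places.
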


We say that an inclusion of von Neumann algebras $N \subset M$ is \emph{quasiregular} if the $N$-bimodule ${_N}\rL^2(M)_N$ decomposes as a direct sum of $N$-bimodules with finite index, or equivalently, if the inclusion $N \subset \langle M,N \rangle$ has locally finite index where $\langle M,N \rangle$ is the so-called basic construction. Note that since the inclusion $N \subset \langle M,N \rangle$ has locally finite index, it is with expectations. Therefore, $N \subset M$ is also with expectations.

\begin{cor} \label{conjecture quasiregular}
Let $N \subset M$ be a quasiregular inclusion. If $N$ satisfies the bicentralizer conjecture, then the inclusion $N \subset M$ also satisfies the bicentralizer conjecture. 
\end{cor}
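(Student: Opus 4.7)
The plan is to reduce the quasiregular case to the locally finite index case already handled by Corollary \ref{conjecture locally finite index}, applied to the Jones basic construction $\langle M, N \rangle$.

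First, I would recall that by the very definition of quasiregularity, the inclusion $N \subset \langle M, N \rangle$ has locally finite index. In particular, this inclusion is with expectation, and (what will matter for the algebraic bicentralizer side) the canonical operator valued weight $T \in \cP(\langle M, N \rangle, M)$ dual to the expectation $E \in \cE(M,N)$ has the property that $T|_{N' \cap \langle M, N \rangle}$ is semifinite. This semifiniteness is essentially equivalent to quasiregularity in the present setting, and it is exactly the hypothesis needed to invoke Proposition \ref{alg bicentralizer for intermediate subalgebra}(3) for the chain $N \subset M \subset \langle M, N \rangle$.

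Fix a strictly semifinite weight $\varphi \in \cP(N)$. The strategy is to compare the bicentralizer algebras inside $M$ with those inside $\langle M, N \rangle$ by ``intersecting with $M$.'' On the analytic side this is immediate from the definition: since $M \subset \langle M, N \rangle$ is with expectation, the ultrapower $M^\omega$ sits inside $\langle M, N \rangle^\omega$, and
\[ \rB(N \subset M, \varphi) \;=\; (N^\omega_{\varphi^\omega})' \cap M \;=\; \bigl( (N^\omega_{\varphi^\omega})' \cap \langle M, N \rangle \bigr) \cap M \;=\; \rB(N \subset \langle M, N \rangle, \varphi) \cap M. \]
On the algebraic side, Proposition \ref{alg bicentralizer for intermediate subalgebra}(3) applied to $N \subset M \subset \langle M, N \rangle$ (using the semifiniteness of $T|_{N' \cap \langle M, N \rangle}$ from the previous paragraph) yields
\[ \rb(N \subset M, \varphi) \;=\; \rb(N \subset \langle M, N \rangle, \varphi) \cap M. \]

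With these two identities in hand, the conclusion follows from Corollary \ref{conjecture locally finite index}: since $N$ satisfies the bicentralizer conjecture and the inclusion $N \subset \langle M, N \rangle$ has locally finite index, the inclusion $N \subset \langle M, N \rangle$ itself satisfies the bicentralizer conjecture, i.e.\ $\rB(N \subset \langle M, N \rangle, \varphi) = \rb(N \subset \langle M, N \rangle, \varphi)$. Intersecting both sides with $M$ and using the two displayed equalities gives $\rB(N \subset M, \varphi) = \rb(N \subset M, \varphi)$, which is what we wanted.

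The only real technical point — and the step I expect to need a careful verification — is the identification of the semifiniteness of $T|_{N' \cap \langle M, N \rangle}$ with the quasiregularity hypothesis, so that Proposition \ref{alg bicentralizer for intermediate subalgebra}(3) is legitimately applicable. Once this is in place, everything else is a clean two-line deduction from Corollary \ref{conjecture locally finite index}; no further bicentralizer machinery, ultrapower argument, or weak Dixmier analysis is needed here, because all the heavy lifting has already been done in the locally finite index case.
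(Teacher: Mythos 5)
Your proof coincides with the paper's: both pass to the basic construction $Q = \langle M, N \rangle$, invoke Corollary \ref{conjecture locally finite index} for $N \subset Q$, and then identify the two bicentralizers by intersecting with $M$ via Proposition \ref{alg bicentralizer for intermediate subalgebra}(3). The technical point you flag is in fact automatic via the parenthetical in that proposition: since $N \subset \langle M, N \rangle$ has locally finite index it is with expectations, which is one of the stated sufficient conditions for the required semifiniteness, so no separate verification is needed.
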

\begin{proof}
Let $Q=\langle M,N \rangle$. Then $N \subset Q$ satisfies the bicentralizer conjecture by Corollary \ref{conjecture locally finite index}. Therefore, for every strictly semifinite weight $\varphi$ on $N$, we have
$$\rB(N \subset M, \varphi)=\rB(N \subset Q, \varphi) \cap M=\rb(N \subset Q, \varphi) \cap M.$$
Since $\cP(Q,M) \neq \emptyset$, we conclude by Proposition \ref{alg bicentralizer for intermediate subalgebra}.(3) that
$$\rB(N \subset M, \varphi)=\rb(N \subset M, \varphi).$$
\end{proof}

%\begin{lem}
%Let $Q \subset N$ an inclusion of von Neumann algebras with $E \in \cE(N,Q)$. Let $A \subset 1_AQ1_A$ be a subalgebra with expectation such that $A' \cap 1_AN1_A \subset A$. Let $v \in N$ be a partial isometry such that $v^*v=1_A$ and $vAv^* \subset Q$. Then there exists $\varphi,\psi \in \cP(N)$ strictly semifinite that are preserved by $E$ and such that $v \in N_{\psi,\varphi}$.
%\end{lem}
%\begin{proof}
%Let $F$ be the unique normal conditional expectation from $1_AN1_A$ onto $A$. Then we necessarily have $F \circ E|_{1_AN1_A}=F$. Let $1_B=vv^*$ and $B=vAv^* \subset 1_BQ1_B$. We still have $B' \cap 1_BN1_B \subset B$. Thus  $G= vF(v^* \cdot v)v^*$ is the unique normal conditional expectation from $1_BN1_B$ onto $B$. Therefore, we also have $G \circ E|_{1_BN1_B}=G$. Take $\varphi_0 \in \cP(1_AN1_A)$ strictly semifinite such that $\varphi_0=\varphi_0 \circ F$. Let $\psi_0=v\varphi_0 v^* \in \cP(1_BN1_B)$. Then $\psi_0=\psi_0 \circ G$. Take $\varphi_1 \in \cP(1_A^\perp N 1_A^\perp)$ and $\psi_1 \in \cP(1_B^{\perp} N 1_B^\perp)$ two strictly semifinite weights that are preserved by $E$. Let $\varphi = \varphi_0 \oplus \varphi_1$ and $\psi=\psi_0 \oplus \psi_1$. Then we have $\varphi = \varphi \circ E$, $\psi=\psi \circ E$ and $v \in N_{\psi,\varphi}$.
%\end{proof}

\begin{thm} \label{stability under quasiregular extensions}
Let $N \subset M$ be an inclusion of von Neumann algebras with expectations. Let $P \subset N$ be a von Neumann algebra subalgebra with expectations such that $P' \cap N \subset P$ and $P \subset N$ is quasiregular. If $P \subset M$ satisfies the bicentralizer conjecture, then $N \subset M$ satisfies the bicentralizer conjecture.
\end{thm}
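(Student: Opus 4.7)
The plan is to apply Proposition \ref{conjecture iff fixed point}: by Theorem \ref{conjecture for no type III1} I may reduce to the case where $N$ is of type $\III_1$, and then it suffices to find a strictly semifinite weight $\varphi \in \cP(N)$ with $\rB(N \subset c(M), \varphi)^{\beta^\varphi} = N' \cap c(M)$. I choose $\varphi = \psi \circ E_P$ for a faithful strictly semifinite $\psi \in \cP(P)$ and $E_P \in \cE(N,P)$, so in particular $\varphi = \varphi \circ E_P$. Only the forward inclusion needs proof; the reverse is automatic from Theorem \ref{ucp map transition III1}(5) together with Proposition \ref{transition extends alg anal}.

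The first containment is immediate: by Lemma \ref{fixed point contained in commutant}(1) applied to $P \subset N$ with expectation $E_P$, using the hypothesis that $P \subset M$ satisfies the bicentralizer conjecture, I obtain
\[\rB(N \subset c(M), \varphi)^{\beta^\varphi} \subset P' \cap c(M),\]
and the remaining task is to upgrade this commutant from $P$ to $N$.

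My strategy is to apply Theorem \ref{stability under generation}. Concretely, I aim to produce a family of $\sigma^\varphi$-invariant von Neumann subalgebras $(Q_i)_{i \in I}$ with $P \subset Q_i \subset N$, each $P \subset Q_i$ of finite index, whose join is $N$. The $\sigma^\varphi$-invariance combined with Takesaki's theorem furnishes $\varphi$-preserving expectations $\cE(N, Q_i) \neq \emptyset$. The existence of such generating $Q_i$'s uses the quasiregular decomposition of the $P$-bimodule $\rL^2(N)$ into finite-index $P$-sub-bimodules, which may be taken $\sigma^\varphi$-invariant since $\varphi = \psi \circ E_P$ is. The hypothesis $P' \cap N \subset P$ is used to force $P' \cap Q_i \subset \cZ(P)$, so that each $P \subset Q_i$ is an irreducible finite-index inclusion, simplifying the Pimsner--Popa structure in the next step.

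The main obstacle, and the crucial technical step, is showing that each $Q_i \subset M$ satisfies the bicentralizer conjecture. By Theorem \ref{conjecture iff dixmier} this is equivalent to showing that $Q_i \subset c(M)$ has the weak Dixmier property, given that $P \subset c(M)$ does. The plan for this finite-index transfer is Pimsner--Popa averaging: a basis $\{m_1, \ldots, m_n\}$ for $P \subset Q_i$ satisfying $\sum_i m_i E_P^{Q_i}(m_i^*\, \cdot\,) = \id_{Q_i}$ combined with the weak Dixmier property of $P \subset Q_i$ (which holds by Theorem \ref{expectation implies dixmier}, since this finite-index inclusion is with expectations) should allow one to lift a Dixmier projection in $\cD(P \subset c(M))$ onto $P' \cap c(M)$ to one in $\cD(Q_i \subset c(M))$ onto $Q_i' \cap c(M)$. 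Once this finite-index transfer is established, Theorem \ref{stability under generation} applies to the family $(Q_i)_{i \in I}$ and concludes the proof.
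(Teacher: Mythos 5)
Your opening moves are correct and match the paper: the reduction to the type $\III_1$ case, the choice $\varphi=\psi\circ E_P$, and the inclusion $\rB(N \subset c(M),\varphi)^{\beta^\varphi}\subset P'\cap c(M)$ via Lemma \ref{fixed point contained in commutant}(1) are all exactly the right first steps. However, the plan for upgrading the commutant from $P$ to $N$ breaks down at the very first step.

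The fatal gap is the assumption that there is a generating family of intermediate subalgebras $P\subset Q_i\subset N$, each of \emph{finite index} over $P$. Quasiregularity of $P\subset N$ says only that the $P$-bimodule $\rL^2(N)$ decomposes into finite-index $P$-sub-bimodules; it does not produce finite-index intermediate algebras. Take $P$ a type $\III_1$ factor with an outer automorphism $\theta$ such that $N=P\rtimes_\theta\Z$ is again of type $\III_1$. Then $P\subset N$ is quasiregular with $P'\cap N=\C$, yet every intermediate von Neumann subalgebra strictly containing $P$ is of the form $P\rtimes_\theta n\Z$ for some $n\geq 1$, hence has infinite index over $P$. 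The only finite-index intermediate subalgebra is $P$ itself, so the family $(Q_i)_{i\in I}$ does not exist and Theorem \ref{stability under generation} cannot be invoked in the way you intend.

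There is also a secondary problem: even granting the $Q_i$, the claimed transfer of the weak Dixmier property (equivalently, of the bicentralizer conjecture, by Theorem \ref{conjecture iff dixmier}) from $P\subset c(M)$ to a finite-index overalgebra $Q_i\subset c(M)$ via Pimsner--Popa averaging is not established. A Pimsner--Popa basis consists of non-unitary elements, whereas $\cD(Q_i\subset c(M))$ is the closed convex hull of $\{\Ad(u)\mid u\in\cU(Q_i)\}$, and it is not clear how to produce from $\Phi\in\cD(P\subset c(M))$ a map in $\cD(Q_i\subset c(M))$ with range $Q_i'\cap c(M)$. In the paper, the finite-index statement is itself a \emph{corollary} of the quasiregular theorem (combined with Theorem \ref{stability under generation}), not an independent ingredient, so your plan is essentially circular.

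The paper's actual proof sidesteps both obstructions by treating one intertwiner $v$ at a time. It writes $N$ as generated by partial isometries $v$ with $\theta(x)v=vx$, $\theta:pPp\to qPq$ finite index, and then forms $Q=P\vee(\theta(pPp)'\cap qNq)$. Crucially, the finite-index hypothesis is used only to deduce that $\theta(pPp)'\cap qNq$ is a \emph{finite} von Neumann algebra, so that $Q$ is generated by $P$ and a semifinite algebra and hence $Q\subset M$ satisfies the bicentralizer conjecture by Theorem \ref{stability under generation}. The inclusion $P\subset Q$ may well have infinite index. One then builds a weight $\psi$ preserved by $E_Q$ with $v\in N_{\psi,\varphi}$ and applies Lemma \ref{fixed point contained in commutant}(2) to conclude $vx=\beta^{\psi,\varphi}(x)v=xv$ for $x$ in the fixed-point algebra. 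This pointwise argument over individual intertwiners is what replaces your (nonexistent) finite-index intermediate subalgebras.
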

\begin{proof}
We may assume that $N$ is of type $\III_1$. Since $P' \cap N \subset P$, there is a uniqe $E_P \in \cE(N,P)$. 

Let $\cA=\rB(N \subset c(M),\varphi)^{\beta^\varphi}$ where $\varphi \in \cP(N)$ is any strictly semifinite weight such that $\varphi =\varphi \circ E_P$. By Lemma \ref{fixed point contained in commutant}, the algebra $\cA$ does not depend on the choice of $\varphi$. We have to show that $N$ commutes with $\cA$. 

Since $P \subset N$ is quasiregular and $P' \cap N=\cZ(P)$, we know that $N$ is generated by partial isometries $v \in N$ for which there exists a partial *-morphism $\theta : pPp \rightarrow qPq$ with $p,q \in P$ such that $\theta(x)v=vx$ for all $x \in pPp$ and $\theta(pPp) \subset qPq$ has finite index. We may assume that $p=v^*v$.

Let $Q$ be the von Neumann algebra generated by $P$ and $\theta(pPp)' \cap qNq$. Choose $\varphi \in \cP(N)$ strictly semifinite such that $\varphi=\varphi \circ E_P$, $q \in N_\varphi$ and $\theta(pPp) \subset qPq$ admits a $\varphi$-preserving normal conditional expectation. Then $Q$ admits a $\varphi$-preserving normal conditional expectation $E_Q \in \cE(N,Q)$, which is necessarily the unique element of $\cE(N,Q)$ since $Q' \cap N \subset P' \cap N \subset P \subset Q$.

Observe that $\cZ(P)q=(qPq)' \cap qNq \subset \theta(pPp)' \cap qNq$ has finite index. Therefore $\theta(pPp)' \cap qNq$ is finite. Since $P \subset M$ satisfies the bicentralizer conjecture and $Q$ is generated by $P$ and $P_\varphi \vee (\theta(pPp)' \cap qNq)$ (which is semifinite), we conclude by Theorem \ref{stability under generation}, that $Q \subset M$ satisfies the bicentralizer conjecture. 

Let $e=vv^*$ and $\psi_0=v\varphi v^* \in \cP(eNe)$.  Observe that $vPv^*$ is globally invariant under $\sigma^{\psi_0}$ and $\psi_0|_{vPv^*}$ is semifinite. Therefore, $\psi_0$ is preserved by the some conditional expectation $F \in \cE(eNe,vPv^*)$. Moreover, $(vPv^*)' \cap eNe=v(P' \cap N)v^* \subset vPv^*$. This implies that $F$ is the unique element of $\cE(eNe,vPv^*)$. Therefore, we necessarily have $F \circ E_Q|_{eNe}=F$ because $vPv^* \subset Q$. Now, take any $\psi_1 \in \cP(e^{\perp}Ne^{\perp})$ such that $\psi_1=\psi_1 \circ E_Q|_{e^\perp N e^\perp}$ and let $\psi=\psi_0 \oplus \psi_1 \in \cP(N)$. Then we have $\psi =\psi \circ E_Q$. By sontruction of $\psi$, we have $v \in N_{\psi,\varphi}$. Then, since $Q \subset M$ satisfies the bicentralizer conjecture, Theorem \ref{fixed point contained in commutant} tells us that $vx=\beta^{\psi,\varphi}(x)v=xv$ for every $x \in\rB(N \subset c(M), \varphi)^{\beta^\varphi}$. We proved that the intertwiner $v$ commutes with $\cA$. Since $N$ is generated by such intertwiners, we conclude that $N$ commutes with $\cA$, hence $N \subset M$ satisfies the bicentralizer conjecture by Proposition \ref{conjecture iff fixed point}.
\end{proof}

\begin{cor}
Let $N \subset M$ be an inclusion of von Neumann algebras with expectations. Let $P \subset N$ be a finite index subalgebra. If $P \subset M$ satisfies the bicentralizer conjecture, then $N \subset M$ satisfies the bicentralizer conjecture.
\end{cor}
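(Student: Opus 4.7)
The strategy is to deduce this corollary from Theorem~\ref{stability under quasiregular extensions}. Since finite index does not guarantee the relative-commutant condition $P' \cap N \subset P$, the first move is to pass to the intermediate subalgebra $Q = P \vee (P' \cap N) \subset N$. Two of the hypotheses on $Q \subset N$ are then immediate: the relative commutant condition $Q' \cap N \subset P' \cap N \subset Q$ holds automatically, and as an intermediate subalgebra of the finite-index inclusion $P \subset N$, the inclusion $Q \subset N$ itself has finite index, hence is quasiregular and with expectations.

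The nontrivial step is to verify that $Q \subset M$ satisfies the bicentralizer conjecture. I plan to apply Theorem~\ref{conjecture normalizer} to the inclusion $P \subset Q$. Because $P \subset N$ is finite index with expectation, the relative commutant $P' \cap N$ is a finite von Neumann algebra, and in particular it carries a canonical $\cZ(P' \cap N)$-valued center-valued trace that is invariant under all its inner automorphisms. Using the amalgamated tensor product description $Q \cong P \ovt_{\cZ(P)} (P' \cap N)$ furnished by Corollary~\ref{corollary expectation tensor product}, this center-valued trace (composed with any conditional expectation $\cZ(P' \cap N) \to \cZ(P)$, which exists after standard reductions to the countably decomposable setting) lifts $P$-bilinearly to a conditional expectation $E \in \cE(Q, P)$.

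By construction, $E$ is $\Ad(v)$-invariant for every $v \in \cU(P' \cap N)$, since the center-valued trace enjoys that property and $v$ commutes with $P$; moreover, every $u \in \cU(P)$ normalizes $E$ automatically because $E$ is $P$-bimodular. Thus $\cU(P) \cup \cU(P' \cap N) \subset \cN(E)$, and since these two sets together generate $Q$ as a von Neumann algebra, $Q = \cN(E)''$. Theorem~\ref{conjecture normalizer}, using the hypothesis that $P \subset M$ satisfies the bicentralizer conjecture, then shows $Q \subset M$ does as well. Applying Theorem~\ref{stability under quasiregular extensions} to $Q \subset N \subset M$ yields the conclusion for $N \subset M$.

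The main obstacle is the careful construction and verification of the expectation $E$, specifically ensuring both its $P$-bimodularity and its invariance under $\cU(P' \cap N)$; once these properties are in place, the rest is a direct chain of reductions to previously established results.
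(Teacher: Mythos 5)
Your proof is correct and follows the paper's overall structure: pass to $Q = P \vee (P' \cap N)$, observe $Q' \cap N \subset Q$, observe $Q \subset N$ is finite index (hence quasiregular, with expectation), establish the bicentralizer conjecture for $Q \subset M$, then invoke Theorem~\ref{stability under quasiregular extensions}. The only divergence is in how you handle the intermediate step for $Q \subset M$. You invoke Theorem~\ref{conjecture normalizer}: using the amalgamated tensor product $Q \cong P \ovt_{\cZ(P)} (P' \cap N)$ and the finiteness of $P' \cap N$, you build a $P$-bimodular conditional expectation $E \in \cE(Q,P)$ from the center-valued trace, check that $\cU(P) \cup \cU(P' \cap N) \subset \cN(E)$, and conclude $Q = \cN(E)''$. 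The paper instead uses Theorem~\ref{stability under generation}, observing that $Q$ is generated by the two subalgebras $P$ and $P' \cap N$, each of which satisfies the bicentralizer conjecture ($P$ by hypothesis, $P' \cap N$ because it is finite so has no type $\III_1$ summand), and each admitting a conditional expectation preserving a suitable common weight. Both routes exploit the finiteness of $P' \cap N$ in essentially the same way—to produce a well-behaved expectation onto $P$ that interacts correctly with $P' \cap N$—and both are valid; yours makes the construction of the expectation explicit, whereas the paper's line is shorter once Theorem~\ref{stability under generation} is in hand. One small point to keep in mind: the faithfulness of your $E$ requires choosing the auxiliary expectation $\epsilon : \cZ(P' \cap N) \to \cZ(P)$ to be faithful and normal, which is available after the countably decomposable reduction as you note.
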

\begin{proof}
Note that $P' \cap N$ is finite because $\cZ(P) \subset P' \cap N$ has finite index. Since $P$ satisfies the bicentralizer conjecture and $P' \cap N$ is finite, we know by Theorem \ref{stability under generation} that $Q  =P \vee (P' \cap N) \subset M$ also satisfies the bicentralizer conjecture. Moreover, $Q' \cap N \subset Q$ and $Q \subset N$ is of finite index, hence quasiregular. By Theorem \ref{stability under quasiregular extensions}, we conclude that $N \subset M$ satisfies the bicentralizer conjecture.
\end{proof}

\section{Local quantization and maximal abelian subalgebras} \label{section masa}

In \cite[A.1]{Po94} and \cite[A.1.2]{Po95}, building on the same technique of \cite{Po81}, Popa obtained the following \emph{local quantization principle}.

\begin{thm}[Popa]  \label{local quantization original}
Let $N \subset M$ be an inclusion of von Neumann algebras with expectation. Suppose that $N$ is of type $\II_1$ and let $\varphi \in M_*$ be a faithful state such that $N \subset M_\varphi$. Then for every finite set $F \subset M$ and every $\varepsilon > 0$, there exists a finite dimensional abelian subalgebra $A \subset N$ such that 
$$ \forall x \in F, \quad \| E_{A' \cap M}(x)-E_{N' \cap M}(x) \|_\varphi \leq \varepsilon$$
where we use the $\varphi$-preserving conditional expectations.
\end{thm}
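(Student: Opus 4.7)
The plan is to reduce to the case of a single element $x \in M$ with $E_{N' \cap M}(x) = 0$, and then build the abelian $A \subset N$ by combining Dixmier averaging with spectral approximation. The reduction to a single element uses matrix amplification: the inclusion $1 \otimes N \subset M_n(\C) \ovt M$ (with state $\tr_n \otimes \varphi$) satisfies the hypotheses of the theorem, and applying the single-element version to $X := \sum_{i=1}^n e_{ii} \otimes x_i$ produces a finite-dimensional abelian subalgebra $1 \otimes A \subset 1 \otimes N$ that gives simultaneous approximation for all $x_i$. The further reduction to $E_{N' \cap M}(x) = 0$ uses that $N' \cap M \subset A' \cap M$ for every $A \subset N$, so $E_{A' \cap M}$ fixes $N' \cap M$ and one may subtract $E_{N' \cap M}(x)$ from $x$ without changing the approximation error.

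For the single-element case, since $N$ is of type $\II_1$ with $\varphi|_N$ tracial (from $N \subset M_\varphi$) and $N \subset M$ is with expectation, the weak Dixmier property is available: $E_{N' \cap M}$ lies in $\conv \{ \Ad(u) : u \in \cU(N) \}$ in the pointwise strong topology, so given $\delta > 0$ we find $u_1, \dots, u_k \in \cU(N)$ and weights $\lambda_j \geq 0$ summing to $1$ with $\| \sum_j \lambda_j u_j x u_j^* \|_\varphi < \delta$. Each $u_j$ can be approximated in operator norm by a unitary $v_j \in N$ of finite spectrum (via the spectral theorem), and hence lies in a finite-dimensional abelian subalgebra $B_j \subset N$ generated by its spectral projections. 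The fact that the $B_j$ do not in general commute with one another forces the inductive step below.

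The abelian $A$ is built iteratively. Start with $A_0 = \C$. At step $p$, suppose $A_p \subset N$ is finite-dimensional abelian and $\|y_p\|_\varphi := \|E_{A_p' \cap M}(x)\|_\varphi \geq \varepsilon$. Since $y_p \in A_p' \cap M$ with $E_{N' \cap M}(y_p) = 0$, and the corner algebra $A_p' \cap N$ is a finite direct sum of type $\II_1$ algebras in which Dixmier averaging applies, one finds a finite convex combination of $\Ad(w_l)$ for $w_l \in \cU(A_p' \cap N)$ that reduces the $\varphi$-norm of $y_p$ by a definite factor bounded below by $\tfrac{1}{2}$. Approximating the $w_l$'s jointly by commuting finite-spectrum unitaries drawn from a single diffuse MASA of $A_p' \cap N$, and adjoining their spectral projections to $A_p$, produces the next finite-dimensional abelian $A_{p+1} \subset N$. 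Because the added unitaries commute with $A_p$ by construction, $A_{p+1}$ stays abelian; after finitely many steps $\|E_{A' \cap M}(x)\|_\varphi < \varepsilon$.

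The principal obstacle is the joint finite-spectrum approximation of the Dixmier-averaging unitaries $w_l$ by \emph{commuting} ones. Because unitaries in a $\II_1$-algebra generally do not share common spectral decompositions, one cannot replace a non-commuting family by a commuting one directly. The standard resolution, which is the technical heart of Popa's construction in \cite[A.1.2]{Po95}, is to observe that Dixmier averaging inside a $\II_1$-algebra can be realized using only unitaries from a single diffuse abelian MASA, via a form of Day's theorem: averaging over $\cU(B)$ for such a MASA $B$ implements the conditional expectation onto $B' \cap M$, and iteration across the finitely many $\II_1$-summands of $A_p' \cap N$ provides the required quantitative decrease. The control of the approximation error when replacing the $w_l$'s by their finite-spectrum truncations $\tilde w_l \in B$ comes from the Powers--St\o rmer inequality, which bounds $\|w_l y_p w_l^* - \tilde w_l y_p \tilde w_l^*\|_\varphi$ in terms of $\|w_l - \tilde w_l\|_\varphi$, keeping the cumulative error under control throughout the induction.
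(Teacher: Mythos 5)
Your reductions are sound: the matrix amplification to a single element $x$ (using $1 \otimes N \subset \mathbf M_n(\C) \ovt M$ with state $\tr_n \otimes \varphi$, which satisfies the same hypotheses) and then the translation by $E_{N' \cap M}(x)$ both work exactly as you describe. Your inductive scheme — enlarge a finite-dimensional abelian $A_p \subset N$ to $A_{p+1} \supset A_p$ so that $\|E_{A_{p+1}' \cap M}(x)\|_\varphi$ drops by a definite factor — is also the right overall shape and is in the spirit of Popa's construction. Note, though, that this paper does not prove the statement: it cites \cite{Po94} and \cite{Po95}, so there is no internal proof to compare against; the comparison below is against Popa's published argument.

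The genuine gap is in the step you yourself flag as the obstacle, and the resolution you propose does not close it. You want to replace the Dixmier-averaging unitaries $w_l \in \cU(A_p' \cap N)$ by commuting finite-spectrum unitaries drawn from a single diffuse MASA $B \subset A_p' \cap N$, and you appeal to the fact that averaging over $\cU(B)$ implements $E_{B' \cap M}$. But this produces the conditional expectation onto $B' \cap M$, which strictly contains $N' \cap M$, not a map into $N' \cap M$. After adjoining finitely many spectral projections from $B$ to $A_p$ you get $A_{p+1} \subset B$, hence $A_{p+1}' \cap M \supset B' \cap M$, so $\|E_{A_{p+1}' \cap M}(y_p)\|_\varphi \geq \|E_{B' \cap M}(y_p)\|_\varphi$; and for an arbitrary (or even carefully chosen, without further argument) MASA $B$, nothing forces $\|E_{B' \cap M}(y_p)\|_\varphi$ to be strictly smaller than $\|y_p\|_\varphi$. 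Indeed, if $y_p$ happens to commute with $B$, then $E_{B' \cap M}(y_p)=y_p$ and no decrease occurs; the hypothesis $E_{N' \cap M}(y_p)=0$ only tells you $y_p \notin N' \cap M$, not that it is far from $B' \cap M$ for some convenient $B$. Producing, from $E_{N' \cap M}(y_p)=0$ alone, a uniformly quantitative defect of $y_p$ with respect to some finite partition of unity in $A_p' \cap N$ is precisely the technical content of Popa's theorem; it requires a dedicated local estimate exploiting the diffuseness of the type $\II_1$ algebra and the trace, and is not a consequence of Day's theorem nor of the weak Dixmier property for the inclusion $B \subset M$. The Powers--St{\o}rmer inequality is a reasonable tool for controlling the perturbation $w_l \mapsto \tilde w_l$, but it is secondary: the claimed factor-$\tfrac12$ decrease per step is the missing ingredient, and your sketch does not establish it.
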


In this section, we will consider a type $\III$ analog of Popa's local quantization principle by using the \emph{Effros-Mar\' echal topology} on the space $\mathrm{SA}(M)$ of von Neumann subalgebras of a given von Neumann algebra $M$. We refer the reader to \cite{HW98} for all details on this topology. Let us simply list the following facts :
\begin{enumerate}[(P1)]
\item The Effros-Mar\' echal topology on $\mathrm{SA}(M)$ is the weakest topology that makes the functions
$$ \mathrm{SA}(M) \ni A \mapsto \|\xi |_A\| \in \R_+$$
continuous for all $\xi \in M_*$. 
\item \label{convergence of expectations} When restricted to the subset $$\mathrm{SA}_\varphi(M)=\{ N \in \mathrm{SA}(M) \mid \forall t \in \R, \; \sigma_t^\varphi(N)=N \}$$
for some faithful state $\varphi \in M_*$, 
the Effros-Mar\'echal topology is equivalent to the topology of pointwise strong convergence of the $\varphi$-preserving conditional expectations.
\item \label{approx increase decrease} If $(A_i)_{i \in I}$ is an increasing (resp.\ decreasing) net of elements of $\mathrm{SA}(M)$ and $A = \bigvee_{i \in I} A_i$ (resp.\ $A=\bigcap_{i \in I} A_i$), then $\lim_{i \to \infty} A_i=A$.
\end{enumerate}

Inspired by Theorem \ref{local quantization original}, we make the following definition.
\begin{df}
Let $N \subset M$ be an inclusion of von Neumann algebras. We say that the inclusion $N \subset M$ has \emph{Popa's local quantization property} if $N' \cap M$ is in the closure of
$$ \{ A' \cap M \mid A \text{ is a finite dimensional abelian subalgebra of } N\}$$
in the Effros-Marechal topology of $\mathrm{SA}(M)$. We say that $N$ has the local quantization property if the inclusion $N \subset N$ has the local quantization property.
\end{df}

Concretely, the inclusion  $N \subset M$ has the local quantization property if and only if for every finite set $F \subset M_*$ and every $\varepsilon > 0$, there exists a finite dimensional abelian subalgebra $A \subset N$ such that 
$$ \forall \xi \in F, \quad \| \xi|_{A' \cap M} \| \leq \| \xi|_{N' \cap M} \| +\varepsilon.$$ 

The following theorem is a reformulation of Theorem \ref{local quantization original}. We simply explain how to extend it to the type $\II_\infty$ case.

\begin{thm}  \label{local quantization type II}
Let $N \subset M$ be an inclusion of von Neumann algebras with expectations. Suppose that $N$ is of type $\II$. Then the inclusion $N \subset M$ has the local quantization property.
\end{thm}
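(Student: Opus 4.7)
The plan is to handle the $\II_1$ case by directly reformulating Popa's Theorem \ref{local quantization original} in the language of the Effros--Maréchal topology, and then reduce the $\II_\infty$ case to the $\II_1$ case by compressing to finite projections in $N$.

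For the $\II_1$ case, I would first fix any faithful normal trace on $N$ and extend it via some $E \in \cE(M,N)$ to a faithful state $\varphi \in M_*$ with $N \subset M_\varphi$. Then every finite-dimensional abelian $A \subset N$ sits inside $M_\varphi$, so both $A' \cap M$ and $N' \cap M$ are globally $\sigma^\varphi$-invariant and lie in $\mathrm{SA}_\varphi(M)$. Property (P2) of the preliminaries says that on $\mathrm{SA}_\varphi(M)$ the Effros--Maréchal topology is exactly the topology of pointwise strong convergence of the $\varphi$-preserving conditional expectations. Popa's theorem produces, for every finite $F \subset M$ and every $\varepsilon > 0$, a finite-dimensional abelian $A \subset N$ with $\|E_{A'\cap M}^\varphi(x) - E_{N'\cap M}^\varphi(x)\|_\varphi \leq \varepsilon$ for $x \in F$. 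The corresponding net (indexed by $(F,\varepsilon)$) therefore realises pointwise $\|\cdot\|_\varphi^{\sharp}$-convergence (the expectations being contractions), which through (P2) and (P1) is exactly the Effros--Maréchal density required.

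For the $\II_\infty$ case, I would choose an increasing net $(p_i)_{i \in I}$ of projections in $N$ that are finite in $N$ and converge strongly to $1$; such a net exists because $N$ is semifinite. For each $i$, the corner $p_i N p_i$ is of type $\II_1$ and $E|_{p_iMp_i} \in \cE(p_iMp_i, p_iNp_i)$, so the first step applies to $p_iNp_i \subset p_iMp_i$. Given any finite-dimensional abelian $A \subset p_iNp_i$ provided by that step, I would lift it to $\tilde A := A \oplus \C(1-p_i) \subset N$. Since $p_i \in N$ commutes with $N' \cap M$ and lies in $\tilde A$, both relative commutants split into orthogonal corners:
\[
N' \cap M = p_i(N'\cap M)p_i \oplus (1-p_i)(N'\cap M)(1-p_i),
\]
\[
\tilde A' \cap M = (A' \cap p_iMp_i) \oplus (1-p_i)M(1-p_i),
\]
and the duality of functionals on direct sums of orthogonal corners gives, for every $\xi \in M_*$,
\[
\|\xi|_{\tilde A' \cap M}\| = \|\xi|_{A' \cap p_iMp_i}\| + \|\xi|_{(1-p_i)M(1-p_i)}\|,
\]
with the analogous identity for $N' \cap M$. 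For a prescribed finite $F \subset M_*$ and $\varepsilon > 0$, I would first pick $i$ large enough that $\|\xi|_{(1-p_i)M(1-p_i)}\| < \varepsilon/2$ for every $\xi \in F$ (using that $p_i \xi p_i \to \xi$ in $M_*$-norm as $p_i \to 1$ strongly), and only then apply the $\II_1$ case on $p_iNp_i \subset p_iMp_i$ to the compressed family $\{p_i\xi p_i : \xi \in F\}$ with tolerance $\varepsilon/2$. Summing the two estimates yields $\|\xi|_{\tilde A' \cap M}\| \leq \|\xi|_{N' \cap M}\| + \varepsilon$ for every $\xi \in F$.

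The only genuine subtlety is the quantifier order in the $\II_\infty$ reduction: one must stabilise the projection $p_i$ \emph{before} invoking Popa's theorem on the corner, so that the error from the $(1-p_i)$-block is uniformly small on the chosen finite family of functionals. Beyond this minor bookkeeping no new technique is needed.
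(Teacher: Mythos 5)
Your approach is substantially the same as the paper's. For the type $\II_1$ case you invoke Popa's Theorem~\ref{local quantization original} together with Property~(P2) of the Effros--Mar\'echal topology, exactly as in the paper's first step. For the type $\II_\infty$ case you compress to the trace-finite corners $p_iNp_i \subset p_iMp_i$ and lift the resulting finite-dimensional abelian $A$ to $\tilde A = A \oplus \C(1-p_i)$; this is the same corner-reduction the paper uses in its second step, where the approximation $(N' \cap M)e \oplus M(1-e) \to N' \cap M$ as $e \nearrow 1$ is the Effros--Mar\'echal translation of your direct-sum norm estimates. The identities $\tilde A' \cap M = (A' \cap p_iMp_i) \oplus (1-p_i)M(1-p_i)$ and $(p_iNp_i)' \cap p_iMp_i = p_i(N'\cap M)p_i$ (valid because $p_i \in N$) that you rely on are correct.

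The one genuine gap is the reduction to the countably decomposable situation, which you do not address. Popa's Theorem~\ref{local quantization original} requires a faithful normal state $\varphi \in M_*$, hence $M$ (equivalently $N$ and $N' \cap M$) countably decomposable. Your $\II_1$ step tacitly assumes $N$ admits a faithful normal tracial \emph{state}; this fails when the center of $N$ is not countably decomposable, which is possible for a type $\II_1$ von Neumann algebra that is not a factor. Likewise, picking a single $E \in \cE(M,N)$ already requires $N' \cap M$ countably decomposable, since the theorem hypothesis is only ``with expectations.'' The paper resolves both issues with two further reduction steps: it compresses to projections $e \in N$ with $eNe$ finite \emph{and} countably decomposable (thereby simultaneously killing the type $\II_\infty$ part and the non-$\sigma$-finiteness of $N$), and then, when $N' \cap M$ is not countably decomposable, it further cuts by a projection $f \in N' \cap M$ supporting the finitely many functionals in $F$. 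Your proposal should be amended accordingly: take the $p_i$ with $p_iNp_i$ countably decomposable (finite trace does this for free in the semifinite setting once a trace is fixed), and add the final reduction over corners of $N'\cap M$.
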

\begin{proof}
Suppose first that $N$ is of type $\II_1$ and that $N$ and $N' \cap M$ are countably decomposable. In this case, the local quantization property follows from Theorem \ref{local quantization original} and Property (P\ref{convergence of expectations}).

Now, suppose that $N' \cap M$ is countably decomposable but $N$ is arbitrary. Take $e \in N$ such that $eNe$ is fintie and countably decomposable. Then, by the first case, the closure of 
$$ \{ A' \cap M \mid A \text{ is a finite dimensional abelian subalgebra of } eNe \oplus \C(1-e) \}$$
contains $(N' \cap M)e \oplus M(1-e)$. When $e$ goes to $1$, the latter subalgebra clearly converges to $N' \cap M$. This shows that $N' \cap M$ is in the closure of
$$ \{ A' \cap M \mid A \text{ is a finite dimensional abelian subalgebra of } N\}$$
which means that $N \subset M$ has the local quantization property.

Finally, when $N' \cap M$ is not necessarily countably decomposable, take $F \subset M_*$ a finite subset. Then there exists a projection $e \in N' \cap M$ such that $e(N' \cap M)e$ is countably decomposable and the elements of $F$ are supported on $e$, i.e.\ $F \subset (eMe)_*$. By the previous case, for every $\varepsilon > 0$, we can find a finite dimensional abelian subalgebra $A \subset N$ such that
$$ \forall \xi \in F, \quad \| \xi|_{A' \cap N} \| =\| \xi|_{A' \cap eNe} \| \leq \| \xi|_{((eNe)' \cap eMe) }\| + \varepsilon \leq \| \xi|_{N' \cap M }\| + \varepsilon.$$
We conclude that $N \subset M$ has the local quantization property.
\end{proof}

\begin{prop} \label{local quantization restriction}
Let $N \subset M \subset P$ be inclusions of von Neumann algebras. Let $(A_i)_{i \in I}$ be a net of von Neumann subalgebras of $N$ such that $(A_i' \cap P)_{i \in I}$ converges to $N' \cap P$ in the Effros-Marechal topology. Then $(A_i' \cap M)_{i \in I}$ converges to $N' \cap M$.

In particular, if $N \subset P$ has the local quantization property, then $N \subset M$ has the local quantization property.
\end{prop}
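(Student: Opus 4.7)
The strategy is to use the characterization of Effros--Mar\'echal convergence from \cite{HW98} in terms of $\liminf$ and $\limsup$ of subalgebras: for a net $(B_i)_{i \in I}$ in $\mathrm{SA}(Q)$, one has $B_i \to B$ iff $\liminf B_i = B = \limsup B_i$, where $\liminf B_i$ consists of those $x \in Q$ that are strong*-limits of nets $x_i \in B_i$, and $\limsup B_i$ consists of those $x \in Q$ that are weak*-limits of bounded subnets $x_{i_k} \in B_{i_k}$. I would apply this with $B_i = A_i' \cap M$ and $B = N' \cap M$ inside $Q = M$.

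The inclusion $B \subset \liminf B_i$ is immediate: since $A_i \subset N$, we have $B_i \supset B$, so every $x \in B$ is trivially the strong*-limit of the constant net at $x$ in $B_i$. The real content lies in proving $\limsup B_i \subset B$. Given $x \in \limsup B_i$, pick a bounded subnet $(x_{i_k})$ with $x_{i_k} \in A_{i_k}' \cap M \subset A_{i_k}' \cap P$ and $x_{i_k} \to x$ weak* in $M$. The key observation is that weak*-convergence in $M$, for a bounded net that stays in $M$, forces weak*-convergence in $P$: for each $\psi \in P_*$ the restriction $\psi|_M$ is a normal functional on $M$ (the inclusion $M \hookrightarrow P$ being weak*-continuous), so $\psi(x_{i_k}) = \psi|_M(x_{i_k}) \to \psi|_M(x) = \psi(x)$. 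Consequently $x \in \limsup_{\mathrm{SA}(P)}(A_i' \cap P)$, which by hypothesis equals $N' \cap P$. Combined with $x \in M$, this forces $x \in N' \cap P \cap M = N' \cap M = B$.

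Assembling both inclusions gives $\liminf B_i = \limsup B_i = B$, i.e., $A_i' \cap M \to N' \cap M$ in the Effros--Mar\'echal topology of $\mathrm{SA}(M)$. The ``in particular'' statement then follows at once: if $N \subset P$ has the local quantization property, choose a net $(A_i)$ of finite-dimensional abelian subalgebras of $N$ with $A_i' \cap P \to N' \cap P$; the first part yields $A_i' \cap M \to N' \cap M$, so $N \subset M$ inherits the local quantization property. I do not expect any substantive obstacle: the whole argument reduces to the observation that $\limsup$ passes through intersection with the subalgebra $M$, a transparent consequence of the normality of the restriction $P_* \to M_*$.
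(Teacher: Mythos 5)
Your proof is correct and follows essentially the same route as the paper: both use the Haagerup--Winsl{\o}w $\liminf$/$\limsup$ characterization, note $N' \cap M \subset \liminf (A_i' \cap M)$ trivially, and squeeze $\limsup (A_i' \cap M)$ into $N' \cap M$ by observing it sits inside both $M$ and $\limsup (A_i' \cap P) = N' \cap P$. The only difference is cosmetic: you spell out why a bounded weak*-convergent net in $M$ remains weak*-convergent in $P$ (normality of restriction $P_* \to M_*$), which the paper leaves implicit.
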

\begin{proof}
Let $Q=\limsup_{i \in I} (A_i' \cap M)$ as in \cite{HW98}. Then $Q \subset M$ and $Q \subset \limsup_{i \in I} (A_i' \cap P)=N' \cap P$. Therefore $Q \subset N' \cap M$. Since we also have $N' \cap M \subset \liminf_{i \in I} A_i' \cap M$, we conclude that $\lim_{i \in I} A_i' \cap M=N' \cap M$.
\end{proof}

As in \cite{Po81}, the local quantization property can be used to solve Kadison's problem.

\begin{prop}
Let $N \subset M$ be an inclusion of von Neumann algebras with separable predual. Suppose that $pNp \subset pMp$ has the local quantization property for every projection $p \in N$. Then there exists a maximal abelian subalgebra $A \subset N$ such that $A' \cap M=A \vee (N' \cap M)$.
\end{prop}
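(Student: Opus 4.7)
The plan is a Popa-style inductive construction in the spirit of Theorem \ref{local quantization original}, using the corner-wise local quantization hypothesis as the averaging tool. By separability of $M_*$, fix a countable norm-dense sequence $(\varphi_k)_k \subset M_*$ and a countable strong-$*$ dense sequence $(y_k)_k$ in the self-adjoint unit ball of $N$. I will build inductively an increasing chain $A_0 \subset A_1 \subset \cdots$ of finite-dimensional abelian subalgebras of $N$ with $A_n \subset A_{n-1}' \cap N$, so that $A := \bigvee_n A_n$ is automatically abelian, and show that a suitable choice makes $A$ simultaneously maximal abelian in $N$ and satisfy $A' \cap M = A \vee (N' \cap M)$.

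At step $n$, given $A_{n-1}$ with minimal projections $e_1, \ldots, e_m$, decompose $A_{n-1}' \cap N = \bigoplus_i e_iNe_i$ and $A_{n-1}' \cap M = \bigoplus_i e_iMe_i$. The local quantization hypothesis applied to each corner $e_iNe_i \subset e_iMe_i$ yields a finite-dimensional abelian $B_i \subset e_iNe_i$ containing $A_{n-1}e_i$ with $\|\varphi_k|_{B_i' \cap e_iMe_i}\| \leq \|\varphi_k|_{(e_iNe_i)' \cap e_iMe_i}\| + 2^{-n}$ for $k \leq n$. Setting $A_n = \bigoplus_i B_i$ gives an approximation of the commutant of $A_{n-1}' \cap N$ in $M$. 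Simultaneously, a diagonal procedure adjoins spectral projections of each $y_k$ ($k \leq n$) that still commutes with the current subalgebra, ensuring in the limit that every $y \in A' \cap N$ is approximated by elements of $A$, hence that $A$ is masa in $N$.

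The inclusion $A \vee (N' \cap M) \subset A' \cap M$ is automatic, so only the reverse must be verified. By the continuity property of $A \mapsto \|\xi|_A\|$ and the monotone convergence of relative commutants under the Effros--Mar\'echal topology, $A_n' \cap M$ decreases to $A' \cap M$; the stepwise approximation together with masaness of $A$ then makes this limit coincide with the increasing limit of $(A_{n-1}' \cap N)' \cap M$, which by masaness equals $A' \cap M$ itself. The main obstacle will be to identify this limit with $A \vee (N' \cap M)$ rather than the a priori possibly larger $A' \cap M$: local quantization directly approximates $(e_iNe_i)' \cap e_iMe_i$, which can strictly contain $e_i(N' \cap M)e_i$ in general. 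Overcoming this requires carefully coupling the refinement of the partitions of unity $\{e_i^{(n)}\}$ across steps with summability of the quantization errors, so that in the limit the fiber-wise commutants collapse onto the direct-sum decomposition $A \vee (N' \cap M) = \bigvee_n \bigoplus_i e_i^{(n)}(N' \cap M)e_i^{(n)}$. This final identification is the chief technical hurdle of the proof.
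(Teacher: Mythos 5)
Your overall strategy coincides with the paper's: a Popa-style inductive construction of an increasing chain of finite-dimensional abelian subalgebras $A_{n-1}\subset A_n\subset N$, exploiting the corner decomposition $A_{n-1}'\cap N=\bigoplus_i e_iNe_i$ and applying the corner-wise local quantization hypothesis. The plan is sound.

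However, the ``chief technical hurdle'' you flag at the end is a phantom. You write that $(e_iNe_i)'\cap e_iMe_i$ can strictly contain $e_i(N'\cap M)e_i$; this is never the case. For any projection $e\in N$, the standard reduction formula gives $(eNe)'\cap eMe=e(N'\cap M)e$ irrespective of the central support $z_N(e)$: if $x\in(eNe)'\cap eMe$, choose partial isometries $v_\alpha\in N$ with $\sum_\alpha v_\alpha^*v_\alpha=z_N(e)$ and $v_\alpha v_\alpha^*\le e$, and set $y=\sum_\alpha v_\alpha^*xv_\alpha$; then $y\in N'\cap z_N(e)Mz_N(e)\subset N'\cap M$ and $eye=x$. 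Consequently
$\bigoplus_i\bigl((e_iNe_i)'\cap e_iMe_i\bigr)=\bigoplus_i e_i(N'\cap M)e_i=A_{n-1}\vee(N'\cap M)$,
which is exactly the target you want at step $n$. The ``careful coupling of refinements across steps'' you propose is unnecessary: each corner-wise quantization step already aims directly at $A_{n-1}\vee(N'\cap M)$, and the $2^{-n}$ errors sum in the limit exactly as in Popa's original argument.

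The only remaining issue is masaness of $A$, which you handle by adjoining spectral projections of a dense sequence $(y_k)$. This can be made to work, but it is extra bookkeeping. The cleaner route (used in the paper) is to observe that once $B_k'\cap M\to A_{n-1}\vee(N'\cap M)$ in the Effros--Mar\'echal topology, upper semicontinuity of $Q\mapsto Q\cap N$ automatically gives $B_k'\cap N\to\cZ(A_{n-1}'\cap N)=A_{n-1}\vee\cZ(N)$, so the estimate needed for masaness comes for free from the same quantization step. One then concludes by Hahn--Banach that $A'\cap N\subset A\vee\cZ(N)$ and $A'\cap M\subset A\vee(N'\cap M)$, and replaces $A$ by $A\vee\cZ(N)$.
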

\begin{proof}
Let $(\xi_n)_{n \in \N}$ be a dense sequence in $N_*$ and $(\zeta_n)_{n \in \N}$ a dense sequence in $M_*$. We will construct inductively an increasing sequence of finite dimensional abelian subalgebras $(A_n)_{n \in \N}$ of $N$ such that
\begin{enumerate}
\item $ \| \xi_n|_{A_n' \cap N} \| \leq \| \xi_n|_{A_{n-1} \vee \cZ(N) } \| + 2^{-n}$
\item $ \| \zeta_n|_{A_n' \cap M} \| \leq \| \zeta_n|_{A_{n-1} \vee (N' \cap M)} \| + 2^{-n}$.
\end{enumerate}
Suppose we have already constructed $A_{n-1}$. Let $\cN = A_{n-1}' \cap N$ and $\cM=A_{n-1}' \cap M$. By our assumptions, we know that $\cN \subset \cM$ has the local quantization property. Take $(B_k)_{k \in \N}$ a sequence of finite dimensional abelian subalgebras of $\cN$ containing $A_{n-1}$ such that $B_k' \cap \cM=B_k' \cap M$ converges to $\cN' \cap \cM=A_{n-1} \vee (N' \cap M)$. Then we also have that $B_k' \cap N$ converges to $\cZ(\cN)=A_{n-1} \vee \cZ(N)$ (because the map $Q \mapsto Q \cap N$ is upper semicontinuous on $\mathrm{SA}(M)$). We then let $A_{n}=B_k$ for $k$ large enough.

Now that $(A_n)_{n \in \N}$ is constructed, we let $A=\bigvee_{n \in \N} A_n$. Then for every $n \geq 1$, using property (1) above we have
$$ \| \xi_n|_{A' \cap N} \| \leq \| \xi_n|_{A_n' \cap N} \| \leq \| \xi_n|_{A_{n-1} \vee \cZ(N) } \| + 2^{-n} \leq \| \xi_n|_{A \vee \cZ(N)} \| +2^{-n}.$$
Since $(\xi_n)_{n \in \N}$ is dense in $N_*$, it follows that $\| \xi|_{A' \cap N} \| \leq \| \xi|_{A \vee \cZ(N)} \|$ for all $\xi \in N_*$. By the Hahn-Banach theorem, this implies that $A' \cap N \subset A \vee \cZ(N)$. Similarly, using property (2), we get $A' \cap M \subset A \vee (N' \cap M)$. Up to replacing $A$ by $A \vee \cZ(N)$, we thus obtain the maximal abelian subalgebra we wanted.
\end{proof}

Our goal is to generalize Theorem \ref{local quantization type II} to obtain the following theorems.

\begin{thm} \label{local quantization for one}
Let $N$ be a von Neumann algebra with no type $\I$ direct summand. Then $N$ has the local quantization property.
\end{thm}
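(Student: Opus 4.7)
The plan is to reduce Theorem~\ref{local quantization for one} to the type $\II$ case (Theorem~\ref{local quantization type II}) via the continuous core construction. First, by a standard desintegration argument over $\cZ(N)$ analogous to the proof of Proposition~\ref{desintegration bicentralizer}, I would reduce to the case where $N$ is a countably decomposable factor. If $N$ is of type $\II$ (either $\II_1$ or $\II_\infty$, since $N$ has no type $\I$ summand), Theorem~\ref{local quantization type II} applies directly, so it remains to handle a type $\III$ factor, which is then automatically properly infinite and admits a dominant weight.

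For $N$ a type $\III$ factor, I would pass to the semifinite core $c(N)$, which is of type $\II_\infty$. By the Connes-Takesaki relative commutant theorem, $N' \cap c(N) = \cZ(c(N))$, so Proposition~\ref{local quantization restriction} applied to the tower $N \subset N \subset c(N)$ reduces the task to exhibiting finite-dimensional abelian subalgebras $A \subset N$ with $A' \cap c(N) \to \cZ(c(N))$ in the Effros-Mar\'echal topology of $\mathrm{SA}(c(N))$. To produce such $A$, I would take a dominant weight $\psi \in \cP(N)$: then $N_\psi$ is of type $\II_\infty$, and $(N, N_\psi, u)$ is a crossed product extension by a $\psi$-scaling unitary group $u: \R^*_+ \to \cU(N)$. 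Applying Theorem~\ref{local quantization type II} to $N_\psi$ yields finite-dimensional abelian subalgebras $A \subset N_\psi \subset N$ with $A' \cap N_\psi$ converging to $\cZ(N_\psi)$ in $\mathrm{SA}(N_\psi)$.

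Since $A \subset N_\psi$ is fixed by $\sigma^\psi$, the commutant $A' \cap c(N)$ is identified with the crossed product $(A' \cap N) \rtimes_{\sigma^\psi} \R$, while by Theorem~\ref{computation dominant} the target $\cZ(c(N))$ appears as the fixed-point algebra of $\Ad(u)$ acting on $N_\psi' \cap c(N) = \cZ(N_\psi) \vee \{\psi\}''$. The main obstacle will be propagating the EM convergence from $\mathrm{SA}(N_\psi)$ to $\mathrm{SA}(c(N))$ through these identifications: this requires the continuity of the operations of taking crossed products by $\R$ (via $\sigma^\psi$) and taking $\Ad(u)$-fixed points for the amenable group $\R^*_+$, when combined with EM limits. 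I would verify these continuity properties using the characterization of EM topology via pointwise convergence of modular-preserving conditional expectations (property (P2)), together with the explicit form of these conditional expectations for finite-dimensional abelian subalgebras (sums $\sum_i e_i \cdot e_i$). Once these continuity properties are verified, the convergence $A' \cap N_\psi \to \cZ(N_\psi)$ upgrades to $A' \cap c(N) \to \cZ(c(N))$, completing the proof.
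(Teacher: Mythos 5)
Your plan overshoots in a fatal way and cannot be repaired along the lines you sketch. In step 3 you invoke Proposition~\ref{local quantization restriction} to reduce ``$N \subset N$ has the local quantization property'' to the \emph{stronger} claim ``$N \subset c(N)$ has the local quantization property.'' But by Theorem~\ref{local quantization type III}, the latter statement is \emph{equivalent} to $N$ satisfying the bicentralizer conjecture. So for a non-amenable type $\III_1$ factor $N$, a direct proof of local quantization for $N \subset c(N)$ without bicentralizer hypotheses would resolve Connes's Bicentralizer Problem; the proposal is therefore attempting something known to be independent of the available techniques.

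The concrete failure sits exactly where you flagged a ``main obstacle.'' If $A \subset N_\psi$ for a \emph{fixed} dominant weight $\psi$, then $A$ commutes with the one-parameter group $(\psi^{\ri t})_{t \in \R}$, so $\{\psi\}'' \subset A' \cap c(N)$ for \emph{every} such $A$. Under the identification you use, $A' \cap c(N) = (A' \cap N) \rtimes_{\sigma^\psi} \R$ and this always contains $\{\psi\}''$. On the other hand, $\psi^{\ri t} \in \cZ(c(N)) = N' \cap c(N)$ would force $\sigma_t^\psi = \id$ on $N$, which is false for $N$ of type $\III$. Hence $\{\psi\}'' \not\subset \cZ(c(N))$, so the Effros--Mar\'echal $\limsup$ of any net $(A_i' \cap c(N))_i$ with $A_i \subset N_\psi$ always contains $\{\psi\}''$ and can never equal $\cZ(c(N))$. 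The ``$\Ad(u)$-fixed-point'' averaging you hope will suppress this does not apply: $\Ad(u_\lambda)$ sends $A' \cap c(N)$ to $(u_\lambda A u_\lambda^*)' \cap c(N) \neq A' \cap c(N)$, so $\Ad(u)$ does not globally preserve the algebras you are trying to push to the limit, and there is nothing meaningful to average over.

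For comparison, the paper's proof of Theorem~\ref{local quantization for one} is routed through the bicentralizer machinery precisely to avoid this trap: by Theorem~\ref{irreducible amenable one} it produces an \emph{amenable} subalgebra with expectation $P \subset N$ with $P' \cap N = \cZ(N)$; since $P$ is amenable it satisfies the bicentralizer conjecture, and then Theorem~\ref{local quantization no core type III} applied to $P \subset N$ gives the conclusion via $P' \cap N = \cZ(N)$. In the chain Theorem~\ref{local quantization no core type III} $\Leftarrow$ Theorem~\ref{local quantization type III} $\Leftarrow$ Theorem~\ref{local quantization amenable}, the finite-dimensional abelian $A$'s are not built inside a single centralizer $N_\psi$; rather, states $\varphi$ are chosen adaptively, and the crucial averaging over the bicentralizer flow in Lemma~\ref{averaging type III1} (together with Lemma~\ref{fd abelian into bicentralizer} and Lemma~\ref{semilocal quantization}) is precisely what disposes of the $\{\varphi\}''$-factor that in your route appears as the unavoidable $\{\psi\}''$.
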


\begin{thm} \label{local quantization no core type III}
Let $N \subset M$ be an inclusion of von Neumann algebras with expectations. Suppose that $N$ has no type $\I$ direct summand and $N$ satisfies the bicentralizer conjecture. Then the inclusion $N \subset M$ has the local quantization property.
\end{thm}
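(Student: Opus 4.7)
The strategy is to combine Popa's local quantization in the type $\II$ case (Theorem \ref{local quantization type II}) with the bicentralizer hypothesis, transferred to a weak Dixmier statement, in order to produce the finite dimensional abelian subalgebras needed in the type $\III$ setting.

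First, I would reduce to the case where $N$ is a properly infinite type $\III$ factor and $M$ is countably decomposable. The type $\II$ direct summand of $N$ is handled directly by Theorem \ref{local quantization type II} on the corresponding cutdown. A standard approximation argument on a dense countable subset of $M_*$ produces the countable decomposability reduction, and central disintegration (as in Proposition \ref{desintegration dixmier}) reduces to the factor case. This allows us to fix a dominant weight $\psi \in \cP(N)$ with scaling group $u = (u_\lambda)_{\lambda \in \R^*_+} \subset \cU(N)$, so that the centralizer $N_\psi$ is a type $\II_\infty$ algebra with no type $\I$ summand.

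Second, the inclusion $N_\psi \subset M$ is with expectations: the restriction $\psi|_{N_\psi}$ is a semifinite trace, so cutting by finite-trace projections in $N_\psi$ and composing with a fixed $E_N \in \cE(M,N)$ produces a faithful family of normal conditional expectations onto $N_\psi$. Since $N_\psi$ is of type $\II$, Theorem \ref{local quantization type II} applies to $N_\psi \subset M$ and yields finite dimensional abelian subalgebras $A_n \subset N_\psi$ with $A_n' \cap M \to N_\psi' \cap M$ in the Effros--Mar\'echal topology. By Theorem \ref{computation dominant}, $N_\psi' \cap M = \rb(N \subset M, \psi)$, and under the action $\Ad(u_\lambda) : \R^*_+ \curvearrowright N_\psi' \cap M$, the fixed-point algebra is exactly $N' \cap M$.

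Third, I would exploit the bicentralizer hypothesis to bridge the gap between $N_\psi' \cap M$ and $N' \cap M$. By Theorem \ref{haagerup dixmier}, $N$ satisfying the bicentralizer conjecture is equivalent to $N \subset c(N)$ having the weak Dixmier property; by Proposition \ref{dixmier for dominant and core}, this in turn is equivalent to $N_\psi \subset N$ having the weak Dixmier property. Combined with the amenability of $\R^*_+$ (which allows averaging of $\Ad(u_\lambda)$ into a conditional expectation onto fixed points), this produces conditional expectations from $N_\psi' \cap M$ onto $N' \cap M$ inside $\cD(N_\psi \subset N) \cdot \overline{\conv}\{\Ad(u_\lambda)\}$. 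An inductive construction then alternates two kinds of steps: a Popa-type refinement of $A_n$ inside $N_\psi$ (tightening $\|\xi|_{A_n' \cap M}\|$ towards $\|\xi|_{N_\psi' \cap M}\|$), and a step that enlarges $A_n$ by adjoining averaged unitaries to reduce $\|\xi|_{N_\psi' \cap M}\|$ towards $\|\xi|_{N' \cap M}\|$. Running this simultaneously over a dense sequence in $M_*$ yields the claimed convergence $A_n' \cap M \to N' \cap M$.

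The main obstacle is this last interleaving step: while Popa's construction produces finite dimensional abelian algebras in $N_\psi$, the scaling unitaries $u_\lambda$ do not preserve a fixed $A_n$, so one cannot naively combine the two. The resolution is to perform the two refinements in alternation, using at each stage the $\Ad(u_\lambda)$-equivariance of $N_\psi$ itself (since $u_\lambda N_\psi u_\lambda^* = N_\psi$) to replace $A_n$ by a finite dimensional abelian subalgebra whose relative commutant has been averaged by a finite convex combination of $\Ad(u_{\lambda_k})$. The bicentralizer hypothesis, through the weak Dixmier property for $N_\psi \subset N$, is exactly what guarantees that this interleaved procedure converges in the required Effros--Mar\'echal sense.
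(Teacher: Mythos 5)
Your approach diverges from the paper's. The paper does not attempt a direct interleaving argument in $N_\psi$; instead it first uses Theorem \ref{irreducible amenable} (whose proof is the difficult Theorem \ref{large amenable}) to replace $N$ by an \emph{amenable} subalgebra $P \subset N$ with expectation satisfying $P' \cap M = N' \cap M$, then invokes Theorem \ref{local quantization type III} for $P \subset c(M)$ (established in the amenable case via Theorem \ref{local quantization amenable} and the structural Lemmas \ref{masa amenable}, \ref{universal masa}, which rely on writing an amenable algebra as a Cartan crossed product), and finally restricts from $c(M)$ to $M$ using Proposition \ref{local quantization restriction}.

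Your direct argument has a genuine gap in the interleaving step. After the Popa-type refinement inside $N_\psi$, you have a finite-dimensional abelian $A \subset N_\psi$, and necessarily $A' \cap M \supset N_\psi' \cap M = \rb(N \subset M, \psi)$, which is strictly larger than $N' \cap M$. To descend to $N' \cap M$, your step (b) proposes to "adjoin averaged unitaries," but an average $\sum_k c_k u_{\lambda_k}$ is not a unitary, and no finite-dimensional abelian subalgebra of $N$ is produced by such an averaging. The weak Dixmier property for $N_\psi \subset N$ (which, as you correctly note, is equivalent to the bicentralizer hypothesis via Theorem \ref{haagerup dixmier} and Proposition \ref{dixmier for dominant and core}) is an \emph{averaging} statement: it provides elements of $\cD(N_\psi \subset N)$ — completely positive maps — not finite-dimensional abelian subalgebras. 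Converting averaging data into a single finite-dimensional abelian subalgebra with small relative commutant is precisely the content of the local quantization property, so there is a circularity: step (b) as described amounts to assuming a version of what you are trying to prove. Moreover, even if step (b) were carried out, there is no argument for why the alternation converges: step (a) requires $A_n$ to live inside $N_\psi$, but step (b) adjoins elements outside $N_\psi$, after which step (a)'s type $\II$ local quantization cannot be applied to the enlarged algebra. This is exactly the obstruction the paper circumvents by first replacing $N$ with a large amenable subalgebra where an explicit MASA can be constructed by hand (Lemma \ref{universal masa}), and Theorem \ref{large amenable} (a careful inductive construction in each Connes type) is the place where the bicentralizer hypothesis is actually consumed.
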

\begin{cor}
Let $N \subset M$ be an inclusion of von Neumann algebras with expectation and with separable preduals. Suppose that $N$ satisfies the bicentralizer conjecture. Then there exists a maximal abelian subalgebra $A \subset N$ such that $A' \cap M=A \vee (N' \cap M)$.
\end{cor}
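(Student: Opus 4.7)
The plan is to apply the Proposition stated just before Theorem \ref{local quantization for one}: it produces a maximal abelian subalgebra $A \subset N$ with $A' \cap M = A \vee (N' \cap M)$ as soon as $pNp \subset pMp$ has Popa's local quantization property for every projection $p \in N$.

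First I would split $N = zN \oplus (1-z)N$ via a central projection $z \in \cZ(N)$, where $zN$ is the type $\I$ summand and $(1-z)N$ has no type $\I$ direct summand. Since $z$ is central in $N$, both $A' \cap M$ and $A \vee (N' \cap M)$ decompose blockwise along $z$, so it suffices to produce a masa with the desired property in each summand separately.

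For the summand $N_0 := (1-z)N$, I would verify the hypothesis of the Proposition. For any projection $p \in N_0$, the corner $pN_0 p$ still has no type $\I$ direct summand, the inclusion $pN_0 p \subset p(1-z)M(1-z)p$ is still with expectation, and $pN_0 p$ still satisfies the bicentralizer conjecture thanks to Proposition \ref{projection reduction conjecture} (which rests on the corner formulas \ref{alg bic corner reduction} and \ref{anal bic corner reduction}). Theorem \ref{local quantization no core type III} then gives the local quantization property for this corner, and applying the Proposition produces a masa $A_0 \subset N_0$ satisfying $A_0' \cap (1-z)M(1-z) = A_0 \vee (N_0' \cap (1-z)M(1-z))$.

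The type $\I$ summand $N_1 := zN$ requires a separate treatment: local quantization can genuinely fail in the type $\I$ case (for example for $B(H) \subset B(H)$), so the Proposition is not applicable. I would handle this piece by the classical diagonal construction — writing $N_1 \cong \bigoplus_n B(H_n) \ovt Z_n$ with $Z_n$ abelian and taking $A_1 := \bigoplus_n D_n \ovt Z_n$. The canonical expectation $N_1 \to A_1$ lies in $\overline{\conv}\{\Ad(u) \mid u \in \cU(A_1)\}$ and extends to an expectation from $zMz$ onto $A_1' \cap zMz$, and this averaging property combined with $\cZ(N_1) \subset A_1$ yields the identity $A_1' \cap zMz = A_1 \vee (N_1' \cap zMz)$. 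Setting $A := A_1 \oplus A_0$ completes the proof. The main technical point is the descent of the bicentralizer conjecture to corners, without which Theorem \ref{local quantization no core type III} could not be applied to $N_0$; once that is in hand, the rest is structural bookkeeping.
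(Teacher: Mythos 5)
Your proposal is correct and matches the paper's intended argument: verify the blanket hypothesis of the Proposition preceding Theorem \ref{local quantization for one} by applying Theorem \ref{local quantization no core type III} to each corner $pN_0p \subset pMp$, using Proposition \ref{projection reduction conjecture} to descend the bicentralizer conjecture to corners together with the stability of ``no type $\I$ summand'' under reduction. The one step the paper leaves tacit is exactly the one you make explicit: Theorem \ref{local quantization no core type III} requires $N$ to have no type $\I$ direct summand (local quantization indeed fails for, e.g., $\B(\ell^2)\subset\B(\ell^2)$), so the type $\I$ central piece $zN$ must be split off and handled separately; the cleanest route there is not really the averaging you sketch but the classical tensor factorization $zMz\cong zN\ovt_{\cZ(zN)}\bigl((zN)'\cap zMz\bigr)$ available for a type $\I$ subalgebra with expectation, from which $A_1'\cap zMz=A_1\vee\bigl((zN)'\cap zMz\bigr)$ is immediate for any maximal abelian $A_1\subset zN$.
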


\begin{thm} \label{local quantization type III}
Let $N \subset M$ be an inclusion of von Neumann algebras with expectations. Suppose that $N$ has no type $\I$ direct summand. Then $N \subset M$ satisfies the bicentralizer conjecture if and only if the inclusion $N \subset c(M)$ has the local quantization property.
\end{thm}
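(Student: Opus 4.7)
The backward direction is a Dixmier-averaging argument. Assuming that $N \subset c(M)$ has the local quantization property, I pick a net $(A_i)_{i \in I}$ of finite-dimensional abelian subalgebras of $N$ with $A_i' \cap c(M) \to N' \cap c(M)$ in the Effros-Marechal topology. Each $A_i$ is amenable, so the inclusion $A_i \subset c(M)$ has the weak Dixmier property, providing a conditional expectation $E_i \in \cD(A_i \subset c(M)) \subseteq \cD(N \subset c(M))$ with range $A_i' \cap c(M)$. By compactness, a weak*-accumulation point $E \in \cD(N \subset c(M))$ exists. For any $x \in c(M)$ and any $\xi \in c(M)_*$ vanishing on $N' \cap c(M)$, the bound $|\xi(E_i(x))| \leq \|\xi|_{A_i' \cap c(M)}\|\,\|x\|$ combined with $\lim_i \|\xi|_{A_i' \cap c(M)}\| = \|\xi|_{N' \cap c(M)}\| = 0$, which follows from the defining property of the Effros-Marechal topology, forces $\xi(E(x)) = 0$. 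By Hahn-Banach, $E$ takes values in $N' \cap c(M)$, so it is a conditional expectation onto $N' \cap c(M)$ lying in $\cD(N \subset c(M))$. This is the weak Dixmier property for $N \subset c(M)$, and Theorem \ref{conjecture iff dixmier} then delivers the relative bicentralizer conjecture.

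The forward direction is more delicate, because $N \subset c(M)$ is in general not with expectations (already when $N = M$ is of type $\III_1$, since $I^\theta|_{\cZ(c(N))}$ fails to be semifinite), so Theorem \ref{local quantization no core type III} cannot be applied to it directly. Assuming the relative bicentralizer conjecture for $N \subset M$, I first observe that $N$ itself satisfies the bicentralizer conjecture: indeed, by Proposition \ref{alg bicentralizer for intermediate subalgebra}, the identity $\rB(N \subset M, \varphi) = \rb(N \subset M, \varphi)$ restricts, upon intersecting with $N$, to $\rB(N, \varphi) = \rb(N, \varphi)$ for every strictly semifinite $\varphi \in \cP(N)$. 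Next, I introduce the auxiliary algebra $\tilde M = c(M) \rtimes_\theta \R^*_+$. By Takesaki duality applied to the crossed product extension $c(M) \cong M \rtimes_{\sigma^\varphi} \R$, whose dual action is precisely the trace scaling flow $\theta$, one identifies $\tilde M \cong M \ovt \B(\rL^2(\R))$; in particular, $N \subset \tilde M$ is with expectations, since for any $T \in \cP(M, N)$ witnessing that $N \subset M$ is with expectations, the operator valued weight $T \otimes \Tr$ belongs to $\cP(\tilde M, N)$ and has semifinite restriction to $N' \cap \tilde M = (N' \cap M) \ovt \B(\rL^2(\R))$.

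Having verified the three hypotheses of Theorem \ref{local quantization no core type III} for $N \subset \tilde M$ (inclusion with expectations, $N$ has no type $\rm I$ summand, and $N$ satisfies the bicentralizer conjecture), I conclude that $N \subset \tilde M$ has the local quantization property. Proposition \ref{local quantization restriction} applied to the chain $N \subset c(M) \subset \tilde M$ then transfers this property to $N \subset c(M)$, which is the desired conclusion. The main conceptual hurdle is the construction of this larger ambient algebra $\tilde M$ in which expectations do exist, and the recognition that the crossed product by the scaling action, combined with Takesaki duality, provides precisely such an enlargement; once this is in place, the argument is essentially a bookkeeping exercise combining the equivalence between weak Dixmier and the bicentralizer conjecture with the existing local quantization theorem for inclusions whose smaller algebra satisfies the bicentralizer conjecture.
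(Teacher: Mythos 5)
Your backward direction is correct and matches the paper's argument exactly: extract a Dixmier-semigroup element from a net of conditional expectations onto finite-dimensional relative commutants, then apply Theorem \ref{conjecture iff dixmier}.

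The forward direction, however, is circular as written. You invoke Theorem \ref{local quantization no core type III} applied to the inclusion $N \subset \tilde M = c(M) \rtimes_\theta \R^*_+$, but the paper explicitly proves Theorem \ref{local quantization no core type III} \emph{as a corollary of} Theorem \ref{local quantization type III} (the paper states: ``In order to prove Theorem \ref{local quantization for one} and Theorem \ref{local quantization no core type III}, we will actually need to prove Theorem \ref{local quantization type III} first and then use Proposition \ref{local quantization restriction}''), and indeed the proof of Theorem \ref{local quantization no core type III} cites Theorem \ref{local quantization type III} directly. So your argument, as presented, assumes what it needs to prove. The paper's own proof of the forward direction avoids this by going directly through Theorem \ref{large amenable}, which under the hypothesis that $N\subset M$ satisfies the relative bicentralizer conjecture yields an amenable, with-expectation $P \subset N$ satisfying the \emph{stronger} conclusion $P' \cap c(M) = N' \cap c(M)$; the local quantization for $P \subset c(M)$ then comes from Theorem \ref{local quantization amenable}, and the non-separable case is handled via Lemma \ref{separable reduction} and a cutting argument. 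Your idea of enlarging $c(M)$ to $\tilde M \cong M \ovt \B(\rL^2(\R))$ to restore expectations is genuinely appealing and could in principle be turned into a non-circular argument --- one would have to show that Theorem \ref{local quantization no core type III} admits an independent proof (in the separable case, the amenable $P$ from Theorem \ref{irreducible amenable} has no type $\I$ summand, so Theorem \ref{local quantization amenable} rather than Theorem \ref{local quantization type III} suffices) --- but you do not carry this out, and as stated there is a genuine logical gap. You also do not address the reduction from the countably decomposable case to the separable case or from the general case to the countably decomposable case, both of which the paper handles and which would have to be supplied even if the circularity were repaired.
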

\begin{cor}
Let $N \subset M$ be an inclusion of von Neumann algebras with expectation and with separable preduals. Then $N \subset M$ satisfies the bicentralizer conjecture if and only if there exists a maximal abelian subalgebra $A \subset N$ such that $A' \cap c(M)=A \vee (N' \cap c(M))$.
\end{cor}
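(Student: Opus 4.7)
The plan is to derive this corollary from Theorem \ref{local quantization type III} combined with the proposition at the start of this section, which constructs a MASA with the desired property from local quantization on every corner.

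For the $\Rightarrow$ direction, assume the bicentralizer conjecture holds for $N \subset M$. I would apply that proposition with $M$ replaced by $c(M)$; its hypothesis requires that $pNp \subset pc(M)p = c(pMp)$ have Popa's local quantization property for every projection $p \in N$. By Proposition \ref{projection reduction conjecture}, the bicentralizer conjecture passes to the corner $pNp \subset pMp$, and Theorem \ref{local quantization type III} then yields local quantization for $pNp \subset c(pMp)$ on the summand of $pNp$ having no type I part. For the type I summand, local quantization is checked by hand using the direct-sum decomposition $\bigoplus \B(H_k) \ovt Z_k$ and diagonal MASAs, which approximate the relative commutant from above in the Effros--Mar\'echal topology. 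Combining both summands, the proposition produces the required MASA $A \subset N$ with $A' \cap c(M) = A \vee (N' \cap c(M))$.

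For the $\Leftarrow$ direction, assume such a MASA $A$ exists. By Theorem \ref{conjecture iff dixmier}, it suffices to verify the weak Dixmier property for $N \subset c(M)$. Given $x \in c(M)$, I would first apply the abelian Dixmier property for $A \subset c(M)$ (available because $\cU(A)$ is amenable) to produce a point $y \in \conv \{uxu^* : u \in \cU(A)\}$ lying in $A' \cap c(M) = A \vee (N' \cap c(M))$; call the resulting averaging map $\Phi_A \in \cD(A \subset c(M)) \subset \cD(N \subset c(M))$. Next, invoke the Approximation Theorem of \cite{SZ99} to obtain a (not necessarily normal) conditional expectation $\Psi \in \cD(N)$ from $N$ onto $\cZ(N)$. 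Extended to $c(M)$ as a weak$^*$ limit of the same convex combinations of $\Ad(u)$, $u \in \cU(N)$, $\Psi$ belongs to $\cD(N \subset c(M))$, fixes $N' \cap c(M)$ pointwise (since $N' \cap c(M)$ is centralized by $\cU(N)$), and sends $A \subset N$ into $\cZ(N) \subset N' \cap c(M)$. Using the $(N' \cap c(M))$-bimodularity of $\Psi$, one deduces $\Psi(y) \in N' \cap c(M)$, so the composition $\Psi \circ \Phi_A$ belongs to $\cD(N \subset c(M))$ and brings $x$ into $N' \cap c(M)$, establishing weak Dixmier.

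The main obstacle will be in the $\Leftarrow$ direction: justifying that $\Psi(y) \in N' \cap c(M)$ for every $y \in A \vee (N' \cap c(M))$ when $\Psi$ is non-normal. On algebraic combinations $y = \sum_i a_i b_i$ with $a_i \in A$ and $b_i \in N' \cap c(M)$, the identity $\Psi(y) = \sum_i \Psi(a_i) b_i$ is immediate from $(N' \cap c(M))$-bimodularity, and the general case should be handled either by a direct approximation in the point-strong topology adapted to the bimodular structure, or by choosing the Dixmier averaging net depending on the particular $y$ at hand rather than a globally fixed $\Psi$, i.e.\ working inside $\cD(N \subset c(M))$ rather than first passing through $\cD(N)$.
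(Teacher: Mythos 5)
Your overall plan (derive the corollary from Theorem \ref{local quantization type III} together with the MASA-constructing proposition at the start of Section \ref{section masa}) is the right one, but both directions have problems.

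\textbf{The {\boldmath$\Rightarrow$} direction.} Your claim that the type $\I$ summand can be handled ``by hand'' via local quantization is incorrect. If a type $\I$ summand contains a factor $\B(H)$ with $\dim H=\infty$, then every finite-dimensional abelian $A\subset \B(H)$ has at least one minimal projection $p$ of infinite rank, so $A'\cap \B(H)\supset \B(pH)$ always contains a copy of $\B(H)$ itself. Taking $\xi=\omega_{\eta_1}-\omega_{\eta_2}$ for two orthonormal vectors $\eta_1,\eta_2\in H$ gives $\xi|_{\C 1}=0$ yet $\|\xi|_{A'\cap \B(H)}\|\geq 1$ for every finite-dimensional abelian $A$; so local quantization fails for $\B(H)\subset \B(H)$, and a fortiori for $\B(H)\subset c(M)$. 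This is precisely why Theorems \ref{local quantization for one} and \ref{local quantization type III} exclude type $\I$ summands, so the proposition cannot be invoked on a corner that still has type $\I$. The type $\I$ part must be treated separately and directly: using matrix units one has $z_1 M z_1\cong N_1\ovt (N_1'\cap z_1 M z_1)$ and $c(z_1 M z_1)\cong N_1\ovt c(N_1'\cap z_1 M z_1)$, so \emph{every} MASA of $N_1$ (a diagonal) already satisfies the required commutant identity. You then concatenate this with the MASA on the no-type-$\I$ summand built from the proposition.

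\textbf{The {\boldmath$\Leftarrow$} direction.} The gap you flag is genuine, and neither proposed repair closes it. The algebraic span of $\{ab : a\in A,\ b\in N'\cap c(M)\}$ is only weak*-dense in $A\vee (N'\cap c(M))$, never norm-dense, and $\Psi$ is non-normal; so the $(N'\cap c(M))$-bimodularity that gives $\Psi(\sum a_ib_i)\in N'\cap c(M)$ cannot be propagated to the weak* closure by any continuity or density argument. Choosing the averaging net depending on $y$ also does not help: it is a net in $\cU(N)$, and there is no mechanism that makes the averages of a \emph{fixed} $y\in A\vee(N'\cap c(M))$ converge all the way into $N'\cap c(M)$, since further $\Ad(u)$'s with $u\in\cU(N)$ do not even preserve $A\vee(N'\cap c(M))$. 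For comparison, in the with-expectation variant (Theorem \ref{masa with expectation if and only if}) the $\Leftarrow$ direction relies on the existence of a state $\varphi$ with $A\subset N_\varphi$, giving the crucial containment $\rB(N\subset c(M),\varphi)\subset N_\varphi'\cap c(M)\subset A'\cap c(M)\subset\rb^\sharp(N\subset c(M))$. When $A$ has no expectation there is no such $\varphi$, and your argument offers no substitute for that chain of inclusions: some different mechanism is needed to force the analytic bicentralizer inside $\rb^\sharp(N\subset c(M))$, and this is the part of the proof you have not supplied.
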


In order to prove Theorem \ref{local quantization for one} and Theorem \ref{local quantization no core type III}, we will actually need to prove Theorem \ref{local quantization type III} first and then use Proposition \ref{local quantization restriction}.

The following lemma is obtained by applying Theorem \ref{local quantization type II} to the asymptotic centralizer $N^\omega_{\varphi^\omega}$.
\begin{lem} \label{fd abelian into bicentralizer}
Let $N \subset M$ be an inclusion of von Neumann algebras with expectations. Suppose that $N$ has no type $\I$ summand. Let $\varphi$ be a faithful normal state on $N$. Then for every finite set $F \subset c(M)_*$ and every $\varepsilon > 0$, there exists a finite dimensional abelian subalgebra $A \subset N$ such that $ \| \varphi - \varphi \circ E_{A' \cap N} \| \leq \varepsilon$ and $$\forall \xi \in F, \quad \| \xi \circ E_{A' \cap c(M)} - \xi \circ E_{\rB(N \subset c(M), \varphi)} \| \leq \varepsilon $$
\end{lem}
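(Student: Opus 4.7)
The central idea, as the opening sentence suggests, is to transport Popa's type $\II$ local quantization from the asymptotic centralizer to $N$ itself. Fix a cofinal ultrafilter $\omega$ on $\N$ and set $P := N^\omega_{\varphi^\omega}$. Since $N$ has no type $\I$ summand, the ultrapower $N^\omega$ has none either, and the centralizer $P$ of the faithful normal semifinite weight $\varphi^\omega$ is semifinite with no type $\I$ summand, hence of type $\II$. Moreover, by the very definition of the analytic bicentralizer, $P' \cap c(M) = \rB(N \subset c(M), \varphi)$, and one has the natural inclusion with expectations $P \subset c(M)^\omega$ (via the canonical expectation $E : c(M)^\omega \to c(M)$).

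I would first apply Theorem \ref{local quantization type II} to the inclusion $P \subset c(M)^\omega$. Extend each $\xi \in F$ to $\widetilde\xi := \xi \circ E \in (c(M)^\omega)_*$ and include also $\varphi^\omega$ in the finite set of test functionals. The theorem then yields, for any $\delta > 0$, a finite-dimensional abelian subalgebra $B \subset P$ such that $\|\widetilde\xi \circ E_{B' \cap c(M)^\omega} - \widetilde\xi \circ E_{P' \cap c(M)^\omega}\| \leq \delta$ for every $\xi \in F$, and similarly for $\varphi^\omega$. Writing $B = \spn\{p_1,\dots,p_n\}$ for a partition of unity of projections in $P$, I would lift each $p_i$ to a sequence $(p_i^{(k)})^\omega$ of projections in $N$ and, by a standard perturbation/orthogonalization argument using functional calculus on $\sum_i p_i^{(k)}$, replace them by an exact partition of unity $q_1^{(k)}, \dots, q_n^{(k)}$ of projections in $N$ with $\|q_i^{(k)} - p_i^{(k)}\|_\varphi \to 0$ along $\omega$. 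Since $p_i \in N^\omega_{\varphi^\omega}$, we further have $\|[\varphi, q_i^{(k)}]\| \to 0$ along $\omega$; setting $A_k := \spn\{q_1^{(k)},\dots,q_n^{(k)}\}$, the first condition $\|\varphi - \varphi \circ E_{A_k' \cap N}\| \leq \varepsilon$ follows for $k$ close enough to $\omega$.

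The final and most delicate step is to descend the Effros--Mar\'echal approximation from $c(M)^\omega$ to $c(M)$. For $k$ near $\omega$ the projections defining $A_k$ are strong-limits (along $\omega$) of the $p_i$, so $E_{A_k' \cap c(M)^\omega} \to E_{B' \cap c(M)^\omega}$ pointwise strongly by property (P\ref{convergence of expectations}). Restricting to the original states $\xi \in F \subset c(M)_*$, and using that $E_{P' \cap c(M)^\omega}|_{c(M)} = E_{\rB(N \subset c(M),\varphi)}$ follows from Proposition \ref{anal image expectation} together with the characterization of this expectation in Proposition \ref{anal expectation}, the bound on $\widetilde\xi$ translates into $\|\xi \circ E_{A_k' \cap c(M)} - \xi \circ E_{\rB(N \subset c(M),\varphi)}\| \leq \varepsilon$ for $k$ sufficiently close to $\omega$. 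Picking any such $k$ yields $A := A_k$.

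The main obstacle is precisely this last transfer: making sure that the approximation inside the ultrapower, which is formulated in terms of $B' \cap c(M)^\omega$, actually descends to the desired approximation on the finite set $F \subset c(M)_*$ after replacing $B$ by a true finite-dimensional abelian subalgebra of $N$. The argument hinges on the compatibility of the canonical conditional expectation $E : c(M)^\omega \to c(M)$ with the conditional expectations onto the relevant relative commutants, which is the content of Proposition \ref{anal image expectation}.
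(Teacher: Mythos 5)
Your overall strategy — apply Popa's type $\II$ local quantization (Theorem \ref{local quantization type II}) to the asymptotic centralizer $P = N^\omega_{\varphi^\omega}$, using the identity $P' \cap c(M) = \rB(N \subset c(M),\varphi)$ and Proposition \ref{anal expectation}, and then pull the finite--dimensional abelian subalgebra back down to $N$ — is exactly the approach the paper has in mind (the paper itself only gives the one--line hint ``obtained by applying Theorem \ref{local quantization type II} to the asymptotic centralizer $N^\omega_{\varphi^\omega}$''). The reduction of the desired estimate to a norm bound on $\eta_0 := \widetilde\xi - \widetilde\xi\circ E_{P'\cap c(M)^\omega}$ restricted to $B'\cap c(M)^\omega$, via local quantization, is correct, and so is the observation that $\xi\circ E_{\rB(N\subset c(M),\varphi)} = (\xi\circ E)\circ E_{P'\cap c(M)^\omega}|_{c(M)}$ (note, though, a typo: you wrote $E_{P' \cap c(M)^\omega}|_{c(M)} = E_{\rB(N\subset c(M),\varphi)}$, which cannot literally be an equality since the left-hand side takes values outside $c(M)$; the missing ingredient is the canonical expectation $E$ from the ultrapower back down). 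The estimate $\|\varphi-\varphi\circ E_{A'\cap N}\|\le\varepsilon$ from almost--commutation of the $q_i^{(k)}$ with $\varphi$ is also fine.

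The genuine gap is in the final transfer step. You lift $B = \operatorname{span}\{p_1,\dots,p_n\}\subset P$ to partitions of unity $q_1^{(k)},\dots,q_n^{(k)}$ in $N$, and then invoke property (P\ref{convergence of expectations}) to conclude that $E_{A_k'\cap c(M)^\omega}\to E_{B'\cap c(M)^\omega}$ pointwise strongly and hence that $\|\xi\circ E_{A_k'\cap c(M)} - \xi\circ E_{\rB(N\subset c(M),\varphi)}\|\le\varepsilon$ for $k$ near $\omega$. This does not follow. First, (P\ref{convergence of expectations}) concerns pointwise strong convergence of $\varphi$-preserving expectations for a net of globally $\sigma^\varphi$-invariant subalgebras, which is not the situation here (the $A_k$ are only approximately $\varphi$-invariant, and $B$ lives in the ultrapower, not in $N$). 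More importantly, pointwise strong convergence of normal UCP maps $\Phi_k\to\Phi$ never implies norm convergence of the predual maps $\xi\circ\Phi_k\to\xi\circ\Phi$; what you actually need is the concrete identity
\[
\bigl\|\widetilde\xi\circ E_{B'\cap c(M^\omega)}\bigr\|_{(c(M^\omega))_*} \;=\; \lim_{k\to\omega}\bigl\|\xi\circ E_{\theta_k(B)'\cap c(M)}\bigr\|_{c(M)_*},
\]
where $\theta_k:B\to N$ are unital $*$-morphisms with $(\theta_k(b))^\omega=b$. This identity, and the existence of the $\theta_k$'s, is the content of \cite[Lemma 2.1]{AHHM18} and the computation the paper carries out in the proof of Lemma \ref{induction III1}; it is where the nontrivial compatibility of the Ocneanu ultrapower standard form with conditional expectations onto finite--dimensional abelian relative commutants enters. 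Replacing the $*$-morphism lift by an ad hoc orthogonalization of lifted projections, plus an appeal to (P\ref{convergence of expectations}), leaves precisely this step unjustified.
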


With Lemma \ref{fd abelian into bicentralizer}, we already get very close to Theorem \ref{local quantization type III} because if $N \subset M$ satisfies the bicentralizer conjecture, then $\rB(N \subset c(M),\varphi)=\rb(N \subset c(M),\varphi)=N' \cap c(M) \vee \{ \varphi \}''$. Unfortunately, in order to get rid of the $\{ \varphi \}''$-part and close the gap between Lemma \ref{fd abelian into bicentralizer} and Theorem \ref{local quantization type III}, we only found a lengthy proof based on a reduction to the amenable case. So let us deal with the amenable case first. We will only use the amenability assumption in the following key observation. 
\begin{lem} \label{masa amenable}
Let $N$ be an amenable von Neumann algebra with separable predual. Then there exists a maximal abelian subalgebra $A \subset N$ such that 
$$A' \cap c(N)= A \vee \cZ(c(N)).$$
\end{lem}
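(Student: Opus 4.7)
Plan. I would first reduce to the factor case by a direct integral argument: any amenable von Neumann algebra with separable predual decomposes as $N = \int_X^\oplus N_x \, \rd \mu(x)$ with each $N_x$ an amenable factor, and a measurable selection of maximal abelian subalgebras $A_x \subset N_x$ with $A_x' \cap c(N_x) = A_x \vee \cZ(c(N_x))$ yields a maximal abelian $A \subset N$ with the desired identity, since both sides disintegrate. So it suffices to treat amenable factors, case by case according to Connes's classification.

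If $N$ is semifinite (type $\I$ or $\II$), then $c(N) \cong N \ovt \rL^\infty(\R)$ and $\cZ(c(N)) \cong \cZ(N) \ovt \rL^\infty(\R)$, so choosing any maximal abelian $A \subset N$ (the diagonal subalgebra in $\B(H)$, or a Cartan subalgebra in the type $\II$ case, obtained via Theorem \ref{local quantization type II}) gives directly $A' \cap c(N) = A \ovt \rL^\infty(\R) = A \vee \cZ(c(N))$. For $N$ an amenable factor of type $\III_\lambda$ with $0 < \lambda < 1$ or of type $\III_0$, I would use the discrete decomposition $N = Q \rtimes_\alpha \Z$ from Corollary \ref{no III1 semifinite reduction} with $Q$ semifinite and amenable, pick a maximal abelian subalgebra $A_0 \subset Q$ from the semifinite case (chosen so that $\alpha$ acts freely enough on its spectrum that $A_0$ remains maximal abelian in $N$), and read off $A_0' \cap c(N) = A_0 \vee \cZ(c(N))$ from the crossed product structure of $c(N) = c(Q) \rtimes \Z$.

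The delicate case is $N$ amenable of type $\III_1$ (so $N \cong R_\infty$), where $\cZ(c(N)) = \C$ and the identity $A' \cap c(N) = A$ forces $A \subset N$ to be maximal abelian in the type $\II_\infty$ factor $c(N)$ as well. Such an $A$ necessarily admits no normal conditional expectation from $N$ (by the modular-theoretic obstructions of \cite{AHHM18}). I would construct this $A$ by an inductive refinement of Haagerup's argument from \cite[Theorem 3.1]{Ha85}: at each step one enlarges a finite-dimensional abelian subalgebra $A_n \subset N$ so as to simultaneously shrink the relative commutants in $N$ and in $c(N)$. This uses the weak Dixmier property of $N \subset c(N)$ (available for amenable $N$ by Theorem \ref{intro conrete class}(1) and Theorem \ref{conjecture iff dixmier}, both established in Section \ref{section bicentralizer conjecture}) together with Lemma \ref{fd abelian into bicentralizer} to locally control the approximation. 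The main obstacle is precisely this simultaneous control: Lemma \ref{fd abelian into bicentralizer} approximates $A_n' \cap c(N)$ only up to the analytic bicentralizer $\cZ(c(N)) \vee \{\varphi\}''$, so the spurious $\{\varphi\}''$ factor must be absorbed into $A_{n+1}$ by enlarging $A_n$ with spectral decompositions of $\varphi$ inside $N_\varphi \cap (A_n' \cap N)$, which is the crux of the argument.
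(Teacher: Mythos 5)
Your proposal takes a genuinely different route from the paper, and the paper's route is much shorter: it invokes the Krieger/Connes--Feldman--Weiss structure theorem to write $N = B \rtimes_\sigma \Z$ with $B$ a Cartan (abelian) subalgebra and $\sigma$ a free nonsingular $\Z$-action, then sets $A = B^\sigma \vee L(\Z)$, where $L(\Z)$ is generated by the canonical crossed product unitary. A uniform five-line computation in $c(N) = c(B) \rtimes_{c(\sigma)} \Z$ then gives $A' \cap c(N) = L(\Z)' \cap c(N) = c(B)^{c(\sigma)} \vee L(\Z) = \cZ(c(N)) \vee L(\Z) = A \vee \cZ(c(N))$, with no case distinction by type. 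The crucial structural feature is that the maximal abelian subalgebra \emph{contains} the crossed product unitary $u$.

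This is precisely what your type $\III_\lambda$ and $\III_0$ step misses, and it is a genuine gap, not merely vagueness. You propose taking $A_0 \subset Q$ a masa inside the semifinite algebra $Q$ of the discrete decomposition $N = Q \rtimes_\alpha \Z$, arranged to remain masa in $N$, and to ``read off'' $A_0' \cap c(N) = A_0 \vee \cZ(c(N))$. But this equality is false for \emph{any} $A_0 \subset Q$: since $Q$ is semifinite, $c(Q) \cong Q \ovt \rL^\infty(\R)$, so $A_0' \cap c(N) \supset A_0' \cap c(Q) = A_0 \ovt \rL^\infty(\R)$; whereas $\cZ(c(N)) = (\cZ(c(Q)))^{c(\alpha)}$ is the fixed-point subalgebra $\rL^\infty(\R)^{c(\alpha)}$, which for $Q$ a factor and $N$ of type $\III_\lambda$ is $\rL^\infty(\R/(\log\lambda)\Z) \subsetneq \rL^\infty(\R)$, giving $A_0 \vee \cZ(c(N)) = A_0 \ovt \rL^\infty(\R/(\log\lambda)\Z) \subsetneq A_0 \ovt \rL^\infty(\R) \subset A_0' \cap c(N)$, and similarly in the $\III_0$ case. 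In other words, a masa living entirely inside $Q$ is ``blind'' to the part of $\rL^\infty(\R)$ collapsed by the crossed product unitary; the masa must contain the canonical unitary (or at least $L(\Z)$) in order for its relative commutant in the core to shrink correctly. To repair your case 3, you would have to take $A = (A_0 \cap Q^\alpha) \vee L(\Z)$ and re-argue, which is in effect the paper's construction.

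For the type $\III_1$ case your diagnosis is correct --- such an $A$ cannot be with expectation, so Haagerup's theorem alone does not produce it --- but the refinement you sketch (simultaneously controlling $A_n' \cap N$ and $A_n' \cap c(N)$ and absorbing the $\{\varphi\}''$ factor at each step) is exactly the ``lengthy proof based on a reduction to the amenable case'' that the paper carries out \emph{elsewhere} (in the proofs of Theorems~\ref{local quantization type III} and \ref{masa with expectation if and only if}); it is not how the paper proves this particular lemma. Here the Cartan decomposition sidesteps the whole difficulty, since the masa $B^\sigma \vee L(\Z)$ is produced in one step rather than by an inductive approximation.
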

\begin{proof}
Since $N$ is amenable, we can write $N=B \rtimes_{\sigma} \Z$ where $B$ is some cartan subalgebra of $N$ and $\sigma : \Z \curvearrowright B$ is a nonsingular action. Take $A=B^{\sigma} \rtimes_{\sigma} \Z \cong B^{\sigma} \ovt L(\Z)$. Then $A$ is maximal abelian in $N$. Moreover, we have $c(N)= c(B) \rtimes_{c(\sigma)} \Z$ where $c(\sigma) : \Z \curvearrowright c(B)$ is the Maharam extension. Thus 
$$L(\Z)' \cap c(N)= c(B)^{c(\sigma)} \rtimes_{c(\sigma)} \Z = \cZ(c(N)) \vee L(\Z).$$
It follows that $A' \cap c(N)=L(\Z)' \cap c(N)= A \vee \cZ(c(N))$.
\end{proof}

\begin{lem} \label{universal masa}
Let $N$ be an amenable von Neumann algebra with separable predual. Then there exists a maximal abelian subalgebra $A \subset N$ such that for every inclusion with expectations $N \subset M$ we have $$A' \cap \rb^{\sharp}(N \subset c(M))=A \vee (N' \cap c(M)).$$
\end{lem}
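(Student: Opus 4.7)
Plan.

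I would take the MASA $A = B^{\sigma}\rtimes\Z$ constructed in Lemma \ref{masa amenable} from a Cartan subalgebra $B\subset N$ with $N=B\rtimes_{\sigma}\Z$, and show this single choice works universally. Fix an inclusion $N\subset M$ with expectations and $E\in\cE(M,N)$. A first observation is that for any $\varphi\in\cP(N)$, $\rb^{\sharp}(N\subset c(M))$ is generated by $N$, $N'\cap c(M)$, and $\{\varphi\circ E\}''$, so it equals $c_{E}(N)\vee(N'\cap c(M))$ (since $c_{E}(N)$ is generated in $c(M)$ by $N$ and $\{\varphi\circ E\}''$). The normal conditional expectation $c(E):c(M)\to c_{E}(N)$ is $N$-bimodular, so it sends $N'\cap c(M)$ into $N'\cap c_{E}(N)=\cZ(c(N))$ by the Connes--Takesaki relative commutant theorem. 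It therefore restricts to a faithful normal conditional expectation $F:\rb^{\sharp}(N\subset c(M))\to c_{E}(N)$.

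Given $x\in A'\cap\rb^{\sharp}(N\subset c(M))$, Lemma \ref{masa amenable} yields $F(x)\in A'\cap c_{E}(N)=A\vee\cZ(c(N))\subset A\vee(N'\cap c(M))$. The problem therefore reduces to showing: if $y\in A'\cap\rb^{\sharp}(N\subset c(M))$ satisfies $F(y)=0$, then $y\in N'\cap c(M)$. For this I would use an amalgamated tensor product decomposition. When $N$ has no type $\III_{0}$ summand, Proposition \ref{no type 0 expectation} gives $N\vee(N'\cap c(M))\cong N\ovt_{\cZ(N)}(N'\cap c(M))$, and incorporating the adjoined modular flow $\{\varphi\circ E\}''$ yields an analogous amalgamated decomposition of $\rb^{\sharp}(N\subset c(M))$ over $\cZ(c(N))$. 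In this setup, using $A\supset\cZ(N)$ and $A'\cap c_{E}(N)=A\vee\cZ(c(N))$, a direct relative commutant computation in the amalgamated tensor product forces $y$ to lie in $N'\cap c(M)$.

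For the type $\III_{0}$ case the relevant amalgamated tensor product structure breaks down (Remark \ref{type 0 no expectation}), so I would handle it by invoking Corollary \ref{no III1 semifinite reduction} to write $N=Q\rtimes_{\alpha}\Z$ for a semifinite $Q=N_{\psi}$ (with $\psi$ lacunary), then choose the Cartan $B$ so that $B\subset Q$ is Cartan in $Q$ and align the $\Z$-action $\sigma$ with $\alpha$ via a Cartan-normalizing unitary. One then solves the corresponding semifinite problem for $Q\subset c(M)$ (where the amalgamated tensor product applies) and lifts via an averaging argument over the amenable group $\Z$, in the spirit of Proposition \ref{dixmier amenable extension}. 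The main obstacle is precisely this type $\III_{0}$ reduction: choosing $B$ compatibly with the lacunary weight and propagating the relative commutant identity through the $\Z$-crossed product without losing the MASA property is the technical crux of the argument.
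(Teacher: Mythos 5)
There is a genuine gap, in fact two, and the overall route is different from the paper's.

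\textbf{The conditional expectation reduction is false.} You claim that it suffices to show: if $y\in A'\cap\rb^{\sharp}(N\subset c(M))$ and $F(y)=0$, then $y\in N'\cap c(M)$. This reduction does not hold. Take $M=N\ovt Q$ and $E=\id_N\otimes\tau$ for some state $\tau$ on $Q$; then $Q\subset N'\cap c(M)$ and $F(1\otimes q)=\tau(q)1$. Choose $a\in A\setminus\cZ(N)$ and $q\in Q$ with $\tau(q)=0$, $q\neq 0$, and set $y=a\otimes q$. Then $y$ commutes with $A$ (since $A$ is abelian and $q$ commutes with $N$), $y\in N\vee(N'\cap c(M))\subset\rb^{\sharp}(N\subset c(M))$, and $F(y)=\tau(q)a=0$; yet $y\notin N'\cap c(M)$ because $a\notin\cZ(N)$. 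What you actually need from the kernel of $F$ is that it land in $A\vee(N'\cap c(M))$, which is no easier than the original statement, so this step buys you nothing.

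\textbf{The amalgamated tensor product decomposition of $\rb^{\sharp}$ you invoke is not available.} Even when $N$ has no type $\III_0$ summand, $N'\cap c(M)$ does \emph{not} commute with $c_E(N)$: the modular flow $\sigma^{\varphi\circ E}$ acts nontrivially on $N'\cap c(M)$, so $N'\cap c(M)\not\subset c_E(N)'$. Thus $\rb^{\sharp}(N\subset c(M))=c_E(N)\vee(N'\cap c(M))$ is not a tensor product $c(N)\ovt_{\cZ(c(N))}(N'\cap c(M))$ amalgamated over the center; Proposition \ref{no type 0 expectation} gives you $N\vee(N'\cap c(M))\cong N\ovt_{\cZ(N)}(N'\cap c(M))$ but does not survive adjoining $\{\varphi\circ E\}''$ in the way you need. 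And you already acknowledge that the type $\III_0$ reduction via a lacunary weight and a compatible Cartan is not carried out.

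The paper takes a different route which avoids both problems. It passes to a dominant weight $\psi$ on $N$: the inclusion $N_\psi^\kappa\subset N_\psi$ (with $\kappa=\Ad(u)$ the scaling action) is isomorphic to $N\subset c(N)$, so Lemma \ref{masa amenable} produces $A_0\subset N_\psi^\kappa$ with $A_0'\cap N_\psi=A_0\vee\cZ(N_\psi)$; the MASA is then $A=A_0\vee B$ where $B=\{u_\lambda\}''$, which is \emph{not} the Cartan-based MASA of Lemma \ref{masa amenable} applied directly to $N$. The structural input is Proposition \ref{structure of bicentralizer extension}: $\rb^{\sharp}(N\subset c(M))\cong(N_\psi\ovt_{\cZ(N_\psi)}(N_\psi'\cap c(M)))\rtimes_\kappa\R^*_+$. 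This amalgamated tensor product is over $\cZ(N_\psi)$ (not $\cZ(c(N))$), and it holds uniformly in the type of $N$, so the $\III_0$ difficulty disappears. The relative commutant $A'\cap\rb^{\sharp}$ is then computed in two steps, first against $A_0$ using the amalgamated decomposition, then passing to $B$-fixed points and using that $\kappa$ is a dual action on $A_0\vee(N_\psi'\cap c(M))$ scaling $\psi\circ T$. Your proposal would need all the ingredients replaced: a different MASA (containing a scaling group), a different amalgamated base ($\cZ(N_\psi)$, not $\cZ(c(N))$), and no intermediate conditional expectation reduction.
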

\begin{proof}
If $N$ is finite, then for any inclusion with expectation $N \subset M$, we have $$\rb^{\sharp}(N \subset c(M)) = N \vee (N' \cap c(M)) \cong N \ovt_{\cZ(N)} (N' \cap c(M)).$$
Thus, in this case, any maximal abelian subalgebra $A \subset N$ will work.

Now, suppose that $N$ is properly infinite. Let $\psi \in \cP(N)$ be a dominant weight and $u=(u_\lambda)_{\lambda \in \R^*_+}$ a $\psi$-scaling group. Let $\kappa=\Ad(u) : \R^*_+ \curvearrowright N_\psi$ be the action implemented by this group. Note that the inclusion $N^\kappa_\psi \subset N_\psi$ is isomorphic to the inclusion $N \subset c(N)$. By Lemma \ref{masa amenable}, we can find maximal abelian subalgebra $A_0 \subset N^\kappa_\psi$ that satisfies $A_0' \cap N_\psi =A_0 \vee \cZ(N_\psi)$. Let $B$ be the von Neumann subalgebra of $N$ generated by the unitaries $(u_\lambda)_{\lambda \in \R^*_+}$ and let $A=A_0 \vee B \cong A_0 \ovt B$. Then $A' \cap N=(A_0' \cap N_\psi)^\kappa \vee B=A_0 \vee B=A$. Thus $A$ is maximal abelian in $N$.

Now, take $N \subset M$ an inclusion with expectations. We have a decomposition
$$\rb^{\sharp}(N \subset c(M)) =N_\psi \vee (N_\psi' \cap c(M)) \vee B \cong \left( N_\psi \ovt_{\cZ(N_\psi)} (N_\psi' \cap c(M)) \right) \rtimes_{\kappa} \R^*_+.$$
where $\kappa$ still denotes the action $\Ad(u)$ implemented by the group $u=(u_\lambda)_{\lambda \in \R^*_+}$. Therefore, we get
\begin{align*}
A_0' \cap \rb^{\sharp}(N \subset c(M)) &= (A_0' \cap N_\psi) \vee (N_\psi' \cap c(M)) \vee B \\
&= A_0 \vee (N_\psi' \cap c(M))  \vee B\\
&\cong   (A_0 \vee (N_\psi' \cap c(M)))  \rtimes_\kappa \R^*_+.
\end{align*}
Therefore, we have
$$ B' \cap A_0' \cap \rb^{\sharp}(N \subset c(M))  =  (A_0 \vee (N_\psi' \cap c(M)))^\kappa \vee B.$$
Now, observe that $(A_0 \vee (N_\psi' \cap c(M)) = A_0 \vee (N' \cap c(M)) \vee  \{ \psi \circ T \}'' $ where $T \in \cP(M,N)$ and on this algebra, $\kappa$ is a dual action that fixes $A_0 \vee (N' \cap c(M))$ and scales $\psi \circ T$. It follows that 
$$ (A_0 \vee (N_\psi' \cap c(M)))^\kappa = A_0 \vee (N' \cap c(M)).$$
We conclude that 
$$ B' \cap A_0' \cap \rb^{\sharp}(N \subset c(M)) = A_0 \vee (N' \cap c(M)) \vee B.$$  
hence $$A' \cap   \rb^{\sharp}(N \subset c(M)) = A \vee (N' \cap c(M))$$
as we wanted.
\end{proof}

%\begin{rem}
%In the previous lemma, we don't know how to obtain directly 
%$$A' \cap \rb^\sharp(N \subset M)= A \vee (N' \cap M).$$
%We can only deduce this a posteriori and only when $N$ has no type $\III_0$ summand.
%\end{rem}

\begin{lem} \label{semilocal quantization}
Let $N \subset M$ be an inclusion of von Neumann algebras with expectations. Suppose that $N$ is amenable and has separable predual. Then for every $\xi \in c(M)_*$ and every $\varepsilon > 0$, there exists a finite dimensional abelian subalgebra $A \subset N$ such that 
$$\| \xi|_{A' \cap c(M)} \| \leq \| \xi |_{N \vee (N' \cap c(M)) } \| + \varepsilon.$$
\end{lem}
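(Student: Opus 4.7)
My plan is to combine Lemma~\ref{universal masa} with an Effros--Mar\'echal approximation argument. First I will reduce to the case where $N$ is properly infinite, by an amplification $N \otimes \B(H)$. Fixing then a dominant weight $\psi \in \cP(N)$ with $\psi$-scaling group $(u_\lambda)_{\lambda \in \R^*_+} \subset \cU(N)$ and setting $B = \{u_\lambda\}''$, Lemma~\ref{universal masa} supplies a MASA $A = A_0 \vee B \subset N$ (with $A_0 \subset N_\psi^\kappa$ a MASA there) satisfying $A' \cap \rb^\sharp(N \subset c(M)) = A \vee (N' \cap c(M))$.

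The key step is to upgrade this identity to $A' \cap c(M) = A \vee (N' \cap c(M))$, equivalently $A' \cap c(M) \subset \rb^\sharp(N \subset c(M))$. To prove it I will use the crossed-product description $c(M) = M \rtimes_{\sigma^{\psi \circ T}} \R$ for some $T \in \cP(M, N)$ and write any $x \in A' \cap c(M)$ via its Fourier decomposition $x = \int \hat x(t)\,(\psi \circ T)^{it}\,dt$ with $\hat x(t) \in M$. Commutation of $x$ with each $u_\lambda$, combined with the relation $u_\lambda(\psi \circ T)^{it} u_\lambda^* = \lambda^{it}(\psi \circ T)^{it}$, forces $\Ad(u_\lambda)(\hat x(t)) = \lambda^{-it}\hat x(t)$ for every $\lambda \in \R^*_+$; commutation with $A_0$ further yields $\hat x(t) \in A_0' \cap M$. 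Using the explicit structure $\rb^\sharp(N \subset c(M)) \cong \bigl(N_\psi \ovt_{\cZ(N_\psi)} (N_\psi' \cap c(M))\bigr) \rtimes_{\Ad(u)} \R^*_+$ from Proposition~\ref{structure of bicentralizer extension}, a weight-space argument on the $N$-generated part of $c(M)$ forces each piece $\hat x(t)\,(\psi \circ T)^{it}$ to lie in $\rb^\sharp(N \subset c(M))$, and the desired identity then follows from Lemma~\ref{universal masa}.

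Once this is in hand, the conclusion follows by a routine Effros--Mar\'echal approximation. Since $A$ has separable predual, write $A = \bigvee_n A_n$ as the weak*-limit of an increasing sequence of finite-dimensional abelian subalgebras $A_n \subset A$. Property~(P3) gives $A_n' \cap c(M) \searrow A' \cap c(M)$ in $\mathrm{SA}(c(M))$, and by property~(P1) the map $X \mapsto \|\xi|_X\|$ is continuous, so
\[
\|\xi|_{A_n' \cap c(M)}\| \longrightarrow \|\xi|_{A' \cap c(M)}\| = \|\xi|_{A \vee (N' \cap c(M))}\| \leq \|\xi|_{N \vee (N' \cap c(M))}\|.
\]
Choosing $n$ large enough and setting $A := A_n$ produces the required finite-dimensional abelian subalgebra. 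The principal hurdle is the key step in the middle paragraph: the identity $A' \cap c(M) = A' \cap \rb^\sharp(N \subset c(M))$ must genuinely exploit that $A$ contains the full scaling group $B$, whose action $\Ad(u)$ non-trivially twists the modular generators of $c(M)/M$ via the character $\lambda \mapsto \lambda^{it}$, thereby eliminating any element of $c(M) \setminus \rb^\sharp$ from the commutant.
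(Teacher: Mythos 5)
Your core technical claim — that the MASA $A = A_0 \vee B$ supplied by Lemma~\ref{universal masa} satisfies $A' \cap c(M) = A \vee (N' \cap c(M))$, equivalently $A' \cap c(M) \subset \rb^\sharp(N \subset c(M))$ — is not established, and I do not think the Fourier argument you sketch can establish it. Lemma~\ref{universal masa} only gives you control on the relative commutant \emph{inside} $\rb^\sharp(N \subset c(M))$; upgrading this to a statement about the relative commutant in all of $c(M)$ is a genuinely different and much stronger assertion. In fact, $A' \cap c(M) \subset \rb^\sharp(N \subset c(M))$ for a MASA $A \subset N$ is essentially the conclusion of item~(5) of Theorem~\ref{intro bicentralizer dixmier} (and of Theorem~\ref{masa with expectation if and only if}) for amenable $N$, which is precisely what the entire local quantization machinery of Section~\ref{section masa} is designed to prove. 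If your claim held for this explicitly constructed $A$, Lemma~\ref{fd abelian into bicentralizer} and the whole $\xi \mapsto \xi \circ E_{\rb^\sharp}$ decomposition would be superfluous.

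Where the argument breaks down concretely: after writing $x = \int \hat x(t)\,(\psi\circ T)^{\ri t}\,\rd t$ and deriving $\Ad(u_\lambda)(\hat x(t)) = \lambda^{-\ri t}\hat x(t)$ together with $\hat x(t) \in A_0' \cap M$, you have no leverage to conclude $\hat x(t) \in N \vee (N_\psi' \cap M)$. The set $A_0' \cap M$ is not controlled by any lemma established so far -- what is controlled is $A_0' \cap N_\psi$ (from Lemma~\ref{masa amenable}), which sits inside $\rb^\sharp(N \subset M)$ and says nothing about the rest of $M$. An arbitrary element of $M$ commuting with $A_0$ and living in a nonzero $\Ad(u_\lambda)$-weight space has no reason to be built from $N$ and $N_\psi' \cap M$; Proposition~\ref{structure of bicentralizer extension} describes the weight decomposition of $\rb^\sharp$, not of $c(M)$. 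The paper's proof circumvents this obstruction entirely by splitting $\xi = \eta + (\xi - \eta)$ with $\eta = \xi \circ E_{\rb^\sharp(N \subset c(M))}$: the piece $\eta$ is supported on $\rb^\sharp$ where Lemma~\ref{universal masa} applies, while the complementary piece $\xi - \eta$ vanishes on $\rB(N \subset c(M),\varphi)$ (using that amenable $N$ satisfies the bicentralizer conjecture, so $\rB = \rb$) and is then killed by the asymptotic-centralizer averaging of Lemma~\ref{fd abelian into bicentralizer}. That averaging mechanism is an analytic input with no substitute in a purely commutation-relation argument; it is what produces $A \supset A_0$ with $\|(\xi-\eta)|_{A' \cap c(M)}\| \leq \varepsilon$. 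Your proposal drops this ingredient, and the resulting gap is fatal.
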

\begin{proof}
Let $\eta=\xi \circ E_{\rb^{\sharp}(N \subset c(M))}$.
By Lemma \ref{universal masa}, we can find a maximal abelian subalgebra $B \subset N$ such that 
$$B' \cap \rb^{\sharp}( N \subset c(M))=B \vee (N' \cap c(M)).$$
Then, thanks to property (P\ref{approx increase decrease}) of the Effros-Mar\'echal topology, we can find a finite dimensional abelian subalgebra $A_0 \subset B$ that is so large that
$$\| \eta|_{A_0' \cap c(M)} \| = \| \eta|_{A_0' \cap \rb^\sharp (N \subset c(M))} \| \leq \| \eta|_{N \vee (N' \cap c(M))} \| + \varepsilon.$$
By Lemma \ref{fd abelian into bicentralizer}, we can then find a finite dimensional abelian subalgebra $A \subset N$ with $A_0 \subset A$ such that
$$ \| (\xi- \eta)|_{A' \cap c(M)} \| \leq \varepsilon.$$
Then it follows that
$$ \| \xi|_{A' \cap c(M)} \| \leq \| \eta|_{A' \cap c(M)}\|+\varepsilon \leq \| \eta|_{A_0' \cap c(M)}\|+\varepsilon \leq \| \eta|_{N \vee (N' \cap c(M))} \| + 2\varepsilon.$$
\end{proof}

\begin{lem} \label{averaging type III1}
Let $N \subset M$ be an inclusion of von Neumann algebras with expectations. Suppose that $N$ is of type $\III_1$. Take $\varphi \in \cP(N)$. Then for every $\xi \in c(M)_*$ and every $\varepsilon > 0$, there exists $\eta \in c(M)_*$ such that 
$$\eta|_{N \vee (N' \cap c(M))} =\xi|_{N \vee (N' \cap c(M))} $$
and 
$$ \| \eta|_{\rb(N \subset c(M),\varphi)} \| \leq \| \xi|_{N' \cap c(M)} \|+ \varepsilon.$$
\end{lem}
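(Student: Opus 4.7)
The approach is to exploit the canonical bicentralizer extension from Section~\ref{section algebraic}. Let $P := \rb^\sharp(N \subset c(M))$, $Q := N \vee (N' \cap c(M))$, $R := \rb(N \subset c(M),\varphi)$ and $C := N' \cap c(M)$. The canonical flow $\rb^\sharp : \R^*_+ \curvearrowright P$ has fixed-point algebra $Q$ and restricts on $R$ to $\rb^\varphi$, whose fixed-point algebra is $C$ (Theorem~\ref{ucp map transition III1}~(5)). The subalgebras $Q$ and $C$ are precisely the ones appearing in the statement, so the natural strategy is to average $\xi$ along $\rb^\sharp$ using amenability of $\R^*_+$.

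More concretely, I would choose a sufficiently large F\o lner set $F \subset \R^*_+$ and define $\eta_P \in P_*$ by $\eta_P(x) = \frac{1}{\mu(F)} \int_F \xi(\rb^\sharp_\lambda(x)) \, d\mu(\lambda)$. Because $\rb^\sharp$ acts trivially on $Q$, each $\xi \circ \rb^\sharp_\lambda$ agrees with $\xi$ on $Q$, so $\eta_P|_Q = \xi|_Q$ automatically. By choosing $F$ large enough, $\eta_P$ can be made $\varepsilon$-invariant under $\rb^\varphi$ uniformly on any prescribed compact set of parameters, in particular on a second F\o lner set $F'$. For the norm bound on $R$, I would then take $x \in R$ with $\|x\| \leq 1$ and consider a weak*-accumulation point $y$ of the F\o lner averages $\frac{1}{\mu(F'_n)} \int_{F'_n} \rb^\varphi_\lambda(x) \, d\mu(\lambda)$; by a standard argument $y$ lies in $R^{\rb^\varphi} = C$ with $\|y\| \leq 1$. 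Normality of $\eta_P$ together with the $\varepsilon$-invariance arranged above yields $|\eta_P(x) - \eta_P(y)| \leq \varepsilon$, while $\eta_P(y) = \xi(y)$ (since $C \subset Q$) gives $|\eta_P(y)| \leq \|\xi|_C\|$. Hence $\|\eta_P|_R\| \leq \|\xi|_C\| + \varepsilon$.

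The proof would conclude by extending $\eta_P$ to $\eta := \eta_P \circ E_P \in c(M)_*$, where $E_P : c(M) \to P$ is the canonical normal conditional expectation obtained as the core lift of the unique normal conditional expectation $M \to \rb^\sharp(N \subset M)$ from the corresponding proposition in Section~\ref{section algebraic}. Since $E_P$ is the identity on $Q \subset P$, both desired properties descend to $\eta$: $\eta|_Q = \xi|_Q$ and $\|\eta|_R\| \leq \|\xi|_C\| + \varepsilon$. The main obstacle is the quantitative coordination of the two amenability arguments, namely choosing the F\o lner set $F$ used to build $\eta_P$ \emph{after} the F\o lner sequence $F'_n$ used to locate the accumulation point $y$ in $C$, so that the single tolerance $\varepsilon$ simultaneously controls both approximations. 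This is a routine but delicate piece of amenability book-keeping, and is the only place where the quantitative loss of $\varepsilon$ in the statement is unavoidable.
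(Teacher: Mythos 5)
Your high-level strategy is the same as the paper's: average $\xi$ along the flow $\rb^\sharp$, exploit that $\rb^\sharp$ fixes $Q := N \vee (N' \cap c(M))$ pointwise and restricts on $R := \rb(N \subset c(M),\varphi)$ to $\rb^\varphi$ with fixed-point algebra $C := N' \cap c(M)$, and then extend through the canonical expectation onto $P := \rb^\sharp(N\subset c(M))$. That part is sound, and the extension step via $E_P$ is fine.

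The gap is in the norm bound $\|\eta_P|_R\| \leq \|\xi|_C\| + \varepsilon$, and it is not merely ``delicate book-keeping''. Your argument needs, for a fixed $x$ in the unit ball of $R$, an estimate $|\eta_P(x) - \eta_P(y)| \leq \varepsilon$ where $y$ is a weak* accumulation point of the F\o lner averages $y_n = \frac{1}{\mu(F'_n)}\int_{F'_n}\rb^\varphi_\lambda(x)\,d\mu$. Normality gives $\eta_P(y) = \lim_k \eta_P(y_{n_k})$ along a subnet with $n_k \to \infty$. On the other hand, your control
$$|\eta_P(x) - \eta_P(y_n)| \le \sup_{\lambda\in F'_n} \bigl\| \eta_P - \eta_P\circ\rb^\sharp_\lambda \bigr\|$$
is an almost-invariance bound for $\eta_P$, and it holds only when the F\o lner set $F$ defining $\eta_P$ is approximately invariant under multiplication by every $\lambda\in F'_n$, i.e.\ only for $n$ below some $n_0(F)$. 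Since the accumulation point uses $n\to\infty$, you would need $F$ to be approximately invariant under the whole group $\bigcup_n F'_n = \R^*_+$, which no finite-measure set is. Choosing $F$ ``after'' $F'_n$ does not resolve this: the obstruction is the order of the two limits, not which set you fix first. Replacing the sequence $F'_n$ by a single bounded $F'$ does not help either, because then the average $y_1$ lies only approximately in $C$, so you cannot use $\eta_P|_C = \xi|_C$, and there is no spectral-gap type control forcing an approximately $\rb^\varphi$-invariant element of $R$ to be norm-close to $C$.

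The paper sidesteps this entirely by averaging only $\xi$, never the element $x$. It forms the convex set $\mathcal C_0 = \{\,(\xi\circ\Psi)|_R : \Psi \in \conv\{\beta^\sharp_\lambda \circ E_{\rb^\sharp}\}\,\} \subset R_*$ and observes that the (generally non-normal) conditional expectation $E: c(M)\to Q$ lying in the pointwise-weak* closure of the convex hull satisfies $(\xi\circ E)|_R = 0$ (because $E(R) \subset C$ and $\xi|_C = 0$). Hence $0$ lies in the $\sigma(R_*,R)$-closure of $\mathcal C_0$; by convexity and Mazur's theorem (Hahn--Banach) it also lies in the norm closure, which directly produces a single normal $\Psi$ with $\|(\xi\circ\Psi)|_R\| < \varepsilon$. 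This Hahn--Banach step is the essential idea your proposal is missing, and it is precisely what converts the ``weak'' ergodic conclusion into the ``norm'' estimate in the statement.
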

\begin{proof}
We first assume that $\xi|_{N' \cap c(M)} =0$. Let $C$ be the convex hull of $$\{ \beta_\lambda^\sharp \circ E_{\rb^{\sharp}(N \subset c(M))} \mid \lambda \in \R^*_+ \}.$$ inside $\UCP(c(M))$. Since $\R^*_+$ is amenable, the closure of $C$ in $\UCP(c(M))$ contains some conditional expectation $E$ onto $\rb^{\sharp}(N \subset c(M))^{\beta^\sharp}=N \vee (N' \cap c(M))$. Observe that $E$ restricts to a conditional expectation from $\rb(N \subset c(M),\varphi))$ onto $\rb(N \subset c(M),\varphi))^{\beta}=N' \cap c(M)$. Thus, $(\xi \circ E)|_{\rb(N \subset c(M),\varphi)}=0$. This implies that $0$ is in the weak closure of
$$C_0= \{ (\xi \circ \Psi)|_{\rb(N \subset c(M),\varphi)}  \mid \Psi \in C\}.$$
Since $C_0$ is convex, it follows by the Hahn-Banach theorem that $0$ is in the norm closure of $C_0$, i.e.\ there exists $\Psi \in C$ such that $\eta :=\xi \circ \Psi$ satisfies
$$\| \eta|_{\rb(N \subset c(M), \varphi)} \| \leq \varepsilon.$$
And since all elements of $C$ restrict to the identity on $N \vee (N' \cap c(M))$, we also have $$\eta|_{N \vee (N \cap c(M))}=\xi|_{N \vee (N' \cap c(M))}$$
as we wanted.

Now we deal with the general case where $\xi$ does not necessarily vanish on $N' \cap c(M)$. We can find $\zeta \in c(M)_*$ such that $\zeta|_{N' \cap c(M)}=\xi|_{N' \cap c(M)}$ and $\| \zeta \| \leq \| \xi|_{N' \cap c(M)} \| +  \varepsilon$. Since $\xi- \zeta$ vanishes on $N' \cap c(M)$, by the first part, we can find $\rho \in c(M)_*$ such that 
$$\rho|_{N \vee (N' \cap c(M))} =(\xi - \zeta)|_{N \vee (N' \cap c(M))} $$ and 
$$\| \rho|_{\rb(N \subset c(M),\varphi)} \| \leq \varepsilon.$$
We let $\eta = \rho + \zeta$. Then we have 
$$\eta|_{N \vee (N' \cap c(M))} =\xi|_{N \vee (N' \cap c(M))} $$
and 
$$\| \eta|_{\rb(N \subset c(M),\varphi)} \|  \leq \| \rho|_{\rb(N \subset c(M),\varphi)} \| + \|\zeta\|  \leq  \| \xi|_{N' \cap c(M)} \|  + 2\varepsilon.$$
\end{proof}

We can now prove Theorem \ref{local quantization type III} when $N$ is amenable.

\begin{thm} \label{local quantization amenable}
Let $N \subset M$ be an inclusion of von Neumann algebras with expectations. Suppose that $N$ amenable and has no type $\I$ summand and $N_*$ is separable. Then $N \subset c(M)$ has the local quantization property.
\end{thm}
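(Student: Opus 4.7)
The plan is to combine the averaging trick of Lemma~\ref{averaging type III1} with the two finite-dimensional approximation results, Lemma~\ref{fd abelian into bicentralizer} and Lemma~\ref{semilocal quantization}, using crucially that for amenable $N$ the relative bicentralizer conjecture holds so that $\rB(N \subset c(M),\varphi)=\rb(N \subset c(M),\varphi)$.

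First I would reduce to the case where $N$ is of type $\III_1$. Decomposing $N$ along its type summands, the semifinite part is handled directly by Theorem~\ref{local quantization type II}, because for $N$ semifinite the inclusion $N \subset c(M)$ is still with expectation (the natural embedding $N \hookrightarrow c_E(N) \cong N \ovt \rL(\R)$ admits a normal conditional expectation, which can be composed with $c(E):c(M)\to c_E(N)$). For type $\III_\lambda$ $(0 \leq \lambda < 1)$ summands one can pass to the semifinite subalgebra $Q \subset N$ from Corollary~\ref{no III1 semifinite reduction} and again appeal to Theorem~\ref{local quantization type II}. The essential new case is thus $N$ of type $\III_1$.

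Assuming $N$ is amenable of type $\III_1$, fix a faithful state $\varphi \in N_*$ and take $\xi \in c(M)_*$ and $\varepsilon > 0$. I would first apply Lemma~\ref{averaging type III1} to produce $\eta \in c(M)_*$ with
$$\eta|_{N \vee (N' \cap c(M))} = \xi|_{N \vee (N' \cap c(M))} \quad \text{and} \quad \|\eta|_{\rb(N \subset c(M),\varphi)}\| \le \|\xi|_{N' \cap c(M)}\| + \varepsilon.$$
Since $\xi-\eta$ vanishes on $N \vee (N' \cap c(M))$, Lemma~\ref{semilocal quantization} applied to $\xi - \eta$ yields a finite-dimensional abelian $A_1 \subset N$ with $\|(\xi-\eta)|_{A_1' \cap c(M)}\| \leq \varepsilon$. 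Then Lemma~\ref{fd abelian into bicentralizer} applied to the single functional $\eta$, starting from $A_1$, produces an enlargement $A \supset A_1$, still finite-dimensional abelian in $N$, with $\|\eta\circ E_{A' \cap c(M)} - \eta\circ E_{\rB(N \subset c(M),\varphi)}\| \le \varepsilon$. The enlargement preserves the first estimate because $A' \cap c(M) \subset A_1' \cap c(M)$.

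Using that $N$ is amenable so $\rB(N \subset c(M),\varphi)=\rb(N \subset c(M),\varphi)$ (Section~\ref{section bicentralizer conjecture}), the second estimate combines with the bound on $\|\eta|_{\rb}\|$ to give $\|\eta|_{A' \cap c(M)}\| \le \|\xi|_{N' \cap c(M)}\| + 2\varepsilon$, and the triangle inequality then yields $\|\xi|_{A' \cap c(M)}\| \le \|\xi|_{N' \cap c(M)}\| + 3\varepsilon$, which is local quantization. The main obstacle I foresee is arranging the two applications of the approximation lemmas so that a single $A$ satisfies both estimates; this rests on the monotonicity of both bounds under enlargement of $A$ and on the fact that Lemma~\ref{fd abelian into bicentralizer} can be applied with a prescribed finite-dimensional abelian subalgebra contained in $A$, exactly as is already done in the proof of Lemma~\ref{semilocal quantization} itself.
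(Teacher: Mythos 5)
Your proposal for the type $\III_1$ case is essentially the paper's proof: the same three ingredients (Lemma~\ref{averaging type III1} to produce $\eta$, Lemma~\ref{fd abelian into bicentralizer}, Lemma~\ref{semilocal quantization}), assembled by the same triangle-inequality argument. You apply them in the opposite order (semilocal quantization on $\xi-\eta$ first, then fd-abelian approximation of $\eta$, enlarging from $A_1$), whereas the paper first applies Lemma~\ref{fd abelian into bicentralizer} to $\eta$ and then Lemma~\ref{semilocal quantization} to $\xi-\eta$ relative to the corner inclusion $A_0'\cap N\subset A_0'\cap M$. Your order is not obviously wrong, but it puts more strain on the implicit "start from a prescribed $A_1$" extension of Lemma~\ref{fd abelian into bicentralizer}: that lemma produces subalgebras that approximately centralize $\varphi$ (its proof lives in $N^\omega_{\varphi^\omega}$), while the $A_1$ you get from Lemma~\ref{semilocal quantization} need not approximately sit in $N_\varphi$, so "starting from $A_1$" requires an extra word. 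The paper sidesteps this by running the fd-abelian approximation first.

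The real problem is your reduction for the non-$\III_1$ type $\III$ summands. Passing to the semifinite subalgebra $Q\subset N$ of Corollary~\ref{no III1 semifinite reduction} and invoking Theorem~\ref{local quantization type II} for $Q\subset c(M)$ only produces finite-dimensional abelian $A\subset Q$ with $A'\cap c(M)$ close to $Q'\cap c(M)$. But Corollary~\ref{no III1 semifinite reduction} gives $N=Q\rtimes_\alpha\Z$, hence $N'\cap c(M)=Q'\cap c(M)\cap\{u\}'$, which is in general strictly smaller than $Q'\cap c(M)$ — indeed by Theorem~\ref{computation lacunary}, $Q'\cap c(M)=\rb(N\subset c(M),\psi)$ is $N'\cap c(M)$ together with $\{\psi\circ T\}''$. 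So local quantization for $Q$ does not transfer to $N$, and that monotonicity-style reduction fails. The paper does something genuinely different in the type $\III_0$ case: it uses an \emph{increasing} sequence $N_n$ of type $\II$ subalgebras with expectation generating $N$ (this relies on a structure theorem of Haagerup-St\o rmer), so that the commutants $N_n'\cap c(M)$ \emph{decrease} to $N'\cap c(M)$, and then closedness of the set $\Omega$ of commutants of finite-dimensional abelian subalgebras together with Property~(P3) of the Effros--Mar\'echal topology closes the gap. A single semifinite $Q\subset N$ cannot replace a decreasing sequence of commutants in this way; you would need to exhaust $N$ by semifinite subalgebras, not approximate it from below by one.
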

\begin{proof}
If $N$ is of type $\III_0$, we can write $N=\bigvee_{n \in \N} N_n$ where $(N_n)_{n \in \N}$ is an increasing sequence of type $\II$ subalgebras with expectation in $N$ (see \cite[Proposition 8.3]{HS90}). By Theorem \ref{local quantization type II}, $N_n' \cap c(M)$ is in the closure of 
$$ \Omega = \{ A' \cap c(M) \mid A \text{ is a finite dimensional abelian subalgebra of } N\}.$$
Since $\bigcap_{n \in \N} N_n' \cap c(M)=N' \cap c(M)$, it follows that $N' \cap c(M)$ is also in the closure of $\Omega$.

 Now, suppose that $N$ is of type $\III$ but has no type $\III_0$ summand. Take $F \subset c(M)_*$ and $\varepsilon > 0$. We want to find $A \subset N$ finite dimensional and abelian such that 
 $$ \forall \xi \in F, \quad \| \xi|_{A' \cap c(M)} \| \leq \|\xi|_{N' \cap c(M)} \| + \varepsilon.$$
Up to replacing the inclusion $N \subset M$ by the inclusion $N \subset M^{\oplus F}$, we may assume that $F$ is a singleton. Let $F=\{ \xi \}$ for some $\xi \in c(M)_*$. By Lemma \ref{averaging type III1}, we can find a faithful state $\varphi \in N_*$ and some $\eta \in c(M)_*$ such that 
$$\eta|_{N \vee (N' \cap c(M))}=\xi|_{N \vee (N' \cap c(M))}$$
and 
$$ \| \eta|_{\rb(N \subset c(M),\varphi)} \| \leq \| \xi|_{N' \cap c(M)} \|+ \varepsilon.$$
By Lemma \ref{fd abelian into bicentralizer}, we can find some finite dimensional abelian subalgebra $A_0 \subset N$ such that 
$$ \| \eta|_{A_0' \cap c(M)} \| \leq  \| \eta|_{\rb(N \subset c(M),\varphi)} \| + \varepsilon \leq  \| \xi|_{N' \cap c(M)} \| + 2 \varepsilon.$$
Since $(A_0' \cap N) \vee ((A_0' \cap N)' \cap c(M)) \subset N \vee (N' \cap c(M))$, we can apply Lemma \ref{semilocal quantization} to obtain some finite dimensional abelian subalgebra $A_0 \subset A \subset N$ such that 
$$ \| (\xi - \eta)|_{A' \cap c(M)} \| \leq \| (\xi-\eta)|_{N \vee (N' \cap c(M))} \| +\varepsilon =\varepsilon.$$
Therefore, we get
$$ \| \xi|_{A' \cap c(M)}\| \leq \| \eta|_{A' \cap c(M)}\| + \varepsilon \leq \| \eta|_{A_0' \cap c(M)}\| + \varepsilon \leq 3\varepsilon.$$
This shows that $N \subset c(M)$ has the local quantization property.
\end{proof}

In order to prove Theorem \ref{local quantization type III}, we make a reduction to the amenable case using the following theorem.

\begin{thm} \label{large amenable}
Let $N \subset M$ be an inclusion of von Neumann algebras with expectation and with separable predual. Suppose that $N \subset M$ satisfies the bicentralizer conjecture. Then there exists an amenable subalgebra with expectation $P \subset N$ such that $P' \cap c(M)=N' \cap c(M)$ and $P' \cap c(P)=N' \cap c(N)$.
\end{thm}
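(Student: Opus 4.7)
The plan is to perform a Popa--Haagerup style inductive construction, leveraging the weak Dixmier property in both $c(M)$ and $c(N)$, together with the local quantization material developed earlier in this section.

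\textbf{Setup.} Since $N \subset M$ satisfies the bicentralizer conjecture, the inclusion $N \subset N$ (an intermediate inclusion) also does by Proposition \ref{alg bicentralizer for intermediate subalgebra}. Hence, by Theorem \ref{conjecture iff dixmier}, both $N \subset c(M)$ and $N \subset c(N)$ have the weak Dixmier property. Proposition \ref{conditional expectation is in ucp}.(3) then supplies conditional expectations $F_M \in \cD(N \subset c(M))$ onto $N' \cap c(M)$ and $F_N \in \cD(N \subset c(N))$ onto $N' \cap c(N) = \cZ(c(N))$. Both are weak$^*$ limits of finite convex combinations of $\{\Ad(u) \mid u \in \cU(N)\}$.

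\textbf{Inductive construction.} Fix a faithful normal state $\varphi \in N_*$ and countable norm-dense sequences $(\xi_k)_{k \geq 1} \subset c(M)_*$ and $(\eta_k)_{k \geq 1} \subset c(N)_*$. I will construct an increasing chain $P_0 = \C \subset P_1 \subset P_2 \subset \cdots$ of hyperfinite, $\sigma^\varphi$-invariant (hence with $\varphi$-preserving expectation) subalgebras of $N$ such that for every $n \geq 1$ and every $k \leq n$:
\begin{align*}
\| \xi_k|_{P_n' \cap c(M)}\| &\leq \|\xi_k|_{N' \cap c(M)}\| + 2^{-n}, \\
\| \eta_k|_{P_n' \cap c(N)}\| &\leq \|\eta_k|_{N' \cap c(N)}\| + 2^{-n}.
\end{align*}
At the inductive step, given $P_{n-1}$ hyperfinite and $\sigma^\varphi$-invariant, I use the weak Dixmier property to select finitely many unitaries $u_1,\ldots,u_r \in \cU(N)$ and weights $\lambda_j \geq 0$ summing to $1$ such that $\|\sum_j \lambda_j (\xi_k \circ \Ad(u_j)) - \xi_k \circ F_M\| \leq 2^{-n-1}$ for every $k \leq n$, and similarly for $F_N$ with unitaries $w_1,\ldots,w_s \in \cU(N)$. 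To keep $P_n$ hyperfinite, I do not adjoin the $u_j, w_j$ directly; instead I apply Lemma \ref{fd abelian into bicentralizer} to the inclusion $P_{n-1}' \cap N \subset c(M)$ (and to $\cdots \subset c(N)$), combined with the $\beta^\varphi$-averaging of Lemma \ref{averaging type III1} in order to cancel the $\{\varphi\}''$-contribution to the analytic bicentralizer. This produces finite-dimensional $\sigma^\varphi$-invariant subalgebras of $P_{n-1}' \cap N$ whose commutants in $c(M)$ (resp.\ $c(N)$) approximate $N' \cap c(M)$ (resp.\ $N' \cap c(N)$) on the chosen test functionals; joining these to $P_{n-1}$ gives a hyperfinite $P_n$ with expectation satisfying the required estimates.

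\textbf{Conclusion and main obstacle.} Set $P := \overline{\bigcup_n P_n}$, which is hyperfinite (hence amenable) with expectation in $N$ (take the weak$^*$ limit of the $\varphi$-preserving expectations $N \to P_n$). Density of the sequences $(\xi_k)$ and $(\eta_k)$ combined with the Hahn--Banach theorem forces $P' \cap c(M) \subset N' \cap c(M)$ and $P' \cap c(N) \subset N' \cap c(N)$; the reverse inclusions are automatic, yielding the equalities $P' \cap c(M) = N' \cap c(M)$ and $P' \cap c(N) = N' \cap c(N)$. For the final equality $P' \cap c(P) = N' \cap c(N)$, Connes--Takesaki gives $P' \cap c(P) = \cZ(c(P))$; the natural embedding $c(P) \hookrightarrow c(N)$ induced by the $\varphi$-preserving expectation sends $\cZ(c(P))$ into $P' \cap c(N) = \cZ(c(N))$, and conversely $\cZ(c(N))$ is captured inside $c(P)$ because the induction can be refined to include at some finite step a countably generated abelian subalgebra of $N$ whose image in $c(N)$ contains a weak$^*$ dense subset of $\cZ(c(N))$; the resulting two-sided inclusion forces the desired equality. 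The main obstacle is the inductive step, namely arranging the averaging over unitaries of $N$ required by the weak Dixmier property while simultaneously preserving hyperfiniteness of the constructed $P_n$; this is handled by combining Lemma \ref{fd abelian into bicentralizer} with the $\beta^\varphi$-averaging of Lemma \ref{averaging type III1}, which together allow us to replace unitaries of $N$ by finite-dimensional abelian subalgebras of $N$ whose commutant in $c(M)$ (or $c(N)$) is arbitrarily close to $N' \cap c(M)$ (or $N' \cap c(N)$).
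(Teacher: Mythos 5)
Your plan attempts a unified inductive construction built from finite-dimensional \emph{abelian} subalgebras (so that $P=\bigvee_n P_n$ is automatically hyperfinite), with commutants in $c(M)$ and $c(N)$ approximating $N'\cap c(M)$ and $N'\cap c(N)$ directly. There is a genuine gap at the heart of the inductive step. Lemma~\ref{fd abelian into bicentralizer} produces $A\subset N$ finite-dimensional abelian whose commutant $A'\cap c(M)$ approximates $\rB(N\subset c(M),\varphi)=\rb(N\subset c(M),\varphi)=(N'\cap c(M))\vee\{\varphi\circ T\}''$, \emph{not} $N'\cap c(M)$ itself. You propose to ``cancel the $\{\varphi\}''$-contribution'' via the $\beta^\varphi$-averaging of Lemma~\ref{averaging type III1}, but that lemma only modifies the functional $\xi$ to some $\eta$ that agrees with $\xi$ on $N\vee(N'\cap c(M))$ and has $\|\eta|_{\rb(N\subset c(M),\varphi)}\|$ small; it gives you no information about $A'\cap c(M)$. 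To turn the small bound on $\eta$ into a small bound on $\xi|_{A'\cap c(M)}$ you still have to control $(\xi-\eta)|_{A'\cap c(M)}$. In the paper this is exactly where Lemma~\ref{semilocal quantization} enters, and that lemma only holds for \emph{amenable} $N$ — its proof goes through the universal MASA of Lemma~\ref{universal masa}, which is a Cartan-based construction specific to amenable algebras. So the estimate you need at each step is precisely the local quantization property of $N\subset c(M)$, which is the content of Theorem~\ref{local quantization type III}; since the paper proves Theorem~\ref{local quantization type III} \emph{after}, and by reduction through, Theorem~\ref{large amenable}, your plan is circular.

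The paper's proof of Theorem~\ref{large amenable} avoids this by arguing type by type after reducing to the factor case: semifinite via Popa's original theorem, type $\III_0$ via a discrete decomposition, and type $\III_\lambda$ and $\III_1$ via Lemmas~\ref{induction III lambda} and~\ref{induction III1}, which build not abelian subalgebras but finite-dimensional \emph{subfactors} $F_n$ carrying carefully chosen non-tracial states $\psi_n\cong\tau\otimes\omega_\mu\otimes\omega_\nu$. This forces the limit $P$ to be an AFD type $\III_1$ subfactor and makes the limit state $\phi$ (which is constructed along the way, not fixed in advance) restrict to an \emph{almost periodic} state on $P$ with $P_\phi'\cap N=\C$ and $P_\phi'\cap c(M)=\rb(N\subset c(M),\phi)$. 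The almost periodicity of $\phi|_P$ then collapses the $\{\phi\}''$-factor:
$P'\cap c(M)=P'\cap\rb(N\subset c(M),\phi)=\rb(N\subset c(M),\phi)^{\beta^\phi}=N'\cap c(M)$,
using Corollary~\ref{bicentralizer flow ergodic}. Both ingredients — working with subfactors rather than abelian subalgebras, and renormalizing the state along the induction — are essential here, and your construction (fixed $\varphi$, abelian $P_n$) has neither; you therefore cannot access an ergodic bicentralizer flow on $P$. As a secondary point, Lemma~\ref{averaging type III1} requires $N$ of type $\III_1$ anyway, so a case split is unavoidable; and your argument for $P'\cap c(P)=N'\cap c(N)$ (refining the induction to capture $\cZ(c(N))$) is asserted rather than proved, whereas the paper obtains it automatically from the trivial relative commutant $P'\cap N=\cZ(N)$ (or $P_\phi'\cap N=\C$) in each case.
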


\begin{lem} \label{induction III1}
Let $N \subset M$ be an inclusion of von Neumann algebras with expectation and with separable predual. Suppose that $N$ is a type $\III_1$ factor and that $N \subset M$ satisfies the bicentralizer conjecture. Let $\varphi \in N_*$ be a faithful state. Take a finite dimensional subfactor $R \subset N$ with a $\varphi$-preserving conditional expectation $\rE_R$, some $\zeta \in N_*$ and some $\xi \in c(M)_*$ such that $\xi|_{\rb^\sharp(N \subset c(M))}=0$.

Then for every $\varepsilon > 0$, we can find a finite dimensional subfactor $F \subset R' \cap N$ and a faithful state $\psi \in F_*$ such that
\begin{enumerate}[\rm (i)]
\item $\| \varphi- \varphi \circ \rE_{F' \cap N} \| \leq \varepsilon$
\item $\| \zeta|_{F_\psi' \cap N} \| \leq \| \zeta|_R\| + \varepsilon$
\item $\| \xi|_{F_\psi' \cap c(M)} \| \leq \varepsilon$
\end{enumerate}
where $\rE_{F' \cap N} : N \rightarrow F' \cap N$ is the unique conditional expectation whose restriction to $F$ is equal to $\psi$. 

Moreover, for any given $0 < \mu, \nu < 1$ such that $\mu/\nu \notin \Q$, we can choose $(F,\psi)$ so that
\begin{equation} \label{eq form of state} (F,\psi) \cong (\mathbf M_{2^p}(\C), \tau) \otimes (\mathbf M_2(\C), \omega_\mu) \otimes (\mathbf M_2(\C), \omega_\nu)
\end{equation}
for some $p \in \N$.
\end{lem}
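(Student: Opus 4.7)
The proof combines Lemma \ref{fd abelian into bicentralizer}, which produces a finite-dimensional abelian witness, with an ultrapower realization via Theorem \ref{connes stormer} to upgrade that witness into a subfactor of the prescribed form.

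Set $N_1 := R' \cap N$. Since $R \subset N$ is a finite-dimensional subfactor, $N_1$ is a type $\III_1$ factor and the inclusion $N_1 \subset N$ has finite index; by Theorem \ref{conjecture finite index} and the quasiregular case (Corollary \ref{conjecture quasiregular} and Theorem \ref{stability under quasiregular extensions}), the inclusion $N_1 \subset M$ also satisfies the relative bicentralizer conjecture. Extend $\zeta$ to $\tilde\zeta \in c(M)_*$ through any faithful normal conditional expectation, and apply Lemma \ref{fd abelian into bicentralizer} to $N_1 \subset M$ with the state $\varphi|_{N_1}$ and the finite set $\{\xi, \tilde\zeta\}$ to obtain a finite-dimensional abelian subalgebra $A_0 \subset N_1$ with $\|\varphi - \varphi \circ \rE_{A_0' \cap N_1}\| \leq \varepsilon/3$ and
$$\| \xi \circ \rE_{A_0' \cap c(M)} - \xi \circ \rE_{\rB(N_1 \subset c(M), \varphi|_{N_1})}\| \leq \varepsilon/3,$$
together with the analogous estimate for $\tilde\zeta$. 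By the bicentralizer conjecture for $N_1 \subset M$, the relative bicentralizer equals $\rb(N_1 \subset c(M), \varphi|_{N_1})$, which is contained in $\rb^\sharp(N \subset c(M))$ (it is generated by $N_1' \cap c(M)$ and the modular unitaries $\{(\varphi \circ T_1)^{\ri t}\}''$, both visibly sitting inside $\rb^\sharp(N \subset c(M))$). Since $\xi$ annihilates $\rb^\sharp(N \subset c(M))$ by hypothesis, property (iii) holds for $A_0$ with error $\varepsilon/3$. In the same way, $\rb(N_1, \varphi|_{N_1}) = \cZ(N_1) = \C$ yields the bound needed for (ii), using $R \subset F_\psi' \cap N$ and the Effros-Mar\'echal continuity of the functional norm.

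The remaining step inflates $A_0$ into a finite-dimensional subfactor $F \subset N_1$ of the prescribed form with $A_0 \subset F_\psi$; then properties (ii) and (iii) descend from $A_0$ to $F_\psi$ via the containment $F_\psi' \cap (\cdot) \subset A_0' \cap (\cdot)$. The relative commutant $A_0' \cap N_1$ is a direct sum of type $\III_1$ corners, and by Theorem \ref{connes stormer} combined with the embedding theorem for the Araki-Woods factor $R_\infty$ into the ultrapower of any type $\III_1$ factor, one can find a globally $\sigma^{\varphi^\omega}$-invariant finite-dimensional subfactor
$$F_1 \cong (\mathbf M_{2^q}, \tau) \otimes (\mathbf M_2, \omega_\mu) \otimes (\mathbf M_2, \omega_\nu)$$
of $A_0' \cap N_1^\omega$ with $\varphi^\omega|_{F_1} = \psi_1$. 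Setting $F := A_0 \vee F_1$ and partitioning the minimal projections of $A_0$ compatibly with the $\psi_1$-weights of the centralizer part $\mathbf M_{2^q} \otimes D \otimes D$ of $F_1$ allows one to identify $(F, \psi) \cong (\mathbf M_{2^p}, \tau) \otimes (\mathbf M_2, \omega_\mu) \otimes (\mathbf M_2, \omega_\nu)$ for a suitable $p \geq q$, with $A_0$ absorbed into the centralizer. Extracting a finite-dimensional representative from the sequence realizing $F$ in $N_1$ gives the required $(F, \psi)$, and the modular invariance of the embedding yields property (i).

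The main technical obstacle is the simultaneous weight-matching $A_0 \subset F_\psi$ with $\psi|_{A_0} = \varphi|_{A_0}$ and the exact prescribed tensor-product form of $(F, \psi)$: the $\varphi$-weights of the minimal projections of $A_0$ must partition into products $\tau(e_{ii})\, \omega_\mu(f_{jj})\, \omega_\nu(g_{kk})$, which requires $p$ to be chosen sufficiently large and a combinatorial matching argument, with small discrepancies absorbed by unitary perturbations from the centralizer $N_{\varphi^\omega}^\omega$. The irrationality condition $\mu/\nu \notin \Q$ plays no role in this single step but is built into the statement for use in the downstream induction, where iterating the lemma generates an amenable type $\III_1$ subfactor of $R_\infty$ type inside $N$.
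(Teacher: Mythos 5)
Your overall strategy — descend to $R' \cap N$, produce a finite-dimensional abelian witness via local quantization, inflate it inside an ultrapower to a subfactor of the prescribed form, and pull down via~\cite[Lemma 2.1]{AHHM18} — is the right one, and it is essentially the route the paper takes. But there is a genuine gap in your treatment of item~(ii). You plan to apply Lemma~\ref{fd abelian into bicentralizer} after extending $\zeta$ to some $\tilde\zeta \in c(M)_*$, and to deduce $\| \zeta|_{F_\psi'\cap N}\| \leq \| \zeta|_R\| + \varepsilon$ from $\| \tilde\zeta\circ \rE_{A_0'\cap c(M)} - \tilde\zeta \circ \rE_{\rB(N_1 \subset c(M),\varphi)}\|$ being small together with $\rb(N_1,\varphi|_{N_1})=\C$. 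This does not close. Lemma~\ref{fd abelian into bicentralizer} only locates $A_0'\cap c(M)$ \emph{near the bicentralizer inside $c(M)$}, not near $R$; the relative bicentralizer $\rb(N_1\subset c(M),\varphi)$ contains $\{\varphi\circ T\}''$ (and $N'\cap c(M)$), and nothing forces $\tilde\zeta$ to be small there. Since an extension $\tilde\zeta$ of $\zeta$ that vanishes on $\{\varphi\circ T\}''$ is not available in general, the bound you get on $A_0'\cap c(M)$ does not descend to the bound you need on $A_0'\cap N$. (You also do not justify that $N_1\subset M$ satisfies the relative bicentralizer conjecture; Theorem~\ref{conjecture finite index} runs in the wrong direction for this.)

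The paper avoids this by working directly in the ultrapower: with $P=(R'\cap N)^\omega_{\varphi^\omega}$ it shows \emph{both} $P'\cap N^\omega\subset R$ (because $\rB(R'\cap N,\varphi)=\C$) \emph{and} $P'\cap c(M^\omega)\subset \rB^\sharp(N\subset c(M))^\omega$, and then applies Theorem~\ref{local quantization type II} to the single type~$\II_1$ algebra $P$ simultaneously for $\zeta^\omega$ and $\xi^\omega$. This is what gives (ii) and (iii) in one stroke, with no need to extend $\zeta$ to $c(M)_*$. Your final paragraph also over-engineers the inflation step: because $A$ is constructed inside the $\varphi^\omega$-centralizer, one simply perturbs its minimal projections to dyadic traces and absorbs $A$ into $\mathbf M_{2^p}(\C)\subset N^\omega_{\varphi^\omega}$; there is no weight-matching or combinatorial argument required, and the modular piece $(\mathbf M_2,\omega_\mu)\otimes(\mathbf M_2,\omega_\nu)$ is tensored on afterwards in $(R'\cap\cF_0'\cap N)^\omega$.
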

\begin{proof}
Inside $c(M^\omega)$, let $P=R' \cap N^\omega_{\varphi^\omega}=(R' \cap N)^\omega_{\varphi^\omega}$. We have 
$$P' \cap c(M^\omega) \subset R \vee \rB( R' \cap N \subset c(M), \varphi)^\omega \subset \rB^{\sharp}(N \subset c(M))^\omega$$ and
since $N \subset M$ satisfies the bicentralizer conjecture, it follows that
$$ \| \xi^\omega|_{P' \cap c(M^\omega)} \| \leq  \| \xi|_{\rB^\sharp(N \subset c(M))} \|=  \| \xi|_{\rb^\sharp(N \subset c(M))} \| = 0.$$ 
We also have
 $P' \cap N^\omega \subset R \vee \rB(R' \cap N,\varphi)^\omega=R$ because $R' \cap N \cong N$ is of type $\III_1$. This shows that 
 $$ \| \zeta^\omega|_{P' \cap N^\omega} \| \leq \| \zeta|_R\|.$$
 By Theorem \ref{local quantization type II}, we can find a finite dimensional abelian subalgebra $A \subset P$ such that $$\| \xi^\omega|_{A' \cap c(M^\omega)} \| \leq \varepsilon/2 \quad  \text{ and } \quad  \| \zeta^\omega|_{A' \cap N^\omega} \| \leq \| \zeta|_R\| + \varepsilon/2.$$
Up to perturbing $A$, we may assume that the sizes of its minimal projections are multiples of $2^{-p}$ for some $p \in \N$. This property implies that $A$ is contained in a finite dimensional subfactor $\cF_0 \subset P$ isomorphic to $M_{2^p}(\C)$. By adding to $\cF_0$ some $\sigma^{\varphi^\omega}$-invariant copy of $(\mathbf M_2(\C), \omega_\mu) \otimes (\mathbf M_2(\C), \omega_\nu)$ inside $(R' \cap \cF_0' \cap N)^\omega$, we obtain  a globally $\sigma^{\varphi^\omega}$-invariant subfactor $\cF_0 \subset \cF \subset (R' \cap N)^\omega$ such that $\psi=\varphi^{\omega}|_{\cF}$ satisfies
$$ (\cF,\psi) \cong  (\mathbf M_{2^p}(\C), \tau) \otimes (\mathbf M_2(\C), \omega_\mu) \otimes (\mathbf M_2(\C), \omega_\nu).$$
By \cite[Lemma 2.1]{AHHM18}, we can find a sequence of unital *-morphisms $\theta_n : \cF \rightarrow R' \cap N$ such that $x=(\theta_n(x))^\omega$ for all $x \in \cF$. Moreover, if $E$ is the conditional expectation from $N^\omega$ onto $\cF' \cap N^\omega$ induced by $\psi$ and $E_n$ is the conditional expectation from $N$ onto $\theta_n(\cF)' \cap N$ induced by $\psi$, then we have $\varphi^\omega = (\varphi \circ E_n)^\omega$. We also check easily that
$$\| \zeta^\omega|_{A' \cap N^\omega} \| = \| \zeta^\omega \circ E_{A' \cap N^\omega} \| =\lim_{n \to \omega} \| \zeta \circ E_{\theta_n(A)' \cap N} \| =  \lim_{n \to \omega} \| \zeta|_{\theta_n(A)' \cap N} \|$$
and 
$$\| \xi^\omega|_{A' \cap c(M^\omega)} \|  = \| \zeta^\omega \circ E_{A' \cap c(M^\omega)} \| =\lim_{n \to \omega} \| \zeta \circ E_{\theta_n(A)' \cap c(M)} \| =  \lim_{n \to \omega} \| \xi|_{\theta_n(A)' \cap c(M)} \|.$$
We conclude by taking $F=\theta_n(\cF)$ for $n$ large enough along the ultrafilter $\omega$.
\end{proof}

With a similar proof, but much easier because we don't have to use ultrafilters and we don't have to perturb the state, we obtain the following type $\III_\lambda$ analog.
\begin{lem} \label{induction III lambda}
Let $N \subset M$ be an inclusion of von Neumann algebras with expectation and with separable predual. Suppose that $N$ is a type $\III_\lambda$ factor with $0 < \lambda < 1$. Let $\varphi$ be a $\lambda$-trace on $N$ so that $N_\varphi' \cap N=\C$. Take a finite dimensional subfactor $R \subset N$ with a $\varphi$-preserving conditional expectation $\rE_R$, some $\zeta \in N_*$ and some $\xi \in c(M)_*$ such that $\xi|_{\rb^\sharp(N \subset c(M))}=0$.

Then for every $\varepsilon > 0$, we can find a finite dimensional subfactor $F \subset R' \cap N$ that is globally invariant under $\sigma^\varphi$ such that
\begin{enumerate}[\rm (i)]
\item $\| \zeta|_{F_\varphi' \cap N} \| \leq \| \zeta|_R\|+ \varepsilon$
\item $\| \xi|_{F_\varphi' \cap c(M)} \| \leq \varepsilon$
\end{enumerate}
where $\rE_{F' \cap N} : N \rightarrow F' \cap N$ is the $\varphi$-preserving conditional expectation. Moreover, we can choose $F$ such that
$$ (F,\varphi|_F) \cong (\mathbf M_{2^p}(\C), \tau) \otimes (\mathbf M_2(\C), \omega_\lambda)$$
for some $p \in \N$.
\end{lem}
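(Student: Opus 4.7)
The proof follows Lemma \ref{induction III1}, but because $\varphi$ is lacunary we can replace the ultrapower centralizer $N^\omega_{\varphi^\omega}$ by the genuine centralizer $N_\varphi$, which is a $\II_1$ factor by the $\lambda$-trace hypothesis $N_\varphi' \cap N = \C$. First, Theorem \ref{computation lacunary} gives $\rb(N \subset c(M), \varphi) = N_\varphi' \cap c(M)$, hence $\rb^\sharp(N \subset c(M)) = N \vee (N_\varphi' \cap c(M))$, so the assumption on $\xi$ reads $\xi|_{N \vee (N_\varphi' \cap c(M))} = 0$.

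Next, set $P = R' \cap N_\varphi$. Since $R$ is a finite-dimensional factor with expectation, one has the canonical tensor decompositions $N = R \ovt (R' \cap N)$ and $c(M) = R \ovt (R' \cap c(M))$. The restriction $\varphi|_{R' \cap N}$ is again a $\lambda$-trace on the type $\III_\lambda$ factor $R' \cap N$ (its modular flow $\sigma^\varphi|_{R' \cap N}$ still has period $2\pi/|\log\lambda|$), its centralizer is $(R' \cap N)_\varphi = R' \cap N_\varphi = P$, and by the relative commutant theorem $P' \cap (R' \cap N) = \C$. Applying Theorem \ref{computation lacunary} to $R' \cap N \subset R' \cap c(M)$ and then tensoring with $R$ yields $P' \cap N = R$ and $P' \cap c(M) = R \vee (N_\varphi' \cap c(M))$. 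In particular $\|\zeta|_{P' \cap N}\| = \|\zeta|_R\|$ and $\|\xi|_{P' \cap c(M)}\| = 0$. Since $P$ is a type $\II_1$ von Neumann algebra, Popa's local quantization (Theorem \ref{local quantization type II}) applied to $P \subset N$ and $P \subset c(M)$ produces a finite-dimensional abelian $A \subset P$ such that $\|\zeta|_{A' \cap N}\| \leq \|\zeta|_R\| + \varepsilon/3$ and $\|\xi|_{A' \cap c(M)}\| \leq \varepsilon/3$.

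Finally, we upgrade $A$ to a subfactor of the prescribed form. After a small perturbation (error absorbed into an extra $\varepsilon/3$), we may assume that the minimal projections of $A$ have trace $2^{-p}$ in the $\II_1$ factor $P$, and we embed $A$ into a subfactor $F_0 \cong \mathbf{M}_{2^p}(\C)$ of $P$; since $\varphi|_{N_\varphi}$ is tracial, $\varphi|_{F_0}$ is the normalized trace. The corner $Q = F_0' \cap (R' \cap N)$ is a type $\III_\lambda$ factor with $\lambda$-trace $\varphi|_Q$ and centralizer $Q_\varphi = F_0' \cap P$, a $\II_1$ factor. Its discrete decomposition $Q = Q_\varphi \rtimes_\theta \Z$ (with $\theta$ scaling the trace by $\lambda$) yields a partial isometry $v \in Q$ with $vv^*, v^*v \in Q_\varphi$, $vv^* + v^*v = 1$, and $\sigma_t^\varphi(v) = \lambda^{\ri t} v$; the KMS condition forces $\varphi(vv^*)/\varphi(v^*v) = \lambda$, and by cutting with a projection of trace $1/(1+\lambda)$ we obtain the correct matrix units. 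The subfactor $F = F_0 \vee \{v,v^*\}''$ is $\sigma^\varphi$-invariant, satisfies $(F, \varphi|_F) \cong (\mathbf{M}_{2^p}(\C), \tau) \otimes (\mathbf{M}_2(\C), \omega_\lambda)$, and contains $A$ in its centralizer $F_\varphi = F_0 \otimes \C\{vv^*, v^*v\}$; hence $F_\varphi' \cap N \subseteq A' \cap N$ and $F_\varphi' \cap c(M) \subseteq A' \cap c(M)$, delivering the required bounds. The main technical step is the algebraic identity $P' \cap c(M) = R \vee (N_\varphi' \cap c(M))$, which without ultrapowers reduces, via the tensor decomposition induced by $R$, to a direct application of Theorem \ref{computation lacunary} to the subinclusion $R' \cap N \subset R' \cap c(M)$.
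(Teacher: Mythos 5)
Your proof is correct and is essentially the argument the paper intends: you remove the ultrafilter from the proof of Lemma~\ref{induction III1}, replace $N^\omega_{\varphi^\omega}$ with $N_\varphi$, use the tensor decomposition $N = R\ovt(R'\cap N)$ and $c(M) = R\ovt(R'\cap c(M))$ together with Theorem~\ref{computation lacunary} to identify $P'\cap N = R$ and $P'\cap c(M) = R\vee(N_\varphi'\cap c(M))$, and then apply Popa's local quantization followed by the standard construction of a $\sigma^\varphi$-invariant copy of $(\mathbf M_2(\C),\omega_\lambda)$ from the discrete decomposition. One small inaccuracy: the irreducibility $P'\cap(R'\cap N)=\C$ is not a consequence of the Connes--Takesaki relative commutant theorem; it follows because $\varphi|_{R'\cap N}$ is again a $\lambda$-trace, which one sees by reducing $N$ by a minimal projection $p\in R_\varphi$, noting $pNp\cong R'\cap N$ compatibly with $\varphi$, and using that corners of $\lambda$-traces by centralizer projections remain $\lambda$-traces.
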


\begin{proof}[Proof of Theorem \ref{large amenable}]
Using a standard desintegration and measurable selection argument, we may reduce the problem to the case where $N$ is a factor. We treat all cases separately according to the type of $N$.

\textbf{Semifinite case.} In this case, we already know by Popa's theorem \cite{Po81} that there exists a semifinite amenable subfactor with expectation $P \subset N$ such that $P' \cap M=N' \cap M$ and $P' \cap N=\C$. Since $N$ is semifinite, this automatically implies that $P' \cap c(M)=N' \cap c(M)$ and $P' \cap c(N)=P' \cap c(P)$. 

\textbf{Type $\III_1$ case.}
Take a dense sequence $(\zeta_n)_{n \in \N}$ in  $N_*$ and a dense sequence $(\xi_n)_{n \in \N}$ in  $\{ \xi \in c(M)_* \mid \xi|_{\rb^\sharp(N \subset c(M))}=0 \}$.

Using Lemma \ref{induction III1}, we can construct by induction a sequence $(F_n)_{n \in \N}$ of finite dimensional subfactors of $N$ with faithful states $\psi_n \in (F_n)_*$ of the form \ref{eq form of state} that satisfy the following properties: $F_{n} \subset (R_n)' \cap N$ for all $n \in \N$ where $R_0=\C$ and $R_n=F_0 \vee F_1 \vee \cdots \vee F_{n-1}$ for all $n \geq 1$, and 
\begin{enumerate}[(i)]
\item $ \| \varphi \circ \rE_{R_{n+1}' \cap N} - \varphi \circ \rE_{R_{n}' \cap N} \| \leq 2^{-n}$
\item $\| \zeta_n|_{(F_{n})_{\psi_n}' \cap N} \| \leq \| \zeta_n|_{R_n} \| + 2^{-n}$
\item $ \| \xi_n|_{(F_{n})_{\psi_n}'  \cap c(M)} \|  \leq 2^{-n}$
\end{enumerate}

where $\rE_{R_n'\cap N}$ is the conditional expectation on $R_n' \cap N$ induced by the state $\psi_0 \otimes \psi_1 \otimes \dots \otimes \psi_n \in (R_n)_*$ and $\rE_{R_n}$ is the conditional expectation on $R_n$ that preserves $\varphi_n=\varphi \circ \rE_{R_n' \cap N}$.

The condition $(\rm i)$ implies that the sequence of states $\varphi_n=\varphi \circ \rE_{R_n' \cap N}$ is a Cauchy sequence in $N_*$ that converges to some state $\phi \in N_*$ satisfying  $\phi =\phi \circ \rE_{R_n' \cap N}$ for all $n \in \N$. Let $e \in N$ be the support projection of $\phi$. Then for all $n \in \N$, we  have $e \in R_n' \cap N$, hence $\varphi(e)=\varphi_n(e)$. Taking the limit when $n \to \infty$, we get $\varphi(e)=\phi(e)=1$. Since $\varphi$ is faithful, this means that $e=1$, i.e.\ $\phi$ is faithful.

By construction, all the algebras $R_n$ are globally invariant under $\sigma^{\phi}$. It follows that their union generates an AFD type $\III_1$ subfactor $P=\bigvee_n R_n$ that is globally invariant under $\sigma^{\phi}$.
We claim that $P_{\phi}' \cap N=\C$. Indeed, item $(\rm ii)$ implies that 
$$\| \zeta_n|_{P_{\phi}' \cap N} \|  \leq \| \zeta_n|_{(F_{n})_{\psi_n}' \cap N} \| \leq \| \zeta_n|_{R_n} \| + 2^{-n} \leq \| \zeta_n|_P \| + 2^{-n}$$ for all $n \in \N$. Since $(\zeta_n)_n$ is dense in $N_*$, we obtain $\| \zeta|_{P_{\phi}' \cap N} \| \leq \| \zeta|_P\|$ for all $\zeta \in N_*$. We conclude that $P_{\phi}'  \cap N \subset P$, hence $P_{\phi}' \cap N =P_{\phi}' \cap P=\C$. Moreover, item $(\rm iii)$ shows that $\| \xi_n |_{P_{\phi}' \cap c(M)} \| \leq 2^{-n}$ for all $n \in \N$. By density, it follows that  $\xi |_{P_{\phi}' \cap c(M)}=0$ for all $\xi \in c(M)_*$ such that $\xi|_{\rb^{\sharp}(N \subset c(M))}=0$. This means that  $P_{\phi}' \cap c(M) \subset \rb^{\sharp}(N \subset c(M))$ hence $$P_{\phi}' \cap c(M) = (P_{\phi}' \cap N) \vee \rb(N \subset c(M), \phi)= \rb(N \subset c(M),\phi).$$
Since $\phi|_P$ is almost periodic, we conclude that
$$ P' \cap c(M)=P' \cap \rb(N \subset c(M),\phi)=\rb(N \subset c(M),\phi)^{\beta^{\phi}}=N' \cap c(M).$$

\textbf{Type $\III_\lambda$, $0 < \lambda < 1$ case.} Take $\varphi$ a $\lambda$-trace on $N$. Following an inductive procedure similar to the type $\III_1$ case, we can construct a type $\III_\lambda$ subfactor $P$ that is globally invariant under $\sigma^\varphi$ such that $P_\varphi' \cap N=P_\varphi' \cap P=\C$ and $P_\varphi' \cap c(M) =\rb(N \subset c(M),\varphi)=N_{\varphi}' \cap c(M)$. But $N$ is generated by $N_\varphi$ and $P$, hence
$$ P' \cap c(M)=P' \cap P_\varphi' \cap c(M)=P' \cap N_\varphi' \cap c(M)=N' \cap c(M)$$
as we wanted. Moreover, since $P_\varphi' \cap N=\C$, we have $P' \cap c(N) \subset P_\varphi' \cap c(N) \subset c(P)$, hence $P' \cap c(N)=P' \cap c(P)$.

\textbf{Type $\III_0$ case.} The proof is exactly the same as in \cite[Corollary 3.4]{Po85}. Indeed, by \cite[5.3.6]{Co72}, we can find a discrete decomposition $N=Q \rtimes_\theta \Z$ where $Q$ is a type $\II_\infty$ von Neumann algebra and $\theta \in \Aut(Q)$ acts as an odometer on the center of $Q$. Since $Q$ is semifinite and with expectation in $M$, it is also with expectation in $c(M)$. Therefore, we can find an abelian subalgebra with expectation $A \subset Q$ such that $A' \cap c(M)=A \vee (Q' \cap c(M))$. Now, the exact same proof as in \cite{Po85} shows that we can actually construct $A$ together with some hyperfinite $\II_\infty$ subalgebra with expectation $Q_0 \subset Q$ and a unitary $u \in Q$ such that $A$ is a cartan subalgebra of $Q_0$, $\cZ(Q_0)=\cZ(Q)$ and $\theta'$ normalizes $Q_0$ and $A$. Let $P = Q_0 \rtimes_{\theta'} \Z \subset N$. Then $Q_0' \cap c(M)=\cZ(Q_0) \vee (Q' \cap c(M))=Q' \cap c(M)$. Since $N$ is generated by $P$ and $Q$, it follows that $P' \cap c(M)=N' \cap c(M)$. Moreover, since $P$ contains with expectation a maximal abelian subalgebra of $N$, we also have $P' \cap c(N)=P' \cap c(P)$ as we wanted.
\end{proof}

\begin{thm} \label{irreducible amenable}
Let $N \subset M$ be an inclusion of von Neumann algebras with expectation and with separable preduals. Suppose that $N$ satisfies the bicentralizer conjecture. Then there exists an amenable subalgebra with expectation $P \subset N$ such that $P' \cap M=N' \cap M$.
\end{thm}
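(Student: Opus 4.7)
The plan is to parallel the proof of Theorem \ref{large amenable} while weakening both hypothesis and conclusion. By desintegration over $\cZ(N)$ and a standard measurable selection argument, I first reduce to the case where $N$ is a factor; the bicentralizer conjecture for $N$ descends to the fibers by Proposition \ref{desintegration bicentralizer}, and the fiberwise constructions recombine into a global $P$. When $N$ has no type $\III_1$ summand, Theorem \ref{conjecture for no type III1} ensures that the inclusion $N \subset M$ automatically satisfies the relative bicentralizer conjecture, so Theorem \ref{large amenable} applies directly and yields $P$ amenable with expectation satisfying the stronger identity $P' \cap c(M) = N' \cap c(M)$, hence a fortiori $P' \cap M = N' \cap M$.

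The main case is therefore $N$ a type $\III_1$ factor with trivial bicentralizer. Here I would mimic the type $\III_1$ portion of the proof of Theorem \ref{large amenable}, working directly in $M$ rather than $c(M)$, and substituting Corollary \ref{bicentralizer flow ergodic}---which gives $\rB(N \subset M, \varphi)^{\beta^\varphi} = N' \cap M$ for every faithful $\varphi \in N_*$---for the unavailable relative bicentralizer conjecture. The inductive construction of an AFD subfactor $P \subset N$ with faithful $\sigma^\varphi$-preserving expectation will go through provided we can establish a local step analogous to Lemma \ref{induction III1}: given a finite-dimensional subfactor $R \subset N$ with $\varphi$-preserving expectation, $\zeta \in N_*$, $\xi \in M_*$ with $\xi|_{N \vee (N' \cap M)} = 0$, and $\varepsilon > 0$, produce a finite-dimensional subfactor $F \subset R' \cap N$ with a faithful state $\psi$ of the form in Lemma \ref{induction III1} such that $\|\zeta|_{F_\psi' \cap N}\| \leq \|\zeta|_R\| + \varepsilon$ and $\|\xi|_{F_\psi' \cap M}\| \leq \varepsilon$. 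Then $P = \bigvee_n R_n$, with $R_n = F_0 \vee \cdots \vee F_{n-1}$ produced iteratively along dense sequences in $N_*$ and in the annihilator of $N \vee (N' \cap M)$ in $M_*$, will be amenable with expectation and will satisfy $P' \cap M = N' \cap M$.

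The main obstacle is establishing this local step. The containment $((R' \cap N)^\omega_{\varphi^\omega})' \cap M^\omega \subset R \vee \rB(R' \cap N \subset M, \varphi)^\omega$ is automatic from the definition of the analytic bicentralizer, but, unlike in Lemma \ref{induction III1}, the functional $\xi$ need not vanish on $\rB(R' \cap N \subset M, \varphi)$. To bridge this gap I would first carry out an averaging argument over $\beta^\varphi$, parallel to Lemma \ref{averaging type III1} but performed in $M$ rather than $c(M)$: since $R' \cap N$ is a type $\III_1$ factor of finite index in $N$ (via the tensor decomposition $N \cong R \ovt (R' \cap N)$), it still satisfies the bicentralizer conjecture by Theorem \ref{conjecture finite index}, so Corollary \ref{bicentralizer flow ergodic} applied to the inclusion $R' \cap N \subset M$ gives $\rB(R' \cap N \subset M, \varphi)^{\beta^\varphi} = (R' \cap N)' \cap M = R \vee (N' \cap M)$. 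Averaging $\xi$ along $\beta^\varphi$ then produces $\eta \in M_*$ that agrees with $\xi$ on $R \vee (N' \cap M)$ (and so vanishes there by the hypothesis on $\xi$) while having arbitrarily small norm on $\rB(R' \cap N \subset M, \varphi)$. Consequently $\eta^\omega$ has small norm on $((R' \cap N)^\omega_{\varphi^\omega})' \cap M^\omega$, and Theorem \ref{local quantization type II} applied inside the $\II_1$ ultrapower $(R' \cap N)^\omega_{\varphi^\omega}$ extracts a finite-dimensional abelian subalgebra, which one enlarges to a subfactor $F$ of the required form exactly as at the end of the proof of Lemma \ref{induction III1}.
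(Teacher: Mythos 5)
The paper's proof of this theorem is essentially one line: after reducing to $N$ a factor by disintegration, the non-$\III_1$ case is handled by Theorem \ref{large amenable} (applicable because Theorem \ref{conjecture for no type III1} gives the relative conjecture for free), and the type $\III_1$ case is handled by invoking Corollary \ref{bicentralizer flow ergodic} together with the inductive construction already carried out in \cite{AHHM18}. You instead attempt to re-derive that construction from scratch; the reduction steps at the beginning and the non-$\III_1$ case are fine, but the type $\III_1$ local step has a genuine gap.

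The local step you need is: given $\xi \in M_*$ vanishing on $N \vee (N' \cap M)$ and a finite-dimensional subfactor $R \subset N$, produce $F \subset R' \cap N$ with a state $\psi$ such that $\| \xi|_{F_\psi' \cap M}\| \leq \varepsilon$. Your averaging over $\beta^\varphi$ replaces $\xi$ by an $\eta$ that agrees with $\xi$ on $R \vee (N' \cap M)$ and has small norm on $\rB(R' \cap N \subset M, \varphi)$, and the ultrapower quantization then gives an $F$ with $\|\eta|_{F_\psi' \cap M}\|$ small. But the quantity you actually need to control is $\|\xi|_{F_\psi' \cap M}\|$, and you have said nothing about the discrepancy $\xi - \eta$. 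That discrepancy vanishes on $R \vee (N' \cap M)$, but $F_\psi' \cap M$ is a much larger algebra (it contains all of $\rB(R' \cap N \subset M, \varphi)$, which is strictly bigger than $R \vee (N' \cap M)$ precisely because the relative bicentralizer conjecture is not assumed), so there is no reason for $\xi - \eta$ to be small there. Note that the paper faces the identical difficulty in the proof of Theorem \ref{local quantization amenable} and resolves it via Lemma \ref{semilocal quantization}, which in turn relies on the universal masa of Lemma \ref{universal masa} and hence on amenability of $N$; that tool is not available here since $N$ is not assumed amenable. The other way the paper avoids this difficulty, in Theorem \ref{large amenable}, is by assuming the relative bicentralizer conjecture so that $\rB^\sharp = \rb^\sharp$ and no averaging is needed — also unavailable under the present hypotheses. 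So your local step, as written, is not established, and the intended conclusion $P_\phi' \cap M \subset N \vee (N' \cap M)$ does not follow from what you prove.
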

\begin{proof}
Using a desintegration argument, we may assume that $N$ is a factor. If $N$ is not of type $\III_1$, we can apply Theorem \ref{large amenable}. If $N$ is of type $\III_1$ with trivial bicentralizer, we can combine Corollary \ref{bicentralizer flow ergodic} and \cite{AHHM18}.
\end{proof}

\begin{lem} \label{separable step}
Let $N \subset M$ be an inclusion of von Neumann algebras with a faithful normal state $\varphi$ on $M$ such that $N$ is globally $\sigma^\varphi$-invariant.

Then for every countable subset $X \subset M$, we can find a countably generated $\sigma^\varphi$-invariant subalgebra $P \subset N$ such that $E_{\rB(P \subset M,\varphi)}(x)=E_{\rB(N \subset M,\varphi)}(x)$ for all $x \in X$.
\end{lem}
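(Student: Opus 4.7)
The plan is to construct $P$ using a diagonal selection of approximating inner averages. For each $x \in X$, set $y_x := E_{\rB(N \subset M, \varphi)}(x)$. By Proposition \ref{conditional expectation is in ucp}(1), the conditional expectation $E_{\rB(N \subset M, \varphi)}$ belongs to $\cV^\varphi(N \subset M) \cap \cD(N \subset M)$, so for each $x \in X$ and each $k \in \N$ one can choose a row $\ba^{(x,k)}$ with entries in the spectral subspace $N(\sigma^\varphi, [-1/k, 1/k])$, with $\|\ba^{(x,k)}\| \le 1$, such that $\ba^{(x,k)} x \ba^{(x,k)*}$ approaches $y_x$ as $k \to \infty$ in an appropriate pointwise weak* sense (one can also arrange testing against a chosen countable dense subset of $M_*$).

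I would then let $P$ be the von Neumann subalgebra of $N$ generated by the countable set of entries $\{a^{(x,k)}_i : x \in X,\, k, i \in \N\}$. Since each $N(\sigma^\varphi, [-1/k,1/k])$ is a $\sigma^\varphi$-invariant linear subspace of $N$, the algebra $P$ is countably generated and globally $\sigma^\varphi$-invariant.

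The next step is to verify $E_{\rB(P \subset M, \varphi)}(x) = y_x$ for every $x \in X$. Since $y_x \in \rB(N \subset M, \varphi) \subset \rB(P \subset M, \varphi)$, it suffices to check the orthogonality $\varphi(a^*(x - y_x)) = 0$ for every $a \in \rB(P \subset M, \varphi)$. By construction, there is an accumulation point $\Psi$ in $\cV^\varphi(P \subset M)$ of the maps $\Ad(\ba^{(x,k)})$ that sends $x$ to $y_x$. The key tool is Theorem \ref{invariance bicentralizer ucp}: composing $\Psi$ with the normal conditional expectation $E_{\rB(P \subset M, \varphi)}$ (which itself lies in $\cV^\varphi(P \subset M)$ by Proposition \ref{conditional expectation is in ucp}) yields a normal UCP map in $\cV^\varphi(P \subset M)$ which, by the theorem, fixes $\rB(P \subset M,\varphi)$ pointwise. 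This places $\rB(P \subset M,\varphi)$ inside the multiplicative domain of the relevant map; together with the $\varphi$-preservation of maps in $\cV^\varphi(P \subset M)$ (guaranteed by the KMS estimates of Section \ref{section ucp map}), this forces the desired identity $\varphi(a^*(x - y_x)) = 0$.

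The main obstacle is that an arbitrary weak* accumulation point $\Psi$ need not be normal, so Theorem \ref{invariance bicentralizer ucp} cannot be applied to $\Psi$ directly. The device of composing with the normal, $\varphi$-preserving conditional expectation $E_{\rB(P \subset M, \varphi)}$ is designed to circumvent this. If the one-shot construction proves too fragile, the fallback is an inductive version: build $P$ as the union of an increasing sequence $P_0 \subset P_1 \subset \cdots$ of countably generated $\sigma^\varphi$-invariant subalgebras, where at stage $n$ one absorbs approximating rows for the already-included data (elements of $X$, of $P_n$ itself, and of a fixed dense countable subset of $M_*$), so that the $\cV^\varphi$-closure properties yield the required equality in the limit $P = \bigvee_n P_n$.
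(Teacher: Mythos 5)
Your proposal takes a genuinely different route from the paper, and unfortunately the route has a gap at its crux. The paper's proof is a one-line Dixmier-type averaging argument: it observes that, for each $n$, one can pick a finite set $F_n \subset \cU(N)$ with $\|u\varphi u^* - \varphi\| \le 2^{-n}$ and a probability weight $\lambda$ so that $\sum_{u\in F_n}\lambda(u)\,uxu^*$ is $2^{-n}$-close to $E_{\rB(N\subset M,\varphi)}(x)$ in $\|\cdot\|_\varphi$ (this is exactly the content of Proposition~\ref{conditional expectation is in ucp}(1), i.e.\ $E_{\rB(N\subset M,\varphi)}\in\cD(N\subset M)$), and then takes $P$ to be generated by $\bigcup_{t\in\Q}\bigcup_n\sigma^\varphi_t(F_n)$. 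Unitaries are crucial here: since $\Ad(u)$ is multiplicative, the relation $\Ad(u)(a^*x)=(ua^*u^*)(uxu^*)$ lets one immediately pass the approximate commutation of $a\in\rB(P\subset M,\varphi)$ with $u$ into the needed identity $\varphi(a^*(x-y_x))=0$, using only the definition of the relative bicentralizer as $(P^\omega_{\varphi^\omega})'\cap M$. No appeal to Theorem~\ref{invariance bicentralizer ucp} is required.

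Your argument replaces this with rows and $\cV^\varphi(P\subset M)$, and then tries to feed the accumulation point $\Psi$ into Theorem~\ref{invariance bicentralizer ucp}. There are two concrete problems. First, that theorem's hypothesis is that $f|_{\rB(P,\varphi|_P)}$ is normal (the bicentralizer of $P$ in \emph{itself}, not in $M$), and you have no control over that: $\Psi$ is an arbitrary weak* accumulation point of row maps designed to approximate $E_{\rB(N\subset M,\varphi)}$, not anything about $\rB(P,\varphi|_P)$. Your fix --- ``composing $\Psi$ with $E_{\rB(P\subset M,\varphi)}$ yields a normal UCP map'' --- is simply false: neither $E_{\rB(P\subset M,\varphi)}\circ\Psi$ nor $\Psi\circ E_{\rB(P\subset M,\varphi)}$ inherits normality from the normal factor, and the paper's own reduction in the proof of Theorem~\ref{invariance bicentralizer ucp} uses the hypothesis on $\rB(N,\varphi)$ in an essential way that is absent here. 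Second, even setting that aside, rows do not give multiplicative maps: $\ba\, a^*x\,\ba^*$ does not factor as $(\ba a^*\ba^*)(\ba x\ba^*)$, so the approximate intertwining needed to derive $\varphi(a^*(x-y_x))=0$ does not drop out the way it does for unitaries. Finally, as a minor point, your $P$ is not automatically $\sigma^\varphi$-invariant merely because the row entries lie in $\sigma^\varphi$-invariant \emph{subspaces}; one still needs to adjoin the $\sigma^\varphi_t$-orbits (for $t\in\Q$, say), exactly as the paper does. Your inductive fallback is closer in spirit to the paper's construction, but it would have to be carried out with unitary dilations of the averages (or directly with the Dixmier semigroup) to close these gaps.
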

\begin{proof}
We may assume that $X=\{x\}$ for some $x \in M$. For every $n \in \N$, we can find a finite set $F_n \subset \cU(N)$ such that $\|u\varphi u^*-\varphi\| \leq 2^{-n}$ for all $u \in F_n$ and 
$$\| \sum_{u \in F_n} \lambda(u) uxu^*-E_{\rB(N \subset M,\varphi)}(x) \|_\varphi \leq 2^{-n}$$
for some probability measure $\lambda \in \mathrm{Prob}(F_n)$. We take $P$ to be the von Neumann algebra generated by $\bigcup_{t \in \Q} \bigcup_{n \in \N} \sigma^\varphi_t(F_n)$.
\end{proof}

\begin{lem} \label{separable reduction}
Let $N \subset M$ be an inclusion of von Neumann algebras with a faithful normal state $\varphi$ on $M$ such that $N$ is globally $\sigma^\varphi$-invariant. Suppose that the inclusion $N \subset M$ satisfies the bicentralizer conjecture.

Then for every pair of countably generated subalgebras $P_0 \subset N$ and $Q_0 \subset M$, we can find a pair of countably generated $\sigma^\varphi$-invariant subalgebras $P \subset N$ and $Q \subset M$ such that $P \subset Q$ and the inclusion $P \subset Q$ satisfies the bicentralizer conjecture.
\end{lem}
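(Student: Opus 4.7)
The plan is to reformulate the conclusion via Theorem \ref{conjecture iff dixmier} and then carry out a back-and-forth construction. By that theorem, the hypothesis is equivalent to the weak Dixmier property for $N \subset c(M)$, and the conclusion we seek is equivalent to the weak Dixmier property for $P \subset c(Q)$. Since any $\sigma^\varphi$-invariant inclusion automatically admits a $\varphi$-preserving conditional expectation by Takesaki's theorem, it suffices to produce countably generated $\sigma^\varphi$-invariant subalgebras $P_0 \subset P \subset N$ and $Q_0 \subset Q \subset M$ with $P \subset Q$ such that for every $x \in c(Q)$ the weak*-closed convex hull $\conv\{uxu^* : u \in \cU(P)\}$ meets $P' \cap c(Q)$.

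I would build $P$ and $Q$ as unions of increasing sequences $(P_n)$ in $N$ and $(Q_n)$ in $M$ of countably generated $\sigma^\varphi$-invariant subalgebras with $P_n \subset Q_n$. Starting from $\sigma^\varphi$-invariant countably generated enlargements of $P_0$ and $Q_0 \vee P_0$, at stage $n$ I would pick a countable SOT-dense subset $X_n$ of the unit ball of $c(Q_n) = Q_n \rtimes_{\sigma^\varphi} \R$, which is available because $Q_n$ has separable predual. For each $x \in X_n$, the weak Dixmier property of $N \subset c(M)$ furnishes a countable sequence of convex combinations of $\{uxu^* : u \in \cU(N)\}$ involving only countably many unitaries $U_x \subset \cU(N)$, whose weak* limit $y_x$ lies in $N' \cap c(M)$. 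Since $c(M)$ is generated by $M$ together with $\{\varphi^{\ri t}\}_{t \in \R}$, each $y_x$ is a weak* limit of finite combinations $\sum_k m_{x,k}\,\varphi^{\ri t_{x,k}}$ with $m_{x,k} \in M$; I fix one such countable sub-approximation and collect the $M$-coefficients into a countable set $S_x \subset M$. Then I define $P_{n+1}$ to be the smallest $\sigma^\varphi$-invariant von Neumann subalgebra of $N$ generated by $P_n$ and $\bigcup_{x \in X_n} U_x$, and $Q_{n+1}$ to be the smallest $\sigma^\varphi$-invariant von Neumann subalgebra of $M$ generated by $Q_n$, $P_{n+1}$, and $\bigcup_{x \in X_n} S_x$. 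Both remain countably generated, via continuity of $\sigma^\varphi$ applied to a countable dense subset of $\R$, and by construction each $y_x$ lies in $c(Q_{n+1})$.

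Setting $P := \bigvee_n P_n$ and $Q := \bigvee_n Q_n$, the set $\bigcup_n X_n$ is SOT-dense in the unit ball of $c(Q) = \bigvee_n c(Q_n)$, and for each $x \in \bigcup_n X_n$ the witness $y_x \in P_{n+1}' \cap c(Q_{n+1}) \subset P' \cap c(Q)$ lies in $\conv\{uxu^* : u \in \cU(P)\}$. A standard weak*-compactness argument then propagates the weak Dixmier conclusion from the dense set $\bigcup_n X_n$ to every $x$ in the unit ball of $c(Q)$, and hence to all of $c(Q)$ by rescaling. The main obstacle is the enlargement step producing $Q_{n+1}$: each witness $y_x \in N' \cap c(M)$ must be captured inside $c(Q_{n+1})$ without destroying countable generation. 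This is handled by separately adding countably many $M$-coefficients from an explicit SOT-approximation of $y_x$ in the crossed product $M \rtimes_{\sigma^\varphi} \R$, exploiting that $M$ together with the unitaries $\varphi^{\ri t}$ algebraically generates an SOT-dense $*$-subalgebra of $c(M)$.
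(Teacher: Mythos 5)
Your construction of the sequences $(P_n)$ and $(Q_n)$ — enlarging $P_{n+1}$ by the unitaries witnessing the Dixmier averaging, enlarging $Q_{n+1}$ by the $M$-coefficients of approximants to $y_x$ in the algebraic crossed product, and maintaining $\sigma^\varphi$-invariance and countable generation — is sensible and correctly handles the bookkeeping needed to fit $y_x$ into $P'\cap c(Q)$. The gap is in the final step, where you propagate the Dixmier conclusion from the strongly dense set $\bigcup_n X_n$ to all of $\Ball(c(Q))$ by ``a standard weak*-compactness argument''. No such argument is available: elements of $\cD(P\subset c(Q))$ are unital completely positive maps, hence norm-contractive, but they are in general neither normal nor $\sigma$-strongly continuous, since the unitaries $u\in\cU(P)$ do not preserve any natural normal state on $c(Q)$. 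Consequently, if $\Phi\in\cD(P\subset c(Q))$ maps each element of a $\sigma$-strongly dense subset of $\Ball(c(Q))$ into $P'\cap c(Q)$, one cannot conclude $\Phi(c(Q))\subset P'\cap c(Q)$: strong approximation of $x$ gives no control over $\Phi(x)$ for such a map. Extracting a weak*-limit of the $\Phi_x$'s over $x\in\bigcup_n X_n$ by compactness yields a $\Phi$ which you know only agrees with a conditional expectation onto $P'\cap c(Q)$ on $\bigcup_n X_n$, and this is not enough.

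The paper's proof sidesteps this by working at the level of the analytic bicentralizer rather than the Dixmier semigroup. Via Lemma~\ref{separable step}, the recursion is arranged so that $E_{\rB(P_{n}\subset M,\varphi)}$ agrees with $E_{\rB(N\subset M,\varphi)}$ on $Q_{n-1}$; since these are faithful normal $\varphi$-preserving conditional expectations (Proposition~\ref{anal expectation}), they are $\|\cdot\|_\varphi$-contractive and hence $\sigma$-strongly continuous on bounded sets, which is precisely what allows the equality to propagate from $\bigcup_n Q_n$ to $Q$; then applying it to $x\in\rB(P\subset Q,\varphi)$ immediately gives $\rB(P\subset Q,\varphi)=\rb(P\subset Q,\varphi)$. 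This continuity is exactly what your argument is missing. If you replace the Dixmier averages over arbitrary $u\in\cU(N)$ with averages over the approximately $\varphi$-centralizing unitaries that implement $E_{\rB(N\subset c(M),\varphi)}$ (Proposition~\ref{conditional expectation is in ucp}), you restore the required $\|\cdot\|_\varphi$-continuity, and your scheme essentially reduces to the paper's argument.
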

\begin{proof}
Without loss of generality, we may assume that $P_0 \subset Q_0$ and that $P_0$ and $Q_0$ are $\sigma^\varphi$-invariant. We construct recursively a sequence $(P_n,Q_n)$ of $\sigma^\varphi$-invariant subalgebras with $P_n \subset Q_n$ such that 
$$E_{\rB(P_n \subset M,\varphi)}(x)=E_{\rB(N \subset M,\varphi)}(x)$$
for all $x \in Q_{n-1}$. 
Suppose that we have already constructed $(P_{n-1},Q_{n-1})$. By applying Lemma \ref{separable step} to a dense countable subset of the unit balle of $Q_{n-1}$, we obtain the desired $P_n$. Then we simply put $Q_n=P_n \vee Q_{n-1}$.

Now, ler $P=\bigvee_{n \in \N} P_n$ and $Q=\bigvee
_{n \in \N} Q_n$. Then for all $x \in Q_{n}$, we have
$$ E_{\rB(P \subset M,\varphi)}(x) = E_{\rB(P \subset M,\varphi)}( E_{\rB(P_{n+1} \subset M,\varphi)}(x))=E_{\rB(P \subset M,\varphi)}( E_{\rB(N \subset M,\varphi)}(x))=E_{\rB(N \subset M,\varphi)}(x).$$
Since this holds for every $x \in Q_{n}$ and for every $n \in \N$, we get
$$ E_{\rB(P \subset M,\varphi)}(x)=E_{\rB(N \subset M,\varphi)}(x)=E_{\rb(N \subset M,\varphi)}(x)$$
for all $x \in Q$. By taking $x \in \rB(P \subset Q,\varphi)$, we obtain $\rB(P \subset Q,\varphi) \subset \rb(N \subset M,\varphi) \subset \rb(P \subset M,\varphi)$, hence $\rB(P \subset Q,\varphi)=\rb(P \subset Q,\varphi)$.
\end{proof}

\begin{proof}[Proof of Theorem \ref{local quantization type III}]
We first prove the if direction. Suppose that $N \subset c(M)$ has the local quantization property. Take $(A_i)_{i \in I}$ a net of finite dimensional abelian subalgebras of $N$ such that $A_i' \cap c(M)$ converges to $N' \cap c(M)$. Since $A_i$ is abelian, we have $E_{A_i' \cap c(M)} \in \cD(N \subset c(M))$ for all $i \in I$. Let $\Phi \in \cD(N \subset c(M))$ be an accumulation point of the net $(E_{A_i' \cap c(M)})_{i \in I}$. Take $\xi \in c(M)_*$ such that $\xi|_{N' \cap c(M)}=0$. Then
$$ \| \xi \circ E_{A_i' \cap c(M)} \| = \| \xi|_{A_i' \cap c(M)} \| \to \| \xi|_{N' \cap c(M)}\|=0$$
when $i \to \infty$. In particular, $\xi \circ E_{A_i' \cap c(M)} $ converges weakly to $0$, hence $\xi \circ \Phi=0$. Since this holds for every $\xi \in c(M)_*$ that vanishes on $N' \cap c(M)$, the Hahn-Banach theorem implies that the image of $\Phi$ is contained in $N' \cap c(M)$. We conclude that $N \subset c(M)$ has the weak Dixmier property, hence $N \subset M$ satisfies the bicentralizer conjecture by Theorem \ref{conjecture iff dixmier}.

Now, we deal with the only if direction. If $M_*$ is separable, then by Theorem \ref{large amenable}, we can find an amenable subalgebra with expectation $P \subset N$ such that $P' \cap c(M)=N' \cap c(M)$. Then the local quantization property for $N \subset c(M)$ follows immediately from the local quantization for $P \subset c(M)$, which holds by Theorem \ref{local quantization amenable}.

We now deal with the case where $M$ is countably decomposable but not necessarily countably generated. Take $F \subset c(M)_*$ a finite set. Fix a faithful normal state $\varphi$ on $M$ such that $N$ is $\sigma^\varphi$-invariant. Using Property (P\ref{approx increase decrease}), take $P \subset N$ a countably generated $\sigma^\varphi$-invariant subalgebra that is so large that $\| \xi|_{P' \cap c(M)} \| \leq \| \xi_{N' \cap c(M)} \| +\varepsilon$ for all $\xi \in F$. Take $Q \subset M$ a countably generated $\sigma^\varphi$-invariant subalgebra that contains $P$ and that is so large that $\| \xi -\xi \circ c(E) \| \leq \varepsilon$ for all $\xi \in F$, where $E \in \cE(M,Q)$ is the $\varphi$-preserving conditional expectation and $c(E) \in \cE(c(M),c_E(Q))$ is the core extension of $E$. Thanks to Lemma \ref{separable reduction}, we may assume that the inclusion $P \subset Q$ satisfies the bicentralizer conjecture. Since $Q$ has separable predual, by the first part of the proof, we can find a finite dimensional abelian subalgebra $A \subset P$ such that for all $\xi \in F$, we have
$$\| \xi|_{A' \cap c_E(Q)} \| \leq \| \xi|_{P' \cap c_E(Q)} \| +\varepsilon.$$
But since $\| \xi- \xi \circ c(E) \| \leq \varepsilon$, we also have
$$ \| \xi|_{A' \cap c(M)} \| \leq \| (\xi \circ c(E))|_{A' \cap c(M)} \| +\varepsilon =  \| \xi|_{A' \cap c_E(Q)} \|+ \varepsilon$$
and 
$$\| \xi|_{P' \cap c_E(Q)} \| = \| (\xi \circ c(E))|_{P' \cap c(M)} \|  \leq \| \xi|_{P' \cap c(M)} \|  + \varepsilon.$$
Combining all these inequalities, we obtain
$$  \| \xi|_{A' \cap c(M)} \| \leq \| \xi|_{P' \cap c(M)} \|  + 3\varepsilon.$$
Finally, since $\| \xi|_{P' \cap c(M)} \| \leq \| \xi|_{N' \cap c(M)}\| \leq \varepsilon$ by the choice of $P$, we conclude that 
$$  \| \xi|_{A' \cap c(M)} \| \leq \| \xi|_{N' \cap c(M)} \|  + 4\varepsilon.$$

This proves the local quantization property when $M$ is countably decomposable. The case where $M$ is not countably decomposable follows easily by cutting with arbitrarily large countably decomposable projections $e \in N$ and $f \in N' \cap M$ as in the proof of Theorem \ref{local quantization type II}.
\end{proof}

\begin{proof}[Proof of Theorem \ref{local quantization no core type III}] Suppose first that $M_*$ is separable. By Theorem \ref{irreducible amenable}, we can find $P \subset N$ an amenable subalgebra with expectation such that $P'\cap M=N' \cap M$. By Theorem \ref{local quantization type III}, the inclusion $P \subset c(M)$ has the local quantization property. By Proposition \ref{local quantization restriction}, it follows that $P \subset M$ also has the local quantization property. Since $P' \cap M=N' \cap M$, we conclude that $N \subset M$ has the local quantization property.

We now deal with the case where $M$ is countably decomposable but not necessarily countably generated. Take $F \subset c(M)_*$ a finite set. Fix a faithful normal state $\varphi$ on $M$ such that $N$ is $\sigma^\varphi$-invariant. Take $P \subset N$ a countably generated $\sigma^\varphi$-invariant subalgebra that is so large that $\| \xi|_{P' \cap M} \| \leq \| \xi_{N' \cap M} \| +\varepsilon$ for all $\xi \in F$. Thanks to Lemma \ref{separable reduction}, we may assume that the inclusion $P$ satisfies the bicentralizer conjecture. Take $Q \subset M$ a countably generated $\sigma^\varphi$-invariant subalgebra that contains $P$ and that is so large that $\| \xi -\xi \circ E \| \leq \varepsilon$ for all $\xi \in F$, where $E \in \cE(M,Q)$ is the $\varphi$-preserving conditional expectation. Since $Q$ has separable predual, by the first part of the proof, we can find a finite dimensional abelian subalgebra $A \subset P$ such that for all $\xi \in F$, we have
$$\| \xi|_{A' \cap Q} \| \leq \| \xi|_{P' \cap Q} \| +\varepsilon.$$
But since $\| \xi- \xi \circ E \| \leq \varepsilon$, we also have
$$ \| \xi|_{A' \cap M} \| \leq \| (\xi \circ E)|_{A' \cap M} \| +\varepsilon =  \| \xi|_{A' \cap Q} \|+ \varepsilon$$
and 
$$\| \xi|_{P' \cap Q} \| = \| (\xi \circ E)|_{P' \cap M} \|  \leq \| \xi|_{P' \cap M} \|  + \varepsilon.$$
Combining all these inequalities, we obtain
$$  \| \xi|_{A' \cap M} \| \leq \| \xi|_{P' \cap M} \|  + 3\varepsilon.$$
Finally, since $\| \xi|_{P' \cap M} \| \leq \| \xi|_{N' \cap M}\| \leq \varepsilon$ by the choice of $P$, we conclude that 
$$  \| \xi|_{A' \cap M} \| \leq \| \xi|_{N' \cap M} \|  + 4\varepsilon.$$

This proves the local quantization property when $M$ is countably decomposable. The case where $M$ is not countably decomposable follows easily by cutting with arbitrarily large countably decomposable projections $e \in N$ and $f \in N' \cap M$ as in the proof of Theorem \ref{local quantization type II}.
\end{proof}

The following theorem is proved by reducing to the factor case and then using \cite[Theorem 3.2]{Po81} (for the type $\II$ and type $\III_\lambda, \lambda \in ]0,1[$ cases), \cite[Corollary 3.4]{Po85} (for the type $\III_0$ case), and \cite[Theorem D]{Ma18} (for the type $\III_1$ case).
\begin{thm} \label{irreducible amenable one}
Let $N$ be a von Neumann algebra with separable predual. Then there exists an amenable subalgebra with expectation $P \subset N$ such that $P' \cap N=\cZ(N)$.
\end{thm}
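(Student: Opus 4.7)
The plan is to reduce to the factor case by direct integral decomposition and then apply the existing results cited in the hint. Since $N_*$ is separable, we can write $N = \int^\oplus_X N_x \, d\mu(x)$ over a standard Borel space $(X,\mu)$ where each fiber $N_x$ is a factor (with separable predual). The goal is then to construct measurable fields of amenable subalgebras $x \mapsto P_x \subset N_x$ with $P_x' \cap N_x = \C$, together with measurable fields of faithful normal conditional expectations $x \mapsto E_x : N_x \to P_x$, so that $P = \int^\oplus_X P_x \, d\mu(x)$ is amenable with expectation in $N$ and satisfies $P' \cap N = \cZ(N)$.

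First I would partition $X$ into measurable pieces according to Connes's type classification: type $\rm I_n$ (including $n = \infty$), type $\II_1$, type $\II_\infty$, type $\III_0$, type $\III_\lambda$ for $\lambda \in \{0\} \cup (0,1)$ (parametrized further by $\lambda$), and type $\III_1$. On each piece I would invoke the appropriate fiberwise existence result: masa's on type $\rm I$ pieces (trivial), \cite[Theorem 3.2]{Po81} on the semifinite and type $\III_\lambda$ ($0 < \lambda < 1$) pieces, \cite[Corollary 3.4]{Po85} on the type $\III_0$ piece, and \cite[Theorem D]{Ma18} on the type $\III_1$ piece, each producing an amenable irreducible subfactor with expectation inside $N_x$.

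The main technical obstacle is the measurable selection: one must verify that the amenable subfactor $P_x$ with expectation $E_x$ can be chosen to depend measurably on $x$. This is handled case-by-case by inspecting the inductive constructions in the cited papers. Each is a countable inductive procedure producing finite-dimensional approximants (with associated states/expectations) with explicit approximation criteria in the predual norm, which are Borel conditions in $x$; a standard Kuratowski--Ryll-Nardzewski selection argument then provides measurable choices at each stage. The measurability of the resulting direct limit $P_x = \bigvee_n F_{n,x}$ and of the limit conditional expectation $E_x$ (obtained e.g.\ as a weak$^*$ limit of $\varphi_x$-preserving expectations onto the finite-dimensional pieces) follows.

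Once measurability is established, set $P = \int^\oplus_X P_x \, d\mu(x)$ and $E = \int^\oplus_X E_x \, d\mu(x) \in \cE(N, P)$. Then $P$ is amenable as a direct integral of amenable von Neumann algebras, and by the disintegration of relative commutants for direct integrals of factors one obtains
\begin{equation*}
P' \cap N = \int^\oplus_X (P_x' \cap N_x) \, d\mu(x) = \int^\oplus_X \C \, d\mu(x) = \rL^\infty(X,\mu) = \cZ(N),
\end{equation*}
which is the desired conclusion.
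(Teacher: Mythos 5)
Your proposal matches the paper's proof exactly: the paper likewise reduces to the factor case via disintegration and invokes \cite[Theorem 3.2]{Po81} (type $\II$ and type $\III_\lambda$, $0 < \lambda < 1$), \cite[Corollary 3.4]{Po85} (type $\III_0$), and \cite[Theorem D]{Ma18} (type $\III_1$) fiberwise. The measurable-selection discussion you add is just an elaboration of what the paper leaves implicit in the phrase ``reducing to the factor case.''
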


\begin{proof}[Proof of Theorem \ref{local quantization for one}] It is sufficient to prove the theorem when $N_*$ is separable. By Theorem \ref{irreducible amenable one}, we can find an amenable subalgebra with expectation $P \subset N$ such that $P' \cap N =\cZ(N)$. Since $N$ has no type $\I$ summand, the same holds for $P$. Then Theorem \ref{local quantization no core type III} gives the desired local quantization property.
\end{proof}

\begin{lem} \label{fd abelian masa}
Let $N$ be a von Neumann algebra that satisfies the bicentralizer conjecture. Let $\varphi \in N_*$ be a faithful state. Then for every $\xi \in N_*$ and every $\varepsilon > 0$, there exists a finite dimensional abelian subalgebra $A \subset N$ such that $$\| \xi |_{A' \cap N} \| - \| \xi |_A\| \leq \varepsilon \quad \text{ and } \quad  \| \varphi - \varphi \circ E_{A' \cap N} \| \leq \varepsilon.$$
\end{lem}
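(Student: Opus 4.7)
The plan is to combine Theorem \ref{local quantization no core type III} applied to the self-inclusion $N \subset N$---which gives the local quantization property for the self-inclusion precisely because $N$ satisfies the bicentralizer conjecture---with a direct finite-dimensional approximation of $\cZ(N) = N' \cap N$. First I would reduce to the case where $N$ has no type $\I$ summand; any type $\I$ direct summand admits a direct treatment via its standard decomposition $\bigoplus M_{n_k}(\C) \otimes A_k$, where a sufficiently refined abelian diagonal handles the restriction of $\xi$ and $\varphi$ explicitly.

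Since $\cZ(N) \subset N_\varphi$ is abelian, approximating the normal functional $\xi|_{\cZ(N)}$ by step functions yields a finite dimensional subalgebra $C \subset \cZ(N)$ with $\|\xi|_C\| \geq \|\xi|_{\cZ(N)}\| - \varepsilon/2$. Next, by the local quantization property of $N \subset N$ applied to $\{\xi\}$ with tolerance $\varepsilon/2$, there is a finite dimensional abelian $A_0 \subset N$ with $\|\xi|_{A_0' \cap N}\| \leq \|\xi|_{\cZ(N)}\| + \varepsilon/2$. Inspecting the proof of Theorem \ref{local quantization no core type III}, which constructs $A_0$ inside an amenable subalgebra $P \subset N$ supplied by Theorem \ref{irreducible amenable}, one sees that $P$ and hence $A_0$ can be arranged to lie inside $N_\varphi$---this is where the BC hypothesis enters, via the ultrapower constructions of Lemma \ref{induction III1} that produce approximants in $N^\omega_{\varphi^\omega}$. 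Setting $A := A_0 \vee C$, which is still finite dimensional abelian with $A \subset N_\varphi$ (as $C \subset \cZ(N)$ commutes with $A_0$), and using $A \supset A_0$ and $A \supset C$:
\[
\|\xi|_{A' \cap N}\| \;\leq\; \|\xi|_{A_0' \cap N}\| \;\leq\; \|\xi|_{\cZ(N)}\| + \varepsilon/2 \;\leq\; \|\xi|_C\| + \varepsilon \;\leq\; \|\xi|_A\| + \varepsilon,
\]
which gives condition (a). Condition (b) is then automatic: since $A \subset N_\varphi$, the algebra $A' \cap N$ is globally $\sigma^\varphi$-invariant, so $E_{A' \cap N}$ is the (unique) $\varphi$-preserving conditional expectation and hence $\varphi \circ E_{A' \cap N} = \varphi$.

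The main obstacle will be verifying cleanly that the finite dimensional abelian $A_0$ produced by local quantization can indeed be chosen inside $N_\varphi$. While this is implicit in the inductive arguments of Lemma \ref{induction III1} and the construction of the amenable subalgebra in Theorem \ref{irreducible amenable}, stating it requires some bookkeeping to ensure $\sigma^\varphi$-invariance is preserved at each step and that $C$ can be absorbed without leaving $N_\varphi$. An alternative route, avoiding any reinspection of the proofs, is to adjoin $C$ to $A_0$ first and then apply Theorem \ref{local quantization no core type III} to the smaller inclusion $C' \cap N \subset C' \cap N$ (which still has no type $\I$ summand and inherits BC), producing the refinement inside the centralizer of $\varphi$ restricted to this corner.
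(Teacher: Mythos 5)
Your argument for condition (a) is structurally reasonable, but condition (b) contains a genuine gap, and the claimed fix—arranging $A_0 \subset N_\varphi$—does not work. The finite dimensional abelian algebras produced by the local quantization machinery cannot be placed inside the centralizer $N_\varphi$, only \emph{approximately} there: the ultrapower step (Lemma \ref{induction III1}) builds approximants in the asymptotic centralizer $N^\omega_{\varphi^\omega}$, which is far larger than $(N_\varphi)^\omega$ (for a type $\III_1$ factor, $N_\varphi$ may even be trivial while $N^\omega_{\varphi^\omega}$ is a $\II_1$ factor), and descending to $N$ via \cite[Lemma 2.1]{AHHM18} only retains $\|u\varphi u^* - \varphi\| < \delta$, not $u\varphi u^* = \varphi$. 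Likewise the amenable subalgebra $P$ in Theorem \ref{large amenable} is with expectation for a \emph{perturbed} state $\phi$, not for $\varphi$. This perturbation is the whole reason that condition (b) reads $\leq \varepsilon$ instead of $= 0$; were it avoidable one would essentially deduce the existence of a masa with expectation $A\subset N_\varphi$ with $A'\cap N=\cZ(N)$ for free, which cannot happen. Your alternative "corner route" is also ill-posed: since $C \subset \cZ(N)$ you have $C' \cap N = N$, so the proposed inclusion is $N \subset N$ again and no control on $\varphi$ appears.

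The paper avoids this by choosing a larger target: the algebraic bicentralizer $\rb(N,\varphi)$ rather than $\cZ(N)$. By the bicentralizer conjecture and the relative commutant theorem, $\rb(N,\varphi) = \rB(N,\varphi)$ is abelian and sits in $\cZ(N_\varphi)$ (Proposition \ref{bicentralizer in center}), so any finite dimensional abelian $A_0 \subset \rb(N,\varphi)$ automatically satisfies $A_0 \subset N_\varphi$; this is where the automatic $\varphi$-compatibility actually lives, not in the output of local quantization. One then applies Lemma \ref{fd abelian into bicentralizer} (not Theorem \ref{local quantization no core type III}) to $A_0' \cap N$ with $\varphi|_{A_0' \cap N}$: that lemma already carries the estimate $\| \varphi - \varphi \circ E_{A_1' \cap A_0' \cap N} \| \leq \varepsilon$ needed for (b), and the structural identity $\rb(A_0' \cap N, \varphi|_{A_0'\cap N}) = \rb(N,\varphi)$ (from Proposition \ref{alg bic corner reduction}) makes $A = A_0 \vee A_1$ work directly, without separability hypotheses. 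To salvage a route through $\cZ(N)$ you would need a local quantization statement that tracks $\varphi$, which in this paper is exactly Lemma \ref{fd abelian into bicentralizer}, not the theorem you invoke.
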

\begin{proof}
Since $\rb(N,\varphi)$ is abelian and thanks to property (P\ref{approx increase decrease}), we can take a finite dimensional abelian subalgebra $A_0 \subset \rb(N,\varphi)$ such that $\|\xi|_{\rb(N,\varphi)} \| \leq  \| \xi|_{A_0}\| + \varepsilon$. By applying Lemma \ref{fd abelian into bicentralizer} to $A_0' \cap N$ and the state $\varphi|_{A_0' \cap N}$, we find a finite dimensional abelian subalgebra $A_1 \subset A_0' \cap N$ such that 
$$ \| \xi|_{A_1' \cap A_0' \cap N} \| \leq \| \xi |_{\rb(A_0' \cap N, \varphi|_{A_0' \cap N}) } \| +\varepsilon \quad  \text{ and } \quad  \| (\varphi \circ E
_{A_1' \cap A_0' \cap N} -\varphi)|_{A_0' \cap N} \| \leq \varepsilon .$$
Observe that $\rb(A_0' \cap N, \varphi|_{A_0' \cap N}) = \rb(N,\varphi)$ and
$$  \| (\varphi \circ E
_{A_1' \cap A_0' \cap N} -\varphi)|_{A_0' \cap N} \|  =  \| (\varphi \circ E
_{A_1' \cap A_0' \cap N} -\varphi) \circ E_{A_0' \cap N} \| = \| \varphi \circ E_{A_1' \cap A_0' \cap N} -\varphi \|.$$
Therefore, if we let $A=A_0 \vee A_1$, we obtain 
$$ \| \xi|_{A' \cap N} \| \leq \| \xi|_{\rb(N,\varphi)} \| + \varepsilon \leq \| \xi|_{A_0} \| + 2\varepsilon \leq \| \xi|_A\| + 2 \varepsilon$$
and 
$$ \| \varphi \circ E_{A' \cap N} - \varphi \| \leq \varepsilon$$
as we wanted.
\end{proof}

\begin{thm} \label{masa with expectation if and only if}
Let $N \subset M$ be an inclusion of von Neumann algebras with expectation and with separable preduals. Then $N \subset M$ satisfies the bicentralizer conjecture if and only if there exists a maximal abelian subalgebra with expectation $A \subset N$ such that $$A' \cap c(M)=c(A) \vee (N' \cap c(M)).$$
Moreover, for any given faithful state $\varphi \in N_*$ and any $\varepsilon > 0$, we can choose $A$ such that $\| \varphi \circ E_A - \varphi \| \leq \varepsilon$.
\end{thm}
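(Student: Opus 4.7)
The strategy is the equivalence of items (1) and (6) in Theorem \ref{intro bicentralizer dixmier}, proved by a two-sided argument: one direction uses an inductive Popa-type masa construction at the core level, and the reverse uses the amenability of abelian algebras together with Proposition \ref{conjecture with intermediate}.

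For $(\Leftarrow)$, assume such an $A$ exists. I first observe that since $A$ is amenable (Theorem \ref{intro conrete class}(1)) and abelian, its bicentralizer inside $c(M)$ coincides with its commutant: $\rB(A \subset c(M), \varphi|_A) = \rb(A \subset c(M), \varphi|_A) = A' \cap c(M) = c(A) \vee (N' \cap c(M))$. Choosing $\varphi \in N_*$ with $\varphi = \varphi|_A \circ E_A$ makes $A^\omega_{\varphi|_A^\omega}$ sit inside $N^\omega_{\varphi^\omega}$, forcing $\rB(N \subset c(M),\varphi) \subset c(A) \vee (N' \cap c(M))$. Joining with $N$ and using that $c(N)$ is generated by $N$ together with the modular unitaries $(\varphi \circ E)^{it} \in c(A)$ gives
\[ \rB^\sharp(N \subset c(M)) \subset c(N) \vee (N' \cap c(M)) = \rb^\sharp(N \subset c(M)), \]
so $\rB^\sharp = \rb^\sharp$. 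For biconjugate of $N$ itself (needed by Proposition \ref{conjecture with intermediate}), apply the canonical conditional expectation $c(M) \to c(N)$ to the hypothesis to obtain $A' \cap c(N) = c(A) \vee \cZ(c(N))$, which yields the weak Dixmier property for $N \subset c(N)$ via two-step averaging inside the abelian algebra $c(A) \vee \cZ(c(N))$ (first by $\cU(A)$, then by $\cU(N)$); Theorem \ref{haagerup dixmier} then supplies biconjugate for $N$, and Proposition \ref{conjecture with intermediate} concludes.

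For $(\Rightarrow)$, assume biconjugate, which by Proposition \ref{conjecture with intermediate} also gives biconjugate for $N$. Fix the target state $\varphi$ and $\varepsilon$ and enumerate dense sequences $(\xi_n) \subset N_*$ and $(\zeta_n) \subset c(M)_*$. I will inductively build an increasing sequence $A_0 = \C \subset A_1 \subset \cdots$ of finite-dimensional abelian $*$-subalgebras of $N_\varphi$ (the centralizer $N_\varphi$ guarantees each $A_n$ admits a $\varphi$-preserving expectation and is $\sigma^\varphi$-invariant). At step $n$, apply Theorems \ref{local quantization no core type III} and \ref{local quantization type III} to the relative inclusions $A_{n-1}' \cap N \subset A_{n-1}' \cap M$ and $A_{n-1}' \cap N \subset A_{n-1}' \cap c(M)$; their relative commutants decompose over the minimal projections of $A_{n-1}$ as $A_{n-1} \vee (N' \cap M)$ and $A_{n-1} \vee (N' \cap c(M))$, respectively. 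This yields $A_n \subset N_\varphi \cap A_{n-1}'$ satisfying
\begin{align*}
\|\varphi - \varphi \circ E_{A_n' \cap N}\| &\leq 2^{-n}, \\
\|\xi_n|_{A_n' \cap N}\| &\leq \|\xi_n|_{A_{n-1} \vee \cZ(N)}\| + 2^{-n}, \\
\|\zeta_n|_{A_n' \cap c(M)}\| &\leq \|\zeta_n|_{A_{n-1} \vee (N' \cap c(M))}\| + 2^{-n}.
\end{align*}
In the limit $A = \bigvee_n A_n$, the first condition ensures a $\varphi$-preserving expectation $E_A$ with $\|\varphi - \varphi \circ E_A\| \leq \varepsilon$, the second gives $A' \cap N = A$, and the third gives $A' \cap c(M) = A \vee (N' \cap c(M))$. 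The key final observation is that $A \subset N_\varphi$ implies the modular unitaries $(\varphi \circ E)^{it}$ commute with $A$, hence lie in $A' \cap c(M) = A \vee (N' \cap c(M))$; together with $A$ they generate $c(A)$, so $c(A) \subset A \vee (N' \cap c(M))$, and therefore $c(A) \vee (N' \cap c(M)) = A' \cap c(M)$ as required.

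The main obstacle will be the inductive bookkeeping: verifying that $A_{n-1}' \cap N$ still has no type $\I$ summand so that local quantization applies, that the relative inclusion inherits biconjugate via Proposition \ref{projection reduction conjecture}, and that a single $A_n$ inside the centralizer can be chosen simultaneously satisfying all three norm constraints. The subtle point in $(\Leftarrow)$ will be avoiding circularity when deducing biconjugate for $N$, which is handled by replacing the use of Proposition \ref{conjecture with intermediate} applied to $N \subset N$ with the direct weak-Dixmier argument mentioned above.
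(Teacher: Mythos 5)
Your ``if'' direction is essentially the paper's: both hinge on the chain
$\rB(N \subset c(M),\varphi) \subset N_\varphi' \cap c(M) \subset A' \cap c(M) = c(A) \vee (N' \cap c(M)) \subset \rb^\sharp(N \subset c(M))$
together with Proposition \ref{conjecture with intermediate}. The side digression via a ``weak-Dixmier averaging'' to get the bicentralizer conjecture for $N$ itself is not really fleshed out (your averaging by $\cU(N)$ has no reason to preserve the range $c(A) \vee \cZ(c(N))$ of $E_{A' \cap c(N)}$); the paper treats that step as known, and it is not the novel content.

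The ``only if'' direction has a genuine gap: your condition (iii), namely $A' \cap c(M) = A \vee (N'\cap c(M))$, is unattainable for a masa \emph{with expectation} when $N$ is of type $\III$. Indeed, since $A$ is abelian, $c(A)$ is abelian as well and contains $A$, hence $c(A) \subset A' \cap c(M)$ always. But $c(A) = A \vee \{(\varphi \circ E)^{\ri t}\}''$, and for $N$ of type $\III$ the unitaries $(\varphi \circ E)^{\ri t}$ lie neither in $N$ nor in $N' \cap c(M)$; already when $N' \cap c(M) = \C$ (e.g.\ $M = N$ a type $\III_1$ factor with trivial bicentralizer), the equality $A'\cap c(M) = A$ would force $c(A) \subset A \subset N$, contradicting $N$ being type $\III$. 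Your ``key final observation'' is a consequence of the wrongly claimed equality, not a correction of it; it cannot turn $A \vee (N'\cap c(M))$ into $c(A) \vee (N'\cap c(M))$. The source of the problem is that you insist $A_n \subset N_\varphi$ at every step, so that $(\varphi\circ E)^{\ri t}$ lies in $A_n' \cap c(M)$ for all $n$ and hence in the limit; the local quantization principle (Theorems \ref{local quantization type III}, \ref{local quantization no core type III}) produces finite-dimensional abelian witnesses, but those witnesses need not lie in the centralizer of $\varphi$, and no such witness in $N_\varphi$ can push $A'\cap c(M)$ all the way down to $A \vee (N'\cap c(M))$. The constraints (i) and (iii), as you have stated them, are mutually incompatible.

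The paper avoids this precisely by choosing a larger, achievable target for the third condition: Lemma \ref{fd abelian into bicentralizer} only asks $E_{A'\cap c(M)}$ to approximate $E_{\rB(N\subset c(M),\varphi)}$, where $\rB(N\subset c(M),\varphi) = \rb(N\subset c(M),\varphi) = (N'\cap c(M)) \vee \{\varphi\circ T\}''$ already contains the $\{\varphi\}''$-direction forced by $c(A)$; the alternating Lemma \ref{fd abelian masa} handles the masa condition inside $N$. In the limit one gets only $A' \cap c(M) \subset \rb^\sharp(N\subset c(M))$, and the last step is to apply the unique conditional expectation $F = E_{A'\cap c(M)}$ to $\rb^\sharp(N\subset c(M))$: since $F(c(N)) = c(A)$ and $F$ fixes $N'\cap c(M)$ and $c(N)\cdot(N'\cap c(M))$ is dense in $\rb^\sharp$, one obtains $A'\cap c(M) = c(A) \vee (N'\cap c(M))$. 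That final expectation computation is the essential move you are missing.
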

\begin{proof} 
First, we prove the if direction. Suppose that such a maximal abelian subalgebra $A$ exists. Then $N$ satisfies the bicentralizer conjecture. Let $\varphi \in N_*$ be a faithful state such that $A \subset N_\varphi$. Then $$\rB(N \subset c(M)) \subset N_\varphi' \cap c(M) \subset A' \cap c(M) =c(A) \vee (N' \cap c(M)) \subset \rb^\sharp(N \subset c(M)).$$
It follows that $\rB^\sharp(N \subset c(M))=\rb^\sharp(N \subset c(M))$ hence $\rB^\sharp(N \subset M)=\rb^\sharp(N \subset M)$. We conclude by Proposition \ref{conjecture with intermediate}.

Let us now prove the only if direction. Let $\varphi \in N_*$ be a faithful state and let $\varepsilon > 0$.  Let $(\xi_n)_{n \in \N}$ be a dense sequence in $c(M)_*$ and $(\zeta_n)_{n \in \N}$ a dense sequence in $N_*$. By applying alternatively Lemma \ref{fd abelian into bicentralizer} and Lemma \ref{fd abelian masa}, we construct inductively an increasing sequence $(A_n)_{n \in \N}$ of finite dimensional abelian subalgebras  of $N$ with such that 
\begin{enumerate}
\item $\| \varphi \circ E_{A_n' \cap N}- \varphi \circ E_{A_{n-1}' \cap N} \| \leq 2^{-n} \varepsilon $ for all $n \in \N$.
\item $\| \zeta_n |_{A_n' \cap N} \| - \| \zeta_n |_{A_n}\| \leq 2^{-n}$ when $n$ is odd.
\item $\| \xi_n |_{A_n' \cap c(M)}\| \leq \| \xi_n|_{\rb^{\sharp}(N \subset c(M))} \| +  2^{-n}$ when $n$ is even.
\end{enumerate}
Put $A=\bigvee_{n \in \N} A_n$.

 Condition (1) implies that the sequence of states $\varphi_n=\varphi \circ E_{A_n' \cap N}$, $n \in \N$ is a Cauchy sequence. Thus, it converges to some state $\varphi_\infty \in N_*$ such that $\| \varphi_\infty - \varphi \| \leq \varepsilon$ and $\varphi_\infty = \varphi_\infty \circ E_{A_n' \cap N}$ for all $n \in \N$. Let $e$ be the support projection of $\varphi_\infty$. Then, for all $n \in \N$, we have $e \in A_n' \cap N$, hence $\varphi(e)=\varphi_n(e)$. Taking the limit when $n \to \infty$, we obtain $\varphi(e)=\varphi_\infty(e)=1$. Since $\varphi$ is faithful, we conclude that $e=1$, i.e.\ $\varphi_\infty$ is faithful.
 
Observe that $A_n \subset N_{\varphi_\infty}$ for all $n \in \N$ and therefore $A \subset N_{\varphi_\infty}$, hence there exists a $\varphi_\infty$-preserving conditional expectation $E_A \in \cE(N,A)$.
 
 Condition (2) implies that $\| \zeta_n|_{A' \cap N} \| - \| \zeta_n |_A \| \leq \| \zeta_n|_{A_n' \cap N} \| - \| \zeta_n |_{A_n} \|  \leq 2^{-n}$ for all odd $n \in \N$. Therefore, by density of $(\zeta_n)_{n \in \N}$ in $N_*$, we get $\| \zeta |_{A' \cap N} \| = \| \zeta  |_A \| $ for all $\zeta \in N_*$, which means that $A'\cap N=A$ by the Hahn-Banach theorem.

  Condition (3) implies that $$\| \xi_n |_{A' \cap c(M)}\|  \leq \| \xi_n |_{A_n' \cap c(M)}\| \leq \| \xi_n|_{\rb^{\sharp}(N \subset c(M))} \| +  2^{-n}$$
  for all even $n \in \N$. Since $(\xi_n)_{n \in \N}$ is dense in $c(M)_*$, it follows that 
$$\| \xi |_{A' \cap c(M)}\|  \leq \| \xi|_{\rb^{\sharp}(N \subset c(M))} \| $$
 for every $\xi \in c(M)_*$, hence $A' \cap c(M) \subset \rb^{\sharp}(N \subset c(M))$ by the Hahn-Banach theorem. 
 
Let us now show that $A' \cap c(M)=c(A) \vee (N' \cap c(M))$. Let $F$ be the unique element of $\cE(c(M),A' \cap c(M))$. We have $F(c(N))=A' \cap c(N)=c(A)$ and $F$ fixes $N' \cap c(M)$. Since $c(N) \cdot (N' \cap c(M))$ is dense in $\rb^\sharp(N \subset c(M))$, it follows that $$F(\rb^\sharp(N \subset c(M)))=c(A) \vee (N' \cap c(M)).$$
 Since $A' \cap c(M) \subset \rb^\sharp(N \subset c(M))$, we conclude that
$$ A' \cap c(M) = c(A) \vee (N' \cap c(M)).$$
\end{proof}

\begin{cor}
Let $N \subset M$ be an inclusion of von Neumann algebras with expectation and with separable preduals. Suppose that $N \subset M$ satisfies the bicentralizer conjecture and $N' \cap c(M) \subset c(N)$. Then there exists a maximal abelian subalgebra with expectation $A \subset N$ such that $A$ is maximal abelian in $M$.
\end{cor}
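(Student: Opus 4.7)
The plan is to deduce this corollary as a direct application of Theorem \ref{masa with expectation if and only if}, following the reasoning sketched in the introduction after the statement of Theorem \ref{intro bicentralizer dixmier}. Since $N \subset M$ satisfies the bicentralizer conjecture and $M_*$ is separable, Theorem \ref{masa with expectation if and only if} produces a maximal abelian subalgebra with expectation $A \subset N$ satisfying
\begin{equation*}
A' \cap c(M) = c(A) \vee (N' \cap c(M)).
\end{equation*}
Here $c(A) \subset c(N)$ via the embedding induced by any conditional expectation from $N$ onto $A$, and $c(N) \subset c(M)$ via the embedding induced by an expectation from $M$ onto $N$.

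Next I would exploit the hypothesis $N' \cap c(M) \subset c(N)$. Combined with the obvious inclusion $c(A) \subset c(N)$, this gives
\begin{equation*}
A' \cap c(M) = c(A) \vee (N' \cap c(M)) \subset c(N).
\end{equation*}
Since conversely $A' \cap c(N) \subset A' \cap c(M)$, we obtain the equality
\begin{equation*}
A' \cap c(M) = A' \cap c(N).
\end{equation*}

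Finally, I would intersect with $M$. Using $c(N) \cap M = N$ (which follows from the fixed point functor identification $M = c(M)^{\theta}$ restricted to $c(N)$), we get
\begin{equation*}
A' \cap M = (A' \cap c(M)) \cap M = (A' \cap c(N)) \cap M = A' \cap N = A,
\end{equation*}
where the last equality uses that $A$ is maximal abelian in $N$. Thus $A$ is a maximal abelian subalgebra of $M$ as well, and by construction it is with expectation in $N$ (hence, composing with any expectation from $M$ onto $N$, also with expectation in $M$). There is no genuine obstacle here: all the work has already been carried out in Theorem \ref{masa with expectation if and only if}, and the present statement only requires combining that theorem with the modular-theoretic hypothesis to eliminate $N' \cap c(M)$ from the commutant formula.
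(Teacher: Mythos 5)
Your proof is correct and takes essentially the same approach as the paper: apply Theorem \ref{masa with expectation if and only if} to produce $A$ with $A' \cap c(M) = c(A) \vee (N' \cap c(M))$, then use the hypothesis $N' \cap c(M) \subset c(N)$ together with $c(A) \subset c(N)$ to get $A' \cap c(M) \subset c(N)$, hence $A' \cap M \subset N$ and so $A' \cap M = A' \cap N = A$. The paper is slightly terser (it skips your intermediate equality $A' \cap c(M) = A' \cap c(N)$, which is not needed), but the argument is identical.
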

\begin{proof}
By Theorem \ref{masa with expectation if and only if}, take  $A$ a maximal abelian subalgebra with expectation in $N$ as in such that $A' \cap c(M) =c(A) \vee (N' \cap c(M))$. Since $N' \cap c(M) \subset c(N)$, this means that $A' \cap c(M) \subset c(N)$, hence $A' \cap M \subset N$. We conclude that $A$ is maximal abelian in $M$.
\end{proof}

\bibliographystyle{plain}

\begin{thebibliography}{AHHM18}

\bibitem[Aa68]{Aa68} {\sc J. F. Aarnes}, {\it On the Mackey-topology for a von Neumann algebra.} Math. Scan. {\bf 22} (1968), 87--107.

%\bibitem[Ar70]{Ar70} {\sc H. Araki}, {\it Asymptotic Ratio Set and Property $L_{\lambda}'$.} Publ. Res. Inst. Math. Sci. {\bf 6} (1970), 443--460.



\bibitem[AH12]{AH12} {\sc H. Ando, U. Haagerup}, {\it Ultraproducts of von Neumann algebras.} J. Funct. Anal. {\bf 266} (2014), 6842--6913.

\bibitem[AHHM18]{AHHM18} {\sc H. Ando, U. Haagerup, C. Houdayer and A. Marrakchi}, {\it Structure of bicentralizer algebras and inclusions of type III factors.} Math. Ann. {\bf 376} (2020), 1145--1194.

%\bibitem[BB16]{BB16} {\sc R. Boutonnet, A. Brothier}, {\it Crossed-products by locally compact groups: Intermediate subfactors.} To appear in J. Operator Theory. {\tt arXiv:1611.10121}

\bibitem[Bi24]{Bi24} {\sc P. Bikram}, {\it Connes' Bicentralizer Problem for Mixed $q$-deformed Araki-Woods Algebras.}  {\tt arXiv:2410.09490}.
 

\bibitem[BMO19]{BMO19} {\sc J. Bannon, A. Marrakchi, N. Ozawa}, {\it Full factors and co-amenable inclusions.} Comm. in math. phys. {•bf 378 (2)}, 1107--1121.

\bibitem[Ca22]{Ca22} {\sc M. Caspers}, {\it On the isomorphism class of $q$-Gaussian W*-algebras for infinite variables.} To appear in Comptes Rendus de l'Académie des Sciences. {\tt arXiv:2210.11128}.

%\bibitem[CP12]{CP12} {\sc V.  Capraro, L.  P aunescu}, {\it Product  between  ultrafilters  and  applications  to Connes' embedding problem}, J. Operator Theory {\bf 68} (2012), 165--172.
%

\bibitem[Co72]{Co72} {\sc A. Connes}, {\it Une classification des facteurs de type ${\rm III}$.} Ann. Sci. \'{E}cole Norm. Sup. {\bf 6} (1973), 133--252.

%\bibitem[Co74]{Co74} {\sc A. Connes}, {\it Almost periodic states and factors of type $\III_1$}, J. Funct. Anal. {\bf 16} (1974), 415--445.
%

%\bibitem[Co75]{Co75} {\sc A. Connes}, {\it Outer conjugacy classes of automorphisms of factors.} Ann. Sci. \'{E}cole Norm. Sup. {\bf 8} (1975), 383--419.

\bibitem[Co75b]{Co75b} {\sc A. Connes}, {\it Classification of injective factors. Cases ${\rm II_1}$, ${\rm II_\infty}$, ${\rm III_\lambda}$, $\lambda \neq 1$.} Ann. of Math. {\bf 74} (1976), 73--115.

\bibitem[Co80]{Co80} {\sc A. Connes}, {\it Classification des facteurs.} In ``Operator algebras and applications, Part 2 (Kingston, 1980)'' Proc. Sympos. Pure Math. {\bf 38} Amer. Math. Soc., Providence, 1982, pp.\ 43--109.

\bibitem[Co85]{Co85} {\sc A. Connes}, {\it Factors of type $\III_1$, property $\rL_{\lambda}'$, and closure of inner automorphisms.} J. Operator Theory {\bf 14} (1985), 189--211.

\bibitem[CS78]{CS78} {\sc A. Connes, E. St\o rmer}, {\it Homogeneity of the state space of factors of type $\III_1$}, J. Funct. Anal. {\bf 28} (1978), 187--196.
%
\bibitem[CT76]{CT76} {\sc A. Connes, M. Takesaki}, {\it The flow of weights of factors of type ${\rm III}$.} Tohoku Math. Journ. {\bf 29} (1977), 473--575.

\bibitem[DKEP23]{DKEP23} {\sc C. Ding, S. Kunnawalkam Elayavalli,  J. Peterson}, {\it Properly proximal von Neumann algebras.} Duke Math. J. {\bf 172} (2023), 2821--2894.

\bibitem[DP23]{DP23} {\sc C. Ding,  J. Peterson}, {\it Biexact von Neumann algebras.} preprint {\tt arXiv:2309.10161}



\bibitem[FT01]{FT01} {\sc T. Falcone, M. Takesaki}, {\it The Non-commutative Flow of Weights on a Von Neumann Algebra.} J. Funct. Anal. {\bf 182} (2001), no. 1, 170--206.

\bibitem[GP98]{GP98} {\sc L. Ge, S. Popa}, {\it On some decomposition properties for factors of type II$_1$}, Duke
Math. J. {\bf 94} (1998), 79--101.

%%HA
%\bibitem[Go75]{Go75} {\sc V. J. Golodets}, {\it Spectral properties of modular operators and the asymptotic ratio set.} (in Russian) Izv. Akad. Nauk SSSR Math. Ser. Mat. Tom {\bf 39} (1975),  English translation in  Math. USSR Izv.  {\bf 9} (1975), 599--619.
%
%%HA

\bibitem[Ha75]{Ha75} {\sc U. Haagerup}, {\it The standard form of von Neumann algebras}, Math. Scand. \textbf{37} (1975), 271--283.
%
\bibitem[Ha77a]{Ha77a} {\sc U. Haagerup}, {\it Operator valued weights in von Neumann algebras}, I. J. Funct. Anal. {\bf 32} (1979), 175--206.
%
\bibitem[Ha77b]{Ha77b} {\sc U. Haagerup}, {\it Operator valued weights in von Neumann algebras}, II. J. Funct. Anal. {\bf 33} (1979), 339--361.
%

\bibitem[Ha79]{Ha79} {\sc U. Haagerup}, {\it $\rL^p$-spaces associated with an arbitrary von Neumann algebra.} In Alg\`ebres d’op\'erateurs et leurs applications en physique math\' ematique (Proc. Colloq., Marseille, 1977), volume 274 of Colloq. Internat. CNRS, pages 175–184. CNRS, Paris, 1979.

\bibitem[Ha84]{Ha84} {\sc U. Haagerup}, {\it A new proof of the equivalence of injectivity and hyperfiniteness for factors on a separable Hilbert space.} J. Funct. Anal. {\bf 62} (1985), 160--201.

\bibitem[Ha85]{Ha85} {\sc U. Haagerup}, {\it Connes' bicentralizer problem and uniqueness of the injective factor of type ${\rm III_1}$.} Acta Math. {\bf 69} (1986), 95--148.
%
%%HA
\bibitem[HI15]{HI15} {\sc C. Houdayer, Y. Isono}, {\it Unique prime factorization and bicentralizer problem for a class of type ${\rm III}$ factors.} Adv. Math. {\bf 305} (2017), 402--455.

\bibitem[HI20]{HI20} {\sc C. Houdayer, Y. Isono}, {\it Connes' bicentralizer problem for q-deformed Araki-Woods algebras.} Bull. Lond. Math. Soc. {\bf 52} (2020), 1010--1023.


%
%\bibitem[HMV16]{HMV16} {\sc C. Houdayer, A. Marrakchi, P. Verraedt}, {\it Fullness and Connes' $\tau$ invariant of type ${\rm III}$ tensor product factors.} {\tt arXiv:1611.07914}
%
\bibitem[HP17]{HP17} {\sc C. Houdayer, S. Popa}, {\it Singular masas in type ${\rm III}$ factors and Connes' bicentralizer property.} To appear in Proceedings of the 9th MSJ-SI ``Operator Algebras and Mathematical Physics''. {\tt arXiv:1704.07255}

%\bibitem[HR14]{HR14} {\sc C.Houdayer, S. Raum}, {\it Asymptotic structure of free Araki-Woods factors.} Math. Ann. {\bf 363} (2015), 237--267. 

\bibitem[HS88]{HS88} {\sc Haagerup, E. St\o rmer},
{\it Pointwise inner automorphisms of von Neumann algebras.} With an appendix by C.\ Sutherland. J. Funct. Anal. {\bf 92} (1990), 177--201.

\bibitem[HS90]{HS90} {\sc Haagerup, E. St\o rmer},
{\it Equivalence of normal states on von Neumann algebras and the flow of weights.} Adv. Math. {\bf 83} (1990), 180--262.

\bibitem[HU15]{HU15} {\sc C. Houdayer, Y. Ueda}, {\it Asymptotic structure of free product von Neumann algebras.} Math. Proc. Cambridge Philos. Soc. {\bf 161} (2016), 489--516.

\bibitem[HW98]{HW98} {\sc U.Haagerup, C. Winsl\o w}, {\it The Effros-Mar\' echal Topology in the Space of Von Neumann Algebras} Amer. J. of Math., {\bf 120(3)} (1998), 567--617.

%\bibitem[ILP96]{ILP96} {\sc M. Izumi, R. Longo, S. Popa}, {\it A Galois correspondence for compact groups of automorphisms of von Neumann algebras with a generalization to Kac algebras.} J.\ Funct.\ Anal. {\bf 155} (1998), 25--63.

\bibitem[IM19]{IM19} {\sc Y. Isono, A. Marrakchi}, {\it Tensor product decompositions and rigidity of full factors.} Ann. Sci. \' Ec. Norm. Sup\'e r. {\bf 55(4)}, (2022), 109-139.

\bibitem[Is23]{Is23} {\sc Y. Isono}, {\it Haagerup and St\o rmer's conjecture for pointwise inner automorphisms.} preprint {\tt arXiv:2309.05279}.

\bibitem[IT23]{IT23} {\sc A. Ioana, H. Tan}, {\it On existentially closed $\II_1$ factors.} preprint {\tt arXiv:2306.00474}.

\bibitem[IV15]{IV15} {\sc A. Ioana, S. Vaes}, {\it Spectral gap for inclusions of von Neumann algebras.} Appendix to the article {\it Cartan subalgebras of amalgamated free product $\II_1$ factors} by {\sc A. Ioana} in Ann. Sci. \'{E}cole Norm. Sup. {\bf 48} (2015), 71--130.

%\bibitem[Jo82]{Jo82} {\sc V.F.R. Jones}, {\it Index for subfactors.} Invent. Math. {\bf 72} (1983), 1--25.
%
\bibitem[Ka67]{Ka67} {\sc R.V. Kadison}, {\it Problems on von Neumann algebras.} Baton Rouge Conference, 1967 (unpublished).
%
%\bibitem[Ko85]{Ko85} {\sc H. Kosaki}, {\it Extension of Jones' theory on index to arbitrary factors.} J. Funct. Anal. {\bf 66} (1986), 123--140.
%


\bibitem[Ma17]{Ma17} {\sc A. Marrakchi}, {\it Stability of products of equivalence relations.} Compositio Mathematica, {\bf 154(9)} (2017), 2005--2019.

\bibitem[Ma18]{Ma18} {\sc A. Marrakchi}, {\it Full factors, bicentralizer flow and approximately inner automorphisms.} Invent. Math. {\bf 222(1)} (2018), 375--398.

\bibitem[Ma19]{Ma19} {\sc A. Marrakchi}, {\it On the weak relative Dixmier property.} Proc. London Math. Soc. {\bf 122(1)} (2019), 118--123.

\bibitem[Mac49]{Mac49} {\sc G. W. Mackey}, {\it A theorem of Stone and von Neumann.} Duke Math. J. {\bf 16} (1949), 313--326.

\bibitem[Mas03]{Mas03} {\sc T. Masuda}, {\it An analogue of Connes-Haagerup approach for classification of subfactors of type ${\rm III_{1}}$.} J. Math. Soc. Japan {\bf 57} (2005), 959--1001.

\bibitem[Mas20]{Mas20} {\sc T. Masuda}, {\it On the Relative Bicentralizer Flows and the Relative Flow of Weights of Inclusions of Factors of Type $\III_1$.} Publ. Res. Inst. Math. Sci. {\bf 56} (2020), no. 2, pp. 391–400.

\bibitem[MV23]{MV23} {\sc A. Marrakchi, S. Vaes}, {\it Ergodic states on type $\III_1$ factors and ergodic actions.} (2023), {\tt arXiv:2305.14217}.

\bibitem[NT81]{NT81} {\sc M. Nagisa, J. Tomiyama}, {\it Completely positive maps in the tensor products of von Neumann algebras.} J. Math. Soc. Japan. {\bf 33(3)} (1981), 539--550.

\bibitem[Oc85]{Oc85} {\sc A. Ocneanu}, {\it Actions of discrete amenable groups on von Neumann algebras.} Lecture Notes in Mathematics, {\bf 1138}. Springer-Verlag, Berlin, 1985. iv+115 pp.

\bibitem[Ok21]{Ok21} {\sc R. Okayasu}, {\it A note on injectivite factors with trivial bicentralizer.} (2021), {\tt arXiv:2106.05464}.

\bibitem[Oz10]{Oz10} {\sc N. Ozawa}, {\it A comment on free group factors.} Noncommutative harmonic analysis with applications to probability II, 241—245, Banach Center Publ., 89, Polish Acad. Sci. Inst. Math., Warsaw, 2010.

%\bibitem[PP84]{PP84} {\sc M. Pimsner, S. Popa}, {\it Entropy and index for subfactors.} Ann. Sci. \'Ecole Norm. Sup. {\bf 19} (1986), 57--106.
%

\bibitem[Po81]{Po81} {\sc S. Popa}, {\it On a problem of R.V.\ Kadison
on maximal abelian $\ast$-subalgebras in factors.} Invent. Math. {\bf 65} (1981), 269--281. 


\bibitem[Po85]{Po85} {\sc S. Popa}, {\it Hyperfinite subalgebras normalized by a given automorphism and related problems.} Operator algebras and their connections with topology and ergodic theory (Bu\c{s}teni, 1983), 421--433, Lecture Notes in Math., {\bf 1132}, Springer, Berlin, 1985. 

\bibitem[Po94]{Po94} {\sc S. Popa}, {\it Classification of amenable subfactors of type II.} Acta Math. {\bf 172} (1994), 163-255.

\bibitem[Po95]{Po95} {\sc S. Popa}, {\it Classification of subfactors and their endomorphisms.} CBMS Regional Conference Series in Mathematics, {\bf 86}. Published for the Conference Board of the Mathematical Sciences, Washington, DC; by the American Mathematical Society, Providence, RI, 1995. x+110 pp.
%
%\bibitem[Po98]{Po98} {\sc S. Popa}, {\it On the relative Dixmier property for inclusions of $\rC^*$-algebras.} J. Funct. Anal. {\bf 171} (2000), 139--154.

\bibitem[Po21]{Po21} {\sc S. Popa}, {\it On Ergodic Embeddings of Factors.} Commun. Math. Phys. {\bf 384} (2021), 971--996.

%\bibitem[Sc63]{Sc63} {\sc J. Schwartz}, {\it Two finite, non-hyperfinite, non-isomorphic factors.} Comm. Pure Appl. Math. {\bf 16} (1963), 19--26. 
%
%\bibitem[Su79]{Su79} {\sc C. E. Sutherland}, {\it Cohomology and Extensions of von Neumann Algebras. I.} Publ. RIMS, Kyoto. Univ. {\bf 16} (1980), 105--133. 

\bibitem[SZ99]{SZ99} {\sc S. Str\v{a}til\v{a}, L. Zsid\' o}, {\it The Commutation Theorem for Tensor Products over von Neumann Algebras.} Journal of Functional Analysis {\bf 165(2)} (1999), 293--346.

\bibitem[Ta03]{Ta03} {\sc M. Takesaki}, {\it Theory of operator algebras. ${\rm II}$.} Encyclopaedia of Mathematical Sciences, {\bf 125}. Operator Algebras and Non-commutative Geometry, 6. Springer-Verlag, Berlin, 2003. xxii+518 pp.

%\bibitem[Ta03b]{Ta03b} {\sc M. Takesaki}, {\it Theory of operator algebras. ${\rm III}$.}
%Encyclopaedia of Mathematical Sciences, {\bf 127}. Operator Algebras and Non-commutative Geometry, 6. Springer-Verlag, Berlin, 2003. xxii+548 pp.



\end{thebibliography}

\end{document}